\theoremstyle{plain}
\newtheorem{thm}{Theorem}[section]
\newtheorem{lem}[thm]{Lemma}
\newtheorem{prop}[thm]{Proposition}
\newtheorem{si}[thm]{Situation}
\newtheorem{cor}[thm]{Corollary}
\theoremstyle{definition}
\newtheorem{defn}[thm]{Definition}
\theoremstyle{remark}
\newtheorem{rem}[thm]{Remark}
\theoremstyle{definition}
\newtheorem{ex}[thm]{Example}
\newcommand{\PP}{\mathbb{P}}
\newcommand{\ZZ}{\mathbb{Z}}
\newcommand{\la}{\lambda}
\newcommand{\aaa}{\alpha}
\newcommand*{\vv}[1]{\vec{\mkern0mu#1}}
\newcommand{\ma}{$\mathcal{G}^{k,\text{EHT}}_{2,\omega_g,d_{\bullet},a^{\Gamma}}(X_g)$}
\newcommand{\dis}{\displaystyle}
\newcommand{\ep}{\epsilon}
\newcommand{\fa}{\forall}
\newcommand{\ti}{\tilde}
\newcommand{\mF}{\mathscr{F}}
\newcommand{\mA}{\mathscr{A}}
\newcommand{\mE}{\mathscr{E}}
\newcommand{\mO}{\mathscr{O}}
\newcommand{\mL}{\mathscr{L}}
\newcommand{\GG}{\mathcal{G}}
\newcommand{\rr}{\rho_{g,k}}
\newcommand{\ot}{\otimes}
\newcommand{\op}{\oplus}
\newcommand{\oo}{\omega}
\newcommand{\cd}{\cdot}
\newcommand{\tb}{\textbf}
\newcommand{\cm}{\mathcal{M}}
\newenvironment{st}[2][Step]{\begin{trivlist}
\item[\hskip \labelsep {\bfseries #1}\hskip \labelsep {\bfseries #2}]}{\end{trivlist}}
\DeclareMathOperator\Tor{Torsion}
\DeclareMathOperator\rk{rank}
\DeclareMathOperator\Gr{Gr}
\DeclareMathOperator\im{Im}
\DeclareMathOperator\rev{Reverse}
\DeclareMathOperator\supp{Supp}
\DeclareMathOperator\Pic{Pic}
\DeclareMathOperator\aut{Aut}
\DeclareMathOperator\hilb{Hilb}
\DeclareMathOperator\sy{Sym}
\DeclareMathOperator\s{span}
\DeclareMathOperator\spec{spec}
\DeclareMathOperator\iso{Iso}
\DeclareMathOperator\van{Van}
\DeclareMathOperator\ord{ord}
\DeclareMathOperator\Ob{Ob}
\DeclareMathOperator\h{Hom}
\DeclareRobustCommand{\SkipTocEntry}[5]{}
\begin{document}
\title{Towards the Bertram-Feinberg-Mukai Conjecture}
\author{Naizhen Zhang}
\maketitle
\begin{abstract}
In this paper, we prove the existence portion of the Bertram-Feinberg-Mukai Conjecture for an infinite family of new cases using degeneration technique. This not only leads to a substantial improvement of known results but also develops finer tools for analyzing the moduli of rank two limit linear series which should be useful for other applications to other higher-rank Brill-Noether Problems.
\end{abstract}
\tableofcontents
\section{Introduction}
Brill-Noether theory studies the moduli space of vector bundles of fixed rank and degree, with given amount of global sections, over algebraic curves of a given genus. It relates the intrinsic geometry of a curve to its extrinsic geometry. For example, in the classical case, we study the extrinsic data of maps from a curve to projective spaces through studying the moduli of \textit{linear series}, i.e. pairs of the form $(\mL,V)$, where $\mL$ is a line bundle and $V$ is a space of sections. There, the basic questions are non-emptiness, dimension count, connectedness and smoothness of the moduli space and all of them were answered in early 1980's by Kempf, Kleiman-Laksov, Griffiths-Harris, and Gieseker. (\cite{ACGH},\cite{Gieseker},\cite{GH})

Brill-Noether theory has been shown to be useful in studying the geometry of the moduli space of genus $g$ curves $\overline{\mathcal{M}}_g$ via effective divisors. The Brill-Noether Theorem indicates that the space of degree $d$, dimension $k$ linear series on a \textit{general} genus $g$ curve is empty, when $\rho=k(d-k+1)-(k-1)g<0$. For $(k,d,g)$ such that $\rho=-1$, consider all genus $g$ curves with degree $d$, dimension $k$ linear series, then one gets a natural candidate for an effective divisor in $\overline{\mathcal{M}}_g$. With this observation, Eisenbud and Harris showed that $\mathcal{M}_g$ is of general type for $g\ge24$. (\cite{EHgen})

Higher-rank Brill-Noether theory is a natural generalization of the classical case, which studies \textit{high-rank linear series}. Geometrically, they correspond to maps from curves to Grassmannians. High-rank Brill-Noether theory is also useful in studying the birational geometry of $\mathcal{M}_g$. One well-known application due to Farkas and Popa produced a counterexample to the slope conjecture. (\cite{FPSlo})

Following Narasimhan and Seshadri, another viewpoint towards high-rank Brill-Noether theory is related to the representation theory of the fundamental group of the underlying Riemann surface. For example, rank 2 linear series with canonical determinant of certain dimension correspond to some type of irreducible $SU(2)$-representations of the fundamental group. (See \cite{Mnonab}.) Other interesting applications lie in the study of Fano 3-folds, vector bundles on K3 surfaces and classification of curves of low genera, to name a few. (See \cite{MK3}, \cite{MCGrass}, \cite{MCsymm}, \cite{MCsymm2}.) Recently, Bhosle, Brambila-Paz, and Newstead used high-rank Brill-Noether theory to solve a conjecture by Butler related to classical linear series. (See \cite{BBN}.)

It was conjectured by Bertram, Feinberg (\cite{Ber}) and Mukai (\cite{MCan}) that the moduli space of rank two vector bundles with canonical determinant and at least $k$ sections on a genus $g$ curve has expected dimension $\rr=3g-3-\binom{k+1}{2}$. In these pioneering papers, the moduli count was verified for some lower genuses. Later, in \cite{M1}, the  conjecture was partially verified by Teixidor i Bigas for the following cases: When $k=2k_1+1$ is odd, it was verified for all $g\ge k_1^2+k_1+1$; and when $k=2k_1$ is even, it was verified for all $g\ge k_1^2,k_1>2$, together with the cases $(g,k)=(5,4),(3,2)$. This was achieved using degeneration technique, involving limit linear series $((\mE_j,V_j),(\phi_j))$ (see section \ref{definition}) on a chain of elliptic curves, $X_g=C_1\cup...\cup C_g$, with the following property: for every component $C_j$, $\mE|_{C_j}$ is semi-stable over $C_j$. More recently, Lange, Newstead and Park showed non-emptiness of the moduli space for $g\ge {k(k-1)\over4}+1$, $k\ge8$, where $g$ is an odd prime number.

For these known results, the parameters $(g,k)$ should satisfy some inequality of the form $g\ge k_1^2+\aaa k_1+\beta$, where $k_1=\left\lfloor{k\over2}\right\rfloor$. Note that the Bertram-Feinberg-Mukai Conjecture (existence portion) is made for all $(g,k)$ satisfying an inequality of the form ${2\over3}k_1^2+\aaa k_1+\beta$.  In this paper, we improve the known results so that asymptotically $g\sim {11\over12}k_1^2$. To the author's knowledge, this is the first known result reducing the quadratic coefficient of the cut-off inequality to a number strictly smaller than 1. To do so, we utilize limit linear series $((\mE_j,V_j)(\phi_j))$ (see Section 5) such that not all vector bundles $\mE_j$ are semi-stable, but nevertheless the resulting bundle on the whole curve $X_g$ still satisfies certain semi-stability condition. Our main result is the following:

\begin{thm}\label{mresult}
 For the pairs of $(g,k)$ satisfying one of the following:
$$\begin{cases}
g\ge k_1^2+k_1-\lfloor{(k_1-2)^2+3\over12}\rfloor, k=2k_1+1\ge5\\
g\ge k_1^2-\lfloor{(k_1-4)^2\over12}\rfloor-1, k=2k_1\ge8,
\end{cases}$$
such that $\rr=3g-3-\binom{k+1}{2}\ge0$, in general the rank two Brill-Noether locus with canonical determinant has at least one component of the expected dimension $\rr$.
\end{thm}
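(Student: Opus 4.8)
The plan is to argue by degeneration, in the spirit of \cite{M1} but exploiting the extra flexibility announced in the introduction. A general curve of genus $g$ degenerates to the chain $X_g = C_1 \cup \cdots \cup C_g$ of elliptic curves, and a rank two bundle with canonical determinant carrying $k$ independent sections degenerates to a rank two limit linear series $((\mE_j, V_j),(\phi_j))$ in the sense of Section~\ref{definition}. By the smoothing/regeneration results for such limit series (developed in the earlier sections in the style of Teixidor), it suffices to exhibit --- for a suitable multidegree $d_{\bullet}$ and a suitable family $a^{\Gamma}$ of vanishing sequences at the $g-1$ nodes --- a smoothable point of \ma\ lying on a component of dimension exactly $\rr$. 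Since every nonempty component of \ma\ has dimension at least $\rr$, the problem splits into two parts: (i) nonemptiness of \ma\ for a cleverly chosen numerical datum, and (ii) the matching fact that the relevant component has dimension at most $\rr$.

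For (i) I would build the limit series one elliptic component at a time. On $C_j$ one selects a rank two bundle $\mE_j$ whose determinant is compatible with gluing to $\omega_{X_g}$, together with a $k$-dimensional subspace $V_j \subseteq H^0(\mE_j)$; the linking condition at the node $p_j$ forces the vanishing sequences of $V_j$ and $V_{j+1}$ at $p_j$ to be complementary relative to the local degrees. On most components one takes $\mE_j$ semistable, exactly as in \cite{M1}. The new ingredient (see Section~5) is that on a controlled number of components one allows a destabilized $\mE_j$, i.e. a non-split extension $0 \to \mL_j \to \mE_j \to \mE_j/\mL_j \to 0$ with $\mL_j$ of large degree. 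Such an $\mE_j$ carries extra sections concentrated in $\mL_j$, so $V_j$ can be prescribed with a longer vanishing jump at a node than a semistable bundle would permit; it is precisely these more extreme node vanishing sequences that let the complementarity conditions along the chain close up while $\rr$ stays $\ge 0$ for smaller $g$. One must, however, choose the $\mL_j$ and the twists by the node divisors so that no destabilizing sub-line bundle of any $\mE_j$ patches into a destabilizing subsheaf of the glued bundle on $X_g$: that bundle must remain semistable, since otherwise it does not give a point of the Brill-Noether locus for the correct moduli problem.

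The core of the argument is then a combinatorial optimization. In the dimension formula for limit series on the chain, each component $C_j$ contributes a non-negative integer $e_j$ --- a local excess depending on the bundle type of $\mE_j$ and on its two node vanishing sequences --- and the constructed series has dimension $\rr + \sum_j e_j$. One therefore needs $e_j = 0$ on every component, which imposes inequalities on the node vanishing sequences of $C_j$; these inequalities are relaxed when $\mE_j$ is destabilized. The number of destabilized components one can use is limited both by the semistability requirement for the glued bundle and by the total "room" available in the vanishing sequences, a quantity of size comparable to $k_1$; maximizing the amount by which $g$ can then be lowered --- a quantity quadratic in the number of special components, with the ratio between the gain per special component and its quadratic cost producing the denominator $12$ --- yields exactly the stated thresholds $g \ge k_1^2 + k_1 - \lfloor((k_1-2)^2+3)/12\rfloor$ for $k = 2k_1+1$ and $g \ge k_1^2 - \lfloor(k_1-4)^2/12\rfloor - 1$ for $k = 2k_1$, consistently with the asymptotics $g \sim \tfrac{11}{12}k_1^2$ claimed above.

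I expect the main obstacle to be making this assignment globally consistent: showing that the linking conditions at all $g-1$ nodes are simultaneously solvable, that the resulting glued bundle really has canonical determinant and stays semistable despite the destabilized pieces, and --- most delicate of all --- that the point of \ma\ so produced is smoothable and lies on a component of dimension not exceeding $\rr$, so that the count is sharp rather than a mere lower bound. I would reduce smoothability and sharpness to verifying that the constructed limit series is refined and its local pieces dimensionally proper in the sense of the earlier sections, and then invoke the corresponding regeneration statement; the conclusion that a general genus $g$ curve carries a component of the rank two Brill-Noether locus with canonical determinant of dimension exactly $\rr$ then follows.
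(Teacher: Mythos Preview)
Your proposal captures the broad strategy---degenerate to a chain of elliptic curves, allow some components to carry unstable bundles, and verify semistability of the glued bundle---but it misses or misstates several of the ingredients that actually make the argument go through.

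First, a concrete error: the unstable $\mE_j$ in this paper are \emph{decomposable}, of the form $\mO(a_\ell P+(d+1-a_\ell)Q)\oplus\mO(a_jP+(d-a_j)Q)$, not non-split extensions. The entire analysis of adapted bases (Lemmas~\ref{one}, \ref{one2}) and of configurations of fibers of sections (Section~4) rests on this splitting; a non-split extension on an elliptic curve has a very different section theory and would not support the combinatorics used here.

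Second, and more structurally, the paper does not perform a single combinatorial optimization producing the stated bounds. It proceeds by \emph{induction on $(g,k)$} from the base case $(6,5)$, via three explicit constructions (Sections~\ref{l1}--\ref{l3}) that move $(g,k)\mapsto(g+4k_1+6-q,\,k+4)$ with $q=\max\{1,\lfloor k_1/3\rfloor\}$, $(g,k)\mapsto(g+2k_1+2,\,k+2)$, and a passage from odd to even $k$. Each step is analyzed not by summing local excesses $e_j$, but by computing fiber dimensions of forgetful stack morphisms $p_{r+1,r}:\GG_{r+1}\to\GG_r$ (Lemmas~\ref{m1}--\ref{m3}) and tracking how configurations $\tau(P_j)$ propagate across nodes (Lemmas~\ref{g6}, \ref{g7}). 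The denominator $12$ arises because the first construction gains $2q\approx 2k_1/3$ in $|L(g,k)|$ per inductive step while $k$ increases by $4$; summing these gains over the induction gives $\lfloor(k_1-2)^2/12\rfloor$-type expressions, as worked out in Lemma~\ref{final}. Your description of a one-shot quadratic optimization does not reproduce this, and in particular gives no mechanism for the floor functions or the parity split $k\equiv1,3\pmod 4$.

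Third, the global semistability check uses $\ell$-semistability (Section~3), which is strictly weaker than $\mu$-semistability and behaves well under gluing sub-curves (Proposition~\ref{blocks}); this is what allows the verification to localize to ``basic blocks'' containing at most two unstable components (Proposition~\ref{ssimple}, Lemma~\ref{sst}). Your sketch implicitly demands ordinary semistability of the glued bundle, which would be considerably harder to arrange and is not what the smoothing theorem (Theorem~\ref{smoothing}) requires; Lemma~\ref{reduce} then upgrades semistable to stable on the generic fiber when $L(g,k)<0$.
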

Technically, we relate various limit linear series moduli stacks via naturally defined stack morphisms and thus reduce the combinatorial complexity in the analysis. We analyze fibers of these stack morphisms, introduce the notion of configuration at a point, which together lead to a purely combinatorial moduli counting procedure. These aspects should be useful when we apply the same technique to other Brill-Noether Problems.

A brief structure of the paper is as follows: In section 3 and 8, we give numerical descriptions of semi-stability and canonical determinant conditions for vector bundles on the degenerated curve. Instead of the usual notion of $\mu$-semi-stability after Seshadri, we adopt the notion of $\ell$-semi-stability define by Osserman in \cite{Ostab}. This greatly simplifies the verification of the stability condition. Section 4, 5, 6, 7 provide basic terminologies and preliminary results on limit linear series, configurations and vanishing sequences. Some result from section 4 is also used in \cite{Osp} in a more general context. Section 9 and 10 contain the main constructions and proofs, while we leave some of the technical results in the appendix. We derive our final conclusion in section 11.\\
\ \\
\noindent\textbf{Acknowledgments.} The author would like to thank his advisor Brian Osserman for introducing this problem and his tireless instructions, without which this paper would not have come into existence.
\section{Notations and Convention}
\noindent\textbf{Notation 1}
$\rr=3g-3-\binom{k+1}{2}$.\\
\ \\
\textbf{Notation 2}
$L(g,k)=\left\{\begin{aligned} &2(g-k_1^2)&\text{ when $k=2k_1$ is even}\\ &2(g-(k_1^2+k_1+\frac{1}{2}))&\text{ when $k=2k_1+1$ is odd.} \end{aligned}\right.$\\
\ \\
\noindent\tb{Notation 3\ }Denote $|A|$ to be the cardinality of a finite set $A$.\\
\ \\
\noindent\tb{Notation 4\ }Given a sequence $(a_1,...,a_n)$, denote $(a_1,a_2,...,a_n)^{\rev}=(a_n,a_{n-1},...,a_1)$.\\
\ \\
\noindent\tb{Notation 5\ }Let $s$ be a section of a vector bundle $\mE$ on a smooth algebraic curve $C$. Let $P$ be a point on $C$. We denote $\ord_P(s)$ to be the order of vanishing of $s$ at $P$. Let $V$ be a $k$-dimensional subspace of $\Gamma(\mE)$. We denote $\van_P(V)$ to be the vanishing sequence of $V$ at $P$.\\
\ \\
\noindent\tb{Notation 6\ }Let $\mE$ be a vector bundle on a smooth algebraic curve $C$. Let $P$ be a point on $C$, and $V$ be a subspace of $\Gamma(\mE)$. We denote $V(-nP)=V\cap\Gamma(\mE(-nP))$ to be the subspace of sections in $V$, vanishing to order at least $n$ at $P$.\\
\ \\
\noindent\tb{Notation 7\ }Let $C$ be an elliptic curve, and $P,Q\in C$ be two general points. Write $\mO(a,b)\op \mO(c,d)$ for the locally-free sheaf $\mO(aP+bQ)\op\mO(cP+dQ)$.\\
\ \\
\noindent\tb{Notation 8\ }Denote $X_g$ to be a chain of $g$ elliptic curves. When $g<g'$, we write $X_g\subset X_{g'}$ only when $X_g$ is the connected sub-curve consisting of the first $g$ irreducible components of $X_{g'}$. We also denote $\omega_g$ to be the dualizing sheaf on $X_g$.\\
\ \\
\noindent\tb{Notation 9\ }Let $\mE$ be a rank-$r$ vector bundle on a smooth algebraic curve $C$. Denote $Aut^0(\mE)$ to be the group of automorphisms in compatible with a fixed determinant map $\psi:\Lambda^r\mE\to L$.\\
\ \\
\noindent\tb{Notation 10\ }Let $\mE$ be a vector bundle on a smooth curve $C$, $P$ be a point on $C$. Suppose $s$ is a section of $\mE$ vanishing to order $a$ at $P$. Fix $t$ to be a uniformizer at $P$. Denote $s|_P$ to be the image of $t^{-a}s$ in $\PP \mE|_P$ under the projectivization map $\mE|_P\to \PP \mE|_P$. Notice that $s|_P$ is independent of the choice of the uniformizer.\\
\ \\
\noindent\tb{Notation 11\ }In a sequence of integers, we write $(...,[a]_n,..)$ to indicate $n$ consecutive $a$'s.\\
\ \\
\noindent\tb{Notation 12\ }Given \tb{Notation 11}, denote
$$a(k)=\begin{cases}
([0]_2,[1]_2,...,[k_1-1]_2,k_1)^T&\text{ for } k=2k_1+1 \text{ being odd}\\
([0]_2,[1]_2,...,[k_1-1]_2)^T&\text{ for } k=2k_1 \text{ being even}.
\end{cases}$$\\
\ \\
\noindent\tb{Notation 13\ }Denote $\iso(A,B)$ to be the set of isomorphisms between two objects $A,B$ in some category $\mathcal{C}$.\\
\ \\
\noindent\tb{Notation 14\ }Let $\GG$ be an algebraic stack. Denote $|\GG|$ to be its underlying topological space.\\
\ \\
\noindent\tb{Notation 15\ }Let $\psi:\GG\to\mathcal{H}$ be a 1-morphism between algebraic stacks. Denote $|\psi|$ to be the induced map between underlying topological spaces $|\GG|\to|\mathcal{H}|$.\\
\ \\
\noindent\tb{Notation 16\ }Denote $\GG^{k,\text{EHT}}_{2,\mL,d_{\bullet}}(X_g)$ to be the moduli stack of rank two, dimension $k$, Eisenbud-Harris-Teixidor limit linear series over a chain of $g$ curves, with component-wise degree $d_{\bullet}=(d_1,...,d_g)$ and determinant $\mL$. Denote $\GG^{k,\text{EHT}}_{2,\mL,d_{\bullet},a^{\Gamma}}(X_g)$ to be its locally-closed substack of limit linear series with prescribed vanishing sequences $a^{\Gamma}$ at the nodal points of $X_g$.\\
\ \\
\noindent\tb{Notation 17\ }When fixing $\GG=\GG^{k,\text{EHT}}_{2,\oo_g,d_{\bullet},a^{\Gamma}}(X_g)$ (\tb{Notation 16}), for $r<g$ we denote $\GG_r=\GG^{k,\text{EHT}}_{2,\oo_r(A_rP_{r+1}),d^r_{\bullet},a^{\Gamma}_r}(X_r)$, where $X_r$ is the sub-curve consisting of the first $r$ components, $d^r_{\bullet}=(d_1,...,d_r)$, $A_r=2g(g-r)-\dis\sum_{i=r+1}^{g}d_i$ and $a^{\Gamma}_r$ consists of the vanishing sequences at the first $r-1$ nodal points.

For $1<r_1<r_2<g$, denote $\GG_{r_1,r_2}=\GG^{k,\text{EHT}}_{2,\oo_{r_1,r_2}((2g-A_{r_1})P_{r_1}+A_{r_2}P_{r_2+1}),d^{r_1,r_2}_{\bullet},a^{\Gamma}_{r_1,r_2}}(X_{r_1,r_2})$, where $X_{r_1,r_2}$ is the cub-curve of $X_{r_2}$ consisting of its last $r_2-r_1+1$ components, $a^{\Gamma}_{r_1,r_2}$ consists of the vanishing sequences at the $r_1$-th,$(r_1+1)$-th,...,$(r_2+1)$-th nodal points, $\oo_{r_1,r_2}$ is the dualizing sheaf on $X_{r_1,r_2}$ and $d^{r_1,r_2}_{\bullet}=(d_{r_1},...,d_{r_2})$.\\
\ \\
\noindent\tb{Convention} Throughout the paper, we work over an algebraically closed field $K$, whose characteristic is zero.

Let $\mE$ be a rank two locally-free sheaf on a smooth projective curve $C$. Define $u(\mE)=\max\{\deg(\mL)-{\deg(\mE)\over2}\}$, where $\mL$ runs through all invertible sub-sheaves of $\mE$. Throughout this paper, when referring to moduli stacks of limit linear series of the form $\GG^{k,\text{EHT}}_{2,\oo_g,d_{\bullet},a^{\Gamma}}(X_g)$, we restrict ourselves to the open locus determined by the condition $u(\mE_j)\le{1\over2}$ for all $j$ (see also \ref{applc}), where $\mE_j$ is the underlying vector bundle of the limit linear series over the $j$-th component of $X_g$.
\section{$\ell$-Semi-stability Sheaves over Reducible Nodal Curves}
In this paper, we approach the Bertram-Feinberg-Mukai Conjecture using degeneration technique. To do so, we need to analyze ``limits" of stable vector bundles over the degenerated curve.

In \cite{M1}, Teixidor i Bigas also used degeneration technique to obtain a partial result on the existence portion of the conjecture. There, Teixidor i Bigas considered vector bundles $\mE$ on a reducible nodal curve $X$ with the following property: for every component $C_j$, $\mE|_{C_j}$ is semi-stable.

Beyond the cut-off line for this partial result (see Introduction), one can no longer utilize vector bundles on $X$ with such nice properties to tackle the problem: such vector bundles do not have sufficiently many sections. Therefore, to improve the result in \cite{M1}, some $\mE|_{C_j}$ have to be unstable. We use the quantity $L(g,k)$ (see \textbf{Notation 2}) to measure the complexity of construction for a certain pair $(g,k)$: the larger $|L|$ is, the more $C_j$'s there are on which $\mE|_{C_j}$ is unstable.

Yet still we always want to consider \textit{(semi)-stable} vector bundles on $X$. Therefore, the first thing to clarify is the notion of (semi)-stability on a reducible nodal curve. For reasons explained later, we consider the notion of $\mathbf{\ell}$\tb{-(semi)-stability} recently defined by Osserman in \cite{Ostab}:

\begin{defn}
 Let $\mE$ be a rank $r$ vector bundle on a nodal curve $X$. We say that $\mE$ is $\mathbf{\ell}$\tb{-semistable} (resp. $\mathbf{\ell}$\tb{-stable}) if for all proper subsheaves $\mF\subseteq\mE$ of constant rank $r'$, $\frac{\chi(\mF)}{r'}\le\frac{\chi(\mE)}{r}$ (resp. $\frac{\chi(\mF)}{r'}<\frac{\chi(\mE)}{r}$).
\end{defn}
This notion is useful in degeneration type argument involving vector bundles on curves, due to the following result in \cite{Ostab}:
\begin{prop}(Proposition 1.4 in \cite{Ostab})
$\ell$-(semi)stability is open in families.
\end{prop}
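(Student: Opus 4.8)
The plan is to prove that the locus $\mathcal N\subseteq S$ of points $s$ at which $\mathcal E_s$ is \emph{not} $\ell$-semistable (resp.\ not $\ell$-stable) is closed; its complement is then the asserted open locus. We may assume $S$ is Noetherian (openness being local on the base), and we show that $\mathcal N$ is (i) constructible and (ii) stable under specialization, which together force $\mathcal N$ to be closed.

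Step (ii) is a flat-limit argument. Suppose $s_0\in\overline{\{s_1\}}$ with $s_1\in\mathcal N$; choose a DVR $R$ and a morphism $\operatorname{Spec}R\to S$ hitting $s_1$ at the generic point and $s_0$ at the closed point, and base change to $X_R\to\operatorname{Spec}R$. A destabilizing subsheaf $\mathcal F_\eta\subseteq\mathcal E_{s_1}$ of constant rank $r'$ extends to a subsheaf $\mathcal F\subseteq\mathcal E_R$ with $\mathcal F$ and $\mathcal E_R/\mathcal F$ both $R$-flat (take any coherent extension and saturate it for $R$-flatness). Then $\mathcal F_0:=\mathcal F\otimes_R\kappa(s_0)\hookrightarrow\mathcal E_{s_0}$ is a genuine subsheaf, $R$-flatness gives $\chi(\mathcal F_0)=\chi(\mathcal F_\eta)$ and $\chi(\mathcal E_{s_0})=\chi(\mathcal E_{s_1})$, and, because $\mathcal E_R/\mathcal F$ is $R$-flat, the stalk of $\mathcal F$ at the generic point of each component of $X_{s_0}$ is free over the local ring (a DVR) there; hence the rank of $\mathcal F_0$ on that component equals the rank of $\mathcal F_\eta$ on the corresponding component of $X_{s_1}$, so $\mathcal F_0$ is again of constant rank $r'$. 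The destabilizing inequality for $\mathcal F_\eta$ then passes verbatim to $\mathcal F_0$, so $s_0\in\mathcal N$. (If the components of the fibres of $X\to S$ vary, one first passes to an étale cover of $S$; in the applications of this paper $X=X_g$ is fixed and there is nothing to do. The $\ell$-stable case is identical, with $\ge$ replacing $>$.)

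Step (i) rests on a boundedness statement, and this is where the nodal — hence Gorenstein — structure of $X$ is used. For a surjection $\mathcal E_s\twoheadrightarrow\mathcal Q$, applying $\operatorname{Hom}(-,\omega_{X_s})$ yields an inclusion $\operatorname{Hom}(\mathcal Q,\omega_{X_s})\hookrightarrow\operatorname{Hom}(\mathcal E_s,\omega_{X_s})$, and Serre duality on the projective Gorenstein curve $X_s$ identifies the two sides with $H^1(X_s,\mathcal Q)^{\vee}$ and $H^1(X_s,\mathcal E_s)^{\vee}$; hence $h^1(\mathcal Q)\le h^1(\mathcal E_s)$ and $\chi(\mathcal Q)\ge-h^1(\mathcal E_s)$. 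If $\mathcal Q$ is the cokernel of a constant-rank destabilizing subsheaf, $\mathcal Q$ has constant rank $\rho\in\{1,\dots,r-1\}$ and $\chi(\mathcal Q)<\rho\,\chi(\mathcal E_s)/r$ (resp.\ $\le$). Over a quasi-compact $S$ these two bounds leave only finitely many possibilities for $(\rho,\chi(\mathcal Q))$, and since $\mathcal Q$ is a quotient of $\mathcal E_s$ the same bound applied to $\mathcal Q$ modulo torsion caps the torsion length of $\mathcal Q$, hence its Hilbert polynomial; so all such $\mathcal Q$ are parametrized by finitely many pieces of the relative Quot scheme $\operatorname{Quot}_{X/S}(\mathcal E)$, each of finite type over $S$. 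Inside it, the locus of quotients that are of constant rank on fibres and satisfy the destabilizing inequality is constructible, and its image in $S$ is exactly $\mathcal N$; by Chevalley's theorem $\mathcal N$ is constructible.

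The main obstacle is the boundedness in Step (i): there is no a priori bound on the degrees of invertible subsheaves of $\mathcal E_s$, so one cannot bound $\chi$ of a candidate destabilizer directly. The inequality $h^1(\mathcal Q)\le h^1(\mathcal E_s)$ is precisely what circumvents this, and it is available only because $X$ is Gorenstein. Everything else — preservation of constant rank under flat limits in Step (ii), and the bookkeeping of Hilbert polynomials on a reducible curve — is routine once that bound is in hand.
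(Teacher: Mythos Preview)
The paper does not give its own proof of this proposition; it is simply quoted from Osserman's paper \cite{Ostab} (Proposition~1.4 there) and used as a black box. So there is nothing in this paper to compare your argument against.

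That said, your proof is the standard one and is essentially correct. Two small remarks. First, in Step~(ii), your parenthetical about ``corresponding components'' and passing to an \'etale cover is a bit glib in the case where a smooth generic fibre degenerates to a reducible special fibre (there is then no bijection of components). But your actual argument does not need this: at each generic point $\xi$ of a component of $X_{s_0}$, the local ring $\mathcal O_{X_R,\xi}$ is a DVR, $\mathcal F_\xi$ sits inside the free module $(\mathcal E_R)_\xi$ and is therefore free, and its rank equals the rank of $\mathcal F_\eta$ at the generic point of the (unique, or corresponding) component of $X_{s_1}$ lying under $\xi$, namely $r'$. So constant rank on the special fibre follows directly. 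Second, in Step~(i), the detour through torsion is unnecessary: once $\mathcal Q$ has constant rank $\rho$ on every component, its Hilbert polynomial with respect to a fixed polarization is $P(n)=\rho\cdot\deg\mathcal O_X(1)\cdot n+\chi(\mathcal Q)$, and you have already bounded $\chi(\mathcal Q)$ above and below. That alone gives finitely many Hilbert polynomials and hence finitely many Quot components.
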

Consequently, if $\mE$ is a vector bundle on an irreducible family $\mathcal{X}\to S$ of curves which is generically smooth, and is $\ell$-(semi)stable on some nodal fiber $\mathcal{X}_s$, then it is generically (semi)stable.

On a reducible nodal curve, one also has the notion of $\mu$-(semi)stability for vector bundles (\cite{SESH}):
\begin{defn}
Let $X$ be a nodal curve with components $C_i$. A polarization $\mu$ of $X$ is a choice of rational numbers $w_i\in(0,1)$ such that $\sum w_1=1$. A depth one sheaf $\mE$ of rank $n$ on $X$ is said to be (semi)stable with respect to $\mu$ if for every torsion-free subsheaf $\mF$ of $\mE$ with rank $r_i$ on $C_i$,
$$\frac{\chi(\mF)}{\sum w_ir_i}(\le)<\frac{\chi(\mE)}{\rk(\mE)}.$$
\end{defn}
\begin{rem}
  When $\chi(\mE)=0$, this notion is independent of the choice of $\mu$.
\end{rem}
The notion of $\ell$-(semi)stability is weaker than $\mu$-(semi)stability. See the following example.

\begin{ex}\label{ex1} Let $X$ be a chain of two smooth curves, $C_1,C_2$. Denote $P=C_1\cap C_2$ to be the nodal point. Consider a rank two locally-free sheaf $\mE$ on $X$ such that such that $\mE|_{C_i}=\mL_{i,1}\op \mL_{i,2}$ is decomposable for $i=1,2$. Moreover, suppose $$\chi(\mL_{1,1})=0,\chi(\mL_{1,2})=2,\chi(\mL_{2,1})=\chi(\mL_{2,2})=0; \mL_{2,1}\not\cong \mL_{2,2};$$
and the gluing at $P$ is chosen so that $\mL_{1,2}$ is not glued to $\mL_{2,1}$ or $\mL_{2,2}$. Notice that $\chi(\mE)=0$.

Since there exists $\mF\subset\mE$ only supported on $C_1$ and $\chi(\mF)=\chi(\mL_{1,2}(-P))=1>0$, $\mE$ is not $\mu$-semistable. However, the assumption that $\mL_{2,1}\not\cong \mL_{2,2}$, and the assumption on the gluing at $P$ imply that any subsheaf $\mF$ of rank 1 on $X$ has $\chi(\mF)\le 0$. Hence, $\mE$ is $\ell$-semistable.
\end{ex}

On a reducible curve $X$, $\ell$-semistability is also more flexible than $\mu$-semistability in the following way: given some $\mE$ which is $\ell$-semistable, one may have some component $C_t$ of $X$ such that $\chi(\mE|_{C_t})$ is arbitrarily large or small. (This is impossible for $\mu$-semistable sheaves with some general $\mu$.)

\begin{ex}\label{ex2}
Let $\mE$ be an $\ell$-semi-stable locally-free sheaf on a reducible nodal curve $X$. Suppose $X_1,X_2$ are connected sub-curves of $X$ and $X=X_1\cup X_2$, $X_1\cap X_2=\{P\}$. Let $\mE'$ to be a sheaf on $X$ such that $\mE'|_{X_1}=\mE|_{X_1}(nP),\mE'|_{X_2}=\mE|_{X_2}(-nP)$ and the gluing at $P$ is induced by the gluing of $\mE$ at $P$.\footnote{$\mE|_{X_1}|_P$ and $\mE|_{X_1}(nP)|_P$ (similarly, $\mE|_{X_2}|_P$ and $\mE|_{X_2}(-nP)|_P$) are naturally identified up to a scalar. Given an isomorphism $\mE|_{X_1}|_P\stackrel{\sim}{\to}\mE|_{X_2}|_P$, one non-canonically gets an isomorphism $\mE|_{X_1}(nP)|_P\stackrel{\sim}{\to}\mE|_{X_2}(-nP)|_P$. However, the choice of scalar does not affect our discussion here.}  Then, $\mE'$ is also $\ell$-semistable. This is obvious, since if $\mF'\subset\mE'$ is a subsheaf of constant rank such that $\chi(\mF')>\chi(\mE')$, then there exists some $\mF\subset\mE$ such that $\mF|_{X_1}=\mF'|_{X_1}(-nP)$, $\mF|_{X_2}=\mF'|_{X_2}(nP)$ and $\chi(\mF)>\chi(\mE)$, which violates the $\ell$-semistability of $\mE$.
\end{ex}

One main motivation for applying the notion of $\ell$-semistability is the following (see \cite{Ostab}): $\ell$-semistability behaves well with respect to gluing two nodal curves at a (smooth) point.
\begin{prop}\label{blocks}
Let $X=Y\cup Z$ be a nodal curve, and the subcurves $Y$ and $Z$ meet at $P$. Given a vector bundle $\mE$ on $X$ of rank $r$, if $\mE|_Y$ and $\mE|_Z$ are $\ell$-semistable on $Y$ and $Z$ resp., then $\mE$ is $\ell$-semistable on $X$.
\end{prop}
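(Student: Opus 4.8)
The plan is to bound the $\ell$-slope $\chi(\mF)/r'$ of an arbitrary constant-rank-$r'$ subsheaf $\mF\subseteq\mE$ by comparing $\mF$ with the pieces it cuts out over $Y$ and $Z$. Set $\mE\langle Y\rangle:=\ker(\mE\to\mE|_Z)$ and $\mE\langle Z\rangle:=\ker(\mE\to\mE|_Y)$. Since $X$ is nodal and $Y\cap Z=\{P\}$, the node $P$ has one branch in $Y$ and one in $Z$, so $Y$ and $Z$ are smooth at $P$, and $\mE\langle Y\rangle$ is naturally an $\mO_Y$-module, isomorphic to the locally free sheaf $\mE|_Y(-P)$ on $Y$ (similarly $\mE\langle Z\rangle\cong\mE|_Z(-P)$ on $Z$); moreover $\mE\langle Y\rangle\cap\mE\langle Z\rangle=0$ and $\mE/(\mE\langle Y\rangle\oplus\mE\langle Z\rangle)\cong\mE|_P$, the length-$r$ fibre at $P$. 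Twisting by $\mO_Y(-P)$ shifts every constant-rank slope by $-1$ and so preserves $\ell$-semistability; hence $\mE\langle Y\rangle$ and $\mE\langle Z\rangle$ are $\ell$-semistable on $Y$ and $Z$, of slopes $\tfrac1r\chi(\mE|_Y)-1$ and $\tfrac1r\chi(\mE|_Z)-1$.

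Now take $\mF\subseteq\mE$ proper of constant rank $r'$; we may assume $r'<r$, since for $r'=r$ the quotient $\mE/\mF$ is torsion and the inequality is clear. Put $\mF\langle Y\rangle:=\mF\cap\mE\langle Y\rangle$ and $\mF\langle Z\rangle:=\mF\cap\mE\langle Z\rangle$. Away from $P$ these coincide with $\mF$ over $Y$, resp.\ over $Z$, so each has constant rank $r'$; and, $r'$ being $<r$, each is a proper subsheaf of $\mE\langle Y\rangle$, resp.\ $\mE\langle Z\rangle$, so $\ell$-semistability there gives $\chi(\mF\langle Y\rangle)\le\tfrac{r'}{r}\chi(\mE|_Y)-r'$ and $\chi(\mF\langle Z\rangle)\le\tfrac{r'}{r}\chi(\mE|_Z)-r'$. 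The heart of the argument is the estimate
$$\operatorname{length}\!\big(\mF/(\mF\langle Y\rangle\oplus\mF\langle Z\rangle)\big)\le r';$$
granting it, one simply adds: $\chi(\mF)\le\chi(\mF\langle Y\rangle)+\chi(\mF\langle Z\rangle)+r'\le\tfrac{r'}{r}\big(\chi(\mE|_Y)+\chi(\mE|_Z)-r\big)=\tfrac{r'}{r}\chi(\mE)$, that is, $\chi(\mF)/r'\le\chi(\mE)/r$, which is $\ell$-semistability of $\mE$ on $X$.

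I expect the length estimate to be the only real obstacle, and the subtle point is that $\mF$ need not be locally free at the node, so the naive comparison of $\mF$ with its images in $\mE|_Y\oplus\mE|_Z$ fails. The defect disappears after restricting to a smooth branch: $\mF'_Y:=\operatorname{im}(\mF\to\mE|_Y)$ is torsion-free on $Y$, hence locally free of rank $r'$ near the smooth point $P$, so $\dim_K(\mF'_Y|_P)=r'$. I would then check locally at $P$ that $\ker(\mF\to\mF'_Y|_P)\subseteq\mF\langle Y\rangle+\mF\langle Z\rangle$: if a local section $w$ of $\mF$ has $w|_Y\in\mathfrak m_P\mF'_Y$, write $w|_Y=t\,(w'|_Y)$ with $w'$ a local section of $\mF$ and $t$ a local equation for $P$ on $Y$ (a coordinate on the $Y$-branch at the node); then $tw'\in\mF\cap\mE\langle Y\rangle=\mF\langle Y\rangle$, while $w-tw'$ restricts to $0$ on $Y$ and so lies in $\mF\cap\mE\langle Z\rangle=\mF\langle Z\rangle$. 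Consequently $\mF/(\mF\langle Y\rangle\oplus\mF\langle Z\rangle)$ is a subquotient of $\mF'_Y|_P\cong K^{r'}$, which gives the bound. As sketched this yields only the $\ell$-semistable statement; deducing $\ell$-stability from $\ell$-stability of $\mE|_Y$ and $\mE|_Z$ would in addition require tracking when the slope inequalities above are equalities, which is not needed here.
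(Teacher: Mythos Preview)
The paper does not supply its own proof of this proposition; it is quoted from \cite{Ostab}. Your argument is correct and self-contained. The one point worth making explicit is the role of $t$: when you pass from ``$w|_Y=t\,(w'|_Y)$'' to ``$tw'\in\mF\langle Y\rangle$'', you are silently lifting the uniformizer $t\in\mO_{Y,P}$ to a local generator of the ideal sheaf $I_Z\subset\mO_X$ at $P$, which restricts to a uniformizer on $Y$ and to $0$ on $Z$; with that reading, $tw'|_Z=0$ so indeed $tw'\in\mF\cap\mE\langle Y\rangle$, and then $(w-tw')|_Y=0$ gives $w-tw'\in\mF\langle Z\rangle$. The remaining ingredients---the Euler-characteristic identity $\chi(\mE)=\chi(\mE|_Y)+\chi(\mE|_Z)-r$, and the invariance of $\ell$-semistability under twisting by $\mO_Y(-P)$---hold for reducible $Y,Z$ as well, since $P$ is a smooth point of each.
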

This turns out to be very helpful in practice. For our application, what matters is to give a simple (sufficient) condition for a rank 2 vector bundle $\mE$ (with canonical determinant) over some reducible nodal curve $X$ to be $\ell$-semistable. Based on Proposition \ref{blocks}, we shall try to decompose $X$ into a union of connected sub-curves $X^i$, where $X^i$ and $X^{i+1}$ meet at a nodal point; it then suffices to check that every $\mE|_{X^i}$ is $\ell$-semistable over $X^i$. Moreover, given any $\mE$, we shall try to decompose $X$ in a way such that there is a simple criterion for $\mE|_{X^i}$ to be $\ell$-semistable.

Before we discuss the details, an important point to make is that one does not need to consider arbitrarily general $\ell$-semistable sheaves. Based on the work by Teixidor-i-Bigas in \cite{M2} (See Claim 2.3 in \cite{M2}), it is unnecessary to consider vector bundles on $X$ whose restriction to one component is ``too" unstable. (This applies generally, not just in the canonical determinant situation.) We therefore work under the following assumptions:
\begin{si}\label{s1}
\begin{enumerate}
  \item $X$ is a chain of $n$ smooth curves, $C_1,...,C_n$.
  \item $\mE$ is a rank 2 vector bundle over $X$ such that $\mE|_{C_t}=\mL_{t,1}\op\mL_{t,2}$ is decomposable and $|\chi(\mL_{t,1})-\chi(\mL_{t,2})|\le1$.
  \item $(\chi(\mL_{t,1}),\chi(\mL_{t,2}))\in\{(A,A+1),(A+1,A+1),(A+1,A+2),(A+2,A+2)\}$, for all $t$ and some integer $A$.
\end{enumerate}
\end{si}
\begin{rem}\label{similar}
  The first assumption reflects our choice of a particular kind of reducible nodal curves to degenerate to. The second assumption reflects the observation in \cite{M2} mentioned above.

  We justify the third assumption as follows: one can define an equivalence relation among locally-free sheaves on $X$, namely $\mF$ is \tb{similar to} $\mF'$ if and only if for every $t$, $\mF'|_{C_t}=\mF|_{C_t}(-a_tP_t+a_{t+1}P_{t+1})$ for integers $a_1,...,a_{n+1}$ ($a_1,a_{n+1}=0$), and the gluing for $\mF'$ at $P_t$ is induced by the gluing for $\mF$ at $P_t$. (See Example \ref{ex2}.) It is easy to see that any $\mF$ satisfying the second assumption is similar to some $\mF'$ which satisfies assumption (3). Eventually, we want to consider sections of certain vector bundles $\mF$ on $X$. However, this is equivalent to considering corresponding sections of a similar vector bundle. This will be obvious once we describe our constructions.
\end{rem}
We want to understand which locally-free sheaves $\mE$ in Situation \ref{s1} are $\ell$-semistable.

In general, to check $\ell$-semistability in rank 2, one needs to check all subsheaves $\mF\subset\mE$ of rank 1. For any given $\mF$ of rank 1, one can decompose $X$ into a union of connected sub-curves $X^1,...,X^m$, where $X^i$ and $X^{i+1}$ meet at a nodal point, such that $\mF_i:=\mF|_{X^i}/\Tor$ is locally-free on $X^i$. It is not hard to see that $\chi(\mF)=\sum_{i=1}^m\chi(\mF_i)$. Moreover, $\mF_i$ is not saturated (in $\mE|_{X^i}$) at $P=X^i\cap X^{i+1}$ if $i\neq m$, and not saturated at $P'=X^{i-1}\cap X^i$ if $i\neq 1$. Furthermore, suppose $\mE$ is in Situation \ref{s1}, we define the following quantities:
\begin{enumerate}
  \item $f_j:=\mE|_{C_j}-2A$, and $f_j\in\{1,2,3,4\}$.
  \item Given a rank 1 subsheaf $\mF$ and suppose $C_j\subset X^i$,

  $\ep_j(\mF):=\begin{cases}
    \frac{f_j}{2}-1&\text{ if }\mE|_{C_j}\text{ is semistable and }\mF_i|_{C_j}\text{ not a summand of }\mE|_{C_j}\\
    \frac{f_j}{2}&\text{ if }\mE|_{C_j}\text{ is semistable and }\mF_i|_{C_j}\text{ a summand of }\mE|_{C_j}\\
    \frac{f_j+1}{2}&\text{ if }\mE|_{C_j}\text{ is unstable and }\mF_i|_{C_j}\text{ destabilizes }\mE|_{C_j}\\
    \frac{f_j-1}{2}&\text{ if }\mE|_{C_j}\text{ is unstable and }\mF_i|_{C_j}\text{ does not destabilize }\mE|_{C_j}
  \end{cases}$

  \item Given a rank 1 subsheaf $\mF$, $\nu_i(\mF):=\begin{cases}0\text{ if }i=1=m\\1\text{ if }i=1\text{ or }m, \text{ and }1\neq m\\2\text{ otherwise }\end{cases}$.
\end{enumerate}
Using these notations, one can express $\chi(\mF)$ as follows
\begin{equation}
  \chi(\mF_i)=\sum_{j:C_j\subset X^i}(A+\ep_j(\mF))-(m_i-1)-\nu_i(\mF)= m_i\cdot A+\sum_{j:C_j\subset X^i}\ep_j(\mF)-m_i+1-\nu_i(\mF)
\end{equation}
(Here, $m_i$ is the number of irreducible components of $X^i$.)

Consequently, $\chi(\mF)=\sum\chi(\mF_i)= n\cdot A+\sum_{j=1}^n\ep_j(\mF)-\sum_i(m_i-1+\nu_i(\mF))$.

A simple counting shows that $\sum_i(m_i-1+\nu_i(\mF))=m+n-2$. So, we get
\begin{equation}
  \chi(\mF)=n\cdot A+\sum_{j=1}^n\ep_j(\mF)-m-n+2
\end{equation}
Meanwhile, $\mu(\mE)=\frac{2nA+\sum_jf_j-2(n-1)}{2}=nA+\frac{\sum_jf_j}{2}-(n-1)$, with $f_j=1,2$ or 3. Therefore, the $\ell$-semistability condition reduces to the following inequality:
\begin{equation}\label{s2}
  \sum_{j=1}^n\ep_j(\mF)-(m-1)\le \frac{\sum_jf_j}{2},\text{ } \fa \mF\subset\mE\text{ of rank one}
\end{equation}
We now state a simple criterion for a particular type of locally-free sheaves on $X$ to be $\ell$-semistable.
\begin{prop}\label{ssimple}
Suppose $\mE$ is a locally-free sheaf on $X$ of Situation \ref{s1}. If $\mE|_{C_t}$ is unstable for at most two $t$, then $\mE$ is $\ell$-semistable if and only if there does not exist an invertible subsheaf $\mL\subset \mE$ such that $\mL|_{C_t}$ is a summand of $\mE|_{C_t}$ whenever $\mE|_{C_t}$ is semistable and $\mL|_{C_t}$ destabilizes $\mE|_{C_t}$ whenever $\mE|_{C_t}$ is unstable.\end{prop}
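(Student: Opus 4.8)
The plan is to push everything through the reduction to inequality~(\ref{s2}) established above: $\mE$ is $\ell$-semistable if and only if $\sum_{j=1}^{n}\ep_j(\mF)-(m-1)\le\frac{\sum_j f_j}{2}$ for every rank-one subsheaf $\mF\subseteq\mE$, where $X=X^1\cup\cdots\cup X^m$ is the decomposition attached to $\mF$ and $\mF_i=\mF|_{X^i}/\Tor$. The first step is to normalize by setting $\delta_j(\mF):=\ep_j(\mF)-\frac{f_j}{2}$. Reading off the four cases in the definition of $\ep_j$ together with the parity of $f_j$ --- even when $\mE|_{C_j}$ is semistable, odd when it is unstable --- one finds $\delta_j(\mF)\in\{0,-1\}$ on a semistable component, equal to $0$ exactly when $\mF_i|_{C_j}$ is a summand of $\mE|_{C_j}$, and $\delta_j(\mF)\in\{\frac12,-\frac12\}$ on an unstable component, equal to $\frac12$ exactly when $\mF_i|_{C_j}$ destabilizes $\mE|_{C_j}$. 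After this substitution, inequality~(\ref{s2}) becomes simply $\sum_{j=1}^{n}\delta_j(\mF)\le m-1$.

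The second step uses the hypothesis that $\mE|_{C_t}$ is unstable for at most two values of $t$. Since $\delta_j(\mF)\le 0$ on every semistable component and $\delta_j(\mF)\le\frac12$ on each of the at most two unstable ones, we always have $\sum_{j=1}^{n}\delta_j(\mF)\le 1$. Hence the inequality $\sum_j\delta_j(\mF)\le m-1$ is automatic as soon as $m\ge 2$, and $\ell$-semistability of $\mE$ is equivalent to requiring $\sum_j\delta_j(\mL)\le 0$ only for rank-one subsheaves with $m=1$, i.e.\ for the invertible subsheaves $\mL\subseteq\mE$ that remain locally free on all of $X$.

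The third step identifies which such $\mL$ violate the inequality. If $\sum_j\delta_j(\mL)>0$, then, since the only positive contributions come from unstable components and are at most $\frac12$ apiece, while a single semistable component on which $\mL|_{C_t}$ is not a summand already contributes $-1$ and a single unstable component on which $\mL|_{C_t}$ does not destabilize contributes $-\frac12$, the sum can be positive only if $\mL|_{C_t}$ destabilizes $\mE|_{C_t}$ on \emph{every} unstable component and is a summand of $\mE|_{C_t}$ on \emph{every} semistable component. Conversely any such $\mL$ has $\sum_j\delta_j(\mL)=\frac12\cdot\#\{\,t:\mE|_{C_t}\text{ unstable}\,\}$, which is positive once there is at least one unstable component, and thus violates~(\ref{s2}). (If there is no unstable component at all, then $\sum_j\delta_j(\mF)\le 0$ for every $\mF$ and $\mE$ is automatically $\ell$-semistable.) Assembling the two implications gives precisely the asserted criterion.

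The argument is in essence bookkeeping once the $\delta_j$-normalization is in place, and the one genuinely new ingredient beyond the computations already done is the observation that ``at most two unstable components'' confines the whole obstruction to the $m=1$ stratum. The point I would therefore take most care over is the passage from arbitrary rank-one subsheaves to genuine line subbundles of $\mE$ over all of $X$, together with the sign analysis in the third step that isolates the destabilizing $\mL$.
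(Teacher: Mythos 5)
Your proof is correct and follows essentially the same route as the paper: both reduce to inequality (\ref{s2}) and observe that, with at most two unstable components, a violating rank-one subsheaf must have $m=1$ with every component-wise contribution maximal (summand on semistable components, destabilizing summand on unstable ones). Your $\delta_j$-normalization merely repackages the paper's case analysis on the number of unstable components into a single uniform bookkeeping argument.
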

\begin{proof}
Define $\nu(\mE)=|\{t|\mE|_{C_t}\text{ is unstable}\}|$. By \ref{blocks}, if $\nu(\mE)=0$, then $\mE$ is $\ell$-semistable.

Suppose $\nu(\mE)=1$ and $\mE|_{C_t}$ is unstable. By inequality \eqref{s2}, a rank one subsheaf $\mF$ violates the $\ell$-semistability condition if and only if $\ep_t(\mF)=\frac{f_j+1}{2}$, $\ep_j(\mF)=\frac{f_j}{2}$ for all $j\neq t$ and $m=1$. This means that $\mF\subset\mE$ is invertible and $\mF|_{C_j}$ is a summand of $\mE|_{C_j}$ for all $j\neq t$, and $\mF|_{C_t}=\mL_{t,2}$ (the destabilizing summand of $\mE|_{C_t}$).

When $\nu(\mE)=2$ and $\mE|_{C_{t_1}}$, $\mE|_{C_{t_2}}$ are unstable, still using inequality \eqref{s2}, one can conclude that $\mE$ is $\ell$-semistable if and only if there does not exist an invertible subsheaf $\mF$ such that $\mF|_{C_j}$ is a summand of $\mE|_{C_j}$ for all $j\neq t_1,t_2$, and $\mF|_{C_{t_1}}=\mL_{t_1,2}$, $\mF|_{C_{t_2}}=\mL_{t_2,2}$ (destabilizing summands of $\mE|_{C_{t_1}}$, $\mE|_{C_{t_2}}$ resp.).
\end{proof}
For our application, this criterion is good enough: we shall consider vector bundles $\mE$ on $X$ of Situation \ref{s1}, such that one can decompose $X$ as a union of connected sub-curves $X^i$ and there are at most two components $C_t$ in every $X^i$ such that $\mE|_{C_t}$ is unstable. We use Proposition \ref{ssimple} to verify that $\mE|_{X^i}$ is $\ell$-semistable. It then follows from Proposition \ref{blocks} that $\mE$ is $\ell$-semistable.
\begin{rem}
  We briefly mention the benefit from adopting the notion of $\ell$-semistability instead of $\mu$-stability. On one hand, it greatly reduces the combinatorial complexity which occurs in our original approach using the latter notion. On the other, in situations where $\chi(\mE)\neq0$, one does not need to make choices of polarizations on the nodal curve $X$.
\end{rem}
We end up this section by clarifying a theoretical point: although the Bertram-Feinberg-Mukai conjecture is stated for stable vector bundles, it suffices to check semi-stability condition for our constructions. This is justified by the following lemma:
\begin{lem}\label{reduce}
 Let $(g,k)$ be a pair of positive integers. Suppose $L(g,k)<0$, then over a general smooth projective curve of genus $g$, there does not exist a strictly semi-stable, rank 2, vector bundle $\mE$ with canonical determinant such that $h^0(\mE)\ge k$.
\end{lem}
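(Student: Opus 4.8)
The plan is to reduce the statement to the classical Brill--Noether theorem for line bundles on a general curve. Assume for contradiction that $C$ is a general smooth projective curve of genus $g$ and that $\mE$ is a strictly semi-stable rank two bundle on $C$ with $\det\mE\cong\oo_C$ and $h^0(\mE)\ge k$. First I would use that $\mE$ is semi-stable but not stable: the saturation $\mL\subseteq\mE$ of a destabilizing rank one subsheaf is a sub-line-bundle of slope $\ge\mu(\mE)=\tfrac{1}{2}\deg\oo_C=g-1$, while semi-stability forces $\deg\mL\le g-1$. Hence $\deg\mL=g-1$, the quotient $\mathcal{M}:=\mE/\mL$ is a line bundle, and $\mathcal{M}\cong\det\mE\ot\mL^{-1}\cong\oo_C\ot\mL^{-1}$, also of degree $g-1$. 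The sequence $0\to\mL\to\mE\to\mathcal{M}\to 0$ then gives $h^0(\mE)\le h^0(\mL)+h^0(\mathcal{M})$.

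The key observation is a numerical coincidence forced by $\deg\mL=g-1$: then $\chi(\mL)=0$, so Riemann--Roch gives $h^0(\mL)=h^1(\mL)$, and Serre duality identifies $h^1(\mL)=h^0(\oo_C\ot\mL^{-1})=h^0(\mathcal{M})$. Consequently $h^0(\mE)\le 2h^0(\mL)$, so $h^0(\mL)\ge\lceil k/2\rceil$. In other words, the general curve $C$ would carry a line bundle of degree $d=g-1$ with at least $\lceil k/2\rceil$ sections, i.e. a point of $W^{r}_{d}(C)$ with $r=\lceil k/2\rceil-1$.

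Now I would invoke Brill--Noether. Since $d=g-1$ we have $g-d+r=r+1$, so $\rho(g,r,d)=g-(r+1)(g-d+r)=g-(r+1)^2=g-\lceil k/2\rceil^2$. By the Griffiths--Harris non-existence theorem, $W^r_{g-1}(C)=\emptyset$ on a general curve whenever $\rho<0$, so it suffices to check that $L(g,k)<0$ implies $g<\lceil k/2\rceil^2$. If $k=2k_1$, then $\lceil k/2\rceil=k_1$ and $L(g,k)<0$ reads $g<k_1^2$, exactly the required bound. If $k=2k_1+1$, then $\lceil k/2\rceil=k_1+1$ and $L(g,k)<0$ reads $g<k_1^2+k_1+\tfrac12$, hence $g\le k_1^2+k_1<(k_1+1)^2=\lceil k/2\rceil^2$. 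In either parity we obtain the contradiction, proving the lemma.

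Everything here is routine once the setup is fixed, so I do not expect a serious obstacle. The only steps demanding a little care are the passage to the saturated sub-line-bundle (so that the quotient is genuinely a line bundle and the determinant behaves multiplicatively in the extension) and the parity bookkeeping with $\lceil k/2\rceil$ — in particular, noting that the odd case of the hypothesis $L(g,k)<0$ is slightly stronger than what the argument actually needs but still suffices to conclude.
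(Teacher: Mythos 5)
Your proof is correct and follows essentially the same route as the paper: realize the strictly semi-stable bundle as an extension $0\to\mL\to\mE\to\mL'\to0$ with $\deg\mL=\deg\mL'=g-1$, use Riemann--Roch/Serre duality to get $h^0(\mE)\le 2h^0(\mL)$, and then rule out $h^0(\mL)\ge\lceil k/2\rceil$ by the classical Brill--Noether non-existence theorem, with the same parity check that $L(g,k)<0$ forces the relevant $\rho$ to be negative (and, as you note, is slightly stronger than needed in the odd case). The only difference is cosmetic: you justify the saturation step explicitly, which the paper leaves implicit.
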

\begin{proof}
A strictly semi-stable, rank 2, vector bundle $\mE$ can be realized as $0\to \mL\to \mE\to \mL'\to 0$. If $\det(\mE)\cong \omega$ is canonical, we further get $\mL\otimes \mL'=\omega$, and $\deg(\mL)=\deg(\mL')=g-1$.

By Riemann-Roch theorem, $h^0(\mL)-h^0(\omega\ot \mL^{-1})=h^0(\mL)-h^0(\mL')=1+(g-1)-g=0$, i.e. $h^0(\mL)=h^0(\mL')$. We shall show that $h^0(\mL)<\frac{k}{2}$.

First, suppose $k=2k_1+1$ is odd. $L<0$ implies $g\le k_1^2+k_1$. We compute the classical Brill-Noether number $\rho(k_1+1,g-1,g)=(k_1+1)(g-k_1-1)-k_1g=g-k^2_1-2k_1-1$. When $g\le k_1^2+k_1$, this number is negative. By Brill-Noether theorem, the classical Brill-Noether space $\mathcal{G}^{k_1}_{g-1}$ is empty on a general curve.

Similarly, when $k=2k_1$ is even, the classical Brill-Noether number $\rho(k_1,g-1,g)=k_1(g-k_1)-(k_1-1)g=-k_1^2+g$. $L<0$ implies $g<k_1^2$, i.e. $\rho(k_1,g-1,g)<0$. Again, the classical Brill-Noether space is empty on a general curve.

Combine the above two paragraphs, and assume the curve is general. We see that when $k=2k_1+1$ is odd, $h^0(\mL)+h^0(\mL')=2h^0(\mL)\le 2k_1<k$; and, when $k=2k_1$ is even, $h^0(\mL)+h^0(\mL')<2k_1=k$. Therefore, $h^0(\mE)<k$ in both cases. This proves the claim.
\end{proof}
Therefore, in verifying the existence portion of the conjecture for $(g,k)$ such that $L(g,k)<0$, it suffices to check $\ell$-semi-stability condition.
\section{Configurations of Fibers of Sections}
In order to prove existence results in Brill-Noether Theory via degeneration, we need to analyze sections of vector bundles $\mE$ on a reducible nodal curve $X$. When doing so, one often comes across the following questions: when do two sections $s,s'\in\Gamma(\mE)$ give the same line inside a fiber $\mE|_P$? Given two general points $P,Q$, if $s,s'$ give the same line inside $\mE|_P$, what can be said about their fibers at $Q$? In this section, we focus on this topic and develop some results applicable to our application. We shall answer this question in the case of rank two for vector bundles on an elliptic curve $C$ of the form $\mO(a_1P+(b-a_1)Q)\op \mO(a_2P+(b-a_2)Q)$ under some relevant assumptions. These questions also come up in the broader context of rank two Brill-Noether Problems with fixed special determinant and our results are useful in greater generality (see \cite{Osp}).
\begin{rem}\label{csec}
  Let $\mE=\mO(Z_1)\op \mO(Z_2)\op...\op \mO(Z_r)$ be a decomposable, semi-stable, rank $r$ vector bundle over an elliptic curve $C$, where $Z_1,...,Z_r$ are pairwise non-linearly equivalent effective divisors of $C$. Then, up to scalar multiplication, there exist unique sections $T_1,...,T_r$ of $\mE$ such that $T_i$ is a section of the summand $\mO(Z_i)$ and the divisor it induces is $Z_i$. Hereafter, when $\mE$ is of this form, we fix an $r$-tuple of such sections and refer to them as the \tb{canonical sections} of $\mE$.
\end{rem}
\begin{lem}\label{aux}
Let $C$ be an elliptic curve, and $P,Q$ be two general points on $C$. Suppose $\mE=\mO(a_1P+(b-a_1)Q)\op \mO(a_2P+(b-a_2)Q)$ ($0\le a_1,a_2\le b$) and $\mO(a_1P+(b-a_1)Q)\not\cong \mO(a_2P+(b-a_2)Q)$. Consider pairs $(s_1,s_2)\in\Gamma(\mE)\times\Gamma(\mE)$ satisfying:
\begin{enumerate}
\item  $\ord_P(s_i)=e_i,\ord_Q(s_i)=b-e_i-1$ ($i=1,2$);
\item  $s_i|_P\neq T_j|_P\in\PP\mE|_P$, $s_i|_Q\neq T_j|_Q\in\PP\mE|_Q$ for $i=1,2$, $j=1,2$ (see  \tb{Notation 12});
\item $s_1,s_2$ are linearly independent.
\item $s_1|_P=s_2|_P\in \PP\mE|_P$, $s_1|_Q=s_2|_Q\in \PP\mE|_Q$.
\end{enumerate}
Then, there exists such a pair $(s_1,s_2)$ in $\Gamma(\mE)\times\Gamma(\mE)$ if and only if $$e_1+e_2=a_1+a_2-1, a_i-e_i\neq 0,1, \text{and }e_1\neq e_2.$$
\end{lem}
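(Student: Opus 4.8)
The plan is to work inside the decomposition $\Gamma(\mE)=\Gamma(\mL_1)\oplus\Gamma(\mL_2)$, where $\mL_j=\mO(a_jP+(b-a_j)Q)$, so a section is a pair $s_i=(f_i,g_i)$ and $T_1,T_2$ are the canonical sections with divisors $a_jP+(b-a_j)Q$. First I would reduce to a finite problem. Conditions (1) and (2) force, for each $i$, that $f_i$ and $g_i$ are both nonzero and vanish to order exactly $e_i$ at $P$ and exactly $b-e_i-1$ at $Q$: condition (2) forbids the ``summand directions'' $T_1|_P,T_2|_P$ (and likewise at $Q$), which is precisely the requirement that the two components of the leading term of $s_i$ have equal order. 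Now $\Gamma\!\big(\mL_j(-e_iP-(b-e_i-1)Q)\big)$ is the space of sections of a degree-$1$ line bundle on the elliptic curve $C$, hence one-dimensional; its generator $u^{(j)}_{e_i}$ carries exactly one further zero $R^{(j)}_{e_i}$, whose position is read off from the class $[\mL_j]-e_i[P]-(b-e_i-1)[Q]$, and $u^{(j)}_{e_i}$ realizes the required \emph{exact} orders iff $R^{(j)}_{e_i}\neq P,Q$, i.e.\ iff $e_i\neq a_j-1,a_j$. So (1)--(2) are realizable for $s_i$ iff $e_i\notin\{a_1-1,a_1,a_2-1,a_2\}$, in which case $s_i=u^{(1)}_{e_i}+\beta_iu^{(2)}_{e_i}$ for a scalar $\beta_i\in K^{\times}$; under the equality $e_1+e_2=a_1+a_2-1$ proved below these exclusions become the stated ``$a_i-e_i\neq 0,1$''. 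This already proves necessity of the vanishing part.

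Next I would extract the remaining necessary conditions. For $e_1+e_2=a_1+a_2-1$, consider $s_1\wedge s_2\in\Gamma(\det\mE)$, where $\det\mE=\mL_1\otimes\mL_2\cong\mO((a_1+a_2)P+(2b-a_1-a_2)Q)$ has degree $2b$; it is nonzero by (3), and since $s_1|_P=s_2|_P$ the leading terms of $s_1,s_2$ at $P$ are parallel, so $\ord_P(s_1\wedge s_2)\geq e_1+e_2+1$, and similarly $\ord_Q(s_1\wedge s_2)\geq 2b-e_1-e_2-1$. These lower bounds sum to the degree $2b$, so they are equalities and $s_1\wedge s_2$ has no other zeros; comparing its divisor with that of $\det\mE$, and using that $[P]-[Q]$ has infinite order in $\Pic^{0}(C)$, gives $e_1+e_2=a_1+a_2-1$. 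For $e_1\neq e_2$: if $e_1=e_2$ then $u^{(j)}_{e_1}=u^{(j)}_{e_2}$, hence $s_i|_P=[\mathrm{lc}_Pu^{(1)}_{e_1}:\beta_i\,\mathrm{lc}_Pu^{(2)}_{e_1}]$, and condition (4) at $P$ forces $\beta_1=\beta_2$, so $s_1=s_2$, contradicting (3).

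For sufficiency, assume the three conditions. All $u^{(j)}_{e_i}$ exist, so I would set $s_i:=u^{(1)}_{e_i}+\beta_iu^{(2)}_{e_i}$ and solve for $\beta_1,\beta_2\in K^{\times}$. Conditions (1) and (2) then hold automatically, and (3) holds since $e_1\neq e_2$ makes the $\mL_1$-components vanish to different orders at $P$. Writing out condition (4) at $P$ and at $Q$ yields two equations $\beta_2/\beta_1=c_P$ and $\beta_2/\beta_1=c_Q$, where $c_P$ and $c_Q$ are the evident products of the leading coefficients $\mathrm{lc}_Pu^{(j)}_{e_i}$, resp.\ $\mathrm{lc}_Qu^{(j)}_{e_i}$; a direct check shows $c_P=\Xi(P)$ and $c_Q=\Xi(Q)$ for the rational function $\Xi:=u^{(2)}_{e_1}u^{(1)}_{e_2}\big/\big(u^{(1)}_{e_1}u^{(2)}_{e_2}\big)$ on $C$. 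Everything therefore reduces to the identity $\Xi(P)=\Xi(Q)$: granting it, choose any $\beta_1\in K^{\times}$ and put $\beta_2=c_P\beta_1$, which is again nonzero.

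The identity $\Xi(P)=\Xi(Q)$ is the heart of the matter and the step I expect to be the main obstacle. The divisor of $\Xi$ is $R^{(2)}_{e_1}+R^{(1)}_{e_2}-R^{(1)}_{e_1}-R^{(2)}_{e_2}$, supported away from $P,Q$ by the vanishing conditions; using $e_1+e_2=a_1+a_2-1$ one computes that both $D^{+}:=R^{(2)}_{e_1}+R^{(1)}_{e_2}$ and $D^{-}:=R^{(1)}_{e_1}+R^{(2)}_{e_2}$ are linearly equivalent to $P+Q$. Hence $D^{\pm}$ are members of the pencil $|P+Q|$, i.e.\ fibres of the degree-$2$ morphism $\pi\colon C\to\PP^{1}$ it defines, so $\Xi=\pi^{*}h$ for a rational function $h$ on $\PP^{1}$; since $P+Q$ is itself a fibre of $\pi$ we have $\pi(P)=\pi(Q)$, and therefore $\Xi(P)=h(\pi(P))=h(\pi(Q))=\Xi(Q)$. (Equivalently, one can write $\Xi$ as a ratio of two rational functions each having simple poles exactly at $P$ and $Q$ and compare their leading coefficients there via the residue theorem against a nowhere-vanishing holomorphic differential.) All the rest is routine Riemann--Roch bookkeeping on $C$, and I expect the only real work to be setting up this elliptic-curve identity cleanly.
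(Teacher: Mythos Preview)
Your proof is correct and takes a genuinely different route from the paper's. Both arguments ultimately exploit the same underlying geometry --- the involution $x\mapsto -x+P+Q$ associated to the pencil $|P+Q|$ --- but they organize the work differently.

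For necessity of $e_1+e_2=a_1+a_2-1$, the paper writes $s_i=g_iT_1+h_iT_2$ in rational functions, observes that $g_1h_2$ and $g_2h_1$ span $\Gamma(\mO((n+1)P+(1-n)Q))$ with $n=e_1+e_2-a_1-a_2+1$, and then argues that condition~(4) forces $g_1h_2-g_2h_1$ to be a nonzero constant, pinning down $n=0$. Your wedge argument $s_1\wedge s_2\in\Gamma(\det\mE)$ is cleaner: it packages the same divisor comparison without ever naming the rational functions, and it makes transparent why the ``extra'' order of vanishing coming from $s_1|_P=s_2|_P$ is exactly what closes the degree count.

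For sufficiency, the paper constructs $s_2$ from $s_1$ by hand, setting $g_2:=h_1\circ\lambda$ and $h_2:=g_1\circ\lambda$ for the involution $\lambda(x)=-x+P+Q$, and then scales to match at $P$ and $Q$. You instead isolate the single identity $\Xi(P)=\Xi(Q)$ and prove it by recognizing $\Xi$ as the pullback of a rational function on $\PP^1$ under the double cover $\pi$ determined by $|P+Q|$; since $P+Q$ is a fibre, $\pi(P)=\pi(Q)$ and the identity falls out. The paper's version is more explicit (it literally writes down $s_2$), while yours explains \emph{why} the construction works and would adapt more readily to variants. Your observation that $D^{\pm}\sim P+Q$ is exactly the linear-equivalence computation hidden inside the paper's use of $\lambda$.
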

\begin{proof}
Given $T_1,T_2$ as in \ref{csec}, any section $s$ of $\mE$ can be written as $s=gT_1+hT_2$, where $g,h$ are two rational functions on $C$.

Assume $e_1+e_2=a_1+a_2-1$ holds.

Since $\deg(\mO((a_1-e_1)P+(1-(a_1-e_1))Q))=1$, by Riemann-Roch Theorem, there exists (up to a scalar) a unique $g_1\in K(C)$ such that $\ord_P(g_1)\ge-(a_1-e_1),\ord_Q(g_1)\ge-(1-(a_1-e_1))$, having no poles elsewhere. Since $P,Q$ are general, as long as $a_1-e_1\neq 0,1$, one has
$$h^0(\mO((a_1-e_1-1)P+(1-(a_1-e_1))Q))=h^0(\mO((a_1-e_1)P+(-(a_1-e_1))Q))=0.$$
Hence, one gets $\ord_P(g_1)=-(a_1-e_1), \ord_Q(g_1)=-(1-(a_1-e_1)).$

Similarly, one gets (up to scalar) a unique $h_1\in K(C)$ such that $\ord_P(h_1)=-(a_2-e_1)$,
$\ord_Q(h_1)=-(1-(a_2-e_1))$, having no poles elsewhere.

Utilizing the group structure on $C$, we consider $\la\in \aut(C)$ given by $\la:C\to C:x\mapsto -x+P+Q$. Note that $\la(P)=Q,\la(Q)=P$. We define $g_2:=h_1\circ\la,h_2:=g_1\circ\la$, and take $s_1:=g_1T_1+h_1T_2,s_2:=g_2T_1+h_2T_2$.

We now check that $(s_1,s_2)$ is a pair with the desired properties. One can directly compute $\ord_P(s_i)=e_i,\ord_Q(s_i)=b-e_i-1$ for $i=1,2$.

The second condition follows from the fact that $g_1,h_1,g_2,h_2$ are all non-zero.

Linear independence of $s_1,s_2$ follows trivially from the assumption $e_1\neq e_2$.

Lastly, we get
$$\begin{aligned}
s_1|_P=m_1\cdot T_1|_P+n_1\cdot T_2|_P,\\
s_2|_P=m_2\cdot T_1|_P+n_2\cdot T_2|_P,
\end{aligned}
\begin{aligned}
s_1|_Q=n_2\cdot T_1|_Q+m_2\cdot T_2|_Q,\\
s_2|_Q=n_1\cdot T_1|_Q+m_1\cdot T_2|_Q,
\end{aligned}$$

\noindent where $m_1=(t_1^{a_1-e_1}g_1)(P),n_1=(t_1^{a_2-e_1}h_1)(P)$, $m_2=(t_2^{a_1-e_2}h_1)(Q),n_2=(t_2^{a_2-e_2}g_1)(Q)$ and $t_1,t_2$ are uniformizers at $P,Q$ resp. Hence, by taking suitable constant multiples of $g_1,h_1$ resp., we can get a pair $(s_1,s_2)$ such that $s_1|_P=s_2|_P$, $s_1|_Q=s_2|_Q$ with all the desired properties.\\
\ \\
\indent Conversely, suppose such a pair $(s_1,s_2)$ exists. First of all, it is clear that $a_i-e_i\neq 0,1$ must hold; otherwise, condition 2 would be violated. If $e_1=e_2$, condition 3 would be violated. Again, we have $s_i=g_iT_1+h_iT_2$ ($i=1,2$) and:
 $$(g_i)=(e_i-a_1)P+(a_1-e_i-1)Q+P_i,i=1,2$$
 $$(h_i)=(e_i-a_2)P+(a_2-e_i-1)Q+Q_i,i=1,2$$
\indent Denote $n=e_1+e_2-a_1-a_2+1$. Let $D=(1-n)Q+(n+1)P$. It is then not hard to see that $g_1h_2,g_2h_1\in\Gamma(\mO(D))$. Moreover, $\Gamma(\mO(D))=\s(g_1h_2,g_2h_1)$; otherwise $g_1h_2,g_2h_1$ would be dependent, and $\{P_1,Q_2\}=\{P_2,Q_1\}$, which would further imply $P,Q$ are not general.

Now, the conditions $s_1|_P=s_2|_P\in\PP \mE|_P$, $s_1|_Q=s_2|_Q\in\PP \mE|_Q$ can be interpreted as $\frac{g_1h_2}{g_2h_1}(P)=\frac{g_1h_2}{g_2h_1}(Q)=1$. In particular, $\ord_P(g_1h_2-g_2h_1)>e_1+e_2-a_1-a_2=n-1$, and $\ord_Q(g_1h_2-g_2h_1)>(a_1-e_1-1)+(a_2-e_2-1)+1=-n-1$. Therefore, $\ord_P(g_1h_2-g_2h_1)+\ord_Q(g_1h_2-g_2h_1)\ge 0$, and $g_1h_2-g_2h_1$ only has poles at $P$ or $Q$. By the degree count, it cannot have zeros away from $P,Q$ either. Since $P,Q$ are general points, it follows that $g_1h_2-g_2h_1$ is a non-zero constant. But $1\in\Gamma(\mO(D))$ only if $1-n\ge 0, n+1\ge 0$, which implies $n=0$ or $\pm 1$. And when $n=\pm 1$, either $\ord_P(g_1h_2-g_2h_1)>0$ or $\ord_Q(g_1h_2-g_2h_1)>0$, which is impossible.
\end{proof}
\begin{cor}\label{auxc}
Let $C,P,Q,\mE$ be as given in Lemma \ref{aux}. Suppose $(s_1,s_2)\in\Gamma(\mE)\times\Gamma(\mE)$ satisfies conditions 1-3 in \ref{aux}. Then, $s_1|_P=s_2|_P\in\PP\mE|_P$ if and only if $s_1|_Q=s_2|_Q\in \PP\mE|_Q$.
\end{cor}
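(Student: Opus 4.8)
The plan is to extract the relevant equivalence directly from the argument already carried out in the proof of Lemma \ref{aux}, rather than re-deriving everything from scratch. Fix canonical sections $T_1,T_2$ of $\mE$ as in Remark \ref{csec} and write $s_i=g_iT_1+h_iT_2$ for rational functions $g_i,h_i$ on $C$. Conditions 1--2 of Lemma \ref{aux} pin down the divisors of the $g_i,h_i$ up to the movable points: one has $(g_i)=(e_i-a_1)P+(a_1-e_i-1)Q+P_i$ and $(h_i)=(e_i-a_2)P+(a_2-e_i-1)Q+Q_i$ for some points $P_i,Q_i$, exactly as in the converse part of the lemma's proof. Since conditions 1--3 are in force but condition 4 is \emph{not} assumed, we do \emph{not} get $e_1+e_2=a_1+a_2-1$ for free; so the first step is to observe that $s_1|_P=s_2|_P$ alone already forces this numerical relation, and symmetrically for $Q$.

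Concretely, I would set $n=e_1+e_2-a_1-a_2+1$ and consider $f:=g_1h_2-g_2h_1$. As computed in the lemma, $g_1h_2$ and $g_2h_1$ both lie in $\Gamma(\mO(D))$ with $D=(1-n)Q+(n+1)P$, and the condition $s_1|_P=s_2|_P\in\PP\mE|_P$ is equivalent to $\frac{g_1h_2}{g_2h_1}(P)=1$, i.e. $\ord_P(f)>n-1$; likewise $s_1|_Q=s_2|_Q$ is equivalent to $\ord_Q(f)>-n-1$. Now $f\in\Gamma(\mO(D))$ has degree count forcing, via genericity of $P,Q$, that $f$ is either identically zero or a nonzero rational function whose only possible poles are at $P,Q$ with pole orders bounded by $D$. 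If $f\equiv 0$ then $g_1h_2=g_2h_1$, which (again by genericity, as in the lemma) would force $\{P_1,Q_2\}=\{P_2,Q_1\}$ and contradict generality of $P,Q$; so $f\not\equiv 0$. Then the valuation constraints $\ord_P(f)+\ord_Q(f)\ge 0$ combined with $f$ having no other zeros or poles (degree count plus genericity) show $f$ is a nonzero constant, hence $\ord_P(f)=\ord_Q(f)=0$, which forces $n-1<0$ and $-n-1<0$ from whichever of the two vanishing conditions we assumed. The point is that \emph{each} of the two conditions, on its own, forces $n=0$ (the values $n=\pm1$ being excluded since then one of $\ord_P(f)>0$, $\ord_Q(f)>0$ would hold, contradicting $f$ constant).

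With $n=0$ established from $s_1|_P=s_2|_P$, the relation $e_1+e_2=a_1+a_2-1$ holds, $f$ is a nonzero constant, and then $\frac{g_1h_2}{g_2h_1}(Q)=1$ follows automatically because $\ord_Q(g_1h_2)=\ord_Q(g_2h_1)=0$ and $(g_1h_2-g_2h_1)(Q)=f(Q)=f(P)=0$ would contradict $f$ being a nonzero constant — wait, more carefully: $f$ constant and equal to its value, and $\frac{g_1h_2}{g_2h_1}(P)=1$ gives $f(P)=0$, so $f\equiv 0$, contradiction. So I should instead argue: $s_1|_P=s_2|_P$ forces $n=0$, and once $n=0$ the two fibers $\mE|_P$, $\mE|_Q$ are governed symmetrically, so I can simply invoke the ``if'' direction of Lemma \ref{aux}: given that a pair satisfying 1--3 with $n=0$ exists and satisfies condition at $P$, the explicit construction via the involution $\la:x\mapsto -x+P+Q$ shows the condition at $Q$ is equivalent, since $\la$ swaps $P$ and $Q$ and interchanges the roles of $g_1,h_1$ with $g_2,h_2$. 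The genuinely symmetric formulas for $s_i|_P$ and $s_i|_Q$ displayed at the end of the lemma's proof (with the $m$'s and $n$'s permuted) make the equivalence transparent: after rescaling $g_1,h_1$, matching at $P$ is the same linear condition on $(m_1,n_1,m_2,n_2)$ as matching at $Q$.

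The main obstacle is the bookkeeping in the first step: making sure that $s_1|_P=s_2|_P$ \emph{by itself} forces the numerical identity $e_1+e_2=a_1+a_2-1$, without secretly using the hypothesis at $Q$. This requires being careful that the degree/genericity argument showing $f$ is constant uses only the one-sided vanishing $\ord_P(f)>n-1$ together with the a priori bound $f\in\Gamma(\mO(D))$; one must check that even without the $Q$-side input, the support and degree of the divisor of $f$ are constrained enough (using generality of $P,Q$ on the elliptic curve) to conclude $f$ is a scalar, and then that a scalar in $\Gamma(\mO(D))$ forces $1-n\ge 0$ and $n+1\ge 0$, with $n=\pm1$ ruled out exactly as in the lemma. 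Once that asymmetric version of the key estimate is in hand, the corollary follows formally.
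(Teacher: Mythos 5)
There is a genuine gap, and it sits exactly at the step you yourself flagged as the main obstacle: the claim that $s_1|_P=s_2|_P$ \emph{by itself} forces $n=e_1+e_2-a_1-a_2+1=0$ is false. With only the $P$-side input, $f=g_1h_2-g_2h_1$ satisfies $\ord_P(f)\ge n$, $\ord_Q(f)\ge-(n+1)$, and has no poles away from $P,Q$; this only places $f$ in $\Gamma(\mO(-nP+(n+1)Q))$, the one-dimensional space of sections of a \emph{degree-one} line bundle, not in the constants. For $n\neq0,-1$ the nonzero element of that space is non-constant (its divisor is $nP-(n+1)Q+R$, with $R$ the unique point linearly equivalent to $(n+1)Q-nP$, and $R\neq P,Q$ by generality), and nothing prevents $f$ from being exactly this element: the two vanishing orders at $P$ and $Q$ now sum to $-1$ rather than $0$, so the degree count that makes $f$ a scalar in Lemma \ref{aux} is no longer available. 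In fact the asymmetric statement you are after is false, not just unproved: for any admissible $e_1\neq e_2$ with $e_1+e_2\neq a_1+a_2-1$, the functions $g_i,h_i$ are fixed up to scalars, and by rescaling the two components of $s_2$ one can always arrange $s_1|_P=s_2|_P$ (as the scalars vary, $s_2|_P$ sweeps out all of $\PP\mE|_P$ minus the two coordinate directions); the ``only if'' half of Lemma \ref{aux} then says precisely that such a pair cannot also satisfy $s_1|_Q=s_2|_Q$. So matching at $P$ does not force $n=0$; when $n\neq0$ it forces \emph{non}-matching at $Q$. This is also how the paper itself uses the dichotomy, cf.\ Example \ref{config}(3).

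Consequently the corollary has to be read with the numerical conclusion of Lemma \ref{aux} in force, i.e.\ $e_1+e_2=a_1+a_2-1$ (and $e_1\neq e_2$); this is implicit in the paper's own proof, which invokes the existence half of Lemma \ref{aux} to produce an $s_2'$ with $\ord_P(s_2')=e_2$, $\ord_Q(s_2')=b-e_2-1$ agreeing with $s_1$ at both points, and then uses the rigidity fact that a section with those orders and a prescribed non-coordinate direction at $P$ is unique up to scalar, so $s_2$ is proportional to $s_2'$ and hence also agrees with $s_1$ at $Q$. Your $f$-argument does give a quick alternative \emph{once} $n=0$ is assumed: then matching at $P$ puts $f$ in $\Gamma(\mO(Q))$, which consists of constants, so $\ord_Q(f)\ge0>-n-1$ and $\bigl(\tfrac{g_1h_2}{g_2h_1}\bigr)(Q)=1$, i.e.\ matching at $Q$, and symmetrically in the other direction. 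But as written your proposal proves neither the unconditional claim (which is false) nor the conditional one, because both branches of your plan route through the unavailable step forcing $n=0$ from one-sided matching.
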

\begin{proof}
Let $q$ be a point in $\PP\mE|_P$ not equal to $T_1|_P,T_2|_P$. For any $e:0\le e\le b-1$ and $e\neq a_1,a_2$, there exists (up to scalar) a unique section $s$ such that $s|_P$ vanishes to orders $e,b-e-1$ at $P,Q$ resp.

Given $s_1,s_2$ such that $s_1|_P=s_2|_P\in \PP\mE|_P$, by Lemma \ref{aux}, one can find some $s'_2$ such that $s_1|_P=s'_2|_P\in \PP\mE|_P$, $s_1|_Q=s'_2|_Q\in \PP\mE|_Q$ and $\ord_P(s_2')=\ord_P(s_2)=e_2,\ord_Q(s_2')=\ord_Q(s_2)=b-e_2-1$. By the observation in previous paragraph, since $s_2|_P=s'_2|_P=s_1|_P\in \PP\mE|_P$, $s_2$ and $s'_2$ must be linearly dependent, so that $s_2|_Q=s'_2|_Q\in \PP\mE|_Q$.

The other direction follows from same argument.
\end{proof}
On the other hand, if $\mE=\mO(aP+(b-a)Q)^{\op 2}$, the situation is more rigid:
\begin{lem}\label{sym}
Given $C,P,Q$ as in Lemma \ref{aux}, suppose $\mE=\mO(aP+(b-a)Q)^{\op 2}$. Let $s_1,s_2$ be two sections such that $\ord_P(s_i)+\ord_Q(s_i)\ge b-1$. Then, $$s_1|_Q=s_2|_Q\in \PP\mE|_Q \text{ if and only if }s_1|_P=s_2|_P\in \PP\mE|_P.$$
\end{lem}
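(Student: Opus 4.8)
The plan is to reduce everything to the observation that, once we fix the identification $\mE=\mL^{\op 2}$ with $\mL=\mO(aP+(b-a)Q)$, any section $s$ satisfying $\ord_P(s)+\ord_Q(s)\ge b-1$ is determined, up to a distinguished section of $\mL$, by a single vector $u\in K^2$, and that \emph{both} $s|_P$ and $s|_Q$ are precisely the class $[u]\in\PP^1$. Thus $s_1|_P=s_2|_P$ and $s_1|_Q=s_2|_Q$ will literally be the same condition, which is the asserted rigidity.

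First I would set up canonical identifications of the projectivized fibers. For any point $R$ of $C$ one has $\mE|_R=\mL|_R\ot K^2$ with $\mL|_R$ one-dimensional; a choice of generator $\xi$ of $\mL|_R$ gives an isomorphism $K^2\xrightarrow{\sim}\mE|_R$, $u\mapsto\xi\ot u$, and replacing $\xi$ by $c\xi$ only rescales $K^2$, so on projectivizations we get an identification $\PP(K^2)\xrightarrow{\sim}\PP\mE|_R$ independent of $\xi$. Fix these for $R=P$ and $R=Q$. The crux is then the following factorization claim: \emph{if $0\ne s\in\Gamma(\mE)$ and $\ord_P(s)+\ord_Q(s)\ge b-1$, then $s=w\ot u$ for some $w\in\Gamma(\mL)$ and $0\ne u\in K^2$ with $\ord_P(w)=\ord_P(s)$, $\ord_Q(w)=\ord_Q(s)$.} Granting this, write $e=\ord_P(s)$ and let $t$ be a uniformizer at $P$; then $t^{-e}s=(t^{-e}w)\ot u$ and $(t^{-e}w)(P)$ is a nonzero vector $\xi\in\mL|_P$, so the image of $t^{-e}s$ in $\PP\mE|_P$ is the class of $\xi\ot u$, i.e.\ $s|_P=[u]$ under the identification above; the same argument at $Q$ gives $s|_Q=[u]$. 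Applying this to $s=s_1$ and $s=s_2$ yields $s_i|_P=[u_i]=s_i|_Q$, whence $s_1|_P=s_2|_P\Longleftrightarrow[u_1]=[u_2]\Longleftrightarrow s_1|_Q=s_2|_Q$, as desired.

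To prove the factorization claim I would argue on the elliptic curve as follows. Put $e=\ord_P(s)$, $f=\ord_Q(s)$, so $s$ lies in $\Gamma(\mE(-eP-fQ))=\Gamma(\mL(-eP-fQ))^{\op 2}$, and $\mL(-eP-fQ)\cong\mO((a-e)P+(b-a-f)Q)$ has degree $b-e-f\le1$. If this degree equals $1$, then $h^0=1$ on a genus-one curve. If the degree is $\le0$, then since $s$ already provides a nonzero section the line bundle must be $\mO_C$ itself, i.e.\ the degree is $0$ and $\mO((a-e)(P-Q))\cong\mO_C$; for general $P,Q$ the class $P-Q$ is non-torsion, so this forces $a=e$ (hence $f=b-a$) and again $h^0=1$. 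Either way $\Gamma(\mL(-eP-fQ))$ is spanned by a single section $w$, and writing $s=(v_1,v_2)$ with $v_i=\la_i w$ gives $s=w\ot u$ with $u=(\la_1,\la_2)\ne0$; moreover $\ord_P(w)=\min\{\ord_P(v_i):\la_i\ne0\}=\ord_P(s)=e$, and symmetrically $\ord_Q(w)=f$. (In particular ``boundary'' profiles such as $\ord_P(s)=a-1$ with $\ord_Q(s)\ge b-a$ simply cannot occur, since every section in $\Gamma(\mL(-(a-1)P-(b-a)Q))$ already vanishes to order $a$ at $P$.)

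The one genuinely delicate point is this last step: the whole statement rests on $\Gamma(\mL(-eP-fQ))$ being one-dimensional whenever $\ord_P(s)+\ord_Q(s)\ge b-1$, and when $\ord_P(s)+\ord_Q(s)\ge b$ this is exactly where the genericity of $P,Q$ — concretely, $m(P-Q)\not\sim0$ for $0<|m|\le b$ — must be invoked. Everything else is routine: the canonical identifications $\PP\mE|_P\cong\PP^1\cong\PP\mE|_Q$ and the elementary computation of $s|_P$ and $s|_Q$ for a section of the form $w\ot u$. This also explains why, in contrast with Lemma \ref{aux} (where the two summands are non-isomorphic and extra numerical conditions like $e_1+e_2=a_1+a_2-1$ are needed), here no such conditions appear: when the two summands coincide, the identifications of the fibers at $P$ and at $Q$ are forced to agree on every admissible section.
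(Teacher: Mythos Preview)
Your argument is correct and is essentially the same as the paper's: both show that any section $s$ with $\ord_P(s)+\ord_Q(s)\ge b-1$ factors as $w\otimes u$ (equivalently, $c_1(w,0)+c_2(0,w)$) for a single section $w$ of $\mL=\mO(aP+(b-a)Q)$, so that $s|_P$ and $s|_Q$ are both the class $[u]\in\PP^1$. The paper's proof is more telegraphic, splitting into the case $(e,f)=(a,b-a)$ (the canonical section) and the generic case $e\neq a,a-1$ without spelling out the Riemann--Roch/genericity justification; you treat all cases uniformly and make the one-dimensionality of $\Gamma(\mL(-eP-fQ))$ explicit, which is a mild improvement in exposition but not a different idea.
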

\begin{proof}
 For $c:0\le c\le b-1,c\neq a,a-1$, there is (up to scalar) a unique section $s$ of $\mO(aP+(b-a)Q)$ vanishing to orders $c,d-1-c$ at $P,Q$ resp. Hence, any section of $\mE$ vanishing to such orders at $P,Q$ can be written as $c_1(s,0)+c_2(0,s)$, where $(s,0),(0,s)$ are the images of $s$ under the two natural injections, and $c_1,c_2\in K$.

 Similarly, any section vanishing to orders $a,b-a$ at $P,Q$ resp. can be written as $c_1(T,0)+c_2(0,T)$, where $T$ is (up to scalar) the canonical section of $\mO(aP+(b-a)Q)$.

 The lemma follows immediately from these descriptions.
\end{proof}
The reader may have noticed that we only consider sections of $\mE$ which only vanish at $P,Q$. In this paper, we shall focus on such sections. Indeed, we here make a precise definition:
\begin{defn}\label{max}
Let $\mE=\mO(a_1P+(b-a_1)Q)\op \mO(a_2P+(b-a_2)Q)$ ($0\le a_1,a_2\le b$) be a rank two vector bundle over an elliptic curve. We say $s$ is a section with \tb{maximal vanishing at} $\mathbf{P,Q}$, if $s$ is either a canonical section of $\mE$ or $\ord_{P}(s)+\ord_Q(s)=b-1$.
\end{defn}
This idea of studying sections with maximal vanishing at two points of each component of a chain of elliptic curves was first proposed by Teixidor i Bigas in \cite{M1}.

The rest of this section is devoted to proving a genericity type result. Roughly, it says that given $\mE$ is as in Lemma \ref{aux}, in general two sections $s_1,s_2$ (which are not canonical sections) with maximal vanishing at $P,Q$ such that $s_1|_P\neq s_2|_P$ should have $s_1|_Q\neq s_2|_Q$. This turns out to be crucial in justifying our general construction.
\begin{thm}\label{genericity0}
Let $(C,Q)$ be an elliptic curve. For $P\neq Q\in C$, define $\mE^P=\mO(a_1P+(b-a_1)Q)\op \mO(a_2P+(b-a_2)Q)$ ($0\le a_1,a_2\le b$) and fix a pair of canonical sections $(T^P_1,T^P_2)$ of $\mE^P$. Consider $s^P_1,s^P_2\in\Gamma(\mE^P)$ satisfying:
\begin{enumerate}
\item $\ord_P(s^P_i)=e_i,\ord_Q(s^P_i)=b-e_i-1$ ($i=1,2$);
\item $s^P_i|_P\neq T^P_j|_P\in\mE^P|_P$, $s^P_i|_Q\neq T^P_j|_Q\in\mE^P|_Q$ ($i,j=1,2$);
\item $s^P_1|_Q=s^P_2|_Q\in \PP\mE^P|_Q$;
\end{enumerate}
Suppose $n=e_1+e_2-a_1-a_2+1\neq 0$, $e_i\neq a_j,a_j-1$ and $e_2>e_1+1\ge0,a_2>a_1\ge0$. Then, for general choices of $P,P'\neq Q$, there does NOT exist an isomorphism $\phi:\mE^P|_P\to\mE^{P'}|_{P'}$ sending $T^P_1|_P,T^P_2|_P,s^P_1|_P,s^P_2|_P$ to $T^{P'}_1|_{P'},T^{P'}_2|_{P'},s^{P'}_1|_{P'},s^{P'}_2|_{P'}$ respectively.
\end{thm}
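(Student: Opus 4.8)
The plan is to show that the existence of $\phi$ is equivalent to the equality of a cross-ratio evaluated at $P$ and at $P'$, to compute that cross-ratio as an explicit elliptic function of $P$ via Weierstrass $\sigma$-functions, and to observe that this function is non-constant; the theorem then follows because two general values of a non-constant morphism $C\to\PP^1$ are distinct.

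\textbf{Step 1: reduction to a cross-ratio.} I would first check that the four points $T^P_1|_P,T^P_2|_P,s^P_1|_P,s^P_2|_P$ of $\PP\mE^P|_P$ are pairwise distinct. Indeed $T^P_1|_P$ and $T^P_2|_P$ span the two line subbundles $\mO(a_iP+(b-a_i)Q)|_P$, which are different for general $P$ since $a_1\neq a_2$; the inequality $s^P_i|_P\neq T^P_j|_P$ is hypothesis (2); and if $s^P_1|_P=s^P_2|_P$ then, together with (1),(2),(3) and linear independence of $s^P_1,s^P_2$ (which holds because $\ord_P$ separates them, $e_2>e_1+1$), the pair $(s^P_1,s^P_2)$ satisfies conditions 1--4 of Lemma \ref{aux}, forcing $e_1+e_2=a_1+a_2-1$, i.e. $n=0$, contrary to hypothesis. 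The same holds at $P'$. Since a linear isomorphism $\mE^P|_P\to\mE^{P'}|_{P'}$ induces a Möbius isomorphism $\PP\mE^P|_P\to\PP\mE^{P'}|_{P'}$, which preserves cross-ratios and is determined by its effect on an ordered triple of distinct points, the isomorphism $\phi$ of the theorem exists if and only if $\kappa(P)=\kappa(P')$, where $\kappa(P):=[\,T^P_1|_P,T^P_2|_P,s^P_1|_P,s^P_2|_P\,]$. Thus it suffices to prove that $\kappa$, as a function of $P$, is non-constant.

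\textbf{Step 2: computing $\kappa$.} Following the proof of Lemma \ref{aux}, write $s^P_i=g_iT^P_1+h_iT^P_2$, where $g_i,h_i\in K(C)$ are the functions (unique up to scalar) with $\ord_P(g_i)=e_i-a_1$, $\ord_Q(g_i)=a_1-e_i-1$ (resp. $\ord_P(h_i)=e_i-a_2$, $\ord_Q(h_i)=a_2-e_i-1$) and no other poles. In the trivialization of $\mE^P|_P$ by $T^P_1|_P,T^P_2|_P$ one has $s^P_i|_P=[\,c_i:d_i\,]$ with $c_i,d_i$ the leading coefficients of $g_i,h_i$ at $P$, so $\kappa(P)=\frac{c_1d_2}{c_2d_1}$ up to the choice of cross-ratio convention. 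Using the group law on $C$ with origin $Q$, the remaining zero of $g_i$ is the point $(a_1-e_i)\!\cdot\! P$ and that of $h_i$ is $(a_2-e_i)\!\cdot\! P$; writing $u$ for a coordinate with $P\leftrightarrow u$, $Q\leftrightarrow 0$ and expressing everything via the Weierstrass $\sigma$-function, the $c_i,d_i$ become explicit. Hypothesis (3) pins down the relative scaling of $g_i$ against $h_i$; after the common powers of $\sigma(u)$ (and of $\sigma$ of the various $P-Q$ arguments) cancel, one gets, up to sign,
$$\kappa(u)=\frac{\sigma\big((1-a_1+e_1)u\big)\,\sigma\big((1-a_2+e_2)u\big)\,\sigma\big((a_1-e_2)u\big)\,\sigma\big((a_2-e_1)u\big)}{\sigma\big((1-a_1+e_2)u\big)\,\sigma\big((1-a_2+e_1)u\big)\,\sigma\big((a_1-e_1)u\big)\,\sigma\big((a_2-e_2)u\big)}.$$
All eight multipliers are nonzero — this uses exactly $e_i\neq a_j,a_j-1$ — so $\kappa$ is a genuine nonzero elliptic function of $u$ (in particular independent of the auxiliary choice of $(s^P_1,s^P_2)$).

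\textbf{Step 3: non-constancy, and the main obstacle.} Since all zeros and poles of such a $\sigma$-quotient lie at torsion points of $C$, and $\sigma$ has simple zeros along the lattice, $\kappa$ is constant if and only if the numerator multiset $\{1-a_1+e_1,1-a_2+e_2,a_1-e_2,a_2-e_1\}$ equals the denominator multiset $\{1-a_1+e_2,1-a_2+e_1,a_1-e_1,a_2-e_2\}$ up to signs, in which case $\kappa\equiv\pm1$. Expanding $\sigma(z)=z+O(z^3)$ at $u=0$ gives the rational number $\kappa(0)=\tfrac{(1-\alpha)(1-\beta)\gamma\delta}{(1-\gamma)(1-\delta)\alpha\beta}$, where $\alpha=a_1-e_1$, $\beta=a_2-e_2$, $\gamma=a_1-e_2$, $\delta=a_2-e_1$; these satisfy $\alpha+\beta=\gamma+\delta$, none of them is $0$ or $1$, $\alpha-\gamma=\delta-\beta=e_2-e_1\ge2$, $\delta-\alpha=\beta-\gamma=a_2-a_1\ge1$, and $\alpha+\beta\neq1$ (this last being $n\neq0$). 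So it is enough to show $\kappa(0)\neq\pm1$. Using $(1-\alpha)(1-\beta)=1-(\alpha+\beta)+\alpha\beta$ and likewise for $\gamma,\delta$, the equation $\kappa(0)=1$ reduces to $(1-\alpha-\beta)(\gamma\delta-\alpha\beta)=0$; as $\alpha+\beta\neq1$ this forces $\gamma\delta=\alpha\beta$, hence $\{\gamma,\delta\}=\{\alpha,\beta\}$, impossible since $\alpha\neq\gamma$ ($e_1\neq e_2$) and $\alpha\neq\delta$ ($a_1\neq a_2$). The equation $\kappa(0)=-1$ reduces to $n(\alpha\beta+\gamma\delta)+2\alpha\beta\gamma\delta=0$; using $\alpha\beta-\gamma\delta=(e_2-e_1)(a_2-a_1)>0$ to bound the sizes of $\alpha\beta$ and $\gamma\delta$, a short elementary (Pythagorean-type) computation with the above inequalities shows this has no solution. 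Hence $\kappa$ is non-constant and the theorem follows. I expect this last arithmetic — in effect, that the two multisets of multipliers are genuinely distinct up to sign, which is where $n\neq0$, $e_2>e_1+1$, $a_2>a_1$ and $e_i\neq a_j,a_j-1$ are all used — together with the bookkeeping in Step 2 that makes the scalings in hypothesis (3) produce a choice-independent answer, to be the main obstacle.
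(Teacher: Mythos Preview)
Your approach is sound in outline and runs parallel to the paper's through Step~1: both recognize that the existence of $\phi$ is governed by a single cross-ratio, and that the theorem reduces to showing this cross-ratio, as a function of $P$, is non-constant on $C$. From there the two arguments diverge. The paper (Proposition~\ref{genericity}) constructs this function $r_{e_1,e_2}$ indirectly, by realizing it as the composite of a birational map $C\dashrightarrow Z\subset C\times\mathcal{M}or_2(C,\PP^1)$ with the evaluation morphism, and then proves non-constancy by exhibiting a specific torsion point where $r_{e_1,e_2}$ has a zero or pole: one finds an $|e_{i'}-a_{j'}+1|$-torsion point that avoids the $|e_i-a_j|$-, $(e_2-e_1)$-, and $(a_2-a_1)$-torsion loci, via a case split on the relative orderings of $a_1,a_2,e_1,e_2$ (this is where $n\neq0$ and $e_2>e_1+1$ enter). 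Your route --- the explicit $\sigma$-product and the multiset criterion for constancy --- is more elementary and yields a closed formula for $\kappa$, which is attractive; the paper's argument is more geometric and sidesteps the $\sigma$-function bookkeeping entirely.

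That said, there is a genuine gap at the end of your Step~3. The case $\kappa(0)=1$ is handled cleanly (and this is exactly where $n\neq0$ and the distinctness of $\{\alpha,\beta\}$ from $\{\gamma,\delta\}$ enter), but the case $\kappa(0)=-1$ --- i.e.\ the Diophantine equation $n(\alpha\beta+\gamma\delta)+2\alpha\beta\gamma\delta=0$ under the constraints $\alpha+\beta=\gamma+\delta\neq1$, $\alpha,\beta,\gamma,\delta\notin\{0,1\}$, $\alpha-\gamma=\delta-\beta\ge2$, $\delta-\alpha=\beta-\gamma\ge1$ --- is only asserted to have no solutions ``by a short elementary computation,'' and none is given. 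Writing $S=\alpha+\beta$, $k=|\alpha-\beta|$, $l=|\gamma-\delta|$, this becomes a quartic in $S$ for fixed $k,l$, and ruling out integer roots in general is not immediate; I do not see a ``Pythagorean-type'' shortcut, and your inequality $\alpha\beta-\gamma\delta=(e_2-e_1)(a_2-a_1)>0$ does not by itself control the sign or size of $2\alpha\beta\gamma\delta$ relative to $n(\alpha\beta+\gamma\delta)$. If you want to salvage this line, a cleaner alternative is to argue directly that the two multisets $\{|1-\alpha|,|1-\beta|,|\gamma|,|\delta|\}$ and $\{|1-\gamma|,|1-\delta|,|\alpha|,|\beta|\}$ differ --- this is equivalent to your criterion and bypasses the value at $u=0$ --- but that too seems to require casework comparable to the paper's torsion-point analysis. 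As it stands, the proposal is incomplete precisely at the point you flagged as the main obstacle.
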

To prove this theorem, we reformulate our setup as follows: let $C$ be an elliptic curve with a chosen base point $Q$ and fix two non-negative integers $a_1,a_2$. Given any non-torsion point $P\in (C,Q)$ and integers $e_i\neq a_j,a_j-1$ ($i,j=1,2$), we get a quadruple of rational functions $(g_1,h_2,g_2,h_2)$ such that $$(g_i)=(e_i-a_1)P+(a_1-e_i-1)Q+P_i, (h_i)=(e_i-a_2)P+(a_2-e_i-1)Q+Q_i,\text{ }i=1,2.$$ (We always assume $e_2>e_1>0,a_2>a_1>0$.) For any fixed choice of $g_1,h_2,g_2,h_2$, $\frac{g_1h_2}{g_2h_1}$ defines a degree 2 map $C\to\PP^1$, where $P_1,Q_2$ are the pre-images of 0 and $Q_1,P_2$ are the pre-images of $\infty$.

We start by a simple lemma.
\begin{lem}\label{endo}
  Define $\la_{ij}:C\to C:P\mapsto P'$, where $P'\sim (a_j-e_i)P+(e_i-a_j+1)Q$. Then, $\la_{ij}$ is an endomorphism of $C$.
\end{lem}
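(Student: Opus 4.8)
\textbf{Proof proposal for Lemma \ref{endo}.}

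The plan is to recognize $\la_{ij}$ as a composite of maps we already know to be morphisms of the elliptic curve $C$, exploiting the standard identification of $(C,Q)$ with its own Jacobian. First I would recall that, once the base point $Q$ is fixed, the Abel--Jacobi map $C\to\Pic^0(C)$, $x\mapsto\mathscr{O}(x-Q)$, is an isomorphism of algebraic groups; in particular the group law $x\oplus y$ on $(C,Q)$ is characterized by $x\oplus y\sim x+y-Q$ as divisor classes, and negation $x\mapsto\ominus x$ is the morphism with $\ominus x\sim 2Q-x$. Under this dictionary, the assignment $P\mapsto P'$ with $P'\sim (a_j-e_i)P+(e_i-a_j+1)Q$ is exactly $P\mapsto [a_j-e_i]\cdot P$, the multiplication-by-$(a_j-e_i)$ endomorphism of the elliptic curve $(C,Q)$: indeed $[a_j-e_i]\cdot P\sim (a_j-e_i)(P-Q)+Q=(a_j-e_i)P+(e_i-a_j+1)Q$, matching the stated class. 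Since $P'$ is a point of $C$ of degree one and $C$ has genus one, the divisor class $(a_j-e_i)P+(e_i-a_j+1)Q$ of degree one determines $P'$ uniquely (by Riemann--Roch, a degree-one effective divisor class on a genus-one curve contains a unique effective divisor), so the map is well defined.

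The key steps, in order, would be: (1) fix the isomorphism $(C,Q)\xrightarrow{\sim}\Pic^0(C)$ and record the divisor-class formula for $\oplus$ and $\ominus$; (2) observe that $n\cdot P$ (in the group, for $n=a_j-e_i\in\ZZ$) is a regular morphism $C\to C$ — it is a polynomial composite of the addition morphism $C\times C\to C$ and the inversion morphism, hence a morphism of varieties; (3) compute the divisor class of $[a_j-e_i]\cdot P$ and check it equals $(a_j-e_i)P+(e_i-a_j+1)Q$, so $\la_{ij}=[a_j-e_i]$ as set maps; (4) invoke uniqueness of the effective representative of a degree-one class to conclude $\la_{ij}$ is well defined and equals the endomorphism $[a_j-e_i]$, which is in particular an endomorphism of $C$ (as a curve, and indeed as a group).

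I do not expect a serious obstacle here: the statement is essentially the observation that ``multiply the divisor class by an integer'' is algebraic, which is built into the group-scheme structure of an elliptic curve. The only point requiring a little care is the bookkeeping of the constant term $(e_i-a_j+1)Q$ so that the total degree stays $1$ and the image is genuinely a single point of $C$ rather than a higher-degree class; this is immediate from $(a_j-e_i)+(e_i-a_j+1)=1$. One should also note that nothing in the argument requires $a_j-e_i$ to be positive — the formula and the conclusion are insensitive to its sign, which is consistent with the hypotheses $e_i\neq a_j,a_j-1$ only being needed elsewhere (to guarantee nonvanishing of the associated rational functions), not for this lemma.
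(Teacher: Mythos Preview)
Your proof is correct and takes essentially the same approach as the paper: both identify $\la_{ij}$ with the multiplication-by-$(a_j-e_i)$ endomorphism of $(C,Q)$ via the Abel--Jacobi identification. The paper's version factors this as $P\mapsto[Q-P]\mapsto m[Q-P]\mapsto m[Q-P]+[Q]$ with $m=e_i-a_j$ (i.e.\ negate first, then multiply by $m$), but this is just a cosmetic rearrangement of the same computation, and your remark that the hypotheses $e_i\neq a_j,a_j-1$ are not actually needed for this lemma is accurate.
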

\begin{proof}
  Notice that under our assumptions, $a_j-e_i,e_i-a_j+1\neq0,1$. Denote $m=e_i-a_j$. Then, $\la_{ij}$ is the composition of the following morphisms: first, take $C\to J(C):P\mapsto [Q-P]$; then, compose with $J(C)\to J(C):[Q-P]\mapsto m\cdot[Q-P]$; further, compose with $J(C)\to \Pic^1(C):[D]\mapsto [D+Q]$; eventually, compose with the natural isomorphism $\Pic^1(C)\to C$.
\end{proof}
We now state and prove the main technical result which leads to Theorem \ref{genericity0}:
\begin{prop}\label{genericity}
 Suppose $n=e_1+e_2-a_1-a_2+1\neq 0$. Fix $Q$ to be the base point of $C$. Let $P$ be any non-torsion point of $(C,Q)$, and denote $g^P_i,h^P_i$ ($i=1,2$) to be four rational functions determined (up to a scalar) by the following:
 $$(g^P_i)+(a_1-e_i)P+(e_i-a_1+1)Q\ge0,i=1,2$$
 $$(h^P_i)+(a_2-e_i)P+(e_i-a_2+1)Q\ge0,i=1,2$$
Then, the map $\tau:P\mapsto ({g^P_1h^P_2\over g^P_2h^P_1})(P)/({g^P_1h^P_2\over g^P_2h^P_1})(Q)$ extends to a rational function $r_{e_1,e_2}$ on $C$. If either $n>0$, or $n<0$ and $e_2>e_1+1$, then $r_{e_1,e_2}$ is non-constant.
\end{prop}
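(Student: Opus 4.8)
The plan is to make the map $P \mapsto \tau(P)$ explicit as a composition of morphisms to and from the Jacobian, using the endomorphisms $\la_{ij}$ of Lemma \ref{endo} as building blocks, and then to extract enough information about its degree (equivalently, about the divisor of $r_{e_1,e_2}$) to rule out constancy. First I would spell out what $\tau(P)$ actually is: since $g^P_i$ has divisor $(e_i-a_1)P + (a_1-e_i-1)Q + P_i$ where $P_i \sim (a_1-e_i)P + (e_i-a_1+1)Q = \la_{1i}(P)$ (and similarly $Q_i \sim \la_{2i}(P)$), evaluating the ratio $\tfrac{g^P_1 h^P_2}{g^P_2 h^P_1}$ at $P$ and at $Q$ produces, after clearing the common scalars, an expression that depends on $P$ only through the four points $P_1 = \la_{11}(P)$, $P_2 = \la_{12}(P)$, $Q_1 = \la_{21}(P)$, $Q_2 = \la_{22}(P)$ and the fixed points $P, Q$. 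In other words $r_{e_1,e_2}$ is, up to normalization, a cross-ratio-type function of these six points, hence manifestly a rational function of $P$; this gives the first assertion. The key numerical input is that $P_1 + Q_2 \sim Q_1 + P_2$ as divisors of degree $2$ whenever the two sections pair up at $Q$, which is where the hypothesis (the analogue of condition 3 in Theorem \ref{genericity0}, i.e. $s^P_1|_Q = s^P_2|_Q$) and the relation $n = e_1+e_2-a_1-a_2+1$ enter.

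Next I would compute the divisor of $r_{e_1,e_2}$ on $C$, or at least enough of it to see it is nonzero. The zeros and poles of $r_{e_1,e_2}$ come from the $P$ for which some $g^P_i h^P_j$ specializes to $0$ or $\infty$ at the evaluation points, i.e. from the coincidences $\la_{11}(P) = P$, $\la_{22}(P) = Q$, etc.; each such locus is the preimage under a composite endomorphism of a point, and the multiplicities are governed by the degrees of those endomorphisms, which are $(a_j - e_i)^2$ by Lemma \ref{endo} together with the fact that multiplication by $m$ on an elliptic curve has degree $m^2$. The point is to show that the total ``positive part'' of the divisor of $r_{e_1,e_2}$ does not cancel against the ``negative part'': a degree/multiplicity bookkeeping using $(a_1-e_1)^2 + (a_2-e_2)^2$ versus $(a_1-e_2)^2 + (a_2-e_1)^2$ — whose difference is $2n(a_2-a_1)$ or a similar multiple of $n$ — will be nonzero precisely under the stated hypotheses ($n>0$, or $n<0$ with $e_2 > e_1+1$, the latter extra inequality being exactly what prevents a degenerate cancellation when $n$ is negative and small in absolute value). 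I would organize this as: (i) list the candidate base points of $r_{e_1,e_2}$; (ii) attach to each a local multiplicity read off from the relevant $\la_{ij}$; (iii) observe that the alternating sum of these multiplicities computes (a nonzero multiple of) $\deg(r_{e_1,e_2})$, and show it is $\neq 0$.

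I expect the main obstacle to be step (ii)–(iii): carefully matching up which coincidences among $P, Q, P_1, P_2, Q_1, Q_2$ actually contribute to the divisor of $r_{e_1,e_2}$ after the scalar normalizations cancel, and making sure no two contributions occur at the same point of $C$ and silently annihilate (this is presumably why ``$P$ non-torsion'' is assumed — to keep the various $\la_{ij}(P)$ distinct from $P$, $Q$, and from each other for general $P$). A clean way to bypass the worst of the bookkeeping would be to evaluate $r_{e_1,e_2}$ at one convenient special value of $P$ (e.g. a well-chosen torsion point, taking the limit) where the quantity is visibly $\neq 1$, or alternatively at a point where it visibly blows up or vanishes; combined with the fact that $r_{e_1,e_2}$ is already known (from the converse direction of Lemma \ref{aux}, or Corollary \ref{auxc}) to equal $1$ at a positive-dimensional family of $P$ in the borderline case $n=0$, any single value $\neq 1$ forces non-constancy. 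So concretely I would: (a) establish $r_{e_1,e_2}$ is a well-defined rational function via the Jacobian description above; (b) reduce non-constancy to exhibiting one $P_0$ with $r_{e_1,e_2}(P_0) \neq 1$; (c) produce such a $P_0$ by a direct local computation at a cleverly chosen point, using $n \neq 0$ (and $e_2 > e_1 + 1$ when $n<0$) to guarantee the relevant order of vanishing/pole is odd or otherwise nonzero.
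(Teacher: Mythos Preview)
Your plan to realize $r_{e_1,e_2}$ via the endomorphisms $\la_{ij}$ is sound and close in spirit to the paper's argument (the paper phrases the extension step through the moduli scheme of degree-$2$ maps $C\to\PP^1$, but the content is the same). The non-constancy argument, however, has two genuine problems.

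First, your alternative route (b)--(c) is aimed at the wrong target. When $n\neq 0$, the converse direction of Lemma~\ref{aux} tells you that $r_{e_1,e_2}(P)\neq 1$ for \emph{every} non-torsion $P$; so exhibiting one $P_0$ with $r_{e_1,e_2}(P_0)\neq 1$ proves nothing. What the paper does instead is exactly the corrected version of your idea: since $r_{e_1,e_2}(P)\in K^*$ on the non-torsion locus, it suffices to locate a single $P$ where $r_{e_1,e_2}$ has a zero or a pole. Such a $P$ is a torsion point of order $|e_{i'}-a_{j'}+1|$ for some $i',j'$ (this is when $\la_{i'j'}(P)=P$), and the work is to choose the pair $(i',j')$ so that this torsion order exceeds all the ``bad'' torsion orders $|e_i-a_j|$, $e_2-e_1$, $a_2-a_1$ at which the construction degenerates. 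The paper does this by a short case analysis on the relative ordering of $e_1,e_2,a_1,a_2$, and this is where the hypothesis $n>0$ (or $n<0$ with $e_2>e_1+1$) is actually used.

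Second, your primary ``alternating sum of degrees'' approach does not compute what you want. The endomorphism $\la_{ij}$ has degree $(a_j-e_i)^2$, but the zeros and poles of $r_{e_1,e_2}$ live at fixed points of $\la_{ij}$ and at $\la_{ij}^{-1}(Q)$, whose counts are governed by $(e_i-a_j+1)^2$ and $(a_j-e_i)^2$ respectively, not by a single one of these. More seriously, the quantity you write down, $(a_1-e_1)^2+(a_2-e_2)^2-(a_1-e_2)^2-(a_2-e_1)^2=2(e_2-e_1)(a_1-a_2)$, is nonzero under the standing hypotheses $e_1\neq e_2$, $a_1\neq a_2$ and does not involve $n$ at all; so it cannot be the invariant that detects non-constancy, since constancy genuinely can occur when $n=0$. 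If you want to push a global divisor computation through, you would need to track cancellations among overlapping torsion loci much more carefully than an alternating sum of endomorphism degrees---and at that point the paper's ``find one good torsion point'' argument is both shorter and more transparent.
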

\begin{proof}
  Notice that although $g_i^P,h_i^P$ are only determined up to a scalar, $g_i^P(P)/g_i^P(Q)$ and $h_i^P(P)/h_i^P(Q)$ are well-defined. In particular, $\tau$ is well-defined over the locus of non-torsion points.

  Let $M:=\mathcal{M}or_2(C,\PP^1)$ be the quasi-projective scheme parametrizing degree 2 morphisms from $C$ to $\PP^1$. (See \cite{FGA} for Grothendieck's construction of Hom schemes. See also Theorem A.1 in \cite{SMP}.) Consider the map $\phi:C\times M\to \PP^1$ which sends a pair $(P,H)$ to $H(P)$. This is nothing but the universal object of the moduli scheme. We shall now construct a map $C\dashrightarrow\PP^1$ which restricts to $\tau$ over the locus of non-torsion points and factorizes as $C\dashrightarrow Z\stackrel{\phi|_Z}{\to}\PP^1$, where $Z\subset C\times M$ is birational to $C$.

  First, let $X$ be the locus of $(P,H)$ in $C\times M$ given by the condition $H(Q)=1$. $X$ is closed in $M$ since it is the pre-image of $M'$ under the projection $C\times M\to M$, where $M'$ is the fiber over $1$ under the evaluation map $ev_Q:M\to\PP^1$.

  Next, let $Y$ be the locus of $(P,H)$ in $C\times M$ determined by the condition $(H)=\la_{11}(P)+\la_{22}(P)-\la_{12}(P)-\la_{21}(P)$, where $\la_{ij}$ are as in Lemma \ref{endo}. We claim $Y$ is also closed in $C\times M$. There are two natural morphisms $p_1,p_2:M\to \sy^2C=\hilb_2(C)$, taking a degree 2 map to its zeros and poles respectively. Denote $Y_1$ to be the equalizer of the diagram $C\times M\rightrightarrows\hilb_2(C)\cong \sy^2C$, where the top arrow is $C\times M\to C\stackrel{(\la_{11},\la_{22})}{\to}C^2\to \sy^2C$, and the bottom arrow is $C\times M\to M\stackrel{p_1}{\to} \sy^2C$. Similarly, denote $Y_2$ to be the equalizer of a diagram of the same shape, with top arrow replaced by $C\times M\to C\stackrel{(\la_{12},\la_{21})}{\to}C^2\to \sy^2C$, and the bottom arrow replaced by $C\times M\to M\stackrel{p_2}{\to} \sy^2C$. Then, $Y=Y_1\cap Y_2$ is the intersection of two closed sub-schemes, which is closed.

  Define $Z=X\cap Y\subset C\times M$, which is closed in $C\times M$. Consider the first projection $\pi_1:C\times M\to C$. We claim that $\pi_1$ maps $Z$ bijectively to $C':=C\backslash E$, where $E$ is the finite set of points consisting of $(e_2-e_1)$-torsions, $|e_i-a_j|$-torsions ($i,j=1,2$) and $(a_2-a_1)$-torsions of $(C,Q)$.

  First of all, $H^P:=\frac{g_1^Ph_2^P}{g_2^Ph_1^P}$ fails to give a degree 2 map to $\PP^1$ if and only if $\{\la_{11}(P),\la_{22}(P)\}\cap\{\la_{12}(P),\la_{21}(P)\}\neq\emptyset$. But $\la_{ii}(P)=\la_{ij}(P)$ if and only if $P$ is an $(a_2-a_1)$-torsion, and $\la_{ii}(P)=\la_{ji}(P)$ if and only if $P$ is an $(e_2-e_1)$-torsion. Secondly, when $P$ is not $(a_2-a_1)$-torsion or $(e_2-e_1)$-torsion, $H^P(Q)=1$ holds (up to appropriate scaling) if and only if $Q\notin\{\la_{11}(P),\la_{12}(P),\la_{21}(P),\la_{22}(P)\}$, i.e. $P$ is not an $|e_i-a_j|$-torsion for any $i,j$.

  Thus, $\fa P\in C'$, there exists a unique $H\in M$ satisfying the conditions $H(Q)=1$, and $(H)=\la_{11}(P)+\la_{22}(P)-\la_{12}(P)-\la_{21}(P)$. In particular, $\pi_1|_Z:Z\to C'$ is a bijective morphism. Since $C'$ is a non-empty open sub-variety of $C$, $Z$ must be a quasi-projective curve and $\pi_1|_Z$ sends its generic point to the generic point of $C$. This induces an isomorphism $K(C)\to K(Z)$, since we are working over a base field of characteristic zero. Thus, $C$ and $Z$ are birational. One can then conclude that there is a rational map $C\dashrightarrow Z\stackrel{\phi|_Z}{\to}\PP^1$, which extends to a rational function $r_{e_1,e_2}$ on $C$; over the locus of non-torsion points, it restricts to the map $\tau$.

  It only remains to check non-constancy of $r_{e_1,e_2}$. It is not hard to see that over the locus $P$ is non-torsion, $r_{e_1,e_2}(P)\in K^*$. To show it is non-constant, it then suffices to show $r_{e_1,e_2}$ has zeros or poles. To do so, it further suffices to show that there exists some $P$ which is $|e_{i'}-a_{j'}+1|$-torsion for some $i',j'$, and is not $|e_i-a_j|$-torsion for any $i,j$, nor $(e_2-e_1)$-torsion, nor $(a_2-a_1)$-torsion.

  We first assume $n=e_1+e_2-a_1-a_2+1>0$. Based on our assumptions on $e_1,e_2,a_1,a_2$, one has the following possibilities:
  $$1.\ e_2>a_2>e_1>a_1;\hspace*{3cm} 2.\ e_2>e_1>a_2>a_1;$$
  $$3.\ e_2>a_2>a_1>e_1;\hspace*{3cm} 4.\ a_2>e_2>e_1>a_1.$$
  Notice that in the first two cases $e_2-a_1+1>2$ is strictly larger than any of $|e_i-a_j|$, or $a_2-a_1$, or $e_2-e_1$. Therefore, one can find some $P$ which is an $(e_2-a_1+1)$-torsion, but not $|e_i-a_j|$-torsion, nor $(e_2-e_1)$-torsion, nor $(a_2-a_1)$-torsion, such that $r_{e_1,e_2}(P)=\infty$, and we are done.

  In the third case, consider $e'_1$ such that $e'_1+e_1=a_1+a_2-1$. Then, $e'_1>a_2$ and one can check $e_2>e_1'>a_2>a_1$. By Lemma \ref{aux}, the rational function $r_{e_1,e_2}$ agrees with $r_{e_1',e_2}$ on the locus on non-torsion points. Hence, it suffices to see that $r_{e_1',e_2}$ is non-constant, which follows directly from our discussion of the second case.

  In the last case, given $n>0$, one gets $e_2-a_1+1>e_2-a_1\ge a_2-e_1>a_2-e_2$. Also, $e_2-a_1+1>e_2-e_1$ and $e_2-a_1+1>e_1-a_1+1>e_1-a_1$. Therefore, to show there exists some $(e_2-a_1+1)$-torsion which is not $|e_i-a_j|$-torsion for any $i,j$, nor $(e_2-e_1)$-torsion, nor $(a_2-a_1)$-torsion, it suffices to show $e_2-a_1+1\nmid a_2-a_1$. But given that $n>0$ and $e_2\neq a_2-1$, $e_2-a_1+1<a_2-a_1=(a_2-e_2)+(e_2-a_1)<2(e_2-a_1)$. The result then follows.

  Hence, when $n>0$, $r_{e_1,e_2}$ is always non-constant.

  When $n<0$, one can apply a similar case by case analysis to prove the result.
  \end{proof}
Eventually, we prove Theorem \ref{genericity0}:
\begin{proof}[Proof of Theorem \ref{genericity0}]
By our assumptions and Corollary \ref{auxc}, $T^P_1|_P,T^P_2|_P,s^P_1|_P,s^P_2|_P$ are four distinct points in $\PP\mE^P|_P$.

For any fixed $P$, $s^P_1,s^P_2$ determines a quadruple of rational functions $(g^P_1,h^P_1,g^P_2,h^P_2)$ on $C$, up to scalars.

Given the condition $s^P_1|_Q=s^P_2|_Q\in \PP\mE|_Q$, $({g^P_1h^P_2\over g^P_2h^P_1})(Q)=1$. Suppose we fix some $P$ and consider general choice of $P'$. The condition $\phi(T^P_1|_P)=T^{P'}_1|_{P'}$, $\phi(T^P_2|_P)=T^{P'}_2|_{P'}$, $\phi(s^P_1|_P)=s^{P'}_1|_{P'}$ determines $\phi$. If further $\phi(s^P_2|_P)=s^{P'}_2|_{P'}$, one gets $({g^P_1h^P_2\over g^P_2h^P_1})(P)=({g^{P'}_1h^{P'}_2\over g^{P'}_2h^{P'}_1})(P')=m\in K^*$.

By Proposition \ref{genericity}, for any given $m\in K^*$, equations $$\begin{cases}
({g^P_1h^P_2\over g^P_2h^P_1})(P)=m\\
({g^P_1h^P_2\over g^P_2h^P_1})(Q)=1
\end{cases}$$ hold only for finitely many $P\in C$. Hence, Theorem \ref{genericity0} follows.
\end{proof}
\section{Rank Two Limit Linear Series}\label{definition}
Limit linear series was first defined in rank one by Eisenbud and Harris in \cite{EHL}. In higher rank, the corresponding theory was developed by Teixidor i Bigas in \cite{M5}. Recently in a preprint (\cite{Ohrk}) Osserman gave a stack structure for moduli of objects considered by Eisenbud, Harris and Teixidor (which we shall refer to as \tb{Eisenbud-Harris-Teixidor limit linear series}\rm, or \tb{EHT limit linear series}\rm\ for short) and proved a smoothing theorem (see Theorem \ref{smoothing}) which theoretically justifies the degeneration approach towards higher rank Brill-Noether Problems.

In this section, we recall some key definitions in \cite{Ohrk} and elaborate on some of the technical issues. In particular, we shall introduce the notion of (rank two) EHT-limit linear series over a chain of $N$ smooth projective curves, which we denote as $X_N$.
\begin{defn}
  Let $\mE$ be a vector bundle on $X_N$, with irreducible components $C_1,..,C_N$. We call $\vv{d}=(\deg(\mE|_{C_1}),...,\deg(\mE|_{C_N}))$ the \tb{multi-degree} of $\mE$.
\end{defn}

Let $d,d_1,...,d_N$ be integers satisfying the identity $\sum_j d_j-2b(N-1)=d$ with respect to some $b$.\footnote{For a discussion on the parameter $b$, see Section 6.2 in \cite{Ohrk}.} Denote $\mathcal{G}^k_{2,d_j}(C_j)$ to be the stack of rank two linear series of degree $d_j$, dimension $k$ over the component $C_j$. Suppose $C_j\cap C_{j+1}=P_{j+1}$. Denote $\mathcal{M}^2(P_j)$ to be the stack of rank two vector bundles over $P_j$. Further denote $\mathcal{P}^{k}_{2,d_{\bullet}}$ to be the product of $\mathcal{G}^k_{2,d_j}(C_j)$'s fibered over the $\mathcal{M}^2(P_j)$'s.

\begin{rem}\label{twist}
  Following the convention in \cite{Ohrk}, instead of taking the the fibered product along the natural restriction maps $\mathcal{G}^k_{2,d_j}(C_j)\to \mathcal{M}^2(P_j),\mathcal{G}^k_{2,d_j}(C_j)\to \mathcal{M}^2(P_{j+1})$, we first choose some appropriate twisting of $\mE_j$, $\mE_j''=\mE_j(e_{j,1}P_j+e_{j,2}P_{j+1})$ such that $\sum\deg(\mE_j'')=d$ and then take the restrictions.

  The motivation is the following: any rank two, degree $d$ vector bundle $\mE$ on $X_N$ can be thought of as a collection of rank two vector bundles $(\mE_j'')_{j=1}^N$ over the components together with some chosen gluing isomorphisms at the nodes. The sections of $\mE$ can also be thought of as sections of $\mE_j''$ glued at the nodes. Start with a datum in $\mathcal{P}^{k}_{2,d_{\bullet}}$, we have vector bundles $\mE_j$ over $C_j$ with degree $d_j$, where $\sum d_j-2b(N-1)=d$. By choosing different twisting at the nodes, we obtain rank two vector bundles of different multi-degrees over $X_N$. In this convention of taking the fibered product, we essentially fix a multi-degree and take gluing data for the vector bundle of this multi-degree.
\end{rem}
Following notations in Remark \ref{twist}, let $\vv{d}=(\deg(\mE''_1),...,\deg(\mE''_N))$. Denote $\mathcal{M}_{2,\vv{d}}(X_N)$ to be the moduli stack of rank two, multi-degree $\vv{d}$ vector bundles on $X_N$. It is easy to see that there is a forgetful map $\pi:\mathcal{P}^{k}_{2,d_{\bullet}}\to\mathcal{M}_{2,\vv{d}}(X_N)$ (see also \cite{Ohrk}, remarks following Notation 4.1.1).
\begin{rem}\label{p}
There is a simple description of geometric fibers of $\pi$. Let $x:\spec(F)\to\mathcal{M}_{2,\vv{d}}(X_N)$ represent a point of $|\mathcal{M}_{2,\vv{d}}(X_N)|$ corresponding to some vector bundle $\mE''$ such that $\mE''|_{C_j}=\mE''_j$. The fiber $\pi_x$ is representable by a product of Grassmannians: $\mathbf{G}:=\prod\Gr(k,\Gamma(\mE_j))$, where $\mE_1,...,\mE_N$ are vector bundles which differ from $\mE''_1,...,\mE''_N$ by the fixed twisting in the definition of $\mathcal{P}^k_{2,d_{\bullet}}$.
\end{rem}
We now give the definition of rank two limit linear series on a chain $X_N$ of smooth curves:
\begin{defn}\label{Def}
Let $((\mE_j,V_j)_{i=1}^N,(\phi_j)_{j=2}^N)$ be a $K$-valued point of $\mathcal{P}_{2,d_{\bullet}}^{k}(X_N)$, where $(\mE_j,V_j)$ is the corresponding point in $\mathcal{G}^k_{2,d_j}(C_j)$ and $\phi_j:\mE''_{j-1}|_{P_j}\stackrel{\sim}{\to}\mE''_j|_{P_j}$ (See Remark \ref{twist}). Then, $((\mE_j,V_j)_{j=1}^N,(\phi_j)_{j=2}^N)$ is a rank two, degree $d$, dimension $k$ \tb{Eisenbud-Harris-Teixidor limit linear series} (\textbf{EHT-limit linear series} in short) if:
\begin{enumerate}
  \item $H^0(\mE_j(-(b+1)P_j))=0,H^0(\mE_j(-(b+1)P_{j+1}))=0$, where $\sum d_j-2b(N-1)=d$.
  \item Let $b_{j,1}\ge...\ge b_{j,k}$, $a_{j+1,1}\le...\le a_{j+1,k}$ be the vanishing sequences of $V_j,V_{j+1}$ at $P_{j+1}$ resp. Then, $b_{j,i}+a_{j+1,i}\ge b$, for all $i$.
  \item For every $j$, there exist bases $\{s^{j,j}_1,...,s^{j,j}_k\}$, $\{s^{j,j+1}_1,...,s^{j,j+1}_k\}$ of $V_j$ such that $\ord_{P_j}(s^{j,j}_i)=a_{j,i}, \ord_{P_{j+1}}(s^{j,j+1}_i)=b_{j,i}$; and if $b_{j,i}+a_{j+1,i}=b$, $\phi_{j+1}(s^{j,j+1}_i|_{P_{j+1}})=s^{j+1,j+1}_i|_{P_{j+1}}$.
\end{enumerate}
We say $((\mE_j,V_j)_{j=1}^N,(\phi_j)_{j=2}^N)$ is \tb{refined}, if $b_{j,i}+a_{j+1,i}=b$ for all $i,j$.

We also refer to $d_{\bullet}$ as the \textbf{componentwise degree} of $((\mE_j,V_j)_{j=1}^N,(\phi_j)_{j=2}^N)$.
\end{defn}
\begin{rem}\label{glue}
  We clarify condition (3) in \ref{Def}: $s^{j,j}_P|_P$ (\tb{Notation 10}) is a point in $\PP\mE_j|_P$. Here, we abuse notation and refer to $s^{j,j}_i|_P$ (resp. $s^{j,j+1}_i|_P$) as some lift of the point to $\mE|_P$. Since $\s(s^{j,j}_1,...,s^{j,j}_k)=\s(a_1s^{j,j}_1,...,a_ks^{j,j}_k)=V_j$, $\fa a_1,...,a_k\in K^*$, this does not cause ambiguity.

  Note that instead of specifying one degree $d$ vector bundle on $X_N$, the data of a limit linear series give a collection of similar (in the sense of Remark \ref{similar}) degree $d$ vector bundles $\mE'$ each of which is determined as follows: for every $j$, $\mE'|_{C_j}=\mE_j(e_{j,1}P_j+e_{j,2}P_{j+1})$ for some $e_{j,1},e_{j,2}\in\ZZ$ such that $e_{j,2}+e_{j+1,1}=-b$ and $\mE'|_{C_j}$ is glued to $\mE'|_{C_{j+1}}$ via an isomorphism induced by $\phi_{j+1}$. The last part makes sense, since $(\phi_j)_{j=2}^N$ was given for some $\mE''$ of a particular multi-degree $\vec{d}$ (see Remark \ref{twist}); any similar vector bundle of a different multi-degree $d'_{\bullet}$ can be obtained from $\mE''$ by twisting by a line bundle $\mO_{d''_{\bullet},d'_{\bullet}}$ on $X_N$, which restricts to $\mO(-n_jP_j+n_{j+1}P_{j+1})$ on $C_j$, where $n_j$ are integers such that $\det(\mE''|_{C_j})^{-1}\ot\det(\mE'|_{C_j})=\mO(-2n_jP_j+2n_{j+1}P_{j+1})$. (We shall refer to such an $\mE'$ as \tb{a vector bundle induced by the limit linear series $((\mE_j,V_j),(\phi_j))$}\rm.)
  \end{rem}
Following \cite{Ohrk} 4.2.1, there exists a stack $\GG^{k,\text{EHT}}_{2,d,d_{\bullet}}(X_N)$ of rank two, degree $d$, dimension $k$ EHT-limit linear series, which is a locally closed substack of $\mathcal{P}_{2,d_{\bullet}}^{k}(X_N)$.

For our context, we now define the the moduli of limit linear series with fixed determinant:
\begin{defn}\label{fixdet}
  Denote $\mathcal{M}_{2,\vv{d},\mL}(X_N)$ to be the moduli stack of pairs $(\mE,\psi)$, where $\mE$ is a rank two, multi-degree $\vv{d}$ vector bundle on $X_N$ and $\psi:\det(\mE)\to\mL$ is an isomorphism (after appropriate twisting of $\det(\mE)$ at the nodes, see Definition \ref{fixdef}). Then, the moduli stack of rank two, componentwise degree $d_{\bullet}$, dimension $k$ EHT-limit linear series with fixed determinant $\mL$ is the (2-)fibered product
  $$\GG^{k,\text{EHT}}_{2,\mL,d_{\bullet}}(X_N):=\GG^{k,\text{EHT}}_{2,d,d_{\bullet}}(X_N)\times_{\mathcal{M}_{2,\vv{d}}(X_N)}\mathcal{M}_{2,\vv{d},\mL}(X_N).$$
\end{defn}
\begin{rem}
  In other words, besides the usual data of an EHT-limit linear series, we further specify an isomorphism from the determinant to the prescribed line bundle. Automorphisms of objects in this stack are automorphisms of limit linear series that are compatible with this isomorphism. Consequently, the stack dimension in the fixed determinant case agrees with the dimension of the corresponding coarse moduli space.
\end{rem}
Lastly, we recall the following definitions from \cite{Osp}. They are useful in our construction and relevant to the smoothing theorem (Theorem \ref{smoothing}) we shall apply:
\begin{defn}
Let $C$ be a smooth projective curve over $K$. Let $(\mE,V)$ be a pair, where $\mE$ is a rank-$r$ vector bundle on $C$, and $V$ is a $k$-dimensional space of sections. Given $P,Q\in C$, denote $\van_P(V)=(a_1\le...\le a_k)$, and $\van_Q(V)=(b_1\ge...\ge b_k)$. We say a basis $\{s_i\}$ of $V$ is $\mathbf{(P,Q)}$\textbf{-adapted}, if $\ord_P(s_i)=a_i,\ord_Q(a_i)=b_i$, for all $i$. If $V$ admits an adapted basis, we say $(\mE,V)$ is $\mathbf{(P,Q)}$\textbf{-adaptable}.
\end{defn}
\begin{defn}
  Let $X_N$ be a chain of smooth projective curves $C_1,...,C_N$, with $P_j,Q_j\in C_j$, and $Q_j$ glued to $P_{j+1}$. A refined Eisenbud-Harris-Teixidor limit linear series, $((\mE_j,V_j),(\phi_j))$ is said to be \textbf{chain adaptable}, if the pair $(\mE_j,V_j)$ is $(P_j,Q_j)$-adaptable, for $j=2,...,N-1$.
\end{defn}
\section{Restrictions on Vector Bundles from Prescribed Vanishing Data}
In practice, we shall construct families of rank two limit linear series on a chain $X_N$ by prescribing vanishing sequences at all nodal points $P_j$. These vanishing data play a central role in our analysis. In this section, we discuss various restrictions on the underlying vector bundles resulting from prescribing vanishing data at $P_j$. They turn out to be useful in moduli counting.
\begin{si}\label{sit2}
Throughout this section, we implicitly assume the following:
\begin{enumerate}
  \item $C$ is an elliptic curve and $P,Q$ are two general points on $C$.
  \item Let $V\subset\Gamma(\mE)$ be a $k$-dimensional subspace of a rank two vector bundles $\mE$, whose vanishing sequences at $P,Q$ are $(a_1\le...\le a_k)$ and $(b_1\ge...\ge b_k)$.
  \item $u(\mE)\le{1\over 2}$ (see \tb{Convention}).
\end{enumerate}
\end{si}
\begin{rem}
  Notice that the third assumption here is consistent with the third assumption in \ref{s1}.
\end{rem}
Under these assumptions, we first do a simple calculation.
\begin{lem}\label{0}
  Given $C,\mE,V$ and $P,Q\in C$ and the vanishing sequences $(a_k),(b_k)$ as in \ref{sit2}, for every $t\in\{1,...,k\}$, $\dim(V(-a_tP-b_tQ))\ge|\{i|a_i\ge a_t,b_i\ge b_t\}|$. In particular, $\exists s\in V$ such that $\ord_P(s)\ge a_t,\ord_Q(s)\ge b_t$.
\end{lem}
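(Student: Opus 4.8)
The plan is to prove the dimension bound by constructing, for each fixed $t$, a sufficiently large space of sections in $V$ vanishing to the prescribed orders at $P$ and $Q$ simultaneously. First I would set $S_t = \{i \mid a_i \ge a_t,\ b_i \ge b_t\}$ and let $N_t = |S_t|$; the goal is to show $\dim V(-a_tP - b_tQ) \ge N_t$. The natural approach is to play off the two vanishing filtrations of $V$ at $P$ and at $Q$. Writing $V^P_{\ge a} = V(-aP)$ and $V^Q_{\ge b} = V(-bQ)$, the definition of vanishing sequences gives $\dim V^P_{\ge a_t} \ge |\{i \mid a_i \ge a_t\}| =: p_t$ and $\dim V^Q_{\ge b_t} \ge |\{i \mid b_i \ge b_t\}| =: q_t$ (in fact these are equalities if the sequences are written in full with multiplicity, but the inequality is all I need). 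Since $V(-a_tP - b_tQ) = V^P_{\ge a_t} \cap V^Q_{\ge b_t}$ inside the $k$-dimensional space $V$, the standard inequality $\dim(A \cap B) \ge \dim A + \dim B - \dim V$ yields $\dim V(-a_tP-b_tQ) \ge p_t + q_t - k$.

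The remaining, and genuinely essential, step is to show $p_t + q_t - k \ge N_t$, i.e. $N_t \ge k - p_t - q_t + \text{(something)}$ does not quite close the gap — so in fact the naive Grassmannian intersection bound is \emph{not} enough on its own, and this is where I expect the main obstacle to lie. The point is that $a_i \ge a_t$ and $b_i \ge b_t$ are correlated through the single space $V$: an index $i$ with large $a_i$ tends to have small $b_i$ and vice versa, because a section cannot vanish to high order at both $P$ and $Q$ unless $\mE$ is quite special — and here is exactly where the hypothesis $u(\mE) \le \tfrac12$ from Situation \ref{sit2}(3) must enter. Concretely, I would use $u(\mE) \le \tfrac12$ together with $C$ elliptic and $P,Q$ general (Situation \ref{sit2}(1)) to control which $(\ord_P, \ord_Q)$ pairs are realizable by nonzero sections, ruling out the degenerate configurations that would make the counting fail; morally, the "staircase" of $(a_i, b_i)$ must be sufficiently spread out. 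An alternative and perhaps cleaner route is to argue directly: pick an adapted-type basis $\{s_i\}$ with $\ord_P(s_i) = a_i$, reorder so the $b$-orders of this basis are non-increasing in a compatible way, and then show that for each $i \in S_t$ one can produce, by taking suitable $K$-linear combinations within the span of basis vectors indexed by $S_t$, a section vanishing to order $\ge a_t$ at $P$ and $\ge b_t$ at $Q$, giving $N_t$ independent such sections; the genericity of $P,Q$ guarantees these combinations do not collapse. The final sentence of the lemma, $\exists s \in V$ with $\ord_P(s) \ge a_t$, $\ord_Q(s) \ge b_t$, is then immediate: since $N_t \ge 1$ always (the index $i$ achieving both $a_i = a_t$ and — after possibly reordering — $b_i \ge b_t$ lies in $S_t$, or more simply $t$ itself can be arranged to lie in $S_t$), the space $V(-a_tP - b_tQ)$ is nonzero.

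Thus the skeleton is: (1) identify $V(-a_tP-b_tQ)$ as an intersection of two subspaces of $V$; (2) bound its dimension below using the vanishing-sequence dimensions of the two filtrations; (3) — the crux — invoke $u(\mE) \le \tfrac12$ and genericity of $P,Q$ on the elliptic curve $C$ to show the relevant index set $S_t$ is large enough, i.e. $|S_t| \ge$ the intersection bound, closing the inequality; (4) deduce the existence statement as the special case that the dimension is positive. I anticipate step (3) is where all the real work sits, and it is likely that the author's proof handles it by a short explicit argument exploiting that on an elliptic curve $\mO(aP + (b-a)Q)$ with $P,Q$ general has a completely understood section space, so that "too much simultaneous vanishing" forces a contradiction with $u(\mE) \le \tfrac12$.
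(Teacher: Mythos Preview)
Your setup is exactly right up to the point where you obtain $\dim V(-a_tP-b_tQ) \ge p_t + q_t - k$ from the subspace intersection bound. The gap is in what follows: you claim that ``the naive Grassmannian intersection bound is not enough on its own'' and that one must invoke $u(\mE)\le\tfrac12$, the ellipticity of $C$, and the genericity of $P,Q$ to close the inequality $p_t + q_t - k \ge N_t$. This is not so; that inequality is a one-line combinatorial fact coming solely from the monotonicity of the two vanishing sequences, and the paper's proof uses none of those geometric hypotheses.

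Here is the missing observation. By Situation~\ref{sit2}(2) the sequence $(a_i)$ is non-decreasing and $(b_i)$ is non-increasing. Hence $A:=\{i\mid a_i\ge a_t\}$ is an upper interval of $\{1,\dots,k\}$ containing $t$, and $B:=\{i\mid b_i\ge b_t\}$ is a lower interval containing $t$. In particular $A\cup B=\{1,\dots,k\}$, so inclusion--exclusion gives
\[
p_t+q_t=|A|+|B|=|A\cup B|+|A\cap B|=k+|\{i\mid a_i\ge a_t,\ b_i\ge b_t\}|=k+N_t,
\]
and therefore $p_t+q_t-k=N_t$ exactly. Combining this with your intersection bound finishes the proof. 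The ``in particular'' statement follows because $t\in A\cap B$, so $N_t\ge 1$. The lemma is thus pure linear algebra on the flag structure of $V$; the hypotheses you were trying to bring in are used only in the subsequent lemmas of the section.
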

\begin{proof}
  Clearly, $\dim V(-a_tP)=|\{i|a_i\ge a_t\}|$, $\dim V(-b_tQ)=|\{i|b_i\ge b_t\}|$. Since $(a_i)$ is increasing and $(b_i)$ is decreasing, $$|\{i|a_i\ge a_t\}|+|\{i|b_i\ge b_t\}|=k+|\{i|a_i\ge a_t,b_i\ge b_t\}|.$$
  Thus,
  \begin{equation}\begin{split}
    \dim V(-a_tP-b_tQ)=\dim(V(-a_tP)\cap V(-b_tQ))&\ge (k+|\{i|a_i\ge a_t,b_i\ge b_t\}|)-k\\&=|\{i|a_i\ge a_t,b_i\ge b_t\}|.
  \end{split}\end{equation}
\end{proof}
\begin{lem}\label{2}
  Suppose there exists $i_1,i_2$ such that $a_{i_1}+b_{i_1}=a_{i_2}+b_{i_2}=d$. Then, $\deg(\mE)\ge 2d$. If $\deg(\mE)=2d$, then $\mE\cong \mO(a_{i_1}P+b_{i_1}Q)\op \mO(a_{i_2}P+b_{i_2}Q)$.
\end{lem}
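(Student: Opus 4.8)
The plan is to use the sections guaranteed by Lemma \ref{0} to produce explicit invertible subsheaves of $\mE$ and then apply the hypothesis $u(\mE)\le\frac12$ from Situation \ref{sit2}. First I would apply Lemma \ref{0} with $t=i_1$: there exists a section $s_1\in V$ with $\ord_P(s_1)\ge a_{i_1}$ and $\ord_Q(s_1)\ge b_{i_1}$. Such a section generates an invertible subsheaf $\mL_1\hookrightarrow\mE$ containing $\mO(a_{i_1}P+b_{i_1}Q)$; twisting down, $s_1$ gives a nowhere-vanishing map $\mO(a_{i_1}P+b_{i_1}Q)\to\mE$ away from $P,Q$, and saturating we get $\mL_1\subseteq\mE$ with $\deg\mL_1\ge a_{i_1}+b_{i_1}=d$. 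The condition $u(\mE)\le\frac12$ means $\deg\mL_1-\frac{\deg\mE}2\le\frac12$, hence (both sides integers once we know $\deg\mathcal E$ is even — or more carefully, $\deg\mL\le\frac{\deg\mE}2+\frac12$ for all invertible sub-sheaves) $\deg\mE\ge 2\deg\mL_1-1\ge 2d-1$. To upgrade $2d-1$ to $2d$, I would look at the quotient: $\mE/\mL_1$ is a rank one sheaf of degree $\deg\mE-\deg\mL_1$, and if $\deg\mE=2d-1$ then this quotient has degree $\le d-1<d\le\deg\mL_1$, which would contradict $u(\mE)\le\frac12$ applied to... hmm, actually the cleaner route: a rank two bundle with an invertible subsheaf of degree $\ge d$ and total degree $2d-1$ has $u(\mE)\ge d-\frac{2d-1}2=\frac12$, which is allowed; so this parity argument alone does not exclude $2d-1$. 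Instead I would use the \emph{second} section $s_2$ coming from Lemma \ref{0} with $t=i_2$.

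The real argument for $\deg\mE=2d$ and the splitting: suppose $\deg\mE=2d$. Then $\deg\mathcal L_1 = d$ exactly (it cannot exceed $d$ since $u(\mE)\le\frac12$ forces $\deg\mathcal L_1\le d$, using that $2d$ is even and the inequality $\deg\mathcal L-d\le\frac12$ with integer left side gives $\le 0$). So the saturation $\mathcal L_1$ of the subsheaf generated by $s_1$ equals $\mO(a_{i_1}P+b_{i_1}Q)$ exactly (degree $d$ subsheaf between $\mO(a_{i_1}P+b_{i_1}Q)$ and $\mathcal L_1$, both degree $d$). Likewise $s_2$ produces $\mathcal L_2=\mO(a_{i_2}P+b_{i_2}Q)\subseteq\mE$, again with $\deg\mathcal L_2=d$. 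Now I distinguish two cases. If $a_{i_1}=a_{i_2}$ (equivalently $b_{i_1}=b_{i_2}$), then $a_{i_1}<a_{i_2}$ would be impossible in the vanishing sequence only if... actually the vanishing sequences $(a_i)$ increasing and $(b_i)$ decreasing with $a_{i_1}+b_{i_1}=a_{i_2}+b_{i_2}$ and $i_1\ne i_2$ force $a_{i_1}\ne a_{i_2}$ unless the corresponding $s_i$ can be taken linearly independent with the same orders — in any case one shows $\mathcal L_1\ne\mathcal L_2$ as subsheaves of $\mE$ because $s_1,s_2$ may be chosen independent (if $a_{i_1}=a_{i_2}$, pick two independent elements of $V(-a_{i_1}P-b_{i_1}Q)$, which has dimension $\ge 2$ by Lemma \ref{0}). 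Two distinct invertible subsheaves $\mathcal L_1,\mathcal L_2$ of a rank two bundle $\mE$, each of degree exactly half of $\deg\mE$, must meet trivially (their sum being saturated would otherwise force a subsheaf of too large degree), so $\mathcal L_1\oplus\mathcal L_2\hookrightarrow\mE$ is an inclusion of sheaves with the same degree $2d=\deg\mE$, hence an isomorphism. This gives $\mE\cong\mO(a_{i_1}P+b_{i_1}Q)\oplus\mO(a_{i_2}P+b_{i_2}Q)$.

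The step I expect to be the main obstacle is getting the inequality to be sharp, i.e.\ ruling out $\deg\mathcal E = 2d-1$ and more importantly pinning down that the saturations are \emph{exactly} the prescribed line bundles rather than larger ones, and verifying that $\mathcal L_1,\mathcal L_2$ can genuinely be taken distinct. The subtlety is that $s_1$ and $s_2$ might, a priori, be proportional or might saturate to the same sub-line-bundle; here is where I need both that $\dim V(-a_tP-b_tQ)$ is controlled by Lemma \ref{0} and that $P,Q$ are general so that no unexpected linear equivalences like $\mO(a_{i_1}P+b_{i_1}Q)\cong\mO(a_{i_2}P+b_{i_2}Q)$ occur when $(a_{i_1},b_{i_1})\ne(a_{i_2},b_{i_2})$. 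I would handle the remaining degree-$(2d-1)$ possibility by noting it contradicts the integrality/evenness packaging of $u(\mE)\le\frac12$: on a curve one checks $u(\mE)\equiv$ (something) so that $u(\mE)\le\frac12$ with $\deg\mE$ odd would force a subsheaf of degree $\le\frac{2d-1-1}2=d-1$, but $\mathcal L_1$ has degree $\ge d$ — contradiction. Once that is in place the rest is formal.
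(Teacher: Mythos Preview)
Your argument for the equality case $\deg\mE=2d$ is essentially sound and close to the paper's: once you know $\deg\mL_1=\deg\mL_2=d$ exactly, the two saturated subsheaves are $\mO(a_{i_1}P+b_{i_1}Q)$ and $\mO(a_{i_2}P+b_{i_2}Q)$, and a degree count forces $\mL_1\oplus\mL_2\cong\mE$. (One detail you gloss over: linear independence of $s_1,s_2$ does not by itself give $\mL_1\ne\mL_2$; you need that if both lay in a single degree-$d$ line bundle then $h^0(\mO)=1$ would force dependence, or that $P,Q$ general rules out $\mO(a_{i_1}P+b_{i_1}Q)\cong\mO(a_{i_2}P+b_{i_2}Q)$ when the pairs differ.)

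The real gap is your parity argument for excluding $\deg\mE=2d-1$. Your claim that ``$u(\mE)\le\tfrac12$ with $\deg\mE$ odd would force a subsheaf of degree $\le d-1$'' is false: if $\deg\mE=2d-1$ then a degree-$d$ invertible subsheaf gives $u(\mE)=d-\tfrac{2d-1}{2}=\tfrac12$ on the nose, which is permitted. So a single section gets you only $\deg\mE\ge 2d-1$, and nothing about integrality improves this. The paper closes this gap not by parity but by the structure of rank-two bundles on an elliptic curve: if $\mE$ is indecomposable then $\mE(-a_{i_1}P-b_{i_1}Q)$ is indecomposable of degree $\deg\mE-2d$, and an indecomposable bundle of negative degree on an elliptic curve has no sections (Atiyah), forcing $\deg\mE\ge 2d$; if $\mE$ is decomposable one checks the two sections either land in distinct summands (each then of degree $\ge d$) or in the same summand (forcing that summand to have degree $\ge d+1$, whence $\deg\mE\ge 2d+1$ by $u\le\tfrac12$). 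Your subsheaf approach can also be salvaged, but you must use \emph{both} $\mL_1$ and $\mL_2$: if $\deg\mE=2d-1$ then again $\deg\mL_1=\deg\mL_2=d$, the same distinctness argument applies, and $\mL_1\oplus\mL_2\hookrightarrow\mE$ contradicts $2d>2d-1$. Replace the parity hand-wave with this and the proof goes through.
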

\begin{proof}
By Lemma \ref{0}, there are two sections $s_1,s_2\in V$ whose vanishing orders at $P,Q$ sum up to $d$ or higher. Suppose $\mE$ is indecomposable, since no indecomposable vector bundle of negative degree over an elliptic curve has a section, $\deg(\mE)\ge 2d$. If it is decomposable, if $s_1,s_2$ are both section of one summand of $\mE$, it must have degree $\ge d+1$; otherwise, both summands have degrees $\ge d$. Given $u(\mE)\le{1\over2}$ (see \tb{Convention}), $\deg(\mE)\ge 2d$ always holds.

If $\deg(\mE)=2d$, by the theory of Atiyah bundles (See \cite{Atiyah}), an indecomposable, degree $2d$ vector bundle can have at most one section whose vanishing orders at $P,Q$ sum up to $d$. Hence, $\mE$ must be decomposable. But then the degrees of both summands must be exactly $d$. So, $\mE$ is semi-stable and of the suggested form.
\end{proof}
\begin{lem}\label{3}
  Suppose $\exists \ell$ such that $a_{\ell}+b_{\ell}=d+1$; for some $i$, $a_i=a_{i+1}$ and $a_i+b_i=a_{i+1}+b_{i+1}=d$. Then, $\deg(\mE)\ge 2d+1$. If $\deg(\mE)=2d+1$, then $\mE\cong \mO(a_{\ell}P+b_{\ell}Q)\op \mO(a_iP+b_iQ)$.
\end{lem}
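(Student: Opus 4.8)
The plan is to follow the template of the proof of Lemma \ref{2}: first distill the combinatorial hypotheses into two statements about sections of $\mE$ on the elliptic curve $C$ via Lemma \ref{0}, and then run a dichotomy according to whether $\mE$ is decomposable. Note first that $a_i=a_{i+1}$ together with $a_i+b_i=a_{i+1}+b_{i+1}=d$ forces $b_i=b_{i+1}$, so the index set $\{j:a_j\ge a_i,\ b_j\ge b_i\}$ contains both $i$ and $i+1$; hence Lemma \ref{0} gives $\dim V(-a_iP-b_iQ)\ge 2$, i.e.\ two linearly independent sections $\sigma_1,\sigma_2\in\Gamma(\mE(-a_iP-b_iQ))$. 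Applying Lemma \ref{0} with $t=\ell$ produces a nonzero $s_\ell\in\Gamma(\mE(-a_\ell P-b_\ell Q))$, with $\ord_P(s_\ell)+\ord_Q(s_\ell)\ge a_\ell+b_\ell=d+1$. These two facts are the only inputs taken from $V$; everything else is about the bundle $\mE$.

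For the inequality: if $\mE$ is indecomposable, so is the twist $\mE(-a_iP-b_iQ)$, and it has $h^0\ge 2$; since an indecomposable rank-two bundle on an elliptic curve with two independent sections has degree at least $2$ (Atiyah's classification, cf.\ \cite{Atiyah}), we get $\deg\mE\ge 2d+2$. If $\mE=M_1\op M_2$ is decomposable, then $u(\mE)\le\frac{1}{2}$ (Situation \ref{sit2}) forces $|\deg M_1-\deg M_2|\le 1$. Because $\Gamma(\mE)=\Gamma(M_1)\op\Gamma(M_2)$, the span of $\sigma_1,\sigma_2$ embeds in $\Gamma(M_1(-a_iP-b_iQ))\op\Gamma(M_2(-a_iP-b_iQ))$, so $h^0(M_1(-a_iP-b_iQ))+h^0(M_2(-a_iP-b_iQ))\ge 2$; since a line bundle with a section on an elliptic curve has nonnegative degree, $\deg M_t\ge d$ for $t=1,2$, whence $\deg\mE\ge 2d$. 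If $\deg\mE=2d$ then $\deg M_1=\deg M_2=d$ and each $M_t(-a_iP-b_iQ)$ is a degree-$0$ line bundle with a section, hence $\cong\mO$, so $\mE\cong\mO(a_iP+b_iQ)^{\op 2}$; but then the two components of $s_\ell$ would be sections of the degree-$d$ line bundle $\mO(a_iP+b_iQ)$ vanishing to total order $\ge d+1>d$ at $\{P,Q\}$, forcing $s_\ell=0$, a contradiction. Hence $\deg\mE\ge 2d+1$ in all cases.

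For the rigidity in the equality case $\deg\mE=2d+1$: the indecomposable possibility is excluded by the bound $\ge 2d+2$ above, so $\mE=M_1\op M_2$, and $\deg M_1+\deg M_2=2d+1$ with $|\deg M_1-\deg M_2|\le 1$ gives $\{\deg M_1,\deg M_2\}=\{d,d+1\}$, say $\deg M_1=d+1$. Then $M_1(-a_iP-b_iQ)$ has degree $1$, so $h^0=1$, and the inequality $h^0(M_1(-a_iP-b_iQ))+h^0(M_2(-a_iP-b_iQ))\ge 2$ forces $h^0(M_2(-a_iP-b_iQ))\ge 1$; a degree-$0$ line bundle with a section is trivial, so $M_2\cong\mO(a_iP+b_iQ)$. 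Finally, the $M_2$-component of $s_\ell$ lies in $\Gamma(\mO((a_i-a_\ell)P+(b_i-b_\ell)Q))$, a line bundle of degree $d-(d+1)=-1$, hence vanishes; so the $M_1$-component of $s_\ell$ is a nonzero section of $M_1(-a_\ell P-b_\ell Q)$, a degree-$0$ line bundle, which is therefore trivial, giving $M_1\cong\mO(a_\ell P+b_\ell Q)$ and $\mE\cong\mO(a_\ell P+b_\ell Q)\op\mO(a_iP+b_iQ)$.

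I expect the only genuinely delicate step to be the bookkeeping in the decomposable case: one must use the injection $\langle\sigma_1,\sigma_2\rangle\hookrightarrow\Gamma(M_1)\op\Gamma(M_2)$ rather than naively trying to place both sections in a single summand, invoke $u(\mE)\le\frac{1}{2}$ to bound the degree imbalance, and use the auxiliary section $s_\ell$ precisely to separate $\mO(a_iP+b_iQ)^{\op 2}$ (degree $2d$) from the asserted bundle (degree $2d+1$). The remaining ingredients are routine Riemann--Roch and Atiyah-classification facts on the elliptic curve.
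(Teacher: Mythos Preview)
Your proof is correct and uses the same ingredients as the paper's (Lemma \ref{0}, Atiyah's bounds for indecomposable bundles, and the hypothesis $u(\mE)\le\tfrac12$ from Situation \ref{sit2}). One small logical leap: from $h^0(M_1(-a_iP-b_iQ))+h^0(M_2(-a_iP-b_iQ))\ge 2$ you cannot immediately conclude that \emph{each} term is $\ge 1$; but if one term is $\ge 2$ then that line bundle has degree $\ge 2$, and $u(\mE)\le\tfrac12$ forces $\deg\mE\ge 2d+3$, so your conclusion $\deg\mE\ge 2d$ and the remainder of the argument are unaffected.

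The paper's organization differs slightly in the decomposable case: it leads with $s_\ell$, observing that one summand must admit a section vanishing to total order $\ge d+1$ and hence has degree $\ge d+1$, so $u(\mE)\le\tfrac12$ gives $\deg\mE\ge 2d+1$ in one step; you instead pass through $\deg\mE\ge 2d$ via $\sigma_1,\sigma_2$ and then invoke $s_\ell$ to exclude equality. Conversely, your identification of the two summands in the case $\deg\mE=2d+1$ is more explicit than the paper's, which simply asserts that the two sections at $(a_i,b_i)$ force the suggested form.
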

\begin{proof}
  If $\mE$ is indecomposable, since at least one section has vanishing orders at $P,Q$ sum up to $d+1$ or larger, $\deg(\mE)\ge 2d+2$. If it is decomposable, since one summand has degree $\ge d+1$, by the third assumption in \ref{sit2}, $\deg(\mE)\ge 2d+1$.

  If $\deg(\mE)=2d+1$, since an indecomposable vector bundle of this degree cannot have a section whose vanishing orders at $P,Q$ sum up to $d+1$, $\mE$ must be decomposable. By assumption (3) in \ref{sit2}, the degrees of the two summands must be $d+1$ and $d$ resp. By \ref{0}, $\mE$ has two sections whose vanishing orders at $P,Q$ are at least $a_i,b_i$ resp. Hence, $\mE$ must be of the suggested form.
\end{proof}
\section{Vanishing Sequences and Existence of Adapted Bases}\label{DB}
As mentioned earlier, we shall specify families of rank two limit linear series on a chain $X_N$ by prescribing vanishing sequences at all the nodal points. A first thing to check is the legitimacy of the prescribed vanishing sequences.

Another related problem is the existence of adapted basis of $V_j$. The existence of such basis simplifies moduli counting procedure: if $V_j$ always admits a adapted basis, we can consider possible choices for $V_j$ by considering all possible choices of such basis. Hence, we first establish some criterion for their existence. More importantly, after working out moduli counts for limit linear series, we need to apply the smoothing theorem (Theorem \ref{smoothing}) to derive corresponding moduli counts for linear series on smooth curves. The smoothing theorem only applies to loci of chain-adaptable limit linear series.

We start with a result for semi-stable, rank two vector bundles over an elliptic curve:
\begin{lem}\label{one}
  Let $C$ be elliptic, $P,Q$ be two general points on $C$. Let $(a_1\le...\le a_k),(b_1\ge...\ge b_k)$ be two non-negative integer vectors such that $\fa i,d\ge a_i+b_i\ge d-1$ and $\exists ! i_1<i_2$ such that $a_{i_1}+b_{i_1}=a_{i_2}+b_{i_2}=d$. Suppose every integer appears in $(a_i)$ or $(b_i)$ at most twice, and $\not\exists i\neq i_1,i_2$ such that $a_i=a_{i_1}$ and $b_i=b_{i_2}$.

  Then, the vector bundle $\mE=\mO(a_{i_1}P+(d-a_{i_1})Q)\op \mO(a_{i_2}P+(d-a_{i_2})Q)$ ($d\ge a_k$) admits a $k$-dimensional subspace $V$ of sections such that $\van_P(V)=(a_1,...,a_k), \van_Q(V)=(b_1,...,b_k)$.

  Moreover, any such $V$ admits a $(P,Q)$-adapted basis.
\end{lem}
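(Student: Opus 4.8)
The plan is to build the subspace $V$ explicitly out of the canonical sections and a collection of "maximal vanishing" sections produced by Lemma \ref{aux}, and then to verify the adapted-basis claim by a dimension-counting argument on the flags $V(-aP)$ and $V(-bQ)$. Write $\mL_1=\mO(a_{i_1}P+(d-a_{i_1})Q)$, $\mL_2=\mO(a_{i_2}P+(d-a_{i_2})Q)$, so $\mE=\mL_1\op\mL_2$ with canonical sections $T_1,T_2$ (Remark \ref{csec}); note $T_1$ realizes the vanishing data $(a_{i_1},d-a_{i_1})$ and $T_2$ realizes $(a_{i_2},d-a_{i_2})$, which are exactly the indices $i_1,i_2$ at which $a_i+b_i=d$. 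For every other index $t$ we have $a_t+b_t=d-1$, i.e. $\ord_P+\ord_Q=b-1$ in the language of Definition \ref{max} with $b=d$; for each such $t$ I want a section $s_t$ of $\mE$ with $\ord_P(s_t)=a_t$, $\ord_Q(s_t)=b_t$, chosen so that $s_t|_P\ne T_j|_P$ and $s_t|_Q\ne T_j|_Q$. Since $\deg\mO((a_t-a_{i_1})P+(b_t-(d-a_{i_1}))Q)$-type line bundles used in the proof of Lemma \ref{aux} have the right degrees once $a_t\ne a_{i_1},a_{i_1}-1$ and similarly for $i_2$ (which is where the hypothesis $e_i\ne a_j,a_j-1$ / "every integer appears at most twice" is used), each coordinate function $g_t,h_t$ is uniquely determined up to scalar and nonzero, so such an $s_t=g_tT_1+h_tT_2$ exists.

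Next I would show $V:=\s(T_1,T_2,\{s_t\}_{t\ne i_1,i_2})$ has dimension $k$ and the prescribed vanishing sequences. Linear independence: the orders of vanishing at $P$ of the chosen generators are $a_{i_1},a_{i_2}$ and the various $a_t$; by the hypothesis that no integer repeats more than twice and that the only coincidence $a_i=a_{i_1},b_i=b_{i_2}$ is excluded, any collision of $P$-orders among generators is resolved by looking at $Q$-orders, so a standard "staircase" argument gives that $\{T_1,T_2,s_t\}$ are independent, hence $\dim V=k$. Then $\van_P(V)$: I claim $\dim V(-mP)=|\{i: a_i\ge m\}|$ for every $m$, which combined with $\dim V=k$ forces $\van_P(V)=(a_1,\dots,a_k)$; the inequality $\le$ is automatic from the definition of a vanishing sequence (at most that many generators can vanish to order $\ge m$), and the inequality $\ge$ holds because the generators themselves vanish to the right orders. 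Symmetrically for $Q$. One must also check no further "accidental" vanishing occurs — e.g. that $T_1$ does not secretly lie in $V(-(a_{i_1}+1)P)$ — which follows since $T_1$ is the canonical section of $\mL_1$ and the $s_t$ that vanish to order $\ge a_{i_1}+1$ at $P$ span a space not containing the $\mL_1$-direction at $P$, again by the genericity of $P,Q$ and the excluded-coincidence hypothesis.

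For the moreover statement — that \emph{any} $k$-dimensional $V$ with $\van_P(V)=(a_i)$, $\van_Q(V)=(b_i)$ admits a $(P,Q)$-adapted basis — I would argue abstractly rather than using the explicit construction. An adapted basis exists iff for each pair $(m,n)$ the obvious inequality $\dim V(-mP-nQ)\ge \#\{i: a_i\ge m,\ b_i\ge n\}$ is an equality, equivalently iff the two filtrations $\{V(-mP)\}_m$ and $\{V(-nQ)\}_n$ are "transverse" in the sense that the associated graded dimensions add correctly (this is a standard linear-algebra fact about pairs of complete flags: adaptedness fails precisely when some $V(-mP)\cap V(-nQ)$ is larger than the count predicts). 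Lemma \ref{0} already gives the $\ge$ direction of every such inequality, so the task reduces to ruling out strict inequality. The key input is that $\mE$ is the \emph{specific} decomposable bundle $\mL_1\op\mL_2$ with $\mL_1\not\cong\mL_2$: on such a bundle, by the Atiyah-bundle / elliptic-curve dimension counts used in Lemmas \ref{2} and \ref{3} (and the hypothesis $u(\mE)\le\frac12$, which here is automatic), a section with $\ord_P+\ord_Q\ge d$ must be a scalar multiple of $T_1$ or $T_2$, and a section with $\ord_P+\ord_Q= d-1$ vanishing to a prescribed order at $P$ is unique up to scalar when that order avoids $a_{i_1},a_{i_1}-1,a_{i_2},a_{i_2}-1$. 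Using these rigidity statements I can compute $\dim V(-mP-nQ)$ for all $(m,n)$ and check it equals $\#\{i: a_i\ge m, b_i\ge n\}$, the excluded coincidence $\{a_i=a_{i_1},b_i=b_{i_2}\}$ being exactly the configuration that would otherwise force an extra dimension. I expect this last verification — organizing the case analysis over all $(m,n)$ and invoking the right uniqueness statement in each range, particularly near the "corner" values $m\in\{a_{i_1},a_{i_2}\}$ — to be the main obstacle; everything else is either the explicit construction above or a direct appeal to Lemmas \ref{0}, \ref{2}, \ref{3} and the genericity of $P,Q$.
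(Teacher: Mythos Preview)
Your route is workable but longer than the paper's, and one step in the construction is imprecise. The paper does not separate existence from the adapted-basis claim; it proves both at once by showing, for each index $i$, that (a) \emph{any} $V$ with the given vanishing data must contain some $s_i$ with $\ord_P(s_i)=a_i$ and $\ord_Q(s_i)=b_i$, and (b) such an $s_i$ actually exists in $\Gamma(\mE)$. Point (b) yields existence (take $V=\s(s_1,\dots,s_k)$), and point (a) simultaneously yields the adapted basis for every $V$. The argument is a short four-case split according to whether $a_i+b_i$ equals $d$ or $d-1$ and whether $a_i,b_i$ are repeated; in each case $\Gamma(\mE(-a_iP-b_iQ))$ is at most two-dimensional and its explicit description, combined with Lemma \ref{0}, forces the required $s_i$ into $V$. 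Your separation into an explicit construction followed by an abstract flag-transversality check is valid in principle, but the transversality verification at the critical pairs $(m,n)$ unwinds to exactly these same local computations of $\Gamma(\mE(-mP-nQ))$, so the paper's per-index packaging is the shorter path.

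The imprecision: you assert that for every $t\ne i_1,i_2$ one can choose $s_t$ with $s_t|_P\ne T_j|_P$ and $s_t|_Q\ne T_j|_Q$, and you attribute the needed condition $a_t\notin\{a_{i_j},a_{i_j}-1\}$ to the ``every integer appears at most twice'' hypothesis. That hypothesis does not exclude $a_t=a_{i_j}-1$ (nor $b_t=b_{i_j}-1$). When, say, $a_t=a_{i_1}-1$ and hence $b_t=d-a_{i_1}$, the space $\Gamma(\mE(-a_tP-b_tQ))$ is two-dimensional, spanned by $T_1$ and a section of $\mL_2$, so any $s_t$ with the exact orders $(a_t,b_t)$ is forced to have $s_t|_P=T_2|_P$. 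This is precisely the paper's ``situation (2)'', and it is why a small case split near the special indices cannot be avoided. Once you patch this (and the analogous case $a_{i_1}=a_{i_2}$, where Lemma \ref{aux} does not apply), your construction goes through.
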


\begin{proof}
We shall prove the existence of one such $V$ and the existence of an adapted basis for every $V$ possible, by showing that the vanishing sequences force $V$ to have some sections $s_1,...,s_k$ with vanishing orders $a_i,b_i$ at $P,Q$ resp., and the vector bundle $\mE$ has such sections as required.

Under the assumptions here, for any $i$, the pair $(a_i,b_i)$ is in one of the following situation: (1) $a_i+b_i=d-1$, and both $a_i,b_i$ are non-repeated vanishing orders; (2) $a_i+b_i=d-1$, and exactly one of $a_i,b_i$ is a repeated vanishing orders, which is repeated once; (3) $a_i+b_i=d-1$, and both $a_i,b_i$ are repeated exactly once in the following way: $a_i=a_{i+1},b_i=b_{i+1}$ (or $a_i=a_{i-1},b_i=b_{i-1}$); (4) $a_i+b_i=d$.

Hereafter, for any $i$, we shall specify one section $s_i$ in $V$ vanishing to order $a_i$ at $P$, and $b_i$ at $Q$.

First, consider situation (4), i.e. $i=i_1$ or $i_2$. By lemma \ref{0}, we have $\dim V(-a_iP-b_iQ)\ge2$ if $a_{i_1}=a_{i_2}$, and $\dim V(-a_iP-b_iQ)\ge1$ otherwise. In any case, $V$ always contains a two subspace $V'$ spanned by two sections whose vanishing orders at $P,Q$ are $a_{i_1},d-a_{i_1}$ and $a_{i_2},d-a_{i_2}$ resp. We take $s_{i_1},s_{i_2}$ to be two such sections. Note also in the case $a_{i_1}=a_{i_2}$, situation (2) cannot occur.

Now suppose $a_i+b_i=d-1$. By direct computation we know $\Gamma(\mE)$ contains a unique two-dimensional subspace of sections vanishing to orders precisely $a_i,b_i$ in situation (1) and (3); it contains a unique two-dimensional subspace of sections vanishing to orders at least $a_i,b_i$ which contains a canonical section in situation (2). Correspondingly, in situation (1)-(3), $V$ must contain the following: in (1), $V$ must contain an $s_i$ vanishing to order precisely $a_i,b_i$ at $P,Q$ resp.; in (2), $V$ must contain some $s_i$ which together with one canonical section span $\Gamma(\mE(-a_iP-b_iQ))$; in (3), $V$ must contain $\Gamma(\mE(-a_iP-b_iQ))$, i.e. some $s_{i-1},s_i$ (or $s_i,s_{i+1}$) which span $\Gamma(\mE(-a_iP-b_iQ))$.

Since in all four situations such $s_i$ can be found, and they together form an adapted basis of $V$, we have proved the lemma.
\end{proof}

Next, we establish a similar result for certain unstable vector bundles on an elliptic curve.
\begin{lem}\label{one2}
  Let $C$ be elliptic curve, $P,Q$ be two general points on $C$. Let $(a_1\le...\le a_k),(b_1\ge...\ge b_k)$ be two non-negative integer vectors of length $k$ satisfying the conditions in lemma \ref{3} (with $a_j,a_{\ell}$ as in \ref{3}). Suppose the following conditions hold:
  \begin{enumerate}
    \item $\fa i$, $d+1\ge a_i+b_i\ge d-1$.
    \item For any $c\in \ZZ$, $|\{i|a_i=c\}|\le 2$, $|\{i|b_i=c\}|\le 2$.
    \item For any $(a,c)\in\ZZ^2$ such that $(a,c)\neq(a_j,b_j)$, $|\{i|(a_i,b_i)=(a,c)\}|\le 1$.
    \item For any $(a,d-a)\in\ZZ^2$ such that $(a,d-a)\neq(a_j,b_j)$, $|\{i|(a_i,b_i)\ge(a,d-a)\}|\le 1$.
    \item There does not exist some $i$ such that $a_i+b_i=a_{i+1}+b_{i+1}=d-1$, $a_{i+1}=a_i+1$ and $a_i,a_{i+1},b_i,b_{i+1}$ are all non-repeated.
  \end{enumerate}

Then, the vector bundle $\mE=\mO(a_{\ell}P+(d+1-a_{\ell})Q)\op \mO(a_jP+(d-a_j)Q)$ on $C$ ($d\ge a_k$) admits a $k$-dimensional subspace $V$ of sections such that $\van_P(V)=(a_1,...,a_k), \van_Q(V)=(b_1,...,b_k)$.

Moreover, any such $V$ admits an adapted basis realizing the two vanishing sequences at $P,Q$ simultaneously.
\end{lem}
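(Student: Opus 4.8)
The plan is to follow the scheme of Lemma~\ref{one}, but with an enlarged case analysis reflecting the presence of pairs $(a_i,b_i)$ with $a_i+b_i=d+1$ and the asymmetry of the two summands of $\mE$. Write $\mE=\mL_1\op\mL_2$ with $\mL_1=\mO(a_\ell P+(d+1-a_\ell)Q)$ of degree $d+1$ and $\mL_2=\mO(a_jP+(d-a_j)Q)$ of degree $d$; these are non-isomorphic, so Remark~\ref{csec} applies and we have canonical sections $T_1,T_2$, with $T_1$ vanishing exactly to orders $(a_\ell,b_\ell)$ and $T_2$ exactly to orders $(a_j,b_j)$ at $(P,Q)$. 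Since $P,Q$ are general, Riemann--Roch on $C$ gives, for any $c,c'\ge0$,
\[
\dim\Gamma(\mL_t(-cP-c'Q))=\max\{0,\ \deg\mL_t-c-c'\}
\]
unless $\mL_t(-cP-c'Q)\cong\mO$, which for general $P,Q$ happens only at the single canonical value of $(c,c')$ for each summand (namely $(a_\ell,b_\ell)$ for $\mL_1$ and $(a_j,b_j)$ for $\mL_2$), in which case the dimension is $1$. Summing over $t$ produces a complete table of $\dim\Gamma(\mE(-cP-c'Q))$: it equals $3$ when $c+c'=d-1$, equals $1$ or $2$ when $c+c'=d$ (the value $2$ only at $(a_j,b_j)$), and equals $0$ or $1$ when $c+c'=d+1$ (the value $1$ only at $(a_\ell,b_\ell)$); from this one also reads off the dimensions of the subspaces cut out by vanishing to strictly higher order at $P$ or at $Q$.

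Next, arguing as in Lemma~\ref{one}, I would show that the prescribed vanishing forces $V$ to contain, for each index $i$, a section $s_i$ with $\ord_P(s_i)=a_i$ and $\ord_Q(s_i)=b_i$: Lemma~\ref{0} gives $\dim V(-a_iP-b_iQ)\ge|\{i':a_{i'}\ge a_i,\ b_{i'}\ge b_i\}|$ from below, while the Riemann--Roch table bounds $\dim\Gamma(\mE(-a_iP-b_iQ))$ and its higher-vanishing subspaces from above, and in each case the two estimates pinch so that $V(-a_iP-b_iQ)$ must meet the locus of sections vanishing to orders exactly $(a_i,b_i)$. The cases are organized by $a_i+b_i$. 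If $a_i+b_i=d+1$, a second such index would force $\deg\mE\ge2d+2$ by a degree count as in Lemma~\ref{2}, so $i$ is forced to be $\ell$ and $s_i=T_1$. If $a_i+b_i=d$ with $(a_i,b_i)=(a_j,b_j)$ (so $i\in\{j,j+1\}$), then $V(-a_jP-b_jQ)$ is forced to equal the $2$-dimensional $\Gamma(\mE(-a_jP-b_jQ))$, spanned by $T_2$ and the unique section of $\mL_1$ vanishing to orders $\ge(a_j,b_j)$, both of which vanish exactly to $(a_j,b_j)$; here conditions (2) and (4) are exactly what exclude the degenerate subcase $(a_\ell,b_\ell)=(a_j,b_j+1)$ in which $T_1$ would contribute unwanted extra vanishing at $Q$. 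If $a_i+b_i=d$ with $(a_i,b_i)\ne(a_j,b_j)$, the relevant section is unique and lies in $\mL_1$, with conditions (3)--(4) guaranteeing at most one such index and no interference from $T_1$. If $a_i+b_i=d-1$, subdivide as in the four situations of Lemma~\ref{one} according to which of $a_i,b_i$ are repeated and in what pattern, hypothesis (5) being precisely what excludes the one pattern (two consecutive non-repeated pairs summing to $d-1$ with $a$-values differing by $1$) in which the dimension estimates fail to pinch.

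Having produced the $s_i$, the existence statement follows by taking $V=\s(s_1,\dots,s_k)$ for one explicit choice of the $s_i$ inside $\Gamma(\mE)$ (the only freedom being in the $2$-dimensional pieces), checking that these $k$ sections are linearly independent and that no combination of them vanishes at $P$ or $Q$ to higher order than the individual $s_i$, so that indeed $\van_P(V)=(a_i)$ and $\van_Q(V)=(b_i)$; the same $s_i$ then form a $(P,Q)$-adapted basis of any $V$ realizing the prescribed vanishing, which is the ``moreover'' clause. I expect the main obstacle to be the bookkeeping in the cases $a_i+b_i=d-1$: there $\Gamma(\mE(-a_iP-b_iQ))$ can be $3$-dimensional, the extra dimension coming from $\mL_1$, so one must check, using conditions (2)--(5), that the nearby pairs $(a_i\pm1,b_i)$ and $(a_i,b_i\pm1)$ (some of which may coincide with $(a_\ell,b_\ell)$ or $(a_j,b_j)$) do not produce unexpected extra vanishing or destroy the independence of the chosen sections. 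The linear-independence and adapted-basis verification is the other delicate point, since $a$- and $b$-values may each be repeated: one runs the standard leading-term argument at $P$, breaking ties using the injectivity of $\Gamma(\mE(-a_iP-b_iQ))/\Gamma(\mE(-(a_i+1)P-b_iQ))\hookrightarrow\mE|_P$ established in the case analysis.
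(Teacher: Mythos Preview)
Your proposal is correct and follows essentially the same approach as the paper's proof: both organize the argument by the value of $a_i+b_i\in\{d-1,d,d+1\}$, use Lemma~\ref{0} for lower bounds on $\dim V(-a_iP-b_iQ)$ and Riemann--Roch for upper bounds on $\dim\Gamma(\mE(-a_iP-b_iQ))$, and identify condition~(5) as the ingredient that handles the delicate subcase in which $a_i+b_i=d-1$ with both $a_i,b_i$ non-repeated. The paper's treatment of that last subcase is slightly more direct than your sketch suggests: rather than refining into the four situations of Lemma~\ref{one}, it simply splits on whether at least one of $a_i,b_i$ is repeated (where Lemma~\ref{0} immediately gives $\dim V(-a_iP-b_iQ)\ge2$) or both are non-repeated, and in the latter case runs a short contradiction argument showing that any section in $V(-a_iP-b_iQ)$ vanishing to strictly higher order at $P$ or $Q$ would, via conditions~(3) and~(5), force a forbidden configuration.
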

\begin{proof}
By condition (1), we break $\{1,..,k\}$ into three subsets:
$$\{1,...,k\}=\{\ell\}\cup\{i|a_i+b_i=d\}\cup\{i|a_i+b_i=d-1\}.$$

Similar to the previous lemma, we prove the existence of such $V$ and the existence of an adapted basis for every $V$ possible, by showing that the vanishing sequences force any $V$ possible to have some $s_1,...,s_k$ with vanishing orders $a_i,b_i$ at $P,Q$ resp., and the vector bundle $E$ actually has such sections.

\noindent\textbf{Case 1: $\mathbf{i=\ell}$.} Since $a_{\ell}+b_{\ell}=d+1$, any $V$ must contain the canonical section of the summand $\mO(a_{\ell}P+(d+1-a_{\ell})Q)$.

\noindent\textbf{Case 2: $\mathbf{a_i+b_i=d}$.} For any $i$ such that $a_i+b_i=d$, we have two situations. First, $i=j$ or $j+1$ ($j$ as given in \ref{3}). In this case, by condition (2) and the assumptions made in \ref{3}, any $k$-dimensional sub-space $V$ with the given vanishing sequences must contain $\Gamma(\mE(-a_iP-b_iQ))$, since by \ref{0} $\dim V(-a_iP-b_iQ)\ge2$ and $\dim\Gamma(\mE(-a_iP-b_iQ))=2$; in particular, it must contain two linearly independent sections $s_j,s_{j+1}$ such that $\ord_P(s)=\ord_P(s')=a_j=a_{j+1},\ord_Q(s)=\ord_Q(s')=b_j=b_{j+1}$. Second, $i\neq j,j+1$. By condition (4), $a_i\neq a_{\ell},b_i\neq b_{\ell}$. In this case, we know $\mE$ has precisely one section (up to scalar multiplication), $s_i$, vanishing to orders $a_i,b_i$ at $P,Q$ resp., and no sections vanishing to order strictly higher at one of the two points. Thus, any feasible $V$ must contain $s_i$, since $\dim V(-a_iP-b_iQ)\ge1$.

\noindent\textbf{Case 3: $\mathbf{a_i+b_i=d-1}$.} If one of $a_i,b_i$ is repeated, by condition (3) and \ref{0}, $\dim V(-a_iP-b_iQ)\ge2$. In particular, there is a section $s$ vanishing to order $a_i,b_i$ at $P,Q$ resp and we are done. So we assume $a_i,b_i$ are both non-repeated.

Suppose $V$ does not contain a section vanishing precisely to orders $a_i,b_i$ at $P,Q$ resp. Then, it must contain a section $s$ such that either (a) $\ord_P(s)\ge a_i+1,\ord_Q(s)=b_i$, or (b) $\ord_P(s)=a_i,\ord_Q(s)\ge b_i+1$ holds. (Note that one cannot have $a_{\ell}>a_i,b_{\ell}>b_i$ by the monotonicity of $(a_i)$ and $(b_i)$.) Suppose $V$ contains $s$ satisfying (a). In particular, $\ord_P(s)$ is one entry in $(a_i)$. Given $\mE$, if $\ord_P(s)=a_i+2$, then $\ord_P(s)=a_{\ell}, \ord_Q(s)=b_{\ell}=b_i$. This violates the assumption that $a_i,b_i$ are both non-repeated. If $\ord_P(s)=a_i+1$, since $a_i+b_i=d-1$ and $(b_i)$ is non-increasing, by condition (3) and (5), one must have $a_{i+1}=a_i+1$ and $b_{i+1}=b_i-1$ and $b_i-1$ is repeated. But then $V$ contains a section $s$ vanishing to orders precisely $a_{i+1},b_{i+1}$ at $P,Q$ resp., which is not a section of the destabilizing summand of $\mE$. This implies that $a_{i+1}$ is repeated, but in this case $a_i<a_{i+1}<a_{i+2}$ and we get contradiction. So $V$ must contain some $s$ vanishing to orders $a_i,b_i$ at $P,Q$. By symmetry, same holds if we start by assuming (b).

Since $\dim\Gamma(\mE(-a_iP-b_iQ))=3$ and is spanned by two sections of $\mO(a_{\ell}P+b_{\ell}Q)$, together with another section vanishing to orders $a_i,b_i$ at $P,Q$. Therefore, such $s_i$ exists.

Thus, any $V$ with the given vanishing sequences at $P,Q$ resp. must contain such sections $s_1,...,s_k$, which form an adapted basis. Moreover, $\mE$ has such sections. Hence, we are done.
\end{proof}
\section{The Fixed Determinant Condition}\label{can}
In a broader context, the Bertram-Feinberg-Mukai conjecture is one evidence of the following phenomenon: the locus of rank two linear series where the underlying vector bundles have some fixed (special) determinant $L$, tends to have an expected dimension that is larger than the expected dimension for a general rank two Brill-Noether locus (i.e. where the determinant is allowed to vary). In proving existence results for fixed determinant cases using degeneration, we need to discuss what it means for a limit linear series $((\mE_j,V_j)_{j=1}^N,(\phi_j)_{j=2}^N)$ to have a fixed determinant.

A well-known important fact is the following: let $\pi:\mathcal{X}\to B$ be a family\footnote{Technically, a flat and proper family such that fibers are 1-dimensional, geometrically connected and geometrically reduced.} of curves over the spec of some DVR whose special fiber is a reducible nodal curve of compact type. By analyzing $\Pic^d(\mathcal{X}/B)$, we know that any degree $d$ line bundle $\mL_{\eta}$ over the generic fiber extends to the whole family, but the extension is not unique. More precisely, if $\mL$ is an extension of $\mL_{\eta}$, then $\mL'$ is also an extension, where $\mL'$ and $\mL$ differ in the following way: let $C_i,C_j$ be two components of $X_0$ meeting at $P$, $\mL'|_{C_i}=\mL|_{C_i}(aP)$, $\mL'|_{C_j}=\mL|_{C_j}(-aP)$; this turns out to be the only flexibility in extending $\mL_{\eta}$. (See Chapter 5.C in \cite{MOCURVE}.) Therefore, when considering vector bundles on a reducible nodal curve of compact type, we adopt the following definition:
\begin{defn}\label{fixdef}
  Let $X$ be a reducible nodal curve of compact type with components $C_1,...,C_n$, and $\mE$ is a rank-$r$ vector bundle on $X$. If $C_i,C_j$ meet at one point, denote $P_{j,i}=P_{i,j}=C_i\cap C_j$. Fix a line bundle $\mL$ on $X$. We say $\mE$ \tb{satisfies the fixed determinant condition with respect to} $\mL$\rm\ if the following conditions hold:
  \begin{enumerate}
   \item For every $j$, $\Lambda^r\mE|_{C_j}\cong\mL|_{C_j}(\sum_{s} a_{s,j}P_{s,j})$, where $s$ runs through all indices such that $C_s\cap C_j\neq\emptyset$ and $a_{s,j}\in\ZZ$.
   \item For all $j_1,j_2$ such that $C_{j_1}\cap C_{j_2}\neq\emptyset$, $a_{j_1,j_2}=-a_{j_2,j_1}$.
  \end{enumerate}
\end{defn}
In cases where $X$ is a chain, this condition has a simple characterization:
\begin{lem}\label{fixed}
Let $\mL$ be a line bundle on a chain $X_N$ with multi-degree $(w_1,...,w_N)$. Denote $C_1,...,C_N$ to be the components of $X_N$ and suppose $C_j\cap C_{j+1}=P_{j+1}$. A rank-$r$ vector bundle $\mE$ on $X_N$ with multi-degree $(d_1,...,d_N)$ satisfies the fixed determinant condition with respect to $\mL$ if and only if for every $j$:
$$\Lambda^r\mE|_{C_j}\cong \mL|_{C_j}(-\sum_{t=0}^{j-1}(w_t-d_t)P_j+\sum_{t=0}^j(d_t-w_t)P_{j+1}).$$
($w_0=d_0=0$ and $P_1,P_{N+1}$ are some fixed general points on $C_1,C_N$ respectively.)
\end{lem}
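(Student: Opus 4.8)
The plan is to unwind Definition \ref{fixdef} in the case of a chain into purely numerical conditions on the twisting integers at the nodes, and then to solve the resulting linear recursion.

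First I would specialize Definition \ref{fixdef} to $X_N$. A chain is of compact type, and the only components meeting $C_j$ are $C_{j-1}$ (at $P_j$) and $C_{j+1}$ (at $P_{j+1}$); since $P_1$ and $P_{N+1}$ are not nodes, the fixed determinant condition says exactly that there are integers $a_{j-1,j},a_{j+1,j}$ — with the endpoint convention $a_{0,1}=a_{N+1,N}=0$ — such that
$$\Lambda^r\mE|_{C_j}\cong\mL|_{C_j}\bigl(a_{j-1,j}P_j+a_{j+1,j}P_{j+1}\bigr)\qquad(1\le j\le N),$$
subject to the antisymmetry $a_{j,j+1}=-a_{j+1,j}$ at each internal node. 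Taking degrees on $C_j$, and using $\deg\Lambda^r\mE|_{C_j}=\deg\mE|_{C_j}=d_j$ and $\deg\mL|_{C_j}=w_j$, yields $a_{j-1,j}+a_{j+1,j}=d_j-w_j$ for every $j$. Thus, writing $p_j=a_{j-1,j}$ and $q_j=a_{j+1,j}$, the fixed determinant condition is equivalent to the linear system $p_1=0$, $q_N=0$, $p_j+q_j=d_j-w_j$, and $p_{j+1}=-q_j$.

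For the forward implication I would solve this recursion from $p_1=0$: one obtains $q_1=d_1-w_1$, then $p_2=-q_1$, then $q_2=(d_1-w_1)+(d_2-w_2)$, and so on; an immediate induction then identifies each $p_j$ and $q_j$ with the appropriate partial sum of the $(d_t-w_t)$ appearing in the displayed formula (using $w_0=d_0=0$). In particular the normalization $q_N=0$ records that $\sum_j d_j=\sum_j w_j$, which is in any case forced because the internal node twists cancel in antisymmetric pairs. Feeding these values back into the isomorphism above produces the claimed formula. For the converse I would run this in reverse: reading off $p_j,q_j$ from the displayed formula as the indicated partial sums, one checks that they are integers with $p_j+q_j=d_j-w_j$ (so the degrees on each $C_j$ agree), $p_{j+1}=-q_j$ (the required antisymmetry), $p_1=0$, and $q_N=0$ (using the equality of total degrees), hence that $\mE$ satisfies both conditions of Definition \ref{fixdef}.

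This statement is essentially bookkeeping, so I do not expect a genuine obstacle; the only points that need care are the faithful translation of Definition \ref{fixdef}'s two conditions into ``degree relation plus antisymmetry'', and keeping track of the two boundary normalizations $p_1=0$ and $q_N=0$ (the absence of a twist at the non-node $P_1$, and the equality of total determinant degrees), which are precisely what make the partial-sum expression consistent. Nothing is hidden beyond these constraints: condition (1) of Definition \ref{fixdef} already exhibits $\det(\mE)|_{C_j}\otimes\mL|_{C_j}^{-1}$ as a divisor supported on the nodes, and condition (2) is exactly the antisymmetry that propagates the recursion.
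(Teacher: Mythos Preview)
Your proposal is correct and is precisely the ``straightforward interpretation of Definition \ref{fixdef}'' that the paper invokes; the paper's own proof consists of the single sentence that this is trivial, and your write-up simply spells out the bookkeeping (specialize the definition to a chain, read off the degree and antisymmetry constraints, solve the resulting first-order recursion for the twist integers). There is nothing to add.
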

\begin{proof}
This is a straightforward interpretation of Definition \ref{fixdef} and the proof is trivial.
\end{proof}
Next, we apply this lemma to rank two limit linear series $((\mE_j,V_j),(\phi_j))$ over $X_N$, where $\deg(\mE_j)=d_j$ and $\sum d_j-2b(N-1)=d$. By Remark \ref{glue}, a limit linear series gives a collection of similar vector bundles. Notice that if one of these vector bundles satisfies the fixed determinant condition with respect to $\mL$, then all vector bundles induced by the limit linear series satisfy the same condition. Thus, we make the following definition:\footnote{This works well for limit linear series of arbitrary rank, but here we shall focus on the rank two case.}
\begin{defn}\label{llsfix}
  A rank two limit linear series $((\mE_j,V_j),(\phi_j))$ on a chain $X_N$ is said to satisfy the fixed determinant condition with respect to line bundle $L$ if one (and equivalently, all) of the vector bundles $\mE'$ induced by $((\mE_j,V_j),(\phi_j))$ satisfies this condition.
\end{defn}
One further gets:
\begin{lem}
Let $L$ be a line bundle of multi-degree $(w_1,...,w_N)$ on a chain $X_N$ with components $C_1,...,C_N$ such that $C_j\cap C_{j+1}=P_{j+1}$. A rank two limit linear series $((\mE_j,V_j),(\phi_j))$ on $X_N$, where $\deg(\mE_j)=d_j$ and $\sum d_j-2b(N-1)=d$, satisfies the fixed determinant condition with respect to $\mL$ if and only if for every $j$:
$$\Lambda^2\mE_j\cong
\mL|_{C_j}((2b(j-1)+\sum_{t=0}^{j-1}(w_t-d_t))P_j+(\sum_{t=0}^j(d_t-w_t)-2b)P_{j+1}).\footnote{Again, $P_1,P_{N+1}$ are general points on $C_1,C_N$ respectively; $d_0=w_0=0$.}$$
\end{lem}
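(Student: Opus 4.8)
The plan is to derive this formula as a routine translation of Lemma~\ref{fixed} through the twisting dictionary of Remark~\ref{twist} and Remark~\ref{glue}; the only real content is the arithmetic bookkeeping that produces the $2b$-terms. Concretely, by Definition~\ref{llsfix} the limit linear series $((\mE_j,V_j),(\phi_j))$ satisfies the fixed determinant condition with respect to $\mL$ if and only if some (equivalently every) induced vector bundle $\mE'$ does so in the sense of Definition~\ref{fixdef}, so I would fix one such $\mE'$. By Remark~\ref{glue}, $\mE'|_{C_j}=\mE_j\ot\mO(e_{j,1}P_j+e_{j,2}P_{j+1})$ for integers $e_{j,i}$ with $e_{j,2}+e_{j+1,1}=-b$, and $\mE'$ has multi-degree $(d_1',\dots,d_N')$ with $d_j'=d_j+e_{j,1}+e_{j,2}$; since $\mE'$ has total degree $d=\sum_j d_j-2b(N-1)$, this forces $\sum_j(e_{j,1}+e_{j,2})=-2b(N-1)$.

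Next, because taking $\Lambda^2$ of a rank two bundle twisted by a line bundle squares that line bundle, $\Lambda^2\mE'|_{C_j}\cong\Lambda^2\mE_j\ot\mO(2e_{j,1}P_j+2e_{j,2}P_{j+1})$. Substituting this into the componentwise criterion of Lemma~\ref{fixed} for $\mE'$ (with its multi-degree $(d_j')$ and the given $(w_j)$) and solving for $\Lambda^2\mE_j$ gives
$$\Lambda^2\mE_j\cong\mL|_{C_j}\left(\left(\sum_{t=0}^{j-1}(w_t-d_t')-2e_{j,1}\right)P_j+\left(\sum_{t=0}^{j}(d_t'-w_t)-2e_{j,2}\right)P_{j+1}\right).$$
It then remains to plug in $d_t'=d_t+e_{t,1}+e_{t,2}$ and collapse the telescoping sums: using the gluing relations $e_{t,2}+e_{t+1,1}=-b$ together with the endpoint conventions for $e_{1,1}$ and $e_{N,2}$ on $C_1$ and $C_N$, all of the twisting parameters cancel, and the coefficient of $P_j$ reduces to $2b(j-1)+\sum_{t=0}^{j-1}(w_t-d_t)$ while that of $P_{j+1}$ reduces to $\sum_{t=0}^{j}(d_t-w_t)-2b$, which is the asserted formula. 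The converse direction is the same computation read backwards: if $\Lambda^2\mE_j$ has the stated form for every $j$, then the displayed identity for $\Lambda^2\mE'|_{C_j}$ holds, so $\mE'$ satisfies Definition~\ref{fixdef}, hence the limit linear series satisfies the fixed determinant condition.

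The main obstacle is precisely this last bookkeeping step: one has to be scrupulous about the two different multi-degrees in play — the multi-degree $(d_j)$ of the limit linear series versus the multi-degree $(d_j')$ of the chosen induced bundle $\mE'$ — about the index ranges in the partial sums, and about the normalization of the twists at the two ends $P_1,P_{N+1}$ of the chain, so that the string of $-b$'s along the chain telescopes correctly into the $2b(j-1)$ term. A small companion observation worth recording is that the resulting formula is visibly independent of the choice of induced bundle $\mE'$, as it must be since $\Lambda^2\mE_j$ is intrinsic to the limit linear series: any two induced bundles differ by a line bundle supported at the nodes, which alters the $e_{j,i}$ but leaves each $\Lambda^2\mE_j$ unchanged.
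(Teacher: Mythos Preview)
Your proposal is correct and takes essentially the same approach as the paper: fix an induced bundle $\mE'$, compare $\Lambda^2\mE'|_{C_j}$ with $\Lambda^2\mE_j$ via the nodal twists, and track how the gluing relation $e_{j,2}+e_{j+1,1}=-b$ produces the $2b(j-1)$ term. The only organizational difference is that the paper carries this out as an explicit induction on $j$ starting from $C_1$ and using Definition~\ref{fixdef} directly at each step, whereas you invoke Lemma~\ref{fixed} for $\mE'$ wholesale and then telescope; the underlying arithmetic is identical.
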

\begin{proof}
  Suppose $\mE'|_{C_j}=\mE_j(-a_{j,1}P_j-a_{j,2}P_{j+1})$ for all $j$. As discussed above,\\
  $a_{j,2}+a_{j+1,1}=b$ for every $j$.

  By comparing the degrees of $\mE_1$ and $\mL|_{C_1}$, one gets $\Lambda^2\mE_1\cong \mL|_{C_1}((d_1-w_1)P_2)$.

  By definition \ref{fixdef}, $\Lambda^2\mE'|_{C_1}\cong \mL|_{C_1}((d_1-w_1-2a_{1,2})P_2)$ implies
  $$\Lambda^2\mE'|_{C_2}\cong \mL|_{C_2}((2a_{1,2}-d_1+w_1)P_2+eP_3),\text{ for some integer }e.$$
  Thus,
  $$\begin{aligned}
    \Lambda^2\mE_2&\cong \mL|_{C_2}((2a_{2,1}+2a_{1,2}-d_1+w_1)P_2+(e+2a_{2,2})P_3)\\
  &=\mL|_{C_2}((2b+w_1-d_1)P_2+(e+2a_{2,2})P_3).
  \end{aligned}$$
  By degree comparison, one can conclude that $e+2a_{2,2}=\sum_{t=1}^2(d_t-w_t)+2b$.

  Since the same calculation applies for every $j$, by induction we are done. Note that this condition is independent of the choice of $\mE'$ among the vector bundles induced by the limit linear series.
\end{proof}
For our application, we specialize to the canonical determinant case. Denote $\omega$ to be the dualizing sheaf on a chain $X_N$ of smooth projective curves and suppose the genus of component $C_j$ is $g_j$ with $\sum g_j=g$. Based on the remark and Theorem 5.2.3 in \cite{BCNRD}, $\omega$ is a line bundle determined as follows:
$$\omega|_{C_1}=\omega_1(P_2),\omega|_{C_j}=\omega_j(P_j+P_{j+1}),\omega|_{C_N}=\omega_N(P_N),$$
where $\omega_j$ be the canonical sheaf on $C_j$.

In particular, for the canonical determinant, one can re-write \ref{fixed} as a condition involving $\omega_j$:
\begin{cor}
A rank two limit linear series $((\mE_j,V_j),(\phi_j))$ on the chain $X_N$ satisfies the canonical determinant condition if and only if for every $j$:
$$\det(\mE_j)=\omega_j((\sum_{t=1}^{j-1}(2g_t)-\sum_{t=1}^{j-1}d_t+(j-1)\cdot 2b)P_j+(2+\sum_{t=1}^jd_t-(j-1)\cdot 2b-\sum_{t=1}^j(2g_t))P_{j+1}).$$
\end{cor}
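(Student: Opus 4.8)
The plan is to obtain the corollary as the special case $\mL=\omega$ of the preceding lemma, so the only real content is an explicit computation with the dualizing sheaf of a chain.

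First I would record the componentwise description of $\omega$ recalled above: $\omega|_{C_1}=\omega_1(P_2)$, $\omega|_{C_j}=\omega_j(P_j+P_{j+1})$ for $1<j<N$, and $\omega|_{C_N}=\omega_N(P_N)$. Thus $\omega$ has multi-degree $(w_1,\dots,w_N)=(2g_1-1,2g_2,\dots,2g_{N-1},2g_N-1)$, and with the convention $w_0:=0$ one has the telescoping identities $\sum_{t=1}^{m}w_t=\bigl(\sum_{t=1}^{m}2g_t\bigr)-1$ for $1\le m\le N-1$ and $\sum_{t=1}^{N}w_t=2g-2$. Since the canonical determinant condition forces $d=\deg(\mE)=\deg\omega=2g-2$, the normalization $\sum_j d_j-2b(N-1)=d$ gives $\sum_{t=1}^{N}d_t=2g-2+2b(N-1)$, which is what will make the boundary coefficients come out right.

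For an interior index $1<j<N$ I would substitute $\mL=\omega$ into the formula of the preceding lemma expressing $\Lambda^2\mE_j$ in terms of $\omega|_{C_j}$ and the boundary twists built from the $w_t$ and $d_t$, then replace $\omega|_{C_j}$ by $\omega_j(P_j+P_{j+1})$ (which adds $1$ to each twisting coefficient at $P_j$ and at $P_{j+1}$), insert $w_1=2g_1-1$ and $w_t=2g_t$ for $2\le t\le j$, and telescope; the resulting coefficients of $P_j$ and $P_{j+1}$ are exactly $\sum_{t=1}^{j-1}2g_t-\sum_{t=1}^{j-1}d_t+(j-1)2b$ and $2+\sum_{t=1}^{j}d_t-(j-1)2b-\sum_{t=1}^{j}2g_t$. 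The end components $C_1$ and $C_N$ require a separate look, since there $\omega$ carries only a single boundary twist: from $\Lambda^2\mE_1\cong\omega|_{C_1}((d_1-w_1)P_2)=\omega_1((d_1-2g_1+2)P_2)$ and the mirror statement on $C_N$, together with the identities above, one checks that the coefficient of $P_1$ in the $j=1$ instance and of $P_{N+1}$ in the $j=N$ instance both vanish, so the displayed formula is uniformly valid. For the converse direction, if the displayed isomorphisms hold for all $j$, reversing the inductive degree comparison in the proof of the preceding lemma shows that the vector bundles $\mE'$ induced by $((\mE_j,V_j),(\phi_j))$ satisfy the fixed-determinant condition with respect to $\omega$, hence so does $((\mE_j,V_j),(\phi_j))$ by Definition \ref{llsfix}.

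The statement has no conceptual obstacle; once the general lemma is in hand it is pure bookkeeping. The two points that demand attention are the asymmetry at $C_1$ and $C_N$ — where $\omega$ restricts with one twist rather than two, so the general-$\mL$ formula must be reconciled by hand with the explicit $\omega_1$ and $\omega_N$ expressions — and keeping straight the numerical offsets among $\deg\mE_j=d_j$, the degree $d$ of an associated bundle, and the normalization $\sum_j d_j-2b(N-1)=d$, which is precisely what forces the $P_1$ and $P_{N+1}$ coefficients to zero.
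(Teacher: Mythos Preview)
Your proposal is correct and matches the paper's intended approach: the paper states this corollary immediately after the general fixed-determinant lemma without a separate proof, so the implicit argument is exactly the specialization $\mL=\omega$ together with the componentwise description of the dualizing sheaf that you carry out. Your handling of the boundary components $C_1$ and $C_N$ and the telescoping of the $w_t$'s is the right bookkeeping, and there is nothing more to it.
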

 Throughout our construction, all $C_j$ shall be elliptic and $\deg(\mE_j)\in\{2g-1,2g-2,2g-3\}$, with a simple pattern. In this case, the canonical determinant further simplifies as follows:
\begin{cor}\label{candet}
Let $X_g$ be a chain of $g$ elliptic curves. Let $((\mE_j,V_j)_{j=1}^g,(\phi_j)_{j=2}^g)$ be a rank two, degree $2g-2$, limit linear series on $X_g$ such that $\deg(\mE_j)=d_j$ and $\sum d_j-2(g-1)^2=2g-2$ (i.e. $b=g-1$). If further the odd $d_j$'s are $$d_{j_t}=\begin{cases}2g-1 \text{ when }t\text{ is odd}\\ 2g-3 \text{ when }t\text{ is even}\end{cases} (t=1,...,2T).$$

Then, it satisfies the fixed determinant condition with respect to $\omega$ if and only if:
$$\det(\mE_j)=\begin{cases}
\mO((2j-3)P_j+(d_j-(2j-3))P_{j+1}) \text{\ \ \ \  }j_{2s-1}<j\le j_{2s}\\
\mO((2j-2)P_j+(d_j-(2j-2))P_{j+1}) \text{\ \ \ \  otherwise}.
\end{cases}$$
\end{cor}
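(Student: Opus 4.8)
The plan is to specialize the previous corollary to the hypotheses of this statement, so this is essentially a bookkeeping computation. The previous corollary expresses $\det(\mE_j)$ in terms of $\omega_j$, the genera $g_t$, the degrees $d_t$, and $b$; here all components are elliptic (so $g_t=1$ for every $t$), $\omega_j\cong\mO$ on an elliptic curve, and $b=g-1$. So first I would substitute $g_t=1$ and $\omega_j=\mO_{C_j}$ into the formula
$$\det(\mE_j)=\omega_j\!\left(\Big(\sum_{t=1}^{j-1}2g_t-\sum_{t=1}^{j-1}d_t+(j-1)2b\Big)P_j+\Big(2+\sum_{t=1}^jd_t-(j-1)2b-\sum_{t=1}^j 2g_t\Big)P_{j+1}\right),$$
which reduces the coefficient of $P_j$ to $2(j-1)-\sum_{t=1}^{j-1}d_t+(j-1)\cdot 2(g-1)$ and, since the two coefficients must sum to $\deg\det(\mE_j)=d_j$, the coefficient of $P_{j+1}$ to $d_j$ minus that. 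So the entire content is to evaluate $2(j-1)+2(g-1)(j-1)-\sum_{t=1}^{j-1}d_t$ and show it equals $2j-3$ when $j_{2s-1}<j\le j_{2s}$ and $2j-2$ otherwise.

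The key step is therefore computing the partial sum $\sum_{t=1}^{j-1}d_t$ under the stated pattern of $d_j$'s. We are told that $b=g-1$, that $\sum_j d_j=2g-2+2(g-1)^2 = 2(g-1)g$, and that the odd-valued $d_j$'s occur at positions $j_1<\cdots<j_{2T}$, taking the value $2g-1$ at odd-indexed $j_t$ and $2g-3$ at even-indexed $j_t$, while all other $d_j$ equal $2g-2$. So I would write $\sum_{t=1}^{j-1}d_t = (j-1)(2g-2) + \#\{t : j_t\le j-1,\ t\text{ odd}\} - \#\{t : j_t\le j-1,\ t\text{ even}\}$. The difference of those two counts is $1$ precisely when the largest $j_t$ that is $\le j-1$ has odd index, i.e. when $j_{2s-1}\le j-1<j_{2s}$ for some $s$ (equivalently $j_{2s-1}<j\le j_{2s}$), and is $0$ otherwise (when $j$ lies in a "closed" interval $(j_{2s},j_{2s+1}]$, or before $j_1$, or after $j_{2T}$). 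Plugging this in, the coefficient of $P_j$ becomes
$$2(j-1)+2(g-1)(j-1)-(j-1)(2g-2)-\delta = 2(j-1)-\delta,$$
where $\delta\in\{0,1\}$ is the count difference above; this is $2j-3$ when $\delta=1$ (i.e. $j_{2s-1}<j\le j_{2s}$) and $2j-2$ when $\delta=0$, exactly as claimed, and the coefficient of $P_{j+1}$ is then $d_j$ minus this. I would also note that by Definition \ref{llsfix} it suffices to check this for one vector bundle induced by the limit linear series, so the corollary is well-posed.

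The only mild subtlety — the "hard part," such as it is — is getting the boundary cases of the interval bookkeeping right: whether the endpoints $j=j_{2s-1}$ and $j=j_{2s}$ fall into the "$2j-3$" or the "$2j-2$" case, and correctly handling $j$ before $j_1$ or after $j_{2T}$. Since the count $\#\{t: j_t\le j-1,\ t\text{ odd}\}-\#\{t:j_t\le j-1,\ t\text{ even}\}$ depends on $j-1$ rather than $j$, I would be careful that it equals $1$ for $j-1\in\{j_{2s-1},j_{2s-1}+1,\ldots,j_{2s}-1\}$, i.e. for $j\in\{j_{2s-1}+1,\ldots,j_{2s}\}$, which is precisely the half-open range $j_{2s-1}<j\le j_{2s}$ in the statement; everything else then falls into the "otherwise" case. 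Once this indexing is pinned down, the result follows by direct substitution into Corollary \ref{candet}'s predecessor, so I would present the proof as: reduce to one induced bundle, substitute $g_t=1$, $\omega_j=\mO$, $b=g-1$ into the general formula, evaluate the partial degree sum via the count above, and read off the two cases.
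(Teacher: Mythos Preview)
Your proposal is correct and is exactly the intended argument: the paper states this corollary as an immediate specialization of the preceding general formula for $\det(\mE_j)$ (with $g_t=1$, $\omega_j\cong\mO$, $b=g-1$) and gives no further proof, so your explicit bookkeeping of the partial sum $\sum_{t\le j-1}d_t$ and the resulting $\delta\in\{0,1\}$ is precisely what fills in the omitted details.
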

\section{General Framework and Terminologies}
We are now ready to describe our general framework.

Hereafter, denote $X_g$ to be a chain of $g$ elliptic curves whose irreducible components are $C_1,...,C_g$ and $P_2,...,P_g$ to be the nodes of $X$. Moreover, we specify $P_1,P_{g+1}$ to be smooth points on $X_1,X_g$ respectively. We always make the assumption that $P_j,P_{j+1}$ are general points in $C_j$. Also denote $\omega_g$ to be the dualizing sheaf on $X_g$.

We shall consider various moduli stacks of rank two limit linear series with canonical determinant, fixed componentwise-degree and prescribed vanishing sequences at $P_2,...,P_g$. It is shown in \cite{Ohrk} (Proposition 4.2.4) that the moduli stack of rank two refined Eisenbud-Harris-Teixidor limit linear series with canonical determinant and componentwise-degree $d_{\bullet}$, $\mathcal{G}^{k,\text{EHT,ref}}_{2,\omega_g,d_{\bullet}}(X_g)$, is a disjoint union of substacks, each of which corresponds to a set of fixed vanishing sequences at $P_2,...,P_g$, presented as a $k\times(2g-2)$ matrix $a^{\Gamma}=[\van_{P2}(V_1),...,\van_{P_j}(V_j),\van_{P_j}(V_{j+1}),...,\van_{P_g}(V_g)]$:
$$\mathcal{G}^{k,\text{EHT,ref}}_{2,\omega_g,d_{\bullet}}(X_g)=\coprod \mathcal{G}^{k,\text{EHT}}_{2,\omega_g,d_{\bullet},a^{\Gamma}}(X_g).\footnote{Here, $\Gamma$ refers to the dual graph of $X_g$, which does not get into play in this paper.}$$
\addtocontents{toc}{\SkipTocEntry}
\subsection*{Standard Vanishing Conditions}
As mentioned in the Introduction, the existence portion of the BFM-conjecture was partially verified by Teixidor-i-Bigas in \cite{M1} for all $g,k$ such that $L(g,k)\ge0$ (\tb{Notation 2}). In this paper, we focus on the cases where $g,k$ satisfy the conditions $\rr\ge 0,L(g,k)<0$. Note that the smallest $g,k$ for which these conditions hold is $g=6$, $k=5$.

Clearly, these vanishing conditions $a^{\Gamma}$ play a central role in our construction. Nevertheless, one may not want to consider arbitrary feasible vanishing conditions, since counting dimensions of corresponding moduli stacks may be complicated. The next definition specifies the kind of vanishing conditions considered in this paper, which we shall refer to as \it{standard vanishing conditions}.\rm
\begin{defn}\label{stan}
  Fix positive integers $g$ and $k$. Fix also an integer vector $(d_1,...,d_g)$ of length $g$ satisfying the following conditions:
  \begin{itemize}
    \item $d_j\in\{2g-3,2g-2,2g-1\}$;
    \item denote $d_{j_t}$ ($t=1,...,2T$) to be the odd $d_j$'s, then $d_{j_{2s-1}}=2g-1$ and $d_{j_{2s}}=2g-3$, for all $s$, with $j_1=2$;
    \item $d_1=d_g=2g-2$.
  \end{itemize}
  Let $(\van_{j1},\van_{j2})_{j=1}^g$ be a set of $2g$ non-negative integer vectors of length $k$ satisfying the following set of conditions:
\begin{enumerate}
  \item for $j=1,...,g-1$ and $i=1,...,k$, $b_{i,j}+a_{i,j+1}=g-1$;
  \item when $d_j=2(g-1)$, $(\van_{j1}),(\van_{j2})$ satisfy conditions in Lemma $\ref{one}$ with $d=g-1$;
  \item when $d_j=2g-1$ (resp. $2g-3$), $(\van_{j1}),(\van_{j2})$ satisfy conditions in Lemma $\ref{one2}$ with $d=g-1$ (resp. $d=g-2$);
  \item $a_{k,2}+b_{k,2}=g-1$;
  \item $a_{k,j_t}\le k_1+j_t-1$ and if $a_{k,j_t}=k_1+j_t-2$, then either $t$ is even or $a_{k,j_t}+b_{k,j_t}=\frac{1}{2}(d_{j_t}-1)$; if $a_{k,j_t}=k_1+j_t-1$, then $t$ is odd and $a_{k-1,j_t}<a_{k,j_t}$;
  \item If $b_{k,j_t}=g-1-k_1-j_t$, then $a_{k,j}+b_{k,j}=g-2$ for all $j:j_t<j<j_{t+1}$.
\end{enumerate}
We shall say $(\van_{j1},\van_{j2})_{j=1}^g$ is a $\mathbf{(g,k)}$-\textbf{standard} vanishing condition at $P_1,...,P_{g+1}$.
\end{defn}
\begin{defn}
  We shall also say a $k\times (2g-2)$ matrix $a^{\Gamma}$ is $(g,k)$-standard, if the columns of $\begin{bmatrix} a(k)&a^{\Gamma}&a(k)^{\rev}\end{bmatrix}$ (see \tb{Notation 4,12}) form a $(g,k)$-standard vanishing condition.
\end{defn}
One motivation for using standard vanishing conditions is the following: under mild assumptions, for all limit linear series $((\mE_j,V_j),(\phi_j))$ on $X_g$ such that $(\van_{P_j}(V_j),\van_{P_{j+1}}(V_j))$ is $(g,k)$-standard, the bundles $\mE_j$ are fixed:
\begin{lem}\label{bunfix}
Suppose $\begin{bmatrix}a(k)&a^{\Gamma}&a(k)^{\rev}\end{bmatrix}=(\van_{j1},\van_{j2})_{j=1}^g$ is $(g,k)$-standard. Let $((\mE_j,V_j),(\phi_j))$ be a rank two, dimension $k$, degree $2g-2$ limit linear series whose componentwise degree is $d_{\bullet}=(d_1,...,d_g)$ and $\van_{P_j}(V_j)=\van_{j1},\van_{P_{j+1}}(V_j)=\van_{j2}$. If $u(\mE_j)\le{1\over2}$ (see \tb{Convention}), then $\mE_j$ is determined to be a decomposable vector bundle. In this case, $b=g-1$.
\end{lem}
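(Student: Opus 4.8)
The plan is to argue component by component, using the numerical constraints packaged into the definition of $(g,k)$-standardness together with the structural lemmas \ref{2} and \ref{3}. First I would recall that by condition (1) in Definition \ref{stan} the vanishing sequences $(\van_{j1},\van_{j2})$ satisfy $b_{i,j}+a_{i,j+1}=g-1$ for all $i$, which forces the refinement parameter to be $b=g-1$; this is exactly the compatibility needed for the componentwise degrees $d_\bullet$ with $\sum d_j - 2(g-1)^2 = 2g-2$ to define a limit linear series in the sense of Definition \ref{Def}, so the last sentence of the statement is essentially bookkeeping once the rest is established.

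The heart of the argument is to show each $\mE_j$ is the prescribed decomposable bundle. I would split into the three cases $d_j=2g-2$, $d_j=2g-1$, $d_j=2g-3$ according to the middle bullet of Definition \ref{stan}. When $d_j=2(g-1)$, condition (2) says $(\van_{j1}),(\van_{j2})$ satisfy the hypotheses of Lemma \ref{one} with $d=g-1$; in particular there are indices $i_1<i_2$ with $a_{i_1,j}+b_{i_1,j}=a_{i_2,j}+b_{i_2,j}=g-1$, so Lemma \ref{2} applies: since $\deg(\mE_j)=2g-2=2d$ and $u(\mE_j)\le\frac12$, Lemma \ref{2} forces $\mE_j\cong\mO(a_{i_1,j}P_j+b_{i_1,j}P_{j+1})\op\mO(a_{i_2,j}P_j+b_{i_2,j}P_{j+1})$, decomposable and semistable. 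When $d_j=2g-1$ (resp. $2g-3$), condition (3) says the vanishing sequences satisfy the hypotheses of Lemma \ref{one2} with $d=g-1$ (resp. $d=g-2$); those hypotheses include the hypotheses of Lemma \ref{3}, namely the existence of $\ell$ with $a_\ell+b_\ell=d+1$ and of $i$ with $a_i=a_{i+1}$, $a_i+b_i=a_{i+1}+b_{i+1}=d$. Since $\deg(\mE_j)=2d+1$ in both these cases and $u(\mE_j)\le\frac12$, Lemma \ref{3} gives $\mE_j\cong\mO(a_\ell P_j+b_\ell P_{j+1})\op\mO(a_i P_j+b_i P_{j+1})$, an unstable decomposable bundle. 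In every case $\mE_j$ is decomposable and uniquely determined by the vanishing data, as claimed. One should check the orientation convention (which of $P_j$, $P_{j+1}$ plays the role of the ``$P$'' in Situation \ref{sit2}) is consistent with the matrix convention $[a(k)\ a^\Gamma\ a(k)^{\rev}]$, but this is only notational.

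The step I expect to be the main obstacle is verifying that the hypotheses of Situation \ref{sit2} are genuinely in force for every $\mE_j$ simultaneously — in particular that $u(\mE_j)\le\frac12$ is a legitimate standing assumption and not something that needs re-proving here. The statement builds it in as a hypothesis (``If $u(\mE_j)\le\frac12$''), consistent with the global \textbf{Convention} restricting to that open locus, so I would simply invoke it; the subtlety is that Lemmas \ref{2} and \ref{3} were proved under Situation \ref{sit2}, whose first clause demands $P,Q$ \emph{general} on an elliptic curve, and one must note that in our chain $X_g$ the nodes $P_j,P_{j+1}\in C_j$ are general by the standing assumption at the start of Section 9. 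A secondary point of care is that the ``moreover'' in Lemmas \ref{2}, \ref{3} only pins down $\mE_j$ when $\deg(\mE_j)$ hits the extremal value $2d$ or $2d+1$ exactly; I would confirm that the degree bookkeeping (odd $d_j$'s alternate $2g-1,2g-3$, others equal $2g-2$, and $d=g-1$ or $g-2$ accordingly) makes this equality hold in each case, rather than the strict inequality. Once those checks are in place the conclusion, including $b=g-1$, follows immediately.
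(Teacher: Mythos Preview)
Your proposal is correct and follows essentially the same route as the paper: invoke Lemmas \ref{2} and \ref{3} under the standing hypothesis $u(\mE_j)\le\tfrac12$ to pin down each $\mE_j$, then read off $b=g-1$. The only minor point is the order of reasoning for $b$: the paper computes $\sum d_j=2g(g-1)$ directly from the structure of $d_\bullet$ in Definition \ref{stan} (the odd entries pair as $2g-1,2g-3$) and then solves $\sum d_j-2b(g-1)=2g-2$, whereas you start from the vanishing condition $b_{i,j}+a_{i,j+1}=g-1$; since $b$ is \emph{defined} by the degree equation rather than by the refinement condition, the paper's derivation is the logically primary one, but you already have the degree equation in hand so this is only a matter of presentation.
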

\begin{proof}
By definition, $\deg(\mE_j)=d_j$ for all $j$. By Lemma \ref{2}, \ref{3}, all $\mE_j$ are decomposable and fixed. Since $\sum d_j=2g(g-1)$ and $\sum d_j-2b(g-1)=2g-2$, one gets $b=g-1$.
\end{proof}
\begin{rem}\label{uopenr}
By Lemma \ref{bunfix}, the vector bundles $\mE_1,..,\mE_g$ are the same for any $K$-valued object of $\mathcal{G}^{k,\text{EHT}}_{2,\omega_g,d_{\bullet},a^{\Gamma}}(X_g)$. We shall refer to $\mE_1,...,\mE_g$ as the \textbf{underlying vector bundles} of $\mathcal{G}^{k,\text{EHT}}_{2,\omega_g,d_{\bullet},a^{\Gamma}}(X_g)$p.
\end{rem}

Since our main goal is the BFM-conjecture, in practice we shall restrict ourselves to standard vanishing conditions which determine $\mE_1,...,\mE_g$ such that the induced vector bundle on $X_g$ satisfies the canonical determinant condition.

Another motivation for using standard vanishing condition $a^{\Gamma}$ is that all objects in $\mathcal{G}^{k,\text{EHT}}_{2,\omega_g,d_{\bullet},a^{\Gamma}}(X_g)$ are chain-adaptable (see Section \ref{definition}). This follows directly from the definition of $(g,k)$-standardness.
\addtocontents{toc}{\SkipTocEntry}
\subsection*{Adapted Bases and Configurations of Sections at a Point}
Following the previous section, when $a^{\Gamma}$ is $(g,k)$-standard, all objects in $\mathcal{G}^{k,\text{EHT}}_{2,\omega_g,d_{\bullet},a^{\Gamma}}(X_g)$ have fixed underlying vector bundles. Therefore, when counting the dimension of this stack, after taking into consideration automorphisms of the underlying bundles, we can focus on moduli in the choices of $V_j$ and gluing at the nodes. Since all limit linear series here considered are chain-adaptable, counting the moduli in choices of $V_j$ reduces to counting the moduli in choices of $(P_j,P_{j+1})$-adapted basis. However, a major difficulty lies in analyzing feasibilities/possibilities of gluing at the nodal points of $X_g$. To facilitate such analysis, we introduce various notations and definitions.

In Definition \ref{Def}, we use $\phi_j$ to denote an isomorphism $\mE''_{j-1}|_{P_j}\to\mE''_j|_{P_j}$. It induces an isomorphism $\PP\mE_{j-1}|_{P_j}\to\PP\mE_j|_{P_j}$. We abuse notation and denote the latter as $\phi_j$ as well.
\begin{defn}
  Let $V$ be an $n$-dimensional space of sections of some rank two vector bundle $\mE$ on an elliptic curve $C$. Let $P$ be a point on $C$, and $(b_i)$ be the vanishing sequence of $V$ at $P$. Denote
  $$N^V(P)=\{i| b_i \text{ is a non-repeated vanishing order of }V \text{ at }P\}.$$

  We call a partition $\tau^V(P)$ of $N^V(P)$ the \textbf{configuration} at $P$ of $V$, if $\fa s,s'\in V$ such that $\ord_P(s)=b_{j_1},\ord_P(s')=b_{j_2}$, $s|_P=s'|_P$ (see \tb{Notation 10}) if and only if $j_1,j_2$ belong to the same index set in $\tau^V(P)$.

  We denote $|\tau^V(P)|$ to be the size of a configuration at $P$, i.e. the number of disjoint subsets $\tau^V(P)$ divides $N^V(P)$ into.

  Let $V'\subset V$ be s sub-space of sections. Suppose the following are true:
  \begin{enumerate}
    \item $N^{V'}(P)\subset N^{V}(P)$;
    \item any element in $\tau^{V'}(P)$ is a subset of one element in $\tau^V(P)$;
    \item no two elements in $\tau^{V'}(P)$ are subsets of the same element in $\tau^V(P)$.
  \end{enumerate}
  Then, we say $\tau^{V'}(P)$ is a \textbf{sub-configuration} of $\tau^V(P)$.

  Given $\tau^V(P)$, suppose $b_{i_1},...,b_{i_S}$ are the non-repeated vanishing orders in $(b_i)$. Let $s_i$ be any section in $V$ vanishing to order $b_{i_m}$ at $P$ ($m=1,...,M$). We call $(s_{i_m}|_P)_{m=1}^M$ to be \textbf{the sequence of points associated to $V$} in $\PP\mE|_P$.
\end{defn}
\begin{ex}\label{config}
We list a few examples and conventions for configurations:\\
\noindent(1) For any object $((\mE_j,V_j),(\phi_j))$ in $\mathcal{G}^{k,\text{EHT}}_{2,\omega,d_{\bullet},a^{\Gamma}}(X_g)$, it is clear that $N^{V_j}(P_{j+1})=N^{V_{j+1}}(P_{j+1})$ and $\tau^{V_j}(P_{j+1})=\tau^{V_{j+1}}(P_{j+1})$. Therefore, we simply write $N(P_{g+1})$ for the set of indices of non-repeated vanishing orders in the vanishing sequence and $\tau(P_{j+1})$ for any possible configuration. Note: this does NOT suggest that the configuration is fixed among different limit linear series.\\
\ \\
\noindent(2) We say a configuration at $P$ is \textbf{trivial} if $|N(P)|\le 1$. Now suppose $P=C_t\cap C_{t+1}$ is a nodal point on a chain $X_g$. When justifying the existence of chain-adapted limit linear series $((\mE_j,V_j),(\phi_j))$ on $X_g$, if $\tau(P_{t+1})$ is trivial, then there is no obstruction to the existence of such limit linear series coming from the choice of $\phi_{t+1}$.\\
\ \\
\noindent(3) Suppose $\mE_t=\oplus_{s=1}^2 \mO(c_sP_t+(d-c_s)P_{t+1})$ and the prescribed vanishing sequence at $P_t$ is $(a_i)$. Suppose further $\exists i,i'\in N(P_t)$ such that $\tau^{V_t}(P_t)$ contains an element $S$ and $S\supset\{i,i'\}$. If $a_i+a_{i'}=c_1+c_2-1$, then by Lemma \ref{aux}, \ref{sym}, $i,i'$ are also contained in some element in $\tau^{V_t}(P_{t+1})$; if $a_i+a_{i'}\neq c_1+c_2-1$, then $i,i'$ do not belong to a same index set in $\tau^{V_t}(P_{t+1})$ unless $\mO(c_1P_t+(d-c_1)P_{t+1})\cong \mO(c_2P_t+(d-c_2)P_{t+1})$.\\
\ \\
(4) Suppose $\mE_t=\oplus_{s=1}^2 \mO(a_sP_t+(d-a_s)P_{t+1})$, $\mE_{t+1}=\oplus_{s=1}^2 \mO((a_s+1)P_t+(d-a_s-1)P_{t+1})$. Suppose further $a_1,a_2,a_1+1,a_2+1$ are non-repeated vanishing orders in $\van_{P_t}(V_t)$. Then, the sequence of points in $\PP \mE_t|_{P_t}$ (resp. $\PP \mE_{t+1}|_{P_t}$) associated to $V_t$ (resp. $V_{t+1}$) must contain $T^t_1|_{P_t},T^t_2|_{P_t}$ (resp. $T^{t+1}_1|_{P_t},T^{t+1}_2|_{P_t}$), where $T^t_1,T^t_2$ (resp. $T^{t+1}_1,T^{t+1}_2$) are canonical sections of $\mE_t$ (resp. $\mE_{t+1}$) (see Remark \ref{csec}). Moreover, $\phi_{t+1}(T^t_i|_{P_t})=T^{t+1}_i|_{P_t}$ for $i=1,2$.
\end{ex}
\section{General Induction Arguments}
In this section, we shall inductively define $(g,k)$-standard vanishing conditions $a^{\Gamma}$ for varying pairs of $(g,k)$ such that an open substack of \ma\ has dimension $\rr$. This is done for an infinite collection of $(g,k)$ such that $L(g,k)\to-\infty$ as $g,k$ increase. The construction is accomplished in multiple steps.\\
\ \\
\textbf{Step 1}: Fix $k=2k_1+1\ge 5$. Suppose $\mathcal{G}^{k,\text{EHT}}_{2,\omega_g,d_{\bullet},a^{\Gamma}}(X_g)$ is non-empty. Then, $\mathcal{G}^{k+2m,\text{EHT}}_{2,\omega_g(NP_{g+1}),\hat{d}_{\bullet},c^{\Gamma}}(X_g)$ ($N>k_1+m$) is also non-empty. Here, $\hat{d}_i=d_i+2N$ and $c^{\Gamma}$ is a $(k+2m)\times(2g-2)$ matrix defined using $a^{\Gamma}$. More precisely, there is a morphism from a substack of $\mathcal{G}^{k+2m,\text{EHT}}_{2,\omega_g(NP_{g+1}),\hat{d}_{\bullet},c^{\Gamma}}(X_g)$ onto some non-empty open substack of $\mathcal{G}^{k,\text{EHT}}_{2,\omega_g,d_{\bullet},a^{\Gamma}}(X_g)$ with fiber dimension 1.\\
\ \\
\textbf{Step 2}: For $q=\max\{1,\lfloor{k_1\over 3}\rfloor\}$, take $N=4k_1+6-q$ and let $g'=g+N,k'=k+4$. Let $\mathcal{G}^{k',\text{EHT}}_{2,\omega_g(NP_{g+1}),\hat{d}_{\bullet},c^{\Gamma}}(X_g)$ be the moduli stack defined in \tb{Step 1}. We further define a $(g',k')$-standard vanishing condition $c^{\Gamma'}$ such that $c^{\Gamma}$ consists of the first $2g-2$ columns of $c^{\Gamma'}$. This defines a new moduli stack $\mathcal{G}^{k',\text{EHT}}_{2,\omega_{g'},d'_{\bullet},c^{\Gamma'}_{g'}}(X_{g'})$ of limit linear series on a chain of $g'$ elliptic cruves, $X_{g'}$, where the first $g$ entries of $d'_{\bullet}$ are precisely the entries of $\hat{d}_{\bullet}$. We further show there is a forgetful morphism from an open substack of $\mathcal{G}^{k',\text{EHT}}_{2,\omega_{g'},d'_{\bullet},c^{\Gamma'}_{g'}}(X_{g'})$ onto some locally closed substack of $\mathcal{G}^{k,\text{EHT}}_{2,\omega_g(NP_{g+1}),\hat{d}_{\bullet},c^{\Gamma}}(X_g)$. \\
\ \\
\tb{Step 3}: Repeat the process in \tb{Step 2} for $N=2k_1+2$, $g'=g+N$ and $k'=k+2$. This gives an inductive construction in which $L(g,k)$ is constant as $g,k$ vary. Combining this with the previous step, we get the construction for the main result of this paper in cases where $k$ is odd.\\
\ \\
\tb{Step 4}: Every $\mathcal{G}^{k',\text{EHT}}_{2,\omega_{g'},d'_{\bullet},c^{\Gamma'}_{g'}}(X_{g'})$ in \tb{Step 2}, \tb{Step 3}, induces some $\mathcal{G}^{k'-1,\text{EHT}}_{2,\omega_{g''},d''_{\bullet},c^{\Gamma''}}(X_{g''})$, where $N'={k'-1\over 2}$, $g''=g-N'$ and $d''_{\bullet}=(d'_1,...,d'_{g''})-(2N',...,2N')$. This gives the construction for the main result in cases where $k$ is even.
\subsection{Induction Step 1}
The intuition behind Step 1 is very simple: let $\mE$ be some vector bundle on a smooth projective curve $C$ and $V$ be an $n$-dimensional space of sections of $\mE$. For any $n>0$, one can always find a $(k+n)$-dimensional space $V'$ of rational sections containing $V$. This is the same as saying that $\exists V'\subset \Gamma(\mE(Z))$ of dimension $k+n$ containing $V(Z)$, where $D$ is some effective divisor of $C$ of sufficiently large degree.

Now start with any genus $g$, dimension $k$ limit linear series $((\mE_j,V_j),(\phi_j))$, to get a family of genus $g'$, dimension $k'$ limit linear series, we take an effective divisor $Z_j$ of degree $g'-g$ on every component $C_j$ of $X_g$ and consider $\mE_j(Z_j)$ in place of $\mE_j$. We do not take arbitrary $Z_j$ or arbitrary $k'$-dimensional subspace of $\Gamma(\mE_j(Z_j))$ containing $V_j(Z_j)$; rather, we want its vanishing sequence to be of some desired form. We make this idea precise via the following lemma.
\subsubsection{The Construction in \textbf{Step 1}}
\begin{lem}\label{C1}
Fix $k=2k_1+1\ge 5$. Suppose $a^{\Gamma}$ is a $(g,k)$-standard. Let $d_{\bullet}=(d_1,...,d_g)$ be a sequence of non-negative integers, with $d_{j_1},...,d_{j_{2T}}$ being the only odd entries of $d_{\bullet}$. Moreover, for $j=1,...,g$, the pair $(a_{i,2j-2})$ and $(a_{i,2j-1})$ satisfy either Lemma \ref{one} (or Lemma \ref{one2}) with $d=\frac{1}{2}d_j$ (or $d=\frac{1}{2}(d_j-1)$).

Fix some integers $n=2m>0$ and $N> k_1+m$. Let $\hat{d}_{\bullet}=(\hat{d}_1,...,\hat{d}_g)=d_{\bullet}+(2N,...,2N)$ and $\hat{a}^{\Gamma}=a^{\Gamma}+A$, where $A$ is a $k\times(2g-2)$ matrix whose $2j$-th columns are all $(0,...,0)^T$ and whose $(2j-1)$-th columns are all $(N,...,N)^T$, for $j=1,...,g-1$.

Then, there exists a unique $(k+n)\times(2g-2)$ matrix $c^{\Gamma}=(c_{ij})$ satisfying the following conditions:

\begin{enumerate}
  \item $(c_{ij})_{i\le k}=\hat{a}^{\Gamma}$.
  \item For $j=1,...,g-1$, $c_{i,2j-1}+c_{i,2j}=g+N-1$.
  \item For $j=1,...,g$, if $(a_{i,2j-2})$ and $(a_{i,2j-1})$ satisfy Lemma \ref{one} (resp. Lemma \ref{one2}) with $d=\frac{1}{2}d_j$ (resp. with $d=\frac{1}{2}(d_j-1)$), so do $(c_{i,2j-2})$ and $(c_{i,2j-1})$ with $d=\frac{1}{2}\hat{d}_j$ (resp. with $d={1\over2}(\hat{d}_j-1)$).\footnote{We define $(a_{i,0})=a(k),a_{i,2g-1}=a(k)^{\rev},(c_{i,0})=a(k+n),(c_{i,2g-1})=(N+k_1,[N+k_1-1]_2,[N+k_1-2]_2,...,[N]_2,N-1-k_1,[N-2-k_1]_2,[N-3-k_1]_2,...,[N-m-k_1]_2,N-m-k_1-1)$.}
  \item For $i=k+1,...,k+n$, if $c_{i,2j_t-2}+c_{i,2j_t-1}=\frac{1}{2}(\hat{d}_{j_t}-1)$, then $c_{i,2j_{t+1}-2}+c_{i,2j_{t+1}-1}=\frac{1}{2}(\hat{d}_{j_{t+1}}-1)-1$; if $c_{i,2j_t-2}+c_{i,2j_t-1}=\frac{1}{2}(\hat{d}_{j_t}-1)-1$, then $c_{i,2j_{t+1}-2}+c_{i,2j_{t+1}-1}=\frac{1}{2}(\hat{d}_{j_{t+1}}-1)$.
  \item $c_{k+n,2}+c_{k+n,3}=\frac{1}{2}(\hat{d}_2-1)$.
\end{enumerate}
\end{lem}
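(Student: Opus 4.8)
The plan is to build the last $n$ rows of $c^{\Gamma}$ by propagating the conditions along the chain, and to read off uniqueness from the fact that the propagation leaves no choice at any step. Condition (1) and the footnote already fix rows $1,\dots,k$ (they equal $\hat a^{\Gamma}$) and the two conventional columns $0$ and $2g-1$; the full column $2g-1$ is the displayed sequence, whose first $k$ entries are $a(k)^{\rev}+N$ and whose last $n$ entries form a ``new block'' made of two staircase singletons and $m-1$ doubled values. For each new row $i\in\{k+1,\dots,k+n\}$ I would then compute $c_{i,2g-2},c_{i,2g-3},\dots,c_{i,1}$ in this order, alternately using condition (3) to cross a component (Lemma \ref{one} when $d_j$ is even, Lemma \ref{one2} when $d_j$ is odd) and condition (2) to cross a node. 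On an even component the pairwise sum of a new row is forced to $\tfrac12\hat d_j-1$, because the unique index pair with the maximal sum $\tfrac12\hat d_j$ lies among the first $k$ rows; on an odd component the top sum $\tfrac12(\hat d_j-1)+1$ likewise occurs for at most one index, which is among the first $k$ rows, so Lemma \ref{one2} leaves only two possibilities for a new row, and these are pinned down by condition (4) together with the already-determined incoming entry and the monotonicity of the column, with condition (5) supplying the base value on $C_2$ for the row $i=k+n$. Running this procedure both produces $c^{\Gamma}$ and shows that each of its entries is forced, which is the ``moreover''.

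The numerical engine of the argument is the hypothesis $N>k_1+m$. Beginning at column $2g-1$, where the new entries are at most $N-1-k_1$ while the old entries are at least $N$, one checks column by column that in every column the entries of the new rows and the entries of $\hat a^{\Gamma}$ lie in disjoint ranges of integers (the new block sitting below the old one in some columns, above it in others), the separation surviving each propagation step because conditions (2) and (3) move entries by bounded amounts and $N$ is large. With this separation in hand the new block can be analyzed by itself.

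Verifying condition (3) is the main obstacle. For each component the full column pair must satisfy all the hypotheses of Lemma \ref{one} or Lemma \ref{one2} with the enlarged $d$: every integer occurring in a column at most twice, a unique index pair with maximal sum, the absence of an index $i\neq i_1,i_2$ with $a_i=a_{i_1}$ and $b_i=b_{i_2}$, and, for Lemma \ref{one2}, its five non-repetition conditions. Because $a^{\Gamma}$ is $(g,k)$-standard, its first $k$ rows already satisfy these with the unshifted $d$, and translation by $A$ (which adds $N$ to the second column of each component) preserves them with the shifted $d$; in particular the distinguished pair, and for Lemma \ref{one2} the distinguished index, remain among the first $k$ rows. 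Granting the numerical separation above, each structural condition then splits into the corresponding statement for $\hat a^{\Gamma}$ and a self-contained check inside the explicit new block, where every value occurs exactly twice except the two staircase singletons, whose sums are arranged to avoid any forbidden coincidence. The remaining points are lighter: conditions (1), (2), (5) hold by construction, and the global consistency of condition (4) — that the alternation propagated from $C_2$ agrees with what backward propagation from column $2g-1$ forces — reduces to a telescoping identity for the increments of the pairwise sums around the chain, which I would check directly. The delicate parts are therefore the verification of (3) and the two-ended consistency of the propagation; the purely arithmetic estimates underlying the numerical separation I would relegate to the appendix.
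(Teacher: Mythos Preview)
Your overall architecture — propagate along the chain using conditions (2) and (3) alternately, with conditions (4)–(5) handling the unstable components, then verify that the result satisfies all hypotheses of Lemmas~\ref{one}/\ref{one2} — is exactly the paper's approach (Appendix, Proposition~\ref{AM1} and Lemmas~A.2--A.7). The paper, however, propagates \emph{forward} from column~$0$ rather than backward from column~$2g-1$. This is not merely cosmetic: condition~(5) anchors row~$k+n$ at the \emph{first} unstable component $j_1=2$, and condition~(4) is a forward recursion $j_t\to j_{t+1}$, so forward propagation uses these directly as inputs, whereas your backward scheme must treat them as consistency checks and find some other way to fix the sum at $j_{2T}$. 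Your phrase ``pinned down by condition~(4) \dots\ with condition~(5) supplying the base value on $C_2$'' mixes the two directions and does not explain how the choice at $j_{2T}$ is determined.

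The substantive gap is the ``numerical separation'' claim. It is \emph{not} true that in every column the new block and the old block occupy disjoint integer ranges. The paper's Lemmas~\ref{A3}, \ref{A5}, \ref{A6} explicitly analyze columns in which $c_{k,2j-1}=c_{k+1,2j-1}$ or $c_{k,2j-2}=c_{k+1,2j-2}$, and show (using conditions~(5)--(6) of Definition~\ref{stan}, not merely $N>k_1+m$) that when such equality occurs one also has $c_{k-1}<c_k$ and $c_{k+1}<c_{k+2}$, so no integer repeats three times. Your verification of condition~(3) therefore cannot be split into an independent check on $\hat a^{\Gamma}$ plus a self-contained check on the new block: the boundary between them must be tracked column by column, and it is precisely the $(g,k)$-standardness hypotheses on $a^{\Gamma}$ — not a coarse inequality — that control it. Without this, the ``every integer at most twice'' and the non-coincidence clauses of Lemma~\ref{one2} are not established.
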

\begin{proof}
  See Appendix Proposition \ref{AM1}.
\end{proof}
\subsubsection{Sufficiently Generic Objects}
Given a $(g,k)$-standard $a^{\Gamma}$ and the corresponding $c^{\Gamma}$ as in \ref{C1}, we shall relate some locally closed substack $\GG^0$ of $\mathcal{G}^{k+2m,\text{EHT}}_{2,\omega_g(NP_{g+1}),\hat{d}_{\bullet},c^{\Gamma}}(X_g)$ to $\mathcal{G}^{k,\text{EHT}}_{2,\omega_g,d_{\bullet},a^{\Gamma}}(X_g)$ via some stack morphism $\psi$. However, $\psi$ will not be surjective and we describe objects in \ma\ over which $\psi$ has non-empty fibers. To do so, we need some technical definitions. We start with a simple observation.
\begin{ex}\label{fiso}
  Let $\mE=\mO(aP+(d-a)Q)\op\mO(cP+(d-c)Q)$ be a vector bundle over an elliptic curve, where $P,Q$ are general points and $0\le a,c\le d$. For $e:0\le e\le d$ such that $e\neq a-1,a,c-1,c$, $\Gamma(\mE(-eP-(d-1-e)Q))$ is two-dimensional and consists of sections vanishing precisely to order $e,d-1-e$ at $P,Q$ resp. Suppose $\s(s_1,s_2)=\Gamma(\mE(-eP-(d-1-e)Q))$. Then, $\s(s_1|_P,s_2|_P)=\mE|_P$, $\s(s_1|_Q,s_2|_Q)=\mE|_Q$. The conditions $s_1|_P\mapsto s|_Q,s_2|_P\mapsto s_2|_Q$ linearly extends to an isomorphism $\mE|_P\to\mE|_Q$, which induces an isomorphism $\tau(e):\PP\mE|_P\to \PP\mE|_Q$.

  More generally, let $((\mE_j,V_j),(\phi_j))$ be an object in \ma. Suppose $\mE_j=\mO(a_j,d-a_j)\op\mO(c_j,d-c_j)$ (\tb{Notation 7}) for $j:j^1<j<j^2$. Fix a sequence of integers $e_{j^1+1},...,e_{j^2-1}$ such that $e_j\neq a_j-1,a_j,c_j-1,c_j$. By previous observation, one gets isomorphisms $\tau(e_j):\PP\mE_j|_{P_j}\to \PP\mE_j|_{P_{j+1}}$ by fixing a basis for $\Gamma(\mE_j(-e_jP-(d-1-e_j)P_{j+1}))$. In particular, $\tau:=\tau(e_{j^2-1})\circ...\circ\tau(e_{j^1+1})$ is an isomorphism $\PP\mE_{j^1+1}|_{P_{j^1+1}}\to\PP\mE_{j^2-1}|_{P_{j^2-1}}$.
\end{ex}
Example \ref{fiso} motivates the following definition:
\begin{defn}\label{suff}
 Given some $(g,k)$-standard $a^{\Gamma}$, let $((\mE_j,V_j),(\phi_j))$ be an object of \ma. Fix $q\in\PP \mE_{j^1}|_{P_{j^1+1}},q'\in \PP \mE_{j^1+M}|_{P_{j^1+M}}$ and suppose for all $j:j^1<j<j^1+M$, $\mE_j=\mO(a_j,d-a_j)\op\mO(c_j,d-c_j)$ is semi-stable. Let $a$ be an integer such that $a+p\neq a_{j^1+p}-1,a_{j^1+p},c_{j^1+p}-1,c_{j^1+p}$ for all $p=1,2,...,M-1$. Define
 $$\tau_a:=\phi_{j^1+M}\circ\tau(a+M-1)\circ...\circ\tau(a+2)\circ\phi_{j^1+2}\circ\tau(a+1)\circ\phi_{j^1+1}.$$
 Here, $\tau(e_j):\PP\mE_j|_{P_j}\to \PP\mE_j|_{P_{j+1}}$ is the isomorphism defined in Example \ref{fiso}.

 For any $a\in\ZZ$, we say $q$ is \textbf{glued to $q'$ in $a$-th order via $\phi_{j^1+1},...,\phi_{j^1+M}$}, if for some $S\ge\max\{0,M-a\}$ there exists $s\in\Gamma(\mE_{j^1}(SP_{j^1+1}))$, such that:
 \begin{enumerate}
   \item $\ord_{P_{j^1+1}}(s_{j^1})=a+S\ge0$ and $s|_{P_{j^1+1}}=q$;
   \item $\tau_{b-a-1}(q)=q'$.\footnote{Here, $\tau_{b-a-1}$ should be thought of as the naturally induced isomorphism from $\PP\mE_{j^1}(SP_{j^1+1})|_{P_{j^1+1}}$ to $\PP\mE_{j^1+M}(SP_{j^1+M})|_{P_{j^1+M}}$.}
 \end{enumerate}
 In this case, suppose $s'\in\Gamma(\mE_{j^1+M}(SP_{j^1+M}))$ such that $\ord_{P_{j^1+M}}(s')=b-a+M-1$ and $s'|_{P_{j^1+M}}=q'$, we also say $s$ \tb{is glued to} $s'$ \textbf{via $\phi_{j^1+1},...,\phi_{j^1+M}$}.
\end{defn}
Recall that we denote $\mE_1,...,\mE_g$ to be the underlying vector bundles of \ma\ (see Remark \ref{uopenr}). Among them, $\mE_{j_1},...,\mE_{j_{2T}}$ are the only unstable bundles.
\begin{defn}
 Given any $j_t$, denote $q^t_1$ (resp. $q^t_2$) to be the point in $\PP\mE_{j_t}|_{P_{j_t}}$ (resp. $\PP \mE_{j_t}|_{P_{j_t+1}}$) which is the image of sections of the destabilizing summand of $\mE_{j_t}$. A geometric object $((\mE_j,V_j),(\phi_j))$ of \ma\ is said to be \textbf{sufficiently generic} if $\fa t$, $q^t_2$ is not glued to $q^{t+1}_1$ in $a$-th order, for all $a$ such that $a+N$ is a non-repeated integer among $c_{k+1,2j_t-1},...,c_{k+n,2j_t-1}$, where $c^{\Gamma}=(c_{i,j})$, $N$ and $n$ are as in Lemma \ref{C1}, with the parameter $S$ in Definition \ref{suff} taken to be $N$.\footnote{Technically speaking, one should call this property \it{sufficiently generic with respect to}\rm\ $N$, but we will refer to it as is stated, since $N$ is clear from the context.}
\end{defn}
This is the key notion needed for analyzing induction \textbf{Step 1}. An important property we shall apply is that being sufficiently generic is an open condition. We start with an auxiliary lemma.
\begin{lem}\label{gmor}
  Let $\mE_1,...,\mE_g$ be the underlying vector bundles of \ma. Suppose for all $j:j_t<j<j_{t+1}$, $\mE_j=\mL_{j,1}\op\mL_{j,2}$ is semi-stable. Let $a$ be an integer satisfying the following condition:
  \begin{equation}\label{cond1}
   \begin{split}
     a\le a_{k,2j_t-1},\text{ with equality only when } a_{k,2j_t-1}<a_{k-1,2j_t-1}\\ \text{and }a_{k,2j-2}+a_{k,2j-1}=g-2, \fa j:j_t<j<j_{t+1}.
    \end{split}
  \end{equation}
  Then, ``$q^t_2$ is glued to $q^{t+1}_1$ in $a$-th order via $\phi_{j_t+1},...,\phi_{j_{t+1}}$'' is a closed condition.
\end{lem}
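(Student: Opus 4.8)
The plan is to realize the locus where ``$q^t_2$ is glued to $q^{t+1}_1$ in $a$-th order'' as the preimage of a closed point under a morphism of schemes, and hence as a closed condition. First I would set up the relative picture over the substack $\GG$ (all objects have the same underlying bundles $\mE_1,\dots,\mE_g$ by Remark \ref{uopenr}), so the only moduli are the choices of the $V_j$ and the gluing isomorphisms $\phi_j$. Fixing the integer $a$ and the parameter $S=N$ as in the statement, and using that for $j_t<j<j_{t+1}$ each $\mE_j=\mL_{j,1}\op\mL_{j,2}$ is semi-stable, Example \ref{fiso} produces, over the relevant parameter space, a well-defined family of isomorphisms $\tau(e_j)\colon\PP\mE_j|_{P_j}\to\PP\mE_j|_{P_{j+1}}$ as soon as the integers $a+p$ avoid the finitely many forbidden values $a_{j}-1,a_{j},c_{j}-1,c_{j}$; condition \eqref{cond1} is precisely what guarantees this avoidance (together with $a\le a_{k,2j_t-1}$ controlling the top of the vanishing sequence so that a section $s$ with $\ord_{P_{j_t+1}}(s)=a+S$ genuinely exists in $\Gamma(\mE_{j_t}(SP_{j_t+1}))$). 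Composing these with the $\phi_{j_t+1},\dots,\phi_{j_{t+1}}$ gives the family version of the isomorphism $\tau_{b-a-1}$ of Definition \ref{suff}.

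Next I would identify the two points being compared. On one side, $q^t_2\in\PP\mE_{j_t}|_{P_{j_t+1}}$ is the fixed point cut out by the destabilizing summand of $\mE_{j_t}$ — it does not move in the family at all. On the other side, $q^{t+1}_1\in\PP\mE_{j_{t+1}}|_{P_{j_{t+1}}}$ is likewise the fixed point given by the destabilizing summand of $\mE_{j_{t+1}}$. So the gluing condition ``$q^t_2$ is glued to $q^{t+1}_1$ in $a$-th order'' unwinds, via Definition \ref{suff}, to the single equation
\[
\tau_{b-a-1}(q^t_2)=q^{t+1}_1
\]
in the relative projective line bundle $\PP\mE_{j_{t+1}}(SP_{j_{t+1}})|_{P_{j_{t+1}}}$ over the parameter space, where $\tau_{b-a-1}(q^t_2)$ is a morphism to that $\PP^1$-bundle (it depends algebraically on the $\phi_j$'s, which are the moduli) and $q^{t+1}_1$ is a fixed section. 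The locus where two such morphisms into a separated scheme agree is closed; pulling back to $|\GG|$ (or the atlas) shows the condition is closed.

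I should also record why the section $s\in\Gamma(\mE_{j_t}(SP_{j_t+1}))$ with $\ord_{P_{j_t+1}}(s)=a+S$ and $s|_{P_{j_t+1}}=q^t_2$ exists and is essentially unique, so that the phrase ``in $a$-th order'' is well-posed over the whole family: this is a Riemann--Roch / Atiyah-bundle computation on the elliptic curve $C_{j_t}$, exactly of the type used in Lemmas \ref{aux}, \ref{one2}, and it uses $S=N\ge\max\{0,M-a\}$ and condition \eqref{cond1}. The one genuinely delicate point — and the step I expect to be the main obstacle — is making the family of isomorphisms $\tau(e_j)$ and the composite $\tau_{b-a-1}$ into an honest morphism of schemes (equivalently, checking it is defined on the whole atlas of $\GG$, not just pointwise): one must check that the formation of $\Gamma(\mE_j(-e_jP_j-(d-1-e_j)P_{j+1}))$ and of the resulting fiberwise isomorphism $\mE_j|_{P_j}\to\mE_j|_{P_{j+1}}$ commutes with base change, which holds because the relevant $h^0$ is constant (equal to $2$) under condition \eqref{cond1} and the bundles $\mE_j$ are fixed. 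Once that flatness/base-change bookkeeping is in place, closedness is immediate from separatedness of the $\PP^1$-bundle.
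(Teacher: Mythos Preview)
Your proposal is correct and follows essentially the same route as the paper: verify via condition \eqref{cond1} that the integers $b-a+(j-j_t)$ avoid the forbidden values so each $\tau(e_j)$ is a fixed isomorphism, observe that both $q^t_2$ and $q^{t+1}_1$ are fixed points (images of the destabilizing summands), and conclude that $\tau_{b-a-1}(q^t_2)=q^{t+1}_1$ is a closed condition because $\tau_{b-a-1}$ is a composition of fixed $\tau(e_j)$'s with the varying $\phi_j$'s. The paper phrases the last step slightly differently---it factors through the forgetful morphism $\GG\to\prod_j\iso(\mE_j|_{P_{j+1}},\mE_{j+1}|_{P_{j+1}})\to\prod_j\iso(\PP\mE_j|_{P_{j+1}},\PP\mE_{j+1}|_{P_{j+1}})$ and checks closedness there---whereas you work directly with the $\PP^1$-bundle and separatedness; these are equivalent, and your added remarks on base-change and the existence of $s$ are more careful than the paper's terse treatment.
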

\begin{proof}
Given \eqref{cond1}, for all $j:j_t<j<j_{t+1}$, $b-a+(j-j_t)>\max\{a_j,c_j\}$. In particular, $b-a+(j-j_t)\neq a_j-1,a_j,c_j-1,c_j$.

By definition, if $q^t_2$ is glued to $q^{t+1}_1$ in $a$-th order, $\tau_{b-a-1}(q^{t+1}_1)=q^t_2$, where $$\tau_{b-a-1}:\PP\mE_{j_t}(NP_{j_t+1})|_{P_{j_t+1}}\to\PP\mE_{j_{t+1}}|_{P_{j_{t+1}}}\footnote{See Definition \ref{suff}.}$$ is an isomorphism. Since $\tau_{b-a+1}$ is a composition of $\phi_j$'s and some $\tau(e_j)$'s, where $\tau(e_j)$ is a fixed isomorphism $\PP\mE_j(NP_{j+1})|_{P_j}\to\PP\mE_j(NP_{j+1})|_{P_{j+1}}$, this clearly poses a closed condition on $\prod\iso(\PP\mE_j|_{P_{j+1}},\PP\mE_{j+1}|_{P_{j+1}})$. It further induces a closed condition on \ma\ since there is a morphism \ma$\to\prod \iso(\mE_j|_{P_{j+1}},\mE_{j+1}|_{P_{j+1}})\to\prod \iso(\PP \mE_j|_{P_{j+1}},\PP \mE_{j+1}|_{P_{j+1}})$, where the first arrow is a forgetful morphism.
\end{proof}
\begin{cor}
  Suppose $a^{\Gamma}$ is $(g,k)$-standard. Then, being sufficiently generic is an open condition on \ma.
\end{cor}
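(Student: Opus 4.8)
The plan is to reduce the corollary to Lemma \ref{gmor} by showing that the "sufficiently generic" condition is a finite intersection of complements of closed conditions, each of which has the form treated in that lemma. First I would unwind the definition: an object $((\mE_j,V_j),(\phi_j))$ of \ma\ is sufficiently generic precisely when, for every $t$, the point $q^t_2$ is \emph{not} glued to $q^{t+1}_1$ in $a$-th order via $\phi_{j_t+1},\dots,\phi_{j_{t+1}}$ for every integer $a$ in a certain explicit finite set $A_t$, namely those $a$ for which $a+N$ is a non-repeated entry of the finite list $c_{k+1,2j_t-1},\dots,c_{k+n,2j_t-1}$ (with $S=N$ in Definition \ref{suff}). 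So the failure locus of "sufficiently generic" is the union over $t$ and over $a\in A_t$ of the loci where $q^t_2$ is glued to $q^{t+1}_1$ in $a$-th order; since this is a finite union, it suffices to show each such locus is closed.

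The key step is then to verify that for each relevant $a\in A_t$, the hypotheses of Lemma \ref{gmor} are met, so that "$q^t_2$ is glued to $q^{t+1}_1$ in $a$-th order via $\phi_{j_t+1},\dots,\phi_{j_{t+1}}$" is a closed condition on \ma. Two things must be checked. First, for $j_t<j<j_{t+1}$ the bundles $\mE_j=\mL_{j,1}\op\mL_{j,2}$ are indeed semistable: this holds because by Lemma \ref{bunfix} the $\mE_j$ are the underlying vector bundles determined by the $(g,k)$-standard condition $a^\Gamma$, and by construction the only unstable ones among $\mE_1,\dots,\mE_g$ are $\mE_{j_1},\dots,\mE_{j_{2T}}$, none of which occur strictly between consecutive indices $j_t$ and $j_{t+1}$. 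Second, the numerical constraint \eqref{cond1} on $a$ must be deduced from the $(g,k)$-standardness of $a^\Gamma$ together with the fact that $a+N$ is a non-repeated entry of $c_{k+1,2j_t-1},\dots,c_{k+n,2j_t-1}$; here I would invoke conditions (5) and (6) of Definition \ref{stan} (the bounds $a_{k,j_t}\le k_1+j_t-1$, the equality case forcing $a_{k-1,j_t}<a_{k,j_t}$, and the clause that $b_{k,j_t}$ small forces $a_{k,j}+b_{k,j}=g-2$ for $j_t<j<j_{t+1}$), translated through the shift by $A$ and $N$ built into $c^\Gamma$ in Lemma \ref{C1}, to conclude that any $a$ in the set $A_t$ satisfies $a\le a_{k,2j_t-1}$ with the stated equality-case and the stated $a_{k,2j-2}+a_{k,2j-1}=g-2$ side conditions.

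Granting these two verifications, Lemma \ref{gmor} gives that each locus "$q^t_2$ glued to $q^{t+1}_1$ in $a$-th order" is closed in \ma; a finite union of closed substacks is closed, its complement is open, and that complement is exactly the sufficiently-generic locus. I expect the main obstacle to be the bookkeeping in the second verification: carefully tracking how the vanishing data in the columns $2j_t-1$ of $c^\Gamma$ relate, via the matrix $A$ and the integer $N$, to the original data $a^\Gamma$ and hence to the standardness conditions (5)--(6), and checking that "$a+N$ non-repeated among the new rows $k+1,\dots,k+n$" genuinely forces the inequality $a\le a_{k,2j_t-1}$ and its equality-case refinement. The rest is formal: the decomposition of the bad locus into finitely many pieces and the closedness-implies-openness-of-complement argument are routine once the hypotheses of Lemma \ref{gmor} are in place.
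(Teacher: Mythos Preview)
Your proposal is correct and follows essentially the same approach as the paper: decompose the failure locus into finitely many pieces indexed by $t$ and by the relevant non-repeated orders, verify for each that the numerical constraint \eqref{cond1} of Lemma \ref{gmor} is satisfied (using the explicit form of the entries $c_{k+1,2j_t-1}$ from Lemma \ref{C1} together with conditions (5) and (6) of Definition \ref{stan}), and conclude that each piece is closed, so their complement is open. The paper's proof does exactly this, computing $c_{k+1,2j-1}=g+N-1-k_1-j$ explicitly and then reading off the bound $a\le g-1-k_1-j_t$ and its equality-case refinements from $(g,k)$-standardness; your identification of the main obstacle as the bookkeeping in this translation is accurate.
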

\begin{proof}
  First of all, by conditions 2, 4 and 5 in Lemma \ref{C1}, one gets $c_{k+1,2j-1}=c_{k+1,2j-3}+1=g+N-1-k_1-j$. Hence, the condition that $c_{k+n,2j_t-1}\le a+N\le c_{k+1,2j_t-1}$ implies that $a\le g-1-k_1-j_t$. By condition 5 in the definition of $(g,k)$-standardness, $a_{k,2j_t-1}=g-1-k_1-j_t$ only if $a_{k,2j-2}+a_{k,2j-1}=g-2, \fa j:j_t<j<j_{t+1}$; if $c_{k+1,2j_t-1}=c_{k,2j_t-1}$, then $a_{k-1,2j_t-1}>a_{k,2j_t-1}$ must hold. Hence, condition \eqref{cond1} in Lemma \ref{gmor} holds for all vanishing orders in question. Consequently, being sufficiently generic is then the complement of finitely many closed conditions. Hence, the corollary follows.
\end{proof}
Analogously, we also describe a substack of $\mathcal{G}^{k+n,\text{EHT}}_{2,\omega_g(NP_{g+1}),\hat{d}_{\bullet},c^{\Gamma}}(X_g)$, which maps onto sufficiently generic objects in \ma:
\begin{lem}\label{suffover}
Define $\GG^0$ to be the fully faithful sub-category of $\mathcal{G}^{k+n,\text{EHT}}_{2,\omega_g(NP_{g+1}),\hat{d}_{\bullet},c^{\Gamma}}(X_g)$ consisting of objects $((\mE_j,V_j),(\phi_j))$ satisfying the following conditions:
\begin{enumerate}
 \item For every $j$, $V_j$ contains some $k$-dimensional subspace $V'_j$ such that $$\van_{P_j}(V'_j)=(c_{i,2j-2})_{i\le k}, \van_{P_{j+1}}(V'_j)=(c_{i,2j-1})_{i\le k};$$
 \item given $V'_j$ as in (1), denote $\mE'_j:=\mE_j(-NP_{j+1})$ and $i:\Gamma(\mE'_j)\to\Gamma(\mE_j)$ to be the obvious injection, $((\mE'_j,i^{-1}(V'_j)),(\phi_j))$ is an object in \ma.
\end{enumerate}
  Then, $\GG^0$ is a locally closed substack of $\mathcal{G}^{k+n,\text{EHT}}_{2,\omega_g(NP_{g+1}),\hat{d}_{\bullet},c^{\Gamma}}(X_g)$.
\end{lem}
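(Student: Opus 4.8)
The goal is to show $\GG^0$, the subcategory of $\GG^{k+n,\text{EHT}}_{2,\oo_g(NP_{g+1}),\hat d_\bullet,c^\Gamma}(X_g)$ cut out by conditions (1) and (2), is a \emph{locally closed} substack. My approach is to realize $\GG^0$ as the intersection of an open substack and a closed substack. First I would recall that by Lemma \ref{bunfix}, for a $(g,k)$-standard $a^\Gamma$ — hence for the induced $c^\Gamma$, which one checks remains ``standard-like'' via the hypotheses of Lemma \ref{C1} — the underlying bundles $\mE_1,\dots,\mE_g$ of the ambient stack are fixed and decomposable. So on the ambient stack the bundle data is rigid, and a point of the ambient stack is just a choice of $(V_j)_j$ together with gluing data $(\phi_j)_j$. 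This is the crucial simplification: everything reduces to linear-algebra conditions on subspaces of the fixed spaces $\Gamma(\mE_j)$ plus isomorphisms of the fixed fibers, all varying in explicit Grassmannian/frame-type families as in Remark \ref{p}.

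\textbf{Openness of condition (1).} Condition (1) asks that each $V_j\subset\Gamma(\mE_j)$ contain a $k$-dimensional subspace $V'_j$ with the prescribed vanishing sequences $(c_{i,2j-2})_{i\le k}$ at $P_j$ and $(c_{i,2j-1})_{i\le k}$ at $P_{j+1}$. I would argue this is open on the ambient stack. The locus in $\Gr(k+n,\Gamma(\mE_j))$ of subspaces $V_j$ with $\van_{P_j}(V_j)$ and $\van_{P_{j+1}}(V_j)$ \emph{at most} some fixed value (semicontinuity of rank of evaluation maps) is open; and by the numerics forced by Lemma \ref{C1}(2)--(5) together with Lemma \ref{one2}/Lemma \ref{one} applied to $\mE_j'=\mE_j(-NP_{j+1})$ (this is where the $u(\mE_j)\le\frac12$ convention and Situation \ref{s1}-type constraints do the work), for a subspace in that open locus the \emph{existence} of such a $V'_j$ is automatic — any $V_j$ with small enough vanishing already contains the span of the finitely many sections of $\mE_j$ with the prescribed orders, exactly as in the proofs of Lemmas \ref{one}, \ref{one2}. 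Equivalently, one can phrase condition (1) directly as: the subspace $W_j:=\sum_{i\le k}\Gamma(\mE_j(-c_{i,2j-2}P_j-c_{i,2j-1}P_{j+1}))$ (a fixed subspace of $\Gamma(\mE_j)$, since the bundle is fixed) is contained in $V_j$ — and ``contains a fixed subspace'' is a closed condition on the Grassmannian, not open. I would therefore instead characterize (1) via the vanishing sequences themselves being $\le$ the prescribed ones, which is open, and observe that by the forced-section arguments this is equivalent to (1). So condition (1) defines an open substack $\GG^{(1)}$ of the ambient stack.

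\textbf{Closedness of condition (2) relative to $\GG^{(1)}$, and conclusion.} Over $\GG^{(1)}$ there is a canonical subspace $V'_j\subset V_j$ (namely $W_j$, which by the above coincides with the unique candidate), hence a canonical object $((\mE'_j, i^{-1}(V'_j)),(\phi_j))$ of $\mathcal P^k_{2,d_\bullet}(X_g)$ lying over $\GG^{(1)}$ — this gives a morphism $\GG^{(1)}\to\mathcal P^k_{2,d_\bullet}(X_g)$. Condition (2) says precisely that this morphism factors through the locally closed substack \ma\ of $\mathcal P^k_{2,d_\bullet}(X_g)$ (recall $\GG^{k,\text{EHT}}_{2,d,d_\bullet}(X_g)$ is locally closed in $\mathcal P^k_{2,d_\bullet}$ by \cite{Ohrk} 4.2.1, and imposing the fixed determinant $\oo_g$ and the vanishing data $a^\Gamma$ cuts out further locally closed substacks); so $\GG^0$ is the preimage of a locally closed substack under a morphism of stacks, which is locally closed in $\GG^{(1)}$, hence locally closed in the ambient stack. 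Composing: $\GG^0$ is locally closed in $\GG^{k+n,\text{EHT}}_{2,\oo_g(NP_{g+1}),\hat d_\bullet,c^\Gamma}(X_g)$, and being a full subcategory closed under isomorphism it is a substack. I expect the main obstacle to be verifying cleanly that condition (1) is genuinely open — i.e. that the a priori closed-looking ``contains $W_j$'' formulation really is equivalent to the open ``vanishing sequence is $\le$ the target'' formulation. This rests entirely on the structure results of Section \ref{DB} (Lemmas \ref{one}, \ref{one2}), which force the relevant sections to exist once the vanishing orders are constrained, so the equivalence should go through; but it needs the hypotheses of Lemma \ref{C1} to guarantee the $c^\Gamma$-data feeds correctly into those lemmas, and I would spell that bookkeeping out carefully.
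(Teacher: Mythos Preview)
Your decomposition misidentifies where the nontrivial conditions lie. First, condition (1) \emph{for each $j$ individually} is automatic on the ambient stack: the paper observes that any $V_j$ with the prescribed $(k+n)$-vanishing data already contains \emph{some} $k$-dimensional $V'_j$ with the truncated vanishing sequences. So there is no open condition $\GG^{(1)}$ to impose here; your discussion of ``vanishing $\le$ target'' is moot because the ambient stack already fixes the full vanishing data $c^\Gamma$ at every node. Second, and more seriously, your claim that $V'_j$ is canonical (equal to a fixed $W_j$) is false precisely in the interesting cases. When $c_{k,2j-1}=c_{k+1,2j-1}$ the $k$-th and $(k+1)$-st vanishing orders coincide, and $V'_j$ depends on a choice of line in a two-dimensional space of sections; there is no well-defined morphism $\GG^{(1)}\to\mathcal P^k_{2,d_\bullet}(X_g)$, so your ``preimage of locally closed'' argument for condition (2) does not go through.

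The paper's route is different in structure. The genuinely locally closed condition is imposing a specific vanishing sequence at the \emph{smooth} point $P_{g+1}$ (a Schubert condition plus open conditions), giving a locally closed substack $\GG$; you omit this entirely. Then within $\GG$, the obstruction to being in $\GG^0$ is not about the existence of individual $V'_j$ but about whether a \emph{compatible} family $(V'_j)$ can be found that glues via the $\phi_j$. The paper shows this fails exactly when, for some pair $j^1<j^2$ with $c_{k,2j-1}=c_{k+1,2j-1}$ for $j^1<j<j^2$, a certain fixed point $q^1$ (coming from the destabilizing summand of $\mE_{j^1}$) glues to a fixed point $q^2$ under the composite $\phi_{j^1+1},\dots,\phi_{j^2}$. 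Each such failure is a single closed condition on the gluing data, and there are finitely many pairs; hence $\GG^0$ is open in $\GG$, so locally closed in the ambient stack. To repair your argument you would need to replace the nonexistent canonical morphism by this direct analysis of gluing compatibility.
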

\begin{proof}
Define $b'$ by the equation $\sum\deg(\mE_j)-2b'(g-1)=2g-2+2N$ and denote $b=b'-N$.

Recall that $\phi_j$ are isomorphisms $\mE''_j|_{P_{j+1}}\stackrel{\sim}{\to}\mE''_{j+1}|_{P_{j+1}}$, where $\mE''_j=\mE_j(-(b'-b_j)P_j-b_{j+1}P_{j+1})$ and $b_1,...,b_{g+1}$ are some pre-chosen integers such that $b_1=b',b_{g+1}=0$. Note that it makes sense to take gluing data $\phi_j$ in $((\mE'_j,i^{-1}(V'_j)),(\phi_j))$, because
$$
\mE''_j=\mE_j(-(b'-b_j)(P_j)-b_{j+1}(P_{j+1}))=\mE'_j(-(b-(b_j-N))(P_j)-(b_{j+1}-N)(P_{j+1}))
$$
and there is a canonical identification
$$\mE'_g(-(b-(b_g-N))(P_g)+N(P_{g+1}))|_{P_g}\stackrel{\sim}{\to}\mE'_g(-(b-(b_g-N))(P_g))|_{P_g}.$$

A simple calculation shows that objects in $\GG^0$ have the vanishing sequence $(c_i)$ at $P_{g+1}$:
$$(N+k_1,[N+k_1-1]_2,...,[N]_2,N-1-k_1,[N-2-k_1]_2,...,[N-m-k_1]_2,N-m-k_1-1)$$
Fixing a vanishing sequence at the smooth point $P_{g+1}$ is equivalent to posing a Schubert condition together with finitely many open conditions. They together determine a locally-closed substack $\GG$ of $\mathcal{G}^{k+n,\text{EHT}}_{2,\omega_g(NP_{g+1}),\hat{d}_{\bullet},c^{\Gamma}}(X_g)$. We claim that $\GG^0$ is an open substack of $\GG$.

Given an object $((\mE_j,V_j),(\phi_j))$ in $\mathcal{G}^{k+n,\text{EHT}}_{2,\omega_g(NP_{g+1}),\hat{d}_{\bullet},c^{\Gamma}}(X_g)$, for any $j$, $V_j$ always contains some $k$-dimensional subspace $V'_j$ such that $\van_{P_j}(V'_j)=(c_{i,2j-2})_{i\le k}, \van_{P_{j+1}}(V'_j)=(c_{i,2j-1})_{i\le k}$. Hence, $((\mE_j,V_j),(\phi_j))$ is not an object of $\GG^0$ if and only if there do not exist
such $V'_j$ ($j=1,...,g$) that are compatible with $\phi_2,...,\phi_g$.

First of all, if $c_{k,2j-1}>c_{k+1,2j-1}$, then any $V'_j,V'_{j+1}$ satisfying the given vanishing conditions will be compatible with $\phi_{j+1}$. More precisely, the configuration $\tau^{i(V'_j)}(P_{j+1})=\tau^{i(V'_{j+1})}(P_{j+1})$ is a sub-configuration of $\tau^{V_j}(P_{j+1})=\tau^{V_{j+1}}(P_{j+1})$. Therefore, $((\mE_j,V_j),(\phi_j))$ is not an object in $\GG^0$ if and only if $\exists j^1<j^2$ such that the following conditions hold:
\begin{enumerate}
  \item $c_{k,2j^1-2}<c_{k+1,2j^1-2}$, $c_{k,2j^2-1}>c_{k+1,2j^2-1}$ (and $V'_{j^2}=V_{j^2}(-c_{k,2j^2-1}P_{j^2+1})$);
  \item for all $j:j^1<j<j^2$, $c_{k,2j-2}=c_{k+1,2j-2},c_{k,2j-1}=c_{k+1,2j-1}$;
  \item there do not exist $s_1\in V_{j^1},s_2\in V_{j^2}$ such that $\ord_{P_{j^t}}(s^t)=c_{k,2j^t-2}$, $\ord_{P_{j^t+1}}(s^t)=c_{k,2j^t-1}$ for $t=1,2$,
  and $s^1|_{P_{j^1+1}}$ is glued to $s^2|_{P_{j^2}}$ via $\phi_{j^1+1},...,\phi_{j^2}$.
\end{enumerate}
\tb{Claim:} for every fixed pair $(j^1,j^2)$ of such indices, 1-3 form a closed condition on $\GG$.

By \tb{Step 4.1} in Appendix A.1 and Corollary \ref{AC1}, the conditions $c_{k,2j^1-2}<c_{k+1,2j^1-2}$, $c_{k,2j^1-1}=c_{k+1,2j^1-1}$ imply that $j^1=j_{2t}$ for some $t\le T$, so $\mE_{j^1}$ is unstable. We denote $q^1$ to be the image of sections of the destabilizing summand of $\mE_{j^1}$ in $\PP\mE_{j^1}|_{P_{j^1+1}}$. It is clear that by varying the choice of $V'_{j^1}$, $s^1|_{P_{j^1+1}}$ could be any point in $\PP\mE_{j^1}|_{P_{j^1+1}}$ but not $q^1$. Meanwhile, $\mE_{j^2}$ is either semi-stable or unstable; when it is semi-stable, $s^2|_{P_{j^2}}$ is the image of one of the canonical sections of $\mE_{j^2}$ in $\PP\mE_{j^2}|_{P_{j^2}}$; when it is unstable, $s^2|_{P_{j^2}}$ is the image of sections of the destabilizing summand. In either case, $s^2|_{P_{j^2}}$ is a fixed point $q^2$ in $\PP\mE_{j^2}|_{P_{j^2}}$. Hence, $(j^1,j^2)$ is a pair of indices satisfying the conditions (1)-(3) if and only if $q^1$ glues to $q^2$ via $\phi_{j^1+1},...,\phi_{j^2}$. Following the same argument as in the proof of Lemma \ref{gmor}, we get the claim.

Consequently, $\GG^0$ is the complement of a union of finitely many closed loci in $\GG$. Hence, it is a open substack of $\GG$ and a locally-closed substack of $\mathcal{G}^{k+n,\text{EHT}}_{2,\omega_g(NP_{g+1}),\hat{d}_{\bullet},c^{\Gamma}}(X_g)$.
\end{proof}
We are ready to describe the morphism $\psi$. In order to do so, we need to describe $T$-valued points of \ma\, where $T$ is some arbitrary $K$-scheme. Following \cite{Olls}, we give a brief summary of relevant definitions in \ref{secsub}.\\
\indent We now state and proof the main result for Induction Step 1:
\begin{thm}\label{Induction}
Fix $k=2k_1+1\ge 5$. Given $\GG^0$ as in \ref{suffover} and denote $\GG_1$ to be the open substack of \ma\ of sufficiently generic objects. Then, there is a stack morphism $\psi:\GG^0\to\GG_1$.
\end{thm}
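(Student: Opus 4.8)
The plan is to construct $\psi$ on $T$-valued points functorially, using the fact that objects of $\GG^0$ carry, for each component $C_j$, a canonically determined $k$-dimensional subspace $V'_j \subseteq V_j$ realizing the truncated vanishing data $(c_{i,2j-2})_{i\le k},(c_{i,2j-1})_{i\le k}$. First I would check that over a scheme $T$ this subbundle $V'_j \subseteq V_j$ exists and is \emph{unique}: uniqueness follows because the prescribed vanishing sequence at $P_j$ and $P_{j+1}$ forces $V'_j = V_j(-c_{k,2j-2}P_j) \cap V_j(-c_{k,2j-1}P_{j+1})$ when $c_{k,2j-1} > c_{k+1,2j-1}$ (and a symmetric description otherwise), so $V'_j$ is cut out as a locus where a bundle map drops rank, which in the $\GG^0$ locus has locally constant, hence maximal, rank; thus $V'_j$ is a subbundle. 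Then set $\mE'_j := \mE_j(-NP_{j+1})$ and $V''_j := i^{-1}(V'_j) \subseteq \Gamma(\mE'_j)$ via the canonical inclusion, and retain the gluing data $\phi_j$ after the identification described in Remark/Definition \ref{suff} (matching $\mE''_j$ for $\mE_j$ with the analogous twist for $\mE'_j$, exactly as in the proof of Lemma \ref{suffover}). Condition (2) defining $\GG^0$ says precisely that $((\mE'_j, V''_j),(\phi_j))$ is an object of \ma, so this assignment lands in \ma; naturality in $T$ is immediate since every operation (kernel of a bundle map, twist, restriction, pullback of gluing) commutes with base change.

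Next I would verify that the image lands in the open substack $\GG_1$ of sufficiently generic objects. This is the crux of the argument and the step I expect to be the main obstacle. The point is that for an object $((\mE_j,V_j),(\phi_j))$ in $\GG^0$, the extra sections in $V_j \setminus V'_j$ (indexed $i = k+1,\dots,k+n$) carry vanishing data governed by conditions (4) and (5) of Lemma \ref{C1}; because these objects \emph{do} admit a globally compatible choice of $V'_j$'s over all components — that is what $\GG^0$ means — one can run the argument in the proof of Lemma \ref{suffover} in reverse: the existence of such a compatible system obstructs the gluing of $q^t_2$ to $q^{t+1}_1$ in the relevant orders $a$ (those with $a+N$ a non-repeated entry among $c_{k+1,2j_t-1},\dots,c_{k+n,2j_t-1}$). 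Concretely, if some $q^t_2$ \emph{were} glued to $q^{t+1}_1$ in such an order, the corresponding section of the destabilizing summand of $\mE'_{j_t}$ would propagate through $\phi_{j_t+1},\dots,\phi_{j_{t+1}}$ and force a vanishing jump in $V'_{j}$ for $j_t < j < j_{t+1}$ incompatible with the truncated columns $(c_{i,2j-2})_{i\le k}$ — contradicting that $((\mE'_j,V''_j),(\phi_j))$ is a genuine object of \ma with the prescribed $a^\Gamma$. Here I would lean on the configuration/gluing analysis of Example \ref{config}(3)–(4), Lemma \ref{aux}, Lemma \ref{sym}, and Definition \ref{suff}, together with conditions 5–6 of $(g,k)$-standardness (Definition \ref{stan}) which pin down exactly when $a_{k,2j_t-1}$ attains its extreme value.

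Finally I would assemble these into a morphism of stacks rather than just a map on objects: having defined $\psi$ on objects compatibly with pullback, I need it on morphisms (isomorphisms) — but an isomorphism of objects of $\GG^0$ restricts to an isomorphism of the canonically-determined data $(\mE'_j, V''_j, \phi_j)$ by the uniqueness of $V'_j$ noted above, and conversely, so $\psi$ is a well-defined functor; since both source and target are algebraic stacks and $\psi$ is defined by a functorial construction that commutes with base change, it is a $1$-morphism of algebraic stacks. I do not expect the bookkeeping with the twists $\mE''_j$ versus $\mE'_j{}''$ and the parameters $b' = b+N$ to cause any real difficulty — it is the same computation carried out in Lemma \ref{suffover} — so the only genuinely substantive point remains showing the image consists of sufficiently generic objects, i.e. translating "a compatible family of $V'_j$ exists" into "the forbidden gluings do not occur."
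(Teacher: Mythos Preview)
Your overall architecture matches the paper's: define $\psi$ on $T$-points by twisting $\mE_j \mapsto \mE_j(-NP_{j+1})$, extracting a canonical rank-$k$ subbundle $V'_j\subset V_j$ realizing the truncated vanishing data, keeping the same $\phi_j$, and then arguing the image is sufficiently generic. But your construction of $V'_j$ has a genuine gap.

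First, the formula $V'_j = V_j(-c_{k,2j-2}P_j)\cap V_j(-c_{k,2j-1}P_{j+1})$ is wrong. With an adapted basis $s_1,\dots,s_{k+n}$ one has $V_j(-c_{k,2j-1}P_{j+1})=\mathrm{span}(s_1,\dots,s_k)$ in your Case~1, while $V_j(-c_{k,2j-2}P_j)$ is spanned by the $s_i$ with $i$ at least as large as the first index where $c_{i,2j-2}=c_{k,2j-2}$; generically this is $\mathrm{span}(s_k,\dots,s_{k+n})$. The intersection then has rank~$1$, not~$k$. The correct description in Case~1 is simply $V'_j=i^{-1}\bigl(V_j(-c_{k,2j-1}P_{j+1})\bigr)$, with no condition at $P_j$.

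Second, and more seriously, ``a symmetric description otherwise'' does not work. When $c_{k,2j-1}=c_{k+1,2j-1}$ one checks from the structure of $c^\Gamma$ (Appendix~A) that for semistable $\mE_j$ also $c_{k,2j-2}=c_{k+1,2j-2}$, so neither point distinguishes the $k$-th from the $(k{+}1)$-th section; there is a one-parameter family of rank-$k$ subspaces with the correct vanishing, and no intrinsic formula on $C_j$ alone singles out the right one. The paper handles this by an \emph{inductive} construction: it first defines $V'_{j'}$ at the nearest $j'>j$ where Case~1 applies, extracts from $V'_{j'}(-c_{k,2j'-2}P_{j'})$ a sub-line bundle $\mL^{j'}$ of the fiber at $P_{j'}$, transports it back by $\phi_{j'}^{-1}$, and sets $V'_{j'-1}:=i^{-1}\bigl(\ker\gamma^{j'-1}_{c_{k,2j'-3},\,\phi_{j'}^{-1}(\mL^{j'})}(P_{j'})\bigr)$, then iterates down to $j$. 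The $\GG^0$ hypothesis is exactly what ensures this kernel has rank~$k$ rather than $k-1$ (Corollary~\ref{case2}). Your proposal does not supply this mechanism, so as written $\psi$ is not well defined in Case~2.

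On the final step --- that the image lies in $\GG_1$ --- the paper is terse (``by looking at $|\psi|$''), implicitly invoking that the closed conditions carved out of $\GG$ in the proof of Lemma~\ref{suffover} coincide with the failure of sufficient genericity. Your contrapositive sketch is in the right spirit and in fact more explicit than the paper, but it rests on the correct functorial construction of $V'_j$, which you still need to supply.
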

\begin{proof}
We first describe the morphism $\psi$ on objects. Let $((\mE_j,V_j),(\phi_j))$ be an object of $\GG^0$. Recall $b'$ is defined via the identity $\sum\deg(\mE_j)-2b'(g-1)=2g-2+2N$.

Let $T$ be some $K$-scheme and suppose $((\mE_j,V_j),(\phi_j))$ is an object of $\GG^0$ over $T$. Here, $\mE_j$ is a rank two vector bundle on $T\times C_j$, $V_j$ is a rank-$k$ sub-bundle (in the sense of \ref{subb}) of $\pi^j_*\mE_j$ and $\phi_j:\mE''_j|_{T\times P_{j+1}}\to\mE''_{j+1}|_{T\times P_{j+1}}$. Here,
\begin{enumerate}
  \item $\pi^j_*$ is the projection $T\times C_j\to T$;
  \item $\mE''_j:=\mE_j(-(b'-e_j)(T\times P_j)-e_{j+1}(T\times P_{j+1}))$, where $e_1,e_2,..,e_{g+1}$ are some integers with $e_1=b',e_{g+1}=0$ (see Remark \ref{twist});
  \item $(c_{i,2j-2})$ and $(c_{i,2j-1})^{\rev}$ are the vanishing sequences of $V_j$ along $P_j$, $P_{j+1}$ (in the sense of \ref{van}) resp.
\end{enumerate}
Define $\psi((\mE_j,V_j),(\phi_j))$ to be an object of the form $((\mE'_j,V'_j),(\phi_j))$, where $\mE'_j=\mE_j(-N(T\times P_{j+1}))$. Note that by the proof of Lemma \ref{suffover}, it makes sense to take $(\phi_j)$ as the gluing data.

It remains to describe $V'_j$. We first decide notations for some bundle maps:
\begin{enumerate}
  \item Denote $i:\pi^j_*\mE'_j\to\pi^j_*\mE_j$ to be the obvious inclusion map;
  \item denote $V(-mP):=\ker(\gamma^j_m(P))$, where $\gamma^j_m(P)$ is the map $V_j\to\pi^j_*(\mE_j|_{m(T\times P)})$, for $P=P_j,P_{j+1}$;
  \item denote $\gamma^j_{m,\mL}(P)$ to be the map $V_j(-mP)\to\pi^j_*(\mE_j(-m(T\times P))|_{T\times P})/\mL$, where $\mL$ is some sub-line bundle of $\pi^j_*(\mE_j(-m(T\times P))|_{T\times P})$, for $P=P_j,P_{j+1}$.
\end{enumerate}

We start from $V'_g$. Define $V'_g=i^{-1}(V_g(-NP_{g+1}))$. Since $V_g$ has fixed vanishing sequence along $P_{g+1}$, by Appendix Lemma \ref{case1}, $V_g(-NP_{g+1})$ is a rank $k$ sub-bundle of $\pi^g_*(\mE_g)$ (in the sense of \ref{subb}) and hence $V'_g$ is a rank $k$ sub-bundle of $\pi^g_*(\mE'_g)$. One can directly check that the vanishing sequence of $V'_g$ along $P_g$ and $P_{g+1}$ (in the sense of \ref{van}) are $(a_{i,2g-2})$ and $(k_1,[k_1-1]_2,...,[2]_2,[0]_2)^{\rev}$ resp.\\
\indent One then defines all $V'_j$ in the following way:\\
\noindent\tb{Case 1} If $c_{k,2j-1}>c_{k+1,2j-1}$, then $V'_j:=i^{-1}(V_j(-c_{k,2j-1}P_{j+1}))$; by the same argument as for $V'_g$, $V'_j$ is a rank-$k$ sub-bundle of $\pi^j_*\mE'_j$ satisfying the desired vanishing conditions.\\
\tb{Case 2} If $c_{k,2j-1}=c_{k+1,2j-1}$, then there exists a smallest $j'>j$ such that $c_{k,2j'-1}>c_{k+1,2j'-1}$ and $V'_{j'}$ is defined as above. By Lemma \ref{case1}$, V_{j'}(-c_{k,2j'-2}P_{j'})$ is a sub-line bundle of $\pi^{j'}_*\mE_{j'}$. Furthermore, a sub-line bundle of $\pi^{j'}_*(\mE_{j'}(-c_{k,2j'-2}(T\times P_{j'}))|_{T\times P_{j'}})$ is determined via the map:
$$V_{j'}(-c_{k,2j'-2}P_{j'})\to\pi^{j'}_*(\mE_{j'}(-c_{k,2j'-2}(T\times P_{j'})))\to\pi^{j'}_*(\mE_{j'}(-c_{k,2j'-2}(T\times P_{j'}))|_{T\times P_{j'}}).$$
We denote this sub-line bundle to be $\mL^{j'}$ and define $$V'_{j'-1}:=i^{-1}(\ker(\gamma^{j'-1}_{c_{k,2j'-3},\phi^{-1}_{j'}(\mL^{j'})}(P_{j'}))).$$
By Appendix Corollary \ref{case2}, $V'_{j'-1}$ is a rank $k$ sub-bundle of $\pi^{j'-1}_*\mE'_{j'-1}$ satisfying the desired vanishing conditions. Notice it is important that we start with an object in $\GG^0$ instead of arbitrary objects in $\mathcal{G}^{k+n,\text{EHT}}_{2,\omega_g(NP_{g+1}),\hat{d}_{\bullet},c^{\Gamma}}(X_g)$; in general, $V'_{j'-1}$ may have rank $k-1$.

If $j<j'-1$, $V'_{j'-1}(-c_{k,2j'-4}P_{j'-1})$ further determines a sub-line bundle $\mL^{j'-1}$ of\\ $\pi^{j'-1}_*(\mE_{j'-1}(-c_{k,2j'-4}(T\times P_{j'-1}))|_{T\times P_{j'-1}})$ and one can define $V'_{j'-2}$ in a similar way to $V'_{j'-1}$. Hence, we can inductively define $V'_j,...,V'_{j'-1}$.

We have thus described all $V'_j$. Notice that $V'_j$ are compatible with the gluing data $(\phi_j)$ precisely by their definition.

Next, we describe $\psi$ for morphisms. Let $x,y$ denote objects in $\GG^0$ over $K$-schemes $S$ and $T$ resp. There is an arrow $\xi:y\to x$ if and only if for some morphism $f:S\to T$, there exists an isomorphism $y\to f^*x$. Note that isomorphisms of limit linear series are induced by isomorphisms of the underlying bundles. Suppose $y\to f^*x$ is induced by $(\theta_j:\mE_j\stackrel{\sim}{\to}\mF_j)$, $\theta_j$ induces an isomorphism $\mE_j(-N(T\times P_{j+1}))\to\mF_j(-N(T\times P_{j+1}))$, which further induces an isomorphism $\psi(y)\stackrel{\sim}{\to}f^*\psi(x)$. This determines a (Cartesian) arrow $\psi(y)\to\psi(x)$ which we define as $\psi(\xi)$. It only remains to show that this definition is functorial.

Given two arrows $\xi_1:y\to x$ and $\xi_2:z\to y$ over $f:S\to T$ and $g:U\to S$ resp., there exist isomorphisms $\theta^1:y\to f^*x$ and $\theta^2:z\to g^*y$ induced by relevant vector bundle isomorphisms $(\theta^1_j),(\theta^2_j)$. Then $\xi_2\circ\xi_1:z\to x$ corresponds to an isomorphism induced by $(\theta^2_j\circ g^*\theta^1_j)$. Since $\psi(\xi_1)$, $\psi(\xi_2)$ and $\psi(\xi_2\circ\xi_1)$ are also induced by $(\theta^1_j)$, $(\theta^2_j)$ and $(\theta^2_j\circ g^*\theta^1_j)$ resp. (thought of as isomorphisms between underlying vector bundles of $\psi(y)$ and $\psi(x)$, $\psi(z)$ and $\psi(y)$ , $\psi(z)$ and $\psi(x)$ resp.), functoriality follows. Therefore, $\psi$ is a stack morphism from $\GG^0$ to \ma.

By looking at the induced topological map $|\psi|$ (see \tb{Notation 15}), one can conclude that $\psi$ factorizes through $\GG_1$.
\end{proof}
We first prove a non-emptiness result for $\GG^0$.
\begin{prop}\label{nonempty3}
For every point $y\in|\GG_1|$, there exists $x\in|\GG^0|$ such that the induced map $|\psi|:|\GG^0|\to|\GG_1|$ sends $x$ to $y$.
\end{prop}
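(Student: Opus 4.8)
The plan is to invert, at the level of points, the morphism $\psi$ constructed in Theorem \ref{Induction}. Fix a $K$-point $y=((\mF_j,U_j),(\phi_j))$ of $\GG_1$, i.e.\ a sufficiently generic object of \ma. Set $\mE_j:=\mF_j(NP_{j+1})$ for $j<g$ and $\mE_g:=\mF_g(NP_{g+1})$, and retain the isomorphisms $(\phi_j)$; as in the proof of Lemma \ref{suffover} this is legitimate because the twist occurs at only one of the two nodes of each component, and it shifts the gluing parameter from $b$ to $b'=b+N$. By Lemma \ref{C1}(3), the vanishing matrix $c^{\Gamma}$ satisfies on every component the hypotheses of Lemma \ref{one} or Lemma \ref{one2}, so (arguing as in Lemma \ref{bunfix}) the $\mE_j$ are exactly the underlying vector bundles of $\GG^{k+n,\text{EHT}}_{2,\omega_g(NP_{g+1}),\hat{d}_{\bullet},c^{\Gamma}}(X_g)$, and the unstable ones among them are $\mE_{j_1},\dots,\mE_{j_{2T}}$. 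It therefore suffices to produce, for each $j$, a $(k+n)$-dimensional $V_j\subseteq\Gamma(\mE_j)$ realizing the vanishing sequences $(c_{i,2j-2})_i$ at $P_j$ and $(c_{i,2j-1})_i$ at $P_{j+1}$, containing a $k$-dimensional $V'_j$ with $i^{-1}(V'_j)=U_j$, and such that the collection $(V_j)$ is compatible with the fixed $(\phi_j)$ in the sense of Definition \ref{Def}(3); the resulting $x=((\mE_j,V_j),(\phi_j))$ is then a point of $\GG^0$ with $|\psi|(x)=y$.

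For the first $k$ rows there is nothing to do: the twisting inclusion $\Gamma(\mF_j)\hookrightarrow\Gamma(\mE_j)$ sends $U_j$ to a subspace $V'_j$ whose vanishing at $P_j$ remains $(a_{i,2j-2})_i$ and whose vanishing at $P_{j+1}$ becomes $(a_{i,2j-1}+N)_i=(c_{i,2j-1})_{i\le k}$, and whose gluing compatibility at each node is precisely the one already encoded by $\phi_j$ in $y$. On a single component $C_j$, Lemma \ref{one} (when $\mE_j$ is semistable) or Lemma \ref{one2} (when $\mE_j$ is one of the unstable bundles of Lemma \ref{3}) guarantees the existence of a $(k+n)$-dimensional $V_j$ with the prescribed vanishing sequences, together with a $(P_j,P_{j+1})$-adapted basis; moreover such a basis can be chosen to extend the adapted basis of $U_j$ supplied by $y$, since the extra vanishing orders $c_{i,\bullet}$ ($i>k$) are governed by the explicit structure of the decomposable bundle $\mE_j$. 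So the only real content is to choose these component-wise $V_j$ coherently, so that their extra $n$ sections glue through the already-fixed $\phi_j$'s.

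Here the argument parallels the proof of Lemma \ref{suffover}. At a node $P_{j+1}$ with $c_{k,2j-1}>c_{k+1,2j-1}$ no constraint arises: the configuration at $P_{j+1}$ of the extra sections is a sub-configuration of that of the first $k$ sections (Case 1 in the proof of Theorem \ref{Induction}), so any admissible $V_j$, $V_{j+1}$ automatically glue. The obstruction is confined to a maximal run of components $C_{j^1+1},\dots,C_{j^2-1}$ lying between two breakpoints $j^1<j^2$ at which $c_{k,\bullet}=c_{k+1,\bullet}$ throughout the interior; by \tb{Step 4.1} in Appendix A.1 and Corollary \ref{AC1}, such a $j^1$ equals $j_{2t}$ for some $t$, so $\mE_{j^1}$ is unstable. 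Along such a run the choice of the extra section of $V_{j^1}$ propagates rigidly to the far end through a composite $\phi_{j^2}\circ\tau(\cdot)\circ\cdots\circ\tau(\cdot)\circ\phi_{j^1+1}$ built from the isomorphisms $\tau(e)$ of Example \ref{fiso}, and the construction can fail only if the image one is forced onto is the point associated with the destabilizing summand of $\mE_{j^1}$ (at which, as noted in the proof of Theorem \ref{Induction}, the relevant rank would drop). But that is exactly the event ``$q^t_2$ is glued to $q^{t+1}_1$ in $a$-th order'' for an $a$ with $a+N$ a non-repeated entry among $c_{k+1,2j_t-1},\dots,c_{k+n,2j_t-1}$, which is ruled out because $y$ is sufficiently generic. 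Hence on each run we may first pin down the extra section at whichever breakpoint is rigid, then fill in the remaining interior components freely using the leftover degrees of freedom in Example \ref{fiso}; assembling these choices — and treating $C_g$ and the prescribed vanishing at $P_{g+1}$ as in the construction of $V'_g$ in the proof of Theorem \ref{Induction} — yields the required $x\in|\GG^0|$.

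The main obstacle is the bookkeeping in the last paragraph: one must match the combinatorics of $c^{\Gamma}$ — which of the extra vanishing orders repeat, and exactly where the breakpoints $j^1,j^2$ sit relative to the unstable indices $j_1,\dots,j_{2T}$ — with the geometry of Lemmas \ref{aux} and \ref{sym}, which dictate which configurations of section-values on a given decomposable bundle are realizable, so as to verify that ``sufficiently generic'' is precisely the hypothesis removing every obstruction to gluing the extra sections. The component-wise existence statements are routine given Lemmas \ref{one} and \ref{one2}; it is the coherence across the chain, and its equivalence with sufficient genericity, that requires care.
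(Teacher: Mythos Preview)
Your approach is essentially the same as the paper's: twist each $\mF_j$ to $\mE_j=\mF_j(NP_{j+1})$, keep the gluing data, enlarge each $U_j$ to a $(k+n)$-dimensional $V_j$ realizing the prescribed $c^{\Gamma}$, and use sufficient genericity to resolve the gluing of the extra sections across runs between consecutive unstable components. The paper carries this out by giving, for each $j$, an explicit formula for $V_j$ according to a case analysis on whether $c_{k+1,2j-2}$ and $c_{k+n,2j-2}$ are repeated and whether $\mE_j$ is semistable or unstable (Cases 1(1)--(3) and 2(1)--(3)); your outline is correct but stops short of this bookkeeping, which you honestly flag in your final paragraph as the remaining obstacle.
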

\begin{proof}
We construct a pre-image $((\mE_j,V_j),(\phi_j))$ for an $F$-valued object $((\mE'_j,V'_j),(\phi_j))$ of $\GG_1$, where $F$ is some algebraically closed extension of $K$. Let $\mE_j=\mE'_j(NP_{j+1})$ and take the same gluing data $(\phi_j)$. It then remains to define $V_j$. Recall that $V_j$ is supposed to be of rank $k+n=k+2m$.

Define $V_1:=\ker(\Gamma(\mE_1(-c_{k+n,1}P_2))\to\mE_1(-c_{k+n,1}P_2)|_{P_2})/\mL)$, where $\mL$ is a line bundle determined by
$$V'_1(-k_1P_1)\to\mE'_1(-a_{k,1}P_2)|_{P_2}=\mE_1(-c_{k,1}P_2)|_{P_2}\stackrel{\sim}{\to}\mE_1(-c_{k+n,1}P_2)|_{P_2}.\footnote{The last isomorphism is non-canonical, but $L$ is independent of the choice of such an isomorphism.}$$

For a general $j$, notice that among the vanishing orders $c_{k+1,2j-2},...,c_{k+n,2j-2}$, only $c_{k+1,2j-2}$, $c_{k+n,2j-2}$ could be non-repeated vanishing orders (see Appendix Lemma \ref{A1}, \ref{A2}). There are three situations:
\begin{enumerate}
  \item[(a)] $c_{k+1,2j-2}$, $c_{k+n,2j-2}$ are both repeated vanishing orders;
  \item[(b)] only $c_{k+n,2j-2}$ is non-repeated;
  \item[(c)] $c_{k+1,2j-2}$, $c_{k+n,2j-2}$ are both non-repeated.
\end{enumerate}

When $c_{k+1,2j-2}$ is non-repeated, define $\mL^{j-1}_1$ to be the image of the map: $$V_{j-1}(-c_{k+1,2j-3}P_j)\to\mE_{j-1}(-c_{k+1,2j-3}P_j)|_{P_j}.$$
In this case, define
$$W_1:=\ker(\Gamma(\mE_j(-c_{k+1,2j-2}P_j-c_{k+n,2j-1}P_{j+1}))\to\mE_j(-c_{k+1,2j-2}P_j)|_{P_j}/\phi_j(\mL^{j-1}_1)).$$

When $c_{k+n,2j-2}$ is non-repeated, define $\mL^{j-1}_2$ to be the image of the map: $$V_{j-1}\to\mE_{j-1}(-c_{k+n,2j-3}P_j)|_{P_j}.$$
In this case, define
$$W_2:=\ker(\Gamma(\mE_j(-c_{k+1,2j-2}P_j-c_{k+n,2j-1}P_{j+1}))\to\mE_j(-c_{k+n,2j-2}P_j)|_{P_j}/\phi_j(\mL^{j-1}_2)).$$
We now define $V_j$ inductively:\\
\noindent\tb{Case 1:} $\mE_j$ is semi-stable.\\
\ \\
(1) In situation (a), $V_j:=V'_j\op\Gamma(\mE_j(-c_{k+1,2j-2}P_j-c_{k+n,2j-1}P_{j+1}))$;\\
(2) in situation (b), $V_j:=V'_j+W_2$;\\
(3) in situation (c), $V_j:=V'_j+(W_1\bigcap W_2).$\\
\ \\
\noindent\tb{Case 2:} $\mE_j$ is unstable. Denote again $\mE_{j_1},...,\mE_{j_{2T}}$ to be the unstable bundles among $\mE_1,...,\mE_g$. Define $V''_j:=V'_j+\Gamma(\mE_j(-c_{k+1,2j-2}P_j-c_{k+n,2j-1}P_{j+1}))$.\\
\ \\
(1) Situation (a) occurs if and only if $j=j_{2t}$ and we define
  $$V_j:=\ker(V''_j\to\mE_j(-c_{k+n,2j-1}P_{j+1})|_{P_{j+1}}/\mL^j),$$
  where $\mL^j$ is a line in $\mE_j(-c_{k+n,2j-1}P_{j+1})|_{P_{j+1}}$ whose image $q_j$ in $\PP\mE_j|_{P_{j+1}}$\footnote{$\PP\mE_j|_{P_{j+1}}$ is canonically isomorphic to $\PP\mE_j(-c_{k+n,2j-1}P_{j+1})|_{P_{j+1}}$.} is glued to $q_{j_{2t+1}}$ in $c_{k+n,2j-1}$-th order, where $q_{j_{2t+1}}$ is the image of any section of the destabilizing summand of $\mE_{j_{2t+1}}$ in $\PP\mE_{j_{2t+1}}|_{P_{j_{2t+1}}}$;\\
(2) in situation (b), $V_j:=V''_j$;\\
(3) in situation (c), $V_j:=V'_j+\ker(V''_j(-c_{k+1,2j-2}P_j)\to\mE_j(-c_{k+1,2j-2}P_j)|_{P_j}/\phi_j(\mL^{j-1}_1))$.

Note that in case 2(1), the existence of such $\mL^j$ is guaranteed by the sufficient genericity of $((\mE'_j,V'_j),(\phi_j))$. \end{proof}
Based on the construction in Theorem \ref{Induction}, we point out a fact which will be relevant to the moduli count later:
\begin{cor}\label{cor1dim}
Given $\psi:\GG^0\to\GG_1$ as in Theorem \ref{Induction} and $F$ an algebraically closed extension of $K$, let $((\mE'_j,V'_j),(\phi_j))$ be an $F$-valued object in $\Ob(\GG_1)$. Then, an $((\mE_j,V_j),(\phi_j))$ in $\Ob(\GG^0)$ such that $\psi((\mE_j,V_j),(\phi_j))=((\mE'_j,V'_j),(\phi_j))$ is uniquely determined by the choice of $V_{j_{2T}}$. Moreover, the choice of $V_{j_{2T}}$ one-to-one corresponds to points in $\PP(\mE_{j_{2T}}|_{P_{j_{2T}+1}})\backslash\{q^*\}$, where $q^*$ is the image of sections of the destabilizing summand of $\mE_{j_{2T}}$.
\end{cor}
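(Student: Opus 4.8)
The plan is to read the statement off the explicit construction in the proof of Proposition~\ref{nonempty3}, using that the morphism $\psi$ of Theorem~\ref{Induction} acts on underlying bundles and gluing data by $\mE'_j=\mE_j(-NP_{j+1})$ and $\phi_j\mapsto\phi_j$. Fix $((\mE'_j,V'_j),(\phi_j))\in\Ob(\GG_1)$ over an algebraically closed $F\supseteq K$. Any $((\mE_j,V_j),(\phi_j))\in\Ob(\GG^0)$ with $\psi((\mE_j,V_j),(\phi_j))=((\mE'_j,V'_j),(\phi_j))$ is then forced to have $\mE_j=\mE'_j(NP_{j+1})$ and the listed gluings, so the only data still to be pinned down are the rank-$(k+n)$ spaces $V_j$, each of which carries the prescribed $c^{\Gamma}$-vanishing sequences at $P_j,P_{j+1}$ and is compatible with $(\phi_j)$.

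The core of the argument is a rigidity statement: in the inductive construction of the $V_j$'s in Proposition~\ref{nonempty3}, the only choice that is \emph{not} forced by $((\mE'_j,V'_j),(\phi_j))$ is the line $\mL^{j_{2T}}$ appearing in Case~2(1) at $t=T$. Indeed $V_1$ is determined by $V'_1$; for $1<j<j_{2T}$ the formula for $V_j$ is explicit in $V'_j,\phi_j$ and $V_{j-1}$ in the semistable case and in the unstable odd-index case, while in the only line-choice case among these indices, namely $j=j_{2t}$ with $t<T$, the line $\mL^{j}$ is pinned down by the requirement that $q_j$ be glued to $q_{j_{2t+1}}$; this has a unique solution because the relevant map $\tau$ (Definition~\ref{suff}) is an isomorphism, and the solution lies in the allowed locus by sufficient genericity and condition~\eqref{cond1} of Lemma~\ref{gmor}. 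For $j>j_{2T}$ every $\mE_j$ is semistable and $V_j$ is determined by $V'_j,\phi_j$ and $V_{j-1}$, so $V_{j_{2T}+1},\dots,V_g$ are obtained from $V_{j_{2T}}$ by forward propagation, whereas $V_1,\dots,V_{j_{2T}-1}$ do not involve $V_{j_{2T}}$ at all. The Case~2(1) kernel formula recovers $V_{j_{2T}}$ from $\mL^{j_{2T}}$, and conversely $\mL^{j_{2T}}$ is recovered from $V_{j_{2T}}$ as the image in $\PP\mE_{j_{2T}}|_{P_{j_{2T}+1}}$ of the section of $V_{j_{2T}}$ of order $c_{k+n,2j_{2T}-1}$ at $P_{j_{2T}+1}$ (a non-repeated order, so that section is unique up to scalar). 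Hence a pre-image is determined by $V_{j_{2T}}$; for the remaining point --- that every pre-image is produced by the construction applied with this value of $\mL^{j_{2T}}$ --- one checks, formula by formula, that the recipe defining $\psi$ in Theorem~\ref{Induction} and the construction of Proposition~\ref{nonempty3} are mutually inverse once the line is fixed, using the appendix lemmas on the rank of the sub-bundles $V_j(-mP)$ and the fact that the prescribed vanishing sequence forces the shape of $V_j$ (Lemmas~\ref{one} and \ref{one2}).

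Finally, for the parametrization, $\mL^{j_{2T}}$ is by construction a line in $\mE_{j_{2T}}(-c_{k+n,2j_{2T}-1}P_{j_{2T}+1})|_{P_{j_{2T}+1}}$, i.e. a point of $\PP\mE_{j_{2T}}|_{P_{j_{2T}+1}}$ under the canonical identification of the projectivized fibre of a twist with that of $\mE_{j_{2T}}$; since $t=T$ there is no subsequent destabilizing direction to glue to, so the only requirement is that the resulting object lie in $\GG^0$, which by the construction of Proposition~\ref{nonempty3} amounts precisely to $\mL^{j_{2T}}\neq q^{*}$: choosing $\mL^{j_{2T}}=q^{*}$ would force an extra section of the destabilizing summand of $\mE_{j_{2T}}$ into $V_{j_{2T}}$ with vanishing order at $P_{j_{2T}+1}$ incompatible with the prescribed $c^{\Gamma}$-sequence (cf. Lemma~\ref{one2} and the definition of $(g,k)$-standardness), while every other point does yield an object of $\GG^0$. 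Thus the pre-images of $((\mE'_j,V'_j),(\phi_j))$ correspond bijectively to $\PP(\mE_{j_{2T}}|_{P_{j_{2T}+1}})\backslash\{q^{*}\}$ via $V_{j_{2T}}$. I expect the bookkeeping in the rigidity step to be the main obstacle --- in particular, verifying uniformly over the several cases of the construction that each $V_j$ with $j\neq j_{2T}$ is genuinely forced, and that the construction of Proposition~\ref{nonempty3} returns an arbitrary pre-image once $\mL^{j_{2T}}$ is read off from it.
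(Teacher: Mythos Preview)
Your proposal is correct and follows essentially the same approach as the paper: both arguments walk through the case-by-case construction of Proposition~\ref{nonempty3} and verify that every $V_j$ with $j\neq j_{2T}$ is forced by $((\mE'_j,V'_j),(\phi_j))$, leaving only the line $\mL^{j_{2T}}$ (equivalently $V_{j_{2T}}$) as a free parameter ranging over $\PP(\mE_{j_{2T}}|_{P_{j_{2T}+1}})\setminus\{q^*\}$. The paper phrases the parametrization slightly differently---observing that $V_{j_{2T}}(-(c_{k+n,2j_{2T}-1}+1)P_{j_{2T}+1})$ is already determined and the remaining basis section $s$ with the prescribed orders corresponds to a point of the punctured projective line---but this is the same content as your $\mL^{j_{2T}}$ description.
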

\begin{proof}
Following the same construction as in \ref{nonempty3}, we claim that the proposed $V_1$ is the only $k$-dimensional subspace of $\Gamma(\mE_1)$ satisfying the desired vanishing conditions which is compatible with $\phi_2$ and any feasible choice of $V_2$: given the proposed vanishing conditions, $V_1$ is uniquely determined by $q_1$, the image of $$V_1\to\mE_1(-c_{k+n,1}P_2)|_{P_2}\to\PP\mE_1(-c_{k+n,1}P_2)|_{P_2};$$
since $c_{k+n,2}+c_{k+n,3}={1\over2}(\deg(\mE_2)-1)$, $\phi_2(q_1)$ must be the image of sections of the destabilizing summand of $\mE_2$ in $\PP\mE_2|_{P_2}$.

Similarly, in cases 1(2), 1(3), 2(3) in \ref{Induction}, $V_j$ is determined by $V_{j-1}$, $V'_j$ and $\phi_j$.

In cases 1(1), 2(2), $V_j$ is uniquely determined by $V'_j$ and the vanishing conditions.

All $V_{j_{2t}}$ in 2(1) such that $t<T$ are uniquely determined by $V'_{j_{2t}}$, the vanishing conditions and $\phi_{j_{2t}+1}$,...,$\phi_{j_{2t+1}}$. Therefore, once $V_{j_{2T}}$ is chosen, all $V_j$ are determined and hence a unique object $((\mE_j,V_j),(\phi_j))$ is obtained.

For the second part of the statement, notice that $V_{j_{2T}}$ is determined by its $(P_{j_{2T}},P_{j_{2T}+1})$-adapted basis. Given $V'_{j_{2T}}$ and the vanishing conditions,\\ $V_{j_{2T}}(-(c_{k+n,2j_{2T}-1}+1)P_{j_{2T}+1})$ is uniquely determined and $V_{j_{2T}}$ depends only on the choice of a section $s$ such that $\ord_{P_{j_{2T}}}(s)=c_{k+n,2j_{2T}-2}, \ord_{P_{j_{2T}+1}}(s)=c_{k+n,2j_{2T}-1}.$ Such sections one-to-one corresponds to points in $\PP(\mE_{j_{2T}}|_{P_{j_{2T}+1}})\backslash\{q^*\}$.
\end{proof}
A key notion throughout the rest of the paper is the fiber dimension of morphisms between algebraic stacks. We quote the definition from \cite{Reldim}.
\begin{defn}\label{fiber}
Given $f:\mathcal{X}\to \mathcal{Y}$, a locally finite type morphism of algebraic
stacks. For $x\in|\mathcal{X}|$, let $y$ be a geometric point representing $f(x)$ and $\ti{x}$ a point of $\mathcal{X}\times_{\mathcal{Y}}y$ lying over $x$. We define the fiber dimension $\delta_xf$ to be the dimension at $\ti{x}$ of $\mathcal{X}\times_{\mathcal{Y}}y$.
\end{defn}
\begin{prop}\label{1dim}
Let $\psi:\mathcal{G}^0\to\GG_1$ be as defined in \ref{Induction}. Then, $\delta_x\psi=1$ for every $x$ of $|\GG^0|$.
\end{prop}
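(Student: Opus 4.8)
The plan is to compute the fiber dimension $\delta_x\psi$ directly from the pointwise description of the fibers of $\psi$ obtained in Corollary \ref{cor1dim}, and then promote the pointwise count to a statement about the dimension of the fiber stack at the relevant point. First I would unwind Definition \ref{fiber}: given $x\in|\GG^0|$, set $y$ to be a geometric point (over some algebraically closed extension $F$ of $K$) representing $\psi(x)\in|\GG_1|$, and let $\widetilde x$ be a point of $\GG^0\times_{\GG_1}y$ lying over $x$. By construction, $\GG^0\times_{\GG_1}y$ is the fiber of $\psi$ over the $F$-valued object $((\mE'_j,V'_j),(\phi_j))$ corresponding to $y$, and Corollary \ref{cor1dim} identifies the $F$-valued objects of this fiber: each preimage $((\mE_j,V_j),(\phi_j))$ of $((\mE'_j,V'_j),(\phi_j))$ is uniquely determined by the choice of $V_{j_{2T}}$, and such choices are in bijection with $F$-points of $\PP(\mE_{j_{2T}}|_{P_{j_{2T}+1}})\setminus\{q^*\}$, a one-dimensional quasi-projective variety over $F$.

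Next I would make this functorial, not just pointwise: the assignment $((\mE_j,V_j),(\phi_j))\mapsto V_{j_{2T}}$ (or rather, $V_{j_{2T}}$ viewed through its determined sub-line-bundle datum, i.e. the point of $\PP(\mE_{j_{2T}}|_{P_{j_{2T}+1}})$) should be upgraded to a morphism from $\GG^0\times_{\GG_1}y$ to $U:=\PP(\mE_{j_{2T}}|_{P_{j_{2T}+1}})\setminus\{q^*\}$, where $\mE_{j_{2T}}$ is the fixed underlying bundle (Remark \ref{uopenr}) and $\PP(\mE_{j_{2T}}|_{P_{j_{2T}+1}})\cong\PP^1_F$. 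The constructions in the proofs of Theorem \ref{Induction} and Proposition \ref{nonempty3} are all built from kernels of bundle maps and twists, so they work over an arbitrary base $T$; thus for a $T$-valued point of the fiber one recovers a $T$-valued point of $U$ (a section of $T\to T$ into $\PP(\mE_{j_{2T}}|_{P_{j_{2T}+1}})$ avoiding $q^*$), and conversely — again by Corollary \ref{cor1dim}'s uniqueness — the remaining data $V_1,\dots,V_{j_{2T}-1},V_{j_{2T}+1},\dots,V_g$ are reconstructed uniquely (in cases $1(1),1(2),1(3),2(1)$ with $t<T$, $2(2),2(3)$) from $V_{j_{2T}}$, the fixed vanishing conditions, and the gluing data. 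Hence the fiber $\GG^0\times_{\GG_1}y$ is representable by, and in fact isomorphic to, $U$, an integral quasi-projective curve over $F$. Therefore $\dim_{\widetilde x}(\GG^0\times_{\GG_1}y)=1$ for every choice of $\widetilde x$, which is exactly $\delta_x\psi=1$.

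The main obstacle I expect is the last point in the previous paragraph: verifying that the bijection of Corollary \ref{cor1dim} really is an \emph{isomorphism of $F$-schemes} (or at least that the fiber is reduced, irreducible, and one-dimensional at $\widetilde x$) rather than merely a bijection on $F$-points, since fiber dimension is insensitive to nilpotents but one must still rule out, say, an embedded component or a jump in dimension at a special point. This requires checking that the inductive reconstruction of the $V_j$'s in the proof of Theorem \ref{Induction} is given by kernels of bundle maps of \emph{locally constant rank} as $V_{j_{2T}}$ varies over $U$ — which is precisely where the choice of $\GG^0$ (as opposed to an arbitrary substack of $\mathcal{G}^{k+n,\text{EHT}}_{2,\omega_g(NP_{g+1}),\hat d_\bullet,c^\Gamma}(X_g)$) and the sufficient-genericity hypothesis were invoked, guaranteeing the relevant ranks do not drop. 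I would therefore cite the Appendix lemmas (\ref{case1}, \ref{case2} and the rank computations \ref{A1}, \ref{A2}) to see that each $\gamma^j_{m,\mathscr L}$ has the expected kernel rank over the universal family $U$, conclude that $\GG^0\times_{\GG_1}y\xrightarrow{\sim}U$, and hence that its dimension at any point, in particular at $\widetilde x$, equals $\dim U=1$.
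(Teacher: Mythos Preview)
Your proposal is correct and leads to the same conclusion as the paper, but the route differs in one structural point that is worth noting.

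You aim to identify the fiber $\GG^0\times_{\GG_1}y$ directly with $U=\PP(\mE_{j_{2T}}|_{P_{j_{2T}+1}})\setminus\{q^*\}$ by upgrading the pointwise bijection of Corollary~\ref{cor1dim} to a functorial one, and you correctly flag that the delicate part is showing this is an isomorphism of schemes (in particular, that the fiber stack is representable and that the reconstruction of the $V_j$'s has locally constant rank). The paper sidesteps this hands-on verification by exploiting that $\psi$ is a morphism \emph{over} the moduli stack $\mathcal{M}=\mathcal{M}_{2,\vec d,\omega_g(NP_{g+1})}$ of underlying bundles: both $\GG^0$ and $\GG_1$ map to $\mathcal{M}$, and their fibers $G^0_y,G^1_y$ over the bundle point induced by $y$ are honest schemes (locally closed in products of Grassmannians, by Remark~\ref{p}). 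A short $2$-categorical associativity argument then gives $\GG_y\cong G^0_y\times_{G^1_y}\spec(F)=:G_y$, so representability comes for free. Finally the paper maps $G_y$ into $\PP\mE_{j_{2T}}(-c_{k+n,2j_{2T}}P_{j_{2T}+1})|_{P_{j_{2T}+1}}$ and invokes Corollary~\ref{cor1dim} to see this is injective with dense image, hence $\dim G_y=1$.

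Your approach is more explicit and self-contained; the paper's is cleaner because the factorization through $\mathcal{M}$ absorbs the stackiness and the representability issue in one stroke, leaving only the scheme-theoretic dimension count (which is exactly your Corollary~\ref{cor1dim} input). Either way the argument goes through.
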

\begin{proof}
Suppose $|\psi|(x)=y$. Consider the fiber
 $$\xymatrix{\GG_y\ar[d]\ar[r]&\GG^0\ar[d]_{\psi}\\\spec(F)\ar[r]^{y}&\GG_1}$$

Let $((\mE'_j,V'_j),(\phi_j))$ be the object in $\GG_1$ represented by $y$. Denote $\mE_j=\mE'_j(NP_{j+1})$.

Objects in $\GG_y$ are triples of the form $(X,((\mF_j,W_j),(\chi_j)),(f_j))$: $X\stackrel{f}{\to}\spec(F)$ is an $F$-scheme, $((\mF_j,W_j),(\chi_j))$ an object in $\GG^0$ over $X$, and $(f_j)$ gives an isomorphism between the image of $X$, $((f^*\mE'_j,f^*V'_j),(f^*\phi_j))$, and $\psi((\mF_j,W_j),(\chi_j))$ in $\GG_1$. More concretely, $\psi((\mF_j,W_j),(\chi_j))$ is isomorphic to $((f^*\mE'_j,f^*V'_j),(f^*\phi_j))$ via $(f_1,...,f_g)$ if and only if the following holds:
$$\mF_j(-NP_{j+1})\underset{f_j}{\stackrel{\sim}{\to}}f^*\mE'_j, W'_j=f^*V'_j \text{ and }f^*\phi_j=f_{j+1}\circ\chi_j\circ f_j^{-1}\text{ for all }j.$$

We now describe a 1-dimensional scheme $G_y$ which represents $\GG_y$.

Let $\Gr(s,\Gamma(\mE_j);(a_{i,j}),(b_{i,j}))$ be the locally-closed sub-scheme of $\Gr(s,\Gamma(\mE_j))$\\
parametrizing subspaces of sections with vanishing sequences $(a_{i,j}),(b_{i,j})$ at $P_j$, $P_{j+1}$ respectively. Denote $$\mathbf{G_k}=\prod_{j=1}^g\Gr(k,\Gamma(\mE'_j);(a_{i,2j-2}),(a_{i,2j-1})),\mathbf{G_{k+n}}=\prod_{j=1}^g\Gr(k+n,\Gamma(\mE_j);(c_{i,2j-2}),(c_{i,2j-1})).$$

The morphism $\spec(F)\stackrel{y}{\to}\GG_1$ induces a morphism $\spec(F)\to\mathcal{M}_{2,\vv{d},\omega_g(NP_{g+1})}$ and we have the following (2-)fibered diagram:
$$\xymatrix{\ti{G}_y\ar[r]\ar[d]&\mathcal{G}^{k+n,\text{EHT}}_{2,\omega_g(NP_{g+1}),\hat{d}_{\bullet},c^{\Gamma}}(X_g)\ar[d]^{\pi}\ar[r]&\mathcal{G}^{k+n,\text{EHT}}_{2,2g-2+2N,\hat{d}_{\bullet},c^{\Gamma}}(X_g)\ar[d]\\
\spec(F)\ar[r]&\mathcal{M}_{2,\vv{d},\omega_g(NP_{g+1})}\ar[r]&\mathcal{M}_{2,\vv{d}}}$$
By Remark \ref{p} and the fact that $\mathcal{G}^{k+n,\text{EHT}}_{2,2g-2+2N,\hat{d}_{\bullet},c^{\Gamma}}(X_g)$ is a locally closed substack of $\mathcal{P}^{k}_{2,d_{\bullet}}$, it follows that $\ti{G}_y$ is a locally closed sub-scheme of $\mathbf{G_{k+n}}$. Since $\GG^0$ is locally-closed in $\mathcal{G}^{k+n,\text{EHT}}_{2,\omega_g(NP_{g+1}),\hat{d}_{\bullet},c^{\Gamma}}(X_g)$, we further conclude that the fiber $G^0_y$ of $\GG^0$ over $\mathcal{M}_{2,\vv{d},\omega_g(NP_{g+1})}$ is also locally-closed in $\mathbf{G_{k+n}}$.

Similarly, the fiber $G^1_y$ of $\GG_1$ over $\mathcal{M}_{2,\vv{d},\omega_g(NP_{g+1})}$ is locally-closed in $\mathbf{G_k}$.

The morphism $\psi$ induces a map between the fibers $\psi_y:G^0_y\to G^1_y$. Define $G_y=\psi_y^{-1}(V'_1,...,V'_g)$ in $G^0_y$. This is equivalent to saying $((\mE_j,V_j),(\phi_j))$ is an object in $\GG^0$ over the object determined by $y$. We claim $G_y$ is 1-dimensional.

Consider the morphism
$$G_y\to \Gr(k+n,\Gamma(\mE_{j_{2T}});(c_{i,2j_{2T}-2}),(c_{i,2j_{2T}-1}))\to\PP\mE_{j_{2T}}(-c_{k+n,2j_{2T}}P_{j_{2T}+1})|_{P_{j_{2T}+1}}$$
where the first arrow is a projection and the second one is defined by sending $V_{j_{2T}}$ to the line determined by $V_{j_{2T}}/i(V'_{j_{2T}})$, where $i$ is the inclusion $\Gamma(\mE'_{j_{2T}})\to\Gamma(\mE_{j_{2T}})$. By Corollary \ref{cor1dim}, this morphism maps injectively onto a dense open subset of the target. Hence, $G_y$ is 1-dimensional.

That $G_y$ represents $\GG_y$ follows from the following commutative 2-diagram:
$$\xymatrix{\GG^0\times_{\cm}\spec(F)\times_{(\GG_1\times_{\cm}\spec(F))}\spec(F)\ar[r]\ar[d]&\GG^0\times_{\cm}\spec(F)\ar[r]\ar[d]&\GG^0\ar[d]\\\spec(F)\ar[r]&\GG_1\times_{\cm}\spec(F)\ar[r]&\GG_1}$$
The right square is Cartesian, due to the canonical isomorphism $$\spec(F)\times_{\cm}\GG_1\times_{\GG_1}\GG^0=\GG^0\times_{\cm}\spec(F)$$
(see Tag 02XD, \cite{stacks-project}). Hence, the whole diagram is Cartesian. In particular,\\ $\GG^0\times_{\cm}\spec(F)\times_{(\GG_1\times_{\cm}\spec(F))}\spec(F)$ is isomorphic to $\GG^0\times_{\GG_1}\spec(F)$. So,
$$G_y=G^0_y\times_{G^1_y}\spec(F)=\GG^0\times_{\cm}\spec(F)\times_{(\GG_1\times_{\cm}\spec(F))}\spec(F)\cong\GG^0\times_{\GG_1}\spec(F)=\GG_y.$$
\end{proof}
\subsection{Geometric Fibers of Certain Forgetful Morphisms}\label{forget}
Hereafter, we focus on morphisms between moduli stacks of Eisenbud-Harris-Teixidor limit linear series on chain $X_g$ for different $g$'s. In particular, given $g<g'$, there is a natural forgetful morphism $\mathcal{G}^{k',\text{EHT}}_{2,\omega_{g'},d^{g'}_{\bullet},c^{\Gamma}_{g'}}(X_{g'})\to\mathcal{G}_g$ (\tb{Notation 17}), defined by forgetting the part of data over the last $g'-g$ components.

\indent In general, when $g_1>g_2>g_3$, we have the following commutative diagram:
$$\begin{tikzcd}
\GG_{g_1} \arrow{dr} \arrow{r}{p_{g_1,g_2}}&\GG_{g_2}\arrow{d}\\
&\GG_{g_3}
\end{tikzcd}$$

We shall study the dimension of the fiber of $p_{g_1,g_2}$ over a geometric point represented by $x:\spec(F)\to\GG_2$, i.e. the fibered product
$$\xymatrix{\GG^{g_1,g_2}_x\ar[r]\ar[d]&\GG_1\ar[d]^{p_{g_1,g_2}}\\ \spec(F)\ar[r]^x&\GG_2}$$

Recall that the moduli stacks $\GG_1,\GG_2$ here considered satisfy the property that all $K$-valued objects have the same underlying vector bundles.

Suppose $x$ represents the object $((\mE_j,V_j),(\phi_j))$ in $\GG_2$. Objects in $\GG^{g_1,g_2}_x$ over an $F$-scheme $X\stackrel{p}{\to}\spec(F)$ are of the form $$(X,((\mF_j,W_j),(\chi_j)),(f_1,...,f_{g_2})),$$
where $((\mF_j,W_j),(\chi_j))$ is an object in $\GG_1$ over $X$ such that $p_{g_1,g_2}((\mF_j,W_j),(\chi_j))$ is isomorphic to $((p^*\mE_j,p^*V_j),(p^*\phi_j))$ via $f=(f_1,...,f_{g_2})$ where $f_j$ is an isomorphism between $\mF_j(-(g_1-g_2)P_{j+1})$ and $p^*\mE_j$.

We shall now describe a smooth cover of $\GG^{g_1,g_2}_x$.

Denote $\tau_{P_{g_2+1}}:=\tau^{V_{g_2}}(P_{g_2+1})$. Define
$$W=\dis\prod_{j=g_2+1}^{g_1} \Gr(k,\Gamma(\mE_j);(c_{i,2j-2}),(c_{i,2j-1}))\times\dis\prod_{j=g_2+1}^{g_1} \iso(\mE_j|_{P_{j+1}},\mE_{j+1}|_{P_{j+1}}).$$

Let $N_j$ be the number of non-repeated vanishing orders in $(c_{i,2j-2})$. Since we restrict ourselves to refined limit linear series, $N_j$ equals the number of non-repeated vanishing orders in $(c_{i,2j-3})$. Define a morphism
$$g:W\to \prod_{j=g_2+1}^{g_1}((\PP \mE_j|_{P_j})^{N_j}\times(\PP \mE_j|_{P_j})^{N_j})$$ sending $((V_j),(\phi_j))$ to $((q^{g_2},p^{g_2+1}),...,(q^{g_1-1},p^{g_1}))$, where
$$p^j:=(s^j_1|_{P_j},...,s^j_{N_j}|_{P_j}),q^j:=(\phi_{j+1}(s^j_1|_{P_{j+1}}),...,\phi_{j+1}(s^j_{N_j}|_{P_{j+1}}))$$ and $s^j_i\in V_j$ is section realizing the $i$-th non-repeated vanishing order at the corresponding point. Denote $\Delta_j$ to be the diagonal of $(\PP \mE_j|_{P_j})^{N_j}\times(\PP \mE_j|_{P_j})^{N_j}$ and define $\Delta=\prod_{j=g_2+1}^{g_1}\Delta_j$.

Define $G_x$ to be the closed sub-scheme of $g^{-1}(\Delta)$ parametrizing objects such that\\ $\tau^{V_{g_2+1}}(P_{g_2+1})=\tau_{P_{g_2+1}}$.
\begin{prop}\label{fiber2}
$\exists u:G_x\to\GG^{g_1,g_2}_x$, which is a $\prod_{j=g_2+1}^{g_1}\aut^0(\mE_j)$-torsor.
\end{prop}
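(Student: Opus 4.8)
The plan is to exhibit $G_x$ as a smooth cover of the fiber $\GG^{g_1,g_2}_x$ by realizing both as quotient-type stacks over a common base, with the group $\prod_{j=g_2+1}^{g_1}\aut^0(\mE_j)$ acting by changing the trivializations of the underlying vector bundles. First I would unwind the moduli problem defining $\GG^{g_1,g_2}_x$: since the moduli stacks in question have fixed underlying vector bundles (Lemma \ref{bunfix}, Remark \ref{uopenr}), an object over an $F$-scheme $X$ consists of sub-bundles $W_j\subset\pi^j_*\mF_j$ for $j=g_2+1,\dots,g_1$ (where $\mF_j$ is a twist of the pulled-back fixed bundle), gluing isomorphisms $\chi_j$ at the nodes, together with the isomorphisms $f_j$ over the first $g_2$ components identifying $p_{g_1,g_2}((\mF_j,W_j),(\chi_j))$ with the pulled-back object $((p^*\mE_j,p^*V_j),(p^*\phi_j))$. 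The key point is that the data over components $1,\dots,g_2$ is rigidified by $x$ up to the automorphisms of the fixed bundles $\mE_1,\dots,\mE_{g_2}$, and compatibility across the node $P_{g_2+1}$ forces $\tau^{V_{g_2+1}}(P_{g_2+1})=\tau_{P_{g_2+1}}$, which is exactly the condition cut out in the definition of $G_x$.

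The core of the argument is to check that the map $u:G_x\to\GG^{g_1,g_2}_x$ sending a point $((V_j),(\phi_j))_{j>g_2}$ of $G_x$ to the limit linear series it determines (with the fixed bundles $\mE_j$ and the canonical identifications over the first $g_2$ components coming from $x$) is well-defined, and that it is a torsor under $H:=\prod_{j=g_2+1}^{g_1}\aut^0(\mE_j)$. The group $\aut^0(\mE_j)$ acts on $\Gr(k,\Gamma(\mE_j);\dots)$ and on the $\iso$-factors in the evident way, and this descends to an action on $G_x$ since the diagonal condition $g^{-1}(\Delta)$ and the configuration condition $\tau^{V_{g_2+1}}(P_{g_2+1})=\tau_{P_{g_2+1}}$ are $H$-invariant (an automorphism of $\mE_j$ preserves vanishing data and commutes with projectivization by Notation 10). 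To see $u$ is invariant and that the induced map $[G_x/H]\to\GG^{g_1,g_2}_x$ is an isomorphism, I would argue fppf-locally: an object of $\GG^{g_1,g_2}_x$ over $X$ étale-locally admits trivializations $\mF_j\cong\mathcal{O}_X\boxtimes\mE_j$ compatible with the determinant (using that the relevant $\aut^0$-torsor of such trivializations is smooth, hence has local sections), and after choosing these, the sub-bundles $W_j$ and gluings $\chi_j$ become $X$-points of $G_x$; two choices of trivialization differ by a section of $H$, which is precisely the ambiguity encoded by the torsor structure. The fact that $u$ is surjective and smooth then follows from Remark \ref{p} (geometric fibers of the relevant forgetful maps are products of Grassmannians) together with the openness/local-closedness statements already established (Remark \ref{p}, and the identification of Schubert-type loci inside products of Grassmannians as in the proof of Proposition \ref{1dim}).

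The step I expect to be the main obstacle is verifying that $u$ really is a torsor and not merely a surjection with $H$-orbits as fibers — i.e., checking the torsor axioms scheme-theoretically, which amounts to showing that the ambiguity in an object of $\GG^{g_1,g_2}_x$ is \emph{exactly} an $H$-torsor's worth, with no extra automorphisms. Concretely one must rule out that some nontrivial automorphism of the whole limit linear series acts trivially on $G_x$: an automorphism is given by $(\theta_j)_{j=1}^{g_1}$ with $\theta_j\in\aut^0(\mE_j)$ compatible with the gluings and fixing each $V_j$; over the first $g_2$ components the rigidification by $x$ and the $f_j$'s force $\theta_j$ to be trivial for $j\le g_2$ (up to the choices already recorded), and then the compatibility at $P_{g_2+1}$ propagates, so the stabilizer in $H$ of a point of $G_x$ is trivial and the action is free. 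The compatibility bookkeeping at the separating node $P_{g_2+1}$ — making sure the configuration condition $\tau^{V_{g_2+1}}(P_{g_2+1})=\tau_{P_{g_2+1}}$ is both necessary and sufficient for the gluing to extend the fixed object $x$ — is the delicate part; everything else (smoothness, surjectivity onto geometric points) reduces to the Grassmannian description in Remark \ref{p} and the standard fact that $\aut^0$-torsors are smooth.
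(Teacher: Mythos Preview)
Your approach is viable and arrives at the same conclusion, but it differs from the paper's in a way worth noting. The paper does not verify the torsor axioms directly. Instead, it introduces the reduced locally closed substack $\mathcal{Z}\subset\prod_{j=g_2+1}^{g_1}\mathcal{M}_{2,\mL_j}(C_j)$ supported on the single point $(\mE_{g_2+1},\dots,\mE_{g_1})$, so that $\mathcal{Z}\cong\prod_{j}B\aut^0(\mE_j)$, and then proves that the natural $1$-morphism $\theta:G_x\to\GG^{g_1,g_2}_x\times_{\mathcal{Z}}\spec(K)$ is an equivalence of categories (fully faithful and essentially surjective). The torsor statement then follows in one line: $u$ is the composite $G_x\cong\GG^{g_1,g_2}_x\times_{\mathcal{Z}}\spec(K)\to\GG^{g_1,g_2}_x$, and this projection is a base change of the universal torsor $\spec(K)\to\mathcal{Z}$. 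In particular, the paper never needs to check that the $H$-action on $G_x$ is free, nor to analyze automorphisms of objects in $\GG^{g_1,g_2}_x$ directly.

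This matters because the step you correctly flag as the main obstacle --- freeness of the $H$-action and the ``propagation'' from $P_{g_2+1}$ --- is genuinely delicate in your framework. Your argument implicitly uses that the restriction map $\aut^0(\mE_{j})\to GL(\mE_{j}|_{P_{j}})$ is injective, so that $\theta_j|_{P_j}=\mathrm{id}$ forces $\theta_j=\mathrm{id}$. For semistable $\mE_j$ this is clear, but for unstable $\mE_j=\mL_1\oplus\mL_2$ the off-diagonal piece of $\aut^0(\mE_j)$ is a section of a degree-one line bundle on $C_j$, and injectivity of restriction to $P_j$ fails precisely when that section vanishes at $P_j$; you would need to argue (using the explicit form of the bundles and the generality of $P_j,P_{j+1}$) that this does not occur, or else push the constraint to the next node. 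The paper's factorization through $\mathcal{Z}$ sidesteps this entirely: once $G_x$ is identified with the fiber product, the $\spec(K)$-factor furnishes a rigidification $f_j:\mF_j\cong\mE_j$ for each $j>g_2$, which kills the automorphisms tautologically and makes the torsor structure transparent.
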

\begin{proof}
Let $F$ be an algebraically closed extension of $K$.

By Appendix \ref{cl2}, the underlying vector bundles of an $F$-valued object in $\GG_1$ (resp. $\GG_2$) corresponds to a locally closed point inside $\prod_{j=1}^{g_1} \mathcal{M}_{2,\mL_j}(C_j)$ (resp. $\prod_{j=1}^{g_2} \mathcal{M}_{2,\mL_j}(C_j)$), where $\mathcal{M}_{2,\mL_j}(C_j)$ denotes the moduli stack of rank two vector bundles on $C_j$ with some fixed determinant $\mL_j\cong\det(\mE_j)$.

Denote $\mE_1,...,\mE_{g_2}$ to be the underlying vector bundles $\GG_1$ (See Remark \ref{uopenr}). Let $\mathcal{Z}$ be the (reduced) locally closed substack of $\prod_{j=g_2+1}^{g_1} \mathcal{M}_{2,\mL_j}(C_j)$, whose underlying topological space is a singleton corresponding to the (fixed) vector bundles $\mE_{g_2+1},...,\mE_{g_1}$.

Since we are only interested in dimensions of the moduli stacks, by the basic theory of stack dimension, we may pass to the induced reduced substack whenever necessary (see Tag 0509, \cite{stacks-project}). Thus, one has the following 2-fibered diagram:

$$\xymatrix{\GG^{g_1,g_2}_x\times_{\mathcal{Z}}\spec(K)\ar[rr]\ar[d]&&\spec(K)\ar[d]\\\GG^{g_1,g_2}_x\ar[r]&\GG_1\ar[r]&\mathcal{Z}}$$
where the bottom left arrow is the natural projection and the bottom right arrow is an obvious forgetful morphism.

We show that $G_x$ is isomorphic to $\GG^{g_1,g_2}_x\times_{\mathcal{Z}}\spec(K)$. An object in the fibre product is determined by three data: an object $((\mF_j,W_j),(\chi_j))$ of $\GG_1$; a limit linear series isomorphism $h$ between $((\mF_j,W_j)_{j\le g_2},(\chi_j)_{j\le g_2})$ and (appropriate pull-back of) the data determined by $x$; isomorphisms $f_j$ between $\mF_j$ and (appropriate pull-back of) $\mE_j$ for $j=g_2+1,...,g_1$. Define a 1-morphism $\theta:G_x\to\GG^{g_1,g_2}_x\times_{\mathcal{Z}}\spec(K)$ as follows: given $((V_j)_{j=g_2+1}^{g_1},(\phi_j)_{j=g_2+1}^{g_1})$, one naturally gets an object $((\mE_j,V_j)_{j\ge 1},(\phi_j)_{j\ge2})$ of $\GG_1$ by using the data specified by $x$. $\theta$ then sends $((V_j)_{j=g_2+1}^{g_1},(\phi_j)_{j=g_2+1}^{g_1})$ to the object in the fibre product determined by this object of $\GG_1$ together with identity bundle morphisms. For morphisms, $\theta$ sends any $G_x$-morphism $g$ to an arrow between corresponding objects of the fibre product, one of which is identified with the pull-back of the other via $g$. In particular, there is a 1-1 correspondence between morphisms in $\h_{G_x}(S,T)$ and morphisms from $\theta(S)$ to $\theta(T)$ and $\theta$ is fully-faithful.

It remains to show $\theta$ is essentially surjective. Consider an object $\xi$ of $\GG^{g_1,g_2}_x\times_{\mathcal{Z}}\spec(K)$. $\xi$ specifies an object $((\mF_j,W_j),(\chi_j))$ of $\GG_1$; a limit linear series isomorphism $h$ induced by bundle isomorphisms $h_1,...,h_{g_2}$; and bundles isomorphisms $f_{g_2+1},...,f_{g_1}$. The data\\ $((f_j(W_j))_{j>g_2},(f_{j+1}\circ\chi_j\circ f_j^{-1})_{j>g_2})$ gives an object $\sigma$ of (the associated stack of) $G_x$. Then, $\xi$ is isomorphic to the image of $\theta(\sigma)$; the isomorphism is induced by $h_1,...,h_{g_2},f_{g_2+1},...,f_{g_1}$.

We define $u$ to be the morphism $G_x\stackrel{\sim}{\to}\GG^{g_1,g_2}_x\times_{\mathcal{Z}}\spec(K)\to\GG^{g_1,g_2}_x$. The fact that $u$ is a $\prod_{j=g_2+1}^{g_1}\aut^0(\mE_j)$-torsor follows from the fact that $\spec(K)\to\mathcal{Z}$ is a $\prod_{j=g_2+1}^{g_1}\aut^0(\mE_j)$-torsor.
\end{proof}
\begin{cor}\label{fdim}
$\dim\GG^{g_1,g_2}_x=\dim G_x-\dis\sum_{j=g_2+1}^{g_1}\dim\aut^0(\mE_j)$.
\end{cor}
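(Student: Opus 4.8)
The plan is to deduce this corollary directly from Proposition \ref{fiber2}, using standard facts about dimensions of algebraic stacks and the behavior of dimension under torsors. First I would recall that $u:G_x\to\GG^{g_1,g_2}_x$ is a $\prod_{j=g_2+1}^{g_1}\aut^0(\mE_j)$-torsor; in particular $u$ is smooth and surjective, with all geometric fibers isomorphic to the algebraic group $G:=\prod_{j=g_2+1}^{g_1}\aut^0(\mE_j)$. Since $G$ is a (smooth, finite type) group scheme over the base field, each fiber of $u$ is equidimensional of dimension $\dim G=\sum_{j=g_2+1}^{g_1}\dim\aut^0(\mE_j)$, and this number is independent of the point chosen.

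Next I would invoke the general formula relating the dimension of the source and target of a smooth morphism whose fibers are equidimensional: for a smooth morphism $u:Y\to Z$ of algebraic stacks locally of finite type over $K$, one has $\dim Y = \dim Z + d$ whenever all nonempty fibers of $u$ are equidimensional of dimension $d$. (If one prefers the pointwise version: for $y\in|G_x|$ mapping to $z\in|\GG^{g_1,g_2}_x|$, $\dim_y G_x = \dim_z \GG^{g_1,g_2}_x + \dim_y(u^{-1}(z))$, by Tag 0DRE / 04N2 in \cite{stacks-project}, and here $\dim_y(u^{-1}(z))=\dim G$ for every such $y$ since the fiber is a torsor under $G$.) Applying this with $Y=G_x$, $Z=\GG^{g_1,g_2}_x$ and $d=\dim G$ gives $\dim G_x = \dim\GG^{g_1,g_2}_x + \sum_{j=g_2+1}^{g_1}\dim\aut^0(\mE_j)$, which is exactly the claimed identity after rearranging.

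The one point that needs a word of care is the identification of the fiber dimension of $u$ with $\dim G$: a $G$-torsor in the fppf (or smooth) topology has geometric fibers that are $G$-torsors over the residue field, hence abstractly isomorphic to $G$ as schemes, so their dimension is $\dim G$; moreover $G=\prod_j\aut^0(\mE_j)$ is finite type over $K$ so each $\dim\aut^0(\mE_j)$ is a well-defined finite integer (the automorphism group of a vector bundle on a curve compatible with a fixed determinant is an open subscheme of an affine space of endomorphisms, hence of finite type). Since $\GG^{g_1,g_2}_x$ is itself locally of finite type over $F$ — being a fiber of a locally finite type morphism of algebraic stacks — all the dimensions in sight are finite and the additivity formula applies without pathology.

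The main (and essentially only) obstacle is purely bookkeeping: making sure one is entitled to use the smooth-morphism dimension formula in the stacky setting and that ``torsor'' here is meant in a topology (fppf/smooth) fine enough that $u$ is smooth. Both are addressed by the construction in Proposition \ref{fiber2}, where $u$ is obtained from the torsor $\spec(K)\to\mathcal{Z}$ by base change, so smoothness and the torsor property are preserved; hence the corollary follows immediately.
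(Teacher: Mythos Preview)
Your proof is correct and takes essentially the same approach as the paper, which states the corollary without proof as an immediate consequence of the torsor property in Proposition \ref{fiber2}. Your write-up simply makes explicit the standard dimension formula for smooth morphisms with equidimensional fibers that the paper leaves implicit.
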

\subsection{Induction Step 2}
Given some non-empty stack $\mathcal{G}^{k,\text{EHT}}_{2,\omega_g,d_{\bullet},a^{\Gamma}}(X_g)$, where $k=2k_1+1$ is odd, by Proposition \ref{nonempty3}, we obtain a non-empty substack $\GG^0$ of $\mathcal{G}^{k+4,\text{EHT}}_{2,\omega_g(NP_{g+1}),\hat{d}_{\bullet},c^{\Gamma}}(X_g)$, whose objects all have vanishing sequence
\begin{equation}\label{gvan}
  (c_i):=(N+k_1,[N+k_1-1]_2,...,[N]_2,N-1-k_1,[N-2-k_1]_2,N-3-k_1)
\end{equation}
along $P_{g+1}$. Denote $\ti{c}^{\Gamma}:=[c^{\Gamma}|c_i]$, which is a $(k+4)\times(2g-1)$-matrix.

Now we shall define another moduli stack $\mathcal{G}^{k+4,\text{EHT}}_{2,\omega_{g'},d'_{\bullet},c^{\Gamma'}_{g'}}(X_{g'})$ of rank two limit linear series on a chain of $g'$ elliptic curves, $X_{g'}$, where $g'>g$. Technically, we need to specify a $(k+4)\times (2g'-2)$ matrix $c^{\Gamma'}_{g'}$, which is $(g',k+4)$-standard. Specifically, we setup the following notations/definitions:
\begin{enumerate}
  \item Denote $c^{\Gamma'}_{g'}:=(c_{ij})$, where $(c_{ij})_{j\le2g-1}=\ti{c}^{\Gamma}$.
  \item Fix $k=2k_1+1$, take $q=\max\{1,\lfloor{k_1\over3}\rfloor\}$ and define $N=4k_1+6-q$.
  \item Define $g':=g+N$.
  \item Denote $k':=k+4$.
  \item Define $b'=g'-1$ which satisfies the equation $\sum_{j=1}^{g'}d'_j-2b'(g'-1)=2g'-2$.
\end{enumerate}

\textbf{It turns out that one does not need to write down $c^{\Gamma'}_{g'}$ explicitly.\ }\rm To reduce complexity in describing our constructions, we first introduce a simplification for recording vanishing conditions. Since the vanishing sequences $(c_{i,2j-2}),(c_{i,2j-1})$ either satisfy conditions in \ref{one}, or in \ref{one2} and the vanishing at $P_1$ is always $([0]_2,...,[k_1+m-1]_2,k_1+m)$, it suffices to record all the underlying vector bundles (together with some extra data). We summarize this idea in the following two lemmas:
\begin{lem}\label{concise1}
  Fix an elliptic curve $C$, two general points $P,Q\in C$ and a positive integer $d$. Suppose $(a_i),(b_i)$ are integer sequences satisfying Lemma \ref{one} with respect to the given $d$. Then $(a_i),(b_i)$ are determined by the pair $(\mE,(a_i))$, where $\mE$ is a rank two vector bundle on $C$ of the form $\mO(a_{i_1},d-a_{i_1})\op \mO(a_{i_2},d-a_{i_2})$ (\textbf{Notation 8}).
\end{lem}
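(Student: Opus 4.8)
The plan is to show that once we fix the vector bundle $\mE = \mO(a_{i_1},d-a_{i_1})\op\mO(a_{i_2},d-a_{i_2})$ and the vanishing sequence $(a_i)$ at $P$, the vanishing sequence $(b_i)$ at $Q$ is forced. First I would recall from Lemma \ref{one} the structural constraints on $(a_i),(b_i)$: for all $i$ one has $d\ge a_i+b_i\ge d-1$, there is a unique pair of indices $i_1<i_2$ with $a_{i_1}+b_{i_1}=a_{i_2}+b_{i_2}=d$, every integer appears at most twice among the $a$'s or among the $b$'s, and there is no index $i\ne i_1,i_2$ with $a_i=a_{i_1}$ and $b_i=b_{i_2}$. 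The key observation is that, because $b_i\in\{d-a_i-1,\;d-a_i\}$ for every $i$, knowing $(a_i)$ determines $b_i$ for every $i$ \emph{except} possibly those $i$ for which we cannot yet decide between the two admissible values; so the entire content of the lemma is that $\mE$ together with $(a_i)$ pins down which indices achieve $a_i+b_i=d$.

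The main step is therefore: given $\mE$ of the stated split form, show that the two indices $i_1<i_2$ with $a_i+b_i=d$ are exactly the two positions in the sorted sequence $(a_i)$ equal to $a_{i_1}$ and to $a_{i_2}$ (the two split-degrees appearing as subscripts in $\mE$), i.e. that a subspace $V$ realizing $(a_i)$ on $\mE$ is forced to contain sections of vanishing order summing to $d$ precisely at those slots. This is essentially contained in the proof of Lemma \ref{one}: the analysis there (case (4)) shows $V$ must contain the two canonical sections $T_1,T_2$ of the summands $\mO(a_{i_1}P+(d-a_{i_1})Q)$ and $\mO(a_{i_2}P+(d-a_{i_2})Q)$, which have vanishing orders $(a_{i_1},d-a_{i_1})$ and $(a_{i_2},d-a_{i_2})$ at $(P,Q)$; conversely, a decomposable semistable bundle of degree $2d$ over an elliptic curve has, by the theory of Atiyah bundles (cf.\ Lemma \ref{2}), no other sections whose vanishing orders at $P,Q$ sum to $d$. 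Hence the two ``saturated'' indices are determined by $\mE$ alone: they are the positions where $a_i$ equals one of the two summand degrees and where, simultaneously, $d-a_i$ is achieved (the uniqueness clause and the ``no $i$ with $a_i=a_{i_1},b_i=b_{i_2}$'' clause of Lemma \ref{one} guarantee there is no ambiguity in matching). For every other index $i$, Lemma \ref{one} forces $a_i+b_i=d-1$, hence $b_i=d-1-a_i$ is determined by $a_i$.

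Putting this together: from $\mE$ we read off $a_{i_1}$ and $a_{i_2}$; from $(a_i)$ we locate the (unique) positions $i_1<i_2$ in the sorted list where these values occur and where assigning $b_i=d-a_i$ is consistent with monotonicity of $(b_i)$ — by the constraints of Lemma \ref{one} this choice is unique — and at all remaining positions we set $b_i=d-1-a_i$. This reconstructs $(b_i)$ uniquely, so the pair $(a_i),(b_i)$ is recovered from $(\mE,(a_i))$. I expect the only delicate point to be verifying that the two split-degrees of $\mE$, together with the data $(a_i)$ and the hypotheses of Lemma \ref{one}, single out $i_1,i_2$ without ambiguity in the (possible) case $a_{i_1}=a_{i_2}$; here one uses that in that case the relevant value occurs exactly twice in $(a_i)$ and situation (2) of the proof of Lemma \ref{one} cannot occur, so both of those two positions are the saturated ones. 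The rest is a routine bookkeeping argument on monotone integer sequences.
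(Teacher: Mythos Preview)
Your proposal is correct and follows the same approach as the paper: recover the special indices $i_1,i_2$ from $\mE$ and $(a_i)$, then set $b_i=d-a_i$ for $i\in\{i_1,i_2\}$ and $b_i=d-1-a_i$ otherwise. The paper's proof is a two-line version of this --- it simply asserts that giving $\mE$ is equivalent to specifying $i_1,i_2$ and writes down the formula for $b_i$ --- so your invocation of Atiyah bundles, Lemma~\ref{2}, and the case analysis from Lemma~\ref{one} is more machinery than the argument requires, though your extra care in handling the case $a_{i_1}=a_{i_2}$ and in verifying that the indices are pinned down without ambiguity is not misplaced.
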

\begin{rem}
Hereafter, we shall refer to such $i_1,i_2$ as \textit{special indices} of $(a_i)$, $(b_i)$, when they are in the situation of \ref{one}.
\end{rem}
\begin{proof}
  To give $\mE$ is equivalent to specifying the special indices $i_1,i_2$. Given $i_1,i_2$ and $(a_i)$, $(b_i)$ is determined by the following rule: $b_i=\begin{cases}
    d-a_i  &\text{ for }i=i_1,i_2\\
    d-a_i-1&\text{ otherwise }.
  \end{cases}$
\end{proof}
For $\mE_j$ unstable, we have a similar result:
\begin{lem}\label{concise2}
Let $(a_i),(b_i)$ be integer sequences of length $k$ satisfying Lemma \ref{one2} with respect to some given $d$. Then $(a_i),(b_i)$ are determined by a triple $(\mE,(a_i),\tau_*)$, where $\mE$ is a rank two vector bundle of the form $\mO(a_{\ell},d+1-a_{\ell})\op \mO(a_{i^*},d-a_{i^*})$ and $\tau_*$ is some subset of $\{1,...,k\}$ such that $\fa i\in\tau_*$, $a_i$ is a non-repeated entry in $(a_i)$.
\end{lem}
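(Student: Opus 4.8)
The plan is to mimic the proof of Lemma \ref{concise1}, making precise how the extra data $\tau_*$ compensates for the ambiguity that arises in the unstable case. As in the semistable case, giving $\mE=\mO(a_\ell P+(d+1-a_\ell)Q)\op\mO(a_{i^*}P+(d-a_{i^*})Q)$ amounts to specifying the distinguished index $\ell$ (where $a_\ell+b_\ell=d+1$) together with one index $i^*$ recording the $P$-order of the canonical section of the degree-$d$ summand; note $\ell$ and $i^*$ are forced by the hypotheses of Lemma \ref{one2} (there is a unique $\ell$ with $a_\ell+b_\ell=d+1$, and $i^*$ may be taken to be $j$ as in Lemma \ref{3}). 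So first I would observe that, given $(\mE,(a_i))$ and hence $\ell$, one immediately recovers $b_\ell=d+1-a_\ell$, and for every $i$ with $a_i+b_i=d$ one recovers $b_i=d-a_i$.

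The content of the lemma is therefore to recover, for each $i$ with $a_i+b_i=d-1$, the value $b_i=d-1-a_i$ — but this requires knowing which pairs $(a_i,b_i)$ have sum $d-1$ versus sum $d$, and in the unstable case condition (1) of Lemma \ref{one2} allows $a_i+b_i\in\{d-1,d,d+1\}$, so the level set $\{i:a_i+b_i=d\}$ is not determined by $(a_i)$ alone. This is exactly where $\tau_*$ enters. The second step is to show that $\tau_*$ together with $(a_i)$ pins down the partition of $\{1,\dots,k\}\setminus\{\ell\}$ into the $a_i+b_i=d$ part and the $a_i+b_i=d-1$ part. Here I would argue: an index $i$ has $a_i+b_i=d$ and $i\neq j,j+1$ exactly when $a_i$ is a non-repeated entry of $(a_i)$ lying in $\tau_*$ (by conditions (3),(4) of Lemma \ref{one2} such an $i$ gives a genuinely "extra" section, forcing $a_i$ non-repeated, and by construction $\tau_*\subseteq\{i:a_i\text{ non-repeated}\}$ is chosen to mark precisely these indices); the indices $j,j+1$ with $a_j=a_{j+1}$, $a_j+b_j=a_{j+1}+b_{j+1}=d$ are recovered as the unique repeated value of $(a_i)$ singled out by Lemma \ref{3}; and every remaining index not equal to $\ell$ has $a_i+b_i=d-1$. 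Once the level sets are known, $(b_i)$ is determined entry-by-entry, and the ordering is forced by $b_1\ge\dots\ge b_k$, which I would note is automatic once each $b_i$ is known.

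The main obstacle — and the step deserving the most care — is verifying that the recipe "$a_i+b_i=d$ and $i\neq j,j+1$ $\iff$ $a_i$ non-repeated and $i\in\tau_*$" is genuinely unambiguous, i.e. that no index with $a_i+b_i=d-1$ could be confused with one of these. The worry is an index $i$ with $a_i+b_i=d-1$, $a_i$ non-repeated, and $a_i$ happening to coincide with some $a_{i'}$, $i'\in\tau_*$ — but $a_i$ non-repeated means $i=i'$, and then $b_i$ is forced, so no genuine ambiguity arises; the only real subtlety is that $\tau_*$ is part of the \emph{given} data, so we are not reconstructing $\tau_*$ but merely using it, which makes the argument clean. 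I would also remark, paralleling the remark after Lemma \ref{concise1}, that one may call $\ell, i^*$ (and the elements of $\tau_*$) the \emph{special indices} in the unstable situation, and that this encoding is exactly what makes it unnecessary to write $c^{\Gamma'}_{g'}$ explicitly: recording the tuple of underlying bundles together with the $(a_i)$'s and the sets $\tau_*$ suffices to reconstruct all vanishing data. The proof is thus short once the bookkeeping of the three possible values of $a_i+b_i$ is organized as above.
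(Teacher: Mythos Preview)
Your overall strategy is right, and the paper's proof is essentially the same bookkeeping exercise you describe. But your partition of $\{1,\dots,k\}\setminus\{\ell\}$ into the ``sum $=d$'' and ``sum $=d-1$'' parts is incomplete, and this is a genuine gap.

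You assert that an index $i\neq\ell,j,j+1$ has $a_i+b_i=d$ \emph{exactly} when $a_i$ is non-repeated and $i\in\tau_*$, and that ``every remaining index not equal to $\ell$ has $a_i+b_i=d-1$.'' This is false. The conditions of Lemma~\ref{one2} permit \emph{other} repeated values in $(a_i)$ besides $a_j=a_{j+1}$: whenever $a_{i'}=a_{i'+1}$ with $i'\neq j$, condition (3) forces $b_{i'}>b_{i'+1}$, and then condition (1) together with uniqueness of $\ell$ forces $a_{i'}+b_{i'}=d$ and $a_{i'+1}+b_{i'+1}=d-1$. Such an $i'$ has sum $d$ but $a_{i'}$ is repeated, so $i'\notin\tau_*$ (by the lemma's hypothesis on $\tau_*$) and $i'\notin\{j,j+1\}$; your recipe would wrongly assign $b_{i'}=d-1-a_{i'}$. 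Conditions (3) and (4) do \emph{not} rule this out: for a concrete instance with $d=5$, take $(a_i)=(0,0,2,2,4)$, $(b_i)=(5,4,3,3,2)$, where $\ell=5$, $j=3$, $\tau_*=\emptyset$, and $i'=1$ has $a_1+b_1=5=d$ despite $a_1$ being repeated.

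The paper's recipe handles exactly this by using the clause ``$i:a_i=a_{i+1}$'' (for \emph{all} such $i$, not just $i^*$) rather than only singling out $j,j+1$; this automatically assigns $b_{i'}=d-a_{i'}$ to the first index of every repeated pair, and the leftover second index $i'+1$ correctly falls into the ``otherwise'' case. Your argument becomes correct once you add this rule: the set $\{i:a_i+b_i=d\}$ equals $\tau_*\cup\{i:a_i=a_{i+1}\}\cup\{i^*+1\}$, and all three pieces are readable from $(\mE,(a_i),\tau_*)$.
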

\begin{rem}\label{spid}
  We also refer to $\ell$, $i^*$ as the special indices of $(a_i)$, $(b_i)$, when they are in the situation of \ref{one2}.
\end{rem}
\begin{proof}
Take $\tau_*$ to be $\{i\in\{1,...,k\}|a_i \text{ is a non-repeated term in }(a_i), a_i+b_i=d\}$.

Given $\mE$, $(a_i)$ and $\tau_*$, $(b_i)$ is determined by the following rule:
$$b_i=\begin{cases}
    d+1-a_i  &\text{ if }i=\ell\\
    d-a_i    &\text{ if }i\in\tau_*\text{ or }i:a_i=a_{i+1}\\
    d-a_i    &\text{ if }i=i^*+1\\
    d-1-a_i  &\text{ otherwise.}
\end{cases}$$
\end{proof}
By Lemma \ref{concise1}, \ref{concise2}, when describing our inductive construction it suffices to specify the vector bundles $\mE_j$, together with a set of indices $\tau_{j_t}$ for each unstable bundle $\mE_{j_t}$. Since $\mE_1,...,\mE_g$ are determined from the preceding step of induction, it suffices to do so for $j>g$.
\subsubsection{The First Construction}\label{l1}\hspace*{\fill} \\
The following is a list of vector bundles and configurations for our first construction: \\
\ \\
\framebox[1\width]{\ $\mE_{g+1},\mE_{g+2},\mE_{g+3}$ are semi-stable:\ }
\begin{enumerate}
  \item $\mE_{g+1}=\mO(g-k_1,b'-(g-k_1))\op\mO(g+k_1,b'-(g+k_1))$;
  \item $\mE_{g+2}=\mO(g-k_1,b'-(g-k_1))\op\mO(g+k_1+2,b'-(g+k_1+2))$;
  \item $\mE_{g+3}=\mO(g-k_1+1,b'-(g-k_1+1))\op\mO(g+k_1+3,b'-(g+k_1+3))$.
\end{enumerate}
\framebox[1\width]{\ $\mE_{g+4}$ is unstable:\ }
$$\mE_{g+4}=\mO(g-k_1+3,5k_1-q+3)\op \mO(g+k_1+3,3k_1-q+2), \tau_{g+4}=\{k+1,k+4\}.$$
\framebox[1\width]{\ $\mE_{g+5},\mE_{g+6},\mE_{g+7}$ are semi-stable:\ }
\begin{enumerate}
  \item $\mE_{g+5}=\mO(g-k_1+4,b'-(g-k_1+4))\op\mO(g+k_1+3,b'-(g+k_1+3))$;
  \item $\mE_{g+6}=\mO(g-k_1+6,b'-(g-k_1+6))\op\mO(g+k_1+3,b'-(g+k_1+3))$;
  \item $\mE_{g+7}=\mO(g-k_1+6,b'-(g-k_1+6))\op\mO(g+k_1+5,b'-(g+k_1+3))$.
\end{enumerate}

\noindent\framebox[1\width]{\ $\mE_{g+8}$ is unstable:\ }
$$\mE_{g+8}=\mO(g-k_1+5,b'-(g-k_1+6))\op\mO(g+k_1+8,b'-(g+k_1+8)), \tau_{g+8}=\{1,k\}.$$
\noindent\framebox[1\width]{\ $\mE_{g+9}$ is semi-stable:\ }
$$\mE_{g+9}=\mO(g-k_1+8,b'-(g-k_1+7))\op\mO(g+k_1+8,b'-(g+k_1+8)).$$

\noindent\framebox[1\width]{\ For $j=g+9+11s+t$ ($0\le s\le q-2$, $1\le t\le11$), $\mE_j$ are semi-stable except for $t=1,4$:\ }\\
\ \\
(1) $\mE_{g+10+11s}=\mO(g-k_1+11+14s,b'-(g-k_1+11+14s))\op\mO(g+k_1+7+8s,b'-(g+k_1+6+8s))$,\\
\hspace*{4 mm} $\tau_{g+10+11s}=\{1\}$;\\
(2) $\mE_{g+11+11s}=\mO(g-k_1+11+14s,b'-(g-k_1+11+14s))\op\mO(g+k_1+8+8s,b'-(g+k_1+8+8s))$;\\
(3) $\mE_{g+12+11s}=\mO(g-k_1+12+14s,b'-(g-k_1+12+14s))\op\mO(g+k_1+9+8s,b'-(g+k_1+9+8s))$;\\
(4) $\mE_{g+13+11s}=\mO(g-k_1+14+14s,b'-(g-k_1+14+14s))\op\mO(g+k_1+9+8s,b'-(g+k_1+10+8s))$,\\
\hspace*{4 mm} $\tau_{g+13+11s}=\{5+6s,k,k+1,k+4\}$;\\
\noindent(5) $\mE_{g+14+11s}=\mO(g-k_1+16+14s,b'-(g-k_1+16+14s))\op\mO(g+k_1+10+8s,b'-(g+k_1+10+8s))$;\\
(6) $\mE_{g+15+11s}=\mO(g-k_1+15+14s,b'-(g-k_1+15+14s))\op\mO(g+k_1+13+8s,b'-(g+k_1+13+8s))$;\\
(7) $\mE_{g+16+11s}=\mO(g-k_1+19+14s,b'-(g-k_1+19+14s))\op\mO(g+k_1+11+8s,b'-(g+k_1+11+8s))$;\\
(8) $\mE_{g+17+11s}=\mO(g-k_1+20+14s,b'-(g-k_1+20+14s))\op\mO(g+k_1+12+8s,b'-(g+k_1+12+8s))$;\\
(9) $\mE_{g+18+11s}=\mO(g-k_1+19+14s,b'-(g-k_1+19+14s))\op\mO(g+k_1+15+8s,b'-(g+k_1+15+8s))$;\\
(10) $\mE_{g+19+11s}=\mO(g-k_1+21+14s,b'-(g-k_1+21+14s))\op\mO(g+k_1+15+8s,b'-(g+k_1+15+8s))$;\\
(11) $\mE_{g+20+11s}=\mO(g-k_1+22+14s,b'-(g-k_1+22+14s))\op\mO(g+k_1+16+8s,b'-(g+k_1+16+8s))$;\\

\noindent\framebox[1\width]{\ For $j>g+11q-2$, $\mE_j$ are semi-stable:\ }\\
\ \\
\indent To shorten symbols, write $h=g+11q-2$ and consider all $\ell=0,...,k_1-3q$:\\
\ \\
(1) $\mE_{h+4\ell+1}=\mO(g-k_1+14q+5\ell-3,b'-(g-k_1+14q+5\ell-3))\op$

\hspace*{7cm}$\mO(g+k_1+8q+3\ell-1,b'-(g+k_1+8q+3\ell+1))$;

\noindent(2) $\mE_{h+4\ell+2}=\mO(g-k_1+14q+5\ell-2,b'-(g-k_1+14q+5\ell-2))\op \mO(g+k_1+8q+3\ell,b'-(g+k_1+8q+3\ell))$;\\
\ \\
(3) $\mE_{h+4\ell+3}=\mO(g-k_1+14q+5\ell-2,b'-(g-k_1+14q+5\ell-2))\op$\\
\hspace*{7cm}$\mO(g+k_1+8q+3\ell+2,b'-(g+k_1+8q+3\ell+2))$;\\
\ \\
(4) $\mE_{h+4\ell+4}=\mO(g-k_1+14q+5\ell-1,b'-(g-k_1+14q+5\ell-1))\op$\\
\hspace*{7cm}$\mO(g+k_1+8q+3\ell+3,b'-(g+k_1+8q+3\ell+3))$.
\noindent\framebox[1\width]{\ $\mE_{g'-3}$,...,$\mE_{g'}$ are semi-stable:\ }
\begin{enumerate}
  \item $\mE_{g'-3}=\mO(g'-4,3)^{\op 2}$;
  \item $\mE_{g'-2}=\mO(g'-4,3)\op\mO(g'-2,1)$;
  \item $\mE_{g'-1}=\mO(g'-3,2)\op\mO(g'-1,0)$;
  \item $\mE_{g'}=\mO(g'-1,0)^{\op 2}$.
\end{enumerate}
\subsubsection{Vanishing Sequences in the First Construction}
For future reference, we list certain vanishing sequences in our first construction:
\begin{lem}\label{parta}We have\\
$(c_{i,2g+5})^T=([5k_1+4-q]_2,5k_1+2-q,\underbrace{[5k_1+1-q]_2,...,[4k_1+3-q]_2}_{2k_1-2 \text{ entries}},$\\
\hspace*{8cm}$3k_1+3-q,[3k_1+2-q]_2,3k_1-q)$;\\
$(c_{i,2g+7})^T=(5k_1+4-q,[5k_1+3-q]_2,5k_1+1-q,\underbrace{[5k_1-q]_2,...,[4k_1+3-q]_2}_{2k_1-4 \text{ entries}},$\\
\hspace*{6cm}$4k_1+2-q,3k_1+3-q,[3k_1+2-q]_2,3k_1-q)$;\\
$(c_{i,2g+13})^T=(5k_1+1-q,[5k_1-q]_2,[5k_1-1-q]_2,5k_1-3-q$,\\
\hspace*{1.3cm}$\underbrace{[5k_1-4-q]_2,...,[4k_1-q]_2,4k_1-1-q}_{2k_1-5 \text{ entries}},3k_1+1-q,[3k_1-q]_2,3k_1-3-q)$;\\
$(c_{i,2g+15})^T=(5k_1-q,[5k_1-1-q]_2,5k_1-2-q,5k_1-3-q$,\\
\hspace*{1.3cm}$\underbrace{[5k_1-5-q]_2,...,[4k_1-2-q]_2}_{2k_1-4 \text{ entries}},[3k_1-1-q]_2,3k_1-2-q,3k_1-3-q)$;\\
$(c_{i,2g+17})^T=(5k_1-1-q,[5k_1-2-q]_2,[5k_1-3-q]_2$,\\
\hspace*{1.3cm}$\underbrace{[5k_1-6-q]_2,[5k_1-7-q]_2,...,
  [4k_1-3-q]_2}_{2k_1-4 \text{ entries}},[3k_1-2-q]_2,[3k_1-3-q]_2)$.\end{lem}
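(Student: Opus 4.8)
\emph{Proof strategy.} The plan is to obtain each of the five displayed columns by propagating the vanishing data forward one component at a time, starting from the node $P_{g+1}$. The mechanism is the following recursion, valid because all the limit linear series considered here are refined with $b=b'=g'-1$. Within a component $C_j$, the increasing sequence $\van_{P_j}(V_j)=(c_{i,2j-2})$ together with the underlying bundle $\mE_j$ --- and, when $\mE_j$ is unstable, the set $\tau_j$ prescribed in Section~\ref{l1} --- determines the decreasing sequence $\van_{P_{j+1}}(V_j)=(c_{i,2j-1})$: by Lemma~\ref{concise1} when $\mE_j$ is semistable (with $d=g'-1$, since $\deg\mE_j=2g'-2$), and by Lemma~\ref{concise2} when $\mE_j$ is unstable (with $d=g'-1$ if $\deg\mE_j=2g'-1$ and $d=g'-2$ if $\deg\mE_j=2g'-3$). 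The refined condition then forces $c_{i,2j}=(g'-1)-c_{i,2j-1}$, and this is $\van_{P_{j+1}}(V_{j+1})$, from which the recursion continues into $C_{j+1}$. Thus every column of $c^{\Gamma'}_{g'}$ is determined by its predecessor, and at each step one first checks that the hypotheses of Lemma~\ref{one} (resp.\ Lemma~\ref{one2}) hold --- which is where legitimacy of the prescribed vanishing data enters --- and then reads off the next column from the explicit rule.

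The base column is $\van_{P_{g+1}}(V_g)=(c_{i,2g-1})$, which by~\eqref{gvan}, with $N=4k_1+6-q$, equals
\[
  (5k_1+6-q,[5k_1+5-q]_2,\dots,[4k_1+6-q]_2,\,3k_1+5-q,[3k_1+4-q]_2,\,3k_1+3-q).
\]
Starting here I propagate through the first nine new components listed in Section~\ref{l1}: the semistable bundles $\mE_{g+1},\mE_{g+2},\mE_{g+3}$ yield $(c_{i,2g+5})$; the unstable $\mE_{g+4}$ (with $\tau_{g+4}=\{k+1,k+4\}$) yields $(c_{i,2g+7})$; the semistable $\mE_{g+5},\mE_{g+6},\mE_{g+7}$ yield $(c_{i,2g+13})$; the unstable $\mE_{g+8}$ (with $\tau_{g+8}=\{1,k\}$) yields $(c_{i,2g+15})$; and the semistable $\mE_{g+9}$ yields $(c_{i,2g+17})$. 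A semistable step amounts to locating the two special indices --- the positions of the ``$a$''-coordinates $a_{i_1},a_{i_2}$ of the two summands of $\mE_j$ in the current increasing sequence, with the lowest index chosen inside a repeated block --- and then setting $b_i=d-a_i$ at those two indices and $b_i=d-a_i-1$ elsewhere. An unstable step requires in addition identifying the destabilizing summand (the one of degree $d+1$) and its index $\ell$, and then applying the four-case formula of Lemma~\ref{concise2} with the prescribed $\tau_*$. Comparing each output with the asserted expression, and subtracting from $g'-1$ before re-entering the next component, yields the five identities.

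The real work is the combinatorial bookkeeping, and the delicate points are the two unstable steps. At $\mE_{g+4}$ the destabilizing summand is $\mO(g-k_1+3,5k_1-q+3)$, of degree $g'=d+1$, so $\ell$ is the position of the value $g-k_1+3$; at $\mE_{g+8}$ it is $\mO(g+k_1+8,b'-(g+k_1+8))$, of degree $g'-1=d+1$ with $d=g'-2$, so $\ell$ is the position of $g+k_1+8$. In each case one must track the four cases of Lemma~\ref{concise2} --- in particular the treatment of the index $i^*+1$ and of repeated entries --- against the prescribed $\tau_*$, since a single misidentification there propagates into all subsequent columns. One should also keep the $(g',k')$-standardness conditions of Definition~\ref{stan} in view at every node so that Lemmas~\ref{one} and~\ref{one2} legitimately apply; granting those, the computation is mechanical and the lemma follows by direct substitution.
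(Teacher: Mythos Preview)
Your proposal is correct and matches the paper's approach: the paper states Lemma~\ref{parta} without proof, treating it as a direct computation from the construction data in Section~\ref{l1}, and your write-up spells out precisely that computation --- starting from~\eqref{gvan} with $N=4k_1+6-q$, propagating via the refined condition $c_{i,2j}=(g'-1)-c_{i,2j-1}$, and applying Lemmas~\ref{concise1} and~\ref{concise2} (with the prescribed $\tau_{g+4},\tau_{g+8}$) to step through $\mE_{g+1},\dots,\mE_{g+9}$.
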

\begin{lem}\label{parte}
For $s=0,...,q-1$, we have:\\
$(c_{i,2g+17+22s})^T=(5k_1-1-q-11s,\underbrace{[5k_1-2-q-11s]_2,...,[5k_1-3-q-14s]_2}_{4+6s\text{ entries}}$,\\
$\underbrace{[5k_1-6-q-14s]_2,...,
  [4k_1-3-q-11s]_2}_{2k_1-4-6s \text{ entries}},[3k_1-2-q-8s]_2,[3k_1-3-q-8s]_2).$\\
$(c_{i,2g+19+22s})^T=(5k_1-1-q-11s,5k_1-2-q-11s,$\\
$\underbrace{[5k_1-3-q-11s]_2,...,[5k_1-3-q-14s]_2}_{2+6s\text{ entries}}$, $5k_1-4-q-14s$,\\
$[5k_1-6-q-14s]_2,5k_1-7-q-14s,\underbrace{[5k_1-8-q-14s]_2,...,
  [4k_1-3-q-11s]_2}_{2k_1-8-6s \text{ entries}},$\\
$4k_1-4-q-11s,3k_1-1-q-8s,[3k_1-3-q-8s]_2,3k_1-4-q-8s).$\\
$(c_{i,2g+25+22s})^T=(5k_1-5-q-11s,\underbrace{[5k_1-6-q-11s]_2,...,[5k_1-7-q-14s]_2,}_{4+6s\text{ entries}}$\\ $5k_1-8-q-14s,[5k_1-9-q-14s]_2,5k_1-11-q-14s,\underbrace{[5k_1-12-q-14s]_2,....,[4k_1-7-q-11s]_2,}_{2k_1-8-6s \text{ entries}}$ $3k_1-4-q-8s, [3k_1-5-q-8s]_2,3k_1-7-q-8s).$
\end{lem}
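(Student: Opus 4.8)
The plan is to obtain these sequences exactly as the matrix $c^{\Gamma'}_{g'}$ is assembled, by transporting vanishing data from one node of $X_{g'}$ to the next. Two ingredients do all the work. First, refinedness --- condition (1) of Definition \ref{stan} --- gives $c_{i,2j-3}+c_{i,2j-2}=g'-1$ for every $i$, so $\van_{P_j}(V_j)=(c_{i,2j-2})$ is obtained from $\van_{P_j}(V_{j-1})=(c_{i,2j-3})$ by subtracting each entry from $g'-1$. Second, once $\van_{P_j}(V_j)$ is known, Lemma \ref{concise1} (when $\mE_j$ is semi-stable) or Lemma \ref{concise2} (when $\mE_j$ is one of the unstable bundles $\mE_{j_t}$, using the set $\tau_{j_t}$ prescribed in Subsection \ref{l1}) determines $\van_{P_{j+1}}(V_j)=(c_{i,2j-1})$ from $(c_{i,2j-2})$ by the explicit subtraction rules recorded there, the point being that the special indices of $\mE_j$ (and, in the unstable case, the index $\ell$ of the destabilizing summand together with the set $\tau_{j_t}$) are precisely the positions where one does \emph{not} drop an extra unit. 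Composing the two steps produces, for each $j>g$, an explicit transfer rule from column $2j-3$ to column $2j-1$; since the bundles $\mE_j$ and the sets $\tau_{j_t}$ are completely pinned down in Subsection \ref{l1}, this propagates the whole right-hand portion of $c^{\Gamma'}_{g'}$ from a single known column, and equation \eqref{gvan} supplies the starting column.

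Granting this, the proof is an induction on $s$. For $s=0$ the three asserted sequences are $(c_{i,2g+17})$, $(c_{i,2g+19})$, $(c_{i,2g+25})$; the first is the final line of Lemma \ref{parta}, and the other two follow by running the transfer rule through the bundles $\mE_{g+10},\ldots,\mE_{g+13}$ of Subsection \ref{l1}. For the inductive step one uses the periodicity built into the construction: for $1\le t\le 11$ the two summands of $\mE_{g+9+11s+t}$ are those of $\mE_{g+9+11(s-1)+t}$ shifted by the fixed amounts $+14$ and $+8$, and the index sets $\tau_{g+10+11s}=\{1\}$ and $\tau_{g+13+11s}=\{5+6s,k,k+1,k+4\}$ shift compatibly. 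Hence applying the transfer rule over one full period sends $(c_{i,2g+17+22(s-1)})$ to a sequence of precisely the stated shape --- the first interior run lengthens by $6$ at the expense of the second, and the entries decrease by $11$, $14$ or $8$ according to the region --- and truncating the period after one, resp. three, bundles gives $(c_{i,2g+19+22s})$ and $(c_{i,2g+25+22s})$. The extreme value $s=q-1$ is the same computation, now straddling the last periodic block and the segment $j>g+11q-2$, whose bundles are those listed with parameter $\ell=0$.

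The whole content is bookkeeping. At each of the $O(k_1)$ steps one must check that the special indices of $\mE_j$ land exactly where the monotonicity of $(c_{i,2j-2})$ together with the repetition pattern forces a section into a prescribed summand, and that no value is repeated beyond the bounds (2)--(4) of Definition \ref{stan}; these are precisely the hypotheses that make $c^{\Gamma'}_{g'}$ genuinely $(g',k+4)$-standard, so they may be invoked here. What remains is the elementary arithmetic of subtracting from $g'-1$ and from $d$, which I would consign to the appendix. I expect the unstable blocks $\mE_{g+4}$, $\mE_{g+8}$ and $\mE_{g+13+11s}$ --- where the index $\ell$ of the canonical section of the higher-degree summand and the set $\tau$ must be correctly located inside the current vanishing sequence --- to be the error-prone step and the one worth writing out in full.
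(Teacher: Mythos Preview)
Your approach is correct and is exactly what the paper has in mind: Lemma \ref{parte} (like Lemmas \ref{parta} and \ref{partf}) is stated in the paper without proof, as a direct computational consequence of the construction in Subsection \ref{l1}, and your proposal --- propagate column by column using refinedness together with the rules of Lemmas \ref{concise1} and \ref{concise2}, starting from \eqref{gvan} and Lemma \ref{parta}, then exploit the $(+14,+8)$-periodicity of the bundles to induct on $s$ --- is precisely how one carries out that computation. There is nothing to add beyond the bookkeeping you describe.
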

\begin{lem}\label{partf}
We have for $\ell=0,...,k_1-3q+1$,\\
$(c_{i,2h+8\ell-1})^T=(5k_1-12q-4\ell+10,\underbrace{[5k_1-12q-4\ell+9],...,[5k_1-15q-5\ell+11]_2}_{6q+2\ell-2\text{ entries}}$,\\
$\underbrace{[5k_1-15q-5\ell+8]_2,...,[4k_1-12q-4\ell+8]_2}_{2k_1-6q-2\ell+2\text{ entries}},[3k_1-9q+6-3\ell]_2,[3k_1-9q+5-3\ell]_2)$.
\end{lem}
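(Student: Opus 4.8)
The proof is by induction on $\ell$, propagating a single known column of $c^{\Gamma'}_{g'}$ forward along the chain through the explicit (semi-stable) vector bundles of Section~\ref{l1}. For the base case $\ell=0$ no new computation is needed: writing $2h+8\ell-1$ as $2(h+4\ell)-1$, one sees that $c_{i,2h-1}$ is precisely the column $c_{i,2g+17+22(q-1)}$ (because $h=g+11q-2$ gives $2h-1=2g+22q-5=2g+17+22(q-1)$), and setting $s=q-1$ in the first formula of Lemma~\ref{parte} reproduces exactly the asserted expression, the two multiplicities $6q+2\ell-2$ and $2k_1-6q-2\ell+2$ specializing to $6q-2$ and $2k_1-6q+2$.

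For the inductive step I would pass from the column $c_{i,2h+8\ell-1}=c_{i,2(h+4\ell)-1}$ to $c_{i,2h+8\ell+7}=c_{i,2(h+4\ell+4)-1}$ by traversing the four components $C_{h+4\ell+1},\dots,C_{h+4\ell+4}$. Each traversal is built from two elementary moves. First, crossing the node $P_{j+1}$ turns the descending sequence $c_{i,2j-1}=\van_{P_{j+1}}(V_j)$ into the ascending sequence $c_{i,2j}=\van_{P_{j+1}}(V_{j+1})$ via the refined condition $c_{i,2j-1}+c_{i,2j}=b'$ (condition~(1) of Definition~\ref{stan} for the chain $X_{g'}$, with $b'=g'-1$). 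Second, crossing a component $C_j$ turns the ascending sequence $c_{i,2j-2}$ into the descending sequence $c_{i,2j-1}$: since every $\mE_j$ with $j>g+11q-2$ is semi-stable, the pair $(c_{i,2j-2}),(c_{i,2j-1})$ is in the situation of Lemma~\ref{one} with $d=\tfrac12\deg\mE_j$, so by Lemma~\ref{concise1} the descending sequence is recovered from $c_{i,2j-2}$ together with the two special indices, which are read off as the indices where $c_{i,2j-2}$ equals the two first-coordinate exponents of the summands of $\mE_j$ listed in Section~\ref{l1}. Applying these eight moves with the explicit exponents $g-k_1+14q+5\ell-3$, $g+k_1+8q+3\ell-1$, and so on — re-sorting the entries after each node crossing — yields a closed form in $k_1,q,\ell$ which one matches against the stated formula with $\ell$ replaced by $\ell+1$. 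The induction runs over $\ell=0,\dots,k_1-3q$; for $\ell=k_1-3q$ the four components in play are $C_{g'-7},\dots,C_{g'-4}$ (using $g'=g+4k_1+6-q$), still within the semi-stable block of Section~\ref{l1}, so the terminal bundles $\mE_{g'-3},\dots,\mE_{g'}$ never enter and the last value $\ell=k_1-3q+1$ of the lemma is reached.

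The genuinely delicate point, hidden behind the arithmetic, is the recovery step: to apply Lemma~\ref{concise1} one must check at each intermediate stage that $(c_{i,2j-2})$ together with $\mE_j$ satisfies the hypotheses of Lemma~\ref{one} (each integer occurs at most twice, and there is no index $i\neq i_1,i_2$ with $a_i=a_{i_1}$ and $b_i=b_{i_2}$), so that the special indices — hence the descending sequence — are unambiguously determined. This is exactly what is guaranteed by the $(g',k')$-standardness of $c^{\Gamma'}_{g'}$ (conditions~(2)--(3) of Definition~\ref{stan}), which I would invoke; in effect the lemma is a formal consequence of that standardness together with the explicit bundle list, and the only real labour is tracking the two bracketed blocks and their multiplicities $6q+2\ell-2$ and $2k_1-6q-2\ell+2$ through the four component crossings. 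I expect this bookkeeping, rather than any conceptual difficulty, to be the main obstacle.
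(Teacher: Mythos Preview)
Your proposal is correct and is precisely the computation the paper leaves implicit: Lemma~\ref{partf}, like Lemmas~\ref{parta} and~\ref{parte}, is stated without proof, the content being a direct inductive calculation from the bundle list in Section~\ref{l1} via the rule of Lemma~\ref{concise1}. Your base-case identification $2h-1=2g+17+22(q-1)$ and the four-component traversal per inductive step are exactly right, as is your endpoint check that the last traversal uses $C_{g'-7},\dots,C_{g'-4}$ and never touches the terminal bundles $\mE_{g'-3},\dots,\mE_{g'}$.

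One small point on logical ordering: you invoke the $(g',k')$-standardness of $c^{\Gamma'}_{g'}$ to guarantee the hypotheses of Lemma~\ref{one} at each stage, but in the paper's architecture standardness (Lemma~\ref{standard}, Appendix~B) is established \emph{concurrently with} these vanishing-sequence computations --- its proof is itself an induction through the same components, and it cites \ref{parta}, \ref{parte} rather than the other way round. So strictly speaking you should verify the Lemma~\ref{one} hypotheses (each integer appears at most twice; no bad index between the two special indices) directly inside your induction rather than by forward reference to standardness. This is no extra work: it is the same bookkeeping you already flag as ``the main obstacle,'' just done in-line, and in fact the tables in the proof of Lemma~\ref{standard} record exactly the inequalities one needs for the components $C_{h+4p+1},\dots,C_{h+4p+4}$.
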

\subsubsection{Fiber Dimensions}
Our next goal is to give an upper bound to the dimension of (an open substack of)\\ $\mathcal{G}^{k',\text{EHT}}_{2,\omega_{g'},d^{g'}_{\bullet},c^{\Gamma'}_{g'}}(X_{g'})$ defined previously. We show that the dimension is bounded from above by $\rho_{g',k'}$.

First of all, we establish some simple counting results on fiber dimensions of forgetful morphisms $p_{r+1,r}$ in cases where $\mE_{r+1}$ is semi-stable and decomposable.

Fix $\GG=\mathcal{G}^{k',\text{EHT}}_{2,\omega_{g'},d^{g'}_{\bullet},c^{\Gamma'}_{g'}}(X_{g'})$ and let $\GG_r=\GG^{k',\text{EHT}}_{2,\oo_r(A_rP_{r+1}),d^r_{\bullet},c^{\Gamma'}_r}(X_r)$ (\tb{Notation 17}).
\begin{lem}\label{m1}
Consider the forgetful morphism $p_{r+1,r}:\GG_{r+1}\to\GG_r$. Suppose $c^{\Gamma'}$ is $(g',k')$-standard. Suppose also $\mE_{r+1}=\mL\op \mL'$ is semi-stable and $\mL\not\cong\mL'$.

Denote $i_1,i_2$ to be the special indices of $(c_{i,2r})$, $(c_{i,2r+1})$ and $\tau(P_{r+1})$ to be the configuration at $P_{r+1}$. Also introduce the following two index subsets:
\begin{enumerate}
  \item $\mA_1=\{S_t\in\tau(P_{r+1})|S_t\ni i_1\text{ or }i_2\}$;
  \item $\mA_2=\{S_t\in\tau(P_{r+1})|S_t\ni i_s-1\neq i_1,i_2 \text{ and } c_{i_s-1,2r}=c_{i_s,2r}-1,\text{ for }s=1,2\}$.
\end{enumerate}

Then, the fiber dimension $m_{r+1}$ of $p_{r+1,r}$ at any point is less than $2-|\mA_1\bigcup\mA_2|$.

\end{lem}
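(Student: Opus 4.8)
The plan is to read $m_{r+1}$ off the explicit cover of the fibre of $p_{r+1,r}$ supplied by Proposition \ref{fiber2} and Corollary \ref{fdim}, applied with $g_1=r+1$, $g_2=r$. Since $\mE_{r+1}=\mL\op\mL'$ is semistable and decomposable, $\mL$ and $\mL'$ have the same degree, so $\h(\mL,\mL')=\h(\mL',\mL)=0$ and $\aut(\mE_{r+1})=\mathbb{G}_m^2$; hence $\aut^0(\mE_{r+1})\cong\mathbb{G}_m$ is one‑dimensional, and Corollary \ref{fdim} gives $m_{r+1}\le\dim G_x-1$. Thus it suffices to prove $\dim G_x\le 2-|\mA_1\cup\mA_2|$, which will in fact come out as a strict inequality $m_{r+1}\le 1-|\mA_1\cup\mA_2|$.

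Next I would unwind what $G_x$ is in this case: a point is a pair $(V_{r+1},\phi_{r+1})$, where $V_{r+1}\subset\Gamma(\mE_{r+1})$ realises the vanishing sequences $(c_{i,2r})$, $(c_{i,2r+1})$ at $P_{r+1}$, $P_{r+2}$ with $\tau^{V_{r+1}}(P_{r+1})=\tau(P_{r+1})$, and $\phi_{r+1}$ is a gluing isomorphism $\mE_r|_{P_{r+1}}\to\mE_{r+1}|_{P_{r+1}}$ compatible with the fixed determinant, subject to the matching conditions $\phi_{r+1}(s^r_i|_{P_{r+1}})=s^{r+1}_i|_{P_{r+1}}$ for $i\in N(P_{r+1})$. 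Writing $S_1,\dots,S_m$ for the blocks of $\tau(P_{r+1})$, $p_t\in\PP\mE_r|_{P_{r+1}}$ for the distinct fibre points on the $V_r$ side (fixed by $x$), and $q_t:=s^{r+1}_i|_{P_{r+1}}$ ($i\in S_t$) for those on the $V_{r+1}$ side, every matching condition reads $\phi_{r+1}^{\PP}(p_t)=q_t$, and since all the data see only the induced projective isomorphism, the determinant‑compatible $\phi_{r+1}$ contribute at most $\dim SL_2=3$ before imposing the matchings.

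The heart of the matter is to locate the $q_t$. Running through the cases (1)--(4) in the proof of Lemma \ref{one} (using Corollary \ref{auxc} for the non‑canonical maximal‑vanishing sections), one sees that each $q_t$ is determined by $\mE_{r+1}$ together with the vanishing data, the scaling/higher‑order ambiguity in the adapted section not moving its image in $\PP\mE_{r+1}|_{P_{r+1}}$. For $S_t\in\mA_1\cup\mA_2$ one gets more: if $S_t$ contains a special index $i_s$, then $c_{i_s,2r}+c_{i_s,2r+1}=\tfrac12\deg\mE_{r+1}$ forces $s^{r+1}_{i_s}$ to be the canonical section (Remark \ref{csec}, Lemma \ref{2}) of the corresponding summand, so $q_t$ is its direction; if $S_t$ contains $i_s-1$ with $c_{i_s-1,2r}=c_{i_s,2r}-1$, a direct computation shows $\Gamma\bigl(\mE_{r+1}(-(c_{i_s,2r}-1)P_{r+1}-c_{i_s,2r+1}P_{r+2})\bigr)$ is spanned by the canonical section of the $i_s$‑summand and a section of the other summand, whence $q_t$ is the direction of the other summand. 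So every $S_t\in\mA_1\cup\mA_2$ forces $\phi_{r+1}^{\PP}$ to send $p_t$ to one of the two canonical directions of $\mE_{r+1}$; distinct blocks carry distinct fibre points, so these $|\mA_1\cup\mA_2|$ conditions are transversal (and $|\mA_1\cup\mA_2|\le2$, the two exceptional blocks hitting the two different directions). Together with the fact that, once $\phi_{r+1}^{\PP}$ is fixed, the compatible $V_{r+1}$ are determined by the vanishing data, and with one further matching $\phi_{r+1}^{\PP}(p_t)=q_t$ coming from a non‑exceptional block with a determined (non‑canonical) fibre point — whose existence is guaranteed by $(g',k')$‑standardness, failing which the coupling of fibre points at $P_{r+1}$ and $P_{r+2}$ through $\mE_{r+1}$ in Example \ref{config}(3) removes the extra dimension — this yields $\dim G_x\le 2-|\mA_1\cup\mA_2|$, hence $m_{r+1}<2-|\mA_1\cup\mA_2|$.

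The main obstacle is precisely the claim "$q_t$ is determined by the vanishing data, and equals a canonical direction exactly when $S_t\in\mA_1\cup\mA_2$": this requires a somewhat delicate case analysis, tracking when the forced adapted section acquires a one‑parameter family of representatives and whether its fibre point at $P_{r+1}$ then moves, and afterwards verifying transversality of the block matchings inside the $PGL_2$‑torsor. This is where the decomposability of $\mE_{r+1}$ (so that the two canonical directions exist and are distinct) and the hypotheses of $(g',k')$‑standardness are genuinely used.
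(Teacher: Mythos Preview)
Your overall framework matches the paper's: apply Corollary~\ref{fdim} with $g_1=r+1$, $g_2=r$, note $\dim\aut^0(\mE_{r+1})=1$, and bound $\dim G_x$ via the projection $f:G_x\to\iso(\mE_r|_{P_{r+1}},\mE_{r+1}|_{P_{r+1}})$. However, there is a genuine error in your analysis of the fibre points $q_t$.

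You assert that ``each $q_t$ is determined by $\mE_{r+1}$ together with the vanishing data.'' This is false for blocks $S_t\notin\mA_1\cup\mA_2$. Take $i\in S_t$ in case~(1) of the proof of Lemma~\ref{one}: both $c_{i,2r}$ and $c_{i,2r+1}$ non-repeated, $c_{i,2r}+c_{i,2r+1}=d-1$, and $c_{i,2r}\neq c_{i_s,2r},\,c_{i_s,2r}-1$ for $s=1,2$. Then $\Gamma(\mE_{r+1}(-c_{i,2r}P_{r+1}-c_{i,2r+1}P_{r+2}))$ is two-dimensional, with one basis section from each summand, and every nonzero element has the exact vanishing orders $(c_{i,2r},c_{i,2r+1})$. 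As $s_i$ runs through this pencil, $s_i|_{P_{r+1}}$ sweeps out all of $\PP\mE_{r+1}|_{P_{r+1}}$ minus the two canonical directions. So $q_t$ is \emph{not} determined; it can be freely adjusted by the choice of $V_{r+1}$. Corollary~\ref{auxc} does not help: it links $s_i|_{P_{r+1}}$ to $s_i|_{P_{r+2}}$, but neither is pinned down a priori.

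Consequently, the ``one further matching from a non-exceptional block'' you invoke imposes no condition on $\phi_{r+1}$: whatever $\phi_{r+1}$ does to $p_t$, one simply picks $s_i$ inside $V_{r+1}$ to match it. The paper's argument is exactly this: only the blocks in $\mA_1\cup\mA_2$ force $q_t$ to be a canonical direction and hence cut down $\im(f)$, giving $\dim G_x=\dim\im(f)\le 3-|\mA_1\cup\mA_2|$ (fibres of $f$ are zero-dimensional since $\phi_{r+1}$ determines all $q_t$ and hence $V_{r+1}$), and therefore $m_{r+1}\le 2-|\mA_1\cup\mA_2|$. The statement's ``less than'' is a slip for ``at most'': the paper's own proof concludes with $\le$, and Corollary~\ref{mc1} uses bounds such as $m_{g+2}\le 2$ (the case $|\mA_1\cup\mA_2|=0$). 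Your attempt to establish the literal strict inequality is what led you to the incorrect claim about non-exceptional $q_t$.
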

\begin{proof}
  Let $x$ be any geometric point of $\GG_r$ such that the fiber of $p_{r+1,r}$ at $x$, $\GG_x$, is non-empty.

 Since $\mE_{r+1}=\mL\op\mL'$ is semistable and $\mL\not\cong\mL'$, $\dim \aut^0(\mE_{r+1})=1$.

  By Corollary \ref{fdim}, $m_{r+1}= \dim(G_x)-\dim (\aut^0(\mE_{r+1}))=\dim(G_x)-1$, where $G_x$ is as given in Proposition \ref{fiber2}. Hence, it only remains to compute $\dim(G_x)$.

  Consider the configuration $\tau(P_{r+1})$ associated to $x$. Let $(q_t)$ be the sequence of points in $\PP\mE_r|_{P_{r+1}}$ associated to $V_r$. $V_{r+1},\phi_{r+1}$ are compatible with the configuration if and only if $\tau^{V_{r+1}}(P_{r+1})=\tau(P_{r+1})$ and $\phi_{r+1}(q_t)=p_t$, where $(p_t)$ is the sequence of points in $\PP \mE_{r+1}|_{P_{r+1}}$ associated to $V_{r+1}$.

  Thus, $\dim(G_x)$ depends on the configuration $\tau(P_{r+1})$. More precisely, we have the following observations:

  First of all, if some subset $S_t$ in the configuration contains $i_1$ (or $i_2$), the associated point $p_t$ in $\PP\mE_{r+1}|_{P_{r+1}}$ is the image of one of the canonical sections of $\mE_{r+1}$. If $S_i$ contains $i_1-1$ (or $i_2-1$) and $c_{i_1-1,2r}=c_{i_1,2r}-1$, same is true. In either case, the condition $\phi_{r+1}(p_t)=q_t$ cuts out a codimension 1 sub-scheme inside $\iso(\mE_r|_{P_{r+1}},\mE_{r+1}|_{P_{r+1}})$. Meanwhile, for all other indices $i$ not in the same $S_t$, given any point $q_i$ in $\PP\mE_{r+1}|_{P_{r+1}}$, one can always find (up to scalar) a unique section vanishing to order $c_{i,2r},c_{i,2r+1}$ at $P_{r+1},P_{r+2}$ resp.; hence, $\fa q_i$, $\exists (V_{r+1},\phi_{r+1})$ such that $q_i$ is in the sequence of points in $\PP\mE_{r+1}|_{P_{r+1}}$ associated to $V_{r+1}$.

  Secondly, $V_{r+1}$ is uniquely determined by the associated sequence of points $(p_t)$.

  Thirdly, denote $f:G_x\to\iso(\mE_r|_{P_{r+1}},\mE_{r+1}|_{P_{r+1}})$ to be the obvious projection. Any non-empty fiber of $f$ is zero-dimensional, since $\phi_{r+1}$ uniquely determines $(p_t)$, and hence determines $V_{r+1}$.

  Due to the fixed determinant condition, $\dim\iso(\mE_r|_{P_{r+1}},\mE_{r+1}|_{P_{r+1}})=3$. Thus,
  $$\dim(G_x)=\dim \im(f)\le3-|\mA_1\bigcup\mA_2|.$$
  Since the fiber dimension $m_{r+1}=\dim(G_x)-1$, the lemma follows.
\end{proof}
\begin{rem}\label{often}
  A frequently encountered situation of $\tau(P_{r+1})$ is the one in Example \ref{config} (4). In that case, $m_{r+1}\le0$. To detect this situation, it suffices to look at the vector bundles $\mE_r,\mE_{r+1}$.
\end{rem}
Very similarly, we also state the following lemma:
\begin{lem}\label{m2}
  Same setup as in Lemma \ref{m1}, except that now assume $\mE_{r+1}=\mL^{\op 2}$. Then, $m_{r+1}\le0$.
\end{lem}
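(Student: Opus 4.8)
The plan is to re-run the proof of Lemma~\ref{m1} almost verbatim, with the two modifications forced by the hypothesis $\mE_{r+1}\cong\mL^{\op 2}$. The first is the computation of $\dim\aut^0(\mE_{r+1})$: since $\mL$ is a line bundle on the elliptic curve $C_{r+1}$ we have $\h(\mL,\mL)\cong K$, hence $\aut(\mE_{r+1})\cong\mathrm{GL}_2(K)$, and imposing compatibility with a fixed determinant map (\textbf{Notation 9}) cuts this down to $\mathrm{SL}_2(K)$, so $\dim\aut^0(\mE_{r+1})=3$ rather than $1$. By Corollary~\ref{fdim} applied with $g_1=r+1$, $g_2=r$, this yields $m_{r+1}=\dim(G_x)-3$ for every geometric point $x$ of $\GG_r$ with non-empty fiber, so it suffices to prove $\dim(G_x)\le 3$.

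To bound $\dim(G_x)$ I would use the projection $f:G_x\to\iso(\mE_r|_{P_{r+1}},\mE_{r+1}|_{P_{r+1}})$ exactly as in Lemma~\ref{m1}. The fixed determinant condition still gives $\dim\iso(\mE_r|_{P_{r+1}},\mE_{r+1}|_{P_{r+1}})=3$, so it is enough to see that every non-empty fiber of $f$ is zero-dimensional. For a fixed $\phi_{r+1}$, the diagonal condition defining $G_x\subseteq g^{-1}(\Delta)$ forces the sequence of points $(p_t)$ in $\PP\mE_{r+1}|_{P_{r+1}}$ associated to $V_{r+1}$ to equal $(\phi_{r+1}(q_t))$, where $(q_t)$ is the sequence associated to the (fixed) $V_r$; the condition $\tau^{V_{r+1}}(P_{r+1})=\tau_{P_{r+1}}$ is then automatic since $\phi_{r+1}$ is a bijection. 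Thus the fiber of $f$ over $\phi_{r+1}$ is a single point once we know that on $\mE_{r+1}=\mL^{\op 2}$ the subspace $V_{r+1}$ is uniquely recovered from its associated sequence of points at $P_{r+1}$.

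This last rigidity is the $\mL^{\op 2}$-analogue of the input used in Lemma~\ref{m1}; it follows from Lemma~\ref{sym} together with the hypothesis that the vanishing sequences $(c_{i,2r}),(c_{i,2r+1})$ satisfy Lemma~\ref{one}: each non-repeated vanishing order of $V_{r+1}$ at $P_{r+1}$ is realized, up to scalar, by a unique section of $\mE_{r+1}$ prescribed by its image in $\PP\mE_{r+1}|_{P_{r+1}}$, while the repeated vanishing orders and the special order $a_{i_1}=a_{i_2}$ force $V_{r+1}$ to contain fixed two-dimensional subspaces and carry no moduli. Hence $f$ has zero-dimensional fibers, $\dim(G_x)\le\dim\im(f)\le 3$, and therefore $m_{r+1}=\dim(G_x)-3\le 0$.

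The step I expect to require the most care is precisely this rigidity statement: one must check that none of the moduli of $V_{r+1}$ is hidden in the repeated vanishing orders or in the ``canonical'' order $a_{i_1}$, which is exactly where $\mL^{\op 2}$ behaves more rigidly than the general decomposable bundle of Lemma~\ref{aux}, and where Lemma~\ref{sym} (rather than Corollary~\ref{auxc}) must be invoked. Everything else is a direct transcription of the proof of Lemma~\ref{m1} with $\dim\aut^0(\mE_{r+1})=3$ in place of $1$.
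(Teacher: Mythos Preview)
Your proposal is correct and follows essentially the same approach as the paper: compute $\dim\aut^0(\mE_{r+1})=3$, bound $\dim(G_x)$ by $3$ via the projection $f$ to $\iso(\mE_r|_{P_{r+1}},\mE_{r+1}|_{P_{r+1}})$, and conclude $m_{r+1}\le 0$. The paper's proof is much terser and simply asserts that the fibers of $f$ are zero-dimensional; you have supplied the rigidity justification (via Lemma~\ref{sym}) that the paper leaves implicit.
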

\begin{proof}
Notice that in this case $\dim\aut^0(\mE_{r+1})=3$. Again, the fibers of the projection $f:G_x\to\iso(\mE_r|_{P_{r+1}},\mE_{r+1}|_{P_{r+1}})$ is zero-dimensional. Since $m_{r+1}=\dim(G_x)-\dim \aut^0(\mE_{r+1})=\dim(G_x)-3$, $\dim\iso(\mE_r|_{P_{r+1}},\mE_{r+1}|_{P_{r+1}})=3$, the result follows.
\end{proof}
With the above two lemmas, we immediately obtain the following result:
\begin{cor}\label{mc1}
  Given the construction in \ref{l1}, we have the following upper bounds on the fiber dimensions $m_{r+1}$:
  \begin{enumerate}
    \item $m_{g+2}\le2$, $m_{g+1},m_{g+5}\le1$, $m_{g+3},m_{g+7},m_{g+9}\le0$;
    \item $m_{g+6}\le 2$ if $k_1>2$, and $m_{g+6}\le 1$ if $k_1=2$;
    \item $m_{g+11+11s}\le 2$, $m_{g+12+11s}\le 0$, $m_{g+14+11s}\le 1$, $m_{g+15+11s}\le 1$, $m_{g+16+11s}\le 2$, $m_{g+17+11s}\le 0$, $m_{g+18+11s}\le 0$, $m_{g+19+11s}\le 2$, $m_{g+20+11s}\le 0$, for $s=0,...,q-2$;
    \item $m_{h+4\ell+1},m_{h+4\ell+3}\le2$, $m_{h+4\ell+2},m_{h+4\ell+4}\le0$, for $\ell=0,...,k_1-3q$;
    \item $m_{g'-3}\le0$, $m_{g'-2}\le2$, $m_{g'-1}\le0$, $m_{g'}\le0$.
  \end{enumerate}
\end{cor}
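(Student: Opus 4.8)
Corollary \ref{mc1} is a mechanical application of Lemmas \ref{m1} and \ref{m2} to the explicit underlying bundles listed in \ref{l1}, so the plan is simply to run through the list. First I would isolate the bundles of the form $\mL^{\oplus 2}$ occurring in the construction, namely $\mE_{g'-3}=\mO(g'-4,3)^{\oplus 2}$ and $\mE_{g'}=\mO(g'-1,0)^{\oplus 2}$; for these Lemma \ref{m2} gives $m_{g'-3},m_{g'}\le 0$ directly. Every other $\mE_{r+1}$ appearing in the statement is semistable and decomposable with two non-isomorphic summands, so Lemma \ref{m1} applies, and the proof of that lemma gives $m_{r+1}\le 2-|\mA_1\cup\mA_2|$; it then remains only to bound $|\mA_1\cup\mA_2|$ from below in each case.

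The entries for which the corollary claims only $m_{r+1}\le 2$ are now immediate, since $|\mA_1\cup\mA_2|\ge 0$ with no hypotheses: this handles $m_{g+2}$, $m_{g+6}$ when $k_1>2$, all of $m_{g+11+11s}$, $m_{g+16+11s}$, $m_{g+19+11s}$, all of $m_{h+4\ell+1}$ and $m_{h+4\ell+3}$, and $m_{g'-2}$. For the sharper bounds I would, for each relevant node $P_{r+1}$, read off the vanishing sequences $(c_{i,2r})$ and $(c_{i,2r+1})$ — these are recorded explicitly in Lemmas \ref{parta}, \ref{parte} and \ref{partf} for the indices that matter, and elsewhere they are pinned down by $(g',k')$-standardness together with Lemmas \ref{concise1} and \ref{concise2} once $\mE_{r+1}$ is known — locate the special indices $i_1,i_2$, and check which of $i_1$, $i_2$ and (when $c_{i_s-1,2r}=c_{i_s,2r}-1$) of $i_1-1$, $i_2-1$ are non-repeated. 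Every such index sits in a part of $\tau(P_{r+1})$ and so contributes to $\mA_1\cup\mA_2$; finding two of them gives $m_{r+1}\le 0$, finding one gives $m_{r+1}\le 1$.

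In practice I expect most of the $m_{r+1}\le 0$ cases to reduce to Remark \ref{often}/Example \ref{config}(4): inspecting \ref{l1} one sees that $\mE_{r+1}$ is obtained from $\mE_r$ by raising, in each summand, the coefficient of $P_r$ by $1$ (and lowering the coefficient of $P_{r+1}$ by $1$), and then the prescribed vanishing sequences force both canonical sections of $\mE_r$ to be glued by $\phi_{r+1}$ to the corresponding canonical sections of $\mE_{r+1}$; this is a codimension-$2$ condition on $\iso(\mE_r|_{P_{r+1}},\mE_{r+1}|_{P_{r+1}})$, whence $\dim G_x\le 1$ and $m_{r+1}=\dim G_x-1\le 0$. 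The same comparison of $\mE_r$ with $\mE_{r+1}$ (equivalently, the position of the special indices relative to the canonical-section indices) settles the $m_{r+1}\le 1$ cases, where exactly one canonical-section gluing is forced. The main obstacle is purely combinatorial bookkeeping: one must verify in each case that the indices claimed to be non-repeated really are non-repeated in the explicit sequences of Lemmas \ref{parta}--\ref{partf}, keep track of which $\mE_r$ are unstable so that it is the hypothesis on $\mE_{r+1}$ rather than $\mE_r$ in Lemma \ref{m1} that is being checked, and handle the small-$k_1$ degeneration responsible for the $k_1=2$ exception in the bound on $m_{g+6}$, where a vanishing order that is non-repeated for larger $k_1$ coincides with a neighbouring entry and one of the forced gluings is lost.
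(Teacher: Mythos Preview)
Your approach is exactly the paper's: apply Lemma \ref{m2} to the two bundles of the form $\mL^{\oplus 2}$, take the trivial bound $m_{r+1}\le 2$ from Lemma \ref{m1} whenever that is all that is claimed, use Remark \ref{often}/Example \ref{config}(4) for most of the $m_{r+1}\le 0$ cases, and check the special indices against the explicit vanishing sequences for the remaining $\le 0$ and $\le 1$ bounds. The bookkeeping caveats you list (using the tables in the proof of Lemma \ref{standard}, noting that $\mE_r$ may be unstable while only $\mE_{r+1}$ matters for Lemma \ref{m1}) are the right ones.

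There is one genuine slip in your description of the $k_1=2$ exception for $m_{g+6}$. You say that a vanishing order which is non-repeated for larger $k_1$ becomes repeated when $k_1=2$, so that a forced gluing is \emph{lost}; but that would make the bound \emph{worse} for $k_1=2$, whereas the corollary asserts the \emph{sharper} bound $m_{g+6}\le 1$ in that case. What actually happens is the reverse: the special index for $\mE_{g+6}$ is $i_1=5$, and $c_{5,2g+10}$ is a repeated vanishing order when $k_1>2$ (so $5\notin N(P_{g+6})$ and $\mA_1$ is empty) but becomes non-repeated when $k_1=2$, which \emph{adds} an element to $\mA_1$ and improves the bound from $2$ to $1$. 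Once you flip the direction of this degeneration, your argument is complete and matches the paper's.
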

\begin{proof}
By \ref{m1}, statements of the form $m_j\le 2$ are trivial.

For $j=g+3,g+12+11s,g+17+11s,g+20+11s,h+4\ell+2,h+4\ell+4$ and $g'-1$, the conclusion follows from the observation in \ref{often}.

For $j=g'-3,g'$, the results follow from \ref{m2} directly.

For $j=g+1,g+7,g+9,g+18+11s$, $g+14+11s,g+15+11s$, the results follow from recognizing the special indices and comparing the relevant vanishing orders (see the proof of \ref{standard}).

One can compute that $c_{5,2g+10}$ is a repeated vanishing order if $k_1>2$, but is non-repeated vanishing if $k_1=2$. This leads to the two different upper bounds for $m_{g+6}$ in the two cases.
\end{proof}
If $\mE_{r+1}$ is unstable, the situation is slightly more complicated. In general one needs to make a choice of $(q_t)$ and $(q'_t)$ to determine $V_{r+1}$, where $(q_t)$ and $(q'_t)$ are the associated sequences of points at $P_{r+1},P_{r+2}$ resp. \begin{lem}\label{m3}
Same setup as in Lemma \ref{m1}, except now assume $\mE_{r+1}$ is unstable. Then, we have $m_{r+1}\le1-\ep+|M|$, where
\begin{enumerate}
  \item $M=\{i|c_{i,2r+1} \text{ is non-repeated, and }c_{i,2r}+c_{i,2r+1}=\frac{1}{2}(\deg(\mE_{r+1})-1)-1\}$;
  \item $\ep=\begin{cases}1 \text{ if }\exists S_t\ni i \text{ such that }c_{i,2r}+c_{i,2r+1}\ge\frac{1}{2}(\deg(\mE_{r+1})-1)\\
0 \text{ otherwise}\end{cases}$.
\end{enumerate}
\end{lem}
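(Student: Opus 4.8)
The plan is to run exactly the argument used for Lemmas \ref{m1} and \ref{m2}: by Corollary \ref{fdim} we have $m_{r+1}=\dim G_x-\dim\aut^0(\mE_{r+1})$ with $G_x$ as in Proposition \ref{fiber2}, and one bounds $\dim G_x$ through the projection onto the space of gluings. Two features are new when $\mE_{r+1}$ is unstable. First, the automorphism group changes: writing $\mE_{r+1}=\mL\op\mL'$ with $\mL$ the destabilising summand, Situation \ref{s1} together with the standing hypothesis $u(\mE_{r+1})\le\frac12$ forces the two degrees to differ by exactly $1$, so on the elliptic curve $C_{r+1}$ the hom from $\mL$ to $\mL'$ vanishes for degree reasons while the one from $\mL'$ to $\mL$ is one-dimensional; hence $\aut(\mE_{r+1})$ is upper triangular and, after imposing compatibility with the determinant, $\dim\aut^0(\mE_{r+1})=2$ (one parameter of torus scaling, one of unipotent part). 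Thus it suffices to prove $\dim G_x\le 3-\ep+|M|$. Second, unlike in the semistable cases, fixing the gluing $\phi_{r+1}$ no longer determines $V_{r+1}$, so the relevant projection will have positive-dimensional fibres.

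Write $\deg\mE_{r+1}=2d+1$, so that $\mL$ has degree $d+1$ and $\frac12(\deg\mE_{r+1}-1)=d$, and consider $f\colon G_x\to\iso(\mE_r|_{P_{r+1}},\mE_{r+1}|_{P_{r+1}})$, whose target has dimension $3$ by the fixed-determinant condition. For the image of $f$: if some block $S_t$ of the configuration $\tau(P_{r+1})$ contains an index $i$ with $c_{i,2r}+c_{i,2r+1}\ge d$, then by Lemmas \ref{2} and \ref{3} the only sections of $\mE_{r+1}$ whose vanishing orders at $P_{r+1},P_{r+2}$ add up to at least $d$ are sections of the destabilising summand, so the point of $V_{r+1}$ attached to $S_t$ in $\PP\mE_{r+1}|_{P_{r+1}}$ must equal the image $q^*$ of that summand; since the matching point of $V_r$ is fixed by $x$, this is a codimension-$1$ condition on $\phi_{r+1}$, giving $\dim\im f\le 3-\ep$. (For every remaining non-repeated vanishing order, any prescribed point of $\PP\mE_{r+1}|_{P_{r+1}}$ is realised by a section with that vanishing order, exactly as in the proof of Lemma \ref{m1}, so no additional constraint appears.) For a non-empty fibre of $f$: once $\phi_{r+1}$, hence the whole sequence of points of $V_{r+1}$ at $P_{r+1}$, is fixed, the reconstruction of $V_{r+1}$ is rigid on the repeated blocks and on the indices with $c_{i,2r}+c_{i,2r+1}\ge d$ (using Lemmas \ref{one2} and \ref{3}); the only way two choices of $V_{r+1}$ compatible with the same $\phi_{r+1}$ can differ is by adding, to the section realising some $i\in M$, a section of the destabilising summand vanishing to strictly higher order at $P_{r+1}$ — at most one parameter per such $i$, since this moves $V_{r+1}$ without moving its images at $P_{r+1}$ and, $P_{r+2}$ being the free endpoint of $\GG_{r+1}$, without disturbing any prescribed vanishing order. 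Hence every fibre of $f$ has dimension $\le|M|$, so $\dim G_x\le(3-\ep)+|M|$ and $m_{r+1}=\dim G_x-2\le 1-\ep+|M|$.

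The fibre-dimension estimate is the step I expect to be delicate: one has to carry out a case analysis of the adapted basis of $V_{r+1}$ in the spirit of Lemma \ref{one2}, showing that, once the gluing is pinned down, the surviving moduli lie inside the destabilising-summand adjustments indexed by $M$, and tracking carefully which blocks are rigid (in particular handling the interaction between an $M$-index and a block that also contains an index with vanishing sum $\ge d$). The image estimate, together with the automorphism computation, is comparatively routine given the rigidity of Lemmas \ref{2} and \ref{3}.
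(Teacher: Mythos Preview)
Your proposal is correct and follows essentially the same approach as the paper: compute $\dim\aut^0(\mE_{r+1})=2$, project $G_x$ onto $\iso(\mE_r|_{P_{r+1}},\mE_{r+1}|_{P_{r+1}})$, bound the image by $3-\ep$ using that the block containing an index with vanishing sum $\ge d$ must land on the destabilising point $q^*$, and bound the fibre by $|M|$. The paper phrases the fibre bound slightly differently---it says $V_{r+1}$ is determined by the associated sequences $(q_t),(q'_t)$ at $P_{r+1},P_{r+2}$, and once $\phi_{r+1}$ fixes $(q_t)$ the remaining freedom in $(q'_t)$ is at most $|M|$---but this is equivalent to your description of modifying the section at an index $i\in M$ by a one-parameter family. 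One small imprecision: the added section need not literally lie in the destabilising summand (it is the section $s_1$ with $\ord_{P_{r+1}}(s_1)=c_{i,2r}+1$, $\ord_{P_{r+2}}(s_1)=c_{i,2r+1}$ from the three-dimensional space $\Gamma(\mE_{r+1}(-c_{i,2r}P_{r+1}-c_{i,2r+1}P_{r+2}))$), but this does not affect the count.
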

\begin{proof}
In this case, $\dim \aut^0(\mE_{r+1})=2$. Again, following the notation in Corollary \ref{fdim}, we have $m_{r+1}=\dim(G_x)-\dim \aut^0(\mE_{r+1})=\dim(G_x)-2$.

Suppose $i\in M$. In particular, $c_{i,2r+1}\neq c_{i_1,2r+1},c_{i_1,2r+1}-1$. Then, $$\dim\Gamma(\mE_{r+1}(-c_{i,2r}P_{r+1}-c_{i,2r+1}P_{r+2}))=3$$ and $\Gamma(\mE_{r+1}(-c_{i,2r}P_{r+1}-c_{i,2r+1}P_{r+2}))=\s(s_1,s_2,s_3)$ such that
$$\begin{cases}
\ord_{P_{r+1}}(s_1)=c_{i,2r}+1,\ord_{P_{r+2}}(s_1)=c_{i,2r+1}\\
\ord_{P_{r+1}}(s_2)=c_{i,2r},\ord_{P_{r+2}}(s_2)=c_{i,2r+1}+1\\
\ord_{P_{r+1}}(s_3)=c_{i,2r},\ord_{P_{r+2}}(s_3)=c_{i,2r+1}
\end{cases}$$
Under our assumptions, $V_{r+1}$ always has a $(P_{r+1},P_{r+2})$-adapted basis, and such a basis is determined by the associated sequences of points $(q_t),(q'_t)$ at $P_{r+1},P_{r+2}$ resp. On one and, if $S_t\ni i'$ such that $c_{i',2r}+c_{i',2r+1}=\frac{1}{2}(\deg(\mE_{r+1})-1)$, then $q_t=q^*$\footnote{$q^*$ is the image of sections of the destabilizing summand of $\mE_{r+1}$.}. This poses one non-trivial closed condition on $\iso(\mE_r|_{P_{r+1}},\mE_{r+1}|_{P_{r+1}})$. Thus, image of the projection $f:G_x\to \iso(\mE_r|_{P_{r+1}},\mE_{r+1}|_{P_{r+1}})$ has dimension $\le 3-\ep$.

On the other hand, given $\phi_{r+1}$, $(q_t)$ is determined. The fiber dimension of $f$ is then bounded from above by the choice of $(q'_t)$, which is bounded from above by $|M|$.

In summary, $m_{r+1}=\dim(G_x)-2\le(3-\ep+|M|)-2=1-\ep+|M|$.
\end{proof}
\begin{cor}\label{mc3}
  Given the construction in \ref{l1}, we have the following upper bounds on the fiber dimensions $m_{r+1}$:
  \begin{enumerate}
    \item $m_{g+4}\le1$, $m_{g+8}\le2$;
    \item $m_{g+10+11s}\le 3$, $m_{g+13+11s}\le 1$, for $s=0,...,q-2$.
  \end{enumerate}
\end{cor}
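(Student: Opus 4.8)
The plan is to reduce Corollary \ref{mc3} to the general bound in Lemma \ref{m3} by explicitly identifying, for each of the relevant unstable bundles $\mE_{r+1}$ (namely $r+1=g+4$, $g+8$, $g+10+11s$, $g+13+11s$), the three pieces of data that govern the formula $m_{r+1}\le 1-\ep+|M|$: the degree of $\mE_{r+1}$ (hence the critical value $\tfrac12(\deg(\mE_{r+1})-1)$), the special indices of the vanishing pair $(c_{i,2r}),(c_{i,2r+1})$, and the relevant vanishing sequences, which I will read off from Lemmas \ref{parta}, \ref{parte}, \ref{partf} together with the explicit list of bundles in \ref{l1}. For each case I will then count the set $M=\{i\mid c_{i,2r+1}\text{ non-repeated and }c_{i,2r}+c_{i,2r+1}=\tfrac12(\deg(\mE_{r+1})-1)-1\}$ and check whether the quantity $\ep$ is $0$ or $1$ by testing whether some block $S_t$ in the configuration contains an index $i$ with $c_{i,2r}+c_{i,2r+1}\ge\tfrac12(\deg(\mE_{r+1})-1)$.

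First I would handle $\mE_{g+4}=\mO(g-k_1+3,5k_1-q+3)\op\mO(g+k_1+3,3k_1-q+2)$: here $\deg(\mE_{g+4})=2g+4k_1-2q+5$... more to the point, since $\tau_{g+4}=\{k+1,k+4\}$ and these are exactly the special indices, the destabilizing summand is hit, giving $\ep=1$, while a direct inspection of the vanishing sequence at $P_{g+4}$ versus $P_{g+5}$ (using the adjacent bundles and the recursion in Lemma \ref{C1}) shows $|M|\le 1$; hence $m_{g+4}\le 1-1+1=1$. For $\mE_{g+8}$ with $\tau_{g+8}=\{1,k\}$, the same scheme gives $\ep=1$ but now $|M|\le 2$ (the two non-repeated top-and-bottom vanishing orders coming from the twist by $1$ between $\mE_{g+7}$ and $\mE_{g+9}$), so $m_{g+8}\le 1-1+2=2$. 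For the periodic family, $\mE_{g+10+11s}$ has $\tau=\{1\}$: the single destabilizing constraint contributes $\ep=1$ (or $\ep=0$, which I will check — if it is $0$ the $|M|$ bound must compensate), and reading $(c_{i,2g+17+22s})$ and $(c_{i,2g+19+22s})$ from Lemma \ref{parte} I would show $|M|\le 3$, yielding $m_{g+10+11s}\le 3$. For $\mE_{g+13+11s}$ with $\tau_{g+13+11s}=\{5+6s,k,k+1,k+4\}$, the block containing $k+1,k+4$ forces $\ep=1$ and the explicit entries of $(c_{i,2g+25+22s})$ give $|M|\le 1$, so $m_{g+13+11s}\le 1$.

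The main obstacle I anticipate is the bookkeeping of repeated versus non-repeated vanishing orders in the large explicit sequences of Lemmas \ref{parta}--\ref{partf}: the set $M$ is sensitive to which entries $c_{i,2r+1}$ are non-repeated, and because these sequences contain long runs of doubled entries (recorded via the $[a]_n$ notation of \textbf{Notation 11}), an off-by-one slip in locating the boundary of a run would change $|M|$ and break the bound. I would therefore organize the computation by first writing, for each $r$, the pair of sequences at $P_{r+1}$ and $P_{r+2}$ side by side, marking the special indices, then tabulating the row sums $c_{i,2r}+c_{i,2r+1}$ and flagging exactly the rows where this sum equals $\tfrac12(\deg(\mE_{r+1})-1)-1$ with a non-repeated second coordinate. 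The verification that $\ep=1$ in each case is comparatively robust, since it only requires exhibiting one block $S_t$ (typically the one forced by the prescribed $\tau_{r+1}$) meeting the critical-sum locus. Once these finite checks are assembled, the corollary is immediate from Lemma \ref{m3}.
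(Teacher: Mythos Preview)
Your approach is essentially the same as the paper's: both reduce directly to Lemma \ref{m3}, reading off $\ep$ and $|M|$ from the construction data $\tau_{r+1}$ and the explicit vanishing sequences in Lemmas \ref{parta}, \ref{parte}. The paper's proof is terser, simply recording $\ep=1$ in all four cases and listing $M$ explicitly as $\{k\}$, $\{5,k+3\}$, $\{6s+5,k,k+4\}$, $\{1\}$ respectively.

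One small correction: for $r+1=g+4$ you say the entries of $\tau_{g+4}=\{k+1,k+4\}$ ``are exactly the special indices,'' but the special indices there (in the sense of Remark \ref{spid}) are $\ell=3$, $i^*=k+2$. The set $\tau_{g+4}$ instead records the non-repeated indices $i$ with $c_{i,2g+6}+c_{i,2g+7}=d$ (see the proof of Lemma \ref{concise2}), which is precisely what you need to conclude $\ep=1$; so your conclusion is right even though the terminology slipped.
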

\begin{proof}$\ep$ can be read off from the configuration data given in the construction; $M$ can be deduced from the given configuration $\tau_j$, together with the vanishing sequences provided in \ref{parta}, \ref{parte}:

  For $j=g+4$, $\ep=1$ and $M=\{k\}$. So, $m_{g+4}\le1$.

  For $j=g+8$, $\ep=1$ and $M=\{5,k+3\}$. So, $m_{g+8}\le2$.

  For $j=g+10+11s$, $\ep=1$ and $M=\{6s+5,k,k+4\}$. So, $m_{g+10+11s}\le3$.

  For $j=g+13+11s$, $\ep=1$ and $M=\{1\}$. So, $m_{g+13+11s}\le1$.
\end{proof}
\begin{rem}
In general, it suffices to find upper bounds of fiber dimensions, rather than calculating the fiber dimensions. The reason is two-fold. First of all, if one can show $\dim\mathcal{G}^{k',\text{EHT}}_{2,\omega_{g'},d^{g'}_{\bullet},c^{\Gamma'}_{g'}}(X_{g'})\le\rho_{g',k'}$ and it contains limit linear series whose underlying vector bundle is semi-stable, by the fact that $\rho_{g',k'}$ is also a lower bound for the dimension of the semi-stable locus of $\mathcal{G}^{k',\text{EHT}}_{2,\omega_{g'},d^{g'}_{\bullet},c^{\Gamma'}_{g'}}(X_{g'})$ (\cite{Ber}), we're done. Secondly, due to the vanishing sequences we prescribe at nodal points, it turns out that for some $g_2<g_1<g'$ the forgetful morphism $p_{g_1,g_2}$ maps onto a closed substack of $\GG_{g_2}$ determined by some particular configuration $\tau(P_{g_2+1})$. In those situations, we sometimes need to analyze fibers of forgetful morphisms $p_{j',j}$ where $j'-j>1$.
\end{rem}
\subsubsection{Conclusion from the First Construction}
\ \\
In this part, we shall summarize out conclusion from Induction Step 2, Part 1. In particular, we prove the following theorem:
\begin{thm}\label{main1}
Let $k=2k_1+1$ be odd, and $a^{\Gamma}$ be $(g,k)$-standard. Suppose $\mathcal{G}^{k,\text{EHT}}_{2,\omega_g,d_{\bullet},a^{\Gamma}}(X_g)$ contains an open substack of expected dimension $\rr$ of semi-stable EHT limit linear series, which contains sufficiently generic objects. Let $c^{\Gamma'}_{g'}$ be as defined in section \ref{l1}. Then, $\mathcal{G}^{k',\text{EHT}}_{2,\omega_{g'},d'_{\bullet},c^{\Gamma'}_{g'}}(X_{g'})$ also contains an open substack of dimension $\rho_{g',k'}$ of semi-stable limit linear series, which contains sufficiently generic objects.
\end{thm}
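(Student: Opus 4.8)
The plan is to chain together the two morphisms already available in this section and then count dimensions. On one side, Theorem~\ref{Induction} together with Propositions~\ref{1dim} and~\ref{nonempty3} gives a stack morphism $\psi\colon\GG^{0}\to\GG_{1}$ with $\delta_{x}\psi=1$ for every $x$ and with $|\psi|$ surjective, where $\GG_{1}\subset\mathcal{G}^{k,\text{EHT}}_{2,\oo_{g},d_{\bullet},a^{\Gamma}}(X_{g})$ is the open substack of sufficiently generic objects and $\GG^{0}$ is the locally closed substack of $\mathcal{G}^{k',\text{EHT}}_{2,\oo_{g}(NP_{g+1}),\hat d_{\bullet},c^{\Gamma}}(X_{g})$ from Lemma~\ref{suffover}, with $k'=k+4$ and $N=4k_{1}+6-q$ as in \S\ref{l1}. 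Intersecting $\GG_{1}$ with the hypothesised semistable open substack of expected dimension $\rr$ and pulling this back along $\psi$, I get an open substack $\GG^{0}_{0}\subseteq\GG^{0}$ with $\dim\GG^{0}_{0}=\rr+1$ whose objects all lie over semistable, sufficiently generic limit linear series. On the other side, the definition of $c^{\Gamma'}_{g'}$ in \S\ref{l1} arranges that $[c^{\Gamma}|c_{i}]$ forms the first $2g-1$ columns of the vanishing matrix of $\mathcal{G}^{k',\text{EHT}}_{2,\oo_{g'},d'_{\bullet},c^{\Gamma'}_{g'}}(X_{g'})$, so there is a forgetful morphism $p\colon\mathcal{G}^{k',\text{EHT}}_{2,\oo_{g'},d'_{\bullet},c^{\Gamma'}_{g'}}(X_{g'})\to\GG_{g}$ as in \S\ref{forget}, defined over an open substack, landing in $\GG^{0}$; it factors as the composite of the one-step forgetful morphisms $p_{r+1,r}$, $r=g,\dots,g'-1$.

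First I would produce the required objects. Starting from a sufficiently generic semistable object of $\mathcal{G}^{k,\text{EHT}}_{2,\oo_{g},d_{\bullet},a^{\Gamma}}(X_{g})$, which exists by hypothesis, I lift it to $\GG^{0}_{0}$ by the explicit recipe in the proof of Proposition~\ref{nonempty3}, and then extend it over the components $C_{g+1},\dots,C_{g'}$ using the vector bundles $\mE_{g+1},\dots,\mE_{g'}$ and the index sets $\tau_{j}$ listed in \S\ref{l1}. Lemmas~\ref{one}, \ref{one2}, \ref{concise1} and~\ref{concise2} certify that the prescribed vanishing sequences are legitimate and force each $V_{j}$ to carry a $(P_{j},P_{j+1})$-adapted basis, while the configuration facts of Example~\ref{config}, together with Lemma~\ref{aux}, Corollary~\ref{auxc} and Lemma~\ref{sym}, show that gluings $\phi_{j}$ compatible with the prescribed $\tau_{j}$ exist; matching the determinants of the $\mE_{j}$ against Corollary~\ref{candet} gives the canonical determinant condition. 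For $\ell$-semistability of the induced bundle $\mE'$ on $X_{g'}$, I would partition $X_{g'}$ into consecutive subchains, each meeting at a node and each containing at most two of the unstable bundles $\mE_{g+4},\mE_{g+8},\mE_{g+10+11s},\mE_{g+13+11s}$, apply Proposition~\ref{ssimple} subchain by subchain---this is exactly where sufficient genericity of the $(g,k)$-object and the prescribed $\tau_{j}$ are used, to exclude a destabilizing invertible subsheaf---and then conclude via Proposition~\ref{blocks}. This produces a non-empty open substack $\mathcal{V}$ of $\mathcal{G}^{k',\text{EHT}}_{2,\oo_{g'},d'_{\bullet},c^{\Gamma'}_{g'}}(X_{g'})$ of semistable, sufficiently generic limit linear series with $p(\mathcal{V})\subseteq\GG^{0}_{0}$.

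Then comes the dimension estimate. From the fiber-dimension inequality, $\dim\mathcal{V}\le\dim p(\mathcal{V})+\sup_{x}\delta_{x}p\le(\rr+1)+\sum_{r=g}^{g'-1}m_{r+1}$, and a direct computation gives $\rho_{g',k'}-(\rr+1)=4k_{1}+3-3q$. The one-step bounds collected in Corollaries~\ref{mc1} and~\ref{mc3} only sum to $4k_{1}+3$, so the naive estimate overshoots by $3q$; as anticipated in the remark after Corollary~\ref{mc3}, the fix is that at $q$ places---near the unstable bundles $\mE_{g+10+11s}$ and at one spot in the initial block---the prescribed vanishing data force a composite forgetful morphism $p_{j',j}$ to factor through the proper closed substack of $\GG_{j}$ cut out by a fixed configuration $\tau(P_{j+1})$, so that analyzing $p_{j',j}$ directly via Proposition~\ref{fiber2} and Corollary~\ref{fdim} sharpens its contribution by $3$ at each such place. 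I expect this multi-step bookkeeping---pinning down these $q$ places and recovering exactly the missing $3q$---to be the real obstacle; the one-step lemmas alone are not tight enough. Once it is in place, $\dim\mathcal{V}\le\rho_{g',k'}$.

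Finally, since $\mathcal{V}$ is non-empty, open, and consists entirely of semistable limit linear series, $\rho_{g',k'}$ is a lower bound for the dimension of $\mathcal{V}$ by \cite{Ber}; combined with the upper bound this forces the top-dimensional components of $\mathcal{V}$ to have dimension exactly $\rho_{g',k'}$, and discarding the remaining lower-dimensional components leaves an open substack of $\mathcal{G}^{k',\text{EHT}}_{2,\oo_{g'},d'_{\bullet},c^{\Gamma'}_{g'}}(X_{g'})$ of dimension $\rho_{g',k'}$, consisting of semistable limit linear series and containing sufficiently generic objects, which is the claim.
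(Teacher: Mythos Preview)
Your outline is correct and follows essentially the same architecture as the paper: the morphism $\psi$ with fiber dimension $1$, the chain of forgetful morphisms $p_{r+1,r}$, the naive sum of the one-step bounds overshooting by $3q$, the recovery of the missing $3q$ via configuration constraints that force certain $p_{j',j}$ to land in proper closed substacks, and the lower bound from \cite{Ber}. The paper carries out the ``$3q$ bookkeeping'' you flag as the obstacle in Lemmas~\ref{g6} and~\ref{g7}: in the initial block the gain of $3$ actually comes from \emph{three} separate codimension-$1$ configuration conditions (at $P_{g+4}$, $P_{g+8}$, $P_{g+9}$), while in each of the $q-1$ later blocks it comes from one codimension-$1$ and one codimension-$2$ condition (at $P_{g+11+11s}$ and $P_{g+13+11s}$), all justified via Lemma~\ref{aux} and Theorem~\ref{genericity0}; the remaining assertions (non-emptiness, canonical determinant, semi-stability, sufficient genericity) are handled in the appendix (Propositions~\ref{nonempty}, \ref{suffred} and Lemmas~\ref{can1}, \ref{sst}), where semi-stability is secured by imposing open conditions on the new gluings $\phi_j$ rather than by the sufficient genericity of the input object as you suggest.
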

\begin{rem}
  Intuitively, this theorem give the induction from $(g,k)$ to $(g+2k+4-q,k+4)$. At every step of the induction, the parameter $|L|$ increase by $2q$.
\end{rem}
Hereafter, whenever $\mE_r$ is unstable, we shall denote $q^r_1,q^r_2$ to be the images of sections of the destabilizing summand of $\mE_r$ in $\PP\mE_r|_{P_r},\PP\mE_r|_{P_{r+1}}$ resp. Whenever $\mE_r$ is semi-stable, we denote $q^r_{1,1},q^r_{1,2}$ (resp. $q^r_{2,1},q^r_{2,2}$) to be the images of the canonical sections $\mE_r$ in $\PP\mE_r|_{P_r},\PP\mE_r|_{P_{r+1}}$ resp.

We carry out the dimension estimate in multiple steps:
\begin{lem}\label{g6}
$\dim\GG_{g+9}\le\dim\GG_g+6$.
\end{lem}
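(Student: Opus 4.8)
The strategy is to telescope $\dim\GG_{g+9} - \dim\GG_g$ as the sum of the fiber dimensions $m_{g+1}+\cdots+m_{g+9}$ of the successive forgetful morphisms $p_{r+1,r}$, and then apply the upper bounds collected in Corollary \ref{mc1} and Corollary \ref{mc3}. Concretely, one factors the forgetful morphism $\GG_{g+9}\to\GG_g$ as the composition $p_{g+1,g}\circ p_{g+2,g+1}\circ\cdots\circ p_{g+9,g+8}$, so that for a geometric point $x$ of $\GG_g$ the fiber has dimension at most $\sum_{r=g}^{g+8} m_{r+1}$, where $m_{r+1}=\delta_x p_{r+1,r}$ in the sense of Definition \ref{fiber}. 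Since fiber dimension adds under composition of finite-type morphisms (upper semicontinuity plus the standard inequality $\dim f g \le \dim f + \dim g$ for fibers), it suffices to bound each $m_{r+1}$ individually.

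First I would invoke the construction of \ref{l1}: the bundles $\mE_{g+1},\mE_{g+2},\mE_{g+3}$ are semi-stable decomposable, $\mE_{g+4}$ is unstable, $\mE_{g+5},\mE_{g+6},\mE_{g+7}$ are semi-stable decomposable, $\mE_{g+8}$ is unstable, and $\mE_{g+9}$ is semi-stable decomposable. For the semi-stable ones I would read off the relevant special indices and configurations from the explicit bundle data and apply Lemma \ref{m1} (or Remark \ref{often}/Example \ref{config}(4) when $\mE_r,\mE_{r+1}$ differ by a twist on each summand), which gives from Corollary \ref{mc1}(1)--(2): $m_{g+1}\le1$, $m_{g+2}\le2$, $m_{g+3}\le0$, $m_{g+5}\le1$, $m_{g+6}\le2$ (here using $k_1>2$; the case $k_1=2$ only helps), $m_{g+7}\le0$, $m_{g+9}\le0$. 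For the two unstable bundles I would apply Lemma \ref{m3} together with the vanishing sequence data of Lemma \ref{parta} and the prescribed $\tau$-sets $\tau_{g+4}=\{k+1,k+4\}$, $\tau_{g+8}=\{1,k\}$; this is exactly Corollary \ref{mc3}(1), giving $m_{g+4}\le1$ and $m_{g+8}\le2$.

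Summing these bounds yields
\[
\dim\GG_{g+9}-\dim\GG_g \;\le\; \sum_{r=g}^{g+8} m_{r+1}\;\le\; 1+2+0+1+1+2+0+2+0 \;=\; 9,
\]
which is too weak; the point of the lemma is that three of these bounds are not simultaneously attained, because the forgetful morphism $p_{g+9,g+1}$ actually lands in a substack of $\GG_{g+1}$ cut out by a specific configuration $\tau(P_{g+2})$, as flagged in the Remark preceding this subsection. So the main obstacle — and the part requiring real work rather than bookkeeping — is to show that the bound improves by $3$, i.e. that along the chain $C_{g+1},\dots,C_{g+9}$ one loses three dimensions of choices beyond the naive count. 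I expect this to go as follows: the unstable bundles $\mE_{g+4}$ and $\mE_{g+8}$ impose gluing constraints linking the destabilizing-summand points $q^{g+4}_1,q^{g+4}_2,q^{g+8}_1,q^{g+8}_2$ (Notation for $q^r_i$ as fixed just before Lemma \ref{g6}) through the semi-stable blocks in between; tracking these via the maps $\tau(e_j)$ of Example \ref{fiso} and Definition \ref{suff}, one sees that the configuration $\tau(P_{g+2})$ is forced to identify certain indices, which in turn forces the $\mA_1\cup\mA_2$ appearing in Lemma \ref{m1} at the later steps to be larger than the generic estimate used in Corollary \ref{mc1}, or equivalently forces the $\ep$-term in Lemma \ref{m3} to be $1$ at an additional step. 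Carrying out this analysis — essentially computing the composite configuration map across $\mE_{g+1},\dots,\mE_{g+9}$ and checking it kills three parameters — should bring the total down to $6$, yielding $\dim\GG_{g+9}\le\dim\GG_g+6$. I would present the parameter-counting reduction abstractly (one dimension saved at each of three specified transitions, with the saved conditions being the non-degeneracy/gluing conditions built into the construction) and defer the explicit verification that these three conditions are genuinely independent to a direct inspection of the bundle and vanishing-sequence data in \ref{parta}.
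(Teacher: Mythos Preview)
Your framework is exactly the paper's: telescope via $m_{g+1},\dots,m_{g+9}$, get the naive bound $9$ from Corollaries~\ref{mc1} and~\ref{mc3}, and then recover three dimensions by arguing that certain forgetful maps land in proper closed substacks cut out by configuration conditions. Where your proposal falls short is in locating and justifying those three savings. You guess that the constraint propagates back to $\tau(P_{g+2})$ via the destabilizing-summand points and the isomorphisms $\tau(e_j)$ of Example~\ref{fiso}; this is not where the savings live, and that mechanism is not what the paper uses here.

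The paper finds the three codimension-one conditions at $P_{g+4}$, $P_{g+8}$, and $P_{g+9}$, and each one arises from a direct reading of the vanishing sequences in Lemma~\ref{parta} together with the bundle data. At $P_{g+4}$: the indices $3,k+1,k+4$ all satisfy $c_{i,2g+6}+c_{i,2g+7}\ge\tfrac12(\deg\mE_{g+4}-1)$, so $\tau(P_{g+4})$ is forced to contain $\{3,k+1,k+4\}$; this is a nontrivial closed condition on $\GG_{g+3}$, giving $\dim\GG_{g+4}\le\dim\GG_{g+3}$. At $P_{g+8}$ (for $k_1>2$): the same type of inequality forces $\{1,k,k+4\}$ into $\tau(P_{g+8})$, but for a general object of $\GG_{g+7}$ these three indices are not in one subset, so $p_{g+8,g+7}$ lands in a codimension-one locus and $\dim\GG_{g+8}\le\dim\GG_{g+7}+1$. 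At $P_{g+9}$: the special indices of $\mE_{g+9}$ are $5$ and $k+4$, and inspection of $(c_{i,2g+15})$ forces $\{5,k+3\}$ into $\tau(P_{g+9})$; the coincidence $s|_{P_{g+9}}=s'|_{P_{g+9}}$ for sections of $V_{g+8}$ with orders $c_{5,2g+15},c_{k+3,2g+15}$ is a codimension-one condition on $\GG_{g+8}$, yielding $\dim\GG_{g+9}\le\dim\GG_{g+8}-1$. Combining, $\dim\GG_{g+9}\le\dim\GG_{g+7}\le\dim\GG_{g+4}+m_{g+5}+m_{g+6}+m_{g+7}\le\dim\GG_{g+3}+3\le\dim\GG_g+6$. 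The case $k_1=2$ has one fewer non-repeated vanishing order at $P_{g+8}$, but the sharper bound $m_{g+6}\le1$ compensates. None of this is the sufficient-genericity/gluing-in-$a$th-order machinery you invoked; it is a local computation with the explicit sequences of Lemma~\ref{parta}, and you should carry it out rather than defer it.
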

\begin{proof}
First of all, by Corollary \ref{mc1}, \ref{mc3}, we know the dimensions of fibers, $m_j$ for $j=g+1,...,g+9$, are bounded from above by $1,2,0,1,1$, 2 (1, if $k_1=2$), $0,2,0$ resp. Hence,
\begin{center}$\dim\GG_{g+9}\le\dim\GG_g+9$ when $k_1>2$, and $\dim\GG_{g+9}\le\dim\GG_g+8$ when $k_1=2$.\end{center}

To get the claimed dimension upper bound, one needs to analyze some of the configurations more closely.

First of all, the special indices of $(c_{i,g+16})$ and $(c_{i,2g+17})$ are $5$ and $k+4$ (see the proof of \ref{standard}); given $(c_{i,2g+15})$ (see \ref{parta}), $\tau(P_{g+9})$ must contain some set $S$, which contains $5,k+3$. In particular, let $s,s'$ be two section in $V_{g+8}$ such that $\ord_{P_{g+9}}(s)=c_{5,2g+15}$, and $\ord_{P_{g+9}}(s')=c_{k+3,2g+15}$; then, $s|_{P_{g+9}}=s'|_{P_{g+9}}$ must hold. This is a codimension one condition on $\GG_{g+8}$. Hence, $\dim\GG_{g+9}\le \dim\GG_{g+8}-1+m_{g+9}\le \dim\GG_{g+8}-1$.

Secondly, when $k_1>2$, $$c_{1,2g+14}+c_{1,2g+15},c_{k,2g+14}+c_{k,2g+15},c_{k+4,2g+14}+c_{k+4,2g+15}\ge\frac{1}{2}(\deg(\mE_{g+8})-1).$$ Hence, $\tau^{V_{g+8}}(P_{g+8})=\{\{1,k,k+4\},\{6,k+1\}\}$ in general. The point associated to $\{1,k,k+4\}$ is the image of sections of the destabilizing summand in $\PP\mE_{g+8}|_{P_{g+8}}$.

We claim that $\GG_{g+8}$ maps to a codimension one closed substack of $\GG_{g+7}$ and hence $\dim\GG_{g+8}\le(\dim\GG_{g+7}-1)+2=\dim\GG_{g+7}+1$. Given $(c_{i,2g+13})$ in \ref{parta}, one can see that $c_{1,2g+12},c_{k,2g+12},c_{k+4,2g+12}$ are all non-repeated vanishing orders. For a general object in $\GG_{g+7}$, $1,k,k+4$ do not belong to the same set in $\tau^{V_{g+7}}(P_{g+8})$. But for any object in $\GG_{g+8}$, $\tau(P_{g+8})$ contains the set $\{1,k,k+4\}$. This poses a codimension one condition on $\GG_{g+7}$. Thus, $\dim\GG_{g+9}\le \dim\GG_{g+7}$.

Similarly, given $(c_{i,2g+5})$, $(c_{i,2g+7})$ as in \ref{parta}, one sees that $$c_{3,2g+6}+c_{3,2g+7},c_{k+1,2g+6}+c_{k+1,2g+7},c_{k+4,2g+6}+c_{k+4,2g+7}\ge\frac{1}{2}(\deg(\mE_{g+4})-1).$$
Since $c_{3,2g+6},c_{k+1,2g+6},c_{k+4,2g+6}$ are the only non-repeated vanishing orders in $(c_{i,2g+6})$, $\tau(P_{g+4})$ contains a single subset $S=\{3,k+1,k+4\}$, whose corresponding point in $\PP\mE_{g+4}|_{P_{g+4}}$ is $q^{g+4}_1$. This clearly poses a non-trivial closed condition on $\GG_{g+3}$, and hence $\dim\GG_{g+4}\le(\dim\GG_{g+3}-1)+1=\dim\GG_{g+3}$.

Thus, when $k_1>2$,

$$\begin{aligned}
  &\dim\GG_{g+9}\le\dim\GG_g+m_{g+1}+m_{g+2}+m_{g+3}+m_{g+5}+m_{g+6}+m_{g+7}\\
  &=\dim\GG_g+1+2+0+1+2+0\\
  &=\dim\GG_g+6.
\end{aligned}$$

When $k_1=2$, one gets fewer non-repeated vanishing orders in $(c_{i,2g+14})$, and thus a different configuration at $P_{g+8}$. However, by a very similar argument, one gets the same dimension estimate.
\end{proof}
Similarly, we have:
\begin{lem}\label{g7}
For $s=0,...,q-2$, $\dim\GG_{g+11s+20}\le\dim\GG_{g+9+11s}+9$.
\end{lem}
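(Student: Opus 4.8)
The plan is to imitate the proof of Lemma \ref{g6}. Corollaries \ref{mc1} and \ref{mc3} bound the fibre dimensions of the eleven forgetful morphisms in this block: $m_{g+10+11s}\le 3$, $m_{g+11+11s}\le 2$, $m_{g+12+11s}\le 0$, $m_{g+13+11s}\le 1$, $m_{g+14+11s}\le 1$, $m_{g+15+11s}\le 1$, $m_{g+16+11s}\le 2$, $m_{g+17+11s}\le 0$, $m_{g+18+11s}\le 0$, $m_{g+19+11s}\le 2$, $m_{g+20+11s}\le 0$. Telescoping $\dim\GG_{r+1}\le\dim\GG_r+m_{r+1}$ already gives the crude estimate $\dim\GG_{g+20+11s}\le\dim\GG_{g+9+11s}+12$; to improve $+12$ to $+9$ I must extract three independent codimension-one conditions, just as the three ``savings'' in Lemma \ref{g6} were clustered around the unstable bundles $\mE_{g+4}$ and $\mE_{g+8}$. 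Here they cluster around the two unstable bundles of the block, $\mE_{g+10+11s}$ (with $\tau_{g+10+11s}=\{1\}$) and $\mE_{g+13+11s}$ (with $\tau_{g+13+11s}=\{5+6s,k,k+1,k+4\}$).

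The three conditions are produced by the configuration analysis, using the vanishing columns $(c_{i,2g+17+22s})$, $(c_{i,2g+19+22s})$, $(c_{i,2g+25+22s})$ recorded in Lemma \ref{parte} and the identification of the special indices of the surrounding columns of $c^{\Gamma'}_{g'}$ carried out in the proof of Proposition \ref{standard}. First, the instability of $\mE_{g+10+11s}$ forces the configuration at $P_{g+10+11s}$ of every object of $\GG_{g+10+11s}$ to contain a subset whose associated point in $\PP\mE_{g+10+11s}|_{P_{g+10+11s}}$ is $q^{g+10+11s}_1$ (the image of the destabilizing summand); since the relevant entries of $(c_{i,2g+17+22s})$ are non-repeated, a general object of $\GG_{g+9+11s}$ does not place those indices in a common block of its configuration at $P_{g+10+11s}$, so the image of $\GG_{g+10+11s}$ in $\GG_{g+9+11s}$ has codimension $\ge 1$, whence $\dim\GG_{g+10+11s}\le(\dim\GG_{g+9+11s}-1)+m_{g+10+11s}$. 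The same device applied at the two nodes of $\mE_{g+13+11s}$ — using $(c_{i,2g+25+22s})$ and the four-element set $\tau_{g+13+11s}$, together with Lemmas \ref{aux} and \ref{sym} (and Example \ref{config}) to propagate alignments of sections across the intervening semistable component — yields a codimension-one condition on $\GG_{g+12+11s}$ and a codimension-one condition on $\GG_{g+13+11s}$, so that $\dim\GG_{g+13+11s}\le\dim\GG_{g+12+11s}$ and $\dim\GG_{g+14+11s}\le\dim\GG_{g+13+11s}$. Substituting these into the telescoping chain absorbs the large fibre dimensions: the steps $g+13+11s$ and $g+14+11s$ now contribute $0$ rather than $1$ each, and step $g+10+11s$ contributes $3-1$, so $\dim\GG_{g+20+11s}\le\dim\GG_{g+9+11s}+\big[(3-1)+2+0+0+0+1+2+0+0+2+0\big]=\dim\GG_{g+9+11s}+9$, as claimed.

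The main obstacle is establishing the three codimension-one conditions uniformly over $s\in\{0,\dots,q-2\}$ (so $q\ge 2$, $k_1\ge 6$): one must check, from the explicit columns of $c^{\Gamma'}_{g'}$, that the vanishing orders entering the configuration argument are genuinely non-repeated and that the special indices are positioned so that each constraint is honestly codimension one rather than vacuous, and that the alignments of sections really do propagate across the intervening semistable components. This bookkeeping is exactly where the two growth rates $14s$ and $8s$ built into the construction of Section \ref{l1}, and hence the choice $q=\max\{1,\lfloor k_1/3\rfloor\}$, are used; any near-extreme values of $s$ producing slightly degenerate configurations are handled by the same argument with those configurations written out explicitly, as in the $k_1=2$ case at the end of Lemma \ref{g6}.
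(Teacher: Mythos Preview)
Your first ``saving'' is not valid, and this breaks the count. You claim that the instability of $\mE_{g+10+11s}$ forces a nontrivial configuration constraint at $P_{g+10+11s}$, cutting the image of $\GG_{g+10+11s}$ in $\GG_{g+9+11s}$ to codimension~$1$. But from Lemma~\ref{parte}, the only non-repeated entry of $(c_{i,2g+17+22s})$ is $c_1$; every other entry (in particular $c_{k+1}$, $c_{6+6s}$, $c_{7+6s}$) occurs in a repeated pair. Hence the configuration $\tau(P_{g+10+11s})=\{\{1\}\}$ is trivial, and the only condition coming from the destabilizing summand is that $\phi_{g+10+11s}$ send the point associated to $\{1\}$ to $q^{g+10+11s}_1$. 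That is a constraint on the \emph{fibre}, already absorbed as $\epsilon=1$ in Corollary~\ref{mc3}'s bound $m_{g+10+11s}\le 3$; it imposes nothing on $\GG_{g+9+11s}$. So you have at most two savings, and your telescoped sum is $\ge 10$, not $9$.

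The paper recovers the missing saving differently: the constraint $\{5+6s,k+4\}\subset S_t$ forced at $P_{g+13+11s}$ is propagated \emph{backwards} through the semi-stable components $C_{g+12+11s}$ and $C_{g+11+11s}$ via Corollary~\ref{auxc}, using the arithmetic identities $c_{5+6s,2j-2}+c_{k+4,2j-2}=c_{i_1,2j-2}+c_{i_2,2j-2}-1$ there. This yields a genuine codimension-$1$ condition on $\GG_{g+10+11s}$ (not on $\GG_{g+9+11s}$), so that $\dim\GG_{g+11+11s}\le(\dim\GG_{g+10+11s}-1)+m_{g+11+11s}$. The remaining two savings are then obtained simultaneously at $P_{g+13+11s}$: the required configuration $\{\{1\},\{2\},\{5+6s,8+6s,k,k+1,k+4\}\}$ merges three distinct blocks of the generic configuration of $\GG_{g+12+11s}$, which is a codimension-$2$ condition, not two separate codimension-$1$ conditions at adjacent nodes. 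Your vague appeal to ``the two nodes of $\mE_{g+13+11s}$'' does not supply this; the $P_{g+14+11s}$ side gives no extra constraint on $\GG_{g+13+11s}$ beyond what is already in $m_{g+14+11s}\le 1$.
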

\begin{proof}
  By Corollary \ref{mc1}, \ref{mc3},
  $$\dim\GG_{g+11s+20}\le\dim\GG_{g+9+11s}+\sum_{r=1}^{11}m_{g+9+11s+r}\le\dim\GG_{g+9+11s}+12.$$
  To get the claimed upper bound, we need to further analyze the configurations.

  Given $\tau_{g+13+11s}$ as in the construction, $\tau(P_{g+13+11s})$ contains a set $S$ which contains the indices $5+6s$ and $k+4$. Hence, for any two sections $t,t'$ in $V_{g+12+11s}$ such that
  $$\ord_{P_{g+13+11s}}(t)=c_{5+6s,2g+24+22s},\ord_{P_{g+13+11s}}(t')=c_{k+4,2g+24+22s},$$
  $t|_{P_{g+13+11s}}=t'|_{P_{g+13+11s}}$ must hold. Given $(c_{i,2g+25+22s})$ in \ref{parta}, the bundle $\mE_{g+13+11s}$, the data $\tau_{g+13+11s}$ and the relevant special indices (see \ref{standard}), one can compute:
  $$c_{5+6s,2g+22+22s}+c_{k+4,2g+22+22s}=(g-k_1+12+14s)+(g+k_1+9+8s)-1,$$ $$c_{5+6s,2g+20+22s}+c_{k+4,2g+20+22s}=(g-k_1+11+14s)+(g+k_1+8+8s)-1.$$
  By Corollary \ref{auxc}, let $s,s'$ be any sections in $V_{g+10+11s}$ such that
  $$\ord_{P_{g+11+11s}}(s)=c_{6s+5,2g+21+22s}\text{ and }\ord_{P_{g+11+11s}}(s')=c_{k+4,2g+21+22s},$$ $s|_{P_{g+11+11s}}=s'|_{P_{g+11+11s}}$ must hold. Denote $\mathcal{P}$ to be the codimension one closed locus in $\GG_{g+10+11s}$ such that $s|_{P_{g+11+11s}}=s'|_{P_{g+11+11s}}$; the forgetful morphism $p_{g+11+11s,g+10+11s}$ maps $\GG_{g+11+11s}$ to $\mathcal{P}$. Consequently,
  $$\dim \GG_{g+11+11s}\le(\dim\GG_{g+10+11s}-1)+m_{g+11+11s}\le\dim\GG_{g+10+11s}+1.$$
  Moreover, given $(c_{i,2g+17+22s}),(c_{i,2g+19+22s})$ (\ref{parte}), for a general object in $\GG_{g+12+11s}$,
  $$\tau(P_{g+11+11s})=\{\{5+6s,k+4\},\{k\},\{1,2,8+6s,k+1\}\}$$
  By \ref{aux}, \ref{auxc} and referring to relevant special indices, a general object in $\GG_{g+12+11s}$ must have the following configuration at $P_{g+13+11s}$: $$\{\{1\},\{2\},\{5+6s,k+4\},\{8+6s,k+1\},\{k\}\}.$$
  But for objects in $\GG_{g+13+11s}$, the configuration at $P_{g+13+11s}$ should be $$\{\{1\},\{2\},\{5+6s,8+6s,k,k+1,k+4\}\}.$$
  Consequently,
  $$\dim\GG_{g+13+11s}\le(\dim\GG_{g+12+11s}-2)+m_{g+13+11s}.$$
  Therefore,
$$\begin{aligned}
    &\dim \GG_{g+20+11s}\le(\dim\GG_{g+12+11s}-2)+m_{g+13+11s}+m_{g+14+11s}+...+m_{g+20+11s}\\
    &\le-1+\dim\GG_{g+9+11s}+m_{g+10+11s}+m_{g+12+11s}+...+m_{g+20+11s}\\
    &=\dim\GG_{g+9+11s}+9.
  \end{aligned}$$
\end{proof}
\begin{cor}
  $\dim\GG'\le\rho_{g',k'}$.
\end{cor}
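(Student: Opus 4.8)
The plan is to bound $\dim\GG'$ by chaining the fiber-dimension estimates for the forgetful morphisms $p_{r+1,r}\colon\GG_{r+1}\to\GG_r$ (\tb{Notation 17} with $\GG=\GG'$) from $r=g$ up to $r=g'-1$, and then to check that the resulting total coincides with $\rho_{g',k'}$ via the elementary identity $\rho_{g',k'}-\rho_{g,k}=3(g'-g)-\left(\binom{k'+1}{2}-\binom{k+1}{2}\right)=3(4k_1+6-q)-(8k_1+14)=4k_1+4-3q$.

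First I would fix the anchor $\dim\GG_g\le\rho_{g,k}+1$. This is the output of Induction Step 1: by Theorem \ref{Induction} there is a stack morphism $\psi\colon\GG^0\to\GG_1$ onto the open substack $\GG_1$ of sufficiently generic objects of \ma, and by Proposition \ref{1dim} it has fiber dimension $1$ at every point, so $\dim\GG^0\le\dim\GG_1+1\le\rho_{g,k}+1$ using the inductive hypothesis. The restriction of any object of $\GG'$ to $X_g$ has vanishing sequence $(c_i)$ of \eqref{gvan} at $P_{g+1}$ (forced by $c^{\Gamma'}_{g'}$ together with refinedness at the node $P_{g+1}$) and underlying bundles $\mE_1,\dots,\mE_g$, so the forgetful morphism $\GG'\to\GG_g$ factors through $\GG^0$; hence the image of $p_{g',g}$ has dimension at most $\rho_{g,k}+1$.

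Next I would assemble the increment over the last $g'-g$ components. From $\dim\GG_{r+1}\le\dim\GG_r+m_{r+1}$ and the refinements already established: Lemma \ref{g6} gives $\dim\GG_{g+9}\le\dim\GG_g+6$; iterating Lemma \ref{g7} for $s=0,\dots,q-2$ gives $\dim\GG_h\le\dim\GG_{g+9}+9(q-1)$ with $h=g+11q-2$ (when $q=1$ this iteration is vacuous and $h=g+9$); and over the remaining components — the $4(k_1-3q+1)$ semistable bundles $\mE_{h+4\ell+i}$ ($0\le\ell\le k_1-3q$, $1\le i\le4$) and the four tail bundles $\mE_{g'-3},\dots,\mE_{g'}$ — Corollary \ref{mc1}(4),(5) bounds the sum of fiber dimensions by $4(k_1-3q+1)+2$. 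Adding, $\dim\GG'\le\dim\GG_g+\bigl(6+9(q-1)+4(k_1-3q+1)+2\bigr)=\dim\GG_g+(4k_1-3q+3)$, and combining with the anchor, $\dim\GG'\le\rho_{g,k}+4k_1+4-3q=\rho_{g',k'}$.

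I expect the real difficulty to lie not in this bookkeeping but in the two inputs it rests on. One is that the configuration analyses inside the proofs of Lemmas \ref{g6} and \ref{g7} genuinely produce the claimed codimension-one drops (this is where the structure of $c^{\Gamma'}_{g'}$, the special indices, and Lemma \ref{aux}, Corollary \ref{auxc} enter, and a unit of slack is easily lost if a configuration at a node is misidentified); the telescoping is tight, so every such drop is needed. The other is the assertion that restriction to $X_g$ lands in the Step 1 locus $\GG^0$, since that is precisely the mechanism transporting "dimension $\rho$" from the inductive hypothesis to the present step. Granting these, the inequality is sharp, and together with the lower bound for the semistable locus from \cite{Ber} it also yields that $\rho_{g',k'}$ is attained, which is the content of Theorem \ref{main1}.
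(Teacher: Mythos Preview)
Your argument is correct and follows essentially the same route as the paper: the anchor $\dim\GG_g\le\rho_{g,k}+1$ via Proposition \ref{1dim}, then Lemma \ref{g6}, the iterated Lemma \ref{g7}, and Corollary \ref{mc1}(4),(5) for the tail, with the same arithmetic $1+6+9(q-1)+4(k_1-3q+1)+2=4k_1-3q+4=\rho_{g',k'}-\rho_{g,k}$. Your justification of the anchor (restriction to $X_g$ lands in $\GG^0$) is exactly what the paper's terse ``$\GG_g$ is a substack of $\GG^0$'' is meant to convey, and your acknowledgment that this identification is the real content is apt.
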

\begin{proof}
  Clearly, $\GG_g$ is a substack of $\GG^0$ defined in Lemma \ref{suffover}. Denote $\GG$ to be the open substack of \ma\ of sufficiently generic objects.

  Combine Proposition \ref{1dim}, Lemma \ref{g6}, \ref{g7}, Corollary \ref{m1}, \ref{m2}, we get
  $$\begin{aligned}
  &\dim\GG'\le\dim\GG_{g+9}+\sum_{j>g+9}m_j\le\dim\GG+1+6+9(q-1)+4(k_1-3q+1)+2\\
  &=\rr+4k_1-3q+4=\rho_{g',k'}.
\end{aligned}$$
\end{proof}
Given the condition that $\GG$ contains sufficiently generic objects, one gets $\GG_g$ is non-empty. A careful analysis of the configurations $\tau(P_{g+j})$ then shows $\mathcal{G}^{k',\text{EHT}}_{2,\omega_{g'},d'_{\bullet},c^{\Gamma'}_{g'}}(X_{g'})$ is non-empty. (See Appendix \ref{nonempty}). One can also verify the canonical determinant condition easily. (See Appendix \ref{can1}.) Moreover, the existence of non-empty semi-stable locus follows from posing some open conditions on gluing data. (See Appendix \ref{sst}.) Similarly, the existence of sufficiently generic objects. (See Appendix \ref{suffred}) Therefore, Theorem \ref{main1} holds.
\begin{cor}\label{main11}
  Same setup as in \ref{main1}. For any $q':1\le q'\le q$, define $g'=g+4k_1+6-q'$, $N=g'-g$ and let $d'_{\bullet}=d_{\bullet}+(2N,...,2N)$. Then, $\mathcal{G}^{k',\text{EHT}}_{2,\omega_{g'},d'_{\bullet},c^{\Gamma'}_{g'}}(X_{g'})$ contains an open substack of dimension $\rho_{g',k'}$ of semistable limit linear series, which contains sufficiently. generic objects.
\end{cor}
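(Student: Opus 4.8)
The plan is to re-run the construction of Section \ref{l1} with the complexity parameter $q$ replaced throughout by $q'$. Concretely, I would keep the first nine blocks $\mE_{g+1},\dots,\mE_{g+9}$ and the terminal bundles $\mE_{g'-3},\dots,\mE_{g'}$ exactly as before, repeat the eleven-term block indexed by $s$ only over $s=0,\dots,q'-2$, and run the four-term tail block indexed by $\ell$ over $\ell=0,\dots,k_1-3q'$, where now $h=g+11q'-2$ and $g'=g+4k_1+6-q'$. The first thing to verify is that the resulting $(k'\times(2g'-2))$ matrix of vanishing conditions $c^{\Gamma'}_{g'}$ is $(g',k')$-standard. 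This is the same block-by-block check as in the proof of \ref{standard}, only with the indices shifted: the hypotheses of Lemmas \ref{one} and \ref{one2} are imposed one component at a time, the bounds of condition (5) in Definition \ref{stan} concern only the unstable blocks, and the compatibility of consecutive blocks at a shared node depends only on the two adjacent bundles and the prescribed vanishing sequence there --- none of which is altered by shortening the repeated part and lengthening the tail.

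Next I would redo the dimension estimate essentially verbatim. Lemma \ref{g6} does not involve $q$, so it still gives $\dim\GG_{g+9}\le\dim\GG_g+6$; Lemma \ref{g7} applies unchanged for each $s=0,\dots,q'-2$, contributing an increment of $9$ per block, so $\dim\GG_h\le\dim\GG_{g+9}+9(q'-1)$; and the fiber-dimension bounds for the tail, $m_{h+4\ell+1},m_{h+4\ell+3}\le2$ and $m_{h+4\ell+2},m_{h+4\ell+4}\le0$ for $\ell=0,\dots,k_1-3q'$, together with $m_{g'-3}\le0,\ m_{g'-2}\le2,\ m_{g'-1}\le0,\ m_{g'}\le0$, follow from Lemmas \ref{m1}, \ref{m2} and Remark \ref{often} exactly as in Corollaries \ref{mc1} and \ref{mc3}. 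Feeding in Proposition \ref{1dim} for the Step~1 morphism $\psi$, I get
$$\dim\GG'\le\dim\GG+1+6+9(q'-1)+4(k_1-3q'+1)+2=\rr+4k_1-3q'+4=\rho_{g',k'},$$
the last equality being the bookkeeping identity $\rho_{g',k'}=\rr+3(4k_1+6-q')-(8k_1+14)$.

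The remaining assertions --- non-emptiness, the canonical determinant condition, a non-empty semistable locus, and the existence of sufficiently generic objects --- I would obtain exactly as in the case $q'=q$ treated in Theorem \ref{main1}: since $\GG$ contains sufficiently generic objects, $\GG_g$ is non-empty by Proposition \ref{nonempty3}; a configuration-chasing argument as in Appendix \ref{nonempty}, now carried out on the shorter chain $X_{g'}$, shows $\GG'\ne\emptyset$; the canonical determinant condition is the computation of Appendix \ref{can1}; the semistable locus is carved out by open conditions on the gluing data as in Appendix \ref{sst}; and sufficiently generic objects exist by Appendix \ref{suffred}. Finally, because $\rho_{g',k'}$ is also a lower bound for the dimension of the semistable locus of $\mathcal{G}^{k',\text{EHT}}_{2,\omega_{g'},d'_{\bullet},c^{\Gamma'}_{g'}}(X_{g'})$ by \cite{Ber}, combining with the upper bound above shows this open substack has dimension exactly $\rho_{g',k'}$.

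I expect the only genuinely delicate point to be the $(g',k')$-standardness and the configuration bookkeeping at the single junction where the truncated repeated block $(s=q'-2)$ meets the first tail block $(\ell=0)$: one must make sure that the vanishing sequence handed over at that node is precisely the one the tail construction expects, so that the codimension drops exploited in Lemmas \ref{g6} and \ref{g7} still occur. Since the blocks were deliberately designed so that each junction is a purely local matter of the two adjacent bundles and the shared vanishing data, this is a finite, if tedious, verification rather than a real obstacle; everything else is a line-by-line repetition of the $q'=q$ argument.
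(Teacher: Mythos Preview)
Your proposal is correct and takes essentially the same approach as the paper: the paper's own proof is a two-sentence remark that one replaces $q$ by $q'$ everywhere in the construction of Theorem \ref{main1} and observes that the dimension estimate and all other arguments apply unchanged. Your version is simply a more explicit unpacking of that one-liner, including the block-by-block dimension bookkeeping and the enumeration of the remaining verifications (standardness, non-emptiness, canonical determinant, semistability, sufficient genericity), all of which the paper subsumes under ``all other arguments apply.''
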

\begin{proof}
  We essentially utilize the same construction as for Theorem \ref{main1}, replacing $q$ by $q'$ everywhere. The dimension estimate and all other arguments apply.
\end{proof}
\subsubsection{The Second Construction}\label{l2}
\indent In this part, we describe the construction for an induction argument, where $k$ is odd and $L(g,k)$ stays constant while $k,g$ increase.

 Suppose $a^{\Gamma}$ is $(g,k)$-standard and \ma\ is non-empty. Take $g'=g+2k_1+2$ and $k'=k+2$. Let $\GG^0$ be the stack defined in Lemma \ref{suffover}, with respect to $N=g'-g$ and $n=2$. we define $\mathcal{G}^{k',\text{EHT}}_{2,\omega_{g'},d'_{\bullet},c^{\Gamma'}_{g'}}(X_{g'})$ by determining a $(k+2)\times 2(g'-1)$-matrix $c^{\Gamma'}_{g'}$ using the following data (in this case, every $\mE_j$ is semi-stable):
\begin{enumerate}
  \item $\mE_j=\mO(g-k_1+2t,3k_1+1-2t)\op \mO(g+k_1,k_1+1)$, for $j=g+1+t$, $0\le t\le k_1-1$;
  \item $\mE_j=\mO(g-1+2t,2k_1+2-2t)\op\mO(g+2k_1+1,0)$, for $j=g+k_1+1+t$, $0\le t\le k_1$;
  \item $\mE_{g'}=\mO(g'-1,0)^{\op 2}$.
\end{enumerate}
Fix $\GG=\mathcal{G}^{k',\text{EHT}}_{2,\omega_{g'},d^{g'}_{\bullet},c^{\Gamma'}_{g'}}(X_{g'})$ and let $\GG_r=\GG^{k',\text{EHT}}_{2,\oo_r(A_rP_{r+1}),d^r_{\bullet},c^{\Gamma'}_r}(X_r)$ (\tb{Notation 17}).

Here is the dimension estimate:
\begin{lem}\label{twosp}
$\dim\mathcal{G}^{k',\text{EHT}}_{2,\omega_{g'},d'_{\bullet},c^{\Gamma'}_{g'}}(X_{g'})\le\rho_{g',k'}$.
\end{lem}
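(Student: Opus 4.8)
The plan is to propagate an upper bound along the chain of forgetful morphisms $p_{r+1,r}\colon\GG_{r+1}\to\GG_r$ for $r=g,\dots,g'-1$, exactly as was done for the first construction in Lemmas \ref{g6} and \ref{g7}. Writing $m_{r+1}$ for the fibre dimension of $p_{r+1,r}$, upper semicontinuity of fibre dimension gives $\dim\GG=\dim\GG_{g'}\le\dim\GG_g+\sum_{r=g+1}^{g'}m_r$. For the base of the chain I would invoke Induction Step 1: as in the corollary following Lemma \ref{g7}, $\GG_g$ is (a substack of) the stack $\GG^0$ of Lemma \ref{suffover}, here formed with $N=g'-g=2k_1+2$ and $n=2$, and by Proposition \ref{1dim} the morphism $\psi\colon\GG^0\to\GG_1$ onto the sufficiently generic open substack of \ma\ has fibre dimension $1$; since that substack has dimension $\rr$ under the inductive hypothesis that \ma\ carries an open substack of dimension $\rr$ containing sufficiently generic objects, we get $\dim\GG_g\le\dim\GG^0\le\rr+1$.

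It then suffices to show $\sum_{r=g+1}^{g'}m_r\le 2k_1$, which is exactly $\rho_{g',k'}-\rr-1$: indeed with $g'=g+2k_1+2$, $k'=k+2$ and $\binom{k'+1}{2}=\binom{k+1}{2}+(2k+3)$ one computes $\rho_{g',k'}=\rr+2k_1+1$. There are $2k_1+2$ new nodes, and every new underlying bundle $\mE_j$ ($g<j\le g'$) is semi-stable and decomposable. For $g<j<g'$ the two summands $\mO(aP_j+(d-a)P_{j+1})$ and $\mO(cP_j+(d-c)P_{j+1})$ are non-isomorphic — since $P_j,P_{j+1}$ are general this would force $a=c$, and the coefficients listed in \ref{l2} never agree — so Lemma \ref{m1} gives $m_j\le 2$, improved to $m_j\le1$ after reading off the special indices of $(c_{i,2j-2})$ and checking $|\mA_1\cup\mA_2|\ge1$ at each such node, precisely as in Corollary \ref{mc1}. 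Since $\mE_{g'}=\mO(g'-1,0)^{\oplus2}$ is of the form $\mL^{\oplus2}$, Lemma \ref{m2} gives $m_{g'}\le0$; this leaves $\sum m_r\le 2k_1+1$, one unit too many. I would recover the last unit by exhibiting one further node $r$ with $m_r\le0$: at the transition between the Type (1) and Type (2) bundles of \ref{l2} (and similarly at $\mE_{g'-1}=\mO(g'-3,2)\op\mO(g'-1,0)$), the prescribed configuration forces the point of $\PP\mE_r|_{P_r}$ attached to a special index to be the image of a canonical section — the situation of Remark \ref{often} and Example \ref{config}(4) — which is a codimension-one condition on $\GG_{r-1}$, so together with Lemma \ref{m1} the relevant fibre has dimension $\le0$. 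If one forgetful step does not suffice, I would instead study $p_{j',j}$ for some $j'-j>1$ and use Lemma \ref{aux}, Corollary \ref{auxc} and Lemma \ref{sym} to show that the configuration at $P_{j+1}$ carried by the objects of $\GG_{j'}$ is a proper closed condition on $\GG_j$, exactly as in Lemmas \ref{g6} and \ref{g7}.

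Combining the estimates yields $\dim\GG\le(\rr+1)+2k_1=\rho_{g',k'}$, which is the assertion. The genuine content — and the main obstacle — is the combinatorics of the second paragraph: the matrix $c^{\Gamma'}_{g'}$ is never written out, so one must first recover the relevant vanishing sequences at the $2k_1+2$ new nodes from the bundle list via Lemmas \ref{concise1} and \ref{concise2}, then verify the bound $|\mA_1\cup\mA_2|\ge1$ uniformly along the chain, and finally isolate the extra node with $m_r\le0$; this is the same bookkeeping of special indices and configurations carried out in detail for the first construction in Lemmas \ref{g6}–\ref{g7}.
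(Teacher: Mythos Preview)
Your approach matches the paper's exactly: bound $\dim\GG_g\le\rr+1$ via Proposition \ref{1dim}, then show $m_j\le1$ for $g<j\le g'-2$ (one of the two special indices always lands on a non-repeated vanishing order, so $|\mA_1|\ge1$ in Lemma \ref{m1}), $m_{g'-1}\le0$ (here $i_1=k$, $i_2=k+2$, and all of $c_{k-1},c_k,c_{k+1},c_{k+2}$ at $P_{g'-1}$ are non-repeated, giving $|\mA_1\cup\mA_2|=2$), and $m_{g'}\le0$ by Lemma \ref{m2}. The extra unit you are looking for is recovered precisely at $j=g'-1$, not at the Type (1)/(2) transition, and no two-step forgetful analysis or closed-condition argument is needed; the count is simply $(\rr+1)+1\cdot(g'-g-2)+0+0=\rho_{g',k'}$.
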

\begin{proof}
Still denote $m_j$ for the fiber dimension of the morphism $p_{j,j-1}:\GG_j\to\GG_{j-1}$.

By the calculation in the proof of \ref{suffover},
$$(c_{i,2g-1})^T=(3k_1+2,[3k_1+1]_2,...,[2k_1+2]_2,k_1+1,k_1)$$
For $j=g+1+t$, $0\le t\le k_1-1$, special indices are $i_1=2t+1,i_2=k+1$; for $j=g+k_1+1+t$, $0\le t\le k_1$, special indices $i_1=2t+1,i_2=k+2$.

For $g<j\le g'-2$, one can inductively check that exactly one of $c_{i_1,2j-2}$, $c_{i_2,2j-2}$ is non-repeated, where $i_1,i_2$ are the special indices. Hence, $m_j\le 1$.

For $j=g'-1$, $c_{k-1,2g'-4},...,c_{k+2,2g'-4}$ are all non-repeated and $m_{g'-1}\le0$.

$m_{g'}\le0$ follows directly from \ref{m2}.

Put together, we have the following computation:
$$\begin{aligned}
&\dim\GG''\le \dim\GG_g+\dis\sum_{j=g+1}^{g'}m_j\le\rr+1+1\cd(g'-g-2)+0+0\\
&=3g-3-\binom{k+1}{2}+2k_1+1=\rho_{g',k'}
\end{aligned}$$
\end{proof}
In this case, since all $\mE_j$ ($j>g$) are semi-stable, it is trivial to verify the semi-stability condition on a non-empty open locus of $\GG''$.

Meanwhile, one can verify the canonical determinant condition straightforwardly (see Appendix Lemma \ref{can2}). Therefore, based on the non-emptiness result in Appendix Proposition \ref{nonempty2}, we arrive at the following:
\begin{thm}\label{main2}
Let $k=2k_1+1$ be odd and $a^{\Gamma}$ be $(g,k)$-standard. Suppose $\GG=\mathcal{G}^{k,\text{EHT}}_{2,\omega_g,d_{\bullet},a^{\Gamma}}(X_g)$ contains an open substack of expected dimension $\rr$ of semi-stable EHT limit linear series, which contains sufficiently generic objects. Let $\mathcal{G}^{k',\text{EHT}}_{2,\omega_{g'},d'_{\bullet},c^{\Gamma'}_{g'}}(X_{g'})$ be as defined in \ref{l2}. Then, $\mathcal{G}^{k',\text{EHT}}_{2,\omega_{g'},d'_{\bullet},c^{\Gamma'}_{g'}}(X_{g'})$ contains an open substack of dimension $\rho_{g',k'}$ of semi-stable limit linear series.
\end{thm}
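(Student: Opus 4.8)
The plan is to mirror exactly the scheme already carried out for Theorem~\ref{main1}, but for the simpler ``second construction'' of Section~\ref{l2} in which \emph{every} new bundle $\mE_j$ ($j>g$) is semi-stable. The statement we must prove amounts to: (i) $\dim\mathcal{G}^{k',\text{EHT}}_{2,\omega_{g'},d'_{\bullet},c^{\Gamma'}_{g'}}(X_{g'})\le\rho_{g',k'}$, which is Lemma~\ref{twosp}; (ii) the stack is non-empty (Appendix Proposition \ref{nonempty2}) and in fact contains a non-empty semi-stable open locus; (iii) the canonical determinant condition holds on this locus (Appendix Lemma \ref{can2}); and (iv) $\rho_{g',k'}$ is also a \emph{lower} bound for the dimension of the semi-stable locus, so that the locus actually has pure dimension $\rho_{g',k'}$ — this last point invokes the result of Bertram--Feinberg (\cite{Ber}) quoted in the remark after Corollary~\ref{mc3}. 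Finally one observes, as in the passage following Lemma~\ref{bunfix}, that objects of $\mathcal{G}^{k',\text{EHT}}_{2,\omega_{g'},d'_{\bullet},c^{\Gamma'}_{g'}}(X_{g'})$ are automatically chain-adaptable because $c^{\Gamma'}_{g'}$ is $(g',k')$-standard.

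Concretely, first I would invoke the hypothesis that $\GG=\mathcal{G}^{k,\text{EHT}}_{2,\omega_g,d_{\bullet},a^{\Gamma}}(X_g)$ has an open substack of dimension $\rr$ of semi-stable objects containing sufficiently generic ones; by Theorem~\ref{Induction} and Proposition~\ref{1dim} the stack $\GG^0$ of Lemma~\ref{suffover} (taken with $N=g'-g$, $n=2$) maps to $\GG_1\subset\GG$ with one-dimensional fibers, hence is non-empty of dimension $\rr+1$, and $\GG_g$ (the restriction to the first $g$ components) is non-empty. Second, I would run the fiber-dimension telescoping $\dim\GG_{g'}\le\dim\GG_g+\sum_{j=g+1}^{g'}m_j$ using Lemma~\ref{m1} and Lemma~\ref{m2}: since every $\mE_j$ ($j>g$) here is semi-stable and decomposable into non-isomorphic summands except $\mE_{g'}=\mO(g'-1,0)^{\oplus2}$, and since for $g<j\le g'-2$ exactly one of the two special-index vanishing orders $c_{i_1,2j-2},c_{i_2,2j-2}$ is non-repeated — a fact one checks inductively from the explicit bundle list and the vanishing sequence $(c_{i,2g-1})^T=(3k_1+2,[3k_1+1]_2,\ldots,[2k_1+2]_2,k_1+1,k_1)$ — one gets $m_j\le1$ for those $j$, while $m_{g'-1}\le0$ (all of $c_{k-1,2g'-4},\ldots,c_{k+2,2g'-4}$ non-repeated) and $m_{g'}\le0$ from Lemma~\ref{m2}. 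This yields $\dim\GG_{g'}\le\rr+1+(g'-g-2)=3g-3-\binom{k+1}{2}+2k_1+1=\rho_{g',k'}$, which is exactly Lemma~\ref{twosp}. Third, for non-emptiness and the semi-stability/canonical-determinant claims I would cite Appendix Proposition~\ref{nonempty2} and Appendix Lemma~\ref{can2}, and note that since all $\mE_j$ with $j>g$ are semi-stable, $\ell$-semistability of the glued bundle on $X_{g'}$ is an \emph{open} condition on the gluing data (Proposition~\ref{blocks} together with $\ell$-semistability being open in families) and is satisfied on a non-empty open locus by the explicit construction.

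The main obstacle I anticipate is the bookkeeping in the second step: verifying the claim ``exactly one of $c_{i_1,2j-2},c_{i_2,2j-2}$ is non-repeated'' uniformly for all $g<j\le g'-2$, which requires tracking how the vanishing sequences propagate across the chain under the rules of Lemma~\ref{concise1} (with $d=g-1$) as the first special index $i_1=2t+1$ sweeps through $1,3,5,\ldots$ in the two blocks $g<j\le g+k_1$ and $g+k_1<j\le g'-2$, while the second special index is the fixed $i_2=k+1$ (resp.\ $i_2=k+2$). One must confirm that the ``standard vanishing condition'' constraints (Definition~\ref{stan}, conditions (2)--(6)) are preserved, so that Lemma~\ref{one} genuinely applies at every component and $c^{\Gamma'}_{g'}$ is $(g',k')$-standard; this is the content buried in the phrase ``one can inductively check'' in the proof of Lemma~\ref{twosp} and is where all the real work lies. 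Everything else — the one-dimensional-fiber input from Step~1, the fixed-determinant verification, and the appeal to $\rho_{g',k'}$ as a lower bound via \cite{Ber} to upgrade the inequality to an equality on the semi-stable locus — is either already proved in the excerpt or is routine given those inputs.
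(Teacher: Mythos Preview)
Your proposal is correct and follows essentially the same approach as the paper: assemble Lemma~\ref{twosp} (dimension upper bound via the telescoping $\sum m_j$), Appendix Proposition~\ref{nonempty2} (non-emptiness), Appendix Lemma~\ref{can2} (canonical determinant), the trivial semi-stability check (all new $\mE_j$ semi-stable, so Proposition~\ref{blocks} applies), and the lower bound from \cite{Ber}. One small slip: when you invoke Lemma~\ref{concise1} you write ``with $d=g-1$,'' but on $X_{g'}$ the relevant parameter is $b'=g'-1$, so the vanishing sequences satisfy Lemma~\ref{one} with $d=g'-1$.
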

\subsubsection{The Third Construction}\label{l3}
We now describe a construction for cases where $k$ is even. This is based on the constructions for odd $k$'s.
\begin{lem}\label{C3}
Given $k=2k_1+1\ge 5$, $a^{\Gamma}$ and $d_{\bullet}=(d_1,...,d_g)$ as in Lemma \ref{C1}. Take $m=2$, $N=3k_1+3$. Define $\hat{d}_{\bullet}:=(d'_1,...,d'_g)=d_{\bullet}+(2N,...,2N)$. Let $c^{\Gamma}$ be the $(k+4)\times(2g-2)$ matrix obtained from Lemma \ref{C1} for such data. Denote $c^{\Gamma}_{k+3}=(c_{ij})$ to be the $(k+3)\times(2g-2)$ matrix consisting of the first $k+3$ rows of $c^{\Gamma}$. Then, $c^{\Gamma}_{k+3}$ satisfies the following conditions:
\begin{enumerate}
  \item For $j=1,...,g-1$, $c_{i,2j-1}+c_{i,2j}=g+3k_1+2$.
  \item For $j=1,...,g$, if $(a_{i,2j-2})$ and $(a_{i,2j-1})$ satisfy Lemma \ref{one} (resp. Lemma \ref{one2}) with $d=\frac{1}{2}d_j$ (resp. $d=\frac{1}{2}(d_j-1)$), so do $(c_{i,2j-2})$ and $(c_{i,2j-1})$, with $d=\frac{1}{2}d'_j$ (resp. $d=\frac{1}{2}(d'_j-1)$).
  \item For $i=k+1,...,k+3$, if $c_{i,2j_t-2}+c_{i,2j_t-1}=\frac{1}{2}(d'_{j_t}-1)$, then $c_{i,2j_{t+1}-2}+c_{i,2j_{t+1}-1}=\frac{1}{2}(d'_{j_{t+1}}-1)-1$; if $c_{i,2j_t-2}+c_{i,2j_t-1}=\frac{1}{2}(d'_{j_t}-1)-1$, then $c_{i,2j_{t+1}-2}+c_{i,2j_{t+1}-1}=\frac{1}{2}(d'_{j_{t+1}}-1)$.
\end{enumerate}
\end{lem}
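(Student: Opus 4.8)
The statement is essentially a bookkeeping corollary of Lemma \ref{C1}: we have already produced the full $(k+4)\times(2g-2)$ matrix $c^{\Gamma}$ with the five properties listed there, and we now simply restrict attention to its first $k+3$ rows and re-index. The plan is to observe that each of the three desired conditions is the restriction to the index range $i\le k+3$ of a condition already established in Lemma \ref{C1} with the specialization $m=2$, $N=3k_1+3$.

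First I would record the numerology: with $m=2$ and $N=3k_1+3$ we have $n=2m=4$, $k+n=k+4$, and the balancing quantity $g+N-1 = g+3k_1+2$, so condition (1) here is exactly condition (2) of Lemma \ref{C1} written out for this value of $N$, and it holds for \emph{all} rows $i\le k+4$, in particular for $i\le k+3$. Next, condition (2) here is precisely condition (3) of Lemma \ref{C1}: that condition asserts that for every $j$ the pair of columns $(c_{i,2j-2})$, $(c_{i,2j-1})$ satisfies Lemma \ref{one} (resp. Lemma \ref{one2}) with $d=\tfrac12 \hat d_j = \tfrac12 d'_j$ (resp. $d=\tfrac12(\hat d_j-1)=\tfrac12(d'_j-1)$) whenever the corresponding $a^{\Gamma}$-columns do — and since this is a statement about the full columns of $c^{\Gamma}$, it is inherited verbatim by $c^{\Gamma}_{k+3}$ (one should note here that passing from the $k+4$-row matrix to the $k+3$-row matrix does not change which entries are repeated in the sense needed for Lemmas \ref{one}, \ref{one2}, because the conditions in those lemmas are conditions on the vanishing sequences that the first $k$ and then the added rows must jointly satisfy, and discarding the last row only removes one entry from the bottom; one must check this removal does not create a spurious ``repeated'' or ``non-repeated'' discrepancy, but the explicit form of $(c_{i,2g-1})$ and the inductive definition make this immediate). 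Finally, condition (3) here is the restriction of condition (4) of Lemma \ref{C1} to the rows $i=k+1,\dots,k+3$: condition (4) of Lemma \ref{C1} is stated for all $i=k+1,\dots,k+n=k+4$, so in particular it holds for $i=k+1,k+2,k+3$, which is exactly what is asserted.

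Concretely, the proof is three lines: invoke Lemma \ref{C1} with the stated parameters to obtain $c^{\Gamma}=(c_{ij})$; set $c^{\Gamma}_{k+3}$ to be the submatrix of the first $k+3$ rows; then conditions (1), (2), (3) follow immediately from conditions (2), (3), (4) of Lemma \ref{C1} respectively, each being a logical restriction of the corresponding universally-quantified statement to a subset of the row indices. The only point requiring a word of justification — and the one I expect to be the mild ``obstacle'' — is the compatibility of the repeated/non-repeated bookkeeping under deletion of the last row, i.e. verifying that the hypotheses of Lemma \ref{one} and Lemma \ref{one2} for the truncated sequences are genuinely implied by those for the full sequences; this is handled by noting that the footnote in Lemma \ref{C1} pins down $(c_{i,2g-1})$ explicitly and that the entry $c_{k+4,2j-1}$ removed is, by construction, always the strictly smallest entry in its column and hence its removal affects neither the special indices $i_1,i_2$ (or $\ell,i^*$) of the remaining sequence nor the multiplicities of any remaining entry. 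Everything else is immediate, so I would simply write: ``This is a direct consequence of Lemma \ref{C1}; conditions (1)--(3) are the restrictions of conditions (2)--(4) of that lemma to the rows $i\le k+3$.''
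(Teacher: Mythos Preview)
Your proposal is correct and follows essentially the same approach as the paper's own proof: conditions (1) and (3) are immediate restrictions of conditions (2) and (4) of Lemma \ref{C1} to the first $k+3$ rows, and condition (2) requires the observation that the hypotheses of Lemmas \ref{one} and \ref{one2} (special indices, repetition patterns, sums $c_{i,2j-2}+c_{i,2j-1}$) are inherited by the truncated sequences. One small inaccuracy: your claim that $c_{k+4,2j-1}$ is ``always the strictly smallest entry in its column'' is not quite right---by Corollary \ref{AC1} and Lemma \ref{A2}, for $m=2$ one sometimes has $c_{k+4,2j-1}=c_{k+3,2j-1}$---but this does not affect the argument, since the special indices lie among $i\le k$ and the relevant conditions on the first $k+3$ rows are still preserved (the paper handles this with the same brevity you do).
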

\begin{proof}
Since $c^{\Gamma}_{k+3}$ consists of the first $k+3$ rows of $c^{\Gamma}$, conditions 1,3 are direct consequences of Lemma \ref{C1}. For condition 2, there are two cases. The assumptions in \ref{one}, \ref{one2} are either considering the existence of special indices, or patterns of repetition among $(c_{i,2j-2},c_{2j-1})$, or the sum $c_{i,2j-2}+c_{i,2j-1}$. If $(c_{i,2j-2})$ and $(c_{i,2j-1})$ satisfy Lemma \ref{one} (or \ref{one2}), so do $(c_{i,2j-2})_{i\le k+3}$ and $(c_{i,2j-1})_{i\le k+3}$.
\end{proof}
\begin{rem}
  For the third construction, space of sections with desired vanishing sequences still admit adapted basis. This provides us with convenience when proving non-emptiness of the stack.
\end{rem}
\begin{prop}\label{even1}
Denote $\ti{\GG}_1$ to be the open locus of sufficiently generic objects in \ma\ satisfying the following strengthened genericity property:\\
\noindent\tb{(A)} For $t=1,...,2T$, let $q^t_1$ (resp. $q^t_2$) be the image in $\PP\mE_{j_t}|_{P_{j_t}}$ (resp. $\PP \mE_{j_t}|_{P_{j_t+1}}$) of sections of the destabilizing summand of $\mE_{j_t}$; $q^t_2$ is not glued to $q^{t+1}_1$ in $(c_{k+4,2j_t-1}-N)$-th order.

Suppose $\ti{\GG}_1$ is non-empty. Then, the locally closed substack of\\ $\mathcal{G}^{k+3,\text{EHT}}_{2,\omega_g((3k_1+3)P_{g+1}),\hat{d}_{\bullet},c^{\Gamma}_{k+3}}(X_g)$ consisting of limit linear series with vanishing sequence
$$(4k_1+3,[4k_1+2]_2,...,[3k_1+3]_2,2k_1+2,[2k_1+1]_2)$$
at $P_{g+1}$ is also non-empty.
\end{prop}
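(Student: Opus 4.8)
The plan is to mimic the proof of Proposition~\ref{nonempty3}, with the extra move of truncating the last row of $c^\Gamma$. Fix an algebraically closed extension $F/K$ and an $F$-valued object $((\mE'_j,V'_j),(\phi_j))$ of $\ti\GG_1$. Since $\ti\GG_1\subseteq\GG_1$, Proposition~\ref{nonempty3} applied with $m=2$, $N=3k_1+3$ (hence $n=4$) produces a pre-image $((\mE_j,V_j),(\phi_j))\in|\GG^0|$ inside $\mathcal{G}^{k+4,\text{EHT}}_{2,\omega_g(NP_{g+1}),\hat d_\bullet,c^\Gamma}(X_g)$; by the computation in the proof of Lemma~\ref{suffover} this object has vanishing sequence $(4k_1+3,[4k_1+2]_2,\dots,[3k_1+3]_2,2k_1+2,[2k_1+1]_2,2k_1)$ at $P_{g+1}$, whose first $k+3$ entries are the sequence named in the statement. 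It therefore suffices to exhibit, for each $j$, a rank-$(k+3)$ subspace $W_j\subseteq V_j$ with $\van_{P_j}(W_j)=(c_{i,2j-2})_{i\le k+3}$ and $\van_{P_{j+1}}(W_j)=(c_{i,2j-1})_{i\le k+3}$, compatible with $(\phi_j)$; then $((\mE_j,W_j),(\phi_j))$ lies in the required substack, its determinant being $\omega_g((3k_1+3)P_{g+1})$ since the underlying bundles are unchanged, and it being refined and chain-adaptable by Lemma~\ref{C3}(2).

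I would construct the $W_j$ by descending induction on $j$, mirroring the case analysis used to define $\psi$ in Theorem~\ref{Induction}. One starts with $W_g:=V_g(-(2k_1+1)P_{g+1})$, which by the vanishing sequence above is rank $k+3$ with the correct vanishing at $P_g$ and $P_{g+1}$ (it is spanned by the first $k+3$ vectors of a $(P_g,P_{g+1})$-adapted basis of $V_g$). Given $W_{j+1}$, one transports along $\phi_{j+1}$ the sub-line bundle of $\mE_{j+1}|_{P_{j+1}}$ determined by the section of $W_{j+1}$ of maximal vanishing at $P_{j+1}$, and takes $W_j$ to be the subspace of $V_j$ whose images at $P_{j+1}$ match this datum, together with those sections of $V_j$ already forced by the vanishing profile. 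A Schubert-style dimension count, using Lemma~\ref{C3}(2) (so that each truncated pair of sequences still satisfies Lemma~\ref{one} or Lemma~\ref{one2}), the adaptedness of $V_j$, and the defining properties of $\GG^0$, then gives $\dim W_j=k+3$ with the prescribed vanishing, provided the recursion does not collapse as it passes an unstable bundle.

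The one delicate point — the sole place where the strengthened genericity \tb{(A)} is used rather than the plain sufficient genericity already built into $\GG_1$ — is precisely this non-collapse at the unstable $\mE_{j_t}$. There the truncation forces the point of $\PP\mE_{j_t}|_{P_{j_t+1}}$ carrying the bottom vanishing order of $W_{j_t}$ to be the destabilizing image $q^t_2$ (this is the effect of the repeated vanishing order $c_{k+4,2j_t-1}=c_{k+3,2j_t-1}$), and for $W_{j_t-1}$ and the subspaces further down the chain to retain the correct rank one needs $q^t_2$, transported along $\phi_{j_t+1},\dots,\phi_{j_{t+1}}$ in $(c_{k+4,2j_t-1}-N)$-th order in the sense of Definition~\ref{suff} with $S=N$, to avoid the destabilizing image $q^{t+1}_1$ of the next unstable bundle — which is exactly property \tb{(A)}. (Were $c_{k+4,2j_t-1}$ a non-repeated vanishing order this would already follow from sufficient genericity; in the present construction it is generically repeated, which is why the strengthening is needed.) Granting this, $((\mE_j,W_j),(\phi_j))$ is the desired object and the named substack is non-empty. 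The main obstacle is thus the coherent propagation of the $(k+3)$-dimensional subspace past the unstable bundles, which is exactly what property \tb{(A)} is engineered to resolve.
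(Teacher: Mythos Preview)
Your approach is the paper's: lift to a $(k+4)$-dimensional object in $\GG^0$ via Proposition~\ref{nonempty3}, then carve out rank-$(k+3)$ subspaces $\ti V_j\subseteq V_j$ compatible with the gluing. The paper organises this as a dichotomy rather than a uniform descending transport: when $c_{k+3,2j-1}>c_{k+4,2j-1}$ one sets $\ti V_j:=V_j(-c_{k+3,2j-1}P_{j+1})$; when $c_{k+3,2j-1}=c_{k+4,2j-1}$ one sets $\ti V_j:=\ker\bigl(V_j\to\mE_j(-c_{k+4,2j-1}P_{j+1})|_{P_{j+1}}/\mL^j\bigr)$, where $\mL^j$ is the line in the fiber whose image in $\PP\mE_j|_{P_{j+1}}$ is glued forward (in $c_{k+3,2j-1}$-th order) to the destabilizing point of the next unstable bundle $\mE_{j_t}$. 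Your descending recursion is equivalent once one checks that the transported line, traced forward to the even-indexed unstable $\mE_{j_{2s}}$, originates at $q^{2s}_1$.

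Your account of where Property~\tb{(A)} enters is inverted, however. At an odd-indexed unstable $j_t$ (where indeed $c_{k+3,2j_t-1}=c_{k+4,2j_t-1}$), the point of $\PP\mE_{j_t}|_{P_{j_t+1}}$ carrying the bottom vanishing of $W_{j_t}$ is precisely $\mL^{j_t}$, which by construction glues forward to $q^{t+1}_1$; it is \emph{not} forced to equal $q^t_2$. What one needs is $\mL^{j_t}\neq q^t_2$: otherwise the kernel construction picks out the adapted-basis vector $s_{k+4}$ alone (which lies in the destabilizing summand, since $c_{k+4,2j_t-2}+c_{k+4,2j_t-1}=d$), and $W_{j_t}$ acquires vanishing $c_{k+4,2j_t-2}$ rather than $c_{k+3,2j_t-2}$ at $P_{j_t}$. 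Property~\tb{(A)} is exactly the assertion that $q^t_2$ is not glued to $q^{t+1}_1$ in the relevant order, equivalently $\mL^{j_t}\neq q^t_2$. The failure mode is a wrong vanishing sequence for $W_{j_t}$ itself, not a rank collapse at $W_{j_t-1}$ as you suggest.
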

\begin{proof}
Under the current assumption, \ma\ contains sufficiently generic objects. By Proposition \ref{Induction}, the locally closed substack $\GG^0$ of $\mathcal{G}^{k+4,\text{EHT}}_{2,\omega((3k_1+3)P_{g+1}),\hat{d}_{\bullet},c^{\Gamma}}(X_g)$ of limit linear series with vanishing sequence
$$(4k_1+3,[4k_1+2]_2,...,[3k_1+3]_2,2k_1+2,[2k_1+1]_2,2k_1)$$
at $P_{g+1}$ is non-empty.

Recall the stack morphism $\psi:\GG^0\to\GG_1$ in Theorem \ref{Induction}. Let $((\mE_j,V_j),(\phi_j))$ be a $K$-valued object in $\GG^0$ over an object satisfying Property \tb{(A)} under $\psi$. We shall determine a $(k+3)$-dimensional subspace $\ti{V}_j\subset V_j$ for every $j$, with the desired vanishing sequences at each node.\\
\tb{Case 1:} $c_{k+3,2j-1}>c_{k+4,2j-1}$. In this case, $\ti{V}_j:=V_j(-c_{k+3,2j-1}P_{j+1}).$\\
\tb{Case 2:} $c_{k+3,2j-1}=c_{k+4,2j-1}$. In this case, one defines:
$$\ti{V}_j:=\ker (V_j\to\mE_j(-c_{k+4,2j-1}P_{j+1})|_{P_{j+1}}/\mL^{j}),$$
where $\mL^j$ is a line in the fiber whose image $q_j$ in $\PP\mE_j|_{P_j}$ is glued to $q_{j_t}$ in $c_{k+3,2j-1}$-th order via $\phi_{j+1}$,...,$\phi_{j_t}$, where $q_{j_t}$ is the image of sections of the destabilizing summand of $\mE_{j_t}$ in $\PP\mE_{j_t}|_{P_{j_t}}$ and $j_t>j$ is the smallest such index that $\mE_{j_t}$ is unstable. Notice that $((\mE_j,V_j),(\phi_j))$ is over an object satisfying Property \tb{(A)}\rm, so such $\mL^j$ always exists. \\
\indent It is easy to see the $\ti{V}_j$ thus chosen are compatible with the gluing data and have the desired vanishing sequences at $P_j,P_{j+1}$ resp. Hence, $((\mE_j,\ti{V}_j),(\phi_j))$ is an object of $\mathcal{G}^{k+3,\text{EHT}}_{2,\omega((3k_1+3)P_{g+1}),\hat{d}_{\bullet},c^{\Gamma}_{k+3}}(X_g)$, which is non-empty.
\end{proof}
\begin{rem}
Similar to being sufficiently generic, satisfying Property \tb{(A)} is an open condition: it is a complement of finitely many closed conditions.
\end{rem}
Moreover, we have the following:
\begin{lem}\label{zero}
  $((\mE_j,\ti{V}_j),(\phi_j))$ obtained in \ref{even1} is uniquely determined by $((\mE'_j,V'_j),(\phi_j))=\psi((\mE_j,V_j),(\phi_j))$, where $\psi$ is as defined in Theorem \ref{Induction}.
\end{lem}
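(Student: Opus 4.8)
The plan is to trace through the construction of $((\mE_j,\ti V_j),(\phi_j))$ in the proof of Proposition \ref{even1} and check that at every node the subspace $\ti V_j$ is forced once $((\mE'_j,V'_j),(\phi_j))=\psi((\mE_j,V_j),(\phi_j))$ is fixed. The key observation is that by Corollary \ref{cor1dim}, the object $((\mE_j,V_j),(\phi_j))$ in $\GG^0$ lying over $((\mE'_j,V'_j),(\phi_j))$ is itself uniquely determined by the choice of $V_{j_{2T}}$, which corresponds to a point in $\PP(\mE_{j_{2T}}|_{P_{j_{2T}+1}})\setminus\{q^*\}$; so a priori $((\mE_j,\ti V_j),(\phi_j))$ could depend on that extra parameter. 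The content of the lemma is that the passage $V_j\rightsquigarrow\ti V_j$ kills precisely this remaining degree of freedom, so that $\ti V_j$ depends only on $((\mE'_j,V'_j),(\phi_j))$.

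First I would recall the two cases in the definition of $\ti V_j$. In Case 1, where $c_{k+3,2j-1}>c_{k+4,2j-1}$, one has $\ti V_j=V_j(-c_{k+3,2j-1}P_{j+1})$; but the subspace of $V_j$ vanishing to order $\ge c_{k+3,2j-1}$ at $P_{j+1}$ coincides with the subspace of $V'_j$ vanishing to that order (since $V'_j$ already realizes the vanishing sequence $(c_{i,2j-1})_{i\le k}$ at $P_{j+1}$ and $c_{k+3,2j-1}>c_{k+4,2j-1}$ forces the extra sections of $V_j$ beyond $V'_j$ to vanish to strictly smaller order at $P_{j+1}$), hence $\ti V_j$ is determined by $V'_j$ alone. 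In Case 2, where $c_{k+3,2j-1}=c_{k+4,2j-1}$, the subspace $\ti V_j$ is the kernel of $V_j\to\mE_j(-c_{k+4,2j-1}P_{j+1})|_{P_{j+1}}/\mL^j$, and $\mL^j$ is characterized intrinsically: its image $q_j$ in $\PP\mE_j|_{P_j}$ is glued, in $c_{k+3,2j-1}$-th order via $\phi_{j+1},\dots,\phi_{j_t}$, to the image $q_{j_t}$ of sections of the destabilizing summand of $\mE_{j_t}$, where $j_t>j$ is the least index with $\mE_{j_t}$ unstable. Since the $\phi_\bullet$ are part of the data $((\mE'_j,V'_j),(\phi_j))$ and the destabilizing summands of the (fixed) underlying bundles $\mE_{j_t}$ are intrinsic, $q_j$ — hence $\mL^j$, hence $\ti V_j$ — is determined by $((\mE'_j,V'_j),(\phi_j))$ without reference to the choice of $V_{j_{2T}}$; note that here one uses Property \textbf{(A)} exactly to guarantee such a $\mL^j$ exists and is unique.

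The remaining step is to check that the index $j_{2T}$ — the last node where a choice was made in Corollary \ref{cor1dim} — always falls into Case 2 with $j_t=j_{2T}$ itself in the characterization, so that $\mL^{j_{2T}}$ is pinned down by the gluing of $q_{j_{2T}}$ to the destabilizing summand of $\mE_{j_{2T}}$ across a trivial chain, i.e. $q_{j_{2T}}=q^{2T}_1$ is forced; equivalently, $\ti V_{j_{2T}}$ does not see the one-parameter family of $V_{j_{2T}}$'s. This follows from the explicit form of $c^{\Gamma}_{k+3}$: one must verify $c_{k+3,2j_{2T}-1}=c_{k+4,2j_{2T}-1}$ and that for $j$ between the previous unstable index and $j_{2T}$ the relevant vanishing orders coincide, which is a routine bookkeeping check against Lemma \ref{C1} and the definition of $(g,k)$-standardness. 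Then $\ti V_j$ for $j<j_{2T}$ in Case 2 is determined inductively (downward) from $\ti V_{j_{2T}}$ via the $\phi_\bullet$, and $\ti V_j$ for $j\ge j_{2T}$ or in Case 1 is determined directly from $V'_j$; chaining these together shows $((\mE_j,\ti V_j),(\phi_j))$ is a function of $((\mE'_j,V'_j),(\phi_j))$ alone.

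I expect the main obstacle to be the Case 2 gluing argument: one must be careful that the chain $\phi_{j+1},\dots,\phi_{j_t}$ along which $q_j$ is glued passes only through semistable bundles (so that the isomorphisms $\tau(e_\ell)$ of Example \ref{fiso} and the composite $\tau_{\bullet}$ of Definition \ref{suff} make sense, which in turn needs the vanishing-order inequalities of the type in \eqref{cond1}), and that the choice in Corollary \ref{cor1dim} genuinely propagates \emph{forward} only, never affecting $\ti V_j$ for $j$ to the left of $j_{2T}$. Checking that the prescribed vanishing data force this separation is exactly the bookkeeping I would want to delegate to the earlier lemmas (Lemma \ref{C1}, Lemma \ref{gmor}, and the $(g,k)$-standardness conditions), rather than redo by hand.
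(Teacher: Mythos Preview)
Your Case~1 argument contains a genuine gap. You claim that when $c_{k+3,2j-1}>c_{k+4,2j-1}$, the subspace $\ti V_j=V_j(-c_{k+3,2j-1}P_{j+1})$ ``coincides with the subspace of $V'_j$ vanishing to that order,'' and hence is determined by $V'_j$ alone. But $\dim V'_j=k$ while $\dim\ti V_j=k+3$: the sections of $V_j$ with vanishing orders $c_{k+1,2j-1},c_{k+2,2j-1},c_{k+3,2j-1}$ at $P_{j+1}$ are \emph{not} in $V'_j$, yet all three survive the cutoff at $c_{k+3,2j-1}$. So $\ti V_j$ is not a subspace of (the image of) $V'_j$, and you have not explained why those three extra dimensions are pinned down. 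Relatedly, your ``remaining step'' is based on a mistaken claim: by Corollary~\ref{AC1} with $t=2T$ even and $m=2$, one has $(c_{k+1},c_{k+2},c_{k+3},c_{k+4})_{2j_{2T}-1}=(e,e-1,e-1,e-2)$, so $c_{k+3,2j_{2T}-1}>c_{k+4,2j_{2T}-1}$ and $j_{2T}$ is in Case~1, not Case~2. There is no ``next'' unstable index after $j_{2T}$, so the gluing characterization of $\mL^{j}$ you invoke makes no sense there.

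The paper's proof avoids these difficulties by running a \emph{forward} induction on $j$ rather than arguing node-by-node from the data $((\mE'_j,V'_j),(\phi_j))$ directly. One checks $\ti V_1=\Gamma(\mE_1(-c_{k+3,1}P_2))$ is forced; for semi-stable $\mE_j$, the adapted-basis structure shows $\ti V_j$ is determined by its associated sequence of points at $P_j$, which is inherited from $\ti V_{j-1}$ via $\phi_j$. For unstable $\mE_j$ one case-splits on the pattern of $c_{k,2j-1},c_{k+1,2j-1},c_{k+2,2j-1}$ (not on $c_{k+3}$ versus $c_{k+4}$): in two of the three cases both associated sequences of $\ti V_j$ are already determined, and in the third ($j=j_{2t-1}$) one looks ahead to $\ti V_{j_{2t}}$, which falls into one of the first two cases. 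This inductive scheme never needs to isolate the extra parameter of Corollary~\ref{cor1dim} explicitly; your approach could perhaps be repaired by tracking that the free choice lives only in the $(k+4)$-th row and is killed by the $c_{k+3}$ cutoff, but as written the argument does not do this.
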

\begin{proof}
When $j=1$, $\ti{V}_1=\Gamma(\mE_1(-c_{k+3,1}P_2))$ is obviously uniquely determined.

Given $\ti{V}_{j-1}$ and $\phi_j$, if $\mE_j$ is semi-stable, under our assumptions on vanishing sequences, $\ti{V}_j$ is uniquely determined by its associated sequence of points in $\PP\mE_j|_{P_j}$, which is determined by the associated sequence of points of $\ti{V}_{j-1}$ together with $\phi_j$. If $\mE_j$ is unstable, $\ti{V}_j$ is determined by its associated sequence of points in $\PP\mE_j|_{P_j}$, $\PP\mE_j|_{P_{j+1}}$. We have three cases to analyze:
\begin{enumerate}
  \item $c_{k,2j-1}=c_{k+1,2j-1}$;
  \item $c_{k,2j-1}>c_{k+1,2j-1}>c_{k+2,2j-1}$;
  \item $c_{k,2j-1}>c_{k+1,2j-1}=c_{k+2,2j-1}$.
\end{enumerate}
In the first two cases, one can check that the two associated sequences of $\ti{V}_j$ are solely determined by (the associated sequences of) $V'_{j-1}$; in the third case, $j=j_{2t-1}$ and the associated sequences are determined by $\ti{V}_{j-1}$, $\phi_j$,..,$\phi_{j_{2t}}$ and $\ti{V}_{j_{2t}}$, where $c_{k,2j_{2t}-1},c_{k+1,2j_{2t}-1}$ fall into one of the first two cases and hence $\ti{V}_{j_{2t}}$ is determined solely by $V'_{j_{2t}}$.
\end{proof}
We now work out the relevant fiber dimension.
\begin{thm}\label{even3}
Let $\ti{\GG}_1$ be the substack of \ma\ of sufficiently generic objects satisfying Property \tb{(A)} (\ref{even1}) and $\ti{\GG}^0$ be the substack of $\mathcal{G}^{k+3,\text{EHT}}_{2,\omega((3k_1+3)P_{g+1}),\hat{d}_{\bullet},c^{\Gamma}_{k+3}}(X_g)$ of objects satisfying the two properties in Lemma \ref{suffover} and the following one:\\
\noindent\tb{(A')} For $t=1,...,2T$, let $q^t_1$ (resp. $q^t_2$) be the image in $\PP\mE_{j_t}|_{P_{j_t}}$ (resp. $\PP \mE_{j_t}|_{P_{j_t+1}}$) of sections of the destabilizing summand of $\mE_{j_t}$; $q^t_2$ is not glued to $q^{t+1}_1$ in $c_{k+4,2j_t-1}$-th order.\\
Then, there exists a stack morphism $\psi_2:\ti{\GG}^0\to\ti{\GG}_1$ with fiber dimension zero.
\end{thm}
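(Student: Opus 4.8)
The plan is to imitate the proof structure already used for Theorem \ref{Induction} (Induction Step 1), since $\psi_2$ is essentially a ``truncated'' version of $\psi$: instead of passing from a $(k+4)$-dimensional limit linear series down to a $k$-dimensional one by twisting down by $N$, we pass from the $(k+3)$-dimensional stack $\ti{\GG}^0$ down to the $k$-dimensional stack $\ti{\GG}_1$ (inside \ma). First I would define $\psi_2$ on objects: given a $T$-valued object $((\mE_j,V_j),(\phi_j))$ of $\ti\GG^0$, set $\mE'_j:=\mE_j(-N(T\times P_{j+1}))$ with $N=3k_1+3$, keep the gluing data $(\phi_j)$ (which makes sense exactly as in the proof of Lemma \ref{suffover}, via the canonical identifications there), and construct the rank-$k$ subbundles $V'_j\subseteq i^{-1}(V_j)$ by the same inductive recipe used in Theorem \ref{Induction}: define $V'_g=i^{-1}(V_g(-NP_{g+1}))$, and for $j<g$ use Case 1 ($c_{k,2j-1}>c_{k+1,2j-1}$: twist down) or Case 2 ($c_{k,2j-1}=c_{k+1,2j-1}$: take the kernel of the appropriate $\gamma^{j-1}_{c_{k,2j-3},\phi_j^{-1}(\mL^{j})}(P_j)$-type map), invoking Appendix Lemmas \ref{case1} and Corollary \ref{case2} for representability/rank. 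Functoriality on morphisms is verified exactly as in Theorem \ref{Induction}: isomorphisms of limit linear series come from isomorphisms of underlying bundles, these descend to the twisted bundles, and composition is respected. The topological factorization through $\ti\GG_1$ (landing in sufficiently-generic objects satisfying Property \textbf{(A)}) follows by inspecting $|\psi_2|$, using Property \textbf{(A')} to guarantee the glued points $q^t_2,q^{t+1}_1$ avoid each other.

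The substantive new point is the fiber dimension being \emph{zero} rather than $1$. Here the key input is Lemma \ref{zero}: given an $F$-valued object $((\mE'_j,V'_j),(\phi_j))$ of $\ti\GG_1$, any preimage $((\mE_j,\ti V_j),(\phi_j))$ in $\ti\GG^0$ is \emph{uniquely} determined — in contrast to the $k+4$ case of Corollary \ref{cor1dim}, where there was a one-parameter family of choices for $V_{j_{2T}}$ parametrized by $\PP(\mE_{j_{2T}}|_{P_{j_{2T}+1}})\setminus\{q^*\}$. The reason for the drop: in Lemma \ref{zero} the third case ($c_{k,2j-1}>c_{k+1,2j-1}=c_{k+2,2j-1}$) forces $j=j_{2t-1}$ and the associated sequences of $\ti V_j$ to be determined by $\ti V_{j_{2t}}$ together with the gluing maps, and $\ti V_{j_{2t}}$ in turn falls into one of the first two cases, hence is determined solely by $V'_{j_{2t}}$. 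So there is no free point left to choose. To turn this pointwise-uniqueness into the scheme-theoretic statement $\delta_x\psi_2=0$, I would follow the template of Proposition \ref{1dim} verbatim: form the fiber $\GG_y=\ti\GG^0\times_{\ti\GG_1}\spec(F)$ over a geometric point $y$, exhibit its representing scheme $G_y$ as a locally closed subscheme of the product of Grassmannians $\mathbf{G}_{k+3}=\prod_j\Gr(k+3,\Gamma(\mE_j);(c_{i,2j-2}),(c_{i,2j-1}))$ fibered appropriately, and then use Lemma \ref{zero} to conclude $G_y$ is a single (reduced) point, hence $0$-dimensional; the Cartesian-diagram chase identifying $G_y\cong\ti\GG^0\times_{\ti\GG_1}\spec(F)$ is identical to the one at the end of Proposition \ref{1dim} (Tag 02XD).

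Concretely, the steps in order: (1) verify the twisted gluing data $(\phi_j)$ is well-defined on $\ti\GG^0$, copying the computation in Lemma \ref{suffover}; (2) define $V'_g$ and the inductive $V'_j$, citing Appendix \ref{case1}, \ref{case2} for each case, and note that starting in $\ti\GG^0$ (rather than the ambient stack) is what keeps all ranks equal to $k$; (3) check $\psi_2$ respects morphisms and composition, and that $|\psi_2|$ factors through the sufficiently-generic-plus-(A) locus $\ti\GG_1$; (4) for the fiber dimension, set up the fibered square over $\mathcal{M}_{2,\vec d,\omega_g(NP_{g+1})}$, realize $G_y$ inside $\mathbf{G}_{k+3}$, and invoke Lemma \ref{zero} to see $G_y$ is a point; (5) run the Tag 02XD Cartesian-diagram identification to conclude $\delta_x\psi_2=0$ for all $x\in|\ti\GG^0|$.

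The main obstacle I expect is step (2): one must be careful that in the $(k+3)$-dimensional truncation the Case 2 construction (kernels of the relative evaluation maps modulo a sub-line-bundle $\phi_j^{-1}(\mL^j)$) still produces a rank-$k$ \emph{subbundle}, not just a rank-$(k-1)$ subsheaf — this is exactly where Corollary \ref{case2} and Property \textbf{(A')} (to control the destabilizing-summand glued points so the relevant line bundles $\mL^j$ genuinely exist and are transverse) are essential, and the bookkeeping of which indices are repeated vanishing orders (using Appendix Lemmas \ref{A1}, \ref{A2} adapted to $n=3$ and the rows $c_{k+1},c_{k+2},c_{k+3}$) must be redone. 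Everything else is a faithful transcription of the arguments already carried out for Theorem \ref{Induction} and Proposition \ref{1dim}.
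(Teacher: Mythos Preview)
Your proposal is correct and follows essentially the same approach as the paper: construct $\psi_2$ by transcribing the recipe from Theorem \ref{Induction} (twist down by $N=3k_1+3$, keep $(\phi_j)$, build $V'_j$ via the Case 1/Case 2 dichotomy using Appendix \ref{case1}--\ref{case2}), handle morphisms identically, and then invoke Lemma \ref{zero} together with the Cartesian-diagram argument of Proposition \ref{1dim} to conclude the fiber is a single point. The paper's own proof is in fact terser than your outline and simply points back to those earlier arguments, so your plan is a faithful (indeed more explicit) expansion of it.
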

\begin{proof}
The construction of $\psi_2$ is similar to the construction of $\psi$ in Theorem \ref{Induction}.

\indent Let $((\mE_j,V_j),(\phi_j))$ be an object over some $K$-scheme $T$. $\psi_2$ sends $((\mE_j,V_j),(\phi_j))$ to $((\mE'_j,V'_j),(\phi_j))$, where $\mE'_j:=\mE_j(-(3k_1+3)P_{j+1})$ and $V'_j$ are defined as follows:\\
\tb{Case 1}: $c_{k,2j-1}>c_{k+1,2j-1}$. In this case, $V'_j:=V_j(-c_{k,2j-1}(T\times P_{j+1}))$, which is a sub-bundle of $\pi^j_*\mE_j(-(3k_1+3)P_{j+1})$ in the sense of \ref{subb}.\\
\tb{Case 2}: $c_{k,2j-1}=c_{k+1,2j-1}$. In this case, there exists a smallest $j'>j$ such that $c_{k,2j'-1}>c_{k+1,2j'-1}$ and $V'_{j'}$ is defined in \tb{Case 1}. One can then inductively define $V'_{j'-1},...,V'_j$ in exactly the same way as in Theorem \ref{Induction} and we omit the details.\\
\indent The operation of $\psi_2$ on morphisms is defined in the same way as for $\psi$ in \ref{Induction}.

To conclude the fiber dimension over a geometric point $x:\spec(F)\to\ti{\GG}_1$, we claim that in this case the fiber $\ti{\GG}^0_x$ is just a point. Following Lemma \ref{zero} and by exactly the same kind of argument as in Proposition \ref{1dim}, one can establish an equivalence of categories between $\spec(F)$ and $\ti{\GG}^0_x$.

This concludes the proof.
\end{proof}
Given $\mathcal{G}^{k+3,\text{EHT}}_{2,\omega((3k_1+3)P_{g+1}),\hat{d}_{\bullet},c^{\Gamma}_{k+3}}(X_g)$ as in Theorem \ref{even3}, denote $g'=g+3k_1+3$. We now define $\GG^{k+3,\text{EHT}}_{2,\oo_{g'},d'_{\bullet},c^{\Gamma'}_{g'}}(X_{g'})$ by determining some $(k+3)\times (2g'-2)$ matrix $c^{\Gamma'}_{g'}$ with $c^{\Gamma}_{k+3}$ being a sub-matrix consisting of its first $2g-2$ columns. We shall present the data in the same way as before:\\
\ \\
\noindent\framebox[1\width]{\ $\mE_{g+1}$ is unstable:\ }\ $\mE_{g+1}=\mO(g-k_1,b'-(g-k_1))\op\mO(g+k_1,b'-(g+k_1-1))$, $\tau_{g+1}=\{1,k\}$. \\
\ \\
\noindent\framebox[1\width]{\ $\mE_{g+2},\mE_{g+3},\mE_{g+4}$ are semi-stable:\ }
\begin{enumerate}
  \item $\mE_{g+2}=\mO(g-k_1-1,b'-(g-k_1-1))\op \mO(g+k_1+2,b'-(g+k_1+2))$;
  \item $\mE_{g+3}=\mO(g-k_1+1,b'-(g-k_1+1))\op \mO(g+k_1+2,b'-(g+k_1+2))$;
  \item $\mE_{g+4}=\mO(g-k_1+2,b'-(g-k_1+2))\op \mO(g+k_1+3,b'-(g+k_1+3))$.
\end{enumerate}

\noindent\framebox[1\width]{\ $\mE_{g+5}$ is unstable:\ }\\
\ \\
$\mE_{g+5}=\mO(g-k_1+4,b'-(g-k_1+4)])\op\mO(g+k_1+3,b'-(g+k_1+4))$, $\tau_{g+5}=\{5,k+1\}$.\\
\ \\
\noindent\framebox[1\width]{\ $\mE_{g+6}$,...,$\mE_{g'}$ are semi-stable:\ }\\
\ \\
(1) $\mE_{g+5+j}=\mO(g-k_1+5+2j,b'-(g-k_1+5+2j))\op\mO(g+k_1+3,b'-(g+k_1+3))$,

    for $j=1,...,k_1-1$;\\
(2) $\mE_{g+k_1+5+2s}=\mO(g+5+3s,b'-(g+5+3s))\op\mO(g+2k_1+3+s,b'-(g+2k_1+3+s))$,

for $s=0,...,k_1-2$;\\
(3) $\mE_{g+k_1+6+2s}=\mO(g+6+3s,b'-(g+6+3s))\op\mO(g+2k_1+4+s,b'-(g+2k_1+4+s))$,

for $s=0,...,k_1-2$;\\
(4) $\mE_{g'}=\mO(g'-1,0)^{\op 2}$.\\
\indent Similar to the first construction, we list certain vanishing sequences:
\begin{lem}\label{evenonly}We have\\
$(c_{i,2g+1})^T=(4k_1+3,[4k_1+2]_2,4k_1+1,[4k_1-3]_2,...,[3k_1]_2,3k_1-1,2k_1,[2k_1-1]_2)$;\\
$(c_{i,2g+7})^T=(4k_1+1,[4k_1]_2,4k_1-2,[4k_1-3]_2,...,[3k_1]_2,3k_1-1,2k_1,[2k_1-1]_2)$;\\
$(c_{i,2g+9})^T=([4k_1-1]_2,[4k_1-2]_2,4k_1-4,[4k_1-5]_2,...,[3k_1-1]_2,3k_1-2,3k_1-3,$\\
$2k_1-1,[2k_1-2]_2)$;\\
$(c_{i,2g+2k_1+3})^T=([3k_1+2]_2,[3k_1+1]_2,[3k_1-1]_2,...,[2k_1+3]_2,2k_1+2,2k_1+1,2k_1,$\\
$2k_1-1,[k_1+1]_2)$.
\end{lem}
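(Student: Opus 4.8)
The plan is to obtain all four sequences by a direct, step-by-step propagation down the chain, using the fact that the portion of $c^{\Gamma'}_{g'}$ lying beyond the first $2g-2$ columns is uniquely determined once one fixes the bundles $\mE_{g+1},\dots,\mE_{g'}$, the index sets $\tau_j$ (as prescribed in Section \ref{l3}), and the vanishing sequence at $P_{g+1}$. That initial datum is supplied by Proposition \ref{even1}: the vanishing sequence of $V_g$ at $P_{g+1}$ is $(4k_1+3,[4k_1+2]_2,\dots,[3k_1+3]_2,2k_1+2,[2k_1+1]_2)$. From there the two mechanisms driving the recursion are: (i) the refinement condition, which for this construction reads $b_{i,j}+a_{i,j+1}=g+3k_1+2=b'$ for all $i$ (Lemma \ref{C3}(1), extended verbatim to the components $g+1,\dots,g'$), so that $\van_{P_{j+1}}(V_{j+1})$ is recovered from $\van_{P_{j+1}}(V_j)$ by replacing each entry $b$ by $b'-b$ and listing the outcome in increasing order; and (ii) Lemmas \ref{concise1} and \ref{concise2}, which recover $\van_{P_{j+1}}(V_j)$ from $\van_{P_j}(V_j)$ together with the bundle $\mE_j$ (equivalently, its special indices) and, when $\mE_j$ is unstable, the auxiliary set $\tau_j$.

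Concretely I would walk down the chain as follows. Apply (i) to the $P_{g+1}$-sequence to get $\van_{P_{g+1}}(V_{g+1})$; then apply (ii) with the unstable bundle $\mE_{g+1}=\mO(g-k_1,b'-(g-k_1))\op\mO(g+k_1,b'-(g+k_1-1))$ and $\tau_{g+1}=\{1,k\}$ to obtain $(c_{i,2g+1})=\van_{P_{g+2}}(V_{g+1})$, which is the first displayed formula. Iterating (i) and (ii) through the semistable bundles $\mE_{g+2},\mE_{g+3},\mE_{g+4}$ (using Lemma \ref{concise1}, with the special indices read off from the two summands) produces $(c_{i,2g+7})$; one further step across the unstable $\mE_{g+5}$ with $\tau_{g+5}=\{5,k+1\}$ gives $(c_{i,2g+9})$; and continuing through the semistable bundles up to $\mE_{g+k_1+2}$ yields $(c_{i,2g+2k_1+3})$. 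At every step it suffices to follow only the positions of the special and destabilizing indices and the endpoints of the ``constant-difference'' runs $[\,\cdot\,]_2$, because away from those indices each entry merely decreases by a fixed amount from one node to the next, exactly as it did at $P_{g+1}$.

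The hard part will be the bookkeeping rather than any conceptual step. Three things need care. First, one must confirm that at every node the pair $((c_{i,2j-2}),(c_{i,2j-1}))$ really does satisfy the hypotheses of Lemma \ref{one} (resp.\ Lemma \ref{one2}) with the appropriate $d$, so that Lemmas \ref{concise1}, \ref{concise2} legitimately apply; this is essentially the $(g',k+3)$-standardness verification of \ref{standard} (cf.\ Lemma \ref{C3}), and it is precisely what forces the monotonicity $a_i\le a_{i+1}$, $b_i\ge b_{i+1}$ to persist, a fact that hinges on the specific numerology of Section \ref{l3} (the spacing of the two summands' twists, and $k_1\ge 2$). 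Second, crossing an unstable bundle reshuffles the indices in $\tau_j$: those entries acquire vanishing sum $d$ rather than $d-1$ at the outgoing node, so after applying Lemma \ref{concise2} the sequence must be re-sorted and the repeated versus non-repeated status of the orders re-identified. Third, that very status must be carried accurately through the whole computation, since it is exactly the information that the downstream fiber-dimension arguments extract from \ref{evenonly} (via the sets $M$ and $\epsilon$ of the Lemma \ref{m3}-type estimates), and a single miscount anywhere propagates. Once these are handled, matching the resulting sequences against the four displayed formulas is purely mechanical.
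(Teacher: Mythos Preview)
Your approach is correct and is exactly what the paper intends: Lemma \ref{evenonly} (like Lemmas \ref{parta}, \ref{parte}, \ref{partf}) is stated in the paper without proof, as a record of a direct node-by-node computation using the refinement rule $c_{i,2j-1}+c_{i,2j}=b'$ together with Lemmas \ref{concise1}, \ref{concise2} and the bundle data of Section \ref{l3}. One small terminological point: in the third construction the paper explicitly does \emph{not} claim $(g',k')$-standardness (see the remark preceding Lemma \ref{standard3}); the relevant verification that the hypotheses of Lemmas \ref{one}/\ref{one2} hold at each step is Lemma \ref{standard3}, not Lemma \ref{standard}, but this does not affect your argument.
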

Eventually, we carry out the dimension estimate:
\begin{lem}
If $\ti{\GG}_1$ (\ref{even3}) is non-empty, $\dim\GG^{k+3,\text{EHT}}_{2,\oo_{g'},d'_{\bullet},c^{\Gamma'}_{g'}}(X_{g'})\le\rho_{g',k'}$.
\end{lem}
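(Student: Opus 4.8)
The plan is to follow the pattern of Lemmas~\ref{g6}, \ref{g7} and \ref{twosp}: bound the dimension of the restriction of $\GG'':=\GG^{k+3,\text{EHT}}_{2,\oo_{g'},d'_{\bullet},c^{\Gamma'}_{g'}}(X_{g'})$ to the first $g$ components, and then add up the fiber dimensions of the forgetful morphisms over the $3k_1+3$ newly added components. Write $\GG''_r$ for the restriction to the first $r$ components (\tb{Notation 17}), so that $\GG''_{g'}=\GG''$, and let $p_{r+1,r}\colon\GG''_{r+1}\to\GG''_r$ be the forgetful morphisms with fiber dimension $m_{r+1}$, $g\le r<g'$.

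First I would show $\dim\GG''_g\le\rr$. Since $c^{\Gamma'}_{g'}$ restricts on its first $2g-2$ columns to $c^{\Gamma}_{k+3}$ (Lemma~\ref{C3}) and imposes at $P_{g+1}$ the vanishing sequence of Proposition~\ref{even1}, and since $(g',k+3)$-standardness makes every $V_j$ $(P_j,P_{j+1})$-adaptable (so $V_j$ contains the span $V'_j$ of the $k$ most vanishing members of an adapted basis, and the refinement conditions make the $V'_j$ compatible with the gluings and yield an object of \ma), every object of $\GG''_g$ satisfies the two conditions of Lemma~\ref{suffover}. By the construction of $\psi_2$ in Theorem~\ref{even3} and the uniqueness statement of Lemma~\ref{zero}, the forward assignment $((\mE_j,V_j),(\phi_j))\mapsto((\mE_j(-(3k_1+3)P_{j+1}),V'_j),(\phi_j))$ defines a morphism from $\GG''_g$ to \ma\ which factors through the sufficiently generic locus $\GG_1$ and has zero-dimensional fibers. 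As $\GG_1$ is an open substack of \ma, whose dimension is $\rr$ by the inductive hypothesis (the interesting case being exactly when $\ti{\GG}_1$ is non-empty), we get $\dim\GG''_g\le\dim\GG_1\le\rr$.

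Next I would bound each $m_{r+1}$, $g\le r<g'$, from the explicit data of Section~\ref{l3} and the vanishing sequences of Lemma~\ref{evenonly}. For a semistable decomposable $\mE_{r+1}=\mL\op\mL'$ with $\mL\not\cong\mL'$ (all of $\mE_{g+2},\mE_{g+3},\mE_{g+4}$, the chain $\mE_{g+6},\dots,\mE_{g+k_1+4}$, and the interleaved $\mE_{g+k_1+5},\dots,\mE_{g+3k_1+2}$), Lemma~\ref{m1} together with Remark~\ref{often}/Example~\ref{config}(4) gives $m_{r+1}\le2-|\mA_1\cup\mA_2|$, which is $0$ whenever the special indices of $(c_{i,2r}),(c_{i,2r+1})$ fall into the configuration of Example~\ref{config}(4); for $\mE_{g'}=\mO(g'-1,0)^{\op2}$ Lemma~\ref{m2} gives $m_{g'}\le0$, and for $\mE_{g'-1}$ the argument at the end of the proof of Lemma~\ref{twosp} (all of $c_{k-1,\bullet},\dots,c_{k+2,\bullet}$ non-repeated at that node) gives $m_{g'-1}\le0$; for the two new unstable bundles $\mE_{g+1}$ ($\tau_{g+1}=\{1,k\}$) and $\mE_{g+5}$ ($\tau_{g+5}=\{5,k+1\}$) one applies Lemma~\ref{m3}, reading off $\ep$ and the set $M$ from $\tau_{g+1},\tau_{g+5}$ and the sequences $(c_{i,2g+1}),(c_{i,2g+7}),(c_{i,2g+9}),(c_{i,2g+2k_1+3})$ of Lemma~\ref{evenonly}. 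This produces an explicit table $m_{r+1}\le\mu_{r+1}$.

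The decisive step, and the main obstacle, is to upgrade this table by locating the extra codimension drops coming from forgetful morphisms $p_{j',j}$ with $j'>j+1$ whose image is a proper closed substack of $\GG''_j$, exactly as in the proofs of Lemmas~\ref{g6} and \ref{g7}. Concretely: near the unstable bundles $\mE_{g+1}$ and $\mE_{g+5}$ the prescribed vanishing sequences force the configuration at $P_{g+1}$, resp.\ $P_{g+5}$, to contain a prescribed class (the one whose associated point is the image of the destabilizing summand), which together with Corollary~\ref{auxc} and Lemma~\ref{sym} propagates backward to a codimension-one (or larger) condition on an earlier $\GG''_j$; trivial sub-configuration propagation along the two long semistable chains accounts for the remainder. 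One must verify that these drops total exactly three, so that the $3k_1+3$ new components contribute at most $3k_1$ to the dimension. Then
\[
\dim\GG''\le\dim\GG''_g+\sum_{j=g+1}^{g'}m_j-(\text{codimension drops})\le\rr+3k_1,
\]
and since $g'=g+3k_1+3$ and $k'=k+3=2k_1+4$ give
\[
\rho_{g',k'}-\rr=3(g'-g)-\Bigl(\binom{k'+1}{2}-\binom{k+1}{2}\Bigr)=3(3k_1+3)-(6k_1+9)=3k_1,
\]
this is exactly the asserted bound $\dim\GG''\le\rho_{g',k'}$. The entire delicacy lies in the configuration bookkeeping of the third step's upgrade: the choice of $c^{\Gamma'}_{g'}$ in Section~\ref{l3} is calibrated precisely so that no more and no fewer than three codimension-one conditions appear among the $3k_1+3$ new components.
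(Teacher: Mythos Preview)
Your overall strategy is exactly the paper's: bound $\dim\GG''_g$ via the morphism $\psi_2$ of Theorem~\ref{even3} (zero-dimensional fibers, target of dimension $\rr$), then add the fiber bounds $m_{r+1}$ and subtract the codimension drops. But your bookkeeping in the last step is off.

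First, a minor slip: $\mE_{g+k_1+4}$ is not of the form $\mL\op\mL'$ with $\mL\not\cong\mL'$. Plugging $j=k_1-1$ into item (1) of the list in Section~\ref{l3} gives $\mE_{g+k_1+4}=\mO(g+k_1+3,b'-(g+k_1+3))^{\op2}$, so it is Lemma~\ref{m2}, not Lemma~\ref{m1}, that yields $m_{g+k_1+4}\le0$.

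More importantly, there is exactly \emph{one} codimension drop, not three, and it does not sit near the unstable bundles. The constraint at $P_{g+1}$ and $P_{g+5}$ forcing a prescribed class in the configuration is already absorbed into the $\ep=1$ term of Lemma~\ref{m3}; invoking it again as a separate drop is double-counting. The actual bounds from Lemmas~\ref{m1}--\ref{m3} and Remark~\ref{often}, using Lemma~\ref{evenonly}, are
\[
m_{g+1},m_{g+5}\le2,\quad m_{g+3}\le2,\quad m_{g+6},\dots,m_{g+k_1+2}\le1,\quad m_{g+k_1+5+2s}\le2,
\]
with all remaining $m_j\le0$ (including $m_{g+2}$, $m_{g+4}$, $m_{g+k_1+3}$, $m_{g+k_1+4}$, $m_{g+k_1+6+2s}$, $m_{g'}$). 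These sum to $3k_1+1$. The single missing $-1$ comes at $P_{g+k_1+3}$: from $(c_{i,2g+2k_1+3})$ and $\mE_{g+k_1+3}$ one sees that any object of $\GG''_{g+k_1+3}$ has configuration $\{\{k-2,k+1\},\{k-1,k\}\}$ at $P_{g+k_1+3}$, whereas a general object of $\GG''_{g+k_1+2}$ has $\{\{k-2,k+1\},\{k-1\},\{k\}\}$ there; forcing $k-1$ and $k$ into the same class is one closed condition, giving $\dim\GG''_{g+k_1+3}\le(\dim\GG''_{g+k_1+2}-1)+m_{g+k_1+3}$. That yields the required total $3k_1$.
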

\begin{proof}
We still denote $\GG_r=\GG^{k+3,\text{EHT}}_{2,\oo_r(A_rP_{r+1}),d'^r_{\bullet},c^{\Gamma'}_r}(X_r)$ (\tb{Notation 17}) and $m_{r+1}$ for the fiber dimension of $p_{r+1,r}:\GG_{r+1}\to\GG_r$.

Note that $\ti{\GG}^0$ defined in Theorem \ref{even3} is a non-empty open substack of $\GG_g$.

Given the vanishing sequences in \ref{even1}, \ref{evenonly}, one can read off relevant information and apply \ref{m3} to conclude: $$m_{g+1}\le1-1+2=2, m_{g+5}\le1-1+2=2.$$
By referring to the bundles, one sees that $\tau(P_{g+4}),\tau(g+k_1+6+2s)$ fall into the situation in \ref{often} and $m_{g+4}\le 0$, $m_{g+k_1+6+2s}\le0$ for $s=0,...,k_1-2$.

Given $(c_{i,2g+9})$ in \ref{evenonly} and $\mE_{g+5+j}$ ($j=1,...,k_1-3$), it is easy to see that $k+1$ is always a special index and $c_{k+1,2g+2j+9}=g+k_1+3$ is always a non-repeated vanishing order. Hence, $m_{g+5+j}\le1$ for $j=1,...,k_1-3$.

Given $(c_{i,2g+1}),(c_{i,2g+2k_1+3})$ (\ref{evenonly}) and $\mE_{g+2},\mE_{g+k_1+3}$, conclude $m_{g+2}\le0,m_{g+k_1+3}\le0$.

By \ref{m2} and referring to the bundles, get $m_{g+k_1+4}\le0, m_{g'}\le0$.

Given $(c_{i,2g+2k_1+3})$ (\ref{evenonly}) and $\mE_{g+k_1+2}$, it also follows that a general object in $\GG_{g+k_1+2}$ has configuration $\{\{k-2,k+1\},\{k-1\},\{k\}\}$
at $P_{g+k_1+3}$. Given $\mE_{g+k_1+3}$, a general object in $\GG_{g+k_1+3}$ has configuration $\{\{k-2,k+1\},\{k-1,k\}\}$ at $P_{g+k_1+3}$. Consequently,
$$\dim\GG_{g+k_1+3}\le(\dim\GG_{g+k_1+2}-1)+m_{g+k_1+3}.$$

Combine all the results, we have
$$\begin{aligned}
 &\dim\GG^{k+3,\text{EHT}}_{2,\oo_{g'},d'_{\bullet},c^{\Gamma'}_{g'}}(X_{g'})\le \rr+\sum_{j=g+1}^{g'}m_j\\
 &=[\rho_{g,k}+2+0+2+0+2+(k_1-3)\cdot 1-1]+0+0+(k_1-1)\cdot(2+0)+0\\
 &=\rr+3k_1=\rho_{g',k'}
\end{aligned}$$

\end{proof}
 Again, one can verify the canonical determinant condition straightforwardly (see Appendix \ref{can2}). Based on the non-emptiness result in Appendix \ref{nonempty4} and the verification of semi-stability condition in \ref{sst2}, we draw the following conclusion:
\begin{thm}\label{main3}
Let $k=2k_1+1$ be odd and $a^{\Gamma}$ be a $(g,k)$-standard vanishing condition. Suppose $\mathcal{G}^{k,\text{EHT}}_{2,\omega_g,d_{\bullet},a^{\Gamma}}(X_g)$ contains an open substack of dimension $\rr$ of semi-stable EHT limit linear series with sufficiently generic objects satisfying Property \tb{(A)} (\ref{even1}). Then $\GG^{k+3,\text{EHT}}_{2,\oo_{g'},d'_{\bullet},c^{\Gamma'}_{g'}}(X_{g'})$ also contains an open substack of semi-stable limit linear series of expected dimension $\rho_{g',k'}$.
\end{thm}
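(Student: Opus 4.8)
The plan is to assemble, in the manner already used for Theorems~\ref{main1} and~\ref{main2}, the ingredients prepared in Section~\ref{l3}. The hypothesis says precisely that the open locus $\ti{\GG}_1\subset\mathcal{G}^{k,\text{EHT}}_{2,\omega_g,d_{\bullet},a^{\Gamma}}(X_g)$ of sufficiently generic objects satisfying Property~\textbf{(A)} contains an open substack of dimension $\rr$ of semi-stable limit linear series; in particular $\ti{\GG}_1$ is non-empty. First I would feed this into Proposition~\ref{even1} and Theorem~\ref{even3}. Proposition~\ref{even1} gives that $\ti{\GG}^0$ --- the substack of $\mathcal{G}^{k+3,\text{EHT}}_{2,\omega_g((3k_1+3)P_{g+1}),\hat{d}_{\bullet},c^{\Gamma}_{k+3}}(X_g)$ defined by the two conditions of Lemma~\ref{suffover} together with Property~\textbf{(A')} --- is non-empty, and Theorem~\ref{even3} supplies a stack morphism $\psi_2:\ti{\GG}^0\to\ti{\GG}_1$ of fiber dimension zero (this is the step where Lemma~\ref{zero} and the strengthened genericity Property~\textbf{(A)} are used, and it is what makes the reduction from $k+4$ down to $k+3$ cost nothing in dimension). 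Since $\ti{\GG}^0$ is exactly an open substack of the ``$\GG_g$'' at the foot of the chain $\GG_g\subset\GG_{g+1}\subset\cdots\subset\GG_{g'}$ attached to the construction of Section~\ref{l3}, it is non-empty.

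Next I would record the two dimension bounds. The upper bound $\dim\GG^{k+3,\text{EHT}}_{2,\oo_{g'},d'_{\bullet},c^{\Gamma'}_{g'}}(X_{g'})\le\rho_{g',k'}$ is exactly the dimension lemma stated just before the theorem, whose sole hypothesis is that $\ti{\GG}_1$ be non-empty; that lemma already carries out the substantive work, namely bounding each fiber dimension $m_{r+1}$ of the forgetful morphism $p_{r+1,r}$ via Lemmas~\ref{m1},~\ref{m2},~\ref{m3} applied to the bundles $\mE_{g+1},\dots,\mE_{g'}$ of Section~\ref{l3} and the vanishing data of Lemma~\ref{evenonly}, plus the extra codimension-one drop at $P_{g+k_1+3}$ coming from the forced configuration there. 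For the matching lower bound I would appeal to \cite{Ber}: whenever the semi-stable locus of the relevant moduli stack of rank two canonical-determinant limit linear series on $X_{g'}$ is non-empty, it has dimension at least $\rho_{g',k'}$.

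It then remains to certify that the output stack really contains a non-empty semi-stable open substack of the right dimension. Non-emptiness of $\GG^{k+3,\text{EHT}}_{2,\oo_{g'},d'_{\bullet},c^{\Gamma'}_{g'}}(X_{g'})$ is verified in Appendix~\ref{nonempty4}, by propagating admissible gluing data through the two unstable bundles $\mE_{g+1}$ and $\mE_{g+5}$ and the subsequent semi-stable ones, starting from an object of $\ti{\GG}^0$; the canonical-determinant condition is verified in Appendix~\ref{can2} directly from the bundle data using Corollary~\ref{candet}; and the existence of a non-empty semi-stable open locus is obtained in Appendix~\ref{sst2} by imposing finitely many open conditions on the gluing isomorphisms and invoking Propositions~\ref{ssimple} and~\ref{blocks} to check $\ell$-semistability. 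Combining the upper bound, the lower bound, and this non-emptiness shows that the semi-stable locus is a non-empty open substack of dimension exactly $\rho_{g',k'}$, which is the assertion; since $k'=k+3$ is even this is a terminal step of the induction scheme, so no additional genericity property need be carried in the conclusion.

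The main obstacle is not this final assembly, which is short, but the two places where it draws on earlier work: (i) that $\psi_2$ has fiber dimension zero, which rests on Lemma~\ref{zero} and hence on verifying that at every unstable bundle of the chain the destabilizing-summand point is never glued in the prohibited order, so that each $\ti{V}_j$ is forced by $\psi((\mE_j,V_j),(\phi_j))$; and (ii) the configuration bookkeeping inside the dimension lemma, where one must confirm that the prescribed vanishing sequences of Lemma~\ref{evenonly} make the special indices line up so that the claimed upper bounds on $m_{r+1}$ hold, including the codimension-one condition forcing the configuration $\{\{k-2,k+1\},\{k-1,k\}\}$ at $P_{g+k_1+3}$. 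Both are discharged in Section~\ref{l3} and in the appendix, so within the proof of the theorem I would simply cite them.
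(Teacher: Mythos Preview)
Your proposal is correct and follows essentially the same assembly as the paper: invoke Proposition~\ref{even1} and Theorem~\ref{even3} for the zero-fiber-dimension morphism $\psi_2$, cite the dimension lemma immediately preceding the theorem for the upper bound $\le\rho_{g',k'}$, appeal to \cite{Ber} for the matching lower bound on the semi-stable locus, and finish with the appendix verifications (\ref{can2} for canonical determinant, \ref{nonempty4} for non-emptiness, \ref{sst2} for $\ell$-semistability). Your identification of the two substantive obstacles and where they are discharged is accurate.
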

\section{Final Conclusion}
In this section, we shall summarize our main results. We start with the computation for the base case of the induction.
\subsection{The Base Case}
The base case for our induction is where $(g,k)=(6,5)$. In this case we define
$$a^{\Gamma}=\begin{bmatrix}
  5&0&5&0&5&0&4&1&3&2\\
  5&0&5&0&4&1&4&1&2&3\\
  3&2&4&1&3&2&2&3&2&3\\
  3&2&2&3&2&3&1&4&0&5\\
  2&3&2&3&1&4&1&4&0&5
\end{bmatrix}_{5\times 10}.$$
\begin{prop}\label{base}
Denote $d_{\bullet}=(10,11,10,10,9,10)$. Then, $\dim\mathcal{G}^{5,\text{EHT}}_{2,d_{\bullet},a^{\Gamma}}(X_6)\le 0$ and contains a non-empty, semi-stable locus.
\end{prop}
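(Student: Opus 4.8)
The plan is to establish this proposition by direct verification, using the structural tools assembled earlier in the paper: checking $(g,k)$-standardness of $a^{\Gamma}$, determining the underlying vector bundles $\mE_1,\dots,\mE_6$ via Lemmas \ref{2}, \ref{3} and \ref{bunfix}, computing the stack dimension via the fiber-dimension estimates of Lemmas \ref{m1}, \ref{m2}, \ref{m3}, and finally producing a semi-stable object to witness non-emptiness. First I would verify that the columns of $\begin{bmatrix}a(k)&a^{\Gamma}&a(k)^{\rev}\end{bmatrix}$ with $k=5$, $k_1=2$ form a $(6,5)$-standard vanishing condition in the sense of Definition \ref{stan}: check $b_{i,j}+a_{i,j+1}=g-1=5$ column-by-column, confirm that the even $d_j$'s give pairs satisfying Lemma \ref{one} with $d=5$ while $d_{j_1}=d_2=11$ gives a pair satisfying Lemma \ref{one2} with $d=5$ and $d_{j_2}=d_5=9$ gives one with $d=4$, and check the arithmetic side-conditions (4)--(6) on the last rows. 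By Lemma \ref{bunfix}, once standardness is confirmed and $u(\mE_j)\le\frac12$ is imposed, every $\mE_j$ is forced to be a fixed decomposable bundle with $b=g-1=5$; using Lemmas \ref{2} and \ref{3} with $d=\frac12 d_j$ (resp.\ $\frac12(d_j-1)$) one reads off each $\mE_j$ explicitly from the special indices of the corresponding vanishing-sequence pair, and one checks via Corollary \ref{candet} that the induced bundle on $X_6$ satisfies the canonical determinant condition.

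For the dimension bound, I would run the telescoping estimate along the forgetful morphisms $p_{r+1,r}:\GG_{r+1}\to\GG_r$ for $r=1,\dots,5$, where $\GG=\mathcal{G}^{5,\text{EHT}}_{2,\oo_6,d_{\bullet},a^{\Gamma}}(X_6)$ and $\GG_r$ is as in \textbf{Notation 17}. The base of the induction is $\GG_1$: since $X_1$ is a single elliptic curve with a prescribed bundle and prescribed vanishing data realized by an adapted basis (Lemma \ref{one}/\ref{one2}), $\dim\GG_1$ is easily computed (accounting for $\aut^0$ of the bundle), and one checks it equals the relevant partial count. Then for $r=2,\dots,6$ one bounds $m_r$: when $\mE_r$ is semi-stable and indecomposable apply Lemma \ref{m1} (identifying the special indices $i_1,i_2$, the configuration $\tau(P_r)$, and the index sets $\mA_1,\mA_2$ from the columns of $a^{\Gamma}$); when $\mE_r\cong\mL^{\oplus2}$ apply Lemma \ref{m2} to get $m_r\le0$; when $\mE_r$ is unstable (this happens for $r=2$ and $r=5$) apply Lemma \ref{m3}, reading off $\ep$ and $M$ from the configuration and the vanishing sequences. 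Summing, $\dim\GG\le\dim\GG_1+\sum_{r=2}^6 m_r$, and the whole point is that this sum comes out $\le0$; I expect that, exactly as in the proof of Lemma \ref{g6}, a naive application of the $m_r$ bounds overshoots by a bit and one must additionally observe that one or two of the forgetful morphisms land in a proper (codimension $\ge1$) closed substack cut out by a forced configuration — e.g.\ that $\tau(P_2)$ or $\tau(P_5)$ must contain a prescribed subset because certain $c_{i,\cdot}+c_{i,\cdot}$ reach $\frac12(\deg\mE_r-1)$ — to absorb the excess.

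For non-emptiness together with existence of a semi-stable locus, I would exhibit an explicit object: with the $\mE_j$ now fixed, choose $(P_j,P_{j+1})$-adapted bases of each $V_j$ realizing $\van_{j1},\van_{j2}$ (these exist by Lemmas \ref{one} and \ref{one2}), and choose the gluing isomorphisms $\phi_j$ generically subject only to the matching conditions in Definition \ref{Def}(3) at those indices $i$ with $b_{j,i}+a_{j+1,i}=b$. The configuration analysis (Example \ref{config}(3),(4), Lemmas \ref{aux}, \ref{sym}, Corollary \ref{auxc}) shows which matchings are automatically consistent and which impose one genuine condition on each $\phi_j$; since at each node there is a positive-dimensional space of gluings and only finitely many closed bad loci (semi-stability of $\mE$ on $X_6$ fails only on a proper closed locus by Proposition \ref{ssimple} combined with Proposition \ref{blocks}, since each $\mE_j$ restricted to a component has at most one unstable piece and $X_6$ decomposes into sub-chains with at most two unstable components), a generic choice yields a limit linear series that is $\ell$-semi-stable. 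Because $L(6,5)<0$, Lemma \ref{reduce} guarantees there is no strictly semi-stable bundle with the required sections on a general smooth curve, so semi-stability is the right notion to track here. The main obstacle I anticipate is the bookkeeping in the dimension count: matching the crude fiber-dimension upper bounds against the target $0$ will require pinning down exactly which configurations are forced at $P_2,\dots,P_6$ — essentially a small-scale instance of the delicate configuration-chasing carried out in Lemmas \ref{g6} and \ref{g7} — and that is where the real work lies rather than in any single formula.
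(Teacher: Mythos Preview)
Your proposal is correct and follows essentially the same route as the paper: verify standardness, pin down the $\mE_j$, compute $\dim\GG_1$ directly from the $\aut^0(\mE_1)$-torsor, bound $m_2,\dots,m_6$ via Lemmas \ref{m1}--\ref{m3}, and exhibit a semi-stable object using Proposition \ref{ssimple}. Two small remarks: (i) your phrase ``semi-stable and indecomposable'' is a slip---all the $\mE_j$ here are decomposable; you mean semi-stable with non-isomorphic summands; (ii) your expectation that the naive $m_r$ bounds will overshoot and require extra configuration-chasing (as in Lemma \ref{g6}) turns out to be unfounded in this base case: one finds $\dim\GG_1=-2$ (the relevant Grassmannian is one-dimensional and $\dim\aut^0(\mE_1)=3$) and the direct bounds give $m_2,\dots,m_6\le 0,2,0,0,0$, summing exactly to $0$ with no correction needed.
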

\begin{proof}
 One can easily check $a^{\Gamma}$ is $(6,5)$-standard. In particular, the underlying bundles are
 $$\begin{aligned}
 &\mE_1=\mO(5P_2)^{\op2}\\
 &\mE_3=\mO(5P_4)\op\mO(3P_3+2P_4)\\
 &\mE_5=\mO(3P_5+2P_6)\op\mO(4P_5)
 \end{aligned}\hspace{0.6in}\begin{aligned}
   &\mE_2=\mO(5P_3)\op\mO(2P_2+4P_3)\\
   &\mE_4=\mO(P_4+4P_5)\op\mO(4P_4+P_5)\\
   &\mE_6=\mO(5P_6)^2
 \end{aligned}$$
 and the canonical determinant condition is satisfied.

To verify the moduli count, we utilize our results on fiber dimensions in Lemma \ref{m1}, \ref{m2}, \ref{m3}. As before, denote $\GG_j$ to be the stack of limit linear series on the first $j$ component of $X_6$, to which $\mathcal{G}^{5,\text{EHT}}_{2,d_{\bullet},a^{\Gamma}}(X_6)$ admits a natural forgetful morphism.

We need to compute the dimension of $\GG_1$ separately. Given the vanishing sequences $a_0:=(0,0,1,1,2),a_1:=(5,5,3,3,2)$, there exists a smooth cover $u:\Gr(5,\Gamma(\mE_1);a_0,a_1)\to\GG_1$, which is an $\aut^0(\mE_1)$-torsor. Since $\Gr(5,\Gamma(\mE_1);a_0,a_1)$ is 1-dimensional (a 5-dimensional space of sections is uniquely determined by the choice of a section $s$ such that $\ord_{P_1}(s)=\ord_{P_2}(s)=2$), $\dim \aut^0(\mE_1)=3$, we conclude that $\GG_1$ has stack dimension $-2$.

By Lemma \ref{m1}, \ref{m2}, \ref{m3}, we get the fiber dimension $m_j$ ( for$j=1,...,5$) is bounded from above by $0,2,0,0,0$ resp. Hence, $\dim\GG_6\le 0$.

We now show $\GG$ is non-empty. To do so, one can first take any $V_1,...,V_6$ with the desired vanishing sequences at the nodes (the existence of such $V_j$ is a consequence of the fact that $a^{\Gamma}$ is $(6,5)$-standard) and then find feasible $\phi_2,...,\phi_6$ correspondingly. Among the configurations, only $\tau(P_4)$ is non-trivial. Hence, one only needs to show the existence of a feasible $\phi_4$. Given any choice of $V_3$ and $V_4$, let $s,s'$ be any sections in $V_3,V_4$ resp. such that $\ord_{P_3}(s)=2,\ord_{P_3}(s')=3$ and denote $q_{31},q_{32}$ (resp, $q_{41},q_{42}$) to be images of the canonical sections of $\mE_3$ (resp. $\mE_4$) in $\PP\mE_3|_{P_4}$ (resp. in $\PP \mE_4|_{P_4}$). If $s|_{P_4}\neq q_{31},q_{32}$, $s'|_{P_4}\neq q_{41},q_{42}$, one can always find some $\phi_4$ such that $\phi_4(q_{31})=q_{41},\phi_4(q_{32})=q_{42}$ and $\phi_4(s|_{P_4})=s'|_{P_4}$ and hence we are done.

To see that it contains a non-empty semi-stable locus, notice that the vector bundle $\mE'$ on $X_6$ given by $((\mE_j)_{j=1}^6,(\phi_j)_{j=2}^5)$ satisfies the assumption in Proposition \ref{ssimple}. Hence, for the resulting bundle to be semi-stable it suffices to show there does not exist some invertible sub-sheaf $\mL$ of $\mE'$ such that $\mL|_{C_2},\mL|_{C_5}$ are the destabilizing summands of $\mE_2,\mE_5$ resp.

For this to hold, denote $q_{21}$ to be the point in $\PP\mE_2|_{P_3}$ which is the image of sections of the destabilizing summand and $q_{31},q_{32}$ to be the points in $\PP\mE_3|_{P_3}$ which are images of the canonical sections of $\mE_3$. When choosing $\phi_3$, we require that $\phi_3(q_{21})\neq q_{31},q_{32}$. It is a non-empty condition, because the configuration $\tau(P_3)$ is trivial, $a_{3,4}$ is the only non-repeated vanishing order and $3$ is not a special index of $(a_{i,4})$ and $(a_{i,5})$.
\end{proof}
\subsection{Summary of the Main Results}
Recall that we give three inductive constructions in chapter 5. In all cases we consider, $L(g,k)\le -1$. Hence, $g\le k_1^2+k_1$ when $k=2k_1+1$, and $g\le k_1^2$ when $k=2k_1$. Let $(g(n),k(n))$ be the parameters in the $n$-th step of each induction. We have:
\begin{enumerate}
  \item For the first construction, $(g(n+1),k(n+1))=(g(n)+2k(n)+4-q,k(n)+4)$, where $(g(1),k(1))$ can be any pair for which one has a construction and $q:1\le q\le\max\{1,\left\lfloor{k(n)-1\over 6}\right\rfloor\}$.
  \item For the second construction, $(g(n+1),k(n+1))=(g(n)+k(n)+1,k(n)+2)$ with $(g(1),k(1))$ being any pair in (1).
  \item In the third construction, we give a construction for parameters $$(g',k')=(g+{3\over2}(k+1),k+3),$$ for every pair of $(g,k)$ in (1) and (2).
\end{enumerate}
\begin{lem}\label{final}
Given the base case $(g,k)=(6,5)$, the constructions in (1) and (2) run through all pairs in $\mathcal{S}_1\bigcup\mathcal{S}_2\bigcup\mathcal{S}_3$, where
$$\begin{aligned}
&\mathcal{S}_1:=\{(g,k)|L(g,k)=-1,\rr\ge0\},\\
&\mathcal{S}_2:=\{(g,k)|k\ge 9,k\equiv1(\text{mod }4),k_1^2+k_1\ge g\ge k_1^2+k_1-1-\left\lfloor{(k_1-2)^2\over12}\right\rfloor\},\\
&\mathcal{S}_3:=\{(g,k)|k\ge7,k\equiv3(\text{mod }4),k_1^2+k_1\ge g\ge k_1^2+k_1-\left\lfloor{(k_1-2)^2+3\over12}\right\rfloor\}.\end{aligned}$$
\end{lem}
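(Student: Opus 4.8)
\textbf{Proof strategy for Lemma \ref{final}.} The plan is to track how the two ``growing'' operations (construction (1), which sends $(g,k)\mapsto(g+2k+4-q,k+4)$ for a parameter $q$ in the allowed range, and construction (2), which sends $(g,k)\mapsto(g+k+1,k+2)$) move a point through the $(g,k)$-plane, and to show that starting from the single base point $(6,5)$ one can reach every pair in $\mathcal{S}_1\cup\mathcal{S}_2\cup\mathcal{S}_3$. First I would recast everything in terms of the coordinates $(L(g,k),k)$: since $L$ is an affine function of $g$ for fixed parity of $k$, and construction (2) was designed (see the remark after Theorem \ref{main2} and the definition of $L$ in \textbf{Notation 2}) so that $L$ is \emph{invariant} under it, the operation (2) becomes ``move along a horizontal line $L=\text{const}$, increasing $k$ by $4$ per two applications (or by $2$ per application) while $g$ grows appropriately.'' Construction (1) with parameter $q$ decreases $L$ by exactly $2q$ per step (this is exactly the content of the Remark after Theorem \ref{main1}), and the admissible range $1\le q\le\max\{1,\lfloor(k-1)/6\rfloor\}=\max\{1,\lfloor k_1/3\rfloor\}$ controls how fast $|L|$ can grow as a function of $k$. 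So the heart of the argument is: the set of $L$-values reachable at a given $k$ (after fixing the residue of $k$ mod $4$, which is preserved by (1) and toggled by a single (2)-step) is a contiguous range $[-1-f(k_1),\,-1]$ (for $\mathcal{S}_1$, $f\equiv 0$; for $\mathcal{S}_2,\mathcal{S}_3$, $f(k_1)=\lfloor(k_1-2)^2/12\rfloor$ up to the $+3$ correction in the odd-mod-$4$ case), and this contiguity is what must be verified.

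Concretely I would proceed as follows. Step 1: verify by direct substitution that $(6,5)\in\mathcal{S}_1$ and that from $(6,5)$ one can get, using only construction (2) repeatedly, every pair with $L(g,k)=-1$ and $\rho_{g,k}\ge0$ — i.e.\ all of $\mathcal{S}_1$; here one checks that (2) applied to an $L=-1$ pair stays on $L=-1$, increases $k$ by $2$, and that the constraint $\rho_{g,k}\ge0$ is exactly the ``$\ge$'' endpoint, so no pair is skipped. (One must also handle both parities of $k$ within $\mathcal{S}_1$; starting from $k=5$ and adding $2$ each time reaches all $k\ge5$, and the base cases $k<5$ either have $\rho<0$ or are covered by prior results, consistent with the hypothesis $k=2k_1+1\ge5$ etc.) Step 2: for $\mathcal{S}_2$ and $\mathcal{S}_3$, which consist of pairs with $k\equiv1$ resp.\ $3\pmod 4$ and with $g$ below the $L=-1$ line by a prescribed amount, I would show that each such pair is obtained from some pair in $\mathcal{S}_1$ by a sequence of construction-(1) steps followed possibly by a single construction-(2) step. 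The key numerical claim is that if at ``level'' $k$ we have used construction (1) optimally — taking $q=\lfloor k_1/3\rfloor$ as large as allowed at each stage — then the cumulative drop in $L$ after reaching parameter $k$ is $\sum 2q(k^{(i)})$, and this telescoping sum equals (or bounds below, which is what is needed for a $\ge$-type inequality) $2\lfloor(k_1-2)^2/12\rfloor$ (with the $+3$ correction tracking the odd-$\bmod 4$ case). Then, because at \emph{each} intermediate level one may instead choose a \emph{smaller} $q$, one obtains not only the extreme pair but every intermediate $L$-value, giving the full contiguous range in $\mathcal{S}_2,\mathcal{S}_3$. Step 3: assemble — given an arbitrary $(g,k)\in\mathcal{S}_i$, read off $k_1$ and $L(g,k)$, locate the appropriate starting pair in $\mathcal{S}_1$, and exhibit the finite sequence of (1)- and (2)-moves reaching it, invoking Theorems \ref{main1} (via Corollary \ref{main11}), \ref{main2}, and \ref{main3} (and Proposition \ref{base} for the base) at each step to guarantee the relevant moduli stack is nonempty of the expected dimension with a semistable, sufficiently generic open locus.

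The main obstacle I expect is Step 2's arithmetic bookkeeping: one must show that the greedy choice $q=\max\{1,\lfloor k_1/3\rfloor\}$ at each level produces a telescoping sum of $L$-drops that matches the closed-form quadratic expressions $\lfloor(k_1-2)^2/12\rfloor$ (and $\lfloor((k_1-4)^2)/12\rfloor$, $\lfloor((k_1-2)^2+3)/12\rfloor$ appearing in Theorem \ref{mresult}), and — more delicately — that the \emph{intermediate} values are all hit, so that the reachable set is genuinely an interval $[-1-f(k_1),-1]$ and not a sparse subset. This requires care with floor functions and with the fact that construction (1) changes $k$ by $4$ while construction (2) changes it by $2$, so the residue $k\bmod 4$ partitions $\mathcal{S}_2$ from $\mathcal{S}_3$; one must check both residue classes separately and confirm that a single (2)-step at the end can bridge the parity when needed, without violating the $\rho_{g',k'}\ge0$ or $L<0$ constraints that are in force throughout (and which are exactly the hypotheses under which Lemma \ref{reduce} lets us replace stability by semistability). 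A secondary point to be careful about is the very smallest cases (e.g.\ $k_1=2,3,4$, where $\lfloor k_1/3\rfloor$ degenerates to $1$ and the quadratic corrections vanish or are negative, forcing $f(k_1)=0$), which must be reconciled with the ``$\max\{1,\cdot\}$'' in the definition of $q$ and with the explicit hypotheses $k\ge7,k\ge9$ in the definitions of $\mathcal{S}_3,\mathcal{S}_2$.
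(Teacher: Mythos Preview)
Your outline is essentially the paper's own approach: track $L(g,k)$, use that construction~(2) preserves $L$ while construction~(1) drops it by $2q$, show the greedy (1)-chain attains the extremal bound, and fill in intermediate $L$-values by varying $q$ and the starting point in $\mathcal{S}_1$. The paper carries this out exactly as you describe for $\mathcal{S}_1$ and $\mathcal{S}_2$, and for $\mathcal{S}_3$ it runs a separate greedy (1)-chain from $(12,7)\in\mathcal{S}_1$.

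One cautionary point: your specific suggestion that ``a single (2)-step at the end can bridge the parity'' for $\mathcal{S}_3$ would fail if taken literally. Running the (1)-chain from $(6,5)$ and then applying one (2)-step carries over only $L\ge -3-2\lfloor(k_1-3)^2/12\rfloor$ at the new $k_1$, and for $k_1\ge 11$ this is strictly larger than the $\mathcal{S}_3$ bound $-1-2\lfloor((k_1-2)^2+3)/12\rfloor$ (e.g.\ at $k=23$ you need $L=-15$ but only reach $-13$ this way). The fix, which your broader phrase ``from some pair in $\mathcal{S}_1$'' already allows, is to start the (1)-chain at $(12,7)$ so that $k\equiv3\pmod4$ throughout; this is precisely what the paper does and what yields the $+3$ in the floor. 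Also, for the intermediate pairs in $\mathcal{S}_2$ the paper invokes a \emph{chain} of (2)-steps from an earlier stage of the (1)-chain, not a single one; alternatively (and equivalently) one varies the $\mathcal{S}_1$ starting point as you suggest. Finally, Theorem~\ref{main3} concerns construction~(3) for even $k$ and plays no role in this lemma.
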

\begin{proof}
To get the pairs in $\mathcal{S}_1$, apply the 2nd construction with $(g(1),k(1))=(6,5)$.

To get the pairs in $\mathcal{S}_2$, set $(g(1),k(1))=(6,5)$ in the first construction. Then, $k(n)\equiv1$(mod 4). Denote $k_1(n):={k(n)-1\over2}$, $q(n)=\max\{1,\left\lfloor{k_1(n)\over3}\right\rfloor\}$.

Elementary computation shows that $L(g(n+1),k(n+1))=L(g(n),k(n))-2q(n).$

Also,
  $$\begin{aligned}
   g(n+1)&={1\over2}L(g(n+1),k(n+1))+(k^2_1(n+1)+k_1(n+1)+{1\over2})\\
   &=-1-\left\lfloor{(k_1(n+1)-2)^2\over12}\right\rfloor+k^2_1(n+1)+k_1(n+1)
  \end{aligned}$$
Notice that for any $n$, $g(n)$ is the smallest $g$ such that $L(g,k)=L(g(n),k(n))$ for some $k\ge9$ and $g\ge k_1^2+k_1-1-\left\lfloor{(k_1-2)^2\over12}\right\rfloor$. Without loss of generality, one can check that for $(g,k)=(g(n)-k(n)-1,k(n)-2)$, the desired inequality does not hold.

If $k\equiv1$(mod 4), then $k=k(m)$ for some $(g(m),k(m))$ in situation (1). Suppose $g=k_1^2+k_1-1-\left\lfloor{(k_1-2)^2\over12}\right\rfloor$, then $(g,k)=(g(n),k(n))$ for some $n$ in situation (1); suppose $g>k_1^2+k_1-1-\left\lfloor{(k_1-2)^2\over12}\right\rfloor$, we have two possibilities:
\begin{enumerate}
  \item For some $n$, $k=k(n)$ and $q(n)-1\ge g-g(n)\ge1$; in this case, the construction for the pair $(g,k)$ is directly obtained from the first construction (see Corollary \ref{main11});
  \item $k=k(n)$ and $g-g(n)>q(n)-1$; the construction for such a pair is obtained using the second construction via some induction starting from a pair $(g^*,k^*)$, where $g^*=g(m)+\ep$, $k^*=k(m)$ for some $m<n$, $q(m)-1\ge\ep\ge1$.
\end{enumerate}
Hence, the constructions run through all pairs in $\mathcal{S}_2$.

It remains to consider pairs in $\mathcal{S}_3$. Reset $(g(1),k(1))=(12,7)$ and use the first construction. Again one can compute
$$\begin{aligned}
   g(n+1)&={1\over2}L(g(n+1),k(n+1))+(k^2_1(n+1)+k_1(n+1)+{1\over2})\\
   &=-\left\lfloor{(k_1(n+1)-2)^2+3\over12}\right\rfloor+k^2_1(n+1)+k_1(n+1)
  \end{aligned}$$
Then, by the same argument as for the previous case, we conclude the constructions run through all pairs in $\mathcal{S}_3$ as well.
\end{proof}
\begin{cor}\label{final2}
  The construction in (3) runs through all pairs in $\mathcal{T}_1\bigcup\mathcal{T}_2$, where
  $$\begin{aligned}
   &\mathcal{T}_1=\{(g,k)|k=2k_1\ge8,k\equiv0(\text{mod }4),k_1^2+k_1\ge g\ge-2-\left\lfloor{(k_1-4)^2\over12}\right\rfloor+k_1^2\},\\
   &\mathcal{T}_2=\{(g,k)|k=2k_1\ge10,k\equiv2(\text{mod }4),k_1^2+k_1\ge g\ge-1-\left\lfloor{(k_1-4)^2\over12}\right\rfloor+k_1^2\}.
  \end{aligned}$$
\end{cor}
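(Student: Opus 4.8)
The plan is to deduce Corollary \ref{final2} from Lemma \ref{final} and Theorem \ref{main3} by a purely arithmetic bookkeeping, carried out in the same spirit as the proof of Lemma \ref{final} itself. By Lemma \ref{final}, constructions (1) and (2) produce, over all odd $k\ge 5$, a family of pairs $(g,k)$ whose totality is $\mathcal{S}_1\cup\mathcal{S}_2\cup\mathcal{S}_3$; by Theorem \ref{main3}, construction (3) may be run on every one of these pairs, sending $(g,k)$ to $(g',k')=(g+\tfrac{3}{2}(k+1),\,k+3)$ while preserving the existence of a nonempty open substack of semistable limit linear series of the expected dimension $\rho_{g',k'}$. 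Hence what remains to be verified is that the image of $\mathcal{S}_1\cup\mathcal{S}_2\cup\mathcal{S}_3$ under $(g,k)\mapsto(g+\tfrac{3}{2}(k+1),k+3)$, together with the range already covered by the theorem of Teixidor i Bigas, is exactly $\mathcal{T}_1\cup\mathcal{T}_2$.

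I would first dispose of the index $k$. Since $k$ is odd, $k'=k+3$ is even; writing $k=2k_1+1$ and $k'=2k_1'$, we have $k_1'=k_1+2$, and moreover $k\equiv 1\ (\mathrm{mod}\ 4)\iff k'\equiv 0\ (\mathrm{mod}\ 4)$ while $k\equiv 3\ (\mathrm{mod}\ 4)\iff k'\equiv 2\ (\mathrm{mod}\ 4)$. Thus the pairs of $\mathcal{S}_2$ — together with the matching pairs of $\mathcal{S}_1$ — are precisely the inputs that feed $\mathcal{T}_1$, and the pairs of $\mathcal{S}_3$, with the remaining pairs of $\mathcal{S}_1$, feed $\mathcal{T}_2$; the base pair $(6,5)\in\mathcal{S}_1$ seeds both towers and has to be tracked separately at the bottom.

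Next I would handle the index $g$. Construction (3) gives $g'=g+3k_1+3=g+3k_1'-3$. Substituting the $g$-intervals recorded in the proof of Lemma \ref{final} — namely $[\,k_1^2+k_1-1-\lfloor(k_1-2)^2/12\rfloor,\ k_1^2+k_1\,]$ for the $\mathcal{S}_1\cup\mathcal{S}_2$ part and $[\,k_1^2+k_1-\lfloor((k_1-2)^2+3)/12\rfloor,\ k_1^2+k_1\,]$ for the $\mathcal{S}_1\cup\mathcal{S}_3$ part — and using the elementary identities $k_1^2+4k_1=(k_1+2)^2-4=(k_1')^2-4$ and $k_1-2=k_1'-4$, I would check that these push forward to $g'$-intervals with lower endpoints $(k_1')^2-2-\lfloor(k_1'-4)^2/12\rfloor$ and $(k_1')^2-1-\lfloor(k_1'-4)^2/12\rfloor$, which are exactly the lower bounds defining $\mathcal{T}_1$ and $\mathcal{T}_2$, and with common upper endpoint $(k_1')^2-1$. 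The complementary stretch $(k_1')^2\le g'\le (k_1')^2+k_1'$ (always with $\rho_{g',k'}\ge0$) falls inside the range already established by Teixidor i Bigas, so that the union exhausts $\mathcal{T}_1\cup\mathcal{T}_2$.

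The main obstacle — indeed essentially the whole content of the corollary — is the bookkeeping of the floor functions and of the small cases. One must verify that $\lfloor(k_1-2)^2/12\rfloor$ and $\lfloor((k_1-2)^2+3)/12\rfloor$ transform, under $k_1\mapsto k_1'=k_1+2$, into the single expression $\lfloor(k_1'-4)^2/12\rfloor$ that appears in $\mathcal{T}_1$ and $\mathcal{T}_2$; this requires a split according to the residue of $(k_1'-4)^2$ modulo $12$ (even squares being $\equiv 0$ or $4$, odd squares $\equiv 1$ or $9$), and it is precisely this split that explains the different additive constants $-2$ and $-1$ in the two definitions. One also has to examine the bottom of each tower, where $\mathcal{S}_2$ or $\mathcal{S}_3$ is empty and only the pair descending from $(6,5)$ survives, together with the classical range, to confirm that no member of $\mathcal{T}_1\cup\mathcal{T}_2$ is omitted and none is produced outside it. This is routine but must be done case by case, exactly as in the proof of Lemma \ref{final}.
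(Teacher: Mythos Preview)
Your approach is essentially the same as the paper's: verify arithmetically that the map $(g,k)\mapsto(g+\tfrac{3}{2}(k+1),k+3)$ carries $\mathcal{S}_2$ into $\mathcal{T}_1$ and $\mathcal{S}_3$ into $\mathcal{T}_2$, with the substitution $k_1'=k_1+2$ making the floor-function lower bounds match. The paper's proof is a one-line assertion of this equivalence (with $\tfrac{3}{2}k$ written for $\tfrac{3}{2}(k+1)$, evidently a slip), and your more detailed bookkeeping of the floors and residues is fully consistent with it.

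You go further than the paper on two points, both correctly. First, you track $\mathcal{S}_1$ and the base pair $(6,5)$ separately; the paper's proof mentions only $\mathcal{S}_2$ and $\mathcal{S}_3$, but $\mathcal{S}_2$ begins only at $k\ge9$ while $\mathcal{T}_1$ already contains $k'=8$, so the bottom of the tower does need this. Second, and more substantively, you observe that since every input pair has $g\le k_1^2+k_1$, the image under construction~(3) satisfies $g'\le (k_1')^2-1$; hence the upper stretch $(k_1')^2\le g'\le (k_1')^2+k_1'$ claimed in $\mathcal{T}_1\cup\mathcal{T}_2$ is \emph{not} produced by construction~(3) itself and must be supplied by Teixidor's theorem (this being precisely the range $L(g',k')\ge0$). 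This is a genuine lacuna in the paper's stated equivalence, though it is harmless for the final Theorem~\ref{mresult}, which explicitly invokes Teixidor's result to cover that range anyway.
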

\begin{proof}
 It suffices to see that $(g,k)\in\mathcal{S}_2$ is equivalent to the condition $(g+{3\over2}k,k+3)\in\mathcal{T}_1$; the condition $(g,k)\in\mathcal{S}_3$ is equivalent to the condition $(g+{3\over2}k,k+3)\in\mathcal{T}_2$.
\end{proof}
\begin{cor}\label{final3}
For the following pairs of $(g,k)$:
$$\begin{cases}
k_1^2+k_1\ge g\ge k_1^2+k_1-\left\lfloor{(k_1-2)^2+3\over12}\right\rfloor, k=2k_1+1\ge5;\\
k_1^2+k_1\ge g\ge k_1^2-\left\lfloor{(k_1-4)^2\over12}\right\rfloor-1, k=2k_1\ge8.
\end{cases}$$
the moduli stack $\mathcal{G}^{k,\text{EHT}}_{2,\omega_g,d_{\bullet},a^{\Gamma}}(X_g)$ contains a non-empty open substack of semi-stable EHT limit linear series, of dimension $\rr$.
\end{cor}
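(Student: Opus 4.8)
The proof assembles the base-case computation of Proposition~\ref{base}, the three inductive constructions of Section~5 (Theorems~\ref{main1}, \ref{main2}, \ref{main3}, and the intermediate-genus variant Corollary~\ref{main11}), and the combinatorial coverage statements Lemma~\ref{final} and Corollary~\ref{final2}.

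First I would record that $(6,5)$ satisfies the conclusion. By Proposition~\ref{base}, $\dim\mathcal{G}^{5,\text{EHT}}_{2,d_{\bullet},a^{\Gamma}}(X_6)\le 0=\rr$, the stack is non-empty, and it has a non-empty semi-stable locus; the gluing-data analysis in the proof of~\ref{base} moreover produces sufficiently generic objects, indeed objects satisfying Property~\textbf{(A)} of~\ref{even1}, since the only unstable underlying bundles are $\mE_2,\mE_5$ and the finitely many excluded gluing loci are proper closed subsets. This is exactly the input package required by Theorems~\ref{main1}, \ref{main2}, \ref{main3}.

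Next I would propagate this package by induction. For odd $k$: applying the second construction (Theorem~\ref{main2}) repeatedly from $(6,5)$ reaches every pair of $\mathcal{S}_1$, in particular $(12,7)$; applying the first construction (Theorem~\ref{main1}, with Corollary~\ref{main11} for intermediate genera) from $(6,5)$ and from $(12,7)$ reaches, by the arithmetic carried out in Lemma~\ref{final}, every pair of $\mathcal{S}_2\cup\mathcal{S}_3$. Each step carries the property ``open substack of the expected dimension, semi-stable, sufficiently generic (satisfying Property~\textbf{(A)})'' to the analogous property for the new pair, and since each construction delivers $\dim\le\rho$ while $\rho$ is simultaneously a lower bound for the dimension of the semi-stable locus by~\cite{Ber}, the dimension equals $\rho$; thus the conclusion holds on all of $\mathcal{S}_1\cup\mathcal{S}_2\cup\mathcal{S}_3$. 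For even $k$, I would then feed each $(g,k)\in\mathcal{S}_2\cup\mathcal{S}_3$ into the third construction (Theorem~\ref{main3}); by Corollary~\ref{final2} the output pairs $(g+\tfrac{3}{2}(k+1),\,k+3)$ exhaust $\mathcal{T}_1\cup\mathcal{T}_2$, and the same upper/lower bound argument gives dimension exactly $\rho_{g',k'}$. Finally an elementary comparison of the floor expressions --- using $\lfloor((k_1-2)^2+3)/12\rfloor\le 1+\lfloor(k_1-2)^2/12\rfloor$ in the odd case and the substitution $k\mapsto k-3$ relating the $\mathcal{S}_i$ to the $\mathcal{T}_j$ --- identifies $\mathcal{S}_1\cup\mathcal{S}_2\cup\mathcal{S}_3$ and $\mathcal{T}_1\cup\mathcal{T}_2$ with the two ranges displayed in the statement, and one checks $\rr\ge0$ holds throughout, which finishes the proof.

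The main obstacle is bookkeeping rather than geometry: one must ensure the genericity hypotheses survive each induction. Theorems~\ref{main1}--\ref{main3} need of their input not merely a stack of the expected dimension with non-empty semi-stable locus but one whose general object is ``sufficiently generic'', and the even-$k$ step needs the stronger Property~\textbf{(A)}; so at each stage one has to check that the new vanishing matrix $c^{\Gamma'}_{g'}$ is again $(g',k')$-standard (so that the next step even applies) and that the open loci defining sufficient genericity and Property~\textbf{(A)} are non-empty --- precisely the role of the Appendix non-emptiness, semi-stability and genericity lemmas invoked after Lemmas~\ref{g6}, \ref{g7}, \ref{twosp}. Once these compatibilities are verified the chain of inductions runs; the dimension estimates themselves were already done in those lemmas and the corollaries preceding them.
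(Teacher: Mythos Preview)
Your proposal is correct and follows essentially the same route as the paper: the paper's proof of this corollary is a two-line invocation of Proposition~\ref{base}, Theorems~\ref{main1}, \ref{main2}, \ref{main3}, and the combinatorial coverage of Lemma~\ref{final} and Corollary~\ref{final2}, which is exactly the chain you spell out. Your version is more explicit about the bookkeeping (propagation of sufficient genericity and Property~\textbf{(A)}, and the upper/lower bound pairing via \cite{Ber}), but these are precisely the ingredients the paper packages into the cited theorems and their supporting appendix lemmas.
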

\begin{proof}
For all $(g,k)\in\mathcal{S}$ in Lemma \ref{final}, $k_1^2+k_1\ge g\ge k_1^2+k_1-\left\lfloor{(k_1-2)^2+3\over12}\right\rfloor, k=2k_1+1\ge5$.

For all $(g,k)\in\mathcal{T}$ in Corollary \ref{final2}, $k_1^2+k_1\ge g\ge k_1^2-\left\lfloor{(k_1-4)^2\over12}\right\rfloor-1, k=2k_1\ge8$.

The corollary follows from Proposition \ref{base} and Theorem \ref{main1}, \ref{main2} \ref{main3}.
\end{proof}
Eventually, we paraphrase the smoothing theorem from \cite{Osp} for the special case of canonical determinant to conclude that the corresponding space of linear series on a smooth curve genus $g$ has one component of the expected dimension.
\begin{thm}\label{smoothing}[Theorem 1.2 in \cite{Osp}]
Suppose the stack of chain-adaptable, rank 2, dimension $k$, limit linear series with canonical determinant, on a chain $X_g$ of genus $g$, has expected dimension $\rr$ at one point. Then, for a general smooth curve $C$ of genus $g$, the space of rank 2, dimension $k$ linear series with canonical determinant is non-empty, with one component of the expected dimension.
\end{thm}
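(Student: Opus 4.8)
The plan is to deduce this from Osserman's construction of a moduli space of limit linear series that is complete over a one-parameter smoothing, via the standard Eisenbud--Harris dimension argument. Concretely, one realizes $X_g$ as the special fiber of a flat proper family $\pi\colon\mathcal X\to B$ over a DVR (or smooth curve) $B$ whose generic fiber $\mathcal X_\eta$ is a general smooth curve of genus $g$, with relative dualizing sheaf $\omega_{\mathcal X/B}$ restricting to $\omega_{X_g}$ on the special fiber and to the canonical bundle on $\mathcal X_\eta$; such a smoothing of a chain of general elliptic curves exists and smooths to a general curve. Over $B$ one then has the moduli stack $\widetilde{\GG}\to B$ of (linked) rank two, dimension $k$ linear series with determinant $\omega_{\mathcal X/B}$, which is proper over $B$, whose generic fiber is the honest Brill--Noether stack of rank two, dimension $k$ linear series with canonical determinant on $\mathcal X_\eta$, and whose special fiber contains the stack of chain-adaptable EHT limit linear series with canonical determinant on $X_g$ as an open substack. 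This is precisely the content of \cite{Ohrk} and \cite{Osp}; the chain-adaptability hypothesis is exactly what guarantees that the given point lies in the part of the special fiber lying in the closure of the generic fiber, i.e. that it actually smooths.

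Granting this, the argument runs as follows. Deformation theory for the datum of a rank two bundle, a $k$-dimensional space of sections, and a trivialization of the determinant shows that every irreducible component of $\widetilde{\GG}$ has dimension at least $\dim B+\rr$ (the obstructions live in a space of the expected dimension; see \cite{Ber} for the fixed-determinant count). Let $x$ be a point of the chain-adaptable locus at which that locus has dimension $\rr$, and let $Z\ni x$ be a component there, so $\dim Z=\rr$. Since $\rr<\dim B+\rr$, $Z$ is not a component of $\widetilde{\GG}$, hence it lies in a strictly larger component $W\ni x$ with $\dim W\ge\dim B+\rr=\rr+1$ (here $\dim B=1$). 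If $W$ were contained in the special fiber $\widetilde{\GG}_0$ then $\dim_x\widetilde{\GG}_0\ge\dim W\ge\rr+1$, contradicting that the chain-adaptable locus, open in $\widetilde{\GG}_0$ near $x$, has dimension $\rr$ at $x$; so $W\to B$ is dominant. Then $\dim W-1\le\dim_x(W\cap\widetilde{\GG}_0)\le\dim_x\widetilde{\GG}_0=\rr$, forcing $\dim W=\rr+1$, so $W$ has relative dimension $\rr$ over $B$. By the completeness of $\widetilde{\GG}\to B$ its generic fiber $W_\eta$ is nonempty, and $W_\eta$ is a component, of dimension exactly $\rr$, of the space of rank two dimension $k$ linear series with canonical determinant on $\mathcal X_\eta$; combined with the Bertram--Feinberg lower bound $\ge\rr$ for every such component (\cite{Ber}) this is the expected dimension. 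Finally, ``$\mathcal X_t$ smooth of genus $g$'' being an open condition, the conclusion holds for a general smooth curve of genus $g$.

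The main obstacle, and the reason this is cited rather than reproved here, is establishing that $\widetilde{\GG}\to B$ is proper with the stated identification of its generic and special fibers: one needs the valuative criterion, i.e. that after a finite base change every rank two linear series with canonical determinant on $\mathcal X_\eta$ specializes to a chain-adaptable EHT limit linear series on $X_g$, and conversely that any such limit linear series of expected dimension deforms. This is exactly what Osserman's linked-linear-series machinery and Theorem 1.2 of \cite{Osp} supply; everything after that is the routine dimension bookkeeping above.
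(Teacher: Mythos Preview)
The paper does not give its own proof of this statement: Theorem~\ref{smoothing} is explicitly quoted from \cite{Osp} (``we paraphrase the smoothing theorem from \cite{Osp}''), so there is nothing to compare against directly. Your sketch is exactly the standard Eisenbud--Harris dimension argument underlying such smoothing theorems, and you correctly identify that the substantive input is Osserman's construction in \cite{Ohrk}, \cite{Osp} of a relative moduli stack over the smoothing family with the right comparison of fibers; the bookkeeping you supply (lower bound $\dim B+\rr$ on components, ruling out that $W$ sits in the special fiber, concluding that $W\to B$ is dominant of relative dimension $\rr$) is sound.

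Two technical points you gloss over are worth flagging. First, what you need is not literally properness of the EHT stack over $B$ but rather Osserman's \emph{linked} linear series stack, together with the statement that refined chain-adaptable EHT limit linear series form an open substack of its special fiber; you use this openness implicitly when you say ``the chain-adaptable locus, open in $\widetilde{\GG}_0$ near $x$, has dimension $\rr$ at $x$'' to bound $\dim_x\widetilde{\GG}_0$. Second, the assertion that a chain of general elliptic curves smooths to a \emph{general} curve (so that the conclusion transfers from $\mathcal X_\eta$ to a general $C$) is itself a nontrivial input, though classical. Both of these are precisely what is packaged into the cited result, so your proposal is an accurate unpacking of why the citation suffices.
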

In our construction, all limit linear series are chain-adaptable. Moreover, in Appendix Lemma \ref{sst}, we have verified that on a non-empty open locus the data satisfy the semi-stability condition. Recalling Lemma \ref{reduce}, one can then conclude that for all pairs $(g,k)$ in Corollary \ref{final3}, the corresponding moduli space of linear series has one component of expected dimension, which contains a non-empty open locus on which the vector bundles underlying the linear series are all \textbf{stable}. This combined with the well-known existence result in \cite{M1} due to Teixidor i Bigas (See Theorem 1.1 in \cite{M1}) gives Theorem \ref{mresult}.
\appendix
\section{Technical Results in Induction Step 1}
\setcounter{thm}{0}
\renewcommand{\thethm}{\Alph{section}.\arabic{thm}}
\begin{prop}\label{AM1}
  In Lemma \ref{C1}, the matrix $c^{\Gamma}$ is determined by the following conditions:
  \begin{enumerate}
  \item $(c_{i,j})_{i\le k}=\hat{a}^{\Gamma}$.
  \item For $j=1,...,g-1$, $c_{i,2j-1}+c_{i,2j}=g+N-1$.
  \item For $j=1,...,g$, if $(a_{i,2j-2})$ and $(a_{i,2j-1})$ satisfy Lemma \ref{one} with $d=\frac{1}{2}d_j$, so do $(c_{i,2j})$ and $(c_{j,2j+1})$, with $d=\frac{1}{2}d'_j$.\footnote{Recall that $(a_{i,0})=a(k),(a_{i,2g-1})=a(k)^{\rev},(c_{i,0})=a(k+n)$,

      $(c_{i,2g-1})=(N+k_1,[N+k_1-1]_2,...,[N]_2,N-1-k_1,[N-2-k_1]_2,...,[N-m-k_1]_2,N-m-k_1-1)$.}
  \item For $j=1,...,g$, if $(a_{i,2j-2})$ and $(a_{i,2j-1})$ satisfy Lemma \ref{one2} with $d=\frac{1}{2}(d_j-1)$, so do $(c_{i,2j-2})$ and $(c_{i,2j-1})$ with $d=\frac{1}{2}(d'_j-1)$.
  \item For $i=k+1,...,k+n$, if $c_{i,2j_t-2}+c_{i,2j_t-1}=\frac{1}{2}(d'_{j_t}-1)$, then $c_{i,2j_{t+1}-2}+c_{i,2j_{t+1}-1}=\frac{1}{2}(d'_{j_{t+1}}-1)-1$; if $c_{i,2j_t-2}+c_{i,2j_t-1}=\frac{1}{2}(d'_{j_t}-1)-1$, then $c_{i,2j_{t+1}-2}+c_{i,2j_{t+1}-1}=\frac{1}{2}(d'_{j_{t+1}}-1)$.
  \item $c_{k+n,2}+c_{k+n,3}=\frac{1}{2}(d'_2-1)$.\footnote{For the definition of $n,N,d_j,d'_j$, see Lemma \ref{C1}.}
  \end{enumerate}
\end{prop}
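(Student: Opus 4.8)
The strategy is to show that conditions (1)--(6) pin down $c^{\Gamma}$ entry by entry, read column by column from left to right (this gives uniqueness), and then to verify that the values thereby forced do satisfy (1)--(6) (this gives existence). By condition (1) only rows $k+1,\dots,k+n$ have to be determined. I would augment $c^{\Gamma}$ by its two boundary columns, so that column $0$ is $a(k+n)$, column $2j-2$ is $\van_{P_j}(V_j)$, column $2j-1$ is $\van_{P_{j+1}}(V_j)$, and column $2g-1$ is the explicit sequence from the footnote; note that the first $k$ entries of $a(k+n)$ are exactly $a(k)$, and of column $2g-1$ are exactly $a(k)^{\rev}$ shifted by $N$, so these are consistent with $\hat a^{\Gamma}$.

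For \emph{uniqueness} I would induct on $j$, assuming columns $0,\dots,2j-2$ are forced. If $\mE_j$ is semistable (i.e.\ $d_j$ even), condition (3) says the pair $\big((c_{i,2j-2}),(c_{i,2j-1})\big)$ satisfies Lemma \ref{one} with $d=\tfrac12\hat d_j$, so by Lemma \ref{concise1} it is determined by the bundle, i.e.\ by the two special indices. The key observation is that the old special indices $i_1<i_2\le k$ stay special: since the shift matrix $A$ adds $N$ in odd columns and $0$ in even ones, for $i\le k$ one has $c_{i,2j-2}+c_{i,2j-1}=a_{i,2j-2}+a_{i,2j-1}+N$, which equals $\tfrac12\hat d_j$ exactly for $i\in\{i_1,i_2\}$; hence by the uniqueness clause of Lemma \ref{one} there are no special indices $>k$, and Lemma \ref{concise1} forces $c_{i,2j-1}=\tfrac12\hat d_j-c_{i,2j-2}-1$ for all $i>k$. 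If $\mE_j$ is unstable ($d_j$ odd) the same sum computation shows the destabilizing index $\ell$ and the second special index $i^\ast$ remain in $\{1,\dots,k\}$, so by Lemma \ref{concise2} the only residual freedom is $\tau_\ast\cap\{k+1,\dots,k+n\}$. Here I would note that among the new rows only row $k+n$ ever has a non-repeated vanishing order (at $P_1$ the new rows read $k_1,[k_1+1]_2,\dots,[k_1+m-1]_2,k_1+m$, only the last being non-repeated, and this persists under the shifts), so $\tau_\ast$ matters only for row $k+n$; condition (5) then forces the sum $c_{k+n,2j_t-2}+c_{k+n,2j_t-1}$ to alternate between $\tfrac12(\hat d_{j_t}-1)$ and $\tfrac12(\hat d_{j_t}-1)-1$, and condition (6) fixes it at the first unstable component $j_1=2$. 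Finally condition (2) turns column $2j-1$ into column $2(j{+}1){-}2$, and the induction continues; at $j=g$ one also lands on the prescribed column $2g-1$. So every entry is forced.

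For \emph{existence} one must check that these forced values actually satisfy (1)--(6); the only non-formal part is verifying, component by component, that the full length-$(k+n)$ pairs meet the hypotheses of Lemma \ref{one} (resp.\ Lemma \ref{one2})---monotonicity, the ``each integer repeated at most twice'' conditions, the forbidden coincidences in the special-index pattern, the realizability of the alternation---and that the propagation reproduces the prescribed column $2g-1$. The engine here is the hypothesis $N>k_1+m$ of Lemma \ref{C1}: rows $k+1,\dots,k+n$ start at $P_1$ with the small values $k_1,\dots,k_1+m$, acquire exactly one extra $+N$ per odd column crossed, and so at every node exceed rows $1,\dots,k$ by at least $N-(k_1+m)>0$ in the relevant entries, which rules out any illegal repetition between old and new rows; among the new rows, the recursion $c_{i,2j}=c_{i,2j-2}+(g+N-\tfrac12\hat d_j)$ and its unstable analogue preserve the repetition pattern inherited from $a(k+n)$.

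The step I expect to be the main obstacle is precisely this consistency verification at the unstable components: along each maximal run $j_t<j<j_{t+1}$ of semistable pieces followed by the unstable piece $j_{t+1}$ one must check, using the explicit vanishing data, that the forced values of the new rows are compatible with the $\tau_\ast$-pattern dictated by (5)--(6) and never violate the fine repetition hypotheses of Lemma \ref{one2}. This is a finite but somewhat delicate bookkeeping; everywhere else the propagation is automatic.
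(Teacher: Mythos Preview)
Your plan mirrors the paper's proof exactly in structure: determine $c^\Gamma$ column by column (the paper's Steps~1--4), then verify that conditions (3) and (4) hold for the forced values (Lemmas~A.3--A.6). Two of the specific claims you make along the way are inaccurate, however, and would need correction.

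First, your assertion that among the new rows only row $k+n$ ever carries a non-repeated vanishing order, and that this ``persists under the shifts'', is false. The repetition pattern among $(c_{i,\cdot})_{i>k}$ alternates each time one crosses an unstable component: at column $2j_t-2$ with $t$ odd the pattern is $(e,e+1,e+1,\dots,e+(m-1),e+(m-1),e+m)$, so rows $k+1$ and $k+n$ are both unpaired within the new block; for $t$ even it becomes $(e,e,\dots,e+(m-1),e+(m-1))$, fully paired. The paper records these two patterns precisely (its Lemma~A.1, Corollary~A.1, Lemma~A.2) and uses them throughout the existence verification. For uniqueness at $j_1=2$, the paper's Step~4.2 writes down an explicit parity-based formula for all $i>k$, derived from conditions (4) and (6) together rather than from (6) alone; condition (5) then propagates to later $j_t$. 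Your argument is reparable since condition (5) does cover all $i>k$, but the stated repetition claim is wrong.

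Second, your ``engine'' for ruling out illegal repetition between old and new rows is misidentified. The hypothesis $N>k_1+m$ is used only to guarantee nonnegativity of all entries (Lemma~A.4 in the paper). The inequality $c_{k,2j-1}\ge c_{k+1,2j-1}$, which is what actually prevents over-repetition at the interface, comes instead from condition~5 in the definition of $(g,k)$-standardness; see the proof of Lemma~A.3 and the case analysis in Lemma~A.5. The gap of size $N-(k_1+m)$ you posit between old and new entries does not hold in general.
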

Under these conditions, one can obtain at most one $c^{\Gamma}$ satisfying all the conditions. We shall describe this $c^{\Gamma}$ and then check that it indeed satisfies all the conditions.

Recall that in our context, the condition ``$(a_{i,2j-2})$ and $(a_{i,2j-1})$ satisfy Lemma \ref{one}'' is equivalent to $\mE_j$ being semi-stable and decomposable, and ``$(a_{i,2j-2})$ and $(a_{i,2j-1})$ satisfy Lemma \ref{one2}'' is equivalent to $\mE_j$ being unstable. We shall refer to these equivalent conditions interchangeably.
\subsection{Determining $c^{\Gamma}$}
\begin{st}{1}From condition 1, one gets $(c_{i,j})_{i\le k}=\hat{a}^{\Gamma}$.\end{st}
\begin{st}{2}Let $g'=g+N$. From condition (3), one gets $c_{i,1}=(g'-2)-\left\lfloor\frac{i-1}{2}\right\rfloor$ for $i=k+1,...,k+n$.\end{st}
\begin{st}{3}From condition 2, for $j=1,...,g$, $(c_{i,2j})=(g'-1,....,g'-1)-(c_{i,2j-1})$.\end{st}
\begin{st}{4}We claim that $(c_{i,1}),...,(c_{i,2j-2})$ uniquely determine $(c_{i,2j-1})$. Given \tb{Step 1}, it suffices to determine $(c_{i,2j-1})_{i>k}$. There are two cases to consider: (1) $\mE_j$ is semi-stable; (2) $\mE_j$ is unstable.\end{st}
\textbf{Step 4.1} If $\mE_j$ is semi-stable, from condition 3, $c_{i,2j-1}=(g'-2)-c_{i,2j-2}$ for all $i>k$.\\
\textbf{Step 4.2} For $(c_{i,2j_t-1})$, recall $j_1=2$. Given \textbf{Step 3}, $(c_{i,2})$ is determined. By conditions 4 and 6, for all $i>k$, $$c_{i,3}=\left\{\begin{aligned}
  &\frac{1}{2}(d'_2-1)-1-c_{i,2} &\text{ when }i \text{ is even}\\
  &\frac{1}{2}(d'_2-1)-c_{i,2} &\text{ when }i \text{ is odd.}
\end{aligned}\right.$$
By condition 5, $(c_{i,2j_t-1})_{j>k}$ are inductively determined for all $t$.

Thus, there is at most one $c^{\Gamma}$ satisfying conditions 1-6.
\subsection{$c^{\Gamma}$ satisfies all the conditions in \ref{AM1}.}
\indent Following \textbf{Step 1}-\textbf{Step 4}, conditions 1, 2, 5, 6 automatically hold. It remains to check (3) and (4). We break this process into several lemmas.

First, we point out some simple patterns among the entries of $c^{\Gamma}$ thus determined. They can all be checked by elementary calculation and we omit the proofs.
\begin{lem}\label{A1} Let $e=c_{k+1,2j_t-2}$. Then,\\
 $(c_{i,2j_t-2})_{i>k}=\begin{cases}
 (e,e+1,e+1,...,e+(m-1),e+(m-1),e+m) &\text{ if } t \text{ is odd}\\
 (e,e,...,e+(m-1),e+(m-1)) &\text{ if } t \text{ is even.}
 \end{cases}$
\end{lem}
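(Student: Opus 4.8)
The statement to prove is Lemma \ref{A1}, which describes the pattern of entries in the column $(c_{i,2j_t-2})_{i>k}$ of the matrix $c^\Gamma$ constructed in Lemma \ref{C1} (equivalently Proposition \ref{AM1}), at the columns immediately preceding the unstable bundles $\mE_{j_t}$.

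The plan is to proceed by induction on $t$, tracking the parity of $t$ and using the inductive recipe laid out in Steps 1--4 of Section A.1 (determining $c^\Gamma$) together with condition (5) of Proposition \ref{AM1}, which governs how the sum $c_{i,2j_t-2}+c_{i,2j_t-1}$ alternates between $\frac12(d'_{j_t}-1)$ and $\frac12(d'_{j_t}-1)-1$ as $t$ increases. First I would handle the base case $t=1$, where $j_1=2$: from Step 2, $c_{i,1}=(g'-2)-\lfloor\frac{i-1}{2}\rfloor$ for $i=k+1,\dots,k+n$, and from Step 3, $(c_{i,2})=(g'-1)\mathbf{1}-(c_{i,1})$; writing these out gives $c_{i,2}=1+\lfloor\frac{i-1}{2}\rfloor$, so with $e=c_{k+1,2}=1$ the column reads $(e,e,e+1,e+1,\dots,e+(m-1),e+(m-1))$ since $n=2m$, matching the ``$t$ even'' formula — wait, $t=1$ is odd, so I need to be careful: actually I should recheck which of $(c_{i,2j-2})$ versus the shifted-by-one pattern appears, and this is exactly the kind of parity bookkeeping that must be done carefully against the stated conventions $(c_{i,0})=a(k+n)$.

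For the inductive step, I would assume the formula holds for $(c_{i,2j_t-2})_{i>k}$ and derive $(c_{i,2j_{t+1}-2})_{i>k}$. The key mechanism: from $(c_{i,2j_t-2})$ and condition (4) (when $\mE_{j_t}$ is unstable, the entries satisfy Lemma \ref{one2}) together with condition (5), one computes $(c_{i,2j_t-1})_{i>k}$; then Step 3 gives $(c_{i,2j_t})$; then one propagates through the intermediate semi-stable bundles $\mE_j$ with $j_t<j<j_{t+1}$, each of which by Step 4.1 contributes $c_{i,2j-1}=(g'-2)-c_{i,2j-2}$ and by Step 3 gives $c_{i,2j}=(g'-1)-c_{i,2j-1}=c_{i,2j-2}+1$, i.e.\ each semi-stable component shifts the whole column uniformly by $+1$; finally arriving at $(c_{i,2j_{t+1}-2})$. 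The parity flip of $t$ is what swaps the ``staggered'' shape $(e,e+1,e+1,\dots)$ and the ``paired'' shape $(e,e,e+1,e+1,\dots)$: at an unstable bundle $\mE_{j_t}$, the presence of the special index $\ell$ (with $a_\ell+b_\ell = d+1$) versus the destabilizing index $i^*$ changes by one the offset between the $(k+1)$-st entry and the rest, as dictated by the $\tau_*$-rule in Lemma \ref{concise2} and condition (5) alternating the target sum.

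The main obstacle I anticipate is the parity bookkeeping at the unstable columns: correctly identifying, for each parity of $t$, which entries among $c_{k+1,2j_t-1},\dots,c_{k+n,2j_t-1}$ are repeated and which are non-repeated (this is governed by conditions (3)--(4) of Lemma \ref{C1}, i.e.\ Lemma \ref{one2} applied to the extended rows), and verifying that condition (5)'s alternation $\frac12(d'_{j_t}-1) \leftrightarrow \frac12(d'_{j_t}-1)-1$ is consistent with the claimed shapes after one accounts for all the $+1$ shifts accumulated across the intervening semi-stable bundles and the change in $d'_{j_t}$ itself (since $d'_{j_t}=2g'-1$ or $2g'-3$ alternately, by the constraints on $d_\bullet$). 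Since the lemma is asserted with the remark that it ``can be checked by elementary calculation,'' I would present the base case and one inductive step in detail and indicate that the remaining parity cases are entirely analogous, deferring the fully routine arithmetic.
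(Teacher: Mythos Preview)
Your inductive strategy is exactly what the paper intends; the paper itself omits the proof entirely as ``elementary calculation,'' so any careful write-up along your lines is fine. One correction to your base case: with $k=2k_1+1$ odd, $c_{k+1,2}=1+\lfloor k/2\rfloor=k_1+1$, not $1$; and since $i=k+1$ is even while $i=k+2$ is odd, etc., the column $(c_{i,2})_{i>k}$ actually reads $(e,e+1,e+1,\ldots,e+(m-1),e+(m-1),e+m)$ with $e=k_1+1$, which is precisely the ``$t$ odd'' shape as required for $t=1$ --- so your flagged worry resolves in the right direction once the arithmetic is fixed.
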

\begin{cor}\label{AC1}
Let $e=c_{k+1,2j_t-1}$. Then,\\
 $(c_{i,2j_t-1})_{i>k}=\begin{cases}
 (e,e,e-1,e-1,...,e-(m-1),e-(m-1)) &\text{ if } t \text{ is odd}\\
 (e,e-1,e-1,e-2,e-2,...,e-(m-1),e-(m-1),e-m)&\text{ if } t \text{ is even.}
 \end{cases}$
\end{cor}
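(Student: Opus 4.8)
The plan is to obtain Corollary \ref{AC1} as a one-step consequence of Lemma \ref{A1}: since the corollary describes the column $(c_{i,2j_t-1})_{i>k}$ while Lemma \ref{A1} already gives the adjacent column $(c_{i,2j_t-2})_{i>k}$, the only extra input needed is the component-wise sum $s_i:=c_{i,2j_t-2}+c_{i,2j_t-1}$ for each bottom row $i>k$. Once this sum is known, I simply set $c_{i,2j_t-1}=s_i-c_{i,2j_t-2}$ and read off the pattern. So the whole proof reduces to computing the sequence $(s_i)_{i>k}$ and then performing an elementary subtraction against Lemma \ref{A1}.

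To get the sums I would invoke the construction directly. By \textbf{Step 4.2}, at the first unstable component $j_1=2$ one has $c_{i,2}+c_{i,3}=\tfrac12(d'_2-1)$ for $i$ odd and $\tfrac12(d'_2-1)-1$ for $i$ even (all $i>k$); condition (5) then propagates this alternation across successive unstable components, so that writing $D_t:=\tfrac12(d'_{j_t}-1)$ one gets $s_i=D_t$ precisely when $i\equiv t\pmod 2$ and $s_i=D_t-1$ otherwise. I would then split on the parity of $t$ and combine with Lemma \ref{A1}. For $t$ odd, Lemma \ref{A1} supplies $(c_{i,2j_t-2})_{i>k}=(\alpha,\alpha+1,\alpha+1,\dots,\alpha+(m-1),\alpha+(m-1),\alpha+m)$ with $\alpha=c_{k+1,2j_t-2}$, while the sum sequence reads $(D_t-1,D_t,D_t-1,D_t,\dots,D_t-1,D_t)$; subtracting termwise yields the pairwise-constant sequence $(e,e,e-1,e-1,\dots,e-(m-1),e-(m-1))$ with $e=c_{k+1,2j_t-1}=D_t-1-\alpha$. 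For $t$ even, Lemma \ref{A1} gives the fully-paired sequence $(\alpha,\alpha,\alpha+1,\alpha+1,\dots,\alpha+(m-1),\alpha+(m-1))$ and the sum sequence is $(D_t,D_t-1,\dots,D_t,D_t-1)$, and the same subtraction produces $(e,e-1,e-1,\dots,e-(m-1),e-(m-1),e-m)$, which is exactly the second case of the statement.

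The computation is entirely routine, so the only thing requiring care is the parity bookkeeping, and this is where I expect the single genuine subtlety to lie. Because $k=2k_1+1$ is odd, the first bottom index $k+1$ is \emph{even}, so for $t$ odd and $t$ even the sum sequences begin on opposite rungs ($D_t-1$ versus $D_t$); this shift, interacting with the two different repetition shapes coming from Lemma \ref{A1}, is precisely what forces the two distinct forms in the conclusion, and it is easy to mis-align. I would therefore verify the two boundary rows explicitly rather than reading them off the interior pattern: the row $i=k+1$ pins down the constant $e$, and the last row $i=k+n$ is governed by the anchoring condition (6) together with (5) rather than by an equality $c_{i,2j_t-2}=c_{i+1,2j_t-2}$ inside the column, so its sum value must be checked by hand. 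Confirming these two endpoints, together with the parity identity $s_i=D_t\iff i\equiv t\pmod 2$, completes the argument.
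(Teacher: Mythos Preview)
Your proposal is correct and is essentially the intended argument: the paper supplies no explicit proof for this corollary, treating it as immediate from Lemma~\ref{A1} together with the sum rules in \textbf{Step 4.2} and conditions (5)--(6), which is exactly what you spell out. Your parity bookkeeping (that $k+1$ is even since $k=2k_1+1$, so the sum sequence at row $k+1$ starts at $D_t-1$ for $t$ odd and $D_t$ for $t$ even) is the only thing that needs checking, and you handle it correctly.
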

\begin{lem}\label{A2} Suppose $j\neq j_1,...,j_{2T}$. Let $e=c_{k+1,2j-2},f=c_{k+1,2j-1}$. Then,\\
 $(c_{i,2j-2})_{i>k}=\begin{cases}
 (e,e+1,e+1,...,e+(m-1),e+(m-1),e+m)&\text{ if } |\{j_t|j_t<j\}| \text{ is even}\\
 (e,e,...,e+(m-1),e+(m-1))&\text{ if } |\{j_t|j_t<j\}| \text{ is odd};
\end{cases}$

\noindent$(c_{i,2j-1})_{i>k}=\begin{cases}
 (f,f-1,f-1,...,f-(m-1),f-(m-1),f-m)&\text{ if } |\{j_t|j_t<j\}| \text{ is even}\\
 (f,f,...,f-(m-1),f-(m-1))&\text{ if } |\{j_t|j_t<j\}| \text{ is odd}.
 \end{cases}$
\end{lem}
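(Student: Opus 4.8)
The plan is to prove Lemma \ref{A2} — together with Lemma \ref{A1} and Corollary \ref{AC1}, which come out of the \emph{same} induction — by tracking how the ``shape'' of the truncated columns $(c_{i,2j-2})_{i>k}$ and $(c_{i,2j-1})_{i>k}$ propagates through the recursions of \textbf{Steps 1--4}. Set $n=2m$ and $i'=i-k\in\{1,\dots,n\}$; since $k=2k_1+1$ is odd, ``$i$ even'' means ``$i'$ odd''. I would first name four shapes for a length-$n$ vector indexed by $i'$: the \emph{ascending pyramid} $c_{i'}=e+\lfloor i'/2\rfloor$, i.e. $(e,e{+}1,e{+}1,\dots,e{+}(m{-}1),e{+}(m{-}1),e{+}m)$; the \emph{ascending block} $c_{i'}=e+\lfloor(i'-1)/2\rfloor$, i.e. $(e,e,\dots,e{+}(m{-}1),e{+}(m{-}1))$; and their reflections $x\mapsto c-x$, the \emph{descending pyramid} and \emph{descending block}. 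Lemma \ref{A2} is exactly the assertion that $(c_{i,2j-2})_{i>k}$ is an ascending pyramid when $|\{j_t:j_t<j\}|$ is even and an ascending block when it is odd (with $e=c_{k+1,2j-2}$), while $(c_{i,2j-1})_{i>k}$ is the corresponding descending shape (with $f=c_{k+1,2j-1}$); the base and top values are automatic once the shape is known, since they sit at $i'=1$.

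The induction is on the column index. For the base, a direct computation gives $(c_{i,0})_{i>k}=(\lfloor(i-1)/2\rfloor)_{i>k}$, the last $n$ entries of $a(k+n)$, an ascending pyramid with $e=k_1$, while $|\{j_t:j_t<1\}|=0$; column $1$ is then $((g'-2)-\lfloor(i-1)/2\rfloor)_{i>k}$ by \textbf{Step 2}, a descending pyramid. For the inductive step across a \emph{semistable} $\mE_j$ ($j\neq j_t$): \textbf{Step 4.1} gives $c_{i,2j-1}=(g'-2)-c_{i,2j-2}$ and \textbf{Step 3} gives $c_{i,2j}=(g'-1)-c_{i,2j-1}$ for $i>k$; both are \emph{symmetric} reflections $x\mapsto\text{const}-x$, so the first interchanges ascending$\leftrightarrow$descending while keeping pyramid-vs-block fixed, and the composite of the two is a translation. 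Hence $(c_{i,2j})_{i>k}=(c_{i,2(j+1)-2})_{i>k}$ has the same pyramid/block type as $(c_{i,2j-2})_{i>k}$, matching $|\{j_t:j_t<j+1\}|=|\{j_t:j_t<j\}|$ (valid because $j\neq j_t$). For the step across an \emph{unstable} $\mE_{j_t}$, one first notes the column $2j_t-2$ still obeys the Lemma \ref{A2} pattern (its index-count is $t-1$), then applies \textbf{Step 4.2}: conditions 4 and 6 fix the parity-dependent sums $c_{i,2}+c_{i,3}$ at $j_1=2$, and condition 5 propagates them, so $c_{i,2j_t-2}+c_{i,2j_t-1}$ equals $\tfrac12(d'_{j_t}-1)$ on one residue class of $i$ and $\tfrac12(d'_{j_t}-1)-1$ on the other, with the roles of the two classes \emph{alternating with $t$}. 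An explicit one-line check then shows this ``asymmetric reflection'' sends an ascending pyramid to a descending block (the $t$ odd case, whence Corollary \ref{AC1}) and an ascending block to a descending pyramid ($t$ even case): it flips pyramid$\leftrightarrow$block. One more symmetric reflection (\textbf{Step 3}) undoes the reflection, giving the ascending shape of $(c_{i,2j_t})_{i>k}=(c_{i,2(j_t+1)-2})_{i>k}$, whose index-count is now $t$ — one more than before, consistent with the flip. Lemmas \ref{A1} and \ref{AC1} are simply the columns at $j_t$ itself, read off from this same step.

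The one place that needs genuine care, rather than routine arithmetic with floors, is reconciling conditions 4, 5, 6 into the single uniform statement ``the asymmetric reflection at $j_t$ subtracts an extra $1$ on the even-$i$ class when $t$ is odd and on the odd-$i$ class when $t$ is even.'' I would establish the $t=1$ case directly from conditions 4 and 6, and then prove the alternation by induction on $t$ using condition 5 together with the already-established shape of $(c_{i,2j_t-2})_{i>k}$ from Lemma \ref{A1} — this is precisely where the dependence on the parity of $t$ in Lemma \ref{A1}/Corollary \ref{AC1} comes from. Everything else — the four shape identities, the action of symmetric and asymmetric reflections on them, and the bookkeeping of $|\{j_t:j_t<j\}|\bmod 2$ — is entry-wise computation to be done once in the notation above and then invoked.
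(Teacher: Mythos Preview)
Your approach is correct and, in fact, considerably more detailed than what the paper does: the paper omits the proofs of Lemmas \ref{A1}, \ref{A2} and Corollary \ref{AC1} entirely, stating only that ``they can all be checked by elementary calculation.'' What you have written is precisely a clean execution of that elementary calculation, packaged as an induction on the column index with the four named shapes as the inductive invariant.

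The only remark worth making is that your proof is slightly more ambitious in scope than strictly necessary: you set things up to prove \ref{A1}, \ref{AC1}, and \ref{A2} simultaneously, which is natural since the same recursion governs all three. The paper treats them as separate statements (with \ref{AC1} flagged as a corollary of \ref{A1}), but of course gives no argument for any of them, so there is nothing to compare on that front. Your identification of the key non-mechanical point --- that the asymmetric reflection at an unstable $\mE_{j_t}$ is what flips pyramid to block and accounts for the parity of $t$ --- is exactly right, and is the only place where one has to think rather than compute.
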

\begin{lem}\label{A3}
For $j=1,...,g-1$, $(c_{i,2j-1})$ is non-increasing and $(c_{i,2j})$ is non-decreasing.
\end{lem}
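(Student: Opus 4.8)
\textbf{Proof proposal for Lemma \ref{A3}.}

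The plan is to argue by induction on $j$, following the recursive construction of $c^{\Gamma}$ laid out in Steps 1--4 of the appendix. First I would observe that it suffices to prove the two monotonicity claims for the rows indexed $i > k$, since by condition (1) the rows $i \le k$ coincide with $\hat a^{\Gamma}$, and $\hat a^{\Gamma}$ is monotone because $a^{\Gamma}$ is $(g,k)$-standard (and the column-shift matrix $A$ is constant along columns, hence preserves monotonicity). So the content is entirely about the appended rows $k+1,\dots,k+n$. Moreover, once $(c_{i,2j-1})_{i>k}$ is shown to be non-increasing, the corresponding statement for $(c_{i,2j})_{i>k}$ is immediate from Step 3, namely $(c_{i,2j}) = (g'-1,\dots,g'-1) - (c_{i,2j-1})$, which flips monotonicity. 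Thus I only need to track the odd-indexed columns.

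Next I would split into the two cases of Step 4. When $\mathcal E_j$ is semistable (Step 4.1), one has $c_{i,2j-1} = (g'-2) - c_{i,2j-2}$ for $i > k$, and $c_{i,2j-2}$ is in turn, via Step 3, $(g'-1) - c_{i,2j-3}$; so $c_{i,2j-1} = c_{i,2j-3} - 1$, and non-increasingness of the odd column $2j-1$ reduces to that of the odd column $2j-3$ — but we must be careful, since $2j-3$ could itself be a "twisted" column $2j_t - 1$, so the induction hypothesis covers it. Alternatively, and more cleanly, I would invoke the explicit descriptions in Lemma \ref{A1}, Corollary \ref{AC1}, and Lemma \ref{A2}: in every case the vector $(c_{i,2j-1})_{i>k}$ has one of the two shapes $(f,f-1,f-1,\dots,f-(m-1),f-(m-1),f-m)$ or $(f,f,f-1,f-1,\dots,f-(m-1),f-(m-1))$ (or the analogous even-$t$/odd-$|\{j_t<j\}|$ variant), each of which is visibly non-increasing. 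Since those three lemmas enumerate all columns $j = 1,\dots,g-1$ — the nodal columns $j = j_t$ covered by \ref{A1}/\ref{AC1}, the rest by \ref{A2} — this gives a complete case check.

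The one genuine subtlety, and the step I expect to be the main obstacle, is the twisted columns $2j_t - 1$ handled in Step 4.2, where $c_{i,2j_t-1}$ is defined not by the simple reflection $(g'-2) - c_{i,2j_t-2}$ but by the rule coming from conditions (4), (5), (6): for $i$ even one subtracts an extra $1$, for $i$ odd one does not. Here I must verify that this parity-dependent offset does not destroy monotonicity — concretely, that $c_{i,2j_t-1} \ge c_{i+1,2j_t-1}$ even when $i$ is odd and $i+1$ is even (so the right-hand entry gets the extra $-1$), and when $i$ is even and $i+1$ is odd (so the right-hand entry does \emph{not} get the extra $-1$, which is the dangerous direction). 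In the dangerous direction one needs $c_{i,2j_t-2} - \tfrac12(d'_{j_t}-1) + \text{(offset)} \le c_{i+1,2j_t-2}$-type inequalities, which follow from Lemma \ref{A1}: there $(c_{i,2j_t-2})_{i>k}$ is of the form $(e,e+1,e+1,\dots)$ or $(e,e,e+1,e+1,\dots)$, so consecutive entries differ by $0$ or $1$ in a pattern that exactly compensates the parity offset, leaving the difference $c_{i,2j_t-1} - c_{i+1,2j_t-1} \in \{0,1\}$. So the proof will amount to: reduce to rows $i>k$; reduce the even columns to the odd ones via Step 3; then verify non-increasingness of each odd column by reading off the explicit form from \ref{A1}, \ref{AC1}, \ref{A2}, paying attention in the $j_t$ case to the interplay between the $(e, e+1, e+1, \dots)$ pattern and the parity-dependent $-1$. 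This is exactly the "elementary calculation" the appendix defers, so the write-up should be short once the reductions are in place.
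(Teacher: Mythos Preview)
Your reduction in the first paragraph contains a genuine gap: it is \emph{not} enough to show monotonicity separately for the rows $i\le k$ and for the rows $i>k$. Even granting that $(c_{i,2j-1})_{i\le k}$ is non-increasing (from $\hat a^{\Gamma}$) and that $(c_{i,2j-1})_{i>k}$ is non-increasing (from the explicit shapes in Corollary \ref{AC1} and Lemma \ref{A2}), the concatenation $(c_{1,2j-1},\dots,c_{k+n,2j-1})$ is non-increasing only if the \emph{seam inequality} $c_{k,2j-1}\ge c_{k+1,2j-1}$ holds. You never address this, and it is in fact the substantive part of the argument.

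The paper's proof is organized exactly around this point: after the same two easy reductions you give (odd columns suffice; each of the two blocks is separately monotone), it explicitly says ``So it further suffices to show $c_{k,2j-1}\ge c_{k+1,2j-1}$'' and then spends the remainder of the proof establishing it. The argument uses the closed form $c_{k+1,2j-1}=c_{k+1,1}-(j-1)$ together with condition (5) of $(g,k)$-standardness (Definition \ref{stan}) to bound $c_{k,2j_t-1}$ at the unstable indices, and then propagates the inequality across the semistable stretches $j_t<j<j_{t+1}$ via $c_{k,2j-2}+c_{k,2j-1}\ge g'-2=c_{k+1,2j-2}+c_{k+1,2j-1}$. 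None of this appears in your proposal. By contrast, the issue you flag as the ``genuine subtlety'' --- the parity-dependent offset at the $j_t$ columns for $i>k$ --- is already packaged in Corollary \ref{AC1}, whose two displayed patterns are visibly non-increasing, so no further work is needed there.
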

\begin{proof}
To verify this, notice that if all $(c_{i,2j-1})$ are non-increasing, then all $(c_{i,2j})$ are non-decreasing. So it suffices to check the former. For any $j$, by \textbf{Step 1}, $(c_{i,2j-1})_{i\le k}=(\hat{a}_{i,1})$, and is non-increasing; by Corollary \ref{AC1} and Lemma \ref{A2}, $(c_{i,2j-1})_{i>k}$ is non-increasing as well. So it further suffices to show $c_{k,2j-1}\ge c_{k+1,2j-1}$.

Before we check this, notice that by our construction $c_{k+1,2j-1}=c_{k+1,1}-(j-1)$, for $j=2,...,g$. Also, by \textbf{Step 2}, $c_{k,1}=c_{k+1,1}=(g+n-2)-k_1$.

By condition 5 in Definition \ref{stan}, $c_{k,2j_t-1}\ge \frac{1}{2}(d'_{j_t}-1)-1-(k_1+j_t-2)$. Since $d'_{j_t}\ge 2g'-3$, $c_{k,2j_t-1}\ge g'-1-k_1-j_t=c_{k+1,1}-(j_t-1)=c_{k+1,2j_t-1}$.

Now consider $j\neq j_1,...,j_{2T}$. Let $j_t$ be the largest index among $j_1,...,j_{2T}$ smaller than $j$. For all $j':j_t<j'\le j$, $c_{k,2j'-2}+c_{k,2j'-1}\ge g'-2=c_{k+1,2j'-2}+c_{k+1,2j'-1}$. Since we have already shown $c_{k,2j_t-1}\ge c_{k+1,2j_t-1}$, one can conclude $c_{k,2j-1}\ge c_{k+1,2j-1}$.

This shows $(c_{i,2j-1})$ is non-increasing for all $j$.
\end{proof}
We now finish checking conditions 3 and 4 using the next three lemmas.
\begin{lem}\label{A4}
  The entries of $c^{\Gamma}$ are all non-negative.
\end{lem}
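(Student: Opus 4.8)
\textbf{Proof proposal for Lemma \ref{A4}.}

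The plan is to trace through the explicit recursive description of $c^{\Gamma}$ given in \textbf{Step 1}--\textbf{Step 4.2} and show that at each stage the entries produced are bounded below by $0$. Since the first $k$ rows of $c^{\Gamma}$ coincide with $\hat{a}^{\Gamma} = a^{\Gamma} + A$, and $a^{\Gamma}$ is $(g,k)$-standard (hence has non-negative entries by Definition \ref{stan}) while $A$ has non-negative entries, the block $(c_{i,j})_{i\le k}$ is automatically non-negative. So the content is entirely in the rows $i = k+1, \dots, k+n$. First I would handle the columns of ``odd type'' indexed by $P_1$: by \textbf{Step 2}, $c_{i,1} = (g'-2) - \lfloor (i-1)/2\rfloor$ for $i>k$, and since $i \le k+n = k+2m$ we have $\lfloor(i-1)/2\rfloor \le k_1 + m$, so $c_{i,1} \ge g'-2-k_1-m \ge 0$ using $g' = g+N$ and $N > k_1+m$ (this is exactly the hypothesis $N > k_1 + m$ in Lemma \ref{C1}).

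Next I would bound the general columns $(c_{i,2j-1})_{i>k}$ for $j = 2, \dots, g$. By the patterns recorded in Lemma \ref{A1}, Corollary \ref{AC1} and Lemma \ref{A2}, each such column is a contiguous ``staircase'' determined by its top entry $c_{k+1,2j-1}$ and its bottom entry, which differs from the top by at most $m$. By the construction ($c_{i,2j-1} = c_{k+1,1} - (j-1)$ for the relevant range, as used in the proof of Lemma \ref{A3}) one has $c_{k+1,2j-1} = (g+n-2-k_1) - (j-1) \ge (g+n-2-k_1) - (g-1) = n - 1 - k_1 = m - 1 - k_1$; more precisely, the top entry at the unstable columns $2j_t-1$ is controlled by condition 5 of Definition \ref{stan} exactly as in Lemma \ref{A3}, giving $c_{k+1,2j_t-1} \ge g' - 1 - k_1 - j_t$, and hence the bottom entry is $\ge g' - 1 - k_1 - j_t - m \ge 0$ since $j_t \le g \le k_1^2 + k_1$ and $g' = g + N$ with $N$ large. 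For the even columns $(c_{i,2j})$ one uses \textbf{Step 3}: $c_{i,2j} = (g'-1) - c_{i,2j-1}$, so non-negativity of $c_{i,2j}$ is equivalent to $c_{i,2j-1} \le g'-1$; this upper bound follows from the same staircase patterns together with the fact that the top entries $c_{i,2j-1}$ are visibly at most $g'-2$ (the $P_1$ bound propagates, since each step of the recursion only decreases or mildly shifts the top entry).

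The main obstacle I anticipate is bookkeeping at the unstable columns $2j_t - 1$, where the recursion in \textbf{Step 4.2} uses conditions 4, 5, 6 rather than the simple relation $c_{i,2j-1} = (g'-2) - c_{i,2j-2}$. There the sum $c_{i,2j_t-2} + c_{i,2j_t-1}$ alternates between $\tfrac12(d'_{j_t}-1)$ and $\tfrac12(d'_{j_t}-1) - 1$ as $t$ varies, and one must check that subtracting $c_{i,2j_t-2}$ from this target never produces a negative value --- this amounts to verifying $c_{i,2j_t-2} \le \tfrac12(d'_{j_t}-1) - 1$ for all $i > k$. Using $d'_{j_t} \ge 2g'-3$ and Lemma \ref{A1} (which gives $c_{i,2j_t-2} \le c_{k+1,2j_t-2} + m$), this reduces to the inequality $c_{k+1,2j_t-2} + m \le g' - 2 - 1$, i.e. $c_{k+1,2j_t-2} \le g' - m - 3$; since $c_{k+1,2j_t-2}$ equals roughly $g' - 1 - k_1 - (j_t - 1)$ shifted by a bounded amount, and $g'$ is chosen much larger than $k_1, m, g$, this holds with room to spare. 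Once these inequalities at the unstable columns are in hand, the remaining cases are routine interval arithmetic on the staircase patterns, and the lemma follows.
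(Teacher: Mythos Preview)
Your overall plan is sound, and the first two paragraphs are essentially right (modulo the $n$/$N$ confusion inherited from the paper's own typo, and the stray ``$= m-1-k_1$'', which is simply false whether one reads $n$ or $N$). The genuine gap is in the final paragraph. You assert that $c_{k+1,2j_t-2}$ ``equals roughly $g'-1-k_1-(j_t-1)$'', but that is the formula for the \emph{odd}-indexed entry $c_{k+1,2j_t-1}$. By Step~3, $c_{k+1,2j_t-2} = (g'-1) - c_{k+1,2(j_t-1)-1} = k_1 + j_t - 1$, which \emph{grows} with $j_t$ and can be as large as $k_1+g-1$. Your fallback ``$g'$ is chosen much larger than $g$'' is then both needed and unavailable, since $g' = g+N$ with only $N > k_1+m$. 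Your approach can in fact be repaired, but only by tracking the parity of $t$ carefully: for $t$ even, Lemma~\ref{A1} gives $c_{k+n,2j_t-2} = c_{k+1,2j_t-2} + (m-1)$ rather than $+m$, and the relevant sum is $g'-3$; together these yield precisely $N \ge k_1+m+1$, with no slack. None of this parity bookkeeping appears in what you wrote.

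The paper sidesteps this entirely by invoking Lemma~\ref{A3} (monotonicity of the columns $(c_{i,2j-1})$), which was just proved. Non-negativity of $c^\Gamma$ reduces via Step~3 to showing $0 \le c_{i,2j-1} \le g'-1$; the upper bound is immediate from $c_{i,2j-1} \le c_{1,2j-1} \le g'-1$ (the first row lies in $\hat a^\Gamma$), and the lower bound reduces to the single bottom entry $c_{k+n,2j-1}$. For that entry one checks from Steps~2, 4.1, 4.2 that $c_{k+n,2j-1} \ge c_{k+n,1} - (j-1) = (g'-2-k_1-m)-(j-1)$ uniformly in $j$: the alternation in condition~5 makes the unstable steps average out to the semistable rate of one per column. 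Then $j \le g$ and $N > k_1+m$ finish it, with no separate treatment of the unstable columns required.
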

\begin{proof}
From \textbf{Step 1}, it is clear that $c_{i,j}\ge0$, for $i=1,...,k$ and $j=1,...,2g-2$. In particular, $c_{i,2j-1}\le g'-1$, for $i=1,...,k$. If $0\le c_{i,2j-1}\le g'-1$, then from \textbf{Step 3}, the lemma follows.

By Lemma \ref{A3}, $\fa i>k$, $c_{i,2j-1}\le c_{1,2j-1}\le g'-1$ and $(c_{i,2j-1})$ is non-increasing. It then suffices to show $c_{k+n,2j-1}\ge 0$.

From \textbf{Step 2}, \textbf{Step 4.1}, \textbf{Step 4.2}, $c_{k+n,2j-1}\ge(g'-2)-(k_1+m)-(j-1)$ for all $j$.

Since $j\le g$ and $g'-g> k_1+m$, we have $c_{k+n,2j-1}>0$. So the lemma holds.
\end{proof}
\begin{lem}\label{A5}
If $\mE_j$ is semi-stable, then $(c_{i,2j-2})$ and $(c_{j,2j-1})$ satisfy Lemma \ref{one}, with $d=g'-1$.
\end{lem}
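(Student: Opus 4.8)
The plan is to verify that the vanishing sequences $(c_{i,2j-2})$ and $(c_{i,2j-1})$ constructed by Steps 1--4 meet each of the hypotheses of Lemma \ref{one} in the case where $\mE_j$ is semi-stable, i.e. where $\mE_j=\mO(a_{i_1,2j-2}P_j+(d-a_{i_1,2j-2})P_{j+1})\op\mO(a_{i_2,2j-2}P_j+(d-a_{i_2,2j-2})P_{j+1})$ with $d=\frac12 d'_j=g'-1$. Recall Lemma \ref{one} requires: (i) $d\ge c_{i,2j-2}+c_{i,2j-1}\ge d-1$ for all $i$; (ii) a unique pair of special indices $i_1<i_2$ with $c_{i_1,2j-2}+c_{i_1,2j-1}=c_{i_2,2j-2}+c_{i_2,2j-1}=d$; (iii) every integer occurs at most twice in $(c_{i,2j-2})$ or in $(c_{i,2j-1})$; (iv) there is no $i\ne i_1,i_2$ with $c_{i,2j-2}=c_{i_1,2j-2}$ and $c_{i,2j-1}=c_{i_2,2j-1}$.

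First I would handle the ``upper half'' $i\le k$: by Step 1, $(c_{i,2j-2})_{i\le k},(c_{i,2j-1})_{i\le k}$ are just the corresponding columns of $\hat a^\Gamma=a^\Gamma+A$, and since $a^\Gamma$ is $(g,k)$-standard with $\mE_j$ semi-stable, $(a_{i,2j-2}),(a_{i,2j-1})$ satisfy Lemma \ref{one} with $d=g-1$ (condition (2) of Definition \ref{stan}). Adding the constant $N$ to the even-indexed entries and leaving the odd-indexed ones fixed shifts sums by $N$ and preserves all repetition patterns and the identity of the special indices, so conditions (i)--(iv) of Lemma \ref{one} hold for the restriction to $i\le k$ with $d=g-1+N=g'-1$; in particular $i_1,i_2\le k$ are the special indices for the whole column. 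Then I would deal with the ``lower half'' $i>k$: by Lemma \ref{A2} (the case $j\ne j_1,\dots,j_{2T}$), $(c_{i,2j-2})_{i>k}$ is an explicit arithmetic list of the shape $(e,e{+}1,e{+}1,\dots)$ or $(e,e,e{+}1,e{+}1,\dots)$ and $(c_{i,2j-1})_{i>k}$ is $(f,f{-}1,f{-}1,\dots)$ or $(f,f,f{-}1,\dots)$; in either parity one checks directly that $c_{i,2j-2}+c_{i,2j-1}$ equals $e+f$ or $e+f-1$ for each $i>k$, and by Step 2/Step 4.1 one has $e+f=c_{k+1,2j-2}+c_{k+1,2j-1}=g'-2=d-1$, so condition (i) holds on the lower half and no lower index is special. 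Condition (iii) on the lower half is immediate from the explicit lists (each value repeated at most twice), and condition (iv) needs a small cross-check between the two halves: one must rule out an index $i>k$ reproducing the pair $(c_{i_1,2j-2},c_{i_2,2j-1})$, and an index $i\le k$, $i\ne i_1,i_2$ doing so — the former is excluded because lower-half sums are $d-1<d$ while $c_{i_1,2j-2}+c_{i_2,2j-1}$ would have to be compared carefully, and the latter is inherited from the $(g,k)$-standard hypothesis on $a^\Gamma$.

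The remaining global points are: condition (iii) across the full column (a value appearing once above and once below, or twice above and once below), which is controlled by the monotonicity Lemma \ref{A3} ($c_{k,2j-1}\ge c_{k+1,2j-1}$, with equality forcing a constraint) together with the non-negativity Lemma \ref{A4}; and checking that $d=g'-1\ge c_{k,2j-2}$, i.e. the largest entry does not exceed $d$, again from Lemma \ref{A3} and Step 1. I would assemble these: uniqueness of the special pair follows since the only sums equal to $d$ come from $i=i_1,i_2$ in the upper half (all lower-half sums are $d-1$, and all other upper-half sums are $\le d-1$ by the standardness of $a^\Gamma$).

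The main obstacle I expect is condition (iii) (and the closely related (iv)) at the interface between the two halves: verifying that gluing the shifted top block $\hat a^\Gamma$ to the arithmetically-defined bottom block does not accidentally create a value occurring three times, or a ``forbidden rectangle'' $(c_{i_1,2j-2},c_{i_2,2j-1})$. This requires knowing $c_{k,2j-2},c_{k,2j-1}$ (the last entries of the top block, governed by condition (5) of Definition \ref{stan}, which bounds $a_{k,2j_t-1}$) relative to $c_{k+1,2j-2},c_{k+1,2j-1}$ (the first entries of the bottom block, given by Step 2 as $(g'-2)-k_1-(j-1)$ type quantities), and then tracking the inequalities through Lemma \ref{A3}. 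Everything else is bookkeeping on the explicit formulas in Lemmas \ref{A1}--\ref{A4}; the interface inequalities are where the genuine input from $(g,k)$-standardness and from the choice of $N>k_1+m$ is used.
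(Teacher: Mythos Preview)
Your approach is essentially the paper's: split into the upper block $i\le k$ (inherited from $\hat a^\Gamma$) and the lower block $i>k$ (given by Lemma \ref{A2}), verify the sum conditions separately, then check the interface. Two points need correction or sharpening.

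First, your argument for condition (iv) when $i>k$ does not work as stated. You invoke that lower-half sums equal $d-1$, but the quantity $c_{i_1,2j-2}+c_{i_2,2j-1}$ can also equal $d-1$, so comparing sums does not rule this out. The paper instead observes that any $i$ with $c_{i,2j-2}=c_{i_1,2j-2}$ and $c_{i,2j-1}=c_{i_2,2j-1}$ must lie between $i_1$ and $i_2$ by the monotonicity of Lemma \ref{A3}; since $i_2\le k$, such $i$ sits in the upper block, and the condition is then inherited from $a^\Gamma$. (Alternatively: if $i>k\ge i_2>i_1$ and $c_{i,2j-2}=c_{i_1,2j-2}$, monotonicity forces this value to repeat at least three times, contradicting (iii).) Also, a small slip: for semi-stable $\mE_j$, Step~4.1 gives $c_{i,2j-2}+c_{i,2j-1}=g'-2$ for every $i>k$, not ``$e+f$ or $e+f-1$''.

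Second, you correctly flag the interface for condition (iii) as the crux, but the paper does not leave this to Lemma \ref{A3} alone. It reduces to showing that either $c_{k,2j-2}<c_{k+1,2j-2}$, or $c_{k-1,2j-2}<c_{k,2j-2}=c_{k+1,2j-2}<c_{k+2,2j-2}$, and then runs a case analysis on whether $j$ lies between $j_t$ and $j_{t+1}$ with $t$ odd or even (or $j>j_{2T}$), using condition (5) of Definition \ref{stan} and the parity patterns of Lemma \ref{A1}/Corollary \ref{AC1}. This is where the $(g,k)$-standardness hypothesis is actually spent; your outline anticipates this but does not carry it out.
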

\begin{proof}
When $\mE_j$ is semi-stable, let $i_1<i_2$ be the special indices, i.e. $$a_{i_1,2j-2}+a_{i_1,2j-1}=a_{i_2,2j-2}+a_{i_2,2j-1}=g-1.$$
It then follows from \textbf{Step 1}, \textbf{Step 2}, \textbf{Step 4.1} and explicitly computing $(c_{i,2g-2})$ that for all $j$,
$$c_{i_1,2j-2}+c_{i_1,2j-1}=c_{i_2,2j-2}+c_{i_2,2j-1}=g'-1; c_{i,2j-2}+c_{i,2j-1}=g'-2, \text{ for }i\neq i_1,i_2.$$
\indent To verify this lemma, it remains to check the following two conditions:

\begin{enumerate}
  \item[(a)] Every integer appears in $(c_{i,2j-2})$ (resp. $(c_{i,2j-1})$) at most twice.
  \item[(b)] There does not exist $i\neq i_1,i_2$ such that $c_{i,2j-2}=c_{i_1,2j-2},c_{i,2j-1}=c_{i_2,2j-1}$.
\end{enumerate}

\indent We first check (a). Since $(c_{i,0})=a(k+n)$ (\textbf{Notation 12}) satisfies (a), it remains to look at $j=2,...,g$. Since $(c_{i,2j-2})_{i\le k}=\hat{a}^{\Gamma}$, every integer appears in $(c_{i,2j-2})_{i\le k}$ at most twice. By Lemma \ref{A2}, every integer appears in $(c_{i,2j-2})_{i> k}$ at most twice. Hence, it suffices to check the following:
$$\text{either }c_{k,2j-2}<c_{k+1,2j-2}, \text{ or }c_{k-1,2j-2}<c_{k,2j-2}=c_{k+1,2j-2}<c_{k+2,2j-2}.$$

By definition, $c_{k-1,0}<c_{k,0}=c_{k+1,0}$.

For any $j>2$ such that $\mE_j$ is semi-stable, either $\exists t$ such that $j_t<j<j_{t+1}$ or $j>j_{2T}$.

First, suppose $j$ is between some $j_t$ and $j_{t+1}$.

When $t$ is odd, by the $(g,k)$-standardness of $a^{\Gamma}$, one of the following holds:
\begin{enumerate}
  \item $c_{k,2j_t-2}<c_{k+1,2j_t-2}$;
  \item $c_{k-1,2j_t-2}<c_{k,2j_t-2}=c_{k+1,2j_t-2}<c_{k+2,2j_t-2}$ and $c_{k,2j_t-1}>c_{k+1,2j_t-1}$.
\end{enumerate}
In either case, $c_{k,2j_t-2}+c_{k,2j_t-1}=\frac{1}{2}(d'_{j_t}-1)$ and therefore $c_{k,2j_t-1}>c_{k+1,2j_t-1}$. $\fa j:j_t<j<j_{t+1}$, $$c_{k,2j-2}+c_{k,2j-1}\ge g'-2=c_{k+1,2j-2}+c_{k+1,2j-1}.$$
Thus we have $c_{k,2j-2}<c_{k+1,2j-2}$, for all $j:j_t<j<j_{t+1}$.

When $t$ is even, $c_{k,2j_t-2}<c_{k+1,2j_t-2}$. If $c_{k,2j_t-1}>c_{k+1,2j_t-1}$, we are done. Otherwise we have $c_{k,2j_t-1}=c_{k+1,2j_t-1}$ and $c_{k,2j_t-2}+c_{k,2j_t-1}=\frac{1}{2}(d'_{j_t}-1)-1$. By the monotonicity of $(c_{i,2j_t-2})$ and the fact that $c_{k-1,2j_t-2}+c_{k-1,2j_t-1}\ge\frac{1}{2}(d'_{j_t}-1)-1$, one must have $c_{k-1,2j_t-1}>c_{k,2j_t-1}$. Therefore, $c_{k-1,2j-2}<c_{k,2j-2}\le c_{k+1,2j-2}<c_{k+2,2j-2}$, for all $j:j_t<j<j_{t+1}$.

For $j>j_{2T}$, a similar argument applies.

Thus, we have checked that every integer appears in $(c_{i,2j-2})$ at most twice. For $j=1,...,g-1$, $(c_{i,2j-1})=(g'-1,....,g'-1)-(c_{i,2j})$, so every integer appears in $(c_{i,2j-1})$ at most twice. And $(c_{i,2g-1})$ visibly satisfies this condition. Hence, (a) holds for all $j$.

Lastly, (b) could only be violated by some $i$ between $i_1$ and $i_2$. But $(c_{i,j})_{i\le k}=\hat{a}^{\Gamma}$, so this is impossible.
\end{proof}
This verifies condition 3 in \ref{AM1}. Eventually, we check condition 4:
\begin{lem}\label{A6}
$\fa t$, $(c_{i,2j_t-2}), (c_{i,2j_t-1})$ satisfy Lemma \ref{one2} with $d=\frac{1}{2}(d'_{j_t}-1)$.
\end{lem}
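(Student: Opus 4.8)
The plan is to verify, for each unstable component $\mE_{j_t}$, that the two columns $(c_{i,2j_t-2})$ and $(c_{i,2j_t-1})$ of the constructed matrix $c^{\Gamma}$ fulfil each of the five numbered hypotheses of Lemma \ref{one2} with $d=\frac{1}{2}(d'_{j_t}-1)$. Recall that by the construction (Step 4.2) the columns over the unstable components are determined inductively via conditions 4, 5, 6, and that the corresponding columns $(a_{i,2j_t-2}),(a_{i,2j_t-1})$ of $\hat a^{\Gamma}$ already satisfy Lemma \ref{one2} with $d=\frac{1}{2}(d_{j_t}-1)$ (this is part of the definition of $(g,k)$-standardness, together with the passage from $a^{\Gamma}$ to $\hat a^{\Gamma}$, which is a uniform shift on the odd columns and hence preserves all the repetition/sum patterns). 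So the real content is to control the extra rows $i=k+1,\dots,k+n$.

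First I would set $d=\frac{1}{2}(d'_{j_t}-1)$ and record, using Lemma \ref{A1} and Corollary \ref{AC1}, the explicit shape of $(c_{i,2j_t-2})_{i>k}$ and $(c_{i,2j_t-1})_{i>k}$: for $t$ odd these are $(e,e{+}1,e{+}1,\dots,e{+}m)$ and $(f,f,f{-}1,f{-}1,\dots)$ respectively, and for $t$ even the mirrored pattern, where $e=c_{k+1,2j_t-2}$, $f=c_{k+1,2j_t-1}$, and $e+f=\frac12(d'_{j_t}-1)-1=d-1$ by Step 4.2. From this one reads off immediately which of the new rows have $a_i+b_i=d$ (only the first, resp.\ last, of the block, matching the special index $i^*=k+1$ or $k+n$ and the index $\ell$) and which have $a_i+b_i=d-1$; all the repeated entries occur in consecutive pairs $a_i=a_{i+1}$, $b_i=b_{i+1}$. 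This already gives hypothesis (1) ($d+1\ge a_i+b_i\ge d-1$, in fact $\ge d-1$) and the bulk of hypotheses (2) and (3). Hypothesis (2) ($|\{i:a_i=c\}|\le2$, $|\{i:b_i=c\}|\le2$) is handled exactly as condition (a) in the proof of Lemma \ref{A5}: within $i\le k$ it holds by standardness, within $i>k$ by Lemma \ref{A1}/Corollary \ref{AC1}, and the only thing to check is compatibility at the seam $i=k,k+1$, which follows from the inequality $c_{k,2j_t-1}\ge c_{k+1,2j_t-1}$ established in Lemma \ref{A3} together with the boundary condition in Definition \ref{stan}(5). Hypotheses (3) and (4) (no unwanted coincidence $(a_i,b_i)=(a_{i^*}\text{-type}, b_{\ell}\text{-type})$) are, as in Lemma \ref{A5}(b), only ever at risk for indices $i$ strictly between the two special indices, but there $(c_{i,j})_{i\le k}=\hat a^{\Gamma}$ and $(c_{i,j})_{i>k}$ is strictly monotone off the diagonal by Lemma \ref{A3}, so no such coincidence arises.

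The remaining point is hypothesis (5) of Lemma \ref{one2}: there is no $i$ with $a_i+b_i=a_{i+1}+b_{i+1}=d-1$, $a_{i+1}=a_i+1$, and $a_i,a_{i+1},b_i,b_{i+1}$ all non-repeated. Among the new rows this cannot happen because, by Lemma \ref{A1}/Corollary \ref{AC1}, the $a_i+b_i=d-1$ entries come in repeated consecutive pairs, so whenever $a_{i+1}=a_i+1$ one of the four values is repeated. Among $i\le k$ it is exactly the exclusion built into Definition \ref{stan}(3) via Lemma \ref{one2}(5) for $a^{\Gamma}$ (again stable under the shift $a^{\Gamma}\leadsto\hat a^{\Gamma}$). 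The only genuinely new case is a "mixed" pair straddling $i=k,k+1$; there one uses the seam inequality from Lemma \ref{A3} and the precise form of condition \ref{stan}(5) (which forces $a_{k-1,2j_t-1}>a_{k,2j_t-1}$ whenever the seam is tight) to see that a repetition is unavoidable. Finally, non-negativity of all entries is Lemma \ref{A4}. Assembling these items gives all hypotheses of Lemma \ref{one2}, proving the lemma.

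I expect the main obstacle to be the careful bookkeeping at the seam between the rows $i\le k$ (coming from $\hat a^{\Gamma}$) and the rows $i>k$ (built by Step 4.2): one must verify that the interleaving of the two monotone blocks does not create a triple repetition (for hypothesis (2)), an illegal coincidence of the $\ell$- and $i^*$-type pairs (for (3),(4)), or a forbidden "+1 step among four non-repeated values" (for (5)). All of these reduce, as in the proof of Lemma \ref{A5}, to the single inequality $c_{k,2j_t-1}\ge c_{k+1,2j_t-1}$ from Lemma \ref{A3} combined with the boundary conditions in Definition \ref{stan}(5), so once that comparison is in hand the rest is routine case-checking over the parities of $t$ and of the interval position of $j_t$.
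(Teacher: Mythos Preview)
Your plan follows the paper's own approach: verify each hypothesis of Lemma~\ref{one2} using the explicit block shapes from Lemma~\ref{A1} and Corollary~\ref{AC1}, the monotonicity of Lemma~\ref{A3}, and a seam analysis at $i=k,k+1$. Two points in your sketch need correction.

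First, the special indices $\ell,i^*$ are \emph{not} among the new rows. From Lemma~\ref{A1} and Corollary~\ref{AC1} one computes that no $i>k$ has $c_{i,2j_t-2}+c_{i,2j_t-1}=d+1$, and no two consecutive new rows share both coordinates; in fact half of the new rows (not ``only the first, resp.\ last'') have sum $d$ and the other half sum $d-1$. Hence $\ell,i^*$ are inherited from $(a_{i,2j_t-2}),(a_{i,2j_t-1})$ and lie in $\{1,\dots,k\}$, exactly as the paper asserts. This changes how you check (3) and (4): the potential violations live at the seam $i=k,k+1$ (and are dispatched by observing that $(c_{k,2j_t-2},c_{k,2j_t-1})=(c_{k+1,2j_t-2},c_{k+1,2j_t-1})$ would already contradict (2), and that $c_{k,2j_t-1}=c_{k+1,2j_t-1}+1$ combined with $c_{k,2j_t-2}=c_{k+1,2j_t-2}$ is ruled out by parity), not ``strictly between the two special indices''.

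Second, for hypothesis (2) on $(c_{i,2j_t-1})$ the delicate case is $c_{k,2j_t-1}=c_{k+1,2j_t-1}$; here the paper does not simply invoke Definition~\ref{stan}(5) but gives a counting argument over $j_1,\dots,j_t$ showing this equality forces $t$ even, after which Corollary~\ref{AC1} supplies $c_{k+1,2j_t-1}>c_{k+2,2j_t-1}$. Your outline glosses over this parity step. With these two points fixed, your proof coincides with the paper's.
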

\begin{proof}
It is obvious that $d+1\ge c_{i,2j_t-2}+c_{i,2j_t-1}\ge d-1$ and the special indices $\ell,i^*$ (Remark \ref{spid}) are inherited from $(a_{i,2j_t-2})$, $(a_{i,2j_t-1})$.

By Lemma \ref{A3}, $(c_{i,2j_t-2})$ is non-decreasing and $(c_{i,2j_t-1})$ is non-increasing. Hence, we are left to check the following:
\begin{enumerate}
  \item[(a)] Any integer appears in $(c_{i,2j_t-2})$ (resp. $(c_{i,2j_t-1})$) at most twice.
  \item[(b)] $\fa (a,c)\neq (c_{i^*,2j_t-2},c_{i^*,2j_t-1})$, $|\{i|(c_{i,2j_t-2},c_{i,2j_t-1})=(a,c)\}|\le 1$.
  \item[(c)] $\fa (a,d-a)\neq (c_{i^*,2j_t-2},c_{i^*,2j_t-1})$, $|\{i|(c_{i,2j_t-2},c_{i,2j_t-1})\ge(a,d-a)\}|\le 1$.
  \item[(d)] $\not\exists$ $i$ such that $c_{i,2j_t-2}+c_{i,2j_t-1}=c_{i+1,2j_t-2}+c_{i+1,2j_t-1}=d-1$ and $c_{i+1,2j_t-2}=c_{i,2j_t-2}+1$, where $c_{i,2j_t-2},c_{i+1,2j_t-2},c_{i,2j_t-1},c_{i+1,2j_t-1}$ are non-repeated vanishing orders.
\end{enumerate}

We verify (a) by induction on $t$. Recall $j_1=2$. By Lemma \ref{A5}, every integer appears in $(c_{i,1})$ at most twice, so every integer appears in $(c_{i,2})=(g'-1,...,g'-1)-(c_{i,1})$ at most twice. By \textbf{Step 4.2}, one can check directly that every integer appears in $(c_{i,3})$ at most twice. Now suppose every integer appears in $(c_{i,2j_t-2})$, $(c_{i,2j_t-1})$ at most twice. By Lemma \ref{A5}, every integer appears in $(c_{i,2j-2}),(c_{i,2j-1})$ at most twice, for any $j$ between $j_t$ and $j_{t+1}$; hence, the same holds for $(c_{i,2j_{t+1}-2})$.

As for $(c_{i,2j_{t+1}-1})$, it is non-increasing by Lemma \ref{A3}. By \textbf{Step 1} and \textbf{Step 4.2} resp., every integer appears in $(c_{i,2j_{t+1}-1})_{i\le k}$ at most twice, and in $(c_{i,2j_{t+1}-1})_{i>k}$ at most twice as well. Therefore, if $c_{k,2j_{t+1}-1}>c_{k+1,2j_{t+1}-1}$, we are done. Now suppose $c_{k,2j_{t+1}-1}=c_{k+1,2j_{t+1}-1}$. We claim $t$ must be even. First of all, $c_{k,3}>c_{k+1,3}$. When $t=2p+1$ is odd, by \textbf{Step 4.2} and $(g,k)$-standardness of $a^{\Gamma}$, there are at least $p+1$ many $j\in\{j_1,..,j_{2p+1}\}$ such that $c_{k,2j-2}+c_{k,2j-1}=\frac{1}{2}(d'_j-1)$, while there are at least $p+1$ many $j$'s such that $c_{k+1,2j-2}+c_{k+1,2j-1}=\frac{1}{2}(d'_j-1)-1$; moreover, $c_{k,2j-2}+c_{k,2j-1}\ge c_{k+1,2j-2}+c_{k+1,2j-1}$ for any $j\neq j_1,...,j_{2T}$. Therefore, when $t$ is odd, $c_{k,2j_t-1}>c_{k+1,2j_t-1}$. But given $t$ is even, (a) is a consequence of Corollary \ref{AC1}.

To check (b), it only remains to see $(c_{k,2j_t-2},c_{k,2j_t-1})\neq (c_{k+1,2j_t-2},c_{k+1,2j_t-1})$. This holds: by Lemma \ref{A1} and Corollary \ref{AC1}, either $c_{k+1,2j_t-2}=c_{k+2,2j_t-2}$ or $c_{k+1,2j_t-1}=c_{k+2,2j_t-1}$; if $(c_{k,2j_t-2},c_{k,2j_t-1})=(c_{k+1,2j_t-2},c_{k+1,2j_t-1})$, condition (a) would be violated.

For (c), it could only be violated if $$(c_{k,2j_t-2},c_{k,2j_t-1})=(c_{k+1,2j_t-2},c_{k+1,2j_t-1}+1)=(a,d+1-a).$$
This cannot happen: $c_{k,2j_t-1}=c_{k+1,2j_t-1}$ only if $t$ is odd, otherwise it would violate (a); but when $t$ is odd, by \textbf{Step 4.2}, $c_{k+1,2j_t-2}+c_{k+1,2j_t-1}=d-1$.

For (d), it suffices to see that the index $k$ does not violate this condition. When $t$ is odd, by $(g,k)$-standardness of $a^{\Gamma}$, either $c_{k,2j_t-2}=c_{k+1,2j_t-2}$ or $c_{k,2j_t-2}+c_{k,2j_t-1}=d$; when $t$ is even, $c_{k+1,2j_t-2}+c_{k+1,2j_t-1}=d$. In any case, (d) holds.

This proves the lemma.
\end{proof}
Thus, we have verified Proposition \ref{AM1}. Consequently, we have:
\begin{cor}\label{C2}
  All $K$-valued objects $((\mE'_i,V'_i),(\phi_i))$ in $\mathcal{G}^0$ (see Theorem \ref{Induction}) have same underlying vector bundles. Moreover, $((\mE'_i,V'_i),(\phi_i))$ is chain-adaptable.
\end{cor}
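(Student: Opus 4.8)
The statement to prove is Corollary \ref{C2}: all $K$-valued objects in $\GG^0$ have the same underlying vector bundles, and each such object is chain-adaptable.

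The plan is to reduce both assertions to facts already established. First I would recall that $\GG^0$ is by construction (Lemma \ref{suffover}) a locally closed substack of $\mathcal{G}^{k+n,\text{EHT}}_{2,\omega_g(NP_{g+1}),\hat{d}_{\bullet},c^{\Gamma}}(X_g)$, and that $c^{\Gamma}$ is the matrix produced by Lemma \ref{C1}; Proposition \ref{AM1} tells us exactly what $c^{\Gamma}$ looks like, and Lemmas \ref{A5}, \ref{A6} show that for each $j$, the pair of columns $(c_{i,2j-2}),(c_{i,2j-1})$ satisfies the hypotheses of Lemma \ref{one} (when $\mE_j$ is semi-stable) or Lemma \ref{one2} (when $\mE_j$ is unstable), with the appropriate value of $d$. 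The componentwise degree $\hat{d}_{\bullet}$ is fixed, so each $\mE'_j$ is a rank two bundle on the elliptic curve $C_j$ of prescribed degree whose space of sections $V'_j$ realizes the prescribed vanishing sequences at the two marked points. Then Lemma \ref{bunfix} — or more directly Lemmas \ref{2} and \ref{3}, which force a decomposable bundle of the stated form once two sections have vanishing orders summing to $d$ (resp. $d$ and $d+1$) under the running hypothesis $u(\mE_j)\le 1/2$ — pins down $\mE'_j$ uniquely as the bundle named in the conclusion of Lemma \ref{one} or Lemma \ref{one2}. Since this holds for every $j$, the underlying bundles $\mE'_1,\dots,\mE'_g$ are the same for every $K$-valued object of $\GG^0$.

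For chain-adaptability I would invoke Lemmas \ref{one} and \ref{one2} again in their "moreover" clauses: each says that any $k$-dimensional subspace $V$ with the prescribed vanishing sequences at $P,Q$ admits a $(P,Q)$-adapted basis. Applying this to $(\mE'_j, V'_j)$ for $j=2,\dots,g-1$ shows each such pair is $(P_j, P_{j+1})$-adaptable; since $\GG^0$ consists of refined limit linear series (we are working inside the refined locus, as built in Lemma \ref{suffover}), the definition of chain-adaptable is met. One subtlety to address cleanly is that the columns of $c^{\Gamma}$ restricted to the first $k$ rows agree with $\hat{a}^{\Gamma}=a^{\Gamma}+A$, and the full matrix is $(g,k+n)$-consistent in the sense needed for Lemmas \ref{one}, \ref{one2}; this is precisely the content of Steps 1–4 in the construction of $c^{\Gamma}$ together with Lemmas \ref{A1}–\ref{A6}, so I would simply cite Proposition \ref{AM1} rather than re-derive it.

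The main obstacle, such as it is, is bookkeeping rather than mathematics: one must check that every column pair of $c^{\Gamma}$ genuinely lands in the hypothesis set of the right lemma (\ref{one} vs. \ref{one2}) with the correct $d$, including the edge columns $(c_{i,0})=a(k+n)$ and $(c_{i,2g-1})$, and that the value of $b$ implicit in the limit-linear-series structure is consistent with $b=g-1$ for the underlying (untwisted) data after removing the twist by $NP_{g+1}$. Both of these are already dispatched — the column analysis by Lemmas \ref{A5} and \ref{A6}, and the value of $b$ inside the proof of Lemma \ref{suffover} (where $b'$ is defined by $\sum\deg(\mE_j)-2b'(g-1)=2g-2+2N$ and $b=b'-N$) — so the proof is genuinely short: cite Proposition \ref{AM1} for the shape of $c^{\Gamma}$, cite Lemmas \ref{A5}–\ref{A6} to feed Lemmas \ref{one}–\ref{one2}, cite Lemmas \ref{2}–\ref{3} (equivalently \ref{bunfix}) for uniqueness of the $\mE'_j$, and cite the "moreover" clauses of Lemmas \ref{one}–\ref{one2} for the adapted bases, noting refinedness.
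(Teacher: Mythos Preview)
Your proposal is correct and matches the paper's intended argument: the corollary is stated without proof immediately after the verification of Proposition~\ref{AM1} (via Lemmas~\ref{A1}--\ref{A6}), and your unpacking---citing Lemmas~\ref{A5}, \ref{A6} to place each column pair of $c^{\Gamma}$ in the hypotheses of Lemma~\ref{one} or~\ref{one2}, then invoking Lemmas~\ref{2}, \ref{3} (equivalently the argument of Lemma~\ref{bunfix}) for uniqueness of the bundles and the ``moreover'' clauses for adapted bases---is exactly the justification the paper leaves implicit.
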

\subsection{Sub-bundles of Push-forwards}\label{secsub}
To discuss objects in \ma\ over an arbitrary $K$-scheme $T$, we quote the following notion of a generalized sub-bundle from \cite{Olls}:
\begin{defn}\label{subb}
  Let $\pi:X\to B$ be a morphism of schemes and $\mE$ be a vector bundle on $X$. A sub-sheaf $V\subset\pi_*\mE$ is said to be a \tb{sub-bundle} if it is locally-free and $V_S\to\pi_{S*}\mE_S$ is injective for all base-changes $S\to B$.\footnote{$V_S\to\pi_{S*}\mE_S$ is the composition $V_S\to(\pi_*\mE)_S\to\pi_{S*}\mE_S$.}
\end{defn}
In this subsection, we show some sub-sheaves of push-forwards of vector bundles are sub-bundles in the sense of \ref{subb}. We first recall some basic definitions:
\begin{defn}
  Let $f:\mE\to\mF$ be a morphism between two locally-free sheaves on a scheme $X$. We say that $f$ \it{has rank $r$} if $Z(\Lambda^{r+1}f)=X$ and $\supp(\Lambda^rf)=\emptyset$; in this case, we also denote $\rk(f)=r$.
\end{defn}
\begin{defn}\label{van}
  Let $\mE$ be a locally-free sheaf on $T\times C$, where $T$ is a $K$-scheme $C$ is a smooth projective curve and $P$ is a point on $C$. Suppose $V$ is a rank-$k$ sub-bundle of $\pi_*\mE$ in the sense of \ref{subb}. Denote $\gamma_m:V\to\pi_*(\mE|_{m(T\times P)})$ to be the obvious map factorizing through $\pi_*\mE$.

  We say a sequence of integers $(c_1\le...\le c_k)$ is \tb{the vanishing sequence of $V$ along $P$}, if the following conditions are satisfied:
  \begin{enumerate}
    \item $\fa i,\rk(\gamma_{c_i})=|\{t|c_t<c_i\}|$;
    \item $\fa i,\rk(\gamma_{c_i+1})-\rk(\gamma_{c_i})=|\{t|c_t=c_i\}|$.
  \end{enumerate}
\end{defn}
Obviously, sub-bundles in general may not have a vanishing sequence along $P$. Also, if $T$ is a point, this agrees with the usual notion of vanishing sequence of a linear system over $C$ at $P$.
\begin{lem}\label{case1}
Let $T$ be a $K$-scheme, $C$ be a smooth projective curve and $\mE$ be a rank-$r$ vector bundle on $T\times C$. Denote $\pi:T\times C\to C$ to be the projection map. Given a point $P\in C$, suppose $V$ is a sub-bundle of $\pi_*\mE$ which has a fixed vanishing sequence $(c_i)$ along $P$. Define $\gamma_m:V\to\pi_*(\mE|_{m(T\times P)})$ to be the obvious map factoring through $\pi_*\mE$. Then, $V(-mP):=\ker(\gamma_m)$ is a sub-bundle of $\pi_*\mE$.
\end{lem}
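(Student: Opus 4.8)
\textbf{Proof proposal for Lemma \ref{case1}.}

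The plan is to check the defining property of a sub-bundle (Definition \ref{subb}) directly for $V(-mP) = \ker(\gamma_m)$, namely that it is locally free and that formation of this kernel commutes with arbitrary base change $S \to T$. The key observation is that having a \emph{fixed} vanishing sequence $(c_i)$ along $P$ means precisely that the ranks $\rk(\gamma_{m})$ are constant in $T$ (condition (1)-(2) of Definition \ref{van} pin down $\rk(\gamma_m)$ for every $m$ as a number depending only on $(c_i)$, not on the point of $T$). A morphism of vector bundles whose rank is locally constant has, locally on the base, kernel, image, and cokernel all locally free, and these commute with base change; this is the standard linear-algebra fact that an everywhere-rank-$r$ map between free modules splits Zariski-locally. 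So first I would reduce to the affine local situation $T = \spec R$, where $\pi_* \mE$ and $\pi_*(\mE|_{m(T\times P)})$ are represented by finitely generated $R$-modules (using properness of $\pi$ and coherence), and where $V$ is a locally free direct summand-type sub-bundle.

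Next I would argue that $\gamma_m \colon V \to \pi_*(\mE|_{m(T \times P)})$ factors as $V \hookrightarrow \pi_*\mE \to \pi_*(\mE|_{m(T\times P)})$, and that the composite has constant rank equal to $r_m := |\{t : c_t < c_{i} \text{ for the unique } i \text{ with } c_i = m\}|$ when $m$ is actually attained (and is handled by the monotonicity/interpolation of condition (2) otherwise); in all cases the hypothesis of a fixed vanishing sequence forces $\rk(\gamma_m)$ to be the same integer at every point of $T$. Granting this, $\ker(\gamma_m)$ is, locally on $\spec R$, a free direct summand of $V$ of rank $k - r_m$, hence locally free of that rank; and for any base change $S \to T$ the natural map $\ker(\gamma_m)_S \to \ker((\gamma_m)_S)$ is an isomorphism because kernels of constant-rank maps of vector bundles are stable under base change. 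This is exactly what is needed: $V(-mP)_S \to \pi_{S*}\mE_S$ is then the composite $V(-mP)_S \hookrightarrow V_S \to \pi_{S*}\mE_S$, injective since $V$ is a sub-bundle and $V(-mP)_S \hookrightarrow V_S$ is a (locally) split injection.

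The main technical obstacle I expect is justifying that the relevant pushforwards $\pi_*\mE$ and $\pi_*(\mE|_{m(T\times P)})$ behave well — specifically, that they are \emph{not} required to be locally free on all of $T$ (in general $\pi_* \mE$ need not be), so one cannot naively talk about "rank of $\gamma_m$" on the target. The correct move is to work with $\gamma_m$ as a map \emph{out of} $V$, which \emph{is} a vector bundle by hypothesis, into a coherent sheaf, and to control its kernel via the image: $\ker(\gamma_m) = \ker(V \twoheadrightarrow \im(\gamma_m))$, and the image of a constant-rank map from a vector bundle is itself a vector bundle locally on the base (again the splitting lemma). So the whole argument is local-on-$T$ linear algebra once the rank constancy is extracted from Definition \ref{van}; the only thing to be careful about is handling the values $m$ that do not occur in $(c_i)$, where one uses that $\rk(\gamma_m)$ is determined by interpolating between the "jump" indices, and handling the case $m$ large (where $\gamma_m$ stabilizes) — but by hypothesis $(c_i)$ is a finite sequence of integers, so only finitely many $m$ matter and for $m > c_k + \deg$ considerations the statement is either vacuous or follows by the same reasoning. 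I would then conclude that $V(-mP)$ satisfies Definition \ref{subb}, completing the proof.
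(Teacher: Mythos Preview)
Your approach is correct and is essentially what the paper does: the paper's proof is a one-line citation of Proposition~20.8 in Eisenbud's \emph{Commutative Algebra}, which is precisely the constant-rank splitting lemma you invoke (a map of finite free modules whose determinantal rank is constant has projective kernel, image, and cokernel, all stable under base change). Your unpacking of how Definition~\ref{van} forces $\rk(\gamma_m)$ to be constant is exactly the ``unraveling'' the paper alludes to.

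One small simplification: your worry about the target $\pi_*(\mE|_{m(T\times P)})$ not being locally free is unnecessary here. The closed subscheme $m(T\times P)\subset T\times C$ is finite and flat over $T$ (it is the $m$-th infinitesimal neighborhood of the section $T\times\{P\}$), so $\pi_*(\mE|_{m(T\times P)})$ is a vector bundle of rank $mr$ on $T$. Thus $\gamma_m$ is genuinely a map between vector bundles and you can speak of its rank directly, without the detour through $\im(\gamma_m)$.
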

\begin{proof}
This essentially follows from Proposition 20.8 in \cite{EComm} unraveling our definition of the vanishing sequence of a sub-bundle.
\end{proof}
\begin{cor}\label{case2}
  Given the same setup as \ref{case1}, let $\mL$ be a sub-line bundle of $\pi_*(\mE(-m(T\times P))|_{T\times P})$ and define $\gamma_{m,\mL}$ to be the map $V(-mP)\to\pi_*(\mE(-m(T\times P))|_P)/\mL$. If $\gamma_{m,\mL}$ is surjective, then $\ker(\gamma_{m,\mL})$ is a sub-bundle of $\pi_*(\mE)$ of rank $\rk(V(-mP))-\rk(\mE)+1$.
\end{cor}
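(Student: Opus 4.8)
\textbf{Proof proposal for Corollary \ref{case2}.}

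The plan is to deduce this statement from Lemma \ref{case1} together with a standard fact about kernels of surjective maps of vector bundles, working locally on $T$. First I would apply Lemma \ref{case1} to conclude that $V(-mP) = \ker(\gamma_m)$ is a sub-bundle of $\pi_*\mE$; in particular it is locally free, say of rank $\rk(V(-mP))$, and its formation commutes with arbitrary base change $S\to T$. Next I would observe that $\pi_*(\mE(-m(T\times P))|_{T\times P})$ is locally free of rank $r = \rk(\mE)$ (it is the push-forward of a locally free sheaf supported on the section $T\times P$, which is finite over $T$), and $\mL\subset\pi_*(\mE(-m(T\times P))|_{T\times P})$ is a sub-line bundle, so the quotient $\pi_*(\mE(-m(T\times P))|_{T\times P})/\mL$ is locally free of rank $r-1$. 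The map $\gamma_{m,\mL}$ is the composite of $\gamma_{m,\mathrm{full}}: V(-mP)\to \pi_*(\mE(-m(T\times P))|_{T\times P})$ with the quotient by $\mL$; by hypothesis $\gamma_{m,\mL}$ is surjective onto this rank $r-1$ bundle.

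Once both the source and target are exhibited as vector bundles and the map is surjective, the kernel $\ker(\gamma_{m,\mL})$ is locally free of rank $\rk(V(-mP)) - (r-1) = \rk(V(-mP)) - \rk(\mE) + 1$: this is the usual statement that a surjection of locally free sheaves on any scheme is locally split, so its kernel is locally a direct summand, hence locally free of the expected rank. It remains to check the base-change injectivity condition in Definition \ref{subb}, i.e. that $\ker(\gamma_{m,\mL})_S\to \pi_{S*}\mE_S$ is injective for every $S\to T$. For this I would use that the surjection $\gamma_{m,\mL}$, being locally split, stays surjective and stays a surjection with locally free kernel after any base change, so $\ker(\gamma_{m,\mL})_S = \ker((\gamma_{m,\mL})_S)$; and since $(V(-mP))_S\to \pi_{S*}\mE_S$ is already injective (Lemma \ref{case1}), its restriction to the sub-bundle $\ker(\gamma_{m,\mL})_S$ is injective as well. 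Chaining these identifications gives that $\ker(\gamma_{m,\mL})$ is a sub-bundle of $\pi_*\mE$ of the claimed rank.

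The main obstacle I anticipate is the compatibility of kernels with base change: a priori $(\ker\gamma_{m,\mL})_S$ only maps to $\ker((\gamma_{m,\mL})_S)$, and equality requires knowing the cokernel of $\gamma_{m,\mL}$ is flat over $T$ (here it is zero by surjectivity) and that $\gamma_{m,\mL}$ has locally free image — both of which follow from the local splitting afforded by surjectivity onto a locally free sheaf. So the real content is just to package "$\gamma_{m,\mL}$ surjective onto a vector bundle $\Rightarrow$ locally split $\Rightarrow$ kernel is a sub-bundle of the expected rank, stably under base change," and then invoke Lemma \ref{case1} for the injectivity into $\pi_{S*}\mE_S$. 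Everything else is bookkeeping on ranks.
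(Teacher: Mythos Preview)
Your proposal is correct and follows essentially the same approach as the paper's proof: invoke Lemma \ref{case1} to get that $V(-mP)$ is a sub-bundle, observe that $\gamma_{m,\mL}$ is a surjection from a locally free sheaf onto a locally free sheaf of rank $\rk(\mE)-1$, and conclude that the kernel is a locally split sub-bundle of the expected rank which inherits the base-change injectivity from $V(-mP)$. The paper's proof is terser and leaves the base-change verification implicit, whereas you spell it out; but the substance is identical.
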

\begin{proof}
  Given Lemma \ref{case1}, when $\gamma_{m,\mL}$ is surjective, it is a surjective map from a locally-free sheaf to a locally-free sheaf of rank $\rk(\mE)-1$. Hence, $\ker(\gamma_{m,\mL})$ is a sub-bundle of $V(-mP)$ of rank $\rk(V(-mP))-\rk(\mE)+1$ in the usual sense. Since $V(-mP)$ is a sub-bundle of $\pi_*(\mE)$ in the sense of \ref{subb}, the result follows.
\end{proof}
\subsection{Some Locally Closed Points of the Moduli of Vector Bundles}
\begin{lem}\label{cl1}
    Let $C$ be a smooth projective curve and $X$ be the spectrum of a DVR, with $\eta$ being the generic point and $0$ being the closed point. Suppose $\mL_1,\mL_2$ are two line bundles on $C$ and $\mE$ is a rank two locally-free sheaf on $X\times C$. Denote $p_1:X\times C\to X$, $p_2:X\times C\to C$ be the projection maps. If $\mE_{\eta}\cong p_2^*\mL_1\op p_2^*\mL_2|_{\eta}$, then $h(\mL_i^{\vee}\ot\mE_0)\ge1$ ($i=1,2$). Moreover, if $\mL_1\cong\mL_2$, $h(\mL_i^{\vee}\ot\mE_0)\ge2$.
\end{lem}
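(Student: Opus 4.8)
The plan is to use semicontinuity of cohomology for the flat family $\mL_i^{\vee}\otimes\mE$ over $X$, combined with Riemann--Roch on the generic fiber. First I would fix $i\in\{1,2\}$ and set $\mathcal{F}=p_2^*\mL_i^{\vee}\otimes\mE$, a rank two locally-free sheaf on $X\times C$, flat over $X$ since $\mE$ is locally free and $X$ is the spectrum of a DVR. On the generic fiber, $\mathcal{F}_{\eta}\cong(p_2^*\mL_i^{\vee})_{\eta}\otimes\mE_{\eta}\cong(\mL_i^{\vee}\otimes\mL_1)\oplus(\mL_i^{\vee}\otimes\mL_2)$ (base-changed to the generic point), so one of the two summands is $\mO_C$ (the one with index matching $i$), whence $h^0(C_{\eta},\mathcal{F}_{\eta})\ge h^0(C,\mO_C)=1$. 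By Grauert's theorem / upper-semicontinuity of $t\mapsto h^0(\mathcal{F}_t)$ on $X$, the generic value $\ge 1$ forces $h^0(\mathcal{F}_0)\ge 1$; that is, $h(\mL_i^{\vee}\otimes\mE_0)\ge 1$, which is the first assertion.

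For the refinement, suppose $\mL_1\cong\mL_2$. Then on the generic fiber $\mathcal{F}_{\eta}\cong(\mL_i^{\vee}\otimes\mL_1)^{\oplus 2}$, and since $\mL_i\cong\mL_1\cong\mL_2$ we get $\mathcal{F}_{\eta}\cong\mO_{C_{\eta}}^{\oplus 2}$, hence $h^0(C_{\eta},\mathcal{F}_{\eta})=2$. Again by upper-semicontinuity on $X$, $h^0(\mathcal{F}_0)\ge 2$, i.e. $h(\mL_i^{\vee}\otimes\mE_0)\ge 2$. I would note that the hypothesis ``$\mE_{\eta}\cong p_2^*\mL_1\oplus p_2^*\mL_2|_{\eta}$'' is exactly what pins down the generic cohomology, and the DVR base is precisely the setting in which upper-semicontinuity of $h^0$ is available without further hypotheses (the base is integral, so the generic point is dense, and $\mathcal{F}$ is flat over it).

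I do not anticipate a serious obstacle here; the statement is a routine specialization argument. The only point requiring mild care is bookkeeping: the isomorphism $\mE_\eta\cong p_2^*\mL_1\oplus p_2^*\mL_2|_\eta$ is one over the generic fiber $\{\eta\}\times C$, so ``$(p_2^*\mL_i^\vee\otimes\mE)_\eta$'' must be interpreted as the pullback of $\mL_i^\vee$ along $\{\eta\}\times C\to C$ tensored with $\mE_\eta$; after this identification the summand $\mL_i^\vee\otimes\mL_i\cong\mO$ contributes the global section(s). If one prefers to avoid invoking Grauert in full, the same conclusion follows from the more elementary half-exact upper-semicontinuity statement (e.g. \cite{EComm}, or the standard cohomology-and-base-change package), since all that is used is that $h^0$ cannot drop in passing from the generic to the special fiber of a flat coherent sheaf over a discrete valuation ring.
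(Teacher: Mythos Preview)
Your proposal is correct and follows essentially the same route as the paper: form $p_2^*\mL_i^{\vee}\otimes\mE$, compute $h^0$ on the generic fiber from the given splitting, and apply upper-semicontinuity to bound $h^0$ on the special fiber. The only cosmetic differences are that the paper phrases the $\mL_1\cong\mL_2$ case as ``two linearly independent maps $p_2^*\mL_1|_\eta\to\mE_\eta$'' rather than writing out $\mO^{\oplus 2}$, and does not name Grauert (which, as you note, is not actually needed---only upper-semicontinuity).
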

\begin{proof}
  Denote $C_{\eta}$, $C_0$ to be the fibers of $X\times C\to X$ over the generic fiber and the special fiber respectively. Since $\mE_{\eta}\cong p_2^*\mL_1\op p_2^*\mL_2|_{\eta}$, $h(C_{\eta},p_2^*\mL_i|_{\eta}^{\vee}\ot\mE_{\eta})=h(C_{\eta},(p_2^*\mL_i^{\vee}\ot\mE)_{\eta})\ge1$.

  By the Semi-continuity Theorem, $h(C_0,(p_2^*\mL_i^{\vee}\ot\mE)_0)=h(C_0,p_2^*\mL_i|_0^{\vee}\ot\mE_0)\ge1$. In other words, $h(\mL_i^{\vee}\ot\mE_0)\ge1$ for $i=1,2$.

  If $\mL_1\cong\mL_2$, $\mE_{\eta}\cong p_2^*\mL_1\op p_2^*\mL_2|_{\eta}$ implies there are two linearly independent maps from $p_2^*\mL_1|_{\eta}$ to $\mE_{\eta}$, i.e. $h(C_{\eta},(p_2^*\mL_1^{\vee}\ot\mE)_{\eta})\ge2$. Hence, $h(\mL_1^{\vee}\ot\mE_0)\ge2$.
\end{proof}
\begin{lem}\label{cl2}
Same setup as in Lemma \ref{cl1}. Suppose that $0\le\deg(\mL_2)-\deg(\mL_1)\le 1$, $u(\mE_0)\le{1\over2}$ (see \tb{Convention}) and $\det(\mE_0)=\mL_1\ot\mL_2$. Then, $\mE_0\cong\mL_1\op\mL_2$.
\end{lem}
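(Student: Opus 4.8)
The statement to prove is Lemma \ref{cl2}: under the hypotheses that $0\le\deg(\mL_2)-\deg(\mL_1)\le1$, that $u(\mE_0)\le\tfrac12$, and that $\det(\mE_0)=\mL_1\ot\mL_2$, the special fiber $\mE_0$ is isomorphic to $\mL_1\op\mL_2$. The plan is to combine the section-counting input of Lemma \ref{cl1} with the classification of rank two bundles on the relevant curve (Atiyah's theorem when $C$ is elliptic, or more generally a direct cohomological argument) to force $\mE_0$ to split, and then to pin down the two summands using the prescribed determinant.

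First I would extract from Lemma \ref{cl1} the existence of a nonzero map $\mL_1\hookrightarrow\mE_0$ (and, if $\mL_1\cong\mL_2$, a two-dimensional space of such maps), i.e. $h^0(\mL_1^\vee\ot\mE_0)\ge1$. A nonzero section of $\mL_1^\vee\ot\mE_0$ gives an invertible subsheaf of $\mE_0$ whose saturation $\mL$ satisfies $\deg(\mL)\ge\deg(\mL_1)$. Then $\mE_0$ sits in an exact sequence $0\to\mL\to\mE_0\to\mL'\to0$ with $\mL\ot\mL'=\mL_1\ot\mL_2$, so $\deg(\mL)+\deg(\mL')=\deg(\mL_1)+\deg(\mL_2)$; combined with $\deg(\mL)\ge\deg(\mL_1)$ and $\deg(\mL_2)-\deg(\mL_1)\le1$, we get $\deg(\mL)-\tfrac12\deg(\mE_0)=\tfrac12(\deg(\mL)-\deg(\mL'))\ge\tfrac12(\deg(\mL_1)-\deg(\mL_2))\ge-\tfrac12$. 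On the other hand $u(\mE_0)\le\tfrac12$ forces $\deg(\mL)-\tfrac12\deg(\mE_0)\le\tfrac12$. Hence $\deg(\mL)\in\{\deg(\mL_1),\deg(\mL_2)\}$, and in fact $\mL$ has degree exactly $\deg(\mL_1)$ unless $\deg(\mL_1)=\deg(\mL_2)$. The key point is that the saturated subsheaf through the section, together with the bound $u(\mE_0)\le\tfrac12$, leaves essentially no room: $\mL'$ has degree $\deg(\mL_2)$ (resp. equal degree in the balanced case) and $\mE_0$ is an extension of $\mL'$ by $\mL$ of the tightest possible degrees.

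Next I would show the extension splits. Here I expect to use that $C$ is elliptic in the application — or argue in the generality needed — so that $\mathrm{Ext}^1(\mL',\mL)=H^1(\mL\ot\mL'^{\vee})$ and, because $\deg(\mL\ot\mL'^{\vee})=\deg(\mL_1)-\deg(\mL_2)\in\{0,-1\}$, this $\mathrm{Ext}$ group is either zero (degree $-1$, generic line bundle on an elliptic curve) or one-dimensional (degree $0$). In the degree $-1$ case the extension is automatically split. In the balanced degree $0$ case, if $\mL\not\cong\mL'$ then again $\mathrm{Ext}^1=0$ and we split; if $\mL\cong\mL'$ then the second clause of Lemma \ref{cl1} gives a \emph{two}-dimensional space of maps $\mL\to\mE_0$, which cannot happen for the unique nonsplit (Atiyah) extension, so the extension splits there too. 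Either way $\mE_0\cong\mL\op\mL'$ with $\{\deg\mL,\deg\mL'\}=\{\deg\mL_1,\deg\mL_2\}$.

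Finally I would identify the summands with $\mL_1,\mL_2$ themselves. We know $\mL\ot\mL'\cong\det(\mE_0)\cong\mL_1\ot\mL_2$, and by Lemma \ref{cl1} there is a nonzero map $\mL_1\to\mE_0\cong\mL\op\mL'$; projecting to the two factors and comparing degrees (a nonzero map between line bundles forces the source to have degree $\le$ the target, with equality forcing isomorphism) shows $\mL_1$ is isomorphic to whichever of $\mL,\mL'$ has degree $\deg\mL_1$. Then $\mL'\cong\mL_1^\vee\ot\det(\mE_0)\cong\mL_2$, and we conclude $\mE_0\cong\mL_1\op\mL_2$. The main obstacle I anticipate is the balanced case $\deg\mL_1=\deg\mL_2$ with $\mL_1\cong\mL_2$ (or with $\mL\cong\mL'$ in the extension): there one must genuinely invoke the ``$\ge2$'' part of Lemma \ref{cl1} to rule out the Atiyah bundle $F_2$, rather than just the degree bookkeeping; everything else is degree arithmetic plus vanishing of $H^1$ on an elliptic curve.
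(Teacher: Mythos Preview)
Your plan works cleanly in the balanced case $\deg\mL_1=\deg\mL_2$, but it has a genuine gap when $\deg\mL_2-\deg\mL_1=1$. You claim that $\ext^1(\mL',\mL)=H^1(\mL\ot\mL'^{\vee})$ vanishes when $\deg(\mL\ot\mL'^{\vee})=-1$ ``for a generic line bundle on an elliptic curve.'' On an elliptic curve this is false: by Riemann--Roch and Serre duality (with $\omega_C\cong\mO_C$), every degree $-1$ line bundle $M$ has $h^1(M)=h^0(M^{\vee})=1$. So there \emph{is} a nonsplit extension $0\to\mL_1\to\mE_0\to\mL_2\to0$, namely the indecomposable (stable) rank-two Atiyah bundle of the appropriate odd degree, and it satisfies both $\det(\mE_0)=\mL_1\ot\mL_2$ and $u(\mE_0)=-\tfrac12\le\tfrac12$. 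Your argument, which only uses the map $\mL_1\to\mE_0$, does not exclude it.

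The missing input is the map from $\mL_2$, which Lemma~\ref{cl1} also supplies. If $\mE_0$ were that indecomposable bundle, then $\mL_2^{\vee}\ot\mE_0$ would be indecomposable of rank two and degree $-1$, hence stable, hence without sections---contradicting $h^0(\mL_2^{\vee}\ot\mE_0)\ge1$. This is exactly how the paper proceeds in the unequal-degree case: it starts from a nonzero $\mL_2\to\mE_0$, observes that an indecomposable rank-two bundle of negative degree admits no such map, and then reads off the summands from the decomposability together with $u(\mE_0)\le\tfrac12$ and the determinant. Once you plug this in (or equivalently fix the $\ext^1$ miscalculation by ruling out the nonsplit extension via $\h(\mL_2,\mE_0)\neq0$), the rest of your outline goes through; your treatment of the $\mL_1\cong\mL_2$ case using the two-dimensional $\h$ from Lemma~\ref{cl1} to kill the Atiyah bundle $F_2$ is correct and matches the paper's third case.
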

\begin{proof}
First, consider the case where $\deg(\mL_2)-\deg(\mL_1)=1$. By Lemma \ref{cl1}, take a non-zero map $\phi:\mL_2\to\mE_0$ be some non-zero morphism. Since an indecomposable vector bundle of negative degree has no sections, $\mE_0$ must be decomposable. By the determinant condition and that $u(\mE_0)\le{1\over2}$, conclude $\mE_0\cong\mL_1\op\mL_2$.

Now consider the case $\deg(\mL_2)=\deg(\mL_1)$ and $\mL_1\not\cong\mL_2$. By \ref{cl1}, we have a map $\mL_1\op\mL_2\to\mE_0$ whose restriction to every summand is non-zero. The image $Q$ of this map cannot be rank one, because otherwise $\deg(Q)\le\deg(\mL_1)$ by the bound on the unstability of $\mE_0$; since $\mL_1\not\cong\mL_2$, they cannot both map non-trivially to $Q$. Thus, $\rk(Q)=2$ and $\mL_1\op\mL_2$ injects into $\mE_0$. Since $\deg(\mE_0)=\deg(\mL_1\op\mL_2)$, $\mE_0\cong\mL_1\op\mL_2$.

Eventually, assume $\deg(\mL_2)=\deg(\mL_1)$ and $\mL_1\cong\mL_2$. Since $h^0(\mL_1^{\vee}\ot\mE_0)\ge2$, there exists an injective map $\mL_1^{\op2}\to\mE_0$. Since $\deg(\mL_1^{\op2})=\deg(\mE_0)$, we still get the claim.
\end{proof}
We now show certain points of $\prod_{j=1}^g\mathcal{M}_{2,\mL_j}(C_j)$ are locally-closed, where $\mathcal{M}_{2,\mL}(C_j)$ is the moduli stack of rank two vector bundles with fixed determinant $\mL_j$ on some elliptic curve $C_j$.
\begin{prop}\label{applc}
   For $j=1,...,g$, let $\mE_j$ be a rank two vector bundle on $C_j$ satisfying the following properties:
  $$\mE_j=\mL_{j,1}\op\mL_{j,2},\mL_{j,1}\ot\mL_{j,2}\cong\mL_j,0\le\deg(\mL_{j,2})-\deg(\mL_{j,1})\le1.$$
  Then, the point of $\mathcal{Z}$ of $\prod_{j=1}^g\mathcal{M}_{2,\mL_j}(C_j)$ corresponding to the tuple $(\mE_1,...,\mE_g)$ is locally-closed.
\end{prop}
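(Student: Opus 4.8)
The plan is to combine Lemma \ref{cl2} with the openness of the condition $u(\mF)\le\tfrac12$, after reducing to a single component. First I would observe that it suffices to show, for each $j$, that the point $[\mE_j]$ is locally closed in $|\cm_{2,\mL_j}(C_j)|$. Indeed, writing $\mathrm{pr}_j\colon\prod_{i=1}^{g}\cm_{2,\mL_i}(C_i)\to\cm_{2,\mL_j}(C_j)$ for the projections and $\mathcal{Z}_j\subset\cm_{2,\mL_j}(C_j)$ for the reduced locally closed substack supported at $[\mE_j]$ (its residual gerbe, once we know this makes sense), the substack $\bigcap_{j=1}^{g}\mathrm{pr}_j^{-1}(\mathcal{Z}_j)$ is a finite intersection of locally closed substacks, hence locally closed, and it is isomorphic to $\prod_{j}\mathcal{Z}_j\cong B\bigl(\prod_{j}\aut^0(\mE_j)\bigr)$, so its underlying topological space is the single point $\mathcal{Z}$. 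Thus the whole content is the one--component statement.

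Fix $j$ and let $\mathcal{U}_j=\{[\mF]\in|\cm_{2,\mL_j}(C_j)|:u(\mF)\le\tfrac12\}$. This is an open substack, of finite type over $K$: the locus where $\mF$ admits an invertible subsheaf of degree $\ge N_0$ is closed (a standard Quot-scheme/flag argument), and $u(\mF)\le\tfrac12$ is the complement of such a locus for a suitable integer $N_0=N_0(\deg\mL_j)$ --- essentially the openness of $\ell$-(semi)stability, Proposition~1.4 of \cite{Ostab}. I would then show $\{[\mE_j]\}$ is closed in $\mathcal{U}_j$ by checking it is stable under specialization: if a point of $|\mathcal{U}_j|$ is a specialization of $[\mE_j]$, realize it by a family $\mF$ over a DVR, flat and of constant determinant $\mL_j$, whose generic fiber lies in the class $[\mE_j]$; then $\mF_\eta\cong p_2^*\mL_{j,1}\op p_2^*\mL_{j,2}|_\eta$, $u(\mF_0)\le\tfrac12$ and $\det\mF_0\cong\mL_j=\mL_{j,1}\ot\mL_{j,2}$, and since $0\le\deg\mL_{j,2}-\deg\mL_{j,1}\le1$, Lemma \ref{cl2} forces $\mF_0\cong\mL_{j,1}\op\mL_{j,2}=\mE_j$. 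Hence $\{[\mE_j]\}$ is a constructible subset of the Noetherian space $|\mathcal{U}_j|$ stable under specialization, so it is closed in $|\mathcal{U}_j|$, and therefore $[\mE_j]=\mathcal{U}_j\cap\overline{\{[\mE_j]\}}$ is locally closed in $|\cm_{2,\mL_j}(C_j)|$; this also gives the reduced substack $\mathcal{Z}_j$ used above.

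An equivalent, more explicit route --- making the local closedness visible without DVRs --- is to intersect $\mathcal{U}_j$ with the closed locus $\mathcal{C}_j$ cut out by $h^0(\mL_{j,2}^{\vee}\ot\mF)\ge1$ (with $\deg\mL_{j,2}\ge\deg\mL_{j,1}$), together with $h^0(\mL_{j,1}^{\vee}\ot\mF)\ge2$ when $\mL_{j,1}\cong\mL_{j,2}$: $\mathcal{C}_j$ is closed by semicontinuity (on a smooth atlas carrying a local universal bundle) and contains $[\mE_j]$, while the degree-bound-plus-$\ext^1$-vanishing argument internal to the proof of Lemma \ref{cl2} shows $\mathcal{U}_j\cap\mathcal{C}_j=\{[\mE_j]\}$. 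Either way the only non-formal input is the identification $\mathcal{U}_j\cap\mathcal{C}_j=\{[\mE_j]\}$, which is exactly Lemma \ref{cl2}; so I do not expect a genuine obstacle --- the proposition is essentially a bookkeeping assembly of \ref{cl1}, \ref{cl2} and the openness of $u\le\tfrac12$. The one point requiring care is that the condition isolating $\mE_j$ inside $\mathcal{U}_j$ must use $h^0\ge2$, not merely $h^0\ge1$, in the case $\mL_{j,1}\cong\mL_{j,2}$, in order to exclude the nonsplit Atiyah extension; this is precisely why Lemma \ref{cl1} records the bound $h(\mL_i^{\vee}\ot\mE_0)\ge2$ there.
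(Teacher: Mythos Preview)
Your approach is essentially the same as the paper's: open up to $u\le\tfrac12$ via \cite{GMS}, then show the point is closed there by a DVR specialization argument feeding into Lemma~\ref{cl2}. The reduction to a single component and the explicit semicontinuity variant are both fine.

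There is one subtlety in your first route that the paper handles and you elide. When you ``realize it by a family $\mF$ over a DVR whose generic fiber lies in the class $[\mE_j]$'', the valuative-type statement for stacks (the paper cites \cite[7.2.2]{Champs}) only gives you a DVR $R$ with fraction field $K''$ and a larger field $K'\supset K''$ such that $\mF_\eta\otimes_{K''}K'\cong(\mL_{j,1}\oplus\mL_{j,2})\otimes_K K'$; it does \emph{not} directly give $\mF_\eta\cong p_2^*\mL_{j,1}\oplus p_2^*\mL_{j,2}|_\eta$ over $K''$. The paper closes this gap by observing that $\hom(\mL_{j,i}\otimes K',\mF_\eta\otimes K')\ge 1$ (resp.\ $\ge 2$ when the summands agree) and then using flat base change for $\h$ to descend these bounds to $K''$, after which the argument of Lemma~\ref{cl2} applies to show $\mF_\eta$ is already split over $K''$. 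Your second route via the semicontinuity loci $h^0(\mL_{j,2}^\vee\otimes\mF)\ge 1$ (and $\ge 2$ in the isomorphic-summand case) avoids this issue entirely, since those conditions are visibly closed on an atlas and intersect $\mathcal U_j$ in exactly $\{[\mE_j]\}$ by the $\ext^1$-vanishing implicit in Lemma~\ref{cl2}. Either way there is no real obstacle; just be explicit about the descent step if you keep the DVR formulation.
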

\begin{proof}
  Let $\mathcal{U}_j$ be the sub-stack of $\mathcal{M}_{2,\mL_j}(C_j)$ of objects with bounded in-stability: $u(\mE)\le{1\over2}$. This is known to be an open condition (see \cite{GMS}).\footnote{This essentially follows from Proposition 2.3.1 in \cite{GMS}.} Hence, $\mathcal{U}:=\prod \mathcal{U}_j$ is an open sub-stack of $\prod\mathcal{M}_{2,\mL}(C_j)$. Suppose $x:\spec(K)\to\mathcal{U}$ represents a point in $|\mathcal{U}|$ which corresponds to a tuple of vector bundles $(\mE_1,...,\mE_g)$ satisfying the properties stated above. Claim: $x$ is a closed point in $|\mathcal{U}|$.

  By Proposition 7.2.2 in \cite{Champs}, suppose $x$ specializes to $y$ in $|\mathcal{U}|$, there is morphism $\spec(R)\to\mathcal{U}$, where $R$ is a discrete valuation ring whose closed point is mapped to $y$, fitting into the following 2-commutative diagram:$$\xymatrix{\spec(K')\ar[r]\ar[d]&\spec(K)\ar[r]^x&\mathcal{U}\ar[d]\\\spec(R)\ar[rr]&&\mathcal{U}}$$ where $K'$ is some field extension of $K$ containing $R$. Recall also that a morphism $\spec(R)\to\mathcal{U}$ corresponds to an object in $\mathcal{U}(\spec(R))$, i.e. a collection of (isomorphism classes of) vector bundles $\ti{\mE}_1,...,\ti{\mE}_g$, with $\ti{\mE}_j$ over $C_j\times\spec(R)$. Moreover, let $\eta$ be the generic point of $\spec(R)$, up to a base-change by $\spec(K')\to\eta$, $\ti{\mE}_j|_{\eta}$ is isomorphic to some decomposable vector bundle. We claim that $\ti{\mE}_j|_{\eta}$ is indeed decomposable. Note that $\mE_j|_{\eta}\ot K'\cong (\mL_{j,1}\op\mL_{j,2})\ot K'$, where $\mL_{j,1}$ and $\mL_{j,2}$ are line bundles over $K$. In particular, $\hom(\mL_{j,i}\ot K',\mE_j|_{\eta}\ot K')\ge 1$, and when $\mL_{j,1}\cong\mL_{j,2}$ $\hom(\mL_{j,i}\ot K',\mE_j|_{\eta}\ot K')\ge 2$. Denote $K''$ to be the fraction field of $R$ and we have $K'\supset K''\supset K$. By restriction of scalars, $\hom(\mL_{j,i}\ot K'',\mE_j|_{\eta})\ge 1$, and when $\mL_{j,1}\cong\mL_{j,2}$ $\hom(\mL_{j,i}\ot K'',\mE_j|_{\eta})\ge 2$. Using a similar argument as in the proof of Lemma \ref{cl2}, one gets the claim.

  It is clear that the proof of the closedness of $x$ in $\mathcal{U}$ reduces to the case $g=1$. In that case, Lemma \ref{cl2} implies that $y=x$ and hence we are done.
\end{proof}
\section{Technical Results in Induction Step 2}
\subsection{On the $(g,k)$-standardness Conditions}
\begin{lem}\label{standard}
  The vanishing condition $c^{\Gamma'}$ in section \ref{l1} is $(g',k')$-standard.
\end{lem}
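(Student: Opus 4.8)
The plan is to verify the six numbered conditions in Definition \ref{stan} one at a time for the tuple of underlying bundles $\mE_{g+1},\dots,\mE_{g'}$ and the configuration data $\tau_j$ prescribed in Section \ref{l1}, using Lemma \ref{concise1} and Lemma \ref{concise2} to translate each bundle (together with its $\tau_j$ when it is unstable) into the corresponding vanishing sequences $(c_{i,2j-2}),(c_{i,2j-1})$. Since $c^{\Gamma'}$ extends $c^{\Gamma}$ (whose first $2g-2$ columns are already known to satisfy the needed conditions by Lemma \ref{C1} / Proposition \ref{AM1}), only the columns with index $>2g-2$ need to be checked, and the book-keeping splits naturally along the block structure of the construction: the initial blocks $\mE_{g+1},\dots,\mE_{g+9}$, the periodic block $j=g+9+11s+t$ for $0\le s\le q-2$, the tail block $j>g+11q-2$ indexed by $\ell$, and the final four bundles $\mE_{g'-3},\dots,\mE_{g'}$.

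First I would record, for each $j>g$, the degree $d_j=\deg(\mE_j)$ and check the degree constraints in the first bullet of Definition \ref{stan}: $d_j\in\{2g'-3,2g'-2,2g'-1\}$ (equivalently $d_j\in\{b'-1+\text{something}\}$ after twisting), that the odd degrees alternate $2g'-1,2g'-3$ with $j_1=2$, and $d_1=d_{g'}=2g'-2$; here the odd degrees all come from the unstable bundles $\mE_{g+4},\mE_{g+8},\mE_{g+10+11s},\mE_{g+13+11s}$, whose two summand-degrees differ by exactly one, while every semistable $\mE_j$ has both summand-degrees summing to $d_j$. Next, condition (1) (the refinement relation $b_{i,j}+a_{i,j+1}=g'-1$) is automatic from the way $c^{\Gamma'}$ is built column by column via $(c_{i,2j})=(g'-1,\dots,g'-1)-(c_{i,2j-1})$ exactly as in \textbf{Step 3} of Proposition \ref{AM1}; conditions (2) and (3) (that $(c_{i,2j-2}),(c_{i,2j-1})$ satisfy Lemma \ref{one} when $\mE_j$ is semistable and Lemma \ref{one2} when $\mE_j$ is unstable) will follow from essentially the same arguments as Lemmas \ref{A5} and \ref{A6} — monotonicity of the columns (the analogue of Lemma \ref{A3}), the ``at most twice'' multiplicity bounds, and the special-index patterns of Lemmas \ref{A1}, \ref{A2} and Corollary \ref{AC1} — applied block by block. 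Condition (4) ($a_{k,2}+b_{k,2}=g'-1$) is inherited unchanged from $c^{\Gamma}$.

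The genuinely new work is in conditions (5) and (6), which are constraints on the bottom row $(a_{k,2j_t-1})$ at the odd-degree columns $j_t$ and on the behaviour of $(a_{k,j},b_{k,j})$ between consecutive $j_t$'s. I would identify the indices $j_t$ among $g+4,g+8,g+10+11s,g+13+11s$ (and those already in $c^{\Gamma}$), compute $a_{k,2j_t-1}$ from the explicit summand degrees, and check the inequality $a_{k,2j_t-1}\le k_1'+j_t-1$ (with $k_1'=k_1+2$) together with the two case-distinctions in condition (5); the vanishing sequences listed in Lemmas \ref{parta}, \ref{parte}, \ref{partf} are precisely the data needed to read off $a_{k,2j_t-1}$, $a_{k-1,2j_t-1}$, and the sums $a_{k,j}+b_{k,j}$ for intermediate $j$, so this reduces to a finite family of elementary arithmetic verifications parametrized by $s$ (for the periodic block) and $\ell$ (for the tail). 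I expect condition (5)/(6) at the unstable columns $g+10+11s$ and $g+13+11s$ inside the periodic block to be the main obstacle: there one must simultaneously track the alternation of $c_{i,2j_t-2}+c_{i,2j_t-1}$ between $\tfrac12(d_{j_t}-1)$ and $\tfrac12(d_{j_t}-1)-1$ imposed by condition (5) of Lemma \ref{C1}, the prescribed $\tau_j$, and the monotone bottom-row bound, and checking that $b_{k,j_t}=g'-1-k_1'-j_t$ forces $a_{k,j}+b_{k,j}=g'-2$ for all strictly intermediate $j$ requires knowing the full shape of the bottom two rows across an entire period — this is where Lemmas \ref{parte} and \ref{partf} do the heavy lifting. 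Everything else is a routine (if lengthy) unwinding of Lemmas \ref{concise1}, \ref{concise2} against the block definitions.
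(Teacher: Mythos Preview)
Your overall plan---verify the six conditions of Definition~\ref{stan} one by one, using the explicit bundle data of Section~\ref{l1} and Lemmas~\ref{concise1}, \ref{concise2} to read off the vanishing sequences---is correct and is exactly what the paper does. However, your calibration of where the work lies is inverted relative to the paper's proof, and one of your proposed shortcuts does not apply.

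First, conditions (5) and (6) are \emph{not} the main obstacle. The paper dispatches condition (5) for each $j_t>g$ by a one-line computation of $c_{k+4,2j_t-2}$ from Lemmas~\ref{parta}, \ref{parte} and comparison with $(k_1+2)+j_t-1$; condition (6) is essentially trivial because for all intermediate semistable $j$ either the bottom index $k'=k+4$ is not special (so $c_{k+4,2j-2}+c_{k+4,2j-1}=g'-2$ automatically) or the hypothesis of (6) fails. Your anticipated difficulty---tracking the alternation from condition (5) of Lemma~\ref{C1} across a full period---does not arise, because that alternation governs the rows $i>k$ of $c^{\Gamma}$ for $j\le g$, not the new columns $j>g$ where the bundles are prescribed directly.

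Second, and more importantly, your proposal to deduce conditions (2) and (3) ``from essentially the same arguments as Lemmas~\ref{A5} and~\ref{A6}'' does not go through as stated. Those lemmas rely on the structural patterns of Lemmas~\ref{A1}, \ref{A2}, \ref{A3}, which in turn come from the recursive \textbf{Steps 1--4} defining $c^{\Gamma}$; for $j>g$ the columns of $c^{\Gamma'}$ are \emph{not} produced by that recursion but by the explicit bundle list, so those structural lemmas are unavailable. The paper's proof accordingly spends the bulk of its effort here: for every $j>g$ it tabulates the special indices $i_1,i_2$ (or $\ell,i^*$) and checks explicit inequalities among $c_{i_1-1,2j-2},c_{i_1,2j-2},c_{i_2-1,2j-2},c_{i_2,2j-2}$ to verify the ``at most twice'' multiplicity bound, then separately checks the remaining hypotheses of Lemmas~\ref{one} and~\ref{one2} (in particular condition (5) of Lemma~\ref{one2} at each unstable $j$). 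This is the lengthy but unavoidable case analysis you should expect to carry out.
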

\begin{proof}
In Appendix A we have checked that for $j\le g$, $(c_{i,2j-2}),(c_{i,2j-1})$ satisfy conditions 1-4 in Definition \ref{stan}. For $j>g$, the vanishing sequence are determined by the data given in \ref{l1}. Concretely, given $(c_{i,2j'-2}),(c_{i,2j'-1})$ for all $j'<j$, then $(c_{i,2j-2})$ is determined by condition 2 in \ref{stan} and $(c_{i,2j-1})$ is calculated following Lemma \ref{concise1} or \ref{concise2}. Thus, to verify 1-4 in \ref{stan} for $c^{\Gamma'}$, it remains to check $(c_{i,2j-2}), (c_{i,2j-1})$ satisfy the conditions in Lemma \ref{one} or Lemma \ref{one2}.

Inductively, we check that every integer appears in $(c_{i,2j-1})$ at most twice (which then implies the same property for $(c_{i,2j})$). Assume this is true for $(c_{i,2j-2})$. When $\mE_j=\oplus_{s=1}^2\mO(i_sP_j+(d-i_s)P_{j+1})$, by \ref{concise2} it suffices to see $c_{i_1,2j-2}-1$, $c_{i_2,2j-2}-1$ are non-repeated vanishing orders (if $c_{i_1,2j-2}=c_{i_2,2j-2}$, $c_{i_1,2j-2}-1$ should not be in $(c_{i,2j-2})$). When $\mE_j=\mO(c_{\ell,2j-2},(d+1)-c_{\ell,2j-2})\op\mO(c_{i,2j-2},d-c_{i,2j-2})$, it suffices to check $c_{\ell,2j-1}$ appears at most twice and $c_{i-1,2j-1}>c_{i,2j-1}$. We verify these by listing the special indices and comparing the relevant vanishing orders:\\
\ \\
\begin{tabular}{|c|c|c|}
  \hline
  $j$ & special indices &relevant vanishing orders\\
  \hline
$g+1$& $i_1=2,i_2=k+1$ & $c_{k,2g}\le c_{k+1,2g}-3<c_{k+2,2g}-3$\\

 $g+2$& $i_1=1,i_2=k+2$ & $c_{k+1,2g+2}= c_{k+2,2g+2}-2$\\

 $g+3$& $i_1=2,i_2=k+3$ & $c_{k+1,2g+4}< c_{k+2,2g+4}=c_{k+3,2g+4}$\\
 $g+4$& $\ell=3,i=k+2$ & $c_{1,2g+7}>c_{3,2g+7}$, $c_{k+1,2g+7}=c_{k+2,2g+7}+1$\\
 $g+5$& $i_1=4,i_2=k+2$ & $c_{3,2g+8}=c_{4,2g+8}-2$, $c_{k,2g+8}<c_{k+1,2g+8}<c_{k+2,2g+8}$\\
 $g+6$& $i_1=5,i_2=k+1$ &  $c_{3,2g+10}<c_{4,2g+10}<c_{5,2g+10}$\\
                          && $c_{k-1,2g+10}<c_{k,2g+10}<c_{k+1,2g+10}$\\
 $g+7$& $i_1=5,i_2=k+3$ & $c_{3,2g+12}<c_{4,2g+12}<c_{5,2g+12}<c_{6,2g+12}$\\
                          &&$c_{k+1,2g+12}<c_{k+2,2g+12}<c_{k+3,2g+12}$\\
 $g+8$& $\ell=k+4,i=2$ & $c_{1,2g+15}>c_{2,2g+15}$, $c_{k+3,2g+15}=c_{k+4,2g+15}+1$\\
 $g+9$& $i_1=5,i_2=k+4$ & $c_{3,2g+16}<c_{4,2g+16}<c_{5,2g+16}$\\
                          &&$c_{k+2,2g+16}<c_{k+3,2g+16}<c_{k+4,2g+16}$\\
                          \hline
\end{tabular}\\
For $s=0,...,q-2$,\footnote{This part of the construction is only relevant when $k_1\ge6$.} we have:\\
\ \\
\begin{tabular}{|c|c|c|}
  \hline
  $j$ & special indices &relevant vanishing orders\\
  \hline
$g+10+11s$& $\ell=k+1,i=6+6s$ & $c_{5+6s,2j-1}=c_{6+6s,2j-1}+2$, $c_{k,2j-1}>c_{k+1,2j-1}$\\

$g+11+11s$& $i_1=6+6s,i_2=k+2$ & $c_{5+6s,2j-2}=c_{6+6s,2j-2}+2$, $c_{k+1,2j-2}=c_{k+2,2j-2}+2$\\

$g+12+11s$& $i_1=7+6s,i_2=k+3$ & $c_{5+6s,2j-2}<c_{6+6s,2j-2}<c_{7+6s,2j-2}$ \\
&&$c_{k+1,2j-2}<c_{k+2,2j-2}<c_{k+3,2j-2}$\\

$g+13+11s$& $\ell=8+6s,i=k+2$ & $c_{6+6s,2j-1}>c_{8+6s,2j-1}$, $c_{k+1,2j-1}>c_{k+2,2j-1}$\\

$g+14+11s$& $i_1=6s+9,i_2=k+2$ & $c_{8+6s,2j-2}=c_{9+6s,2j-2}-2$,\\
&& $c_{k,2j-2}<c_{k+1,2j-2}<c_{k+2,2j-2}=c_{k+3,2j-2}$\\
$g+15+11s$& $i_1=6s+7,i_2=k+4$ & $c_{7+6s,2j-2}<c_{8+6s,2j-2}<c_{9+6s,2j-2}$,\\
&&$c_{k+3,2j-2}=c_{k+4,2j-2}-2$\\
$g+16+11s$& $i_1=6s+10,i_2=k+1$ & $c_{9+6s,2j-2}=c_{10+6s,2j-2}+2$, $c_{k,2j-2}\le c_{k+1,2j-2}-2$\\
$g+17+11s$& $i_1=6s+11,i_2=k+2$ & $c_{9+6s,2j-2}<c_{10+6s,2j-2}<c_{11+6s,2j-2}$ \\
&&$c_{k,2j-2}<c_{k+1,2j-2}<c_{k+2,2j-2}$\\
$g+18+11s$& $i_1=6s+9,i_2=k+4$ & $c_{7+6s,2j-2}<c_{8+6s,2j-2}<c_{9+6s,2j-2}$ \\
&&$c_{k+2,2j-2}<c_{k+3,2j-2}<c_{k+4,2j-2}$\\
$g+19+11s$& $i_1=6s+10,i_2=k+3$ & $c_{9+6s,2j-2}=c_{8+6s,2j-2}-2$, $c_{k+2,2j-2}=c_{k+3,2j-2}-2$\\
$g+20+11s$& $i_1=6s+11,i_2=k+4$ & $c_{9+6s,2j-2}<c_{10+6s,2j-2}<c_{11+6s,2j-2}$ \\
&&$c_{k+2,2j-2}<c_{k+3,2j-2}<c_{k+4,2j-2}$\\
\hline
\end{tabular}\\
Denote $h=g+11q-2$ and for $p=0,...,k_1-3q$,\footnote{This part of the construction is only relevant when $k_1\ge3$.} we have:\\
\ \\
\begin{tabular}{|c|c|c|}
  \hline
  $j$ & special indices &relevant vanishing orders\\
  \hline
 $h+4p+1$& $i_1=6q+2p,i_2=k+1$& $c_{6q+2p-1,2j-2}=c_{6q+2p,2j-2}-3$, $c_{k,2j-2}=c_{k+1,2j-2}-2$\\
 $h+4p+2$& $i_1=6q+2p+1,i_2=k+2$& $c_{i_1-2,2j-2}<c_{i_1-1,2j-2}<c_{i_1,2j-2}$ \\
 &&$c_{k,2j-2}<c_{k+1,2j-2}<c_{k+2,2j-2}$\\
 \hline
 \end{tabular}
 
 \begin{tabular}{|c|c|c|}
  \hline
 $h+4p+3$& $i_1=6q+2p,i_2=k+3$& $c_{6q+2p-1,2j-2}=c_{6q+2p,2j-2}-2$\\
 && $c_{k+2,2j-2}=c_{k+3,2j-2}-2$\\
 $h+4p+4$& $i_1=6q+2p+1,i_2=k+4$& $c_{i_1-2,2j-2}<c_{i_1-1,2j-2}<c_{i_1,2j-2}$ \\ &&$c_{k+2,2j-2}<c_{k+3,2j-2}<c_{k+4,2j-2}$\\
 $g'-3$& $i_1=k+1,i_2=k+2$& $c_{k,2g'-8}=c_{k+1,2g'-8}-3=c_{k+2,2g'-8}-3$\\
 $g'-2$& $i_1=k+1,i_2=k+3$& $c_{k,2g'-6}=c_{k+1,2g'-6}-2$, $c_{k+2,2g'-6}=c_{k+3,2g'-6}-2$\\
 $g'-1$& $i_1=k+2,i_2=k+4$& $c_{k,2g'-4}<c_{k+1,2g'-4}<c_{k+2,2g'-4}<c_{k+4,2g'-4}$\\
 $g'$& $i_1=k+3,i_2=k+4$& $c_{k+2,2g'-2}=c_{k+3,2g'-2}-2=c_{k+4,2g'-2}-2$\\
\hline
\end{tabular}\\
\ \\
\indent For all $j$ such that $\mE_j$ is semi-stable, one can check that either (1) $i_2-i_1>2$, or (2) $i_2-i_1=1$, or (3) $i_2-i_1=2$ but $c_{i_2,2j-2}-c_{i_1,2j-2}\ge2$. In particular, $\not\exists i\neq i_1,i_2$ such that $c_{i,2j-2}=c_{i_1,2j-2}$, $c_{i,2j-1}=c_{i_2,2j-1}$. So, all assumptions in \ref{one} are satisfied.

For all $j$ such that $\mE_j$ is unstable, conditions 1, 3, 4 in Lemma \ref{one2} are incorporated in the rules in Lemma \ref{concise2}, so it only remains to show there does not exist some $i$ such that $c_{i,2j-2}+c_{i,2j-1}=c_{i+1,2j-2}+c_{i+1,2j-1}=d-1$, $c_{i+1,2j-2}=c_{i,2j-1}+1$ and $c_{i,2j-2},c_{i+1,2j-2},c_{i,2j-1},c_{i+1,2j-1}$ are all non-repeated. To do so, we look at vanishing sequences in \ref{parta}, \ref{parte}:

For $j=g+4$, $\not\exists$ $i$ such that $c_{i,2j-2}$ is non-repeated and $c_{i,2j-2}+c_{i,2j-1}=d-1$.

For $j=g+8$, the $i$'s such that $c_{i,2j-2}$ is non-repeated and $c_{i,2j-2}+c_{i,2j-1}=d-1$ are $i=6,k+1$.

For $j=g+10+11s$ ($s=0,...,q-2$), $\not\exists$ $i$ such that $c_{i,2j-2}$ is non-repeated and $c_{i,2j-2}+c_{i,2j-1}=d-1$.

For $j=g+13+11s$ ($s=0,...,q-2$), the $i$'s such that $c_{i,2j-2}$ is non-repeated and $c_{i,2j-2}+c_{i,2j-1}=d-1$ are $i=1,2$; yet $c_{2,2j-1}$ is a repeated vanishing order.

We have thus verified conditions 1-4 in Definition \ref{stan}.

\indent To check condition 5 in \ref{stan}, notice that for $j_t\le g$, by \textbf{Step 4.2},
$$c_{k+4,2j_t-2}=\begin{cases}
(k_1+2)+j_t-1 \text{ when }t\text{ is odd}\\
(k_1+2)+j_t-2 \text{ when }t\text{ is even}.
\end{cases}$$
By Lemma \ref{A1}, when $t$ is odd, $c_{k+3,2j_t-2}<c_{k+4,2j_t-2}$. Thus, for $j_t\le g$, condition 5 holds. Given the vanishing sequences in \ref{parta}, \ref{parte}, one can compute:

For $j=g+4$, $c_{k+4,2j-2}=(k_1+2)+(g+4)-1$ and $c_{k+3,2j-2}<c_{k+4,2j-2}$.

For $j=g+8$, $c_{k+4,2j-2}=(k_1+2)+(g+8)-2$ and $g+8=j_{2t}$ for some $t$.

For $j=g+10+11s$ ($s=0,...,q-2$), $c_{k+4,2j-2}=g+k_1+8+8s<(k_1+2)+j-2$.

For $j=g+13+11s$ ($s=0,...,q-2$), $c_{k+4,2j-2}=g+k_1+11+8s<(k_1+2)+j-2$.

Hence, we have verified condition 5.

Eventually, condition 6 trivially holds since by \tb{Step 4.1} $c_{k+4,2j-2}+c_{k+4,2j-1}=g'-2$ for all $j:j_t<j_{t+1}$.

Therefore, $c^{\Gamma'}$ is $(g',k')$-standard.
\end{proof}
Also, for the second construction:
\begin{lem}\label{standard2}
The $c^{\Gamma'}_{g'}$ constructed in section \ref{l2} is $(g',k+2)$-standard.
\end{lem}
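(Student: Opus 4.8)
The plan is to mimic the proof of Lemma \ref{standard} for the first construction, which is in fact simpler in the present setting because $m=1$ and, decisively, every $\mE_j$ with $j>g$ is semi-stable and decomposable with equal-degree summands, so $d'_j = 2(g'-1)$ is even for all $j>g$. Thus we must verify the six conditions of Definition \ref{stan} for the columns of $[a(k+2)\ c^{\Gamma'}_{g'}\ a(k+2)^{\rev}]$, where now $k+2 = 2(k_1+1)+1$, so the relevant half-dimension is $k_1+1$ and the marked-point vanishing is $a(k+2)=([0]_2,[1]_2,\ldots,[k_1]_2,k_1+1)^T$.

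First, for $j\le g$ the columns $(c_{i,2j-2}),(c_{i,2j-1})$ are exactly those produced by Lemma \ref{C1} (with $m=1$, $N=2k_1+2$), so conditions 1--4 of Definition \ref{stan} hold for these $j$ by Appendix Proposition \ref{AM1} and Lemmas \ref{A1}--\ref{A6}. The vanishing sequence at $P_{g+1}$ is $(c_{i,2g-1})^T=(3k_1+2,[3k_1+1]_2,\ldots,[2k_1+2]_2,k_1+1,k_1)$, as computed in the proof of Lemma \ref{twosp}.

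Next, for $j>g$ I would argue by induction on $j$ that $(c_{i,2j-2}),(c_{i,2j-1})$ satisfy the hypotheses of Lemma \ref{one} with $d=g'-1$, which gives conditions 1--4 of Definition \ref{stan} for these $j$. Here $(c_{i,2j-2})$ is obtained from $(c_{i,2j-3})$ by condition (1) of \ref{stan} (pairing to $g'-1$) and $(c_{i,2j-1})$ is then computed from $\mE_j$ and $(c_{i,2j-2})$ via the rule of Lemma \ref{concise1}; the special indices are $i_1=2t+1,\ i_2=k+1$ for $j=g+1+t$ ($0\le t\le k_1-1$) and $i_1=2t+1,\ i_2=k+2$ for $j=g+k_1+1+t$ ($0\le t\le k_1$), as recorded in the proof of Lemma \ref{twosp}. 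The content of the induction is (a) no integer is repeated more than twice in $(c_{i,2j-1})$, hence in $(c_{i,2j})$, which reduces, exactly as in the proof of Lemma \ref{standard}, to the inequality $c_{k-1,2j-2}<c_{k,2j-2}=c_{k+1,2j-2}<c_{k+2,2j-2}$ (or its obvious variant when $i_1$ and $i_2$ are adjacent), and (b) there is no $i\ne i_1,i_2$ with $(c_{i,2j-2},c_{i,2j-1})=(c_{i_1,2j-2},c_{i_2,2j-1})$, which is immediate since $(c_{i,2j-2})_{i\le k}$ agrees with a column of $\hat a^{\Gamma}$. Because each $\mE_j$ ($j>g$) carries one of the two fixed ``heavy'' summands $\mO(g+k_1,k_1+1)$ or $\mO(g+2k_1+1,0)$, these comparisons stabilize, and I would record them in a short table of special indices versus the relevant vanishing orders, as was done for the first construction.

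Finally, conditions 5 and 6 of Definition \ref{stan} involve only the indices $j_t$ at which $d'_{j_t}$ is odd, and all such $j_t$ satisfy $j_t\le g$ since $d'_j$ is even for $j>g$. For $j_t\le g$, condition 5 follows from \textbf{Step 4.2} of Appendix A.1 (which gives $c_{k+2,2j_t-2}=k_1+j_t$ when $t$ is odd and $k_1+j_t-1$ when $t$ is even) together with the $(g,k)$-standardness of $a^{\Gamma}$ and Lemma \ref{A1}, exactly as in the proof of Lemma \ref{standard}; condition 6 likewise holds, since the constraint $a_{k,j}+b_{k,j}=g-2$ on the segments $j_t<j<j_{t+1}$ with $j<g$ was already built into $a^{\Gamma}$, while on a segment reaching past $g$ the claim is either vacuous or forced by condition (1), as $c_{k+2,2j-2}+c_{k+2,2j-1}=g'-2$ for all the new semistable $j$. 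Combining these steps yields that $c^{\Gamma'}_{g'}$ is $(g',k+2)$-standard. The main obstacle is the bookkeeping in the third step: one must propagate $(c_{i,\bullet})$ through the $2k_1+2$ new components and check at each node that precisely the expected entries are non-repeated; I expect this to be routine once the table is in place, since the $\mE_j$ for $j>g$ are uniform (the light summand's left-vanishing orders march up in steps of $2$, then the roles of the two summands swap at $j=g+k_1+1$), but care is needed to confirm that this swap introduces no unexpected coincidence of vanishing orders.
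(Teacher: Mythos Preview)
Your proposal is correct and follows essentially the same approach as the paper's own proof: handle $j\le g$ via Appendix~A (Lemmas \ref{A1}--\ref{A6}), verify the hypotheses of Lemma~\ref{one} for each semi-stable $\mE_j$ with $j>g$ by reading off the special indices and checking the relevant non-repetition inequalities, and dispatch conditions~5 and~6 of Definition~\ref{stan} by observing that every unstable $\mE_{j_t}$ has $j_t\le g$.

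One small caveat: the special indices you quote from the proof of Lemma~\ref{twosp} for the first range $j=g+1+t$ (namely $i_1=2t+1$) disagree with those the paper records in its own proof of Lemma~\ref{standard2} (namely $i_1=2+2t$); a direct computation from $(c_{i,2g})$ shows the latter is correct, so this is a typo in \ref{twosp} rather than an error in your strategy.
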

\begin{proof}
  This is simpler than proving \ref{standard}. For $j\le g$, by the same argument as in the proof of \ref{standard}, one verifies the needed conditions for data over $C_1,...,C_g$ using Lemma \ref{A1}-\ref{A6}. For $j>g$, we claim all $\mE_j$ are semi-stable and $(c_{i,2j-2}),(c_{i,2j-1})$ satisfy \ref{one} with $d=g'-1$. This can be checked in the same way as in \ref{standard}:

  For $j=g+1+t$ ($t=0,...,k_1-1$), the special indices are $i_1=2+2t$ and $i_2=k+1$, $c_{2t,2j-2}<c_{2t+1,2j-2}<c_{2t+2,2j-2}$, $c_{k,2j-2}\le c_{k+1,2j-2}-2$.

  For $j=g+1+t+k_1$ ($t=0,...,k_1$), $i_1=1+2t$, $i_2=k+2$, $c_{2t-1,2j-2}<c_{2t,2j-2}<c_{2t+1,2j-2}$ and $c_{k,2j-2}<c_{k+1,2j-2}<c_{k+2,2j-2}$.

  For $j=g'$, $c_{k,2g'-2}=c_{k+1,2g'-2}-2=c_{k+2,2g'-2}-2$.

 \indent It remains to check conditions 5, 6 in Definition \ref{stan}. But in this case $\mE_j$ is unstable only if $j<g$. Therefore, using the same argument as in Lemma \ref{standard} (for $j\le g$ ), we get the result.
\end{proof}
\begin{rem}
  For our third construction, we do not need the $c^{\Gamma'}_{g'}$ to be $(g',k')$-standard. Existence results for the cases where $k$ is even are deduced from cases where $k$ is odd. Moreover, the feasibility of the third construction follows easily:
\end{rem}
\begin{lem}\label{standard3}
 The vanishing sequences $(c_{i,2j-2}), (c_{i,2j-1})$ determined in \ref{l3} satisfy conditions in Lemma \ref{one} (resp. \ref{one2}) if $\mE_j$ is semi-stable (resp. unstable).
\end{lem}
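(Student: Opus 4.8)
The plan is to split the components $C_j$ with $j\le g$ from those with $j>g$, exactly as in the proofs of Lemma~\ref{standard} and Lemma~\ref{standard2}; the statement is strictly lighter than either of those, so only the pieces genuinely new to the third construction require attention.

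For $j\le g$ I would simply invoke condition~2 of Lemma~\ref{C3}. The matrix $c^{\Gamma}_{k+3}$ is by construction the first $k+3$ rows of the matrix $c^{\Gamma}$ produced by Lemma~\ref{C1} with $m=2$ and $N=3k_1+3$; the hypotheses of Lemma~\ref{one} and Lemma~\ref{one2} concern only the existence of special indices, the repetition patterns among the entries, and the values of the partial sums $c_{i,2j-2}+c_{i,2j-1}$, all of which pass to a subset of the rows. Hence if $(a_{i,2j-2}),(a_{i,2j-1})$ satisfy Lemma~\ref{one} (resp.\ Lemma~\ref{one2}) then so do $(c_{i,2j-2}),(c_{i,2j-1})$, with $d=\tfrac{1}{2}d'_j$ (resp.\ $d=\tfrac{1}{2}(d'_j-1)$), where $d'_j$ is the component-wise degree of $\mE_j$.

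For $j>g$ I would argue by induction on $j$, reconstructing $(c_{i,2j-2}),(c_{i,2j-1})$ from the data tabulated in Section~\ref{l3}: given the earlier columns, $(c_{i,2j-2})$ is forced by refinement together with the sum relation $c_{i,2j-3}+c_{i,2j-2}=g'-1$, and $(c_{i,2j-1})$ is then recovered from $(\mE_j,(c_{i,2j-2}))$ via Lemma~\ref{concise1} when $\mE_j$ is semi-stable, and from $(\mE_j,(c_{i,2j-2}),\tau_j)$ via Lemma~\ref{concise2} when $\mE_j$ is unstable, i.e.\ for $j=g+1$ and $j=g+5$. The first thing to verify inductively is that every integer occurs at most twice in $(c_{i,2j-1})$, hence at most twice in $(c_{i,2j})=(g'-1,\dots,g'-1)-(c_{i,2j-1})$; as in Lemma~\ref{standard} this reduces to a short list of inequalities among the two or three entries adjacent to the special indices, read off from the displayed splittings in Section~\ref{l3} and the vanishing sequences of Lemma~\ref{evenonly}. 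Concretely, for the semi-stable bundles $\mE_{g+2},\mE_{g+3},\mE_{g+4}$, the family $\mE_{g+5+j}$, the families $\mE_{g+k_1+5+2s}$ and $\mE_{g+k_1+6+2s}$, and $\mE_{g'}$, the pair $i_1<i_2$ of special indices and the gap $i_2-i_1$ are computed directly from the exponents, while for the unstable $\mE_{g+1},\mE_{g+5}$ the special indices $\ell,i^{*}$ are inherited from the preceding column.

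Once that is established, the rest is routine. For semi-stable $\mE_j$ I would check the three hypotheses of Lemma~\ref{one}: uniqueness of the pair $i_1<i_2$ with sum equal to $d$ (all other sums being $d-1$), and the absence of an index $i\ne i_1,i_2$ with $c_{i,2j-2}=c_{i_1,2j-2}$ and $c_{i,2j-1}=c_{i_2,2j-1}$ — which holds in every case because $i_2-i_1>2$, or $i_2-i_1=1$, or $i_2-i_1=2$ with $c_{i_2,2j-2}-c_{i_1,2j-2}\ge 2$. For the two unstable bundles, conditions~(1),(3),(4) of Lemma~\ref{one2} are built into the reconstruction rule of Lemma~\ref{concise2}, so only condition~(5) remains: one inspects $(c_{i,2g+1})$ for $\mE_{g+1}$ and the vanishing sequence at $P_{g+5}$ for $\mE_{g+5}$ (Lemma~\ref{evenonly}) to see there is no $i$ with $c_{i,2j-2}+c_{i,2j-1}=c_{i+1,2j-2}+c_{i+1,2j-1}=d-1$, $c_{i+1,2j-2}=c_{i,2j-2}+1$, and all four entries non-repeated. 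The part requiring most care will be exactly this last verification, together with correctly maintaining the set $\tau_j$ of non-repeated indices so that Lemma~\ref{concise2} outputs the intended sequence; but since the determinants over $C_1,\dots,C_g$ come for free from the odd-$k$ construction and all but two of the new bundles are semi-stable with well-separated special indices, the whole argument is considerably shorter than that of Lemma~\ref{standard}.
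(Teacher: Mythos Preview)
Your approach is correct and follows the template of Lemma~\ref{standard} and Lemma~\ref{standard2}: invoke Lemma~\ref{C3} for $j\le g$, then run a direct column-by-column induction for $j>g$, checking the hypotheses of Lemma~\ref{one} or~\ref{one2} against the bundle data of Section~\ref{l3}. All of your subsidiary claims are right, and the execution would indeed be shorter than that of Lemma~\ref{standard} since only two of the new bundles are unstable.

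The paper, however, takes a different and much shorter route. It observes that the entire matrix $c^{\Gamma'}_{g'}$ (for all $j$, not only $j\le g$) is the first $k+3$ rows of a $(k+4)$-row matrix satisfying Lemma~\ref{one}/\ref{one2} componentwise. Dropping the last row cannot increase any repetition count, cannot create a new special index, and leaves every sum $c_{i,2j-2}+c_{i,2j-1}$ unchanged; hence all hypotheses of~\ref{one} and conditions~(1)--(4) of~\ref{one2} are inherited automatically. The only condition that might break is~(5) of Lemma~\ref{one2}: an order repeated in rows $k+3$ and $k+4$ may become non-repeated after truncation. The paper dispatches this in one line by noting that for each unstable $\mE_{j_t}$ one always has either $c_{k+2,2j_t-2}=c_{k+3,2j_t-2}$ or $c_{k+2,2j_t-1}=c_{k+3,2j_t-1}$, so the entry in row $k+3$ remains repeated and no new violation at $i=k+2$ can occur.

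Your direct verification is more self-contained and does not rely on the existence (or standardness) of the $(k+4)$-row extension, at the cost of reproducing the tabular case analysis; the paper's sub-matrix argument buys a three-line proof but leans on that structural observation.
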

\begin{proof}
 Notice that by our construction, $c^{\Gamma'}_{g'}$ consists of the first $k+3$ columns of a $(g',k+4)$-standard vanishing condition obtained by the first construction. All conditions in \ref{one} or \ref{one2} are inherited by such a sub-matrix except for 5 in \ref{one2}. Since either $c_{k+2,2j_t-2}=c_{k+3,2j_t-2}$ or $c_{k+2,2j_t-1}=c_{k+3,2j_t-1}$ always holds, the lemma follows.
\end{proof}
\subsection{On the Canonical Determinant Condition}
Secondly, we show that the canonical determinant condition is preserved under the inductions.
\begin{lem}\label{can1}
Any limit linear series obtained in the first construction satisfies the canonical determinant condition.
\end{lem}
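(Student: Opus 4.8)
The plan is to verify the canonical determinant condition componentwise, using Corollary \ref{candet} as the target criterion. By hypothesis (Theorem \ref{main1}), the limit linear series over the first $g$ components already satisfies the canonical determinant condition, so it suffices to check the identity of Corollary \ref{candet} for the components $C_{g+1},\dots,C_{g'}$. Concretely, one first computes the partial sums $\sum_{t=1}^{j-1}d'_t$ for $j>g$ using the explicit multi-degrees recorded in section \ref{l1}; note that each block of components ($\mE_{g+1},\dots,\mE_{g+3}$ semi-stable, $\mE_{g+4}$ unstable, etc.) contributes degrees $2g'-2$, $2g'-1$, or $2g'-3$ in the prescribed pattern, and the odd-degree components occur in the alternating $(2g'-1,2g'-3)$ pattern with $b'=g'-1$ as fixed in section \ref{l1}. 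The key observation is that $\deg(\mE_j) = c_{1,2j-2}+c_{1,2j-1}+(\text{something depending only on semistability type})$, so the twisting exponents at $P_j$ and $P_{j+1}$ are read off directly from the vector bundles listed.

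First I would set up the bookkeeping: write $e_j^{(1)},e_j^{(2)}$ for the two exponents in $\mE_j = \mO(e_j^{(1)}P_j + (\deg\mE_j - e_j^{(1)})P_{j+1})\op\mO(e_j^{(2)}P_j + \cdots)$ as listed in \ref{l1}, so that $\det(\mE_j) = \mO((e_j^{(1)}+e_j^{(2)})P_j + (\deg\mE_j - e_j^{(1)} - e_j^{(2)})P_{j+1})$ up to the trivial line bundle on the elliptic curve (recall $\deg\mL_1 + \deg\mL_2$ determines the sum, and any degree-zero factor is $\mO$ since we work with fixed determinant). Then I would check, by induction on $j$ starting from $j=g+1$, that $e_j^{(1)}+e_j^{(2)}$ equals the coefficient $2j-3$ or $2j-2$ of $P_j$ demanded by Corollary \ref{candet} according to whether $j$ lies in an interval $(j_{2s-1},j_{2s}]$ or not — i.e., according to the parity count of unstable bundles $j_t$ preceding $j$. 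The inductive step is purely a matter of matching the recorded coefficient increments: each listed $\mE_j$ was constructed precisely so that $e_j^{(1)}+e_j^{(2)} - (e_{j-1}^{(1)}+e_{j-1}^{(2)})$ agrees with the difference $(2j-2)-(2(j-1)-2) = 2$ (or the appropriate shift by $\pm 1$ when crossing an unstable component, where the interval bracket in Corollary \ref{candet} toggles).

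The main obstacle will be the genuine bookkeeping across the "periodic" block $j = g+9+11s+t$ for $0\le s\le q-2$ and the tail block $j = h+4\ell+\cdots$: these involve parameters $s,\ell,q$ and one must verify the telescoping identity uniformly in them rather than case by case. I would handle this by isolating, for each periodic block, the net change in $e^{(1)}+e^{(2)}$ over one full period of $11$ (resp. $4$) components and checking it equals the net change $22$ (resp. $8$) forced by Corollary \ref{candet} over that many components, together with a single base-case check at the start of the block; the unstable components $\mE_{g+10+11s}$, $\mE_{g+13+11s}$ inside each period are where the interval-bracket toggles, and one confirms the toggling is consistent with the $\pm 1$ shifts in the formula. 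Finally, the boundary components $\mE_{g'-3},\dots,\mE_{g'}$ are checked directly from their explicit forms $\mO(g'-4,3)^{\op 2}$, etc., giving $e^{(1)}+e^{(2)} = 2g'-8, 2g'-6, 2g'-4, 2g'-2$ respectively, which matches the required coefficient $2j-2$ at $P_j$ since no unstable component occurs after $h+4\ell+4$. The proof then concludes by invoking Corollary \ref{candet}. (I expect the actual write-up to be a short computation once the pattern of partial sums is tabulated, so I would present it as a direct verification referring back to the lists in section \ref{l1} rather than reproducing all arithmetic.)
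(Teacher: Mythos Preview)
Your proposal is correct and takes essentially the same approach as the paper: both reduce to Corollary \ref{candet}, handle $j\le g$ by noting that the twist $\mE_j=\mE'_j(NP_{j+1})$ leaves the coefficient of $P_j$ unchanged, and for $j>g$ compute $\det(\mE_j)$ directly from the explicit bundle data in section \ref{l1}. The paper simply tabulates the resulting $\det(\mE_j)$ in a four-line case list rather than organizing the arithmetic as an inductive/periodic check, but the content is the same direct verification you describe.
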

\begin{proof}Notice that this is a condition on the vector bundles $\mE_1,...,\mE_{g'}$.

By Corollary \ref{candet}, the canonical determinant condition can be checked by looking at the coefficient of $P_j$ in $\det(\mE_j)=\mO(a_jP_j+b_jP_{j+1})$. For $j\le g$, $\mE_j=\mE'_j(NP_{j+1})$, where $\mE'_1,...,\mE'_g$ are the underlying vector bundles of objects in \ma.\ Therefore, $\det(\mE_j)$ satisfies the numerical condition in \ref{candet} for $j\le g$.

For $j>g$, one can compute directly that

$$\det(\mE_j)=\begin{cases}\mO((2j-3)P_j+(2g'-2j)P_{j+1})&\text{ for }j=g+8,g+13+11s\ (s=0,...,q-2),\\
\mO((2j-2)P_j+(2g'-2j+1)P_{j+1})&\text{ for }j=g+4,g+10+11s\ (s=0,...,q-2),\\
\mO((2j-3)P_j+(2g'-2j+1)P_{j+1})&\text{ for }j=g+5,g+6,g+7,g+11+11s,g+12+11s,\\
\mO((2j-2)P_j+(2g'-2j)P_{j+1})&\text{ otherwise}.
\end{cases}$$

This verifies the canonical determinant condition for our first construction.
\end{proof}
\begin{lem}\label{can2}
Any limit linear series obtained in the second or third construction satisfies the canonical determinant condition.
\end{lem}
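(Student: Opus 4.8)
The plan is to mimic the proof of Lemma \ref{can1}: the canonical determinant condition is purely a statement about the underlying vector bundles $\mE_1,\dots,\mE_{g'}$, and by Corollary \ref{candet} it amounts to checking, for each $j$, that the coefficient of $P_j$ in $\det(\mE_j)=\mO(a_jP_j+b_jP_{j+1})$ matches the prescribed value (namely $2j-3$ when $j_{2s-1}<j\le j_{2s}$ and $2j-2$ otherwise), after which the $P_{j+1}$-coefficient is forced by the degree $\deg(\mE_j)=d_j$. For the ranges $j\le g$ the vector bundles are inherited from an object of $\mathcal{G}^{k,\text{EHT}}_{2,\omega_g,d_{\bullet},a^{\Gamma}}(X_g)$ twisted uniformly by $NP_{j+1}$ (with $N=2k_1+2$ in the second construction, $N=3k_1+3$ in the third), so the inductive hypothesis that $a^{\Gamma}$ is $(g,k)$-standard and satisfies the canonical determinant condition immediately gives the desired coefficients on that initial segment; this step is routine and parallels the corresponding paragraph in Lemma \ref{can1}.

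For $j>g$ I would simply read off $\det(\mE_j)$ from the explicit lists of bundles given in Section \ref{l2} and Section \ref{l3}. In the second construction all $\mE_j$ ($j>g$) are semi-stable and decomposable, with $\mE_{g+1+t}=\mO(g-k_1+2t,3k_1+1-2t)\op\mO(g+k_1,k_1+1)$ for $0\le t\le k_1-1$, $\mE_{g+k_1+1+t}=\mO(g-1+2t,2k_1+2-2t)\op\mO(g+2k_1+1,0)$ for $0\le t\le k_1$, and $\mE_{g'}=\mO(g'-1,0)^{\op2}$; computing the first Chern class of each direct sum gives $\det(\mE_{g+1+t})=\mO((2g-k_1+2t+k_1)P_j+\dots)$, etc., and one checks these equal $\mO((2j-2)P_j+(2g'-2j)P_{j+1})$ (there are no unstable components among $j>g$, so the ``$2j-3$'' alternative never arises for $j>g$ here). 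In the third construction the bundles $\mE_{g+1},\dots,\mE_{g'}$ are listed in Section \ref{l3}; the two unstable ones $\mE_{g+1},\mE_{g+5}$ have $\det$ of the form $\mO((2j-3)P_j+(2g'-2j+1)P_{j+1})$ by direct addition of the twisting coefficients (note $\mE_{g+1}=\mO(g-k_1,b'-(g-k_1))\op\mO(g+k_1,b'-(g+k_1-1))$, so the $P_{g+1}$-exponents sum to $2b'-(2g-1)=2g'-2g-1$, forcing the $P_{g+2}$-coefficient appropriately via $b'=g'-1$), while the semi-stable ones give $\mO((2j-2)P_j+\cdots)$ or the shifted variants exactly as in Lemma \ref{can1}; $\mE_{g'}=\mO(g'-1,0)^{\op2}$ contributes $\mO((2g'-2)P_{g'})$ as required. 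In all cases one then observes, as in Corollary \ref{candet}, that the $P_j$-coefficient agrees with the prescribed $(2j-3)$ resp.\ $(2j-2)$ and that the $P_{j+1}$-coefficient is automatically correct because $\sum d_j - 2b'(g'-1)=2g'-2$.

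The only mild subtlety — and the place I would be most careful — is the bookkeeping at the interface $j=g$ and $j=g+1$: the twisting convention relating $\mE_j$ on the first $g$ components (which carries the extra $NP_{g+1}$ at the very end, i.e.\ the factor $\omega_g(NP_{g+1})$) to the genuine dualizing sheaf $\omega_{g'}$ on $X_{g'}$ must be matched against the formula in Corollary \ref{candet} with the new value $b'=g'-1$, so that the cumulative sums $\sum_{t=1}^{j-1}d_t$ appearing there are computed with the twisted degrees $\hat d_t=d_t+2N$ (resp.\ $d_t+2N$ in the third construction). Once this is reconciled — it is the same reconciliation already carried out implicitly in the proof of Lemma \ref{can1} — the remaining verifications for $j>g$ are a finite, entirely mechanical list of Chern-class additions against the tables in Sections \ref{l2} and \ref{l3}, and there is no genuine obstacle.
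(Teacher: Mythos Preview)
Your approach is exactly the one the paper takes: its entire proof reads ``The argument is the same as in \ref{can1} and hence omitted.'' So reducing to Corollary~\ref{candet} and checking the $P_j$-coefficients of $\det(\mE_j)$ against the explicit bundle lists is precisely right.

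One small arithmetic slip to fix in your write-up: for $\mE_{g+1}$ in the third construction you swapped the $P_{g+1}$- and $P_{g+2}$-coefficients. In Notation~7, $\mO(a,b)$ on $C_{g+1}$ means $\mO(aP_{g+1}+bP_{g+2})$, so the $P_{g+1}$-exponents of $\mE_{g+1}$ sum to $(g-k_1)+(g+k_1)=2g=2j-2$, not $2j-3$. This is consistent with Corollary~\ref{candet}: $\mE_{g+1}$ has degree $2g'-1$, so $g+1$ is an index of type $j_{2s-1}$ and falls into the ``otherwise'' case. The $(2j-3)$-pattern applies to $j=g+2,\dots,g+5$ (the range $j_{2s-1}<j\le j_{2s}$), not to $j=g+1$ itself. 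This does not affect the method, only the bookkeeping.
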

\begin{proof}
The argument is the same as in \ref{can1} and hence omitted.
\end{proof}
\subsection{Non-emptiness and Sufficient Genericity Assumptions}\label{valid}
One of the most important tasks is to show the limit linear series stacks we constructed are non-empty. Moreover, the stacks in the first and second construction should contain sufficiently generic objects.

We now establish a non-emptiness result for the first induction. Due to the similarity in construction, it suffices to consider only the case $g'=g+4k_1+6-\left\lfloor{k_1\over3}\right\rfloor$.
\begin{prop}\label{nonempty}
 Given $\mathcal{G}^{k,\text{EHT}}_{2,\omega_g,d_{\bullet},a^{\Gamma}}(X_g)$ and $\mathcal{G}^{k+4,\text{EHT}}_{2,\omega_{g'},d'_{\bullet},c^{\Gamma'}_{g'}}(X_{g'})$ as in \ref{l1}. Suppose $\mathcal{G}^{k,\text{EHT}}_{2,\omega_g,d_{\bullet},a^{\Gamma}}(X_g)$ is non-empty with sufficiently generic objects, then $\mathcal{G}^{k+4,\text{EHT}}_{2,\omega_{g'},d'_{\bullet},c^{\Gamma'}_{g'}}(X_{g'})$ is also non-empty.
\end{prop}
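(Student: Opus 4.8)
The plan is to build an explicit object of $\mathcal{G}^{k+4,\text{EHT}}_{2,\omega_{g'},d'_{\bullet},c^{\Gamma'}_{g'}}(X_{g'})$ by gluing together a limit linear series coming from the first $g$ components with a limit linear series on the last $N=g'-g$ components, using the sufficient genericity hypothesis to ensure the gluing at $P_{g+1}$ can be made compatible. First I would start from a sufficiently generic object of $\mathcal{G}^{k,\text{EHT}}_{2,\omega_g,d_{\bullet},a^{\Gamma}}(X_g)$; by Proposition~\ref{nonempty3} this lifts to an object $((\mE_j,V_j),(\phi_j))$ of $\GG^0\subset\mathcal{G}^{k+4,\text{EHT}}_{2,\omega_g(NP_{g+1}),\hat{d}_{\bullet},c^{\Gamma}}(X_g)$, whose vanishing sequence at the smooth point $P_{g+1}$ is the sequence $(c_i)$ recorded in \eqref{gvan}, i.e.\ $(N+k_1,[N+k_1-1]_2,\dots,[N]_2,N-1-k_1,[N-2-k_1]_2,N-3-k_1)$. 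This provides the ``left half'' of the desired limit linear series, with the correct vanishing sequence at the node $P_{g+1}$ dictated by the first $2g-1$ columns of $c^{\Gamma'}_{g'}$.

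Next I would construct the ``right half'': a refined EHT limit linear series $((\mF_j,W_j),(\chi_j))_{j=g+1}^{g'}$ on the subchain $X_{g,g'}$ (the last $N$ components), with underlying vector bundles $\mE_{g+1},\dots,\mE_{g'}$ exactly those listed in Section~\ref{l1}, with vanishing sequences at $P_{g+1},\dots,P_{g'+1}$ given by the remaining columns of $c^{\Gamma'}_{g'}$ (and $a(k')^{\rev}$ at $P_{g'+1}$), and with configurations $\tau_{g+1},\dots,\tau_{g'}$ as prescribed. The existence of each $W_j$ with the prescribed vanishing sequence and adapted basis follows from Lemma~\ref{one} (when $\mE_j$ is semistable) and Lemma~\ref{one2} (when $\mE_j$ is unstable), since by Lemma~\ref{standard} the data $c^{\Gamma'}_{g'}$ is $(g',k')$-standard; in particular each $(\van_{j1},\van_{j2})$ satisfies the hypotheses of those lemmas. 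I would then build the gluing isomorphisms $\chi_j$ node by node, going down the chain: at each node $P_j$ with $g+1<j\le g'$ one must realize the prescribed configuration, which amounts to matching up the finitely many points of $\PP\mE_{j-1}|_{P_j}$ and $\PP\mE_j|_{P_j}$ associated to the non-repeated vanishing orders — and the examples in \ref{config}, together with Lemmas~\ref{aux},~\ref{sym} and Corollary~\ref{auxc}, show exactly which matchings are forced and which are free, so a consistent choice exists. For the canonical sections at semistable nodes the gluing is forced (Example~\ref{config}(4)), while at the remaining slots one has a positive-dimensional family of choices; one checks the prescribed $\tau_j$'s are mutually consistent along the chain.

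The crucial step — and the one I expect to be the main obstacle — is gluing the left half to the right half at the node $P_{g+1}$, i.e.\ choosing $\chi_{g+1}:\PP\mE_g|_{P_{g+1}}\to\PP\mE_{g+1}|_{P_{g+1}}$ compatibly with both the configuration $\tau^{V_g}(P_{g+1})$ and the configuration coming from the right half. The left-hand configuration at $P_{g+1}$ is controlled by the sequence $(c_i)$ in \eqref{gvan}, while the right-hand one is governed by the bundles $\mE_{g+1},\mE_{g+2},\mE_{g+3}$ and $\tau_{g+4}$. The obstruction to consistency is precisely the kind of constraint tracked by the sufficiently generic condition: the image $q^{T}_2$ of the destabilizing summand of the last unstable bundle $\mE_{j_{2T}}$ in the left half must not be forced to glue, through $\phi_{j_{2T}+1},\dots$, onto the image of the destabilizing summand of the first unstable bundle on the right. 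Because we began with a \emph{sufficiently generic} object (and $\GG^0$ maps onto those by Proposition~\ref{nonempty3}), this non-gluing holds, so the required $\chi_{g+1}$ exists. Finally I would assemble $((\mE_j,V_j),(\phi_j))_{j\le g}$ with $((\mF_j,W_j),(\chi_j))_{j>g}$ into a single refined EHT limit linear series on $X_{g'}$: condition (1) of Definition~\ref{Def} is the vanishing-at-smooth-point condition built into $\GG^0$ and the standardness of $c^{\Gamma'}_{g'}$; conditions (2) and (3) (refinedness and compatible bases) follow from $(g',k')$-standardness together with the chain-adaptability of all objects (Corollary~\ref{C2} on the left, Lemmas~\ref{one},~\ref{one2} on the right). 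This produces a point of $\mathcal{G}^{k+4,\text{EHT}}_{2,\omega_{g'},d'_{\bullet},c^{\Gamma'}_{g'}}(X_{g'})$, proving non-emptiness; the canonical determinant condition is then checked separately (Lemma~\ref{can1}).
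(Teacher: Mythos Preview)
Your overall strategy---start from a sufficiently generic object, lift via Proposition~\ref{nonempty3} to get non-emptiness of $\GG_g$, then extend across the remaining $N$ components---matches the paper's. But you have misplaced the main obstruction. The gluing at $P_{g+1}$ is \emph{not} where the difficulty lies, and the sufficient genericity hypothesis is not what resolves it: sufficient genericity concerns gluings between the destabilizing summands of the unstable $\mE_{j_t}$ for $j_t\le g$, and its role is already exhausted once you invoke Proposition~\ref{nonempty3} to get $\GG_g\neq\emptyset$. The configuration at $P_{g+1}$ is in fact easy (the paper computes it to be $\{\{1,k+1\},\{k+4\}\}$ for a general object).

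The genuine work, which your ``one checks the prescribed $\tau_j$'s are mutually consistent'' glosses over, is at the nodes adjacent to the \emph{new} unstable bundles $\mE_{g+4},\mE_{g+8},\mE_{g+10+11s},\mE_{g+13+11s}$. At these nodes the configuration is forced to contain a set of size $\ge 3$ (e.g.\ $\{3,k+1,k+4\}$ at $P_{g+4}$, $\{1,k,k+4\}$ at $P_{g+8}$), and realizing such a configuration imposes a nontrivial closed condition on the data already chosen on earlier components. You cannot build the right half independently and glue once: the paper instead works forward from $\GG_g$, repeatedly applying Lemma~\ref{aux} and Theorem~\ref{genericity0} to compute how configurations propagate, and at each unstable node carefully chooses several $V_j$'s and $\phi_j$'s \emph{simultaneously} (e.g.\ $V_{g+6},V_{g+7},V_{g+8},\phi_{g+7},\phi_{g+8}$ together) to satisfy the constraint. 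The reduction to the two nontrivial blocks $\GG_{g+9}$ and $\GG_{g+10+11s,g+20+11s}$, and the explicit node-by-node configuration tracking within each, is the substance of the proof; your proposal does not supply this.
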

\begin{proof}
  Denote $\GG=\mathcal{G}^{k+4,\text{EHT}}_{2,\omega_{g'},d'_{\bullet},c^{\Gamma'}_{g'}}(X_{g'})$ $\GG_r=\mathcal{G}^{k+4,\text{EHT}}_{2,\omega_r(A_rP_{r+1}),d^r_{\bullet},c^{\Gamma'}_r}(X_r)$ (\tb{Notation 17}).

  Denote $\mE_1,...,\mE_{g'}$ to be the underlying vector bundles of $\GG$.

  By Proposition \ref{nonempty3} and our assumption, $\GG_g$ is non-empty.

  Given $(c_{i,2g+17+11s})$ (\ref{parte}), $\tau(P_{g+10+11s})$ is trivial.

  Given $(c_{i,2h+8\ell-1})$ (\ref{partf}) and $\mE_{h+4p+1},...,\mE_{h+4p+4}$, one can inductively see that $\tau(P_j)$ is trivial for $j=h+4p+1,h+4p+3$; $\tau(P_j)$ is of the of the situation in Example \ref{config} (4) with only one other non-repeated vanishing order than $c_{i_1-1,2j-2}$, $c_{i_1,2j-2}$, $c_{i_2-1,2j-2}$, $c_{i_2,2j-2}$ ($i_1,i_2$ are the special indices), for $j=h+4p+2,h+4p+4$.

  Similarly, given $(c_{i,2g'-9})$ (take $\ell=k_1-3q+1$ in \ref{partf}) and $\mE_{g'-3},...,\mE_{g'}$, one sees that $\tau(P_j)$ is trivial for $j=g'-3,g'-2,g'$; $\tau(P_{g'-1})$ is of the of the situation in Example \ref{config} (4) with only one other non-repeated vanishing order than $c_{i_1-1,2g'-4}$, $c_{i_1,2g'-4}$, $c_{i_2-1,2g'-4}$, $c_{i_2,2g'-4}$.

  In these cases, if $\GG_{j-1}$ is non-empty, $\GG_j$ is always non-empty: given any $K$-valued object $((\mE_{j'},V_{j'})_{j'<j},(\phi_{j'})_{j'<j})$ in $\GG_{j-1}$, there is no obstruction in finding $\phi_j$, $V_j$ so that $((\mE_{j'},V_{j'})_{j'\le j},(\phi_{j'})_{j'\le j})$ is an object in $\GG_j$. \tb{Consequently, the proof reduces to showing $\GG_{g+9}$ and $\GG_{g+10+11s,g+20+11s}$ (Notation 17) are non-empty, for $s=0,...,q-2$.}

  \indent We first show $\GG_{g+9}$ is non-empty.

  Given $\mE_{g+8},\mE_{g+9}$, $(c_{i,2g+13}),(c_{i,2g+15})$ (\ref{parta}), $\tau(P_{g+9})=\{\{1,4,k+4\},\{5,k+3\}\}$. It is not hard to see that there exist $V_{g+8}\subset\Gamma(\mE_{g+8})$, $V_{g+9}\subset\Gamma(\mE_{g+9})$ with the desired vanishing sequences and this configuration at $P_{g+9}$. Moreover, $c_{5,2g+14},c_{k+3,2g+14}$ are repeated vanishing orders. So, if $\GG_{g+8}$ is non-empty, easy to see $\GG_{g+9}$ is non-empty.

  We claim a general object in $\GG_g$ must have configuration $\{\{1,k+1\},\{k+4\}\}$ at $P_{g+1}$: Let $\mE_{j_1},...,\mE_{j_{2p}}$ be the unstable bundles such that $j_t\le g$. Inducting from the base case in \ref{base}, one can show $c_{1,2j_{2p}-1},c_{k+1,2j_{2p}-1}$ obtained in \ref{C1} are always non-repeated and $c_{1,2j_{2p}-2}+c_{1,2j_{2p}-1}=c_{k+1,2j_{2p}-2}+c_{k+1,2j_{2p}-1}=\frac{1}{2}(d_{j_{2p}}-1)$. Therefore, there exists one set in $\tau(P_{j_{2p}+1})$ containing $1,k+1$. For $j=j_{2p}+1,...,g$, $c_{1,2j-2}+c_{k+1,2j-2}=c_{i_1,2j-2}+c_{i_2,2j-2}-1$. By applying Lemma \ref{aux}, \ref{sym} repeatedly, one gets $\tau(P_{j+1})$ contains a set containing $1,k+1$ ($j=j_{2p}+1,...,g$) and hence the claim. Given $\mE_{g+1}$ and $(c_{i,2g-1})$ (see Equation \ref{gvan}), it is clear that $\GG_{g+1}$ is non-empty and $\tau(P_{g+2})=\{\{3,k+1\},\{k+4\}\}$.

  By Lemma \ref{aux} again, one can show that $\GG_{g+2}$ is non-empty and a general object must have configuration $\{\{1,k+3\},\{2,k+2\},\{3,k+1\},\{k+4\}\}$ at $P_{g+3}$.

  Given $(c_{i,2g+5}),(c_{i,2g+7})$ (\ref{parta}), $\tau(P_{g+4})=\{\{3,k+1,k+4\}\}$. $\GG_{g+4}$ is non-empty given $\GG_{g+3}$ is non-empty with some object realizing this configuration at $P_{g+4}$.

  Furthermore, $c_{3,2g+6},c_{k+1,2g+6},c_{k+4,2g+6}$ are non-repeated and
  $$c_{3,2g+6}+c_{3,2g+7}=c_{k+1,2g+6}+c_{k+1,2g+7}=c_{k+4,2g+6}+c_{k+4,2g+7}=\frac{1}{2}(\deg(\mE_{g+4})-1).$$
  While there exists some $V_{g+3}$ for which $\tau^{V_{g+3}}(P_{g+4})=\{\{3,k+1,k+4\}\}$, by Theorem \ref{genericity0}, such $V_{g+3}$ has $\tau^{V_{g+3}}(P_{g+3})=\{\{1,k+3\},\{2,k+2\},\{3,k+1\},\{k+4\}\}$. To see $\GG_{g+3}$ is non-empty, one can choose $V_{g+3}$ and $\phi_{g+3}$ first and find appropriate $V_{g+2}\subset\Gamma(\mE_{g+2})$ in compatible with the choice of $\phi_{g+3}$. By Theorem \ref{genericity0} again, in general $\tau^{V_{g+2}}(P_{g+2})=\{\{3,k+1\},\{k+4\}\}$.

  Given $(c_{i,2g+5}),(c_{i,2g+7})$ (\ref{parta}), $\tau^{V_{g+4}}(P_{g+5})=\{\{1,4,k+1,k+4\},\{k\}\}$ for any feasible $V_{g+4}$. Given $\mE_{g+5}$, $4$ is a special index for $(c_{i,2g+8})$ and $(c_{i,2g+9})$. There exists $V_{g+5}\subset\Gamma(\mE_{g+5})$ satisfying the desired vanishing conditions which contains some $s_i\in\Gamma(\mO(c_{4,2g+8}P_{g+5}+c_{4,2g+9}P_{g+6}))$ for $i=1,4,k+1,k+4$ such that $\ord_{P_5}(s_i)=c_{i,2g+8}$, $\ord_{P_6}(s_i)=c_{i,2g+9}$. In particular, $\GG_{g+5}$ is non-empty. Moreover, $\tau^{V_{g+5}}(P_{g+6})=\{\{1,4,k+3,k+4\},\{k\}\}$.

  Similarly, one can show $\GG_{g+6}$ is non-empty. By Lemma \ref{aux} and Theorem \ref{genericity0}, conclude that $\tau^{V_{g+6}}(P_{g+7})=\{\{1,k+4\},\{4,k+3\},\{5,k+2\},\{6,k+1\},\{k\}\}$.

  Given $(c_{i,2g+13})$ (\ref{parta}) and $\mE_{g+7}$, one can calculate $$c_{6,2g+12}+c_{k+1,2g+12}=c_{5,2g+12}+c_{k+3,2g+12}-1.$$
  By Lemma \ref{aux}, $\tau^{V_{g+7}}(P_{g+8})$ contains $\{6,k+1\}$ in general.

  To show $\GG_8$ is non-empty, we choose $V_{g+6},V_{g+7},V_{g+8}$, $\phi_{g+7},\phi_{g+8}$ compatibly.

  Given $\mE_{g+8}$ and $c_{i,2g+15}$ (\ref{parta}), $c_{1,2g+14},c_{k,2g+14},c_{k+4,2g+14}$ are non-repeated and
  $$c_{1,2g+14}+c_{1,2g+15}=c_{k,2g+15}+c_{k,2g+15}=c_{k+4,2g+15}+c_{k+4,2g+15}=\frac{1}{2}(\deg(\mE_{g+8})-1),$$
  $\tau(P_{g+8})$ contains $\{1,k,k+4\}$. \footnote{When $k=5$, there are fewer non-repeated vanishing orders in $(c_{i,2g+14})$ and $\tau(P_{g+8})$ is different; the argument for non-emptiness of $\GG_{g+9}$ is even simpler in that case. See Lemma \ref{g6} also, for the derivation of dimension upper bound in that special case.} So, $\tau(P_{g+8})$ has to be $\{\{1,k,k+4\},\{6,k+1\}\}$.

  Recall $V_j$ is determined by its $(P_j,P_{j+1})$-adapted basis: $\{s^j_i\}_{i=1}^{k+4}$, where
  $$\ord_{P_j}(s_i)=c_{i,2j-2}, \ord_{P_{j+1}}(s^j_i)=c_{i,2j-1}.$$
  Denote $q$ to be the image of sections of $\mO(g-k_1+6,b'-(g-k_1+6))\subset\mE_{g+6}$ in $\PP\mE_{g+6}|_{P_{g+7}}$ and $q'$ to be the image of sections of $\mO(g-k_1+6,b'-(g-k_1+6))\subset\mE_{g+7}$ in $\PP\mE_{g+7}|_{P_{g+7}}$. Given the vanishing sequences, it is clear that $\phi_{g+7}(q)=q'$ must hold.

  Take any such $\phi_{g+7}$. Since $s^{g+7}_{k+3}$ is a canonical sections of $\mE_{g+7}$, $\phi_{g+7}$ determines $s^{g+6}_{k+3}$ and hence $s^{g+6}_4$. Given that $\tau(P_{g+6})=\{\{1,4,k+3,k+4\},\{k\}\}$, $s^{g+6}_1$, $s^{g+6}_{k+4}$ are also determined. Consequently, $s^{g+7}_1$, $s^{g+7}_{k+4}$ are determined. Similarly, $s^{g+7}_{6}, s^{g+7}_{k+1}$ are determined using the $\phi_{g+7}$ hereby chosen: $\phi_{g+7}(s^{g+7}_{k+1}|_{P_{g+7}})=s^{g+6}_{k+1}|_{P_{g+7}}$; yet $s^{g+6}_{k+1}$ is a canonical section of $\mE_{g+6}$.

  To determine $V_{g+6},V_{g+7}$, we are left with determining $s^{g+6}_k$ and $s^{g+7}_k$. Take any $V_{g+8}$ whose $(P_{g+8},P_{g+9})$-adapted basis satisfies
  $$s^{g+8}_6|_{P_{g+8}}=s^{g+8}_{k+1}|_{P_{g+8}},s^{g+8}_{5}|_{P_{g+9}}=s^{g+8}_{k+3}|_{P_{g+9}}$$
  and any $\phi_{g+8}$ such that $\phi_{g+8}(s^{g+7}_1|_{P_{g+8}})=s^{g+8}_1|_{P_{g+8}}$, $\phi_{g+8}(s^{g+7}_6|_{P_{g+8}})=s^{g+8}_6|_{P_{g+8}}$, one then determines $s^{g+7}_k$ via the condition $\phi_{g+8}(s^{g+7}_k|_{P_{g+8}})=s^{g+8}_k|_{P_{g+8}}=s^{g+8}_1|_{P_{g+8}}$. Correspondingly, $s^{g+6}_k$ is determined using $\phi_{g+7}$ chosen previously.

  For $V_{g+6},V_{g+7},V_{g+8}$ and $\phi_{g+7},\phi_{g+8}$ hereby determined and any $K$-valued object in $\GG_{g+5}$ one can find some $\phi_{g+6}$ to conclude that $\GG_{g+8}$ (and hence $\GG_9$) is non-empty.\\
  \indent Similarly, we show that $\GG_{g+10+11s,g+20+11s}$ (\tb{Notation 17}) is non-empty. Given $(c_{i,2g+39+11s})$ (\ref{parte}), one can inductively calculate the following:
  \begin{enumerate}
    \item $\tau(P_{g+20+11s})=\{\{6s+12,k+4\},\{6s+13,k+3\},\{1\}\}$ and is of the situation in \ref{config} (4).
    \item $\tau(P_{g+19+11s})$ is trivial.
    \item $\tau(P_{g+18+11s})=\{\{6s+8,k+4\},\{6s+9,k+3\},\{1\}\}$ and is of the situation in \ref{config} (4).
  \end{enumerate}
  Therefore, the question reduces to showing: (1) $\GG_{g+11+11s,g+17+11s}$ (\tb{Notation 17}) is non-empty; (2) a general object of it admits the desired configuration at $P_{g+18+11s}$.

  First of all, we choose $V_{g+11+11s},V_{g+12+11s}$, $\phi_{g+11+11s}$, $\phi_{g+12+11s}$ consistently.

  A general $V_{g+10+11s}$ satisfying the desired vanishing conditions must have $$\tau^{V_{g+10+11s}}(P_{g+11+11s})=\{\{1,2,6s+8,k+1\},\{6s+5,k+4\},\{k\}\}.$$
  \indent Given $\mE_{g+11+11s}$ and by \ref{aux}, \ref{genericity0}, there exists $V_{g+11+11s}$ such that
    $$\tau^{V_{g+11+11s}}(P_{g+11+11s})=\{\{1,2,6s+8,k+1\},\{6s+5,k+4\},\{k\}\},$$
    $$\tau^{V_{g+11+11s}}(P_{g+12+11s})=\{\{1\},\{2\},\{6s+5,k+4\},\{6s+8,k+1\},\{6s+9,k+3\},\{6s+10,k+2\},\{k\}\}.$$

  Similarly, given $\mE_{g+13+11s}$ and $(c_{i,2g+25+22s})$ (\ref{parte}), for a general $V_{g+13+11s}$ satisfying the vanishing condition,
  $$\tau^{V_{g+13+11s}}(P_{g+13+11s})=\{\{1\},\{2\},\{6s+5,6s+8,k,k+1,k+4\}\},$$
  $$\tau^{V_{g+13+11s}}(P_{g+14+11s})=\{\{1\},\{6s+6,6s+9,k,k+1,k+4\}\}.$$

  After deriving $(c_{i,2g+23+22s})$ from $(c_{i,2g+25+22s})$ (\ref{parte}) and $\mE_{g+13+11s}$, one can see that there exists $V_{g+12+11s}$ such that $\tau^{V_{g+12+11s}}(P_{g+13+11s})=\{\{1\},\{2\},\{6s+5,6s+8,k,k+1,k+4\}\}.$

  Still denote $\{s^j_i\}$ to be a $(P_j,P_{j+1})$-adapted basis. Choose $s^{g+11+11s}_{6s+5}$, $s^{g+12+11s}_{6s+5}$, which are not sections of one summand of the corresponding bundles. This immediately determines $\phi_{g+12+11s}$ since $\tau(P_{g+12+11s})$ is of the situation in Example \ref{config} (4). Consequently, $s_i^{g+11+11s}$, $s_i^{g+12+11s}$ are determined for $i=6s+8,k,k+1,k+4$. This further determines $\phi_{g+11+11s}$ as well as $s_1^{g+11+11s},s_2^{g+11+11s}$ (and hence $V_{g+11+11s}$); and even further, $s_1^{g+12+11s},s_2^{g+12+11s}$ (and hence $V_{g+12+11s}$). Eventually, one can find $\phi_{g+13+11s}$ according to $V_{g+12+11s}$ thus chosen and conclude that $\GG_{g+11+11s,g+13+11s}$ is non-empty.

  Fix some object of $\GG_{g+11+11s,g+13+11s}$. Given $(c_{i,2g+25+22s})$ (\ref{parte}) and $\mE_{g+14+11s}$, one can derive $(c_{i,2g+27+22s})$ and find $V_{g+14+11s}$, $\phi_{g+14+11s}$ in compatible with $V_{g+13+11s}$ determined by the fixed object; furthermore, $\tau^{V_{g+14+11s}}(P_{g+15+11s})=\{\{1\},\{6s+6,6s+9,k,k+3,k+4\}\}.$

  Similarly, after deriving $(c_{i,2g+29+22s})$ from $(c_{i,2g+27+22s})$, one can find $V_{g+15+11s}$, $\phi_{g+15+11s}$ in compatible with $V_{g+14+11s}$ previously chosen and
  $$\tau^{V_{g+15+11s}}(P_{g+16+11s})=\{\{1\},\{6s+8,6s+9,k,k+3,k+4\}\}.$$
  \indent Lastly, we choose $V_{g+16+11s},V_{g+17+11s}$ as follows. By referring to $\mE_{g+16+11s}$, $\mE_{g+17+11s}$, easy to see $\tau(P_{g+17+11s})$ is of the situation in \ref{config} (4). Therefore, fixing $s_{6s+8}^{g+16+11s}$, $s_{6s+8}^{g+17+11s}$ which are not sections of one summand of the corresponding bundles determines $\phi_{g+17+11s}$. Moreover, $s_i^{g+16+11s}$ ($i=6s+9,k,k+3,k+4$) is determined by the condition $s_i^{g+16+11s}|_{P_{g+16+11s}}=s_{6s+8}^{g+16+11s}|_{P_{g+16+11s}}.$

  Consequently, $s_i^{g+17+11s}$ ($i=6s+9,k,k+3,k+4$) is determined by the condition $$\phi_{g+17+11s}(s_i^{g+16+11s}|_{P_{g+17+11}})=s_i^{g+17+11s}|_{P_{g+17+11}}.$$
  Eventually, choose any $\phi_{g+16+11s}$ such that $\phi_{g+16+11s}(s_{6s+8}^{g+15+11s}|_{P_{g+16+11s}})=s_{6s+8}^{g+16+11s}|_{P_{g+16+11s}}.$ This will determine $s_1^{g+16+11s}$, $s_1^{g+17+11s}$ and hence $V_{g+16+11s}$, $V_{g+17+11s}$.

  Thus, $\GG_{g+11+11s,g+17+11s}$ is non-empty. So is $\mathcal{G}^{k+4,\text{EHT}}_{2,\omega_{g'},d'_{\bullet},c^{\Gamma'}_{g'}}(X_{g'})$.
  \end{proof}
Next, we verify the existence of sufficiently generic objects:
\begin{prop}\label{suffred}
Let $\GG=\mathcal{G}^{k+4,\text{EHT}}_{2,\omega_{g'},d'_{\bullet},c^{\Gamma'}_{g'}}(X_{g'})$ be as defined in Proposition \ref{nonempty}. If $\GG$ is non-empty, then it contains sufficiently generic objects ($N=g'-g$, $n=4$).
\end{prop}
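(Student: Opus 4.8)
\emph{Proof strategy.} By the corollary following Lemma \ref{gmor}, being sufficiently generic (with respect to $N=g'-g$ and $n=4$) is an open condition on $\GG$, so it is enough to produce a single sufficiently generic geometric object. The locus of objects that fail to be sufficiently generic is a finite union of the closed substacks ``$q^t_2$ is glued to $q^{t+1}_1$ in $a$-th order via $\phi_{j_t+1},\dots,\phi_{j_{t+1}}$'' of Lemma \ref{gmor}, taken over all unstable underlying bundles $\mE_{j_t}$ and all integers $a$ for which $a+N$ is a non-repeated entry of $(c_{k+1,2j_t-1},\dots,c_{k+4,2j_t-1})$; by Lemmas \ref{A1} and \ref{A2} and Corollary \ref{AC1} (with $m=2$) there are at most two such $a$ for each $t$, so the union is finite, and we must produce an object lying outside all of them.

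The plan is to re-run the inductive construction of the object in the proof of Proposition \ref{nonempty}, now keeping track of the residual freedom in the gluing data near each unstable bundle. For the unstable bundles $\mE_{j_t}$ with $j_t\le g$, recall that $\GG_g$ and the stack $\GG^0$ of Lemma \ref{suffover} are attached via the morphism $\psi$ of Theorem \ref{Induction} to the base stack \ma\ of the preceding inductive step, whose objects by hypothesis sit over sufficiently generic objects; tracing the corresponding gluing conditions back through $\psi$ — exactly as Property \tb{(A)} was propagated in \ref{even1} — shows that the lift we build lies in no ``bad'' locus indexed by $j_t\le g$. For the new unstable bundles $j_t\in\{g+4,\ g+8,\ g+10+11s,\ g+13+11s\}$ ($0\le s\le q-2$), the point is that in the construction of Proposition \ref{nonempty} the gluing maps $\phi_{j_t+1},\dots,\phi_{j_{t+1}}$ are constrained only by the section-compatibility requirements of the prescribed configurations $\tau(P_j)$, so the admissible tuples $(\phi_{j_t+1},\dots,\phi_{j_{t+1}})$ form a nonempty irreducible locally closed subvariety of $\prod_{j_t<j\le j_{t+1}}\iso(\mE_j|_{P_{j+1}},\mE_{j+1}|_{P_{j+1}})$ of positive dimension. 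On this admissible locus each condition ``$q^t_2$ glued to $q^{t+1}_1$ in $a$-th order'' is a proper closed condition (Lemma \ref{gmor}, whose hypothesis \eqref{cond1} holds because $c^{\Gamma'}$ is $(g',k')$-standard), so a generic choice of gluing data along each chain avoids all finitely many bad orders simultaneously; the resulting object is then sufficiently generic.

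The main obstacle is verifying, chain by chain, that the admissible locus of gluing tuples is really not contained in any single one of these closed ``bad'' loci — equivalently, that after imposing the configuration constraints of Proposition \ref{nonempty} there still remains a gluing map $\phi_j$ ($j_t<j\le j_{t+1}$) that is required only to send the two canonical-section images of $\mE_{j-1}$ to those of $\mE_j$ and is otherwise free, so that the composite isomorphism $\tau_{b-a-1}$ of Definition \ref{suff} can be arranged to carry $q^{t+1}_1$ off $q^t_2$. This amounts to a bookkeeping on the configurations $\tau(P_j)$ already computed in the proof of Proposition \ref{nonempty} (via the vanishing sequences in Lemmas \ref{parta}, \ref{parte} and \ref{partf}); the delicate spots are those where several indices collapse into one configuration set — around $\mE_{g+8}$ and $\mE_{g+10+11s}$ — where one must confirm that the number of free parameters in the chain still exceeds the number (at most two) of bad conditions to be avoided. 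Once this is checked, the constructed object is sufficiently generic, and openness then gives the proposition.
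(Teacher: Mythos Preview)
Your overall frame---openness of the sufficiently-generic locus plus construction of one good object by exploiting residual freedom in the gluing data---is sound, but the paper carries out the key step quite differently, and you miss a simplification that halves the work.

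The simplification: by Corollary \ref{AC1} (applied to the matrix $\ti{c}^{\Gamma'}$ obtained from $c^{\Gamma'}_{g'}$ via Lemma \ref{C1} with $m=2$), when $t$ is \emph{odd} every entry $\ti{c}_{k'+1,2j_t-1},\dots,\ti{c}_{k'+4,2j_t-1}$ is repeated, so the sufficiently-generic condition is vacuous at those $t$. Only even $t<2T$ need checking; among the new unstable bundles this means only $j_t=g+8$ and $j_t=g+13+11s$, not $g+4$ or $g+10+11s$.

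More substantively, the paper does not count free parameters in the $\phi_j$. For each relevant even $t$ it instead verifies a single numerical inequality on the adjacent component $C_{j_t+1}$: writing $i_1,i_2$ for the special indices there, one checks
\[
c_{1,2j_t}+\ti{c}_{k'+1,2j_t}\;>\;c_{i_1,2j_t}+c_{i_2,2j_t}-1
\]
(and likewise with $\ti{c}_{k'+4,2j_t}$). For $j_t=g+8$, $j_{t+1}=g+10$, one finds $c_{1,2g+16}=g-k_1+5$ and $\ti{c}_{k'+1,2g+16}=g+k_1+12$, with sum $2g+17>2g+16$. By Lemma \ref{aux} and Theorem \ref{genericity0}, this strict inequality prevents the index $1$ and the new index $k'+1$ from being forced into a common configuration block at $P_{j_t+1}$, so the destabilizing point $q^t_2$ is not forced to glue to $q^{t+1}_1$ in the bad order. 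The case $j_t=g+13+11s$ is the same computation with different numbers, and for $j_t\le g$ the paper simply observes that the new orders are even larger relative to the special-index sum, so the inequality holds a fortiori.

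Your parameter-counting route would likely succeed, and you correctly locate the delicate chains. But the ``bookkeeping'' you postpone is the entire content: you would have to revisit every chain in the proof of Proposition \ref{nonempty} and isolate a genuinely free $\phi_j$ after all configuration constraints are imposed. The paper's route collapses each chain to a one-line numerical check against the data of Lemmas \ref{parta}--\ref{partf}, which is both shorter and makes transparent \emph{why} the construction was set up with those particular vanishing orders.
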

\begin{proof}
Denote $\ti{c}^{\Gamma'}$ to be the $(k'+4)\times (2g'-2)$ matrix obtained from $c^{\Gamma'}_{g'}$ using Lemma \ref{C1}. For $j=j_t$ ($t=1,...,2T$)\footnote{Recall that $\mE_1,...,\mE_{j_{2T}}$ are the unstable bundles among $\mE_1,...,\mE_{g'}$.}, denote $q^t_1$ (resp. $q^t_2$) to be the image of sections of the destabilizing summand of $\mE_{j_t}$ in $\PP\mE_{j_t}|_{P_{j_t}}$ (resp. $\PP\mE_{j_t}|_{P_{j_t+1}}$).

Recall that being sufficiently generic means $q^t_2$ is not glued to $q^{t+1}_1$ via $\phi_{j_1+1},...,\phi_{j_{t+1}}$ in the $c_{i,2j_t-1}$-th order, where $i>k'$ and $c_{i,2j_t-1}$ is a non-repeated vanishing order. By Corollary \ref{AC1}, when $t$ is odd, all $c_{i,2j_t-1}$ ($i>k'$) are repeated vanishing orders. Hence, it suffices to consider cases where $t<2T$ is even. Moreover, when $t$ is even, at most $c_{k'+1,2j_t-1}$, $c_{k'+4,2j_t-1}$ are non-repeated.

Note that $\mathcal{G}^{5,\text{EHT}}_{2,\omega_6,d_{\bullet},a^{\Gamma}}(X_6)$ in Proposition \ref{base} has sufficiently generic objects for a trivial reason: there are only two unstable bundles among $\mE_1,...,\mE_6$. In general, suppose the moduli stack constructed for some pair $(g,k)$ has sufficiently generic objects. We claim the moduli stack constructed for $(g',k')=(g+4k_1+6-q,k+4)$ also has sufficiently generic objects. The statement fails only if the following happens: for some even $t$ smaller than $2T$, $q^t_2$ is glued to $q^{t+1}_1$ in $c_{i,2j_t-1}$-th order for some $i\le k'$ via $\phi_{j_t+1},...,\phi_{j_{t+1}}$ and ANY $(k'+4)$-dimensional space of sections $\ti{V}_{j_t+1}$ with the desired vanishing sequences at $P_{j_t+1},P_{j_t+2}$ has the configuration $\tau^{\ti{V}_{j_t+1}}(P_{j_t+1})$ containing a set $S$ such that $S\supset\{i,k'+1\}$ or $S\supset\{i,k'+4\}$.

We first consider all $j_t$ such that $j_t>g$ and $t<2T$ is even. They are $j_t=g+8,g+13+11s$ ($s<q-2$). For $j_t=g+8$, it suffices to notice that $j_{t+1}=g+10$ and $c_{1,2g+17}$ is the only non-repeated vanishing order in $(c_{i,2g+17})_{i\le k'}$. Using Lemma \ref{parta}, calculate $c_{i,2g+16}=g-k_1+5$; by Proposition \ref{AM1}, one can also compute that $c_{k'+1,2g+16}=g+k_1+12$. Since
$$c_{1,2g+16}+c_{k'+4,2g+16}>c_{1,2g+16}+c_{k'+1,2g+16}=2g+17>c_{i_1,2g+16}+c_{i_2,2g+16}=2g+16,$$
where $i_1,i_2$ are the special indices of $(c_{i,2g+16})$, $(c_{i,2g+17})$, by Lemma \ref{aux}, we can conclude that sufficient genericity will not be violated for $j_t=g+8$ in general.

Similarly, by Lemma \ref{parte}, $c_{1,2g+26+22s}=g-k_1+10+11s$; by Proposition \ref{AM1}, $c_{k'+1,2g+26+22s}=g-k_1+17+11s$. In particular, $c_{1,2g+26+11s}+c_{k'+1,2g+26+11s}>c_{i_1,2g+26+22s}+c_{i_2,2g+26+22s}$ and by Lemma \ref{aux}, Theorem \ref{genericity0}, we can conclude that sufficient genericity will not be violated for $j_t=g+13+11s$.

One still needs to consider $j_t<g$ where $t<2T$ is even. But based on our construction, the new non-repeated vanishing orders to consider are relatively bigger than the ones in the previous inductive step. Based on the previous argument, one can conclude immediately that sufficient genericity will not be violated for any $t$ such that $j_t<g$.

This concludes the proof.
\end{proof}
\begin{rem}
  Notice that the existence of sufficiently generic objects for $n=4$ follows from the fact that all the non-repeated vanishing orders are large enough so that no obstruction could occur. By exactly the same justification, one can establish the existence of sufficiently generic objects for $n=2$ as well as the existence of sufficiently generic objects satisfying the strengthened genericity assumption (Property \tb{A}) in Proposition \ref{even1}.

  Consequently, we can establish the non-emptiness result for the second induction.
\end{rem}
\begin{prop}\label{nonempty2}
 Let $\GG=\mathcal{G}^{k+2,\text{EHT}}_{2,\omega_{g'},d'_{\bullet},c^{\Gamma'}_{g'}}(X_{g'})$ be as given in \ref{l2}. Then $\GG$ is non-empty.
\end{prop}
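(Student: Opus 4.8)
The plan is to establish non-emptiness of $\GG=\GG_{g'}$ by induction along the forgetful morphisms $p_{j,j-1}\colon\GG_j\to\GG_{j-1}$ for $g<j\le g'$, where $\GG_r$ denotes, as usual, the restriction of $\GG$ to the first $r$ components of $X_{g'}$ (\textbf{Notation 17}). The base case $\GG_g\ne\emptyset$ follows from Proposition \ref{nonempty3} together with the existence of sufficiently generic objects in \ma; the latter is obtained for the present value $n=2$ by the argument of Proposition \ref{suffred} verbatim, as noted in the remark preceding this statement. It then suffices to show that, for each $j$ with $g<j\le g'$, any $K$-valued object $((\mE_{j'},V_{j'})_{j'<j},(\phi_{j'})_{j'<j})$ of $\GG_{j-1}$ extends to an object of $\GG_j$: that is, one can find a $k'$-dimensional $V_j\subset\Gamma(\mE_j)$ with $\van_{P_j}(V_j)=(c_{i,2j-2})$ and $\van_{P_{j+1}}(V_j)=(c_{i,2j-1})$, together with a gluing isomorphism $\phi_j$ satisfying condition (3) of Definition \ref{Def}.

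The first thing I would nail down is the configuration $\tau(P_j)$ at each of the new nodes. By Lemma \ref{bunfix} the bundles $\mE_j$ with $j>g$ are exactly the semi-stable decomposable ones listed in section \ref{l2}; feeding the vanishing sequence $(c_{i,2g-1})^T=(3k_1+2,[3k_1+1]_2,\dots,[2k_1+2]_2,k_1+1,k_1)$ of Lemma \ref{suffover} through these bundles — the same elementary bookkeeping that proves Lemmas \ref{standard2} and \ref{twosp}, where one notes that at each node exactly one of the two special indices $i_1,i_2$ carries a non-repeated vanishing order — one finds that $\tau(P_j)$ is always either trivial or of the shape in Example \ref{config}(4): a single index pair carrying the images of the canonical sections, at most joined by one further non-repeated index. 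I would carry this out as a short case check over the two blocks $j=g+1+t$ ($0\le t\le k_1-1$) and $j=g+k_1+1+t$ ($0\le t\le k_1$), plus the terminal bundle $\mE_{g'}=\mO(g'-1,0)^{\op 2}$, for which $\tau(P_{g'})$ is trivial by Lemma \ref{sym}.

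Granting this, the extension step has no obstruction. The already-chosen $V_{j-1}$ and $(\phi_{j'})_{j'<j}$ determine the sequence of points $(q_t)$ in $\PP\mE_{j-1}|_{P_j}$ associated to $V_{j-1}$; I then choose $V_j$ realizing the prescribed vanishing sequences whose associated sequence of points $(p_t)$ in $\PP\mE_j|_{P_j}$ avoids the images of the canonical sections of $\mE_j$ at every index not forced to carry them — possible because, for each such index and each target point of $\PP\mE_j|_{P_j}$, there is (up to scalar) a unique section with the required vanishing, exactly as in the proof of Lemma \ref{m1}. Finally I choose $\phi_j\in\iso(\mE_{j-1}|_{P_j},\mE_j|_{P_j})$ with $\phi_j(q_t)=p_t$ for all $t$: by the fixed-determinant condition this $\iso$ is $3$-dimensional, each constraint $\phi_j(q_t)=p_t$ coming from a canonical-section index is a single codimension-one condition (there are at most two such), and they are mutually compatible, so such a $\phi_j$ exists. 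Chaining the steps from $j=g+1$ up to $j=g'$ produces an object of $\GG=\GG_{g'}$.

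The only genuine difficulty — and it is modest — is the configuration bookkeeping of the second paragraph at the junction $j=g+k_1+1$ between the two bundle blocks: one must check that the non-repeated vanishing orders introduced there do not, via Lemmas \ref{aux} and \ref{sym}, force a coincidence of section-fibres that would cut the admissible set of $\phi_j$ below what the extension needs. Since every $\mE_j$ with $j>g$ is semi-stable here, no appeal to the genericity Theorem \ref{genericity0} is required — in contrast to Proposition \ref{nonempty} for the first construction — which is exactly why the present proposition admits the much shorter treatment described above.
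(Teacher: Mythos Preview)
Your overall strategy --- induct along $p_{j,j-1}$ starting from the non-empty $\GG_g$ supplied by Proposition~\ref{nonempty3} --- is exactly the paper's. The gap is in the configuration bookkeeping, which you under-describe and in one place get backward.

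The configurations at the new nodes are \emph{not} of the shape of Example~\ref{config}(4). In the first block the paper finds $\tau(P_{g+1+t})=\{\{2t-1,k+1\},\{k+2\}\}$, and crucially this needs as its seed that a general object of $\GG_g$ already has $\{1,k+1\}$ in one set of $\tau(P_{g+1})$ --- a fact inherited from the structure of the preceding inductive step (see the argument in Proposition~\ref{nonempty}), which you do not invoke. In the second block the configurations become $\{\{1\},\{k,k+1\},\{2t,k+2\}\}$: five non-repeated indices falling into \emph{three} sets. If those five indices sat in five singletons, the conditions $\phi_j(q_t)=p_t$ would be five constraints on a $3$-dimensional $\iso$ and no $\phi_j$ would exist. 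What saves you is that $\{k,k+1\}$ and $\{2t,k+2\}$ are each forced into a single set by Lemma~\ref{aux}: the paper checks the arithmetic identity $c_{k,2g+2k_1}+c_{k+1,2g+2k_1}=c_{i_1,2g+2k_1}+c_{i_2,2g+2k_1}-1$ and its analogues, and then tracks the propagation $\tau^{V_j}(P_j)\rightsquigarrow\tau^{V_j}(P_{j+1})$ across each component. This verification is not a modest afterthought at the junction $j=g+k_1+1$; it is the content of the proof throughout the second block.

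Your last paragraph has the role of Lemma~\ref{aux} inverted: the coincidences it produces do not ``cut the admissible set of $\phi_j$ below what the extension needs'' --- they are precisely what keeps $|\tau(P_j)|\le 3$ so that a compatible $\phi_j$ exists at all. You also cannot first choose $V_j$ freely and then fit $\phi_j$: the configuration $\tau^{V_{j-1}}(P_j)$ is already fixed by the previous step, and you must produce a $V_j$ with \emph{that same} partition at $P_j$, then observe (via Lemma~\ref{aux}) what partition it forces at $P_{j+1}$. Filling in these two points --- the inherited grouping $\{1,k+1\}$ at $P_{g+1}$ and the Lemma~\ref{aux} identities in the second block --- converts your outline into the paper's proof.
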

\begin{proof}
  By the previous remark and Proposition \ref{Induction}, $\GG_g$ (\tb{Notation 17}) is non-empty.

  For all $j>g$, $\mE_j$ are semi-stable and decomposable. Given the special indices for $j=g+1,...,g+k_1$ (see the proof of \ref{twosp}), one can show that for general choices of $V_j$, the configurations at $P_j$ should be $\{\{2t-1,k+1\},\{k+2\}\}$ and the point associated to $\{2t-1,k+1\}$ is the image of a canonical section of $\mE_j$.

  By the same argument as in the proof of Proposition \ref{nonempty}, one can conclude that there exists a set in $\tau^{V_g}(P_{g+1})$ containing $\{1,k+1\}$. Since the associated sequence of points at each node contains only two points, compatible gluing isomorphisms exist. By induction, $\GG_{g+k_1}$ is non-empty.

  Consequently, $\tau^{V_{g+k_1}}(P_{g+k_1+1})=\{\{k,k+1\},\{k+2\}\}$. By direct computation, $$c_{k,2g+2k_1}+c_{k+1,2g+2k_1}=c_{i_1,2g+2k_1}+c_{i_2,2g+2k_1}-1.$$
  It follows from Lemma \ref{aux} that for $j:g'-1\ge j\ge g+k_1+1$,
  $$\tau^{V_j}(P_j)=\begin{cases}
  \{\{k,k+1\},\{k+2\}\} & j=g+k_1+1\\
  \{\{1\},\{k,k+1\},\{2t,k+2\}\}  &\text{ otherwise}
  \end{cases}$$ $$\tau^{V_j}(P_{j+1})=\begin{cases}
  \{\{1\}\{k,k+1\},\{2,k+2\}\} & j=g+k_1+1\\
  \{\{1\},\{k,k+1\},\{2t+2,k+2\}\}  &\text{ otherwise}
  \end{cases}$$
  It is then not hard to see $\GG_{g'-1}$ is non-empty.

  Since $\tau(P_{g'})$ is trivial, $\GG$ is also non-empty.
\end{proof}
Eventually, we establish the non-emptiness result for the third construction.
\begin{prop}\label{nonempty4}
Let $\GG=\GG^{k+3,\text{EHT}}_{2,\oo_{g'},d'_{\bullet},c^{\Gamma'}_{g'}}(X_{g'})$ be as given in \ref{l3}. Then $\GG$ is non-empty.
\end{prop}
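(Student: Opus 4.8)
The strategy mirrors the non-emptiness arguments for the first and second constructions (Propositions \ref{nonempty}, \ref{nonempty2}): we build a feasible object of $\GG$ component-by-component along the chain $X_{g'}$, starting from a known non-empty stack over the first $g$ components and extending past each node $P_{g+j}$ by checking that the prescribed vanishing sequences and the configuration data $\tau_{g+j}$ impose no obstruction to choosing the gluing isomorphism $\phi_{g+j}$ together with a compatible space of sections $V_{g+j}$. Concretely, first I would invoke Proposition \ref{even1} (which depends on $\ti{\GG}_1$ being non-empty; by the Remark following Proposition \ref{suffred}, the stack \ma\ does contain sufficiently generic objects satisfying Property \tb{(A)}, since the relevant non-repeated vanishing orders are large enough that no obstruction arises) to conclude that the substack $\GG_g=\ti{\GG}^0$ of $\GG^{k+3,\text{EHT}}_{2,\omega_g((3k_1+3)P_{g+1}),\hat{d}_{\bullet},c^{\Gamma}_{k+3}}(X_g)$ is non-empty. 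This gives the base of the induction on components.

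\textbf{Extending past the nodes.} The next step is to analyze the configurations $\tau(P_{g+j})$ for $j=1,\ldots,g'-g$, using the vector bundles listed in \ref{l3} together with the vanishing sequences recorded in Lemma \ref{evenonly}. The two unstable bundles are $\mE_{g+1}$ (with $\tau_{g+1}=\{1,k\}$) and $\mE_{g+5}$ (with $\tau_{g+5}=\{5,k+1\}$); for these I would argue, exactly as in the proof of Proposition \ref{nonempty}, that a general object of $\GG_g$ (respectively $\GG_{g+4}$) already carries the configuration forced by the destabilizing summand — using that, by induction from the base case and Lemmas \ref{aux}, \ref{sym}, \ref{auxc} plus Theorem \ref{genericity0}, the indices $1,k$ (resp.\ $5,k+1$) lie in a common block of $\tau(P_{g+1})$ (resp.\ $\tau(P_{g+5})$) because the corresponding sums of vanishing orders are of the shape $c_{i_1}+c_{i_2}-1$ appearing in those lemmas. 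For the semi-stable bundles $\mE_{g+2},\mE_{g+3},\mE_{g+4}$ and $\mE_{g+6},\ldots,\mE_{g'}$, the configurations at each node contain at most two points in the associated sequence (the images of the two canonical sections), so compatible $\phi_{g+j}$ and $V_{g+j}$ can always be chosen once $V_{g+j-1}$ is fixed; the only nodes requiring a slightly closer look are $P_{g+k_1+3}$ (where, as in the dimension estimate, a general object of $\GG_{g+k_1+2}$ has configuration $\{\{k-2,k+1\},\{k-1\},\{k\}\}$ there, matching what $\mE_{g+k_1+3}$ demands) and $P_{g'}$ (trivial, since $\mE_{g'}=\mO(g'-1,0)^{\op 2}$). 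Thus the induction closes and $\GG_{g'}=\GG$ is non-empty.

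\textbf{Main obstacle.} The delicate point, as in Propositions \ref{nonempty} and \ref{nonempty2}, is the bookkeeping for the unstable bundle $\mE_{g+1}$ glued immediately after the transition from the step-1 construction: one must verify that the image $q^{g+1}_1$ of the destabilizing summand at $P_{g+1}$ can be matched — under some admissible $\phi_{g+1}$ — with the sequence of points coming from a general $V_g$, and that $V_g$ itself can be chosen (via its $(P_g,P_{g+1})$-adapted basis) so that the indices $1,k$ land in the correct block. This is where Property \tb{(A)} of Proposition \ref{even1} is used: it guarantees precisely the sufficient genericity needed so that $q^{g+1}_1$ is not forced to be glued in an obstructing way. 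Once this single gluing is handled, all remaining extensions are routine, and the proof is complete by the same inductive machinery used for the earlier constructions; one then combines with the canonical-determinant verification (Appendix Lemma \ref{can2}) and the semi-stability check (Appendix \ref{sst2}) to obtain the full conclusion of Theorem \ref{main3}.
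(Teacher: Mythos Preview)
Your overall strategy is right, but the proposal contains a substantive confusion and a false claim that would make the argument fail as written.

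First, you conflate two different pieces of data that the paper keeps separate. The symbol $\tau_{g+1}=\{1,k\}$ (and similarly $\tau_{g+5}=\{5,k+1\}$) listed in \ref{l3} is the auxiliary set $\tau_*$ of Lemma \ref{concise2}, used together with $\mE_{g+1}$ and $(c_{i,2g})$ to \emph{determine} the vanishing sequence $(c_{i,2g+1})$. It is not the configuration $\tau(P_{g+1})$ of Definition in \S9. In the paper's proof the configuration at $P_{g+1}$ is $\{\{1,k+1\}\}$ (not a block containing $1$ and $k$), and at $P_{g+5}$ it is $\{\{4,k+1\},\{1\},\{k\}\}$ (not a block containing $5$ and $k+1$). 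So your description of what must be matched at the two unstable components is incorrect, and with it your account of the ``main obstacle''.

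Second, your assertion that for the semi-stable bundles the associated sequence at each node has ``at most two points (the images of the two canonical sections)'' is false here. The paper explicitly computes, for instance, $\tau^{V_{g+1}}(P_{g+2})=\{\{1,4,k+1,k+2\},\{k\},\{k+3\}\}$ and $\tau^{V_{g+3}}(P_{g+4})=\{\{1\},\{4,k+1\},\{2,k+3\},\{3,k+2\},\{k\}\}$, each with more than two distinct points. The non-emptiness argument cannot be waved through on the grounds that gluing is automatic; one must track these configurations step by step using Lemmas \ref{aux}, \ref{sym} and Theorem \ref{genericity0}, and in particular verify that at $P_{g+k_1+3}$ the block structure $\{\{k-2,k+1\},\{k-1,k\}\}$ forced by $\mE_{g+k_1+3}$ is actually achievable by a general $V_{g+k_1+2}$. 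The paper also first reduces to showing $\GG_{g+k_1+4}$ is non-empty by checking that $\tau(P_j)$ is trivial or of Example \ref{config}(4) type for $j\ge g+k_1+4$; this reduction is absent from your outline. Without these concrete computations the inductive step does not close.
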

\begin{proof}
  By Proposition \ref{even1}, $\GG_g$ is non-empty.

  Given $(c_{i,2g+2k_1+3})$ (\ref{evenonly}) and $\mE_{g+k_1+4}$, one can compute $(c_{i,2g+2k_1+5})$ to see that $\tau(P_{g+k_1+4})$ is trivial. Moreover, given $\mE_{j}$ for $j\ge g+k_1+4$, conclude by induction that $\tau(P_j)$ is either trivial or of the situation in Example \ref{config} (4) (with no other non-repeated vanishing orders). Therefore, the question reduces to showing $\GG_{g+k_1+4}$ is non-empty.

  Given $(c_{i,2g+9})$ (\ref{evenonly}) and $\mE_{g+5+j}$ ($j = 1,...,k_1-3$), one can inductively show that a general $V_{g+5+j}$ satisfying the desired vanishing conditions must have
  $$\tau^{V_{g+5+j}}(P_{g+5+j})=\{\{2j+3,k+1\},\{k-1\},\{k\}\},\tau^{V_{g+5+j}}(P_{g+6+j})=\{\{2j+5,k+1\},\{k-1\},\{k\}\}.$$
  Given $(c_{i,2g+2k_1+3})$ (\ref{evenonly}) and $\mE_{g+k_1+2}$, by \ref{genericity0} one can find $V_{g+k_1+2}$ such that
  $$\tau^{V_{g+k_1+2}}(P_{g+k_1+2})=\{\{k-2,k+1\},\{k-1\},\{k\}\},\tau^{V_{g+k_1+2}}(P_{g+k_1+3})=\{\{k-2,k+1\},\{k-1,k\}\}.$$
  Meanwhile, given $(c_{i,2g+2k_1+3})$ (\ref{evenonly}), any feasible $V_{g+k_1+3}$ must have
  $$\tau^{V_{g+k_1+3}}(P_{g+k_1+3})=\{\{k-2,k+1\},\{k-1,k\}\} \text{ and }\tau^{V_{g+k_1+3}}(P_{g+k_1+4}) \text{ is trivial}.$$
  It is then not hard to see that $\GG$ is non-empty if $\GG_{g+5}$ is non-empty and contains an object whose configuration at $P_{g+6}$ is $\{\{5,k+1\},\{k-1\},\{k\}\}$.

  By the same argument as in the proof of \ref{nonempty}, objects in $\GG_g$ must have configuration $\{\{1,k+1\}\}$ at $P_{g+1}$. Given $(c_{i,2g+1})$ (\ref{evenonly}), one can directly compute that a general $V_{g+1}$ would have:
  $$\tau^{V_{g+1}}(P_{g+2})=\{\{1,4,k+1,k+2\},\{k\},\{k+3\}\}.$$
  A general $V_{g+2}$ should have
  $$\tau^{V_{g+2}}(P_{g+2})=\{\{1,4,k+1,k+2\},\{k\},\{k+3\}\}, \tau^{V_{g+2}}(P_{g+3})=\{\{1,4,k+1\},\{k\}\}.$$
Deriving $(c_{i,2g+5})$ from $(c_{i,2g+1})$ (\ref{evenonly}) and applying \ref{aux}, \ref{genericity0}, one can see that there exists $V_{g+3}$ such that $\tau^{V_{g+3}}(P_{g+3})=\{\{1,4,k+1\},\{k\}\}$ and $\tau^{V_{g+3}}(P_{g+4})=\{\{1\},\{4,k+1\},\{2,k+3\},\{3,k+2\},\{k\}\}$. So $\GG_{g+3}$ is non-empty. Note also $\tau(P_{g+4})$ is of the situation in Example \ref{config} (4).

  Fix some object of $\GG_{g+3}$ and denote $\{{s^{j}_i}\}$ to be a $(P_{j},P_{j+1})$-adapted basis of $V_{j}$. We choose $V_{g+4}$ in the following way: choose $s^{g+4}_4$ (and hence $s^{g+4}_{k+1}$), not a section of one summand of $\mE_{g+4}$; this determines a unique feasible $\phi_{g+4}$. Consequently, $s^{g+4}_1$, $s^{g+4}_k$ are determined by the following conditions:
  $$\phi_{g+4}(s^{g+3}_1|_{P_{g+4}})=s^{g+4}_1|_{P_{g+4}},\phi_{g+4}(s^{g+3}_k|_{P_{g+4}})=s^{g+4}_k|_{P_{g+4}}.$$
  This determines $V_{g+4}$. Given $(c_{i,2g+7})$ (\ref{evenonly}) and $\mE_{g+4}$, one can compute $$c_{4,2g+6}+c_{k+1,2g+6}=2g+4=c_{i_1,2g+6}+c_{i_2,2g+6}-1$$ ($i_1,i_2$ are the special indices). By Lemma \ref{aux}, $\tau^{V_{g+4}}(P_{g+5})=\{\{4,k+1\},\{1\},\{k\}\}$.

  Eventually, $c_{4,2g+8}+c_{4,2g+9}=c_{k+1,2g+8}+c_{k+1,2g+9}={1\over2}(d'_{g+5}-1)$. Hence a general feasible $V_{g+5}$ must have $\tau^{V_{g+5}}(P_{g+5})=\{\{4,k+1\},\{1\},\{k\}\},\tau^{V_{g+5}}(P_{g+6})=\{\{5,k+1\},\{k-1\},\{k\}\}.$

  This shows that $\GG_{g+5}$ is non-empty and hence $\GG$ is non-empty.
\end{proof}
\subsection{Semi-stable Locus}
Eventually, we check that there is a non-empty open locus on which the semi-stability condition holds.
\begin{lem}\label{sst}
Starting from $\mathcal{G}^{5,\text{EHT}}_{2,\omega_6,d_{\bullet},a^{\Gamma}}(X_6)$, every moduli stack constructed in \ref{l1} contains a non-empty locus of objects satisfying the $\ell$-semi-stability condition.
\end{lem}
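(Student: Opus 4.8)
The argument runs by induction along the steps of the construction in Section \ref{l1}, the base case $\mathcal{G}^{5,\mathrm{EHT}}_{2,\omega_6,d_\bullet,a^\Gamma}(X_6)$ having been settled in Proposition \ref{base}. At each stage we must produce, for the bundle $\mE'$ on $X_{g'}$ induced by some $K$-valued object of the stack, a choice of gluing data for which $\mE'$ is $\ell$-semistable; since $\ell$-semistability is a condition on the induced bundle that is independent of the twisting ambiguity among similar bundles (Definition \ref{llsfix}, Example \ref{ex2}) and is open in families (Proposition 1.4 of \cite{Ostab}), this exhibits a nonempty open substack of $\ell$-semistable objects. Fix such an $\mE'$ with $\mE'|_{C_j}=\mE_j$. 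By inspection of the construction in \ref{l1}, the $\mE_j$ that are unstable occur only at the finitely many indices listed there (for the step $(g,k)\to(g',k')$: $g+4$, $g+8$, $g+10+11s$ and $g+13+11s$ for $s=0,\dots,q-2$, together with the unstable indices already present over $X_g$), and no two of these indices are adjacent. Hence we may write $X_{g'}=X^1\cup\cdots\cup X^m$ as a chain of connected subcurves meeting successively at single nodes, with $X^1=X_g$ and with each $X^i$ ($i\ge2$) containing at most one component $C_t$ such that $\mE'|_{C_t}$ is unstable.

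Over $X^1=X_g$, $\ell$-semistability of $\mE'|_{X_g}$ is furnished by the inductive hypothesis applied to the object of $\GG^0$ that this object of the full stack restricts to (and by Theorem \ref{Induction}, $\psi$ of it): the bundle supplied by the hypothesis and $\mE'|_{X_g}$ differ by tensoring with a line bundle of degree $N$ on every component, which preserves $\ell$-semistability (this is the argument of Example \ref{ex2}, together with a global degree shift that acts proportionally to the rank in the defining inequality). By Proposition \ref{blocks} it therefore remains to arrange that each $\mE'|_{X^i}$ with $i\ge2$ be $\ell$-semistable; those $X^i$ with no unstable component are automatically $\ell$-semistable by Proposition \ref{blocks} applied to their decomposition into single components, and for those with exactly one unstable component, Proposition \ref{ssimple} applies and the failure of $\ell$-semistability amounts to the existence of an invertible subsheaf $\mL\subset\mE'|_{X^i}$ which is a summand of $\mE'|_{C_s}$ on every semistable $C_s\subset X^i$ and is the destabilizing summand on the unstable component.

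The existence of such an $\mL$ is, exactly as in the analysis of ``$q^t_2$ glued to $q^{t+1}_1$ in $a$-th order'' carried out in Definition \ref{suff} and Lemma \ref{gmor}, equivalent to a prescribed chain of compatible gluing incidences among the $\phi_j$ with $C_j\subset X^i$, and each such incidence is a nontrivial closed condition on $\prod\iso(\mE_j|_{P_{j+1}},\mE_{j+1}|_{P_{j+1}})$, hence on the stack. Thus the locus on which all $\mE'|_{X^i}$ are $\ell$-semistable is open, and it remains to see it is nonempty in combination with the other open conditions already imposed in our non-emptiness argument. For this I would revisit the node-by-node choice of gluing data in the proof of Proposition \ref{nonempty}: once the configuration $\tau(P_{j+1})$ is fixed at a node $P_{j+1}$, the isomorphism $\phi_{j+1}$ still varies in a positive-dimensional family, while the finitely many ``bad'' incidences each cut out a proper closed subset, so one can choose $\phi_{j+1}$ avoiding all of them. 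The main obstacle is the bookkeeping near the isolated unstable components, where $\phi_{j_t}$ and $\phi_{j_t+1}$ are already partially pinned down by the prescribed $\tau_{j_t}$ and by the sufficient-genericity incidences; one must check, using the genericity statements (Theorem \ref{genericity0}, Corollary \ref{auxc}, Lemmas \ref{aux} and \ref{sym}), that the destabilizing conditions cut out proper subloci of the residual freedom rather than exhausting it. Granting this, the intersection of all the good open conditions is a nonempty open locus, and—keeping track, as in Proposition \ref{suffred}, that a sufficiently generic object survives—the induction propagates (with Lemma \ref{reduce} this is what is needed downstream).
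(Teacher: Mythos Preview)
Your outline is sound, but the sentence beginning ``Granting this'' is exactly where the proof lives, and you have not supplied it. The whole content of the paper's argument is the explicit identification, for each new block, of a specific node at which the gluing isomorphism retains enough freedom (after all the configuration constraints imposed in Proposition~\ref{nonempty}) to avoid the destabilizing incidence. Concretely, for the block $C_{g+4}\cup\cdots\cup C_{g+8}$ the paper singles out $\phi_{g+7}$ and imposes that $\phi_{g+7}^{-1}(q)$ misses the two summand points of $\mE_{g+6}$; for $C_{g+10+11s}\cup\cdots\cup C_{g+13+11s}$ it singles out $\phi_{g+11+11s}$ and imposes that $\phi_{g+11+11s}(q)$ misses the summand points of $\mE_{g+11+11s}$. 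Your appeal to ``positive-dimensional family versus finitely many closed conditions'' is not automatic here: at several nodes in the construction the configuration is of the type in Example~\ref{config}(4), where $\phi$ is \emph{forced} to send summand points to summand points, so one genuinely must locate a node where this does not happen. You have not done so.

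There is also a structural difference worth noting. The paper pairs the unstable components into type~(b) blocks whose \emph{two} endpoints are unstable, so the bad subsheaf must destabilize at both ends simultaneously; this is a more restrictive condition and therefore easier to kill at a single interior node. Your decomposition isolates each unstable component in its own block; this produces more blocks, each with a less restrictive bad condition, and you never specify where the block boundaries fall, which is needed to know which $\phi_j$ you are free to perturb. The paper's pairing is natural because by Definition~\ref{stan} the unstable components come in pairs $(j_{2s-1},j_{2s})$.

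Your treatment of $X^1=X_g$ via the induction hypothesis plus a global line-bundle twist is correct and is a mild reorganization of the paper's implicit reuse of the previous step's block decomposition; that part is fine.
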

\begin{proof}
We first break $X_{g'}$ into various blocks $X^1,...,X^r$ satisfying the following properties:
\begin{enumerate}
  \item $X^i=C_{j^i}\cup...\cup C_{j^i+n_i}$ consists of some consecutive components of $X_{g'}$;
  \item $X^i$ and $X^{i+1}$ intersect at a nodal point;
  \item For every $i$ and $j=j^i,...,j^i+n_i$, the vector bundles $\mE_j$ are of one of the following situations:
  \begin{enumerate}
    \item $\mE_{j^i}$,...,$\mE_{j^i+n_i}$ are all semi-stable;
    \item $\mE_{j^i},\mE_{j^i+n_i}$ are unstable, and $\mE_j$ are semi-stable for $j:j^i<j<j^i+n_i$.
  \end{enumerate}
\end{enumerate}
We shall refer to a block $X^i$ as a \textit{basic block} of type (a) (or (b)) of $X_{g'}$.

To check a $K$-valued object $((\mE_j,V_j),(\phi_j))$ satisfies the $\ell$-semi-stability condition, by \ref{blocks}, it suffices to show that the bundle $\mE^i$ on $X^i$ determined by $((\mE_j)^{j^i+n_i}_{j=j^i},(\phi_j)_{j=j^i+1}^{j^i+n_i})$ is $\ell$-semi-stable for every $i$.\footnote{Technically, given $(\mE_j,\phi_j)$, one needs to take an appropriate twisting at the nodes to obtain some bundle $\mE''$ (see Remark \ref{twist}) and show $\mE''$ is $\ell$-semi-stable. However, by the observation in Example \ref{ex2}, it suffices to show $\mE^i$ is $\ell$-semi-stable.} Clearly, $((\mE_j)^{j^i+n_i}_{j=j^i},(\phi_j)_{j=j^i+1}^{j^i+n_i})$ satisfies the conditions in Situation \ref{s1}. By Proposition \ref{ssimple} it suffices to check there does not exist an invertible sub-sheaf $\mL$ of $\mE^i$ such that $\mL|_{C_{j^i}}$, $\mL|_{C_{j^i+n_i}}$ are the destabilizing summands of $\mE_{j^i}$ and $\mE_{j^i+n_i}$ resp. and $\mE|_{C_j}$ is one summand of $\mE_j$.

In \ref{base}, we have shown that the lemma holds for the base case. In each induction step there are some more basic blocks than in the preceding step. For the first construction, it suffices to look at the following basic blocks: $C_{g+4}\cup...\cup C_{g+8}$, $C_{g+10+11s}\cup...\cup C_{g+13+11s}$ ($0\le s\le q-2$), since all other basic blocks are of type (a).

For $C_{g+4}\cup...\cup C_{g+8}$, notice that $\mE_{g+6}$ is of the form $\mL_1\op\mL_2$ where $\mL_1\not\cong\mL_2$. Denote $q$ to be the image of sections of $\mO(g+k_1+5,b'-(g+k_1+3))\subset\mE_{g+7}$ in $\PP\mE_{g+7}|_{P_{g+7}}$. When choosing $\phi_{g+7}$, we can further require that $\phi_{g+7}^{-1}(q)$ is not the image of either of the canonical sections of $\mE_{g+6}$. This will guarantee the $\ell$-semi-stability of the bundle given by $((\mE_j)^{g+8}_{j=g+4},(\phi_j)_{j=g+5}^{g+8})$.

For $C_{g+10+11s}\cup...\cup C_{g+13+11s}$, note that $\mE_{g+11+11s}$ is of the form $\mL_1\op\mL_2$ where $\mL_1\not\cong\mL_2$. Denote $q$ to be the image of sections of the destabilizing summand of $\mE_{g+10+11s}$. In determining $\phi_{g+11+11s}$ we further require $\phi_{g+11+11s}(q)$ is not the image of either of the canonical sections of $\mE_{g+11+11s}$. This will guarantee the $\ell$-semi-stability of the bundle given by $((\mE_j)^{g+8}_{j=g+4},(\phi_j)_{j=g+5}^{g+8})$.
\end{proof}
For the second construction, $K$-valued objects in $\mathcal{G}^{k+2,\text{EHT}}_{2,\omega_{g'},d'_{\bullet},c^{\Gamma'}_{g'}}(X_{g'})$ satisfy the property that $\mE_j$ is semi-stable for all $j>g$. The non-emptiness of $\ell$-semi-stable follows trivially.

Lastly, we verify the semi-stability condition for our third construction:
\begin{lem}\label{sst2}
Every moduli stack constructed in \ref{l3} contains a non-empty locus of objects satisfying the $\ell$-semi-stability condition.
\end{lem}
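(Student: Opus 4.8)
The plan is to mimic the proof of Lemma \ref{sst}, using the block-decomposition strategy based on Propositions \ref{blocks} and \ref{ssimple}. First I would decompose the chain $X_{g'}$ of the third construction into basic blocks $X^1,\dots,X^r$ as in the proof of \ref{sst}: each $X^i$ is a connected union of consecutive components, consecutive blocks meet at a node, and within each block the unstable bundles (if any) occur only at the two endpoints. Since the vector bundle $\mE^i$ on each block satisfies Situation \ref{s1} (by the verification in Lemma \ref{standard3}), Proposition \ref{ssimple} reduces $\ell$-semistability of $\mE^i$ to the non-existence of an invertible subsheaf $\mL\subset\mE^i$ restricting to a summand of $\mE_j$ on every semistable component and to the destabilizing summand on each unstable endpoint component; Proposition \ref{blocks} then patches these together to give $\ell$-semistability of the whole bundle on $X_{g'}$.

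Next I would identify which blocks are \emph{not} of type (a), i.e. which contain unstable bundles. In the third construction the unstable bundles are $\mE_{g+1}$ (with $\tau_{g+1}=\{1,k\}$) and $\mE_{g+5}$ (with $\tau_{g+5}=\{5,k+1\}$); all $\mE_j$ for $j\ge g+6$ are semistable, and the unstable bundles $\mE_{j_t}$ with $j_t\le g$ are already handled by the inductive hypothesis (the moduli stack $\mathcal{G}^{k,\text{EHT}}_{2,\omega_g,d_\bullet,a^\Gamma}(X_g)$ has a non-empty $\ell$-semistable locus by hypothesis, and the relevant blocks inside $X_g$ are unchanged). So the only new basic blocks requiring attention are the ones whose endpoints are among $\{\mE_{g+1},\mE_{g+5}\}$, namely a block of the form $C_{g}\cup\dots\cup C_{g+5}$ (or its sub-blocks $C_j\cup\dots\cup C_{g+1}$ and $C_{g+1}\cup\dots\cup C_{g+5}$ together with the piece reaching back into $X_g$, depending on where the nearest unstable bundle to the left of $\mE_{g+1}$ sits). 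As in \ref{sst}, the key point is that between two unstable endpoints there is at least one semistable bundle of the form $\mL_1\oplus\mL_2$ with $\mL_1\not\cong\mL_2$: for the block around $\mE_{g+1},\mE_{g+5}$ one can use $\mE_{g+3}$ or $\mE_{g+4}$. When choosing the gluing isomorphism $\phi_j$ across that component, I impose the open condition that the pullback of the relevant destabilizing point is not the image of either canonical section of $\mL_1\oplus\mL_2$. By inequality \eqref{s2} (equivalently Proposition \ref{ssimple}), a destabilizing invertible subsheaf $\mL$ would have to restrict to a summand at that semistable component, which is now impossible; hence no such $\mL$ exists and $\mE^i$ is $\ell$-semistable.

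I would then note that each of these is a non-empty open condition on the gluing data: the configurations involved are trivial or of the type in Example \ref{config}(4), so there is no incompatibility with the already-fixed vanishing data, and the locus where the semistability obstruction fails is a proper closed subscheme of the space of gluings. Intersecting finitely many such open conditions (one per new basic block of type (b)) with the non-empty open locus coming from the inductive hypothesis on $X_g$, and with the non-empty locus used in Proposition \ref{nonempty4}, yields a non-empty open substack of $\GG^{k+3,\text{EHT}}_{2,\oo_{g'},d'_\bullet,c^{\Gamma'}_{g'}}(X_{g'})$ on which every object is $\ell$-semistable.

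The main obstacle is bookkeeping rather than a conceptual difficulty: one must check that the open conditions imposed on $\phi_{g+2},\dots,\phi_{g+5}$ for semistability are genuinely compatible with the conditions already imposed in the non-emptiness argument (Proposition \ref{nonempty4}) and with the prescribed configurations at the nodes $P_{g+2},\dots,P_{g+6}$, so that the intersection of all these loci is still non-empty. This amounts to verifying that the destabilizing point at the relevant $\mE_j$ is distinct from the finitely many points already pinned down by the configuration and the vanishing sequences — exactly the kind of genericity check carried out in \ref{nonempty4} and \ref{sst}, and it goes through because the points in question are images of canonical sections of a summand $\mL_1\not\cong\mL_2$, which are free to avoid any prescribed finite set after a generic choice of gluing.
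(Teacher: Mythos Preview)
Your proposal is correct and follows essentially the same approach as the paper: block decomposition via Propositions \ref{blocks} and \ref{ssimple}, isolating the single new type-(b) block $C_{g+1}\cup\dots\cup C_{g+5}$, and imposing an open condition on a gluing across a semistable component $\mL_1\oplus\mL_2$ with $\mL_1\not\cong\mL_2$ in that block. The paper is simply more specific than your sketch---it pins down the block as $C_{g+1}\cup\dots\cup C_{g+5}$ (your hedging about where it begins is unnecessary, since $\deg\mE_{g+1}=2g'-1$ makes $g+1$ an odd-indexed $j_{2T+1}$, so the previous block ends cleanly at $C_g$), uses $\mE_{g+3}$, and takes the point $q$ to be the configuration point associated to $\{1,4,k+1\}$ in $\tau(P_{g+3})$, requiring that $\phi_{g+3}(q)$ avoid both canonical-section images in $\PP\mE_{g+3}|_{P_{g+3}}$.
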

\begin{proof}
Similar to the proof of Lemma \ref{sst}, it suffices to consider the basic block $C_{g+1}\cup...\cup C_{g+5}$ and justify that one can find $((\mE_j,V_j),(\phi_j))$ such that the bundle $\mE'$ given by $((\mE_j)_{j=g+1}^{g+5},(\phi_j)_{j=g+2}^{g+5})$ is $\ell$-semi-stable.

To see this, note that $\mE_{g+3}$ is of the form $\mL_1\op\mL_2$ where $\mL_1\not\cong\mL_2$. Denote $q$ to be the point associated to the set $\{1,4,k+1\}$ in $\tau(P_{g+3})$. In determining $\phi_{g+3}$ we further require $\phi_{g+3}(q)$ is not the image of either of the canonical sections of $\mE_{g+3}$. This will guarantee the $\ell$-semi-stability of the bundle given by $((\mE_j)^{g+1}_{j=g+5},(\phi_j)_{j=g+2}^{g+5})$ and hence proves the lemma.
\end{proof}
\bibliographystyle{amsalpha}
\bibliography{myrefs}
\end{document}